\renewcommand{\leq}{\leqslant}
\renewcommand{\geq}{\geqslant}
\renewcommand{\preceq}{\preccurlyeq}
\numberwithin{equation}{section}
\newcommand{\bq}{{\bf q}}
\newcommand{\bm}{{\bf m}}
\newcommand{\bu}{{\bf u}}
\newcommand{\bN}{{\bf N}}
\def\mm{{\bf m}}
\newcommand{\R}{\mathbb{R}}
\newcommand{\N}{\mathbb{N}}
\newcommand{\Z}{\mathbb{Z}}
\renewcommand{\P}{\mathbb{P}} 
\newcommand{\E}{\mathbb{E}}
\def\build#1_#2^#3{\mathrel{ \mathop{\kern 0pt#1}\limits_{#2}^{#3}}}
\def\d{{\rm d}}
\def\eps{\varepsilon}
\def\wh{\widehat}
\def\la{\longrightarrow}
\newcommand{\ind}{\mathbbm{1}}
\newtheorem{theo}{Theorem}[section]
\newtheorem{lem}[theo]{Lemma}
\newtheorem{rek}[theo]{Remark}
\newtheorem{prop}[theo]{Proposition}
\newtheorem{cor}[theo]{Corollary}
\newtheorem*{conjecture}{Conjecture}
\title{\textsc{The scaling limit of planar maps with large faces}}
\renewcommand{\leq}{\leqslant}
\renewcommand{\geq}{\geqslant}
\begin{document}
\author{
Nicolas \textsc{Curien}\thanks{Universit\'e Paris-Saclay\hfill  \href{mailto:nicolas.curien@gmail.com}{\texttt{nicolas.curien@gmail.com}}} \ and
Gr\'egory  \textsc{Miermont}\thanks{ENS de Lyon and Institut
  Universitaire de France \hfill \href{mailto:gregory.miermont@ens-lyon.fr}{\texttt{gregory.miermont@ens-lyon.fr}}} \ and 
Armand  \textsc{Riera}\thanks{LPSM, Sorbonne Universit\'e \hfill \href{mailto:riera@lpsm.paris}{\texttt{riera@lpsm.paris}}}}
   \date{}         
             \maketitle

\begin{abstract} We prove  that large Boltzmann stable planar maps of index $\alpha \in (1;2)$ converge  in the scaling limit towards a 
random compact metric space $ \mathcal{S}_{\alpha}$ that we construct
explicitly. They form a one-parameter family of random continuous
spaces ``with holes'' or ``faces''  different from the Brownian
sphere.  In the so-called dilute phase $ \alpha \in [3/2;2)$, the
topology of $ \mathcal{S}_{\alpha}$ is that of the Sierpinski carpet,
while in the dense phase $ \alpha \in (1;3/2)$ the ``faces'' of $
\mathcal{S}_{\alpha}$ may touch each other. En route, we prove various geometric properties of these objects concerning their faces or the behavior of geodesics.
\end{abstract}

\begin{figure}[!h]
 \begin{center}
 \includegraphics[height=4cm]{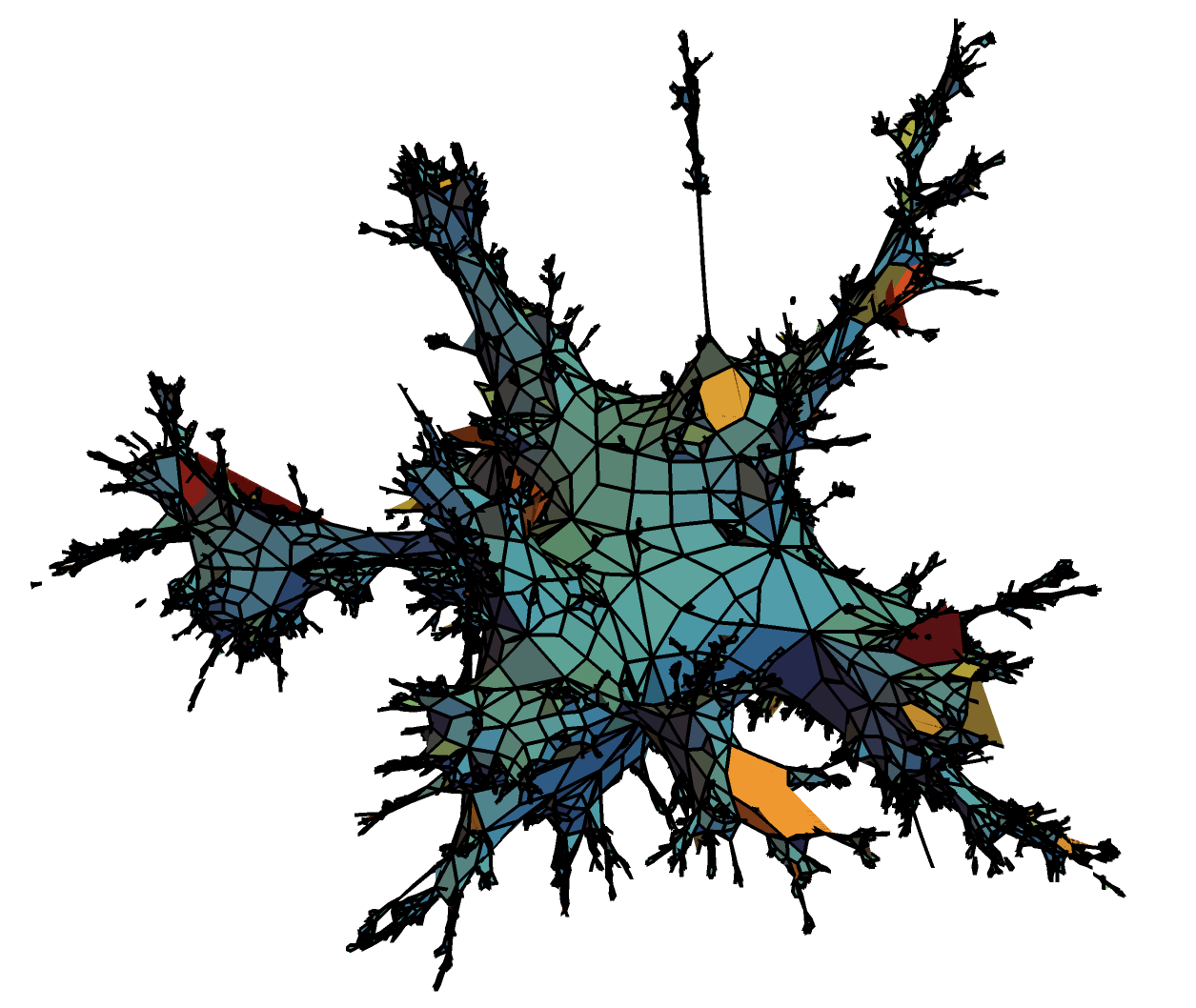}
  \includegraphics[height=4cm]{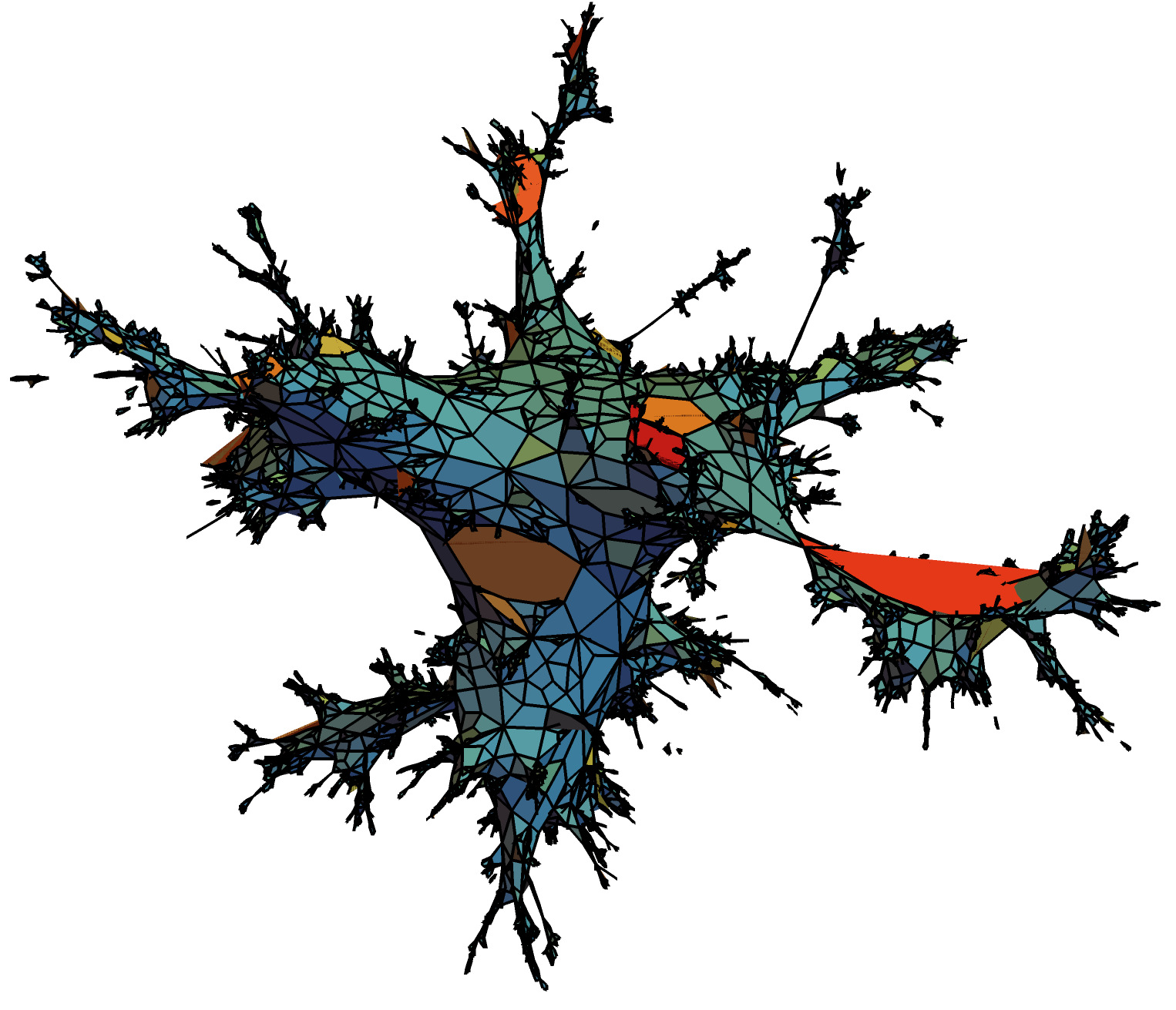}
   \includegraphics[height=4cm]{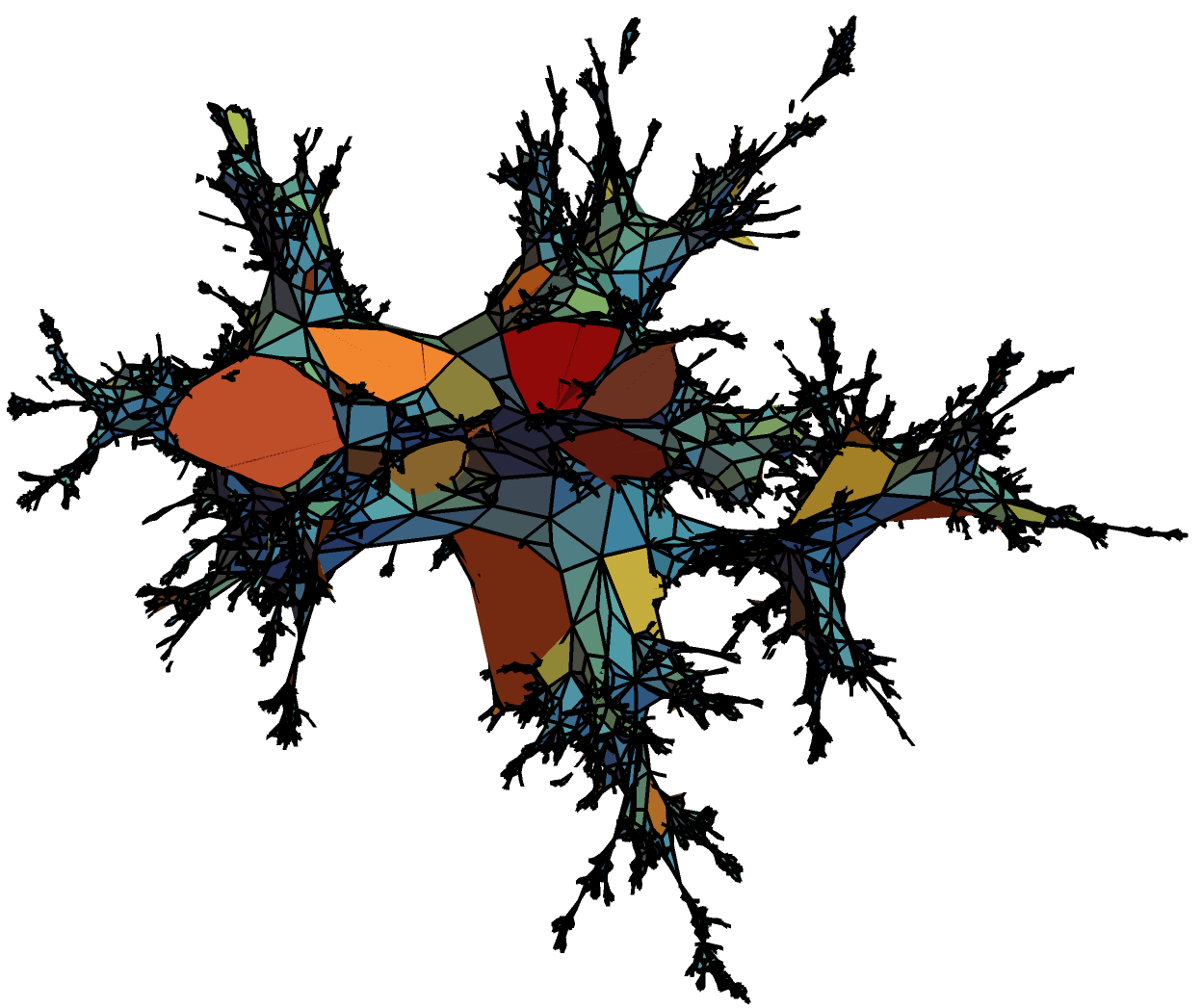}
      \includegraphics[height=4cm]{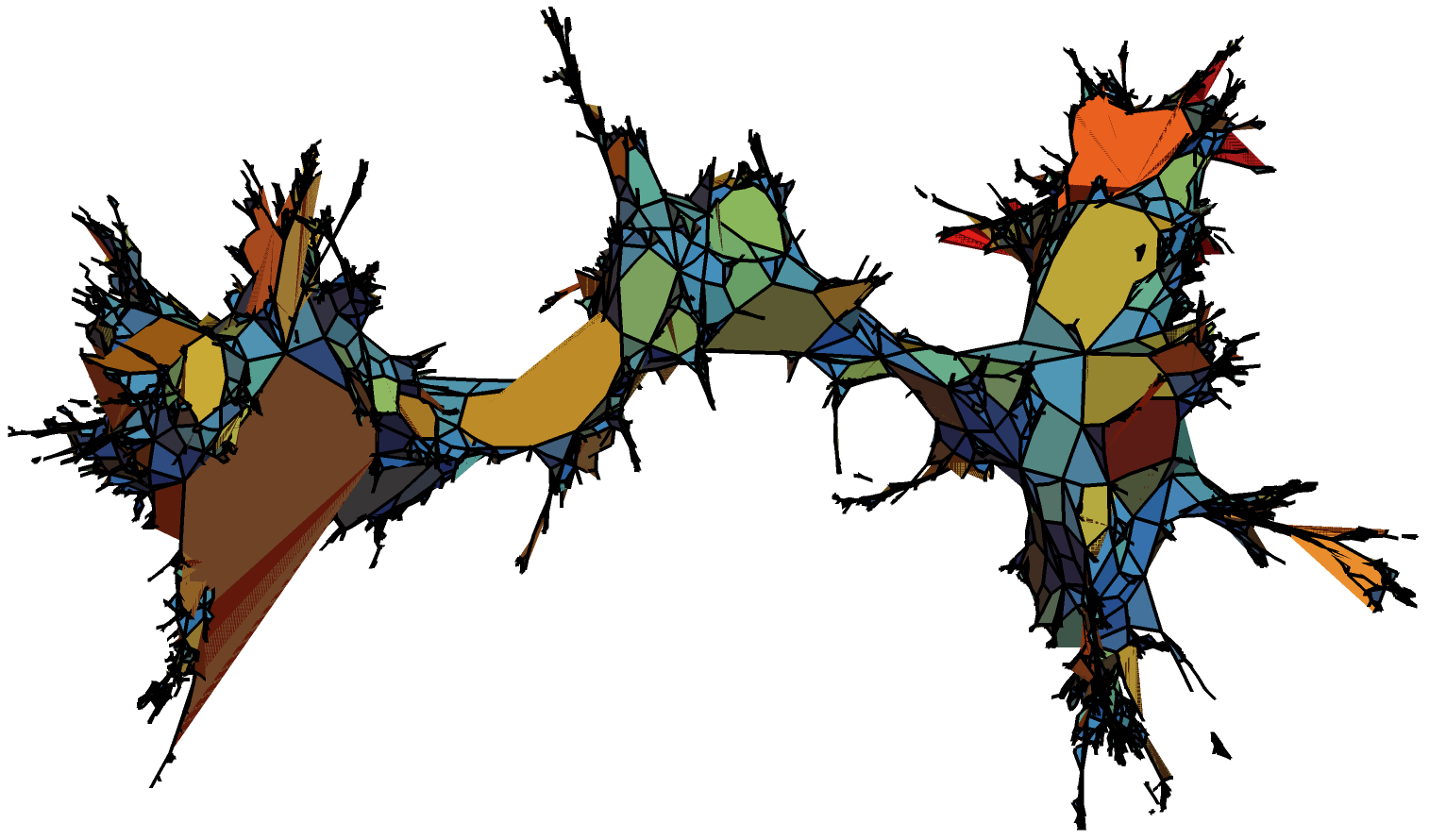}
         \includegraphics[height=4cm]{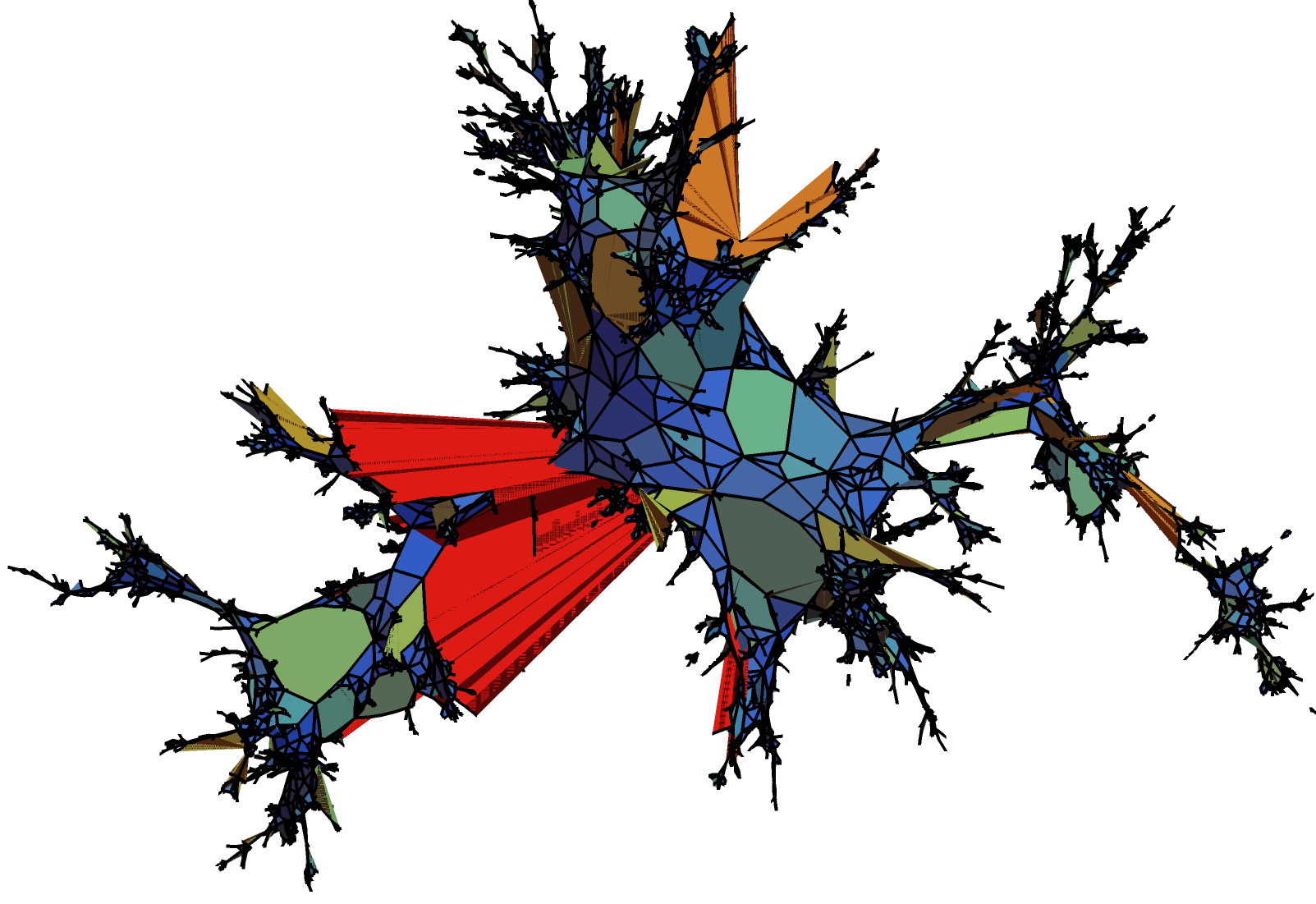}
            \includegraphics[height=4cm]{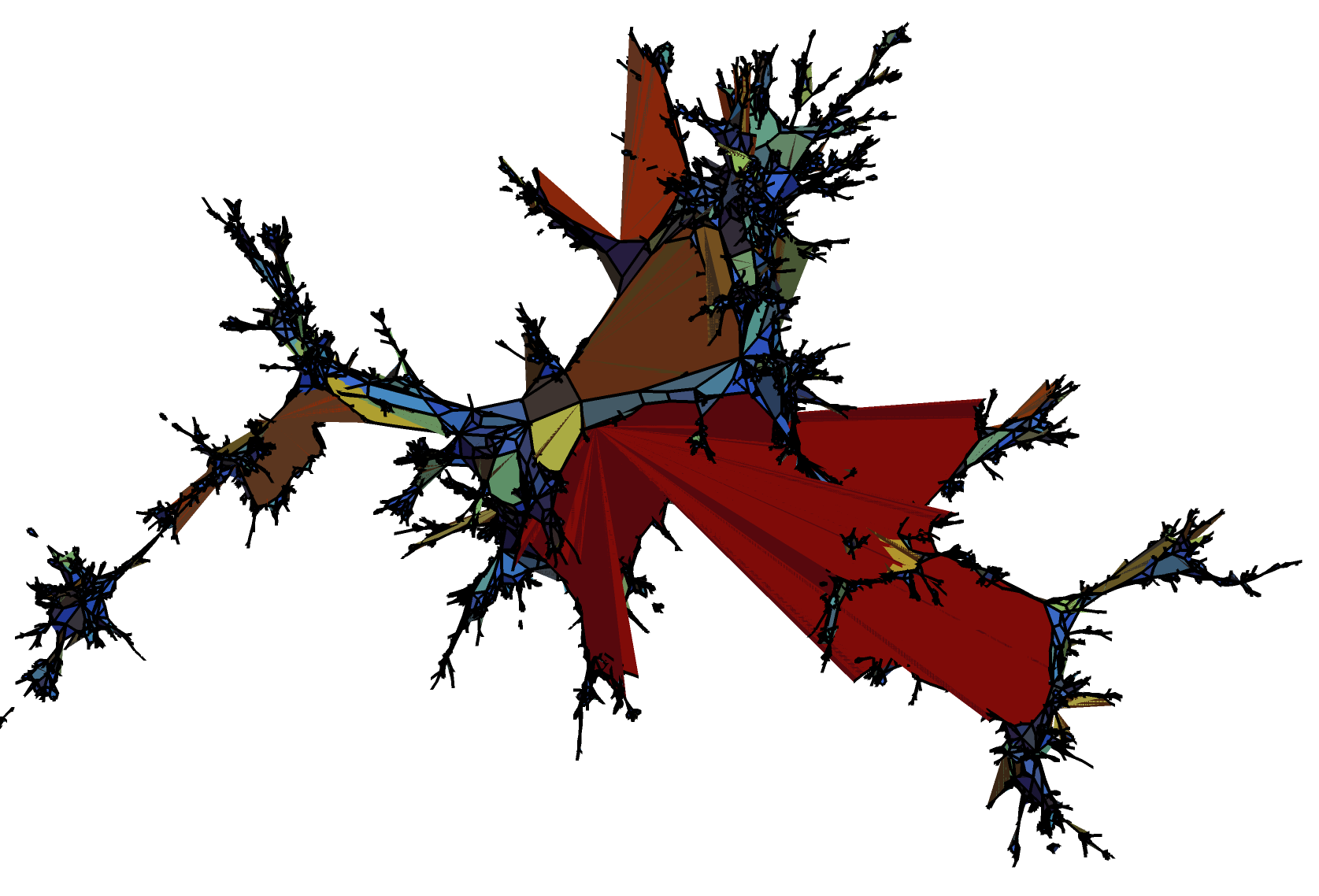}
               \includegraphics[height=4cm]{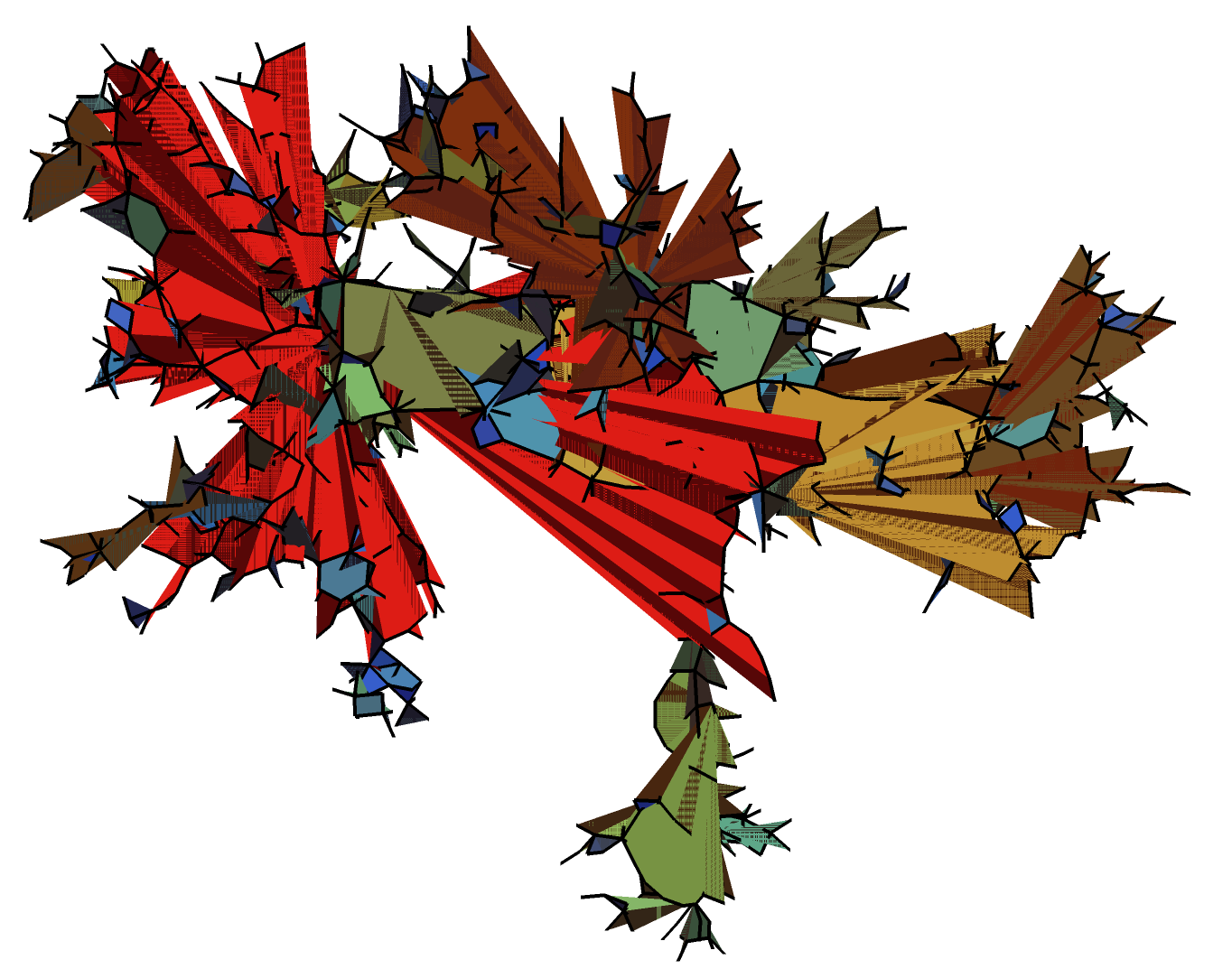}
 \caption{Simulations of large non-generic critical random Boltzmann planar maps of index $ \alpha \in \{1.9, 1.8, 1.7, 1.6, 1.5, 1.4, 1.3\}$ from top left to bottom right. }
 \end{center}
 \end{figure}

\newcommand{\scal}{\mathfrak{s}_{\mathbf{q}}}
\newcommand{\zbq}{z_{\mathbf{q}}}
\newcommand{\bT}{\boldsymbol{ \mathcal{T}}}
\newcommand{\bTn}{\boldsymbol{ \mathcal{T}}_{\!\!n}}
\newcommand{\delay}{\triangle}
\newcommand{\hdelay}{\widehat{\triangle}} 
\newcommand{\Edelay}{\triangle\!\!\!\!\triangle} 
\newcommand{\GG}{\widetilde{\mathcal{G}}}

\section{Introduction}
In recent years, the theory of random planar maps has seen many spectacular developments. The central object of this theory, the \textit{Brownian sphere}  \cite{LG11,Mie11}, is in a sense a universal model of $2$-dimensional random geometry. It is now proven to be the limit of a long list of combinatorial models of random planar maps, see e.g.~\cite{AA13,BJM13,CLGmodif,marzouk2022scaling}, but also of other $2$-dimensional geometric structures such as random hyperbolic surfaces with many cusps \cite{BC22}. The Brownian sphere and other related models of random surfaces \cite{BeMi22,CLGplane} can be constructed from canonical objects in probability theory, such as Aldous' Continuum random tree and Le Gall's Brownian snake. Other constructions can be given in terms of conformal random objects like the Gaussian Free Field and Schramm's SLE$_{6}$, which make the Brownian sphere a distinguished member of the family of Liouville quantum gravity metrics \cite{gwynne2020existence,MS15,MiSh21geod,MiSh21conf,miller2015liouville}. All these constructions allow for the study of this object from many angles and, in particular, enable a wealth of exact calculations,  see e.g.~\cite{gall2018some,Spine}.
Nevertheless, many of its properties remain unknown and are the subject of intensive ongoing research.

However, it is also known that the Brownian sphere is not the only possible limiting model for natural random maps models. In a sense,  
the paradigm of random maps converging to the Brownian sphere can be considered as the ``Gaussian'' case of the theory. In this work, we rather focus on the ``stable'' case, which despite some efforts \cite{BBCK18,BC16,LGM09,marzouk2018scalingstable} remains much less understood.

\paragraph{Non-generic Boltzmann maps.}

Let us first introduce the model we will study in this paper, starting with some basic definitions. A \textbf{planar map} is a proper embedding of a finite multigraph in the two-dimensional sphere, such that the connected components of the complement of the embedding are simply connected. These connected components are called the \textbf{faces} of the map. 
Two planar maps that can be transformed into one another by a homeomorphism of the sphere preserving the orientation are systematically identified, so that the set of planar maps (up to these identifications) is in fact countable. Alternatively, one can view a planar map as a gluing of a finite collection of polygons by identifying their edges in pairs, in such a way that the resulting space is homeomorphic to the 2-dimensional sphere. With this point of view, the polygons correspond to the faces of the map, and the number of corners of a polygon is called the degree of the corresponding face, while their edges and vertices correspond to the edges and vertices of the embedded graph. 
See \cite{LZ04} for a discussion of these and many other possible representations of maps. 

As usual, all planar maps we consider are \textbf{rooted}, i.e.~one of the corners 
incident to one of the faces of the map is distinguished and called the root corner, the incident face  is called the root face. This is equivalent to distinguishing an oriented edge, considered in clockwise order around the root face, and both notions will be used.
For technical simplicity, we will  only consider \textbf{bipartite} planar maps, meaning that all faces have even degree. 

Given a non-zero sequence $ \mathbf{q}= (q_{k})_{ k \geq 1}$ of non-negative numbers, we define the \textbf{$ \mathbf{q}$-Boltzmann measure}  $w_{\mathbf{q}}$  on the set $\mathcal{M}$ of all rooted bipartite planar maps by the formula
\begin{equation} \label{eq:defface}
w_{\mathbf{q}}(\bm) := \prod_{f\in \mathrm{Faces}(\bm)} q_{ \mathrm{deg}(f)/2}~, \qquad \bm\in \mathcal{M}.
\end{equation}
To simplify many of our formulas, we shall assume that there is a unique map in $ \mathcal{M}$ which has no face: this map, which is denoted by $\to$, is the ``edge map" made of a single oriented edge, two vertices and no face (not to be confused with the map having a single edge and a face of degree $2$), and its weight  is $w_{ \mathbf{q}}(\to)=1$. We shall also denote the set of all pointed maps by $\mathcal{M}^\bullet=\{(\bm,v):\bm\in \mathcal{M},v\in V(\bm)\}$,
where $V( \bm)$ is the vertex set of $ \bm$, and define the pointed $\bq$-Boltzmann measure $w_\bq^\bullet$ on $\mathcal{M}^\bullet$ by $w_{\bq}^\bullet((\bm,v)) := w_\bq(\bm)$.

Given a number $\alpha\in (1,2)$, we say that the sequence $\bq$ is \textbf{non-generic with exponent $\alpha$} if there exists a constant $\scal \in (0,\infty)$ such that:
\begin{equation}
  \label{eq:nongenerictail}
  w_\bq^\bullet\Big(\mathrm{deg}(\mathrm{root\ face})>k\Big)\sim
  \frac{2\scal}{
    |\Gamma(1-\alpha)|\,  k^\alpha}\, ,\qquad  \mbox{ as } k\to\infty\, .
\end{equation}
This particular expression of the prefactor will simplify the form of the scaling constants which will arise later. 
In this paper, \textbf{we will always assume that the weight sequence $\bq$ is  non-generic with some exponent $\alpha\in (1,2)$.}
This assumption implies in particular that the measure $w_\bq^\bullet$, and a fortiori $w_\bq$, is a finite measure, see \cite[Exercise 3.10]{CurStFlour}.  The normalized distributions $ w_\bq/w_\bq(\mathcal{M})$ and $w_\bq^\bullet/w_\bq^\bullet(\mathcal{M}^\bullet)$ are then called the \textbf{$\bq$-Boltzmann distributions} on $\mathcal{M}$ and $\mathcal{M}^\bullet$, respectively. A random variable with law $w_\bq/w_\bq(\mathcal{M})$ will be more colloquially called a \textbf{$\bq$-Boltzmann map}.

The non-genericity property (\ref{eq:nongenerictail}) has
many other characterizations, see \cite[Chapter 5]{CurStFlour}. In particular, it has a simple rephrasing in terms of the analytic function
  \begin{equation}
    \label{eq:genq}
    g_\bq(x):=1+\sum_{k\geq 1}\binom{2k-1}{k-1}\, q_k\, x^k\, ,\qquad x\geq 0.  
  \end{equation}
  Namely, if we define the quantity $\zbq$ by 
  \begin{equation}
    \label{eq:zbq}
    \zbq:=\inf\big\{x>0:g_\bq(x)=x\big\}\in (0,\infty]\, ,
  \end{equation}
  then it holds that $w_\bq^\bullet(\mathcal{M}^\bullet)=2\zbq$, and $\bq$ is non-generic with exponent $\alpha$ if and only if $z_\bq<\infty$ and 

  \begin{equation}
    \label{eq:nongenericgenf} 
    g_\bq(\zbq\, x)=\zbq\, x + \scal \left(\frac{1-x}{2}\right)^{\alpha}(1+o(1))\, ,\qquad  x\uparrow 1\, ,
  \end{equation}
where $\scal$ is the same constant as in (\ref{eq:nongenerictail}).

The reader should keep in mind that, as evidenced by the above discussion, the non-genericity condition (\ref{eq:nongenerictail})
is not only an asymptotic property of the weight sequence $\bq$, but depends in a fine-tuned way on all its values. However, such weight sequences are not  ``ad-hoc" since they naturally appear as the law of the gasket of critical $O(N)$-loop model -- including critical Bernoulli percolation -- on ``generic'' random planar map models\footnote{The term ``generic'' should be understood here in the loose sense that the map model converge to the Brownian sphere in the scaling limit. For a more precise discussion on this terminology, see for instance \cite{BBG11}.}, see Section \ref{sec:BDG} below or \cite{bernardi2019boltzmann,BBG11,BudOn,curien2018duality,marzouk2018scalingstable} and \cite[Section 5.3.3 and 5.3.4]{CurStFlour} for details. For convenience, in all this work, the parameter $\alpha$ will usually be implicit and often removed from the notation. 
 
For a given non-generic sequence $\mathbf{q}$, large $ \mathbf{q}$-Boltzmann maps possess ``large\footnote{This can be a bit misleading: many models of maps with large faces are known to rescale to the Brownian sphere \cite{marzouk2022scaling}, as long as, heuristically speaking, most of the faces have comparable degrees. The important fact here is that, as a consequence of (\ref{eq:nongenerictail}), the variance of the typical face degree is infinite.} faces'', and Le Gall \& Miermont \cite{LGM09} proved that after normalization of the graph distance by $ n^{-\frac{1}{2\alpha}}$, where $n$ is the number of vertices of the map, the sequence of laws of the random metric spaces that they induce is tight in the Gromov--Hausdorff topology. The main goal of this work is to prove the uniqueness of the limit, which together with \cite{LGM09} will  ensure the convergence in distribution. More precisely, for $n \geq 1$, let $ \mathfrak{M}_{n}$ be a $ \mathbf{q}$-Boltzmann map conditioned to have $n$ vertices -- which is always  possible for $n$ large enough, see \cite[Exercise 3.8]{CurStFlour}. We endow its vertex set $ V(\mathfrak{M}_{n})$ with the graph distance $\mathrm{d}^{\mathrm{gr}}_{\mathfrak{M}_{n}}$ and the uniform measure $\mathrm{vol}_{\mathfrak{M}_{n}}$.
 
 \begin{theo}[Scaling limit for non-generic Boltzmann maps]  \label{thm:main} 
Fix $\alpha\in (1,2)$. There exists a random compact metric measure space $\left( \mathcal{S}_\alpha, D^{*}_\alpha, \mathrm{Vol}_\alpha\right)$ such that, for every 
admissible, critical and non-generic weight sequence $\bq$ of exponent $\alpha$, 
we have the following convergence in distribution for the Gromov--Hausdorff--Prokhorov topology:
$$ \left( V(\mathfrak{M}_{n}) ,~  (\scal n)^{-\tfrac{1}{2 \alpha}}\cdot \mathrm{d}^{\mathrm{gr}}_{\mathfrak{M}_{n} }, ~\mathrm{vol}_{\mathfrak{M}_{n}} \right) \xrightarrow[n\to\infty]{(d)} \left( \mathcal{S}_\alpha, D^{*}_\alpha,~ \mathrm{Vol}_\alpha\right).$$
The space  $\left( \mathcal{S}_\alpha, D^{*}_\alpha, \mathrm{Vol}_\alpha\right)$ is of Hausdorff dimension $2\alpha$ almost surely. It is called the $\alpha$-stable carpet if $\alpha \in [3/2,2)$, and the $\alpha$-stable gasket if $ \alpha \in( 1,3/2)$. 
 \end{theo}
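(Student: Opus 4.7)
The plan is to follow the standard three-step strategy used in the Brownian sphere case (tightness, identification of all potential subsequential limits via a functional of a continuum encoding object, and uniqueness by a re-rooting argument), adapted to the non-generic stable setting.

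\textbf{Step 1: Encoding and tight continuum encoder.} I would start from the Bouttier-Di Francesco-Guitter bijection, which encodes pointed bipartite maps with weight $w_{\mathbf{q}}^\bullet$ by labeled mobiles whose white vertices correspond to faces of $\bm$ and whose black vertices correspond to non-root vertices, the labels recording modified distances to the distinguished vertex $v$. Under the non-genericity assumption \eqref{eq:nongenericgenf}, the white-vertex offspring distribution is in the domain of attraction of an $\alpha$-stable law (this is one of the classical equivalent characterizations recalled in \cite[Chapter 5]{CurStFlour}). From \cite{LGM09} and \cite{marzouk2018scalingstable} we already know that, after normalizing distances by $(\scal n)^{-1/(2\alpha)}$, the sequence of discrete encoders converges jointly with the map, along subsequences, to a continuum object $(\mathcal{T}_\alpha, Z_\alpha)$: a stable tree of index $\alpha$ equipped with a Brownian-type label process $Z_\alpha$ indexed by $\mathcal{T}_\alpha$, with Gaussian ``small'' fluctuations but jumps coming from the infinite-variance face degrees (these jumps correspond to the macroscopic faces in the limit). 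This identifies every subsequential limit of the rescaled maps as a measurable function of $(\mathcal{T}_\alpha, Z_\alpha)$.

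\textbf{Step 2: Construction of $\mathcal{S}_\alpha$.} Using $(\mathcal{T}_\alpha, Z_\alpha)$, I would mimic the definition of the Brownian sphere: define the ``lower'' pseudo-distance
\[
D^\circ_\alpha(s,t) := Z_\alpha(s)+Z_\alpha(t)-2\max\Bigl(\min_{[s,t]}Z_\alpha,\ \min_{[t,s]}Z_\alpha\Bigr),
\]
where intervals are taken on the circle of total measure $1$ in contour order, and set $D^*_\alpha$ to be the largest pseudo-distance below $D^\circ_\alpha$ with the standard ``identifications'': points at the same label joined by a $Z_\alpha$-valley must be identified, and so must the endpoints of each jump of $Z_\alpha$ (this last identification is the new phenomenon specific to the stable case and encodes the boundary of a macroscopic face). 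Then $\mathcal{S}_\alpha$ is the quotient of the contour circle by $\{D^*_\alpha=0\}$, endowed with $D^*_\alpha$ and the pushforward $\mathrm{Vol}_\alpha$ of Lebesgue measure. The existing discrete-to-continuous estimates in \cite{LGM09} give the inequality $\limsup (\scal n)^{-1/(2\alpha)} \mathrm{d}^{\mathrm{gr}}_{\mathfrak{M}_n} \leq D^\circ_\alpha$, hence any subsequential Gromov-Hausdorff-Prokhorov limit is dominated by $D^\circ_\alpha$ and, by the quotient construction, bounded below by $D^*_\alpha$.

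\textbf{Step 3: Uniqueness via re-rooting invariance (the hard part).} This is the main obstacle, exactly as in Le Gall's proof for the Brownian sphere: showing that any subsequential limit $D$ with $D^*_\alpha \leq D \leq D^\circ_\alpha$ must equal $D^*_\alpha$. The strategy is to exploit the invariance of the pointed Boltzmann law under uniform re-rooting of the distinguished vertex, which translates at the continuum level into invariance of $(\mathcal{T}_\alpha,Z_\alpha)$ under uniform re-rooting along the contour circle. One first proves a ``typical distance'' identity: for Lebesgue-a.e.\ pair $(s,t)$ on the contour circle, $D(s,t)$ equals an explicit functional of $Z_\alpha$, coinciding with $D^*_\alpha(s,t)$. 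The key input is a two-point function / spinal decomposition for $\mathbf{q}$-Boltzmann maps under non-genericity, which should be derivable from the Krikun-type skeleton decomposition or from a peeling exploration, combined with sharp control of geodesics showing that they avoid the macroscopic faces in the dilute phase and may skirt them in the dense phase. Once $D = D^*_\alpha$ almost everywhere, continuity of both sides and a density argument upgrade this to everywhere equality.

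\textbf{Step 4: Universality and Hausdorff dimension.} Universality in $\bq$ comes for free once uniqueness is established: the whole construction only depends on $\bq$ through the pair $(\alpha,\scal)$ appearing in \eqref{eq:nongenerictail}, and the normalization by $(\scal n)^{-1/(2\alpha)}$ absorbs $\scal$. The Hausdorff dimension $2\alpha$ is obtained by the usual route, deriving upper and lower volume-growth estimates at a typical point from two-point and three-point estimates in the discrete model (à la Le Gall) and transferring them to $\mathcal{S}_\alpha$. The topological dichotomy carpet vs.\ gasket at $\alpha = 3/2$ is not needed for the convergence itself and would be proved separately by analyzing whether distinct ``face boundaries'' may touch, which is governed by the exponent $\alpha$ of the stable label jumps.
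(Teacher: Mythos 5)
Your high-level architecture (encode, construct a candidate limit, prove uniqueness, conclude) is correct and matches the paper's, but the content of the hard Step 3 is both too vague and in one place wrong, and there is a recurring conceptual slip in Steps 1--2.

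On the encoding: the continuum coding object is the \emph{stable looptree} $\mathcal{L}$ coded by the stable excursion $X$, not the stable tree $\mathcal{T}_\alpha$. This is not a cosmetic change: the looptree replaces the infinite-degree branch points of the stable tree by loops, and the loops are precisely what become the faces of $\mathcal{S}_\alpha$. Relatedly, the label process $Z$ is \emph{continuous}; it has no jumps. The extra identifications in $D^*$ (beyond the familiar $\mathfrak{z}=0$ valleys) come from the jumps of the Lukasiewicz/excursion process $X$, that is, from the looptree relation $s\sim_d t$. Writing ``endpoints of each jump of $Z_\alpha$'' confuses the two coding processes and would lead you in the wrong direction when you try to describe the quotient.

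On Step 3, the proposal has a genuine gap. First, the claim that geodesics ``avoid the macroscopic faces in the dilute phase'' is the opposite of what happens: the paper proves (via the exact computation $\mathbf{N}(\sup Z>1)=\alpha(\alpha-1)/2$ and the Shepp covering argument) that one-sided records of $Z$ are dense on every loop, so simple geodesics \emph{bounce on faces even in the dilute phase} $\alpha\in[3/2,2)$. This bouncing is not a nuisance to be avoided but the key structural input: it is what makes the surgery work, because it lets one ``trap'' a neighborhood of a typical point on a typical geodesic between two faces. Second, neither a Krikun-type skeleton decomposition nor a peeling exploration is used or would obviously suffice here; the paper instead builds a variant of the BDG bijection with two marked vertices and an integer delay (the ``unicyclomobile'' construction, adapting Miermont's multi-point approach to general bipartite Boltzmann maps), proves scaling limits for its belt--buckle decomposition, and deduces (i) essential uniqueness of the geodesic between two $\mathrm{Vol}$-typical points, (ii) that all geodesics to the root are simple geodesics, and (iii) a quantitative bound that most points along a typical geodesic are ``good'' (can be locally replaced by pieces of geodesics to the root). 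Third, before any of this, the paper needs and proves the separate theorem that $D=0\iff D^*=0$, i.e.\ the identification of points, by showing that two non-trivially identified times must be separated by a face and using a sharpened cactus bound; this step has no counterpart in your outline. Finally, Le Gall's re-rooting trick is indeed present but plays a different role than you suggest: it is used to \emph{extend} the result from strictly non-generic weight sequences (where local limit theorems are available) to all non-generic ones, not as the primary engine of the $D=D^*$ proof. The actual engine is the surgery along the typical geodesic together with the a priori volume bound $D^*\le A_\delta D^{1-\delta}$, which itself requires a nontrivial ball-volume estimate for $Z$.

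Step 4 is essentially right: universality through $(\alpha,\scal)$ and the Hausdorff dimension $2\alpha$ are obtained along the lines you sketch (and indeed the dimension statement already holds for any subsequential limit from the easy bounds $|Z_s-Z_t|\le D\le\mathfrak{z}$).

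=== END REVIEW ===
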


 Let us stress that the law of the limit does not depend on the choice of $\bq$, except through the value of $\alpha$, so that this result may be seen as an invariance principle, or, in more physical terms, as the identification of a ``universality class'' for random maps. One can wonder whether a similar result holds when the hypothesis (\ref{eq:nongenerictail}) is relaxed a little bit, for instance, if one assumes only that $w_\bq^\bullet(\mathrm{deg}(\mathrm{root\ face})>k)$ is regularly varying with exponent $-\alpha$, or when one drops the assumption that the maps are bipartite. We believe that extensions indeed hold, and in Section \ref{sub:reroot} below, we give a method based on a now classical re-rooting trick by Le Gall \cite{LG11} which, in principle, makes the proof of such extensions relatively easy. In fact, in this paper, we will first prove Theorem \ref{thm:main} under the more stringent assumption that $w_\bq^\bullet(\mathrm{deg}(\mathrm{root\ face})=k)$ is equivalent to a constant times $k^{-\alpha-1}$. This will then allow us to apply Le Gall's re-rooting trick to obtain the full statement with minimal effort. 

 The limiting spaces appearing in the statement of Theorem \ref{thm:main} have an explicit description in terms of certain random processes, which will be given in the next few paragraphs. As mentioned before, we consider the value of $\alpha$ as fixed, and therefore, we will usually drop it from the notation and simply denote the limit by $(\mathcal{S},D^*,\mathrm{Vol})$. 
 
  One of the main ingredient in \cite{LGM09}, as well as in our work, is the Bouttier--Di Francesco--Guitter  bijection  \cite{BDFG04} which  allows to  code the random maps  $\mathfrak{M}_{n}$ using labeled trees, see Section \ref{sec:BDG}.  This construction encapsulates (certain) graph distances on $ \mathfrak{M}_{n}$, and scaling limits for the labeled trees are known -- in fact, obtaining scaling limit for these labeled trees was the main occupation of \cite{LGM09}. Hence at a very high level, the above theorem is a kind of ``typical continuity'' of this encoding under scaling limits. 

 \begin{rek}[Dual maps] We stress that our results are only valid for the maps $ \mathfrak{M}_{n}$ and not for their duals $  \mathfrak{M}_{n}^{\dagger}$ which have vertices of large degrees. Indeed, there is no known Schaeffer-type construction of maps with large degree vertices which efficiently encodes the distances in the map. However, completely different techniques (based on Markovian explorations -- the peeling process and its link with discrete Markov branching trees \cite{bertoin2024self}) show that the diameter of $ \mathfrak{M}_{n}^{\dagger}$ scales as $n^{ 1-\frac{3}{2 \alpha}}$ for $\alpha \in (3/2, 2)$, see \cite{BBCK18,bertoin2024self,BC16}, and we expect scaling limits homeomorphic to the $2$-dimensional sphere, but of fractal dimension $ \frac{2 \alpha}{2\alpha-3}$. The diameter of those dual maps  is logarithmic in $n$ in the dense phase $\alpha \in (1,3/2)$ so that no scaling limit is expected in this case, see \cite{BBCK18,kammerer2025fpp} and \cite{BCMcauchy,kammerer2023distances,kammerer2023large} for the critical case $ \alpha = \frac{3}{2}$ which should be connected to the LQG metric on the CLE$_{4}$.
 \end{rek}

 \paragraph{Definition of the $\alpha$-stable carpet/gasket.}  Let us detail the definition of the random compact metric space $ (\mathcal{S},  D^{*},\mathrm{Vol})$. This construction  may seem ad-hoc at first glance,  but it comes from passing to the scaling limit the discrete encoding of $ \mathfrak{M}_{n}$ using labeled trees. This is similar to the definition of the Brownian sphere \cite{LG11,Mie11}, and on a high-level and for the connoisseurs, the role of Brownian tree is replaced in our context by the $\alpha$-stable looptree and Le Gall's Brownian snake becomes the Gaussian Free Field on the looptree --  or equivalently Brownian motion indexed by the stable looptree.  \medskip

 Fix $\alpha \in (1,2)$ and let  $(X_{t})_{t\geq 0}$ be  the excursion with lifetime equal to $1$ of an $\alpha$-stable L\'evy process with no negative jumps reflected above its infimum (normalized so that its Laplace exponent is $\lambda^{\alpha}$). We will use the standard notation $\Delta_{t}:=X_{t}-X_{t-}$, for every $t\in [0,1]$, and we consider $(\mathrm{t}_{i})_{i\in \mathbb{N}}$  a measurable enumeration of the set $\{t\in [0,1]:\:\:\Delta_{t}>0\}$. The construction of the $\alpha$-stable carpet/gasket relies on:
 \begin{itemize}
\item the L\'evy excursion $X$;
\item  a sequence of  independent  Brownian bridges  $(\mathrm{b}_i)_{i\in \mathbb{N}}$ (all starting and ending at $0$ with lifetime~$1$)  also independent of $X$.
\end{itemize} 
Before  giving the formal definition of $(\mathcal{S},D^{*})$ let us introduce some useful notation. First, we set
 $$I_{s,t}:=\inf\limits_{[s,t]} X,$$
 for every  $s,t\in [0,1]$ with $s\leq t$, and for convenience we let  $I_{s,t}=-\infty$ if $t<s$. We also write $s\preceq t$ and say that $s$ is an \textbf{ancestor} of $t$ 
 if $s\leq t$  and $I_{s,t}\geq X_{s-}$ 
 and  we set $x_{s,t}:=I_{s,t}-X_{s-},$ for every $s\preceq t$. We shall also \textbf{identify} the two times $s$ and $t$, and write $s \sim_{d} t$, if $ X_{t} = X_{s-}$ and $I_{s,t} = X_{s-}=X_{t}$ in the case $s \leq t$ and vice-versa if $s \geq t$, see Figure \ref{fig:identificationlooptree}. We will see in Section \ref{sec:looptree} that this equivalence relation is associated to a pseudo-metric $d$ on $[0,1]$, such that $ \mathcal{L}= ([0,1] /\sim_{d}, d)  $ is the \textbf{looptree} coded by the function $X$, as  introduced in \cite{CKlooptrees}. With this notation at hand, we can define the \textbf{continuous label function} 
 by
 \begin{equation}\label{Z_represent_Mir_intro}
Z_{t} :=    \mathop{\sum \limits_{i\in \mathbb{N},~\mathrm{t}_i\preceq t}} \Delta_{\mathrm{t}_{i}}^{\frac{1}{2}}\cdot \mathrm{b}_{i}\big(\frac{x_{\mathrm{t}_{i},t}}{\Delta_{\mathrm{t}_{i}}}\big), \quad \mbox{for } t \in [0,1].
\end{equation}
The proper definition of the  process $Z$ is given in Section \ref{sec:constr_Z}, 	where we recall the original definition of \cite{LGM09} and give an alternative point of view in terms of Gaussian processes using some of the results of Archer \cite{archer2019brownian}, concerning stable looptrees. 
This function passes to the quotient by $ \sim_{d}$ and can  be viewed as the ``Brownian motion indexed by the looptree $ \mathcal{L}$''. Section \ref{sec:constr_Z} is devoted to making this interpretation precise. See \cite{blancrenaudie2022looptree} for recent investigations of those constructions in more general contexts. \begin{figure}[!h]
 \begin{center}
 \begin{tabular}{ccc}
 & \includegraphics[width=4.4cm]{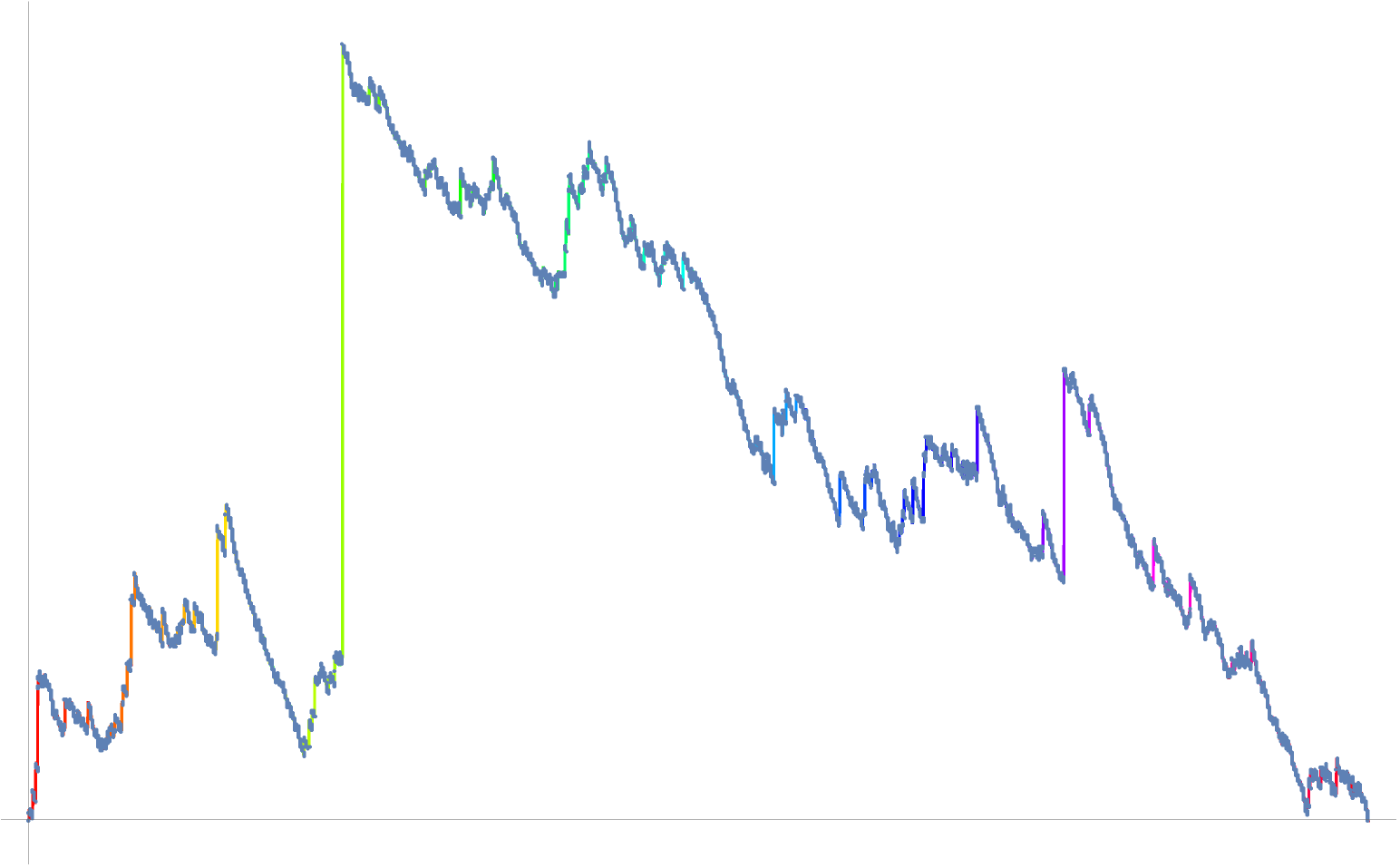} \quad &\quad \includegraphics[width=4.4cm]{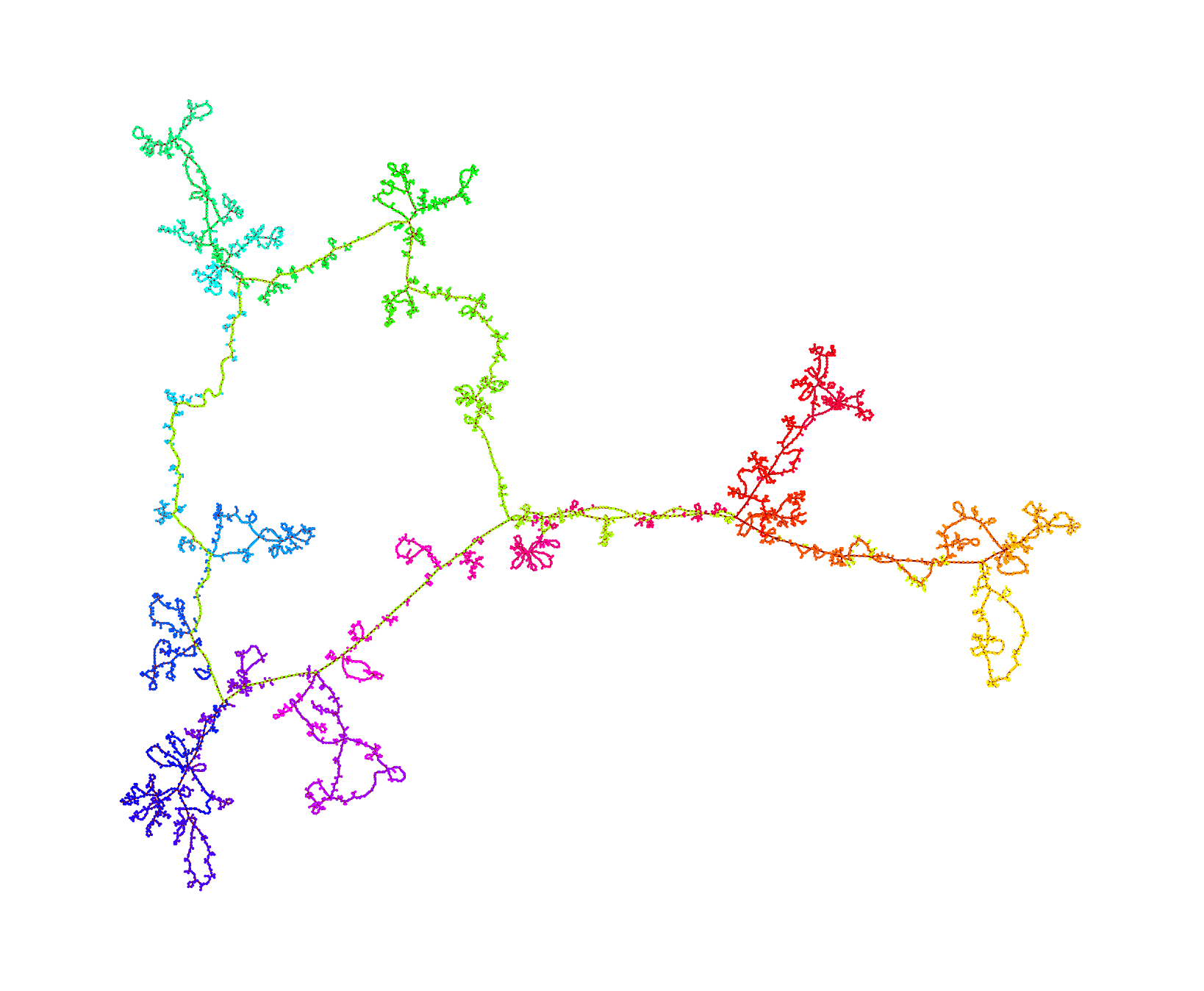}\\
&\includegraphics[width=4.4cm]{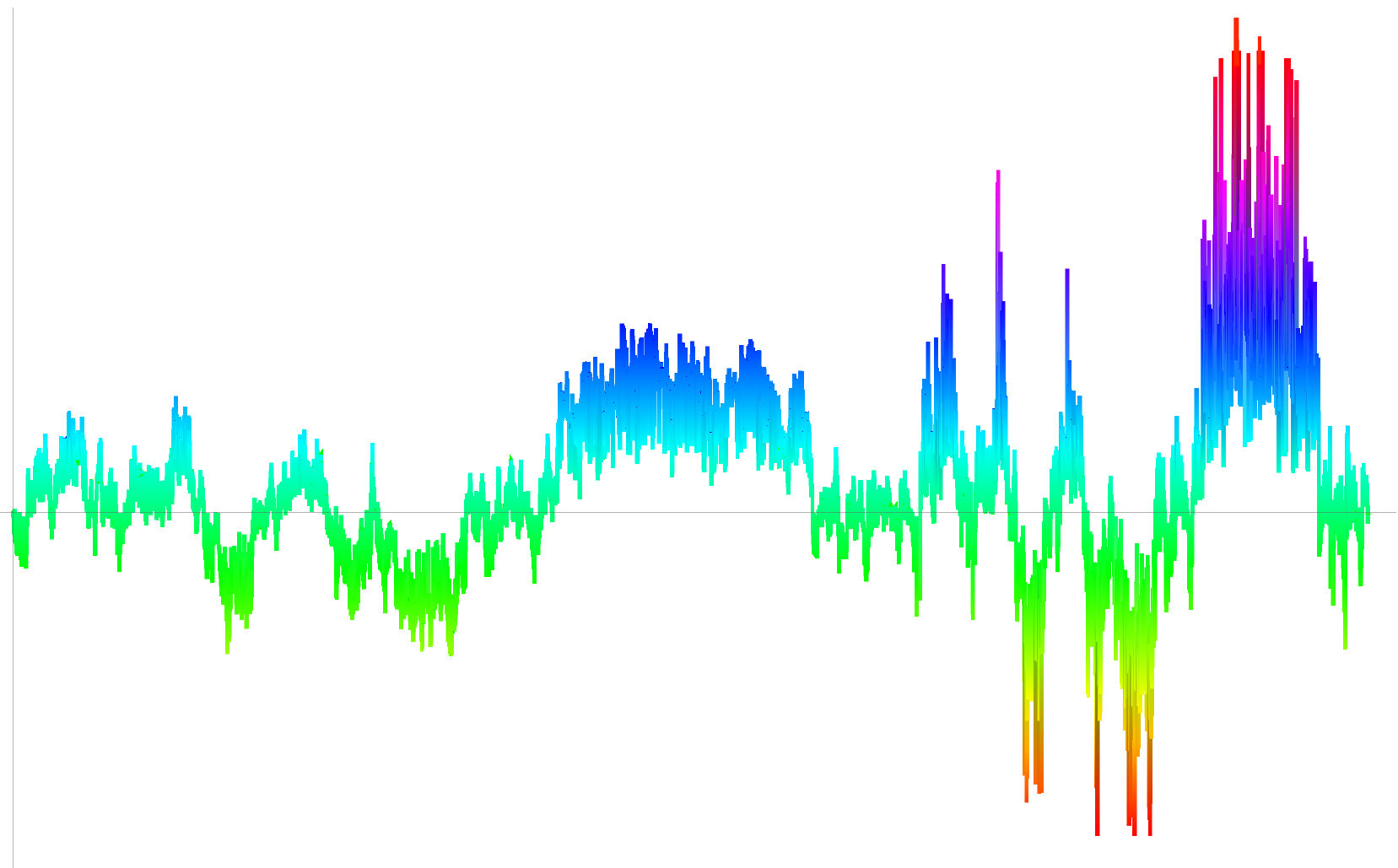}   \quad&\quad  \includegraphics[width=4.4cm]{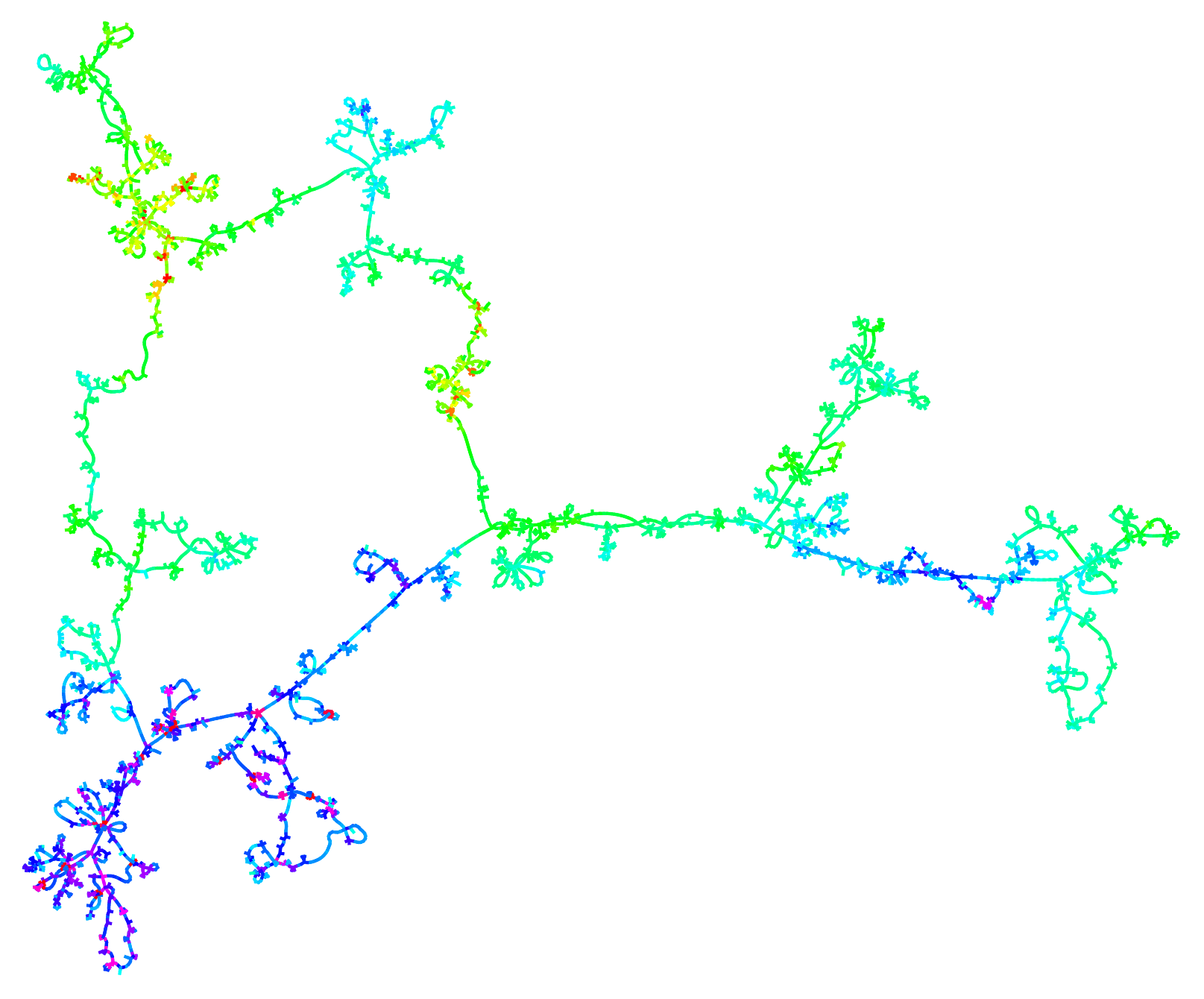}
  
  \end{tabular}

 \caption{\textbf{From top left to bottom right}: The stable excursion $X$, the looptree $ \mathcal{L}$ coded by $X$ with colors indicating the loops, the label process $Z$, and finally the same looptree with colors indicating the values of the process $Z$.}
 \end{center}
 \end{figure}
Now, for $0\leq s<t\leq 1$, set $[t,s]:=[0,s]\cup[t,1]$ and, for any $s,t \in [0,1]$, define 
  \begin{eqnarray} \label{eq:defD0}  \mathfrak{z}(s,t):=Z_{s}+Z_{t}-2 \max \left(\min_{[s,t]}Z,\min_{[t,s]} Z\right),  \end{eqnarray} and 
 \begin{eqnarray} \label{def:dstar}D^{*}(s,t):=\inf \sum \limits_{k=1}^{p} \mathfrak{z}(s_{k},t_{k})~,  \end{eqnarray}
where the infimum is over all choices of the integer $p\geq 1$ and all finite sequences $(s_{k},t_{k})_{1\leq k\leq p}$ such that $ t_{k} \sim_{d} s_{k+1}$, for every $1\leq k\leq p-1$, and $(s_1,t_p)=(s,t)$. Let us mention that the pseudo-distance $ \mathfrak{z}$ is usually denoted by $ D^{\circ}$ in the Brownian geometry literature. By construction, the function $D^{*}(\cdot,\cdot)$ is  a continuous pseudo-distance on $[0,1]$. We write $s\sim_{D^{*}}t$ if $D^{*}(s,t)=0$ and remark that $\sim_{D^{*}}$ is an equivalence relation. The $\alpha$-stable carpet/gasket is then obtained as the quotient space  $ \mathcal{S} = [0,1]/\sim_{D^{*}}$ endowed with the distance function induced by  $D^{*}$, which we still write $D^{*}$ by abuse of notation. We write $\Pi_{D^*}$ for the canonical projection $[0,1] \to [0,1]/\sim_{D^{*}}$ and $\mathrm{Vol}_{D^*}$ for the  pushforward of Lebesgue measure on $[0,1]$ under $\Pi_{D^*}$.\medskip

In the rest of this introduction, we will describe the main steps of the proof of Theorem \ref{thm:main} and compare them with the accomplished march towards the uniqueness of the Brownian sphere \cite{CS04,LG07,LG09,LG11,LGP08,MM06,Mie09,Mie11}.

\paragraph{1. Convergence of coding functions and subsequential limits.} As we said above, the first step is to encode our random maps $ \mathfrak{M}_{n}$ with random labeled trees using the Bouttier--Di Francesco--Guitter (BDG) construction, a variant of Schaeffer's bijection \cite{Sch98}. These random trees are further described by their contour and label processes, for which scaling limit results have been established \cite{LGM09}, and where the limit is given by the process $(X,Z)$ described above. From this, it is not hard to show a tightness result,  i.e. that for any given subsequence, we can further extract a subsequence $(n_{k})_{k\geq 1}$ along which 
 $$ \left( V(\mathfrak{M}_{n}) , (\scal n)^{-\tfrac{1}{2 \alpha}} \cdot \mathrm{d}^{\mathrm{gr}}_{\mathfrak{M}_{n}},\mathrm{vol}_{\mathfrak{M}_{n}}\right) \xrightarrow[n\to\infty]{(d)} \Big( [0,1] / \sim_{D}~,~  D  ~,~\mathrm{Vol}_D\Big),$$
where $D:[0,1]^{2}\to \mathbb{R}_+$ is a random pseudo-distance, $\sim_{D}$ is the equivalence relation defined by $s\sim_{D} t$ if and only if $D(s,t)=0$, and finally $\mathrm{Vol}_D$ stands for the pushforward of the Lebesgue measure on $[0,1]$ under the canonical projection associated with $\sim_D$. The previous convergence holds in the Gromov--Hausdorff--Prokhorov sense. If one forgets the measures $\mathrm{vol}_{\mathfrak{M}_{n}}$ and $\mathrm{Vol}_D$, this result already appears in the proof of Theorem 4 in \cite{LGM09}, and the argument can easily adapted to incorporate the measures, see Proposition \ref{theo:sub} for details. To prove Theorem \ref{thm:main}, it then suffices to establish uniqueness of the limit, i.e.~to show that $D=D^{*}$ regardless of the subsequence $(n_{k})_{k\geq1}$. 

As shown in Section \ref{sec:D<D*}, by coupling appropriately the pseudo-distance $D$ with the process $(X,Z)$, it follows easily from the discrete BDG construction that if   $t_{*}$ is the a.s.\ unique time    at which $Z$ realizes its minimum (Proposition \ref{distinct}), then:
\begin{eqnarray*}\label{eq:Dtetoile}
D(t_{*},s) &=& D^{*}(t_{*},s)=Z_s-Z_{t_*}, \\ 
D(s,t) &\leq& D^{*}(s,t),
\end{eqnarray*} for all $s,t \in [0,1]$. 
Although these might seem like rather weak statements, these properties are already sufficient to prove that, for \textit{any subsequential limit}, the Hausdorff dimension of $ ([0,1]/\sim_{D},D)$ is $2 \alpha$, see \cite{LG07} or  \cite{LGM09} in the case of the Brownian sphere.
Here, we see that the random time $t_*$ plays a distinguished role, and its image $\rho_*=\Pi_{D^*}(t_{*})$ in $ [0,1]/\sim_{D}$ will often be called the \textbf{root} of $ [0,1]/\sim_{D}$ (it can be seen as a random uniform point on $[0,1]/\sim_{D}$).

\paragraph{2. Topology of the stable carpet/gasket.}   Our first main contribution in proving that $D=D^{*}$ is to show that, for every $s,t\in[0,1]$, we have $$D(s,t)=0 \iff D^{*}(s,t)=0,$$ i.e.~that $D$ and $D^{*}$ identify the same points of $[0,1]$, see Theorem \ref{main_theorem_topology}. More precisely, we are able to describe (Theorem \ref{main_theorem_topology}) exactly the points which are identified by $D^{*}$ and by any sub-sequential limit metric $D$. In the case of the Brownian sphere, this was accomplished by Le Gall in the breakthrough paper \cite{LG07}. Our approach is however \textit{completely different} and relies on the presence of ``faces'' in $ [0,1]/\sim_{D}$ (there are no such notion of ``faces'' in the Brownian sphere). More precisely, given $t >0$ such that $\Delta_{t}>0$, and for $s\in [0,1]$; we let 
\begin{equation}\label{def:f:t}
 \mathrm{f}_{t}(s):=\inf \{r\geq t:X_{r}=X_{t}-s\cdot \Delta_{t}\}.
 \end{equation}
We then define the \textbf{face boundary} in $ [0,1]/\sim_{D}$ corresponding to that jump as the image  of $ \mathrm{f}_{t}( [0,1])$ under $\Pi_{D^*}$. We will also say that two points $s$ and $t$ are  \textit{trivially identified} 
$$ \text{ if }\:\:  s \sim_{d} t, \quad   \text{ or if }\:\: \mathfrak{z}(s,t)=0.$$
In the first case, the points are already identified in the looptree $ \mathcal{L}$, and in the second case, they are identified with the continuous equivalent of an edge in the BDG construction. In particular,  in all the previous cases we clearly have $s\sim_{D^{*}}t$.
 The main idea is then to use the ``faces'' to argue that as soon as the times $s$ and $t$ are not trivially identified, then they must be separated by a ``face'' which imposes $D(s,t) > 0$, see Figure \ref{fig:illusD=0}. Similar arguments have been used in \cite[Section 4.2]{bjornberg2019stable} when dealing with the so-called shredded spheres. Establishing these statements require to prove a sharper version of the ``cactus bound'' (see Lemma \ref{lem:cactusbound}) and to establish fine properties of the minima of $Z$, such as the fact that the local minima of $Z$ do not occur on the ``branches'' of the looptree (Proposition \ref{pinch_points_are_not_record}).  

\begin{figure}[!h]
 \begin{center}
 \includegraphics[width=8.5cm]{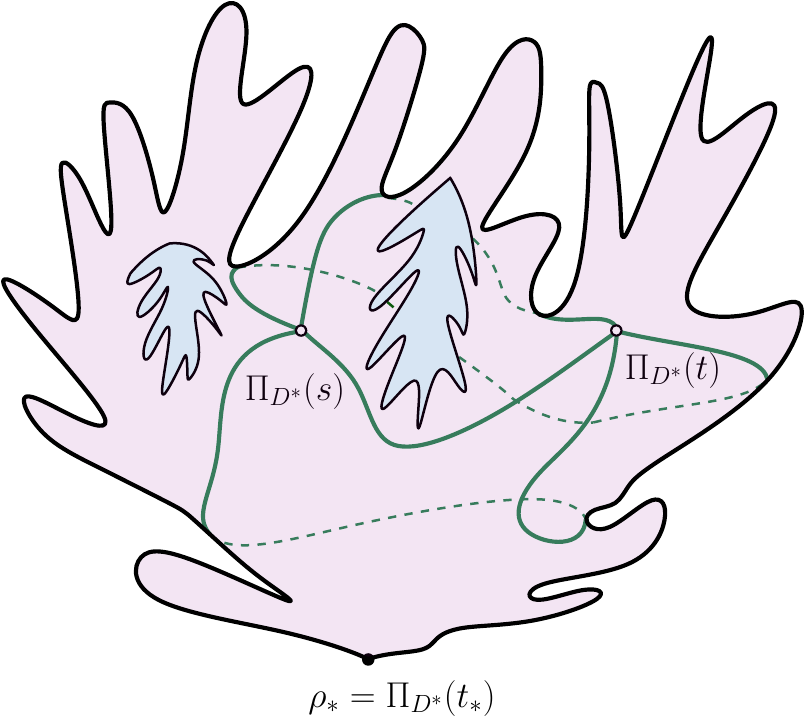}
 \caption{Illustration of the geometric underlying idea for the proof of $D=0\iff D^*=0$. The ``faces'' of $ \mathcal{S}$ are represented by the blue ``holes''.  Except for the trivial identifications, the distance $D$ cannot identify more points  since they must be separated by two faces. \label{fig:illusD=0}}
 \end{center}
 \end{figure}
 The equivalence $D=0\iff D^{*}=0$ allows us, by standard properties of compact spaces, to identify the quotient space $[0,1] / \sim_{D}$ with the space $\mathcal{S} = [0,1]/\sim_{D^{*}}$ equipped with their quotient topologies, and the volume measure $\mathrm{Vol}_D$ with $\mathrm{Vol}_{D^{*}}$.  We make this identification in the rest of the introduction, and in particular, we simply write $\mathrm{Vol}$ for the volume measure. In particular, the functions induced by $D$ and $D^{*}$ on the quotient can be seen as  two distances on $\mathcal{S}$ defining the same topology.  So one can wonder whether this (a priori random) topology can be characterized.  Here, a crucial difference happens depending on the position of $\alpha$ with respect to $3/2$. Indeed, we prove in Section \ref{sec:twopoints} that in the dilute phase $\alpha \in [3/2,2)$, the face boundaries  are simple non-intersecting curves in $( \mathcal{S},D)$,  whereas for $\alpha < 3/2$ they are self and mutually intersecting. In both cases, the Hausdorff dimension of a face boundary is $2$, see Proposition \ref{prop:HDface=2}. This dichotomy relies on an exact calculation (Theorem \ref{two_points_function} and Proposition \ref{sec:stable-map-3}) about the process $Z$, which can in some sense be seen as an extension of the connection between the Brownian snake and partial differential equations \cite{LeG99} to our case.  

In the dilute case, we establish that:
\begin{theo}[Topology in the dilute case]\label{main-topo-intro} When $\alpha \in [3/2,2)$ the topology of $( \mathcal{S},D^{*})$  (as well as that of any sub-sequential limit $( \mathcal{S},D)$) is almost surely that of the  Sierpinski carpet.
\end{theo}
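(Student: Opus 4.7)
My approach is through Whyburn's topological characterization of the Sierpinski carpet: a metric space is homeomorphic to it if and only if it is a locally connected continuum that embeds in $S^2$ with empty interior, and whose complementary components form a null sequence of Jordan domains with pairwise disjoint closures. Because the previous step of the introduction establishes that $D=0 \Longleftrightarrow D^*=0$, every subsequential limit $(\mathcal{S},D)$ is homeomorphic to $(\mathcal{S},D^*)$, so it suffices to treat $(\mathcal{S},D^*)$. The elementary topological inputs are handled quickly: $\mathcal{S}$ is compact and connected as a continuous image of $[0,1]$, and it is locally connected by the Hahn--Mazurkiewicz theorem applied to the continuous surjection $\Pi_{D^*}$ onto the Hausdorff quotient.

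The heart of the proof is to exhibit $\mathcal{S}$ as a closed subset of a topological sphere with the correct complement. I would glue a closed $2$-disk $\overline{D}_t$ to $\mathcal{S}$ along each Jordan curve $\mathcal{C}_t := \Pi_{D^*} \circ \mathrm{f}_t([0,1])$; in the dilute regime these curves are simple and pairwise disjoint by the results of Section \ref{sec:twopoints}. Call the resulting space $\widehat{\mathcal{S}}$. The crux is then the identification $\widehat{\mathcal{S}} \cong S^2$. I would start from the \emph{filled looptree} $\widehat{\mathcal{L}}$, obtained from $[0,1]/\sim_d$ by gluing a disk along each loop; this filled looptree is a topological sphere, corresponding to the topology of the $\alpha$-stable map. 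There is a natural continuous surjection $\widehat{\mathcal{L}} \to \widehat{\mathcal{S}}$ whose non-trivial fibers encode precisely the identifications $s \sim_{D^*} t$ that lie beyond $\sim_d$. Moore's decomposition theorem for $S^2$ would then conclude, provided the decomposition is upper semi-continuous and each non-trivial class is cellular, i.e.~an intersection of nested topological disks in $\widehat{\mathcal{L}}$. Both properties rest on the dilute-regime fact that the extra identifications occur at isolated pinch points and never accumulate along an arc of a given face boundary.

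Once $\widehat{\mathcal{S}} \cong S^2$ is in hand, Whyburn's other conditions are routine. The complementary domains are the open disks $D_t$, Jordan with pairwise disjoint closures by construction. Their diameters form a null sequence: $\mathcal{C}_t$ is the projection of a looptree loop of circumference $\Delta_t$, whose image oscillation is controlled by $\Delta_t^{1/2}$ and the snake fluctuations along the loop, while only finitely many jumps of $X$ exceed any fixed threshold. Finally $\mathcal{S}$ has empty interior in $\widehat{\mathcal{S}}$, since by density of $\{t : \Delta_t > 0\}$ in $[0,1]$ and continuity of $\Pi_{D^*}$, every open set of $\widehat{\mathcal{S}}$ must meet some $D_t$. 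Combining these points, Whyburn's criterion yields that $(\mathcal{S},D^*)$ is homeomorphic to the Sierpinski carpet.

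\emph{Main obstacle.} The decisive difficulty is the cellularity step in Moore's theorem. One must rule out pathologies such as two points of a single face boundary being collapsed together (which would create a local cut point and destroy the carpet structure), or a sequence of non-trivial identifications accumulating along an arc of some $\mathcal{C}_t$. These are exactly the phenomena that do occur when $\alpha < 3/2$, where face boundaries self- and mutually intersect, so this is the point at which the dilute-phase threshold $\alpha \geq 3/2$ plays its decisive role, and where the fine control on $\mathfrak{z}$ and on local minima of $Z$ developed in the topology section of the paper is indispensable.
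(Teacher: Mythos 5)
Your overall plan (Whyburn's characterization $+$ Moore's theorem $+$ the simple/disjoint face boundaries from Section \ref{sec:twopoints}) is the same strategy the paper follows, and the routine items you dispatch --- compactness, connectedness, local connectedness via Hahn--Mazurkiewicz, null diameters, empty interior --- are handled essentially as in the paper's proof of Theorem \ref{main-topo}. The gap is in the crux: your proposed source for Moore's theorem.

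You take the \emph{filled looptree} $\widehat{\mathcal{L}}$, obtained from $\mathcal{L}=[0,1]/\!\sim_d$ by gluing a disk along each loop, and assert that $\widehat{\mathcal{L}}$ is a topological sphere. This is false. By Proposition \ref{prop:timeclassification}, every pinch point $u$ of $\mathcal{L}$ is a cut point: $\mathcal{L}\setminus\{u\}$ has exactly two connected components, each containing part of the two loops that meet at $u$. Filling each loop with a disk does not repair this: the two filled disks meet only at $u$, so $\widehat{\mathcal{L}}\setminus\{u\}$ is still disconnected. Thus $\widehat{\mathcal{L}}$ has uncountably many cut points and is not even a $2$-manifold, let alone a sphere. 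Moore's theorem (and its cell-like generalizations) requires the source to be $\mathbb{S}^2$, so the decomposition $\widehat{\mathcal{L}}\to\widehat{\mathcal{S}}$ cannot be fed into it. Note also that, by Proposition \ref{pinch_points_are_not_record}, pinch points are never one-sided records of $Z$; the cut at a pinch point is repaired in $\mathcal{S}$ only because \emph{nearby} loop points are $\mathfrak{z}$-identified across it (a dense set, by Proposition \ref{prop:records-loops}), not because the pinch point itself carries a $\mathfrak{z}$-identification. Your intermediate object $\widehat{\mathcal{L}}$ has not yet seen these identifications, which is precisely why it is still badly non-manifold.

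The paper circumvents this by never claiming any intermediate space is a sphere other than $\mathbb{S}_2$ itself. In Lemma \ref{prop:moore} it places the lamination $L(X)$ (encoding $\sim_d$) on the closed upper hemisphere and the \emph{maximal} lamination $L(Z)$ (encoding $\sim_{\mathfrak{z}}$, with full triangles of $\mathbb{D}\setminus L(Z)$ also collapsed) on the closed lower hemisphere, and applies Moore to the single decomposition $\approx$ of $\mathbb{S}_2$: every class is a point, an arc, or a closed triangle, hence cellular with connected complement, and Proposition \ref{pinch_points_are_not_record} ensures the two hemispheric laminations only meet at trivial classes on the equator so that $\approx$ is genuinely an equivalence relation. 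This gives $\mathbb{S}_2/\!\approx\,\cong\mathbb{S}_2$ in one shot, with $\mathcal{S}$ appearing as the image of the equator and the complementary domains coming only from the upper hemisphere. The $\mathfrak{z}$-identifications that you would need to plaster over the cut points of $\widehat{\mathcal{L}}$ are built into the lower hemisphere from the start. To fix your argument you would either have to prove a Moore-type theorem for the singular space $\widehat{\mathcal{L}}$ (which fails as stated), or replace $\widehat{\mathcal{L}}$ by a genuine $\mathbb{S}_2$ carrying both laminations, which is the paper's construction.
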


The proof of the latter result  combines  Moore's theorem for quotients of the 2-dimensional sphere and a theorem of  Whyburn which establishes  that the Sierpinski carpet (on the sphere) is the unique homeomorphism type of a compact connected metric space $K$ embedded in the sphere $  \mathbb{S}_{2}$ such that its complement is made of countable many connected components $C_{1}, C_{2}, ...$ with the following properties:
\begin{itemize}
\item the diameter of $C_{i}$ goes to $0$ as $i \to \infty$;
\item $\bigcup_{i \geq 1} \partial C_{i}$ is dense in $K$;
\item the boundaries $ \partial C_{i}$ of $ C_{i}$ are simple closed curves which do not intersect each other.
\end{itemize}

The use of Moore's theorem mirrors the approach of Le Gall \&  Paulin to show that the Brownian sphere is homeomorphic to the $2$-dimensional sphere \cite{LGP08}. We also refer to \cite{Mie08} for an alternative proof in the case of the Brownian sphere.

In the dense case, we only show that almost surely, there exists a continuous injection from $\mathcal{S}$ to $\mathbb{S}_2$, see Lemma~\ref{prop:moore}. In fact, we believe that the topology of $ \mathcal{S}$ in this case is actually \textit{random}, in the strong sense that almost surely, two independent samples of $\mathcal{S}$ are not homeomorphic. We refer the reader to Figure \ref{fig:topologyrandom} in Section \ref{sect:topo:dilute} for heuristics, and  to \cite{yearwood22} for a similar behavior in the case of the SLE$_{\kappa}$ trace, with $\kappa >4$.

\paragraph{3. Two-point construction and the behavior of geodesics.} The third step in our program consists in understanding the behavior of geodesics in any subsequential limit $( \mathcal{S},D,\mathrm{Vol})$. The study of geodesics on the Brownian sphere was also instrumental in the proof of the uniqueness of the latter \cite{LG11,Mie11} and is still the subject of an intense research \cite{angel2017stability,LG11,LGstar,miller2020geodesics,mourichoux2024bigeodesic}, see also \cite{blanc2024geodesics,dauvergne202327,gwynne2021geodesic} for related contexts.  

To begin with, by passing the description of discrete geodesics in the Bouttier--Di Francesco--Guitter construction to the scaling limit, one can construct, for any $t \in [0,1]$, a  path $\gamma^{{(t)}}(\cdot)$ going from $\Pi_{D^*}(t)$ to the root $\rho_* = \Pi_{D*}(t_*)$ called a \textbf{simple geodesic}. Informally, these paths are obtained by re-rooting $Z$ at time $t$ and then following the associated running infimum process. It is easy to check that these paths are indeed geodesics for $D$ and $D^*$ towards the root $\rho_*$. In fact, the pseudo-distance $D^{*}$ can be seen as the largest pseudo-distance which passes to the quotient of the looptree and for which the simple geodesics are indeed geodesics.

We prove in Proposition \ref{thm:geodesics:rho:*} that the simple geodesics are in fact the only geodesics (for $D$ or $D^{*}$) within $ \mathcal{S}$ towards $\rho_{*}$, which is the analog of Le Gall's result \cite{LG09} in the Brownian sphere case. As a consequence, we prove that the cut locus of $ \mathcal{S}$ relative to $\rho_{*}$, defined as the set of points from which we can start two distinct geodesics to $\rho_{*}$, is a totally disconnected subset of $ \mathcal{S}$, and that the maximal number of distinct geodesics to $\rho_{*}$ that start from a given point is equal to $2$. See the discussion at the end of Section \ref{sec:classificationgeo}. This contrasts with the  Brownian sphere case, where the cut locus is a topological tree (a dendrite) with maximal degree $3$, see \cite{LG09}. One key feature of simple geodesics, derived from Proposition~\ref{sec:stable-map-3}, is that even though faces may not intersect each other in the dilute phase, the (simple) geodesics always bounce on faces even in the dilute phase, as illustrated in Figure \ref{fig:trap-2}. This property is instrumental in the surgery along geodesics discussed below.

Our method to study geodesics is to adapt the construction with two sources and  delays of \cite{Mie09} to general bipartite Boltzmann random maps. This can be seen as a variant of the Bouttier--Di Francesco--Guitter  construction, where the distances to the distinguished point of the map are replaced by the infimum of the distances to two uniform distinguished points $v_1^n, v_2^n \in \mathfrak{M}_n$, shifted by some additive delay. In return, this construction gives information about the set of points for which the difference of the distances to $v_1^n$ and $v_2^n$ takes a fixed value. We then take the scaling limits of this construction in Section~\ref{sec:scalingunicyclo} (compared to the more standard Brownian case studied in \cite{Mie09}, this step requires much more care in our ``stable'' setting). This yields Theorem \ref{alm-unique} which shows that there exists a unique  geodesic between two typical points in $ \mathcal{S}$ -- where a typical point means that it is obtained sampling a random variable with law $\mathrm{Vol}$. Adapting an argument of  Bettinelli \cite{bettinelli2016geodesics}, the essential uniqueness of  typical geodesic allows one to characterize all the geodesics towards $\rho_*$ as simple geodesic by an approximation procedure in the continuum. 

\paragraph{4. Surgery along geodesics.} We now come to the final step in proving $D=D^{*}$.
First remark that, by continuity considerations, it is enough to show that:
$$D(U_1,U_2)=D^*(U_1,U_2),$$
where $U_1$ and $U_2$ are two independent and uniform random variables on $[0,1]$. Recall that there is almost surely a unique geodesic $\gamma_{1,2}$ connecting the associated points $\rho_1=\Pi_D(U_1)$ and $\rho_2=\Pi_D(U_2)$. 
The starting idea is then similar to that of \cite{LG11,Mie11}: one needs to prove that  $\gamma_{1,2}$ can be well-approximated by pieces of simple geodesics targeting $\rho_{*}$. Both in \cite{LG11} and \cite{Mie11} this was done by estimating the dimension of ``bad'' points (namely, geodesic $3$-stars) along typical geodesics, that are points  $x$, in the range of $\gamma_{1,2}$, from which one can start a geodesic towards $\rho_{*}$ that does not coincide locally with a piece of $\gamma_{1,2}$. In \cite{LG11}, this estimate was performed using a kind of slice decomposition involving maps with a geodesic boundary, whereas in \cite{Mie11} the calculation was performed using the multipoint construction of \cite{Mie09} with three or four sources.  In our case, as for establishing Theorem~\ref{main_theorem_topology}, \textit{we exploit  the presence of faces to define our notion of bad points}. Here we only provide a heuristic description of the ideas in the continuum, and we refer to Sections \ref{sec:D=D*} and \ref{secP:uni:geo} for the precise arguments, which also rely on discrete considerations. Remember that it follows from Proposition~\ref{sec:stable-map-3} that simple geodesics bounce along faces of $ \mathcal{S}$. Roughly speaking then, a point $x$ on the geodesic $\gamma_{1,2}$ going from $\rho_1$ to  $\rho_2$ is \textbf{good} if there exist two different faces $\mathfrak{F}_1$ and $\mathfrak{F}_2$ of $\mathcal{S}$ such that (see Figure \ref{fig:trap-2} for an illustration):
\begin{itemize}
\item on its way from $x$ to $\rho_1$, the geodesic $\gamma_{1,2}$ touches both $ \mathfrak{F}_{1}$ and $ \mathfrak{F}_{2}$,
\item on its way from $x$ to $\rho_2$, the geodesic $\gamma_{1,2}$ touches both $ \mathfrak{F}_{1}$ and $ \mathfrak{F}_{2}$,
\item the pieces (in orange on Figure \ref{fig:trap-2}) of $\gamma_{1,2}$ linking $ \mathfrak{F}_{1}$ and $ \mathfrak{F}_{2}$ separate $x$ from $\rho_{*}$,
\end{itemize}
and we say that $x$ is a bad point otherwise. Notably, determining whether a point $x$  is good relies on planarity arguments. 
\begin{figure}[!h]
 \begin{center}
 \includegraphics[width=12cm]{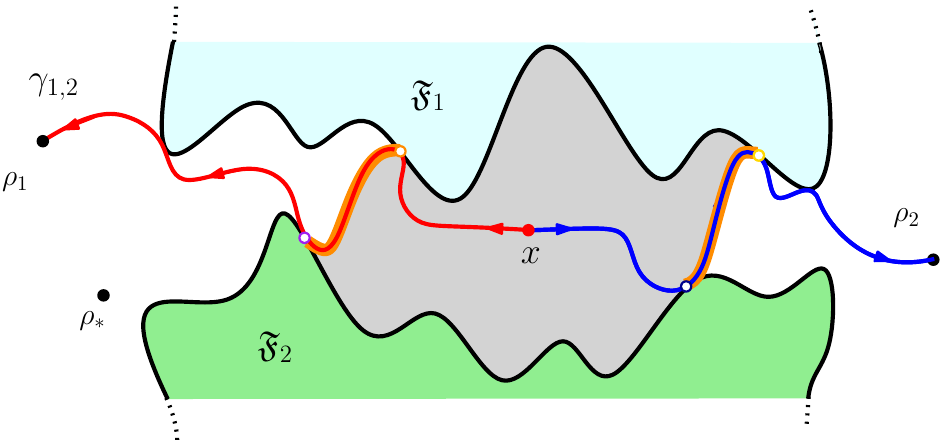}
 \caption{\label{fig:trap-2} Illustration of the neighborhood of a good point $x \in \gamma_{1,2}$.  The geodesic $ \gamma_{1,2}$ is drawn in red and blue. The faces $\mathfrak{F}_1$ and $\mathfrak{F}_2$ are drawn in light blue and green.  The points $\rho_1$ and $\rho_2$ could be in the same connected component as $x$ -- even if we will see that this is not the standard configuration.}
 \end{center}
 \end{figure}\\
The key point is that, if  $x$ is a good point, one could replace a piece of  $\gamma_{1,2}$ around $x$ with a concatenation of (at most two) geodesics directed toward $\rho_{*}$ and still obtain a geodesic going from $\rho_1$ to $\rho_2$.

  As in \cite{LG11,Mie11} the crucial step is then to prove that the dimension of the set of bad points along a typical geodesic is strictly less than $1$ (Proposition \ref{main:techni}). To prove this upper bound, we use the description of the local neighborhood around a uniform point along a typical geodesic provided by the multipoint construction with \textit{two} sources. The latter requires much of the general theory we developed on the process $(X,Z)$ and is perhaps the most technical part of the paper: compared to the Brownian sphere case, here we have an interplay between the stable jumps of $X$ and the Gaussian nature of $Z$ which complicates matters and requires very fine properties of stable L\'evy processes.  Finally, to approximate $\gamma_{1,2}$ in the neighborhood of bad points we need an \textit{a priori} bound of the type $D^{*} \leq D^{1-\delta}$ (locally) for some $\delta \in (0,1)$  as close to $0$ as wanted. In the case of the Brownian sphere, this estimate was derived from the ball volume estimates \cite[Corollary 6.2]{LG11}, see \cite{gall2021volume} for more recent works. In our case, no such precise calculation was available and we developped a robust method (Theorem \ref{technical_uniform_balls}) to prove stretched exponential tails for volume of balls in $( \mathcal{S}, D^*,\mathrm{Vol})$, which may be of independent interest.
\paragraph{Structure of the paper and tools used.}
For the reader's convenience, the paper is divided into two parts. The first one, purely in the continuum, is devoted to the study of the coding and label process $(X,Z)$ without reference to planar maps. In particular, we prove that the process $Z$ can be seen as the Gaussian free field on the stable looptree coded by $X$ and derive fine properties such as regularity properties and study of its records. We also lay the basis of an analog of the random snake theory of Le Gall, where the underlying coding object is a stable looptree instead of a random $\R$-tree. In particular, this leads to the exact calculations in Proposition \ref{sec:stable-map-3}.
The second part deals with the discrete planar maps. We recall and develop various encodings via labeled tree-like structures. Those labeled tree structures are then encoded by contour functions, which are shown to rescale towards the process $(X,Z)$, or variants thereof. We then use the information on $(X,Z)$ gathered in the first part to prove our scaling limits results following the strategy described above. To give a flavor of the broad type of tools which we will use along our journey, we provide a non-exhaustive list:
\begin{itemize}
\item Section \ref{sec:looptree}: Stable processes, stable looptree, $ \mathbb{R}$-trees coded by continuous functions
\item Section \ref{sec:constr_Z}: Regularity of Gaussian processes indexed by metric spaces, Dudley's theorem 
\item Section \ref{sec:local:minima}: Markov property and excursion theory for stable processes, regenerative sets and Hausdorff dimension of their intersections, Height process of Duquesne--Le Gall--Le Jan
\item Section \ref{sec:2pt}: Connection with integro-differential equations, Hypergeometric functions, Bessel functions, Bessel processes and their absolute continuity relations, Shepp covering theory
\item Section \ref{sec:prison}: Excursion theory, fluctuation theory of the stable L\'evy processes, conditioned stable processes, spine decomposition
\item Section \ref{sec:BDG}: Bouttier--Di Francesco--Guitter encoding, invariance principles for coding functions, Gromov--Hausdorff--Prokhorov topologies,  Jordan's theorem, Le Gall's re-rooting trick
\item Section \ref{sec:topology}: Quotient topology, (stable) laminations, Moore's theorem, Whyburn characterization of Sierpinski carpet, basic Hausdorff dimension theory (covering, Frostman lemma)
\item Section \ref{sec:D=D*}: Surgery along geodesic
\item Section \ref{sec:boltzm-stable-maps}: Encoding of  bi-marked planar maps, combinatorial decompositions
\item Section \ref{sec:scalingunicyclo}: Heavy-tailed random variables, local limit theorem, concentration,  Bretagnolle's theorem
\item Section \ref{secP:uni:geo}: Couplings, cut-locus, geodesic stars,  Jordan's theorem.
\end{itemize}

\paragraph{Brownian geometry, universality classes and open questions.} We end this introduction with a discussion on potential avenues opened by  this work. First of all, as recalled in the beginning of the paper, the developments around the Brownian sphere have led to a flourishing field now called Brownian geometry. In particular, analogs of the Brownian sphere have been defined in different topologies such as that of the plane \cite{CLGplane}, the disk \cite{BM15}, the cylinder \cite{gall2024drilling} or in higher genus  \cite{BeMi22}. A theory of ``calculs of continuous surfaces'' is currently being developed where cutting, gluing and drilling holes are the basic operations allowed \cite{bouttier2022bijective,caraceni2019self,GM16a}. Continuous Markovian explorations of those surfaces mimicking the discrete ``peeling process'' \cite{CurStFlour} are also the subject of active research \cite{le2024peeling,gall2023spatial}.
  In parallel to this, the construction of Brownian surfaces based on conformal random geometry and Liouville quantum gravity provides powerful alternative tools to study Brownian surfaces, based on the coupling between SLE and the Gaussian free field, and the mating of tree theory of quantum surfaces \cite{DMS14,gwynne2019mating,miller2016imaginary,She10}. In particular, this has led to the definition of Brownian motion on Brownian surfaces, and to the understanding of the scaling limits of percolation interfaces and of the self-avoiding walk on random maps \cite{berestycki15,GaRhVa16,GM16a,gwynne2017convergence,GwMiSh21}. 

  Developing a companion theory in the ``stable paradigm'' seems now a possible yet challenging goal. The Brownian sphere has also been shown to be the scaling limits of an increasing list of discrete map and graph models. Widening the basin of attraction of the stable carpets/gaskets, by extending Theorem \ref{thm:main} to non-bipartite maps or maps with prescribed face degrees, is a natural objective. Notice also that our work may help proving uniqueness of several variants such as the L\'evy maps recently considered by Kortchemski \& Marzouk in \cite{kortchemski2024random} or the scaling limits of quadrangulations with high degrees introduced by Archer, Carrance and M\'enard \cite{archer2024stable}.
In parallel, non-generic Boltzmann random maps and their scaling limits are also believed to be tightly connected with Liouville Quantum Gravity (LQG), see e.g. \cite{gwynne2019mating,SheHC}. One of these conjectural links is the fact that large critical loop $O(N)$-decorated random planar maps converge in the scaling limit and after uniformization on the sphere towards $\gamma$-Liouville Quantum Gravity decorated by an independent nested Conformal Loop Ensemble (CLE) with parameter ${\kappa}$ where 
$$\alpha = \frac{1}{2} + \frac{4}{\kappa} = \frac{3}{2} \pm \frac{1}{\pi} \arccos(N/2), \quad \mbox{and} \quad \kappa \in \{ \gamma^{2}, 16/\gamma^{2}\}. $$
As demonstrated in \cite{BBG12} for the case of quadrangulations, the gasket (i.e. the part of the map not disconnected by a loop from the root edge) of a critical loop $O(N)$-decorated map is a Boltzmann  planar map whose weight sequence is non-generic with exponent given by the above value of $\alpha$.  See also \cite{albenque2022geometric,chenTurunen2020critical,chen2023ising} for the case of triangulations decorated by an Ising model, or \cite{bernardi2019boltzmann,curien2018duality} for the case of Bernoulli percolation. This shows a first link between our stable carpet/gasket and (non-nested) CLE$_{\kappa}$, and in particular the dichotomy for the topology proved in Theorem \ref{main-topo-intro} parallels that of the topology of CLE$_{\kappa}$, see \cite{RohdeSchramm05,SheCLE,SheWer12}. Many recent works have studied geometric properties of CLE$_{\kappa}$ (possibly on top of $\gamma$-LQG). Of particular interest is the definition of a ``percolation exploration''  \cite{miller2017cle} inside simple CLE$_{\kappa}$ for $\kappa \in(8/3, 4)$ giving rise to a non-simple CLE with the dual parameter $\kappa'  = \frac{16}{\kappa}$, with a similar story in the discrete planar map setup \cite{curien2018duality}. Defining such continuous variant of the SLE$_{6}$ or even Brownian motion directly on the $\alpha$-stable carpet (or gasket) is a challenging open problem.  From a metric point of view, the intrinsic ``chemical'' distance inside CLE$_{\kappa}$ has recently been shown to exist in  a series of breakthroughs \cite{AMY24,miller2021tightness} first in the dense case. Based on the above uniformization conjecture,  we expect our $\alpha$‑stable carpet/gasket to coincide with the $\gamma$‑LQG‑induced chemical metric inside CLE$_\kappa$, once $\kappa$ and $\gamma$ are coupled appropriately. Establishing this link would bridge the two metric perspectives under a common paradigm of random geometry.

\bigskip

\textbf{Acknowledgments.} We thank Quentin Berger, Jean Bertoin,  Thomas Budzinski, Thomas Duquesne,  Jean-François Le Gall, Thomas Leh\'ericy, Cyril Marzouk, Mathieu Mourichoux,  Lo\"\i c Richier and especially   Alejandro Rosales-Ortiz for discussions that happened at various stages of this project. We are also grateful to Ewain Gwynne for pointing us \cite{yearwood22}. The first and last authors were supported by ERC GeoBrown (ERC 740943 GeoBrown). N.C. is supported by SuPerGRandMa, the ERC CoG 101087572.

\tableofcontents
\clearpage
\noindent \textbf{\Large Index of notation}

\medskip
To help the reader navigate these pages, we make a list of some of the most important notation that are used across several sections. We also gather here a few classical constructions that will be employed several times along the way:  
\medskip

\noindent \textbf{General notation for distances and pseudo-distances.} Consider $\mathrm{dis}:\mathtt{I}\times \mathtt{I}\to \mathbb{R}_{+}$ a pseudo-distance  on an interval $\mathtt{I}\subset \mathbb{R}_+$. The $\mathrm{dis}$-ball of radius $r$ centered at $x\in \mathtt{I}$ is the set $$ B_ \mathrm{dis}(x,r) := \{y\in \mathtt{I}:~\mathrm{dis}(x,y)\leq r\}.$$
 We write $\sim_{\mathrm{dis}}$ for the equivalence relation on $\mathtt{I}$ defined by $x\sim_{\mathrm{dis}}y$ if and only if $\mathrm{dis}(x,y)=0$. We shall still use the notation $\mathrm{dis}$ for the distance on the quotient $\mathtt{I}/\sim_{\mathrm{dis}}$ and denote  the canonical projection by $\Pi_{{\mathrm{dis}}} : \mathtt{I} \to \mathtt{I} /\sim_{\mathrm{dis}}$. We also write $ \mathrm{Vol}_{\text{dis}}$ for the pushforward of the Lebesgue measure on $\mathtt{I}$ by $\Pi_{ \mathrm{dis}}$.  
\medskip

\noindent\textbf{$ \mathbb{R}$-tree coded by continuous excursions.} Recall that for $0<s<t < 1$ we set $[t,s] = [0,s]\cup[t,1]$ as well as $(t,s) = [0,s) \cup (t,1]$. If $F : [0,1] \to \mathbb{R}$ is a continuous function satisfying $F(0)=F(1)=0$, we define a pseudo-distance (denoted with a mathfrak font) by the formula 
 \begin{eqnarray} \label{eq:defpseudodistancearbre} \mathfrak{f}(s,t) = F(s)+F(t) -2 \max \left(\min_{[s,t]} F ; \min_{[t,s] } F \right).  \end{eqnarray} In particular, if $F : [0,1] \to \mathbb{R}_+$ is a non-negative excursion, then the right-hand side of the last display simplifies to $F(s)+F(t) - 2 \min_{u \in [s \wedge t, s \vee t]}F(u)$. The quotient of $[0,1]$ by the equivalence relation $\sim_{\mathfrak{f}}$, equipped with $ \mathfrak{f}$ is an $\mathbb{R}$-tree that we denote by $ \mathcal{T}_ \mathfrak{f}$, see Section \ref{sec:codagearbre}. 

\paragraph{Usual notation}\ \\ 

\begin{tabular}{cl}
$ \mathbb{N}$& natural numbers $\{1,2,3, ...\}$ \\
$:= \mbox{ or } =:$ & definition of a mathematical object\\
$ \# E$ & cardinality of $E$\\
$\mathrm{Cl}(E)$ & closure of $E$\\
$\llbracket a,b \rrbracket$ & set of integers $\{a,a+1, ... , b-1,b\}$\\
$ \mathbb{D}( I,E)$ & space of rcll functions from $I$ to $E$, endowed with the Skorokhod J1 topology\\
$ \mathcal{C}( I,E)$ & space of continuous functions \\ & endowed with the topology of  uniform convergence on every compact\\
$ \mathbb{M}, \mathbb{M}_{\mathrm{root}}$& space of isometry classes of (rooted) weighted compact  metric spaces\\
$ \mathbb{PM}, \mathbb{PM}_{\mathrm{root}}$& space of isometry classes of of (rooted) iweighted  geodesic compact  metric spaces\\
$ V( \mathbf{g})$ & vertex set of the graph $ \mathbf{g}$\\
  $ \mathrm{d}^{ \mathrm{gr}}_{ \mathbf{g}}$ & graph distance on the vertex set $V(\mathbf{g})$ of a graph $\mathbf{g}$\\
  $\mathrm{vol}_{\mathbf{g}}$ & uniform probability measure on the set of vertices of the finite graph $\mathbf{g}$\\
  $ \lesssim$ & the LHS is bounded above by a universal constant times the RHS\\
  $ x_{n} = o(y_{n})$ & $x_{n}/y_{n} \to 0$ as $n \to \infty$\\
\end{tabular}\ \\

\begin{tabular}{cl}
 $ x_{n} = O(y_{n})$ & $x_{n}/y_{n}$ is bounded as $n \to \infty$\\
$ X_{n} = o_{a.s.}(Y_{n})$ & $X_{n}/Y_{n} \to 0$ almost surely as $n \to \infty$\\
$ X_{n} = O_{a.s.}(Y_{n})$ & $(X_{n}/Y_{n} : n \geq 0)$ is  bounded almost surely\\
$ X_{n} = o_{ \mathbb{P}}(Y_{n})$ & $X_{n}/Y_{n} \to 0$ in probability as $n \to \infty$\\
$ X_{n} = O_{ \mathbb{P}}(Y_{n})$ & $(X_{n}/Y_{n} : n \geq 0)$ is tight\\

\end{tabular}

\paragraph{Stable, Brownian and Bessel processes}\ \\ 

\begin{tabular}{cl}
$(X_t)_{ t \geq 0}$& canonical rcll process on $\mathbb{D}(\R_+,\R)$\\
$ \mathbf{Q}$ & law of a spectrally positive $\alpha$-stable L\'evy process starting from $0$\\
$ q^{[\alpha]}_{c}$ & density of $X_{c}$ under $ \mathbf{Q}$\\
$ \mathbf{N}$& Ito's excursion measure for spectrally positive $\alpha$-stable process  \eqref{eq:excursionmeasuredecomp}\\
$ \sigma$ & lifetime of $X$ under $ \mathbf{N}$\\
$ \mathbf{P}$& normalized excursion measure\\

$ \mathbf{N}^\bullet$ & biased excursion measure, see \eqref{def:N:bullet}\\
$ t_\bullet$ & $\in [0, \sigma]$ distinguished time under $ \mathbf{N}^\bullet$\\

\ \\

$(B_t)_{ t \geq 0}$& canonical continuous process on $\mathcal{C}(\R_+,\R)$\\
 $\P_x$ & law of Brownian motion started from $x$\\
  $\P_x^{{(s)}}$ & law of Brownian motion started from $x$ with life time $s>0$\\
 $\P^{\langle\nu\rangle}_x$ & law of Bessel process with index $\nu$ (i.e. dimension $2 \nu +2$) started from $x$\\
 $I_{\nu}$ & modified Bessel function of the first kind with index $\nu$\\
  $\P_{x\to y}^{(s)}$ & law of Brownian bridge with lifetime $s$ starting from $x$ and ending at $y$
  \end{tabular}

\paragraph{Looptree}\ \\ 

\begin{tabular}{cl}
$U_i : i \geq 1$ & i.i.d.\ random variables uniform on $[0,1]$ and independent of all others\\
$X$& underlying L\'evy process \\
$ \Delta_t $ & $= \Delta_t X$, i.e.\ the jump  of $X$ at time $t$\\
$ (\mathrm{b}_{i} : i\in \mathbb{N})$ & enumeration of the jump times of $X$\\
$I_{s,t}$ & $ = \inf \{ X_{u} : u \in [s,t]\}$\\
$I_t$ &$=I_{0,t} = \inf\{ X_u : u \in [0,t]\}$\\
$  s \preceq t $ & ancestor relation defined by $s\preceq t$ if $s\leq t$  and $I_{s,t}\geq X_{s-}$\\
$ s \prec t$ & strict ancestor defined by $s \preceq t$, $s<t$ and $X_{s-} < X_{t}$\\
$ s \curlywedge t $ & most recent common ancestor\\
$x_{s,t}$ & $=I_{s,t}-X_{s-}$, i.e. the position in the jump associated to $s$ of the lineage going to $t$\\
 $d$ & pseudo-distance on $[0,1]$ and distance on $\mathcal{L}$, see \eqref{def:distancelooptree}\\
\end{tabular}
\ \\

\begin{tabular}{cl}
 $\mathcal{L}$ & $ = [0,1]/ \sim_{d}$, the looptree associated with $X$\\
 $ \mathrm{f}_{t}$ & face of $ \mathcal{L}$ associated with the jump at time $t$, see \eqref{def:loops}\\
 $ \mathrm{Loops}$ & the set of all loops in $ \mathcal{L}$ (or their pre-images in $[0,1]$)\\
 $ \mathrm{Skel}$ & the set of all pinch points in $ \mathcal{L}$, called the skeleton, (or their pre-images in $[0,1]$)\\
  $ \mathrm{Leaves}$ & the set of all leaves in $ \mathcal{L}$, i.e. points of degree $1$ (or their pre-images in $[0,1]$)\\
$\mathrm{Branch}(s,t)$& a set of times whose images are the points separating $\Pi_{d}(s)$ and $\Pi_{d}(t)$\\
 
\end{tabular} 

\paragraph{Label process}\ \\ 

\begin{tabular}{cl}
 $Z$ & the continuous label process   \\
$ (\mathrm{b}_{i} : {i\in \mathbb{N}})$ & Brownian bridges of duration $1$ associated to each jump time $t_{i}$\\
 $t_*$ & the unique $t\in[0,1]$ such that $Z_{t_*}=\min Z$, see Lemma \ref{distinct}\\
 $ \mathcal{T}_{ \mathfrak{z}}$ & tree coded by the process $Z$ (re-rooted at $t_{*}$)\\
  $ \mathrm{LeftRec}$ & set of local minimal record times on the left of $Z$\\
    $ \mathrm{RightRec}$ & set of local minimal record times on the right of $Z$\\

\end{tabular}

\paragraph{Boltzmann measures}\ \\ 

\begin{tabular}{cl}
$ \mathbf{q}$ &  $= (q_{k})_{k \geq 1}$  weight sequence\\
$ \mathcal{M}$ & set of all planar  rooted bipartite maps\\
$ \mathcal{M}^{\bullet}$ & set of all planar  rooted pointed bipartite maps\\
$ w_{ \mathbf{q}}$ & Boltzmann measure on planar maps with weight sequence $ \mathbf{q}$\\
$ w_{ \mathbf{q}}^\bullet$ & Boltzmann measure on pointed planar maps i.e. $w_\bq^\bullet((\bm,v))= w_\bq(\bm)$\\
  $\zbq$ & $=w_\bq^\bullet(\mathcal{M}^\bullet)/2$\\
$ \scal$ & normalizing constant defined in \eqref{eq:nongenerictail} or \eqref{eq:nongenericgenf}\\
\end{tabular}

\paragraph{Planar maps}\ \\ 

\begin{tabular}{cl}
\hspace*{-1.5cm} 
$\mathfrak{M}_{n}$ & a random $ \mathbf{q}$-Boltzmann  conditioned to have $n$ vertices\\
\hspace*{-1.5cm} $([0,1]/\sim_D,D, \mathrm{Vol}_D)$ & an accumulation point of $\big( V(\mathfrak{M}_{n}) , (\scal n)^{-\frac{1}{2\alpha}} \cdot \mathrm{d}^{ \mathrm{gr}}_{ \mathfrak{M}_{n}}, \mathrm{vol}_{\mathfrak{M}_{n}} \big) _{n\geq 1}$, see Proposition \ref{theo:sub}\\
\hspace*{-1.5cm} $(n_k : k \geq 1)$ & the subsequence selected in Proposition \ref{theo:sub}\\
\hspace*{-1.5cm} $(m_k : k \geq 1)$ & the subsequence of $(n_{k})_{k \geq 1}$ selected at the end of Section  \ref{secP:uni:geo}\\
\hspace*{-1.5cm} $ \mathcal{S}$ &  the quotient space $[0,1]/\sim_{D^*}$ (latter identified with $ [0,1]/\sim_{D}$ by Theorem \ref{main_theorem_topology})\\
\hspace*{-1.5cm} $\mathrm{Vol}$ & the volume measure on $\mathcal{S}$ (equal to both $ \mathrm{Vol}_D$ and $ \mathrm{Vol}_{D^*}$ by Theorem \ref{main_theorem_topology}) \\
\hspace*{-1.5cm} $\rho_*$ & the root $\Pi_D(t_*)$ in $ \mathcal{S}$\\
\hspace*{-1cm} $\gamma^{(s)}$ & simple geodesic associated with $s\in[0,1]$, see Section \ref{sec:D<D*}\\
\hspace*{-1.5cm} $\gamma^{(s\to t)}$ & path from $\Pi_D(s)$ to $\Pi_D(t)$ obtained by following $\gamma^{(s)}$ and $\gamma^{(t)}$ until they merge

\end{tabular} 
\\
\\
\\
\paragraph{BDG$^\bullet$ construction}\ \\ 

\begin{tabular}{cl}
$ \bT=( \mathcal{T}, \ell)$ & labeled mobile\\
$ V_{\circ}( \mathcal{T}), V_\bullet( \mathcal{T})$ & sets of white and black vertices\\
$ \widehat{V}_\circ( \mathcal{T})$ & set of white leaves\\
$ k_u( \mathcal{T})$ & number of children of the vertex $ u$ in $ \mathcal{T}$\\
$[c,c']_{ \mathcal{T}}$ & interval of corners between $c$ and $c'$ in $ \mathcal{T}$\\
$[v,v']_{ \mathcal{T}}$ & minimal interval of vertices in the clockwise contour between $v$ and $v'$ in  $ \mathcal{T}$\\
$ \mu_{\circ}, \mu_{\bullet}$ & offspring distributions of the label mobile defined in \eqref{eq:mucirc_mubullet}\\
$ \mathrm{GW}_{ \mathbf{q}}( \mathrm{d}  \bT)$ & law of the well-labeled mobile\\
$\bTn$ & random mobile of law $ \mathrm{GW}_\mathbf{q}( \cdot \mid V_\circ = n-1)$\\
$ ( S^{ \bT}_k, L^{\bT}_k)$& Lukasiewicz and label path of $  \bT$, see Figure \ref{fig:coding}\\
$\mathrm{BDG}^\bullet( \bT, \epsilon)$ & the BDG pointed map associated with $( \bT, \epsilon)$, see Figure \ref{fig:BDGconstruction}\\
$ v_*$ & the distinguished vertex of $\mathrm{BDG}^\bullet( \bT, \epsilon)$\\
$ \gamma^{{(c)}}$ & simple geodesic starting from the corner $c$ \\
\end{tabular}

\paragraph{BDG$^{2\bullet}$ construction}\ \\ 

\begin{tabular}{cl}
$ \mathcal{U}$ & set of well-labeled unicyclomobiles $( \mathbf{u}, \ell)$\\
$ \Edelay_{ \mm, v_{1},v_{2}}$ & set of  admissible delays $ \delay$ of a bi-pointed map $(\bm,(v_1,v_2))$\\
$ \mathcal{M}^{{2 \bullet}}$& space of rooted bi-pointed maps with an admissible delay\\
$ w_ \mathbf{q}^{2 \bullet}$ & $ \mathbf{q}$-Boltzmann law defined by $w_\bq^{2\bullet}\big(\bm,(v_1,v_2),\delay\big) = w_ \mathbf{q}( \bm)$\\
$ \mathrm{BDG}^{2 \bullet}( \mathbf{u}, \epsilon)$ & the bi-pointed map associated with  the unicyclomobile $ \mathbf{u}$ and $ \epsilon$, see Figure \ref{fig:unicyclo2}\\
$ w_ \mathbf{q}^{2 \bullet}$ & $ \mathbf{q}$-Boltzmann law defined by $w_\bq^{2\bullet}\big(\bm,(v_1,v_2),\delay\big) = w_ \mathbf{q}( \bm)$\\
$ \tilde{w}_ \mathbf{q}^{2\bullet}$ & pushforward of $ w_ \mathbf{q}^{2 \bullet}$ by the inverse of $ \mathrm{BDG}^{2 \bullet}$ and forgetting the sign\\
$ \mathbf{u}_n$ & random unicyclomobile of law $\tilde{w}_ \mathbf{q}^{2\bullet}( \cdot \mid \# V_\circ = n-2)$\\
$( \mathfrak{M}_n, (\widehat{v}_1^n, \widehat{v}_2^n), \hdelay_n)$ & $= \mathrm{BDG}^{2\bullet}( \mathbf{u}_n, \epsilon)$  bi-pointed map with delay coded by $ \mathbf{u}_n$ and a sign\\
$J_n$ & a white vertex minimizing the label on the cycle of $ \mathbf{u}_n$\\
\end{tabular}

\part{Properties of the label process $\mathbf{Z}$}\label{PartI}

This part is purely ``in the continuum'' and is devoted to the study of the  label process $Z$. After a first section reviewing the construction of the stable looptree $ \mathcal{L}$ from a spectrally positive $\alpha$-stable L\'evy excursion $X$, we recapitulate the procedure to construct $Z$   once the jumps of $X$ are  decorated with independent Brownian bridges. We will show that the process $Z$ can alternatively be thought of as the Brownian motion  indexed by $ \mathcal{L}$ (or equivalently as a Gaussian Free Field on the looptree $\mathcal{L}$).  This enables us to establish quantitative  properties of $Z$ using the theory of Gaussian processes. In view of our applications to random maps, a particular attention is devoted to the study of the local minimal records of $Z$, see Propositions~\ref{pinch_points_are_not_record}, ~\ref{lem:non-icnreasealongbranches} and~\ref{prop:records-loops}. In a sense, this part can be seen as the starting point of a ``Brownian Snake theory'' for the Brownian motion indexed by $ \mathcal{L}$. In particular, Theorem \ref{two_points_function}  is  the stable counterpart of the link between the Brownian snake (on Aldous CRT) and the equation $\nabla^2 u = u^{2}$ which was central in the theory developed by Le Gall~\cite{LG99}.\bigskip

For notational convenience, we always work on the canonical space $ \mathbb{D}(  \mathbb{R}_{+}, \mathbb{R})$ of rcll functions endowed with the Skorokhod topology and we denote the canonical process by $X$. Extending the notation of the introduction, for $t\geq 0$,  we write $\Delta_t := X_t- X_{t-}$ for the associated jump and  $I_t := \inf_{s \in [0,t]} X_s$ for the running infimum. It will also be useful, for every $s,t\geq 0$ with $s\leq t$, to set
\[I_{s,t}:=\inf\limits_{[s,t]} X.\]
We also consider $(\mathrm{t}_i)_{i\in \mathbb{N}}$, a measurable indexing of the jumping times of  $X$, with the convention that $\mathrm{t}_i=\infty$ if $X$ has fewer than $i$ jumps -- in all cases we will consider,  $X$ will have infinitely many jumps.
\\
\\
 We endow $\mathbb{D}(  \mathbb{R}_{+}, \mathbb{R})$ with various measures that change the law of the canonical process $X$:
\begin{itemize}\item  Under the probability measure $ \mathbf{Q}$, the process  $(X_{t})_{t\geq 0}$ is  an $\alpha$-stable L\'evy process starting from $0$ with no negative jumps with Laplace exponent $ \lambda\mapsto \lambda^{\alpha}$, for $\lambda\geq 0$, or equivalently with  L\'evy measure given by   
 \begin{eqnarray} \label{eq:levymeasure} \  \frac{\mathrm{d}r}{\Gamma(-\alpha)r^{\alpha+1}}  \mathbf{1}_{r >0} .  \end{eqnarray}
\item Under the probability measure $ \mathbf{P}$,  the process  $(X_{t})_{t\geq 0}$   is a normalized excursion of lifetime equal to $1$ of the above stable L\'evy process. In particular, we have $X_t=0$, for every $t\geq 1$.
\item The  sigma-finite measure $ \mathbf{N}$ is the corresponding excursion measure which can be written as follows. For every $v>0$, let $\mathbf{N}^{(v)}$ be the law of the excursion of length $v$ (obtained from $ \mathbf{P}$ by scaling). The sigma-finite measure  $\mathbf{N}$ is defined by the relation:
  \begin{eqnarray} \label{eq:excursionmeasuredecomp}\mathbf{N}(A):=\int_{0}^{\infty} \mathbf{N}^{(v)}(A)\frac{\d v}{|\Gamma(-\frac{1}{\alpha})| v^{\frac{1}{\alpha}+1}},  \end{eqnarray}
for any measurable set $A$. 
\end{itemize}
The excursion measure $ \mathbf{N}$ can also be defined as follows.  Under $\mathbf{Q}$, the process $(X_{t}-I_{t})_{t\geq 0}$ is a strong Markov process and $0$ is a regular recurrent point. Moreover, $(-I_{t})_{t\geq 0}$ is a local time for $(X_{t}-I_{t})_{t\geq 0}$  at $0$. The measure $ \mathbf{N}$ is the excursion measure of $(X_{t}-I_{t})_{t\geq 0}$ away from $0$ associated with the local time  $(-I_{t})_{t\geq 0}$, see \cite[Chapters VI and VII]{Ber96} for more details.

\section{The stable Looptree}
\label{sec:looptree}
In this section we work under the probability $ \mathbf{P}$, so that $X$ is a stable L\'evy excursion with lifetime~1. We recall  the construction of the looptree $ \mathcal{L}$ associated with  $X$ given in \cite{CKlooptrees}, and we establish some elementary properties. Informally, this random compact metric space can be  obtained by gluing loops along the jumps of $X$ and following the tree structure encoded by $X$. In this direction, recall from the introduction, that for every $s,t\in [0,1]$ with $s\leq t$, we write $s\preceq t $ if $s\leq t$ and $I_{s,t}\geq X_{s-}$ and say that $s$ is an \textbf{ancestor} of $t$. 
We say that $s$ is a \textbf{strict ancestor} of $t$ and write $s\prec t$ if $s <t$ and, furthermore, $X_{s-} < X_{t}$. In particular remark that $0$ is an ancestor of every $t\in [0,1]$. The relation  $\preceq$ is a partial order on $[0,1]$ and   we can define a notion of  most recent  common ancestor of $s$ and $t$ as follows:
 \[s \curlywedge t:=\sup \big\{r\leq s\wedge t:~ r\preceq s\:\:\text{and}\:\:r\preceq t\big\}.\]
  When $s$ is an ancestor of $t$ we set $x_{s,t}:=I_{s,t}-X_{s-}$, and when it is not the case we simply  take $x_{s,t}:=0$.  Recall finally that  $(\mathrm{t}_{i})_{i\in \mathbb{N}}$ is  a measurable enumeration of the jumping times of $X$.

 \subsection{Definition of the $\alpha$-stable looptree}\label{sec:defloop}
We start by introducing the distance of $[0,1]$ seen as a loop of length $1$ after identifying $0$ and $1$, i.e. for every $0 \leq s,t \leq 1$, we set 
\begin{align*}
\delta(s,t):=\min\big(|t-s|, 1-|t-s|\big).
\end{align*}
Next, for $s\preceq t$,  we consider the quantity
\begin{equation}\label{def:d:0:1}
d_{0}(s,t):=\mathop{\sum_{s\prec r\preceq t}}_{\Delta_{r}>0} \Delta_{r} \cdot \delta\big(0,\frac{x_{r,t}}{\Delta_{r}}\big).
\end{equation}
And finally, for every $s,t\in[0,1]$, we take
  \begin{eqnarray} \label{def:distancelooptree} d(s,t):=d_{0}(s\curlywedge t,s)+d_{0}(s\curlywedge t,t)+\Delta_{s\curlywedge t} \cdot \delta\big(\frac{x_{s\curlywedge t,s}}{\Delta_{s\curlywedge t}},\frac{x_{s\curlywedge t,t}}{\Delta_{s\curlywedge t}}\big),  \end{eqnarray}
with the convention $\Delta_{s\curlywedge t}\cdot \delta\big(\frac{x_{s\curlywedge t,s}}{\Delta_{s\curlywedge t}},\frac{x_{s\curlywedge t,t}}{\Delta_{s\curlywedge t}}\big):=0$ if $\Delta_{s\curlywedge t}=0$. By  \cite[Proposition 2.2]{CKlooptrees} the function $d$  is, under $\mathbf{P}$, a continuous pseudo-distance on $[0,1]$, and in particular the equivalence relation $\sim_{d}$ defined on $[0,1]$ is $\mathbf{P}$-a.s.\ closed. The $\alpha$-stable looptree coded by $X$ is  the random compact metric space $$\mathcal{L}:=([0,1]/\sim_{d},d).$$  The point $\Pi_{d}(0)=\Pi_{d}(1)$ will be interpreted as the \textbf{root} of $\mathcal{L}$, where we recall that $\Pi_{d}:[0,1]\to [0,1]/\sim_{d}$ stands for the canonical projection. Moreover, we  interpret  $\mathrm{Vol}_{d}$, the pushforward of the Lebesgue measure on  $[0,1]$ under the associated canonical projection as the uniform measure on  the looptree $\mathcal{L}$.

 We will now   describe the equivalence classes of $\sim_d$ and classify the different types of points in $ \mathcal{L}$. In this direction, we first recall a few basics $\mathbf{P}$-almost sure properties  of $X$: 
\begin{itemize}
\item  \hypertarget{prop:A:1}{$(A_{1})$} For every $t\in(0,1)$ such that $\Delta_{t}>0$, we have:
\[\inf\limits_{[t,t+\varepsilon]}X<X_{t}\:\:\text{and}\:\:\inf\limits_{[t-\varepsilon,t]}X<X_{t-},\quad \text{ for every }\varepsilon\in\big(0,t\wedge(1-t)\big).\]
In particular the local infima of $X$ are  realized, i.e.~are local minima. 
\item \hypertarget{prop:A:2}{$(A_{2})$}  All the local minima of $X$ are distinct. Moreover if   $X$ has a local minimum at $t\in(0,1)$ (in particular, $\Delta_{t}=0$ by $(A_{1})$) and if we set $s:=\sup\{r\in[0,t]:~ X_{r}< X_{t}\}$, then $\Delta_{s}>0$ and $X_{s-}<X_{t}<X_{s}$.  
\end{itemize}
\begin{figure}[!h]
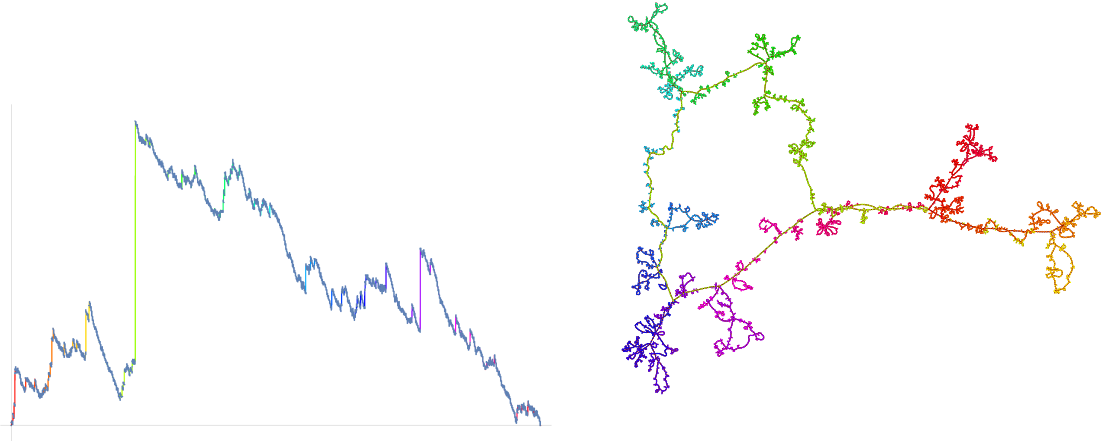

 \begin{center}
 \includegraphics[height=4.5cm]{images/exc8000}
    \includegraphics[height=6.5cm]{images/looptree8000}
    \caption{\label{fig:looptree}A simulation of 
      a $\tfrac{3}{2}$-stable L\'evy excursion and the corresponding looptree. Points belonging to the same loop (corresponding to the jumps of the excursion) are displayed with the same color.}
 \end{center}
 \end{figure}
To lighten notation, for $t \in (0,1)$, we set
\begin{equation}\label{eq:def:mathcal:A}
 \mathcal{A}_t := \{ s \in [0,1] : 0 \preceq s \preceq t \mbox{ and } \Delta_{s} >0\},
 \end{equation}
which corresponds  to the set of ancestors of $t$ which are jump times. See Figure \ref{fig:descente} for an illustration.  We stress that while the set of ancestors of $t$ is uncountable, the set $ \mathcal{A}_t$ is countable. Moreover, the following properties also hold $\mathbf{P}$-a.s.
\begin{itemize}
\item \hypertarget{prop:A:3}{$(A_{3})$}  For every $t \in (0,1)$, we have 
 \begin{eqnarray}X_{t}=\sum \limits_{\begin{subarray}{c} r \in \mathcal{A}_t \end{subarray}}x_{r,t}. \label{eq:sumdessauts}\end{eqnarray}
 \item \hypertarget{prop:A:4}{$(A_{4})$}  For every $t \in (0,1)$, no point of  $ \mathcal{A}_t$ is isolated from the left, and  we have $x_{r,t} \in (0, \Delta_r)$, for every $r \in \mathcal{A}_t \backslash \sup \mathcal{A}_t$.
\end{itemize}
Properties {\hypersetup{linkcolor=black}\hyperlink{prop:A:1}{$(A_1)$}} and {\hypersetup{linkcolor=black}\hyperlink{prop:A:2}{$(A_2)$}}  are a rewriting of properties $(H_{1})-(H_4)$ in \cite[Proposition 2.10]{Kor11}.  Property {\hypersetup{linkcolor=black}\hyperlink{prop:A:3}{$(A_3)$}} is proved in  \cite[Corollary 3.4]{CKlooptrees} and {\hypersetup{linkcolor=black}\hyperlink{prop:A:4}{$(A_4)$}} follows from \cite[Proposition 3.1]{CKlooptrees}.

\begin{prop}[Equivalence classes induced by $d$] \label{topologie_loop_tree}
The following properties hold under $\mathbf{P}$.  For every $0\leq s<t\leq 1$ we have $d(s,t)=0$ if and only if:
\begin{equation}\label{eq:equivalent:classes}
X_{t}=X_{s-}  \mbox{ and \ \ } X_{r}>X_{s-} \mbox{ \ for every } r\in (s,t).
\end{equation}
Moreover,  the equivalence classes of $\sim_d$ have at most two points.
\end{prop}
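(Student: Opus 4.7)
My plan is to establish the biconditional (\ref{eq:equivalent:classes}) by direct analysis of the three summands appearing in the definition (\ref{def:distancelooptree}) of $d(s,t)$, and then to deduce the bound on the sizes of equivalence classes from that characterization.

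For the forward implication, assume $X_t = X_{s-}$ and $X_u > X_{s-}$ for all $u \in (s,t)$. A short argument shows $I_{s,t} = X_{s-}$, hence $s \preceq t$ and $s \curlywedge t = s$, which collapses (\ref{def:distancelooptree}) to $d(s,t) = d_0(s,t) + \Delta_s \cdot \delta(x_{s,s}/\Delta_s, x_{s,t}/\Delta_s)$. The second term vanishes because $x_{s,s} = \Delta_s$ and $x_{s,t} = 0$ yield $\delta(1,0) = 0$ (or trivially when $\Delta_s = 0$). For the first, any jump-time $r \in (s,t)$ contributing to the sum must satisfy $X_{r-} \geq X_{s-}$ (by passing to the left limit and using strict positivity of $X - X_{s-}$ on $(s,t)$), while $r \preceq t$ forces $X_{r-} \leq I_{r,t} \leq X_t = X_{s-}$; hence $X_{r-} = X_{s-}$, so $x_{r,t} = 0$ and the corresponding term is $\Delta_r \cdot \delta(0,0) = 0$.

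For the converse, assume $d(s,t) = 0$ with $s < t$ and set $r := s \curlywedge t$, so that each of the three non-negative summands in (\ref{def:distancelooptree}) vanishes. When $\Delta_r > 0$, the identity $\delta(x_{r,s}/\Delta_r, x_{r,t}/\Delta_r) = 0$ offers two alternatives: either $x_{r,s} = x_{r,t}$, or $\{x_{r,s}/\Delta_r, x_{r,t}/\Delta_r\} = \{0,1\}$. The first alternative forces $I_{r,s} = I_{r,t}$, and using properties $(A_1)$--$(A_2)$ one locates a common ancestor of $s$ and $t$ lying strictly inside $(r, s \wedge t]$, contradicting the definition of $r$ --- except in the boundary situation $r = s$, which is precisely the desired configuration. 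The second alternative corresponds to $s$ and $t$ being the two antipodal ends of the loop at $r$; exploiting then the vanishing of each non-negative term in $d_0(r,s)$ and $d_0(r,t)$ (which forces $x_{r',s}/\Delta_{r'}$ and $x_{r',t}/\Delta_{r'}$ to lie in $\{0,1\}$ for every intermediate jump-time ancestor $r'$) and iterating along the descent, one concludes that $s$ is itself a jump time with $X_{s-} = X_t$ and that $X_u > X_{s-}$ throughout $(s,t)$. The degenerate case $\Delta_r = 0$ is handled analogously, the loop contribution being automatically zero and the analysis resting on the two descent sums.

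The bound on the cardinality of equivalence classes is then an immediate consequence of (\ref{eq:equivalent:classes}): if there were three distinct points $s_1 < s_2 < s_3$ in the same class of $\sim_d$, applying the characterization to $(s_1, s_3)$ would give $X_{s_2} > X_{s_1-}$ (since $s_2 \in (s_1, s_3)$), while applying it to $(s_1, s_2)$ would give $X_{s_2} = X_{s_1-}$, a contradiction. I expect the main obstacle to be the converse implication, where the case analysis --- distinguishing whether $r$ coincides with $s$, whether $\Delta_r$ is positive, and which of $s$ or $t$ corresponds to which endpoint of the loop at $r$ --- must be marshaled carefully, and where ruling out the ``same-endpoint'' subcase hinges on properties $(A_1)$--$(A_4)$ to produce a spurious deeper common ancestor.
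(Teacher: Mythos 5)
Your forward implication and your final cardinality argument are fine (the forward direction shows each summand in $d_0$ vanishes rather than emptying the index set as the paper does, but both work, and your triple-point argument is a clean alternative to the paper's). The converse, however, has two genuine gaps. First, the case split at $r:=s\curlywedge t$ is both mis-partitioned and mis-resolved. Writing $a=x_{r,s}/\Delta_r$, $b=x_{r,t}/\Delta_r$, the relation $\delta(a,b)=0$ means $a=b$ or $\{a,b\}=\{0,1\}$, but the ``common-ancestor contradiction'' you invoke for the first branch only works when $a=b\notin\{0,1\}$: the boundary sub-cases $a=b=0$ and $a=b=1$ are not contradictory and must be routed to the desired conclusion, not discarded. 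Conversely, your claimed escape hatch ``except when $r=s$'' inside the first branch is \emph{itself} a contradiction: if $r=s$ and $\Delta_r>0$ then $x_{r,s}=\Delta_r$, so $a=1$ and the first branch forces $b=1$, i.e.\ $I_{r,t}=X_r$ for some $t>r$, which $(A_1)$ forbids. The paper's split --- $a=b\notin\{0,1\}$ (contradiction) versus $(a,b)\in\{0,1\}^2$ (forcing, via $(A_1),(A_2)$, $s=s\curlywedge t$ and $t=\inf\{u>s:X_u=X_{s-}\}$) --- is the correct one.

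Second, ``iterating along the descent'' is a hand-wave, not a proof, and the ingredient you are missing is $(A_4)$. Vanishing of $d_0(s\curlywedge t,s)$ and $d_0(s\curlywedge t,t)$ only tells you $x_{r',s},x_{r',t}\in\{0,\Delta_{r'}\}$ for intermediate jump-time ancestors $r'$; it is $(A_4)$ that says these quantities must lie in the \emph{open} interval $(0,\Delta_{r'})$ for $r'\in\mathcal{A}_\cdot\setminus\sup\mathcal{A}_\cdot$, yielding the contradiction that pins down $\mathcal{A}_{s\curlywedge t}=\mathcal{A}_s=\mathcal{A}_t$. Without that input, there is nothing to iterate on. Moreover, in the branch $\{a,b\}=\{0,1\}$ with $\Delta_r>0$ your intermediate conclusion that ``$s$ is itself a jump time'' is obtained in a sub-case where $(A_1)$ already forces $s=r$ with no descent at all; and as a \emph{general} conclusion of the converse it is false --- when $\Delta_{s\curlywedge t}=0$, the time $s$ can be a continuity point that is a one-sided local minimum (cf.\ Figure~\ref{fig:identificationlooptree}), which is exactly the second geometric type of identified pair.
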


\begin{figure}[!h]
 \begin{center}
 \includegraphics[width=12cm]{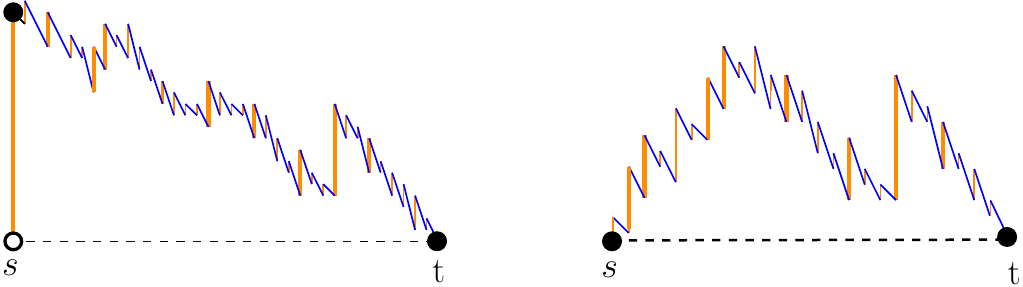}
 \caption{Illustration of times $s<t$ such that $s \sim_{d} t$. There are two possible cases: either $X$ has a jump at time $s$, as depicted on the left, or $X$ is continuous at $s$, and  $s$ is a local minimal record on the right, as depicted on the right. There are countably many pairs of identified times in the first case, and uncountably many in the second case. In both cases, $t$ is the first return time of $X$ at level $X_{s-}$ after time $s$. \label{fig:identificationlooptree}}
 \end{center}
 \end{figure}
\begin{proof}

 First of all, let us explain why the graph of $\sim_{d}$ contains all the couples of times described in the statement. Indeed, if  $0\leq s<t\leq 1$ verifies \eqref{eq:equivalent:classes}, then we have $s=s\curlywedge t$ and $\{r\in[0,1]:s\prec r\prec t\}=\varnothing$ and it follows from  \eqref{def:distancelooptree}   that:
$$ d(s,t)=\Delta_{s}\cdot \delta\big(\frac{x_{s,s}}{\Delta_{s}},\frac{x_{s,t}}{\Delta_{s}}\big). $$
Next, notice that  if $\Delta_s=0$ we directly have $d(s,t)=0$, and if $\Delta_s>0$, by \eqref{eq:equivalent:classes} we must have $x_{s,s}=\Delta_s$ and $x_{s,t}=0$, which implies $d(s,t)=\Delta_{s}\cdot \delta\big(1,0\big)=0$. Let us now focus on the other inclusion. Fix $ 0 \leq s < t \leq 1$ such that $d(s,t)=0$ and recall from \eqref{eq:def:mathcal:A} the definition of the set $ \mathcal{A}_s$ (resp.\ $ \mathcal{A}_t$) of ancestors of $s$ (resp.  $t$) which are also jump times of $X$. Notice that by   {\hypersetup{linkcolor=black}\hyperlink{prop:A:4}{$(A_4)$}},  in order to have $d_{0}(s\curlywedge t,s)=d_{0}(s\curlywedge t,t)=0$, it must hold that $\mathcal{A}_{s\curlywedge t}=\mathcal{A}_{s}=\mathcal{A}_{t}$. We  let $r = \inf\{ u>s : X_{u}=X_{s-}\}$ and we are going to show that we must have $s=s\curlywedge t$ and $t=r$, which implies that the pair $(s,t)$ verifies \eqref{eq:equivalent:classes}. In this direction, notice that by  ({\hypersetup{linkcolor=black}\hyperlink{prop:A:1}{$A_1$}},  {\hypersetup{linkcolor=black}\hyperlink{prop:A:2}{$A_2$}}), there is no $p>r$ such that  $s\curlywedge t\preceq p  $, which implies that, necessarily, 
$s\curlywedge t \leq s<t\leq r$. Property  {\hypersetup{linkcolor=black}\hyperlink{prop:A:3}{$(A_3)$}} combined with the previous discussion then entails that:
$$X_{s}= X_{(s\curlywedge t)-}+x_{s\curlywedge t,s}\quad \text{ and } X_{t}= X_{(s\curlywedge t)-}+x_{s\curlywedge t,t}.$$  Next, we argue according to whether $\Delta_{s\curlywedge t}=0$ or  $\Delta_{s\curlywedge t}>0$. If $\Delta_{s\curlywedge t}=0$, by the previous display, we must have $X_s=X_{t}=  X_{(s\curlywedge t)-}$ and, by the definition of $s\curlywedge t$ and $r$, this is possible if and only if $s=s\curlywedge t$ and $t=r$. If  $\Delta_{s\curlywedge t}>0$, by looking at the contribution of the jump time $s \curlywedge t$ in \eqref{def:distancelooptree},  we get:
\[\delta\big(\frac{x_{s\curlywedge t,s}}{\Delta_{s\curlywedge t}},\frac{x_{s\curlywedge t,t}}{\Delta_{s\curlywedge t}}\big)=0,\]
and one deduces that necessarily $x_{s\curlywedge t,s}=x_{s\curlywedge t,t}$ or $(x_{s\curlywedge t,s},x_{s\curlywedge t,t})\in \{0,\Delta_{s\curlywedge t}\}^{2}$.
But if $x_{s\curlywedge t,s}=x_{s\curlywedge t,t}\notin\{0,\Delta_{s\curlywedge t}\}$, then the real number  $p=\inf\{r\geq s\curlywedge t:\: x_{s\curlywedge t,r}=x_{s\curlywedge t,s}\}$
satisfies $p\preceq s$, $p\preceq t$ and $s\curlywedge t<p$. This is in contradiction with the definition of $s\curlywedge t$, and we deduce that  the only possibility is
$(x_{s\curlywedge t,s},x_{s\curlywedge t,t})\in \{0,\Delta_{s\curlywedge t}\}^{2}$. Notice then that by properties ({\hypersetup{linkcolor=black}\hyperlink{prop:A:1}{$A_1$}},  {\hypersetup{linkcolor=black}\hyperlink{prop:A:2}{$A_2$}}),  the latter only holds if and only if $s=s\curlywedge t $ and $t=r$. Finally, to conclude  it remains to show that, under  $\mathbf{P}$, the equivalence classes for $\sim_{d}$ contain at most two points. However, this follows directly since the previous argument also shows (by properties ({\hypersetup{linkcolor=black}\hyperlink{prop:A:1}{$A_1$}},  {\hypersetup{linkcolor=black}\hyperlink{prop:A:2}{$A_2$}})) that the point $s$ can only be identified with the point $t$.
\end{proof}

\label{Sec:equiv:d}

It follows from the above proposition that if $s \prec t$ is a strict ancestor of $t$, then $\Pi_{d}(s) \ne \Pi_{d}(t)$, thereby justifying \textit{a posteriori} the terminology. 
Furthermore, from  ({\hypersetup{linkcolor=black}\hyperlink{prop:A:1}{$A_1$}},  {\hypersetup{linkcolor=black}\hyperlink{prop:A:2}{$A_2$}}), it follows that
 if $ \Pi_{d}(s) \ne \Pi_{d}(t)$,  then $s \curlywedge t$ is necessarily a jump time of $X$.  Let us now describe the various types of points that can be encountered in the looptree $ \mathcal{L}$: 
\begin{itemize}
\item \textbf{Root}. The root of $ \mathcal{L}$ is $\Pi_d(0) = \Pi_d(1)$.
\item \textbf{Loops}. For every $t\in[0,1]$ with $\Delta_{t}>0$, we write
 \begin{eqnarray} \label{def:loops} \mathrm{f}_{t}(s):=\inf \big\{r\geq t:\:X_{r}=X_{t}-s\cdot \Delta_{t}\big\}\:\:,\:\:\text{for every}\:\: s\in[0,1].  \end{eqnarray}
 It is easy to check that the set $\Pi_{d}\circ\mathrm{f}_{t}([0,1]) \subset \mathcal{L}$, endowed with the restriction of the metric $d$, forms a loop of length $\Delta_t=X_t-X_{t-}$ equipped with the length metric. For this reason, it is called the \textbf{loop} associated with $t$ in $ \mathcal{L}$. The loops of $ \mathcal{L}$ will later become the \textbf{faces} of our limiting metric space $( \mathcal{S}, D^{*})$. By construction, there are countably many loops, and their union $\cup\{\Pi_{d}\circ\mathrm{f}_{t}([0,1]):\Delta_t>0\}$  is denoted by $ \mathrm{Loops}$. By construction,  we have $\mathcal{L}=\mathrm{Cl}( \mathrm{Loops})$, since the set of jumping times of $X$ is dense in $[0,1]$.

\item \textbf{Pinch points.} A  pinch point is a point different from the root in $ \mathcal{L}$ which has several pre-images by $\Pi_d$, hence exactly $2$ by Proposition \ref{topologie_loop_tree}. Informally, pinch points correspond  to the set of ``touching points between loops''. The set of all pinch points in $ \mathcal{L}$ is called the \textbf{skeleton} of the looptree and denoted by $ \mathrm{Skel}$.
\item \textbf{Leaves.} A point of the looptree which is not a pinch point will be called a \textbf{leaf}.  The set of all leaves is denoted by $ \mathrm{Leaves}$.
\end{itemize}
We stress that there are leaves and pinch points on loops, and that the root is a leaf, see Figure~\ref{fig:examplespoints} for an illustration.  By extension, we shall speak of leaf, pinch point or loop times for times in $[0,1]$ which project respectively on $\mathrm{Leaves}$, $\mathrm{Skel}$ or $\mathrm{Loops}$.

Let us now justify our choice of terminology. In this direction, we define the \textbf{degree} of a point $x \in \mathcal{L}$
 as the number of connected components of $ \mathcal{L} \backslash \{x\}$.

\begin{figure}[!h]
 \begin{center}
 \includegraphics[width=10cm]{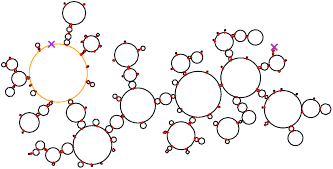}
 \caption{An example of a loop (in orange), of pinch points (in red) and of two leaves (purple crosses) on a looptree. \label{fig:examplespoints}}
 \end{center}
 \end{figure}

\begin{prop}[Degree of points] \label{prop:timeclassification} Under $\mathbf{P}$, all the pinch points of $ \mathcal{L}$ have degree $2$ and the leaves have degree $1$.
\end{prop}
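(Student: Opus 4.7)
The plan is to treat the two cases separately, relying heavily on the explicit description of the equivalence relation $\sim_d$ provided by Proposition \ref{topologie_loop_tree}. In both cases, I would compute the connected components of $\mathcal{L} \setminus \{x\}$ by exhibiting natural candidate subsets coming from intervals in $[0,1]$, and verifying disjointness, openness (in the quotient topology) and connectedness.

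For a pinch point $x \in \mathrm{Skel}$, Proposition \ref{topologie_loop_tree} guarantees $\Pi_d^{-1}(x) = \{s,t\}$ with $s < t$, $X_t = X_{s-}$ and $X_r > X_{s-}$ for all $r \in (s,t)$. The candidate decomposition of $\mathcal{L}\setminus\{x\}$ is
\begin{equation*}
A := \Pi_d((s,t)) \quad \text{and} \quad B := \Pi_d([0,s) \cup (t,1]).
\end{equation*}
Each is a continuous image of a connected subset of $[0,1]$ (using the root $\Pi_d(0) = \Pi_d(1) \neq x$ to join the two pieces making up $B$), hence connected. To establish disjointness, I would argue by contradiction: if $u \in (s,t)$ and $v \in [0,s) \cup (t,1]$ satisfy $u \sim_d v$ with, say, $v<u$, then Proposition \ref{topologie_loop_tree} forces $X_u = X_{v-}$ and $X_r > X_{v-}$ on $(v,u)$; taking the left limit at $s \in (v,u)$ gives $X_{s-} \geq X_{v-}$, while the pinch-point condition at $x$ gives $X_u > X_{s-}$, so $X_u > X_{v-} = X_u$, a contradiction. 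The symmetric case $v > u$ with $v \in (t,1]$ is handled analogously using $X_{u-} \geq X_{s-}$ together with $X_t = X_{s-} > X_{u-}$. Once disjointness is known, one has $\Pi_d^{-1}(A) = (s,t)$ and $\Pi_d^{-1}(B) = [0,s) \cup (t,1]$, both open in $[0,1]$, so $A$ and $B$ are open in the quotient topology. Hence $\mathcal{L}\setminus\{x\}$ has exactly two connected components.

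For a leaf $x \in \mathrm{Leaves}$, I distinguish two subcases. If $x$ is the root, then $\Pi_d^{-1}(x) = \{0,1\}$ and $\mathcal{L} \setminus \{x\} = \Pi_d((0,1))$ is a continuous image of a connected set, hence connected. Otherwise $x$ has a unique preimage $s \in (0,1)$, and
\begin{equation*}
\mathcal{L} \setminus \{x\} = \Pi_d([0,s)) \cup \Pi_d((s,1]),
\end{equation*}
where both pieces are connected (as continuous images) and share the root $\Pi_d(0) = \Pi_d(1) \neq x$, so the union is connected. Either way, $x$ has degree $1$.

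The main obstacle I anticipate is the disjointness argument for pinch points, since it requires careful tracking of strict versus non-strict inequalities involving left limits of $X$. The argument works precisely because Proposition \ref{topologie_loop_tree} phrases identification in terms of $X_{s-}$ together with the \emph{strict} inequality $X_r > X_{s-}$ on $(s,t)$; any loosening would make the two chains of inequalities just meet. Everything else is essentially topological bookkeeping once the right interval decomposition has been identified.
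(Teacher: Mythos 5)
Your decomposition and overall strategy coincide with the paper's: for a pinch point $x$ with $\Pi_d^{-1}(x)=\{s,t\}$, split $\mathcal{L}\setminus\{x\}$ into $A=\Pi_d((s,t))$ and $B=\Pi_d([0,s)\cup(t,1])$, argue both are connected continuous images, and rule out identifications across the split using the explicit description of $\sim_d$ from Proposition \ref{topologie_loop_tree}; for leaves the one-piece image argument is likewise the paper's. Your disjointness computation with left limits and strict inequalities is correct.

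There is one overclaim in the passage ``Once disjointness is known, one has $\Pi_d^{-1}(A)=(s,t)$ and $\Pi_d^{-1}(B)=[0,s)\cup(t,1]$.'' Disjointness of $A$ and $B$ only shows that no $u\in(s,t)$ is identified with any $v\in[0,s)\cup(t,1]$; it does not by itself exclude that some $u\in(s,t)$ is identified with $s$ or with $t$, which would force $x\in A$ and $\{s,t\}\cap\Pi_d^{-1}(A)\neq\varnothing$ (and symmetrically for $B$). What rules this out is the last assertion of Proposition \ref{topologie_loop_tree}, that $\sim_d$-classes contain at most two points: since $s\sim_d t$ already, a third time $u\sim_d s$ (or $u\sim_d t$) is impossible. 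The paper makes exactly this step explicit when it observes that $\Pi_d(s)\notin\mathcal{C}_1\cup\mathcal{C}_2$. With that one line added, your proof is complete and essentially identical to the paper's.
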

\begin{proof}
 If $x \in \mathcal{L}$ is a leaf different from the root, then there exists a unique  $s\in (0,1)$ such that  $x = \Pi_d(s)$. Therefore, we have $\mathcal{L}\setminus \{x\}=\Pi_d([0,1]\setminus\{s\})$. Since $\Pi_{d}$ is continuous and identifies $0$ and $1$,  this implies that  $\mathcal{L}\setminus \{x\}$ is connected. Conversely, if $x$ is a pinch point then by Proposition \ref{topologie_loop_tree} we have $x=\Pi_d(s)=\Pi_d(t)$ with $0 < s < t < 1$ satisfying $I_{s,t} = X_{s-}=X_t$, and we introduce the sets:
  \[\mathcal{C}_{1}:=\Pi_{d}\big((s,t)\big)\mbox{ and } \mathcal{C}_{2}:=\Pi_{d}\big((t,s)\big).\]
Next, we notice that $(s,t)$ and $(t,s)$ are connected, after identifying the points $0$ and $1$. Therefore,   the continuity of $\Pi_{d}$ implies that $\mathcal{C}_{1}$ and $\mathcal{C}_{2}$ are two connected subsets of $\mathcal{L}$. Moreover we have $\mathcal{L}=\mathcal{C}_{1}\cup \mathcal{C}_{2}\cup\{\Pi_{d}(s)\}$ and $\Pi_{d}(s)\notin \mathcal{C}_{1}\cup \mathcal{C}_{2}$ since equivalence classes of $\sim_{d}$ have at most two elements.  To see that $\mathcal{C}_{1}$ and $\mathcal{C}_{2}$ are in fact the two connected components of $\mathcal{L}\setminus \{\Pi_{d}(s)\}$, just note that   for every $r_{1}\in (s,t)$ and $r_{2}\in (t,s)$ we have $I_{r_{1},r_{2}}<X_{r_{1}}$ if $r_{1}<r_{2}$ and  $I_{r_{2},r_{1}}<X_{r_{1}}$ if $r_{2}<r_{1}$ ; otherwise the equivalence class of $s$  would have  more than $2$ elements. This implies by Proposition~\ref{topologie_loop_tree} that $\Pi_{d}^{-1}\big(\mathcal{C}_{1}\big)=(s,t)$ and $\Pi_{d}^{-1}\big(\mathcal{C}_{2}\big)=(t,s)$, and consequently that $\mathcal{C}_{1}$ and $\mathcal{C}_{2}$ are  the two connected components of $\mathcal{L}\setminus \{\Pi_{d}(s)\}$. The case of the root $\Pi_{d}(0)=\Pi_{d}(1)$ is special. The preimage of the root is formed of the two times $0$ and $1$. However, in this case $\mathcal{C}_2$ is empty and we have $\mathcal{C}_1=\mathcal{L}\setminus\{\Pi_{d}(0)\}$. Consequently the root has  degree $1$. 
  \end{proof}
  
  For every $s,t \in [0,1]$, we set 
  \begin{equation} \label{eq:defpinch} \mathrm{Branch}(s,t):=\big\{r\in[0,1]:~s\curlywedge t\prec r\prec t\big\}\cup\big\{r\in[0,1]:~s\curlywedge t\prec r\prec s\big\} \cup\big\{s,t\big\}.  \end{equation}
  In the previous proof, we characterized the connected components of the complement of a point. It is easy to infer from this characterization that the image of $ \mathrm{Branch}(s, t)$ in $ \mathcal{L}$ corresponds to all the points separating $ \Pi_{d}(s)$ from $ \Pi_{d}(t)$ in $ \mathcal{L}$. Notice that, except possibly for the times $s$ and $t$, all times in $ \mathrm{Branch}(s, t)$ are pinch point times.  Also, in the case $s=0$, we simply have 
 \begin{equation}\label{eq:defpinch:0:t} 
  \mathrm{Branch}(0,t)=\big\{r\in[0,1]:~0\preceq r\prec t\big\}\cup \big\{t\big\}.
 \end{equation}
 We stress that the above set  is not necessarily equal to $\{r\in[0,1]:~0\preceq r\preceq t\}$. Actually these two sets are equal if $t$ is a leaf, but if $t$ is a pinch point time, then $\{r\in[0,1]:~0\preceq r\preceq t\}$ may contain the two representatives of $\Pi_{d}(t)$ (if $t$ is the largest representative). However their images by $\Pi_{d}$ coincide. See Figure \ref{fig:descente} for an illustration.
  \begin{figure}[!h]
 \begin{center}
 \includegraphics[width=9cm]{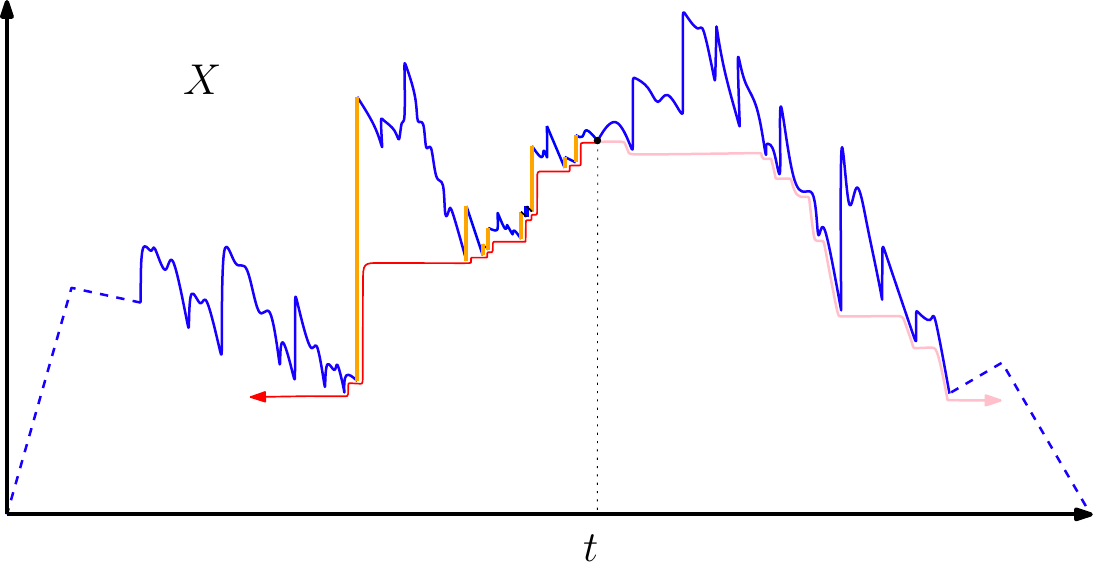}
 \caption{ For a given $t \in (0,1)$, the times $s \preceq t$ are the minimal records found by starting from $X_{t}$ and following the running infimum  of $X$ backward in time  (represented in red in the figure). These constitute the set $\text{Branch}(0,t)$. Except at $0$  and possibly at time $t$, all such times are pinch point times according to the classification above. In particular, each $s \preceq t$ is identified in the looptree to the point $ \inf \{ u \geq t : X_{u} =  X_{s-}\}$. Notice also that, although there are uncountably many $s \preceq t$, there are only countably many $ s \preceq t$ which are also jump times (represented in orange on the figure). These correspond  to the loops encountered  in the looptree $ \mathcal{L}$ when going  from $\Pi_{d}(t)$ back to the root $\Pi_{d}(0)$. \label{fig:descente}}
 \end{center}
 \end{figure} 
 \begin{rek}[Hausdorff dimensions and a heuristic]
 As established in \cite{CKlooptrees}, the (Hausdorff) dimension of $(\mathcal{L},d)$ is~$\alpha$. Additionally, the set of all loops, $\mathrm{Loops} \subset \mathcal{L}$, is a countable union of plain loops, and therefore has dimension 1. While we do not prove it here,  the reader may keep in mind that the skeleton $ \mathrm{Skel} \subset \mathcal{L}$ is of dimension $\alpha-1$. Notably, since $\alpha - 1 < 1$, there are ``fewer" points on the skeleton compared to the loops, and this asymmetry becomes more pronounced as $\alpha \downarrow 1$.
 \end{rek}

\paragraph{Resistance metric.} We shall also equip the looptree $\mathcal{L}$ with another metric, \textbf{the resistance metric}, which will enable us to consider a Gaussian process indexed by $ \mathcal{L}$. This metric has already been introduced and studied by Archer \cite{archer2019brownian} in order to construct the Brownian motion \textit{moving} over $ \mathcal{L}$ (as opposed to our forthcoming ``Brownian motion \textit{indexed} by $ \mathcal{L}$''), see also \cite{blancrenaudie2022looptree} for a systematic study of these processes on non-necessarily stable looptrees.

The construction is \textit{mutatis mutandis} the same as for the metric $d$  replacing the pseudo-distance $\delta$, turning $[0,1]$ into a loop of length $1$, by the function defined for $s,t \in [0,1]$ by
  \begin{eqnarray} \label{eq:defdtilde} \tilde{\delta}(s,t):= \left( \frac{1}{|s-t|} + \frac{1}{1-|s-t|}\right)^{-1}= |s-t|\cdot \big(1-|s-t|\big).  \end{eqnarray} The pseudo-distance $\tilde{\delta}$ is the resistance metric of $[0,1]$, considered  as a loop of length 
$1$  after identifying the points $0$ and $1$,   with unit resistance per unit length.  Replacing $\delta$ by $\tilde{\delta}$ in the definition of $d_0$ and $d$, given in \eqref{def:distancelooptree}, we obtain a new pseudo-distance $ \tilde{d}$ on $[0,1]$ (defined together with $d$) which is quasi-isometric to the original distance $d$ (see \cite[Section 4.1]{archer2019brownian}). Namely, we have  
 \begin{eqnarray}  \label{eq:quasiisod} \frac{1}{2}d(s,t) \leq \tilde{d}(s,t) \leq d(s,t), \quad \text{ for every } s,t \in [0,1],  \end{eqnarray}
since $\frac{1}{2}\delta(s,t)\leq\tilde{\delta}(s,t)\leq \delta(s,t)$. In particular, we have the identification $\mathcal{L}=[0,1]/\sim \tilde{d}$, and $\tilde{d}$ induces a distance function on $\mathcal{L}$, which, as usual, we still denote by $\tilde{d}$. 
\par Let us conclude this section with an upper bound on the minimum number of $\tilde{d}$-closed balls of radius $r$  that covers $[0,1]$. This result will play a pivotal role in the study of the variations of the Gaussian process indexed by $ \mathcal{L}$. In this direction, fix $r>0$, and  consider the finite sequence $(s_{k})_{1\leq k\leq N_{r}}$ of elements of $[0,1]$ defined as follows. First take $s_{1}=0$. For $k\geq 2$,  if $s_{k-1}=1$  then take $N_{r}:=k-1$ and stop the construction of the sequence at step $k-1$. However,  if $s_{k-1}<1$ then set:
\[s_{k}:=\inf\big\{t\in (s_{k-1},1]:~\tilde{d}(s_{k-1},t)\geq r\big\},\]
with the convention $\inf\varnothing=1$. By construction, the collection $\{B_{\tilde{d}}(s_{k},r):~1\leq k\leq N_{r}\}$
is a covering of $[0,1]$ by $\tilde{d}$-closed balls of radius $r$, and as consequence $N_r$ is an upper bound for the $r$-covering number of $ (\mathcal{L}, \tilde{d})$. Building on Archer~\cite{archer2019brownian}, we establish the following uniform control:
\begin{lem}\label{estimate:N:tilde}
There exist two constants $c,C>0$ such that:
$$\mathbf{P}\Big(\sup_{r\in (0,1]} \frac{N_{r}}{\big(r^{-1} \log(1+r^{-1})\big)^{\alpha}}\geq x\Big)\leq C\cdot  \exp\big(-c x^{\frac{1}{\alpha}}\big),\quad \text{ for } x\geq 0.$$
\end{lem}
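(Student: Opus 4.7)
The plan is to combine a pointwise stretched exponential tail bound for $N_r$, at each fixed $r$, with a dyadic discretization in $r$. The starting point, which I propose to extract from Archer's analysis~\cite{archer2019brownian} of the resistance metric on stable looptrees, is the pointwise estimate
\begin{equation}\label{eq:plan:pointwise}
\mathbf{P}\big(N_r \geq y\, r^{-\alpha}\big) \leq C_0 \exp\big(-c_0\, y^{1/\alpha}\big), \qquad y \geq 1,
\end{equation}
valid for every $r \in (0,1]$, with $C_0, c_0 > 0$ absolute constants. Heuristically, covering the loop associated with a jump of size $\Delta_t$ by $\tilde{d}$-balls of radius $r$ requires about $\max(1, \Delta_t/r)$ balls, so that $N_r$ is essentially a Poisson-type functional of the jumps $(\Delta_t)$ of $X$. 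The exponent $1/\alpha$ on the right-hand side of \eqref{eq:plan:pointwise} is the natural one produced by the heavy tail $\mathbf{P}(\Delta_t > y) \asymp y^{-\alpha}$ combined with a standard concentration inequality for heavy-tailed sums, taking into account that under $\mathbf{P}$ the lifetime of $X$ is conditioned to equal~$1$.

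Once \eqref{eq:plan:pointwise} is in hand, the uniform statement follows from a dyadic comparison. Since $r \mapsto N_r$ is non-increasing (for smaller $r$ the greedy construction of $(s_k)$ generates more points), for every $n \geq 0$ and every $r \in [2^{-n-1}, 2^{-n}]$ we have
$$
\frac{N_r}{\big(r^{-1}\log(1+r^{-1})\big)^\alpha} \leq C_1\, \frac{N_{2^{-n-1}}}{\big(2^{n+1}\log(1+2^{n+1})\big)^\alpha},
$$
for some universal constant $C_1$. Applying \eqref{eq:plan:pointwise} at $r = 2^{-n-1}$ with the variable $y\,(\log(1+2^{n+1}))^\alpha$ in place of $y$ gives
$$
\mathbf{P}\!\Big(\frac{N_{2^{-n-1}}}{(2^{n+1}\log(1+2^{n+1}))^\alpha} \geq y\Big) \leq C_0 \exp\!\big(-c_0\, y^{1/\alpha} \log(1+2^{n+1})\big) \leq C_0\, 2^{-c_2(n+1)\, y^{1/\alpha}},
$$
for some $c_2 > 0$. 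A union bound over $n \geq 0$ then yields, for $y \geq 1$,
$$
\mathbf{P}\!\Big(\sup_{r \in (0,1]} \frac{N_r}{(r^{-1}\log(1+r^{-1}))^\alpha} \geq y\Big) \leq \sum_{n\geq 0} C_0\, 2^{-c_2(n+1)\, y^{1/\alpha}} \leq C\exp(-c\, y^{1/\alpha}),
$$
while the range $y \in [0,1]$ is handled trivially by absorbing the constants into the bound $\mathbf{P}(\cdot) \leq 1$.

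The main obstacle is the pointwise estimate \eqref{eq:plan:pointwise}: Archer's arguments are primarily phrased as moment bounds for $N_r$, so one has to do some additional work to extract a genuine stretched exponential. I expect this to follow by decomposing $N_r$ into a counting contribution $\#\{t : \Delta_t \geq r\}$, a ``large loop'' contribution $\sum_{\Delta_t \geq r} \Delta_t/r$, and a negligible skeleton contribution, and then bounding each piece via the Poisson point process structure of the jumps of $X$ under the excursion measure $\mathbf{N}$, together with a standard change of measure to transfer the estimates from $\mathbf{N}$ to $\mathbf{P}$. The logarithmic factor $\log(1+r^{-1})$ appearing in the denominator of the supremum is in turn precisely what is needed to make the union bound over dyadic scales converge, so the exact shape of the right-hand side of Lemma~\ref{estimate:N:tilde} is effectively dictated by this argument.
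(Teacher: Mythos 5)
Your overall strategy is the same as the paper's: derive a pointwise stretched-exponential tail for $N_r$ at fixed $r$, pass to the dyadic scale $r=2^{-n}$ using monotonicity of $r\mapsto N_r$ (which, as you correctly observe, is non-increasing — the paper says ``non-decreasing'' here, which is a typo), scale the parameter by a factor $(\log(1+r^{-1}))^\alpha$, and union bound over $n$. The bookkeeping in the dyadic step is correct.

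The gap is the pointwise estimate \eqref{eq:plan:pointwise}, and the obstacle you identify there is in fact a non-obstacle while the bound you write is in fact an overshoot. Archer's Proposition~5.4 (more precisely its proof) does yield a genuine probability bound, not just moments: there exists $\lambda_0$ such that for every $\lambda\geq\lambda_0$ and $r\leq 1$,
$$\mathbf{P}\big(N_r\geq\lceil\lambda r^{-\alpha}\rceil\big)\ \leq\ c_1\,\lambda r^{-\alpha}\, \mathrm{e}^{-c_1'\lambda^{1/\alpha}},$$
with a polynomial prefactor $\lambda r^{-\alpha}$ in front of the stretched exponential. Your clean form \eqref{eq:plan:pointwise}, with constants uniform over all $r\in(0,1]$ and all $y\geq 1$, cannot be extracted from this bound (or, I believe, proved at all as stated): for fixed $y$ and $r\to 0$ the prefactor $y r^{-\alpha}$ diverges, and there is no exponential room to absorb it at a fixed scale. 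The route via a fresh Poisson decomposition of $N_r$ to clean up the prefactor is therefore both not obviously sound and more work than is needed. What actually saves the argument is precisely the substitution $\lambda \to y(\log(1+r^{-1}))^\alpha$ that you already make: with it, $\mathrm{e}^{-c_1'\lambda^{1/\alpha}} = (1+r^{-1})^{-c_1' y^{1/\alpha}}$, and for $y$ large enough that $c_1' y^{1/\alpha}\geq 2\alpha+1$ (say) this kills the $\lambda r^{-\alpha}$ prefactor with a geometric factor to spare, giving a bound of order $(r\log(1+r^{-1}))^\alpha\cdot y\,\mathrm{e}^{-(c_1'/2)y^{1/\alpha}}$, which is both stretched-exponentially small in $y$ and summable over $r=2^{-n}$. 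So carry the Archer prefactor through the dyadic union bound instead of trying to establish \eqref{eq:plan:pointwise}; your dyadic argument then closes without any additional analysis of the jump structure and recovers the paper's proof. Small values $y\in[0,\lambda_0^{1/\alpha}\vee 1]$ are handled by enlarging $C$, exactly as you say.
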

\begin{proof}
Fix $r\in (0,1]$. By construction, we have:
$$\big\{ N_r\geq  \lceil \lambda r^{-\alpha} \rceil \big\} \subset \Big\{ \exists s, t \in [0,1] : |s-t| \leq \lambda^{-1} r^{\alpha}  \mbox{ and } \tilde{d}(s,t) \geq   r \Big\} .$$
The proof of Proposition 5.4 in \cite{archer2019brownian}  states  that  there exists $\lambda_0$ such that, for all $\lambda\geq \lambda_{0}$  and  $r\leq 1$, we have 
\begin{equation} \label{eq:archer}
 \mathbf{P}\big(N_r \geq \lceil \lambda r^{-\alpha} \rceil\big) \underset{ \tilde{d} \leq d}{\leq} 
\mathbf{P}\left( \exists s, t \in [0,1] : |s-t| \leq \lambda^{-1} r^{\alpha}  \mbox{ and } d(s,t) \geq   r \right) \underset{\cite{archer2019brownian}}{\leq} \mathrm{c}_1  \cdot \lambda r^{-\alpha}  \mathrm{e}^{- \mathrm{c}_1^\prime \lambda^{\frac{1}{\alpha}}},
\end{equation}
 for some $c_1 ,c'_1 >0$.\footnote{In order for the reader to recover the exact result presented here, we mention the following typo in \cite[ Propositions 5.3 and 5.4]{archer2019brownian}: the condition $\lambda \in(0, \frac{1}{2} r^{-\alpha})$  should be replaced by $\lambda \geq   2 r^{\alpha}$.}  Taking $\lambda = x \log(1+r^{-1})^\alpha$ and noticing that for $x$ large enough we have $x^{\frac{1}{\alpha}} \log(1+r^{-1}) \geq  \frac{1}{2} x^{\frac{1}{\alpha}} + \frac{2\alpha}{c_1^\prime} \log(1+r^{-1})$,  we deduce that 
  \begin{eqnarray*}\mathbf{P}\big(N_r \geq \lceil \lambda r^{-\alpha} \rceil\big) &\lesssim&   x\cdot ( r^{-1}\log(1+r^{-1}) )^{\alpha} \cdot \mathrm{e}^{- \mathrm{c}_1^\prime  x^{\frac{1}{\alpha}} \log(1+r^{-1})} \\
  & \lesssim &  x\cdot ( r^{-1}\log(1+r^{-1}) )^{\alpha} \cdot   \mathrm{e}^{- 2 \alpha \log (1 + r^{-1})}\mathrm{e}^{- \frac{\mathrm{c}_1^\prime}{2}  x^{\frac{1}{\alpha}}}\\
    &\lesssim &  \big(r \log(1+r^{-1}) \big)^\alpha \cdot \big( x\,  \mathrm{e}^{- \frac{\mathrm{c}_1^\prime}{2}  x^{\frac{1}{\alpha}}}\big),\end{eqnarray*}
  for every $r\in [0,1]$. The desired result now follows by   performing a union bound over $r = 2^{-k}$, for $k \geq 0$,  and using the fact that $r \mapsto N_r$ is non-decreasing.\end{proof}

\subsection{$ \mathbb{R}$-trees  and the height process } \label{sec:codagearbre}
Although this is not strictly necessary for our immediate purposes, let us compare the $\alpha$-looptree to the $\alpha$-stable tree, which is a random $\R$-tree also encoded by the process $X$. We first recall the classical construction of $\R$-trees from continuous excursions functions \cite{DLG02}. In this direction, fix  $F : [0,1] \to \mathbb{R}$  a continuous function satisfying $F(0)=F(1)=0$. Then,  we define a pseudo-distance, denoted with a lowercase mathfrak font, by the formula 
 \begin{eqnarray*}  \mathfrak{f}(s,t) := F(s)+F(t) -2 \min \left(\min_{[s \wedge t, s \vee t]} F ; \min_{[0,s \wedge t] \cup[s \vee t, 1]} F \right).  \end{eqnarray*} In particular, if $F : [0,1] \to \mathbb{R}_+$ is a non-negative excursion, then the right-hand side of the last display simplifies to $F(s)+F(t) - 2 \min_{u \in [s \wedge t, s \vee t]}F(u)$. The quotient of $[0,1]$ by the equivalence relation $\sim_{\mathfrak{f}}$, equipped with $ \mathfrak{f}$ is an $ \mathbb{R}$-tree\footnote{An $ \mathbb{R}$-tree   is a uniquely arcwise connected metric space, in which each arc is isometric to a compact interval of $\mathbb{R}$.} that we denote by $ \mathcal{T}_ \mathfrak{f}$, see \cite{DLG05} for more details. We shall usually root this tree at $\Pi_ \mathfrak{f}( \mathrm{argmin } \,F)$, and we stress that there is no ambiguity in the definition since two points realizing the minimum of $F$ are identified by $\mathfrak{f}$. We also  notice that for every $s\in [0,1]$ we have:
 $$\mathfrak{f}\big(\Pi_ \mathfrak{f}(s), \Pi_ \mathfrak{f}( \mathrm{argmin }\,F)\big)=F(s)-\min F.$$ 
One can also equip $\mathcal{T}_{\mathfrak{f}}$ with the pushforward of the Lebesgue measure on  $[0,1]$ under the associated canonical projection  which we denote by $\mathrm{Vol}_{\mathfrak{f}}$.
In this work,  we shall specialize the above construction to two random functions:
 \begin{itemize}
 \item the height process $H$ -- see below --  so that $ \mathcal{T}_{ \mathfrak{h}}$, is, under $ \mathbf{P}$, the $\alpha$-stable tree introduced by Duquesne--Le Gall--Le Jan, see \cite{DLG05},
 \item the excursion of the label process $Z$ giving the tree $ \mathcal{T}_ \mathfrak{z}$; see Section \ref{sec:local:minima}.
 \end{itemize}

Obviously, the construction of looptrees from excursions with positive jumps parallels the definition of $ \mathbb{R}$-trees coded by continuous excursions, see \cite{khanfir2022convergences} for a recent generalization which englobes both constructions. We extend the terminology introduced in the previous section to the context of $\R$-trees. More precisely, for every  $x\in \mathcal{T}_{\mathfrak{f}}$, in accordance with the notion in the looptree setting, the \textbf{degree}  of $x$ is  the number of connected components in the complement of  $\{x\}$. We say that $x$ is a \textbf{branching point} if its degree is larger than $2$ and a \textbf{leaf} if it has degree $1$. The skeleton of $\mathcal{T}_{\mathfrak{f}}$ is defined as the set of points with degree larger than $1$, that is the complement of the set of leaves. By construction, with possibly  the exception of times realizing the global minimum, the times realizing two-sided local minima of $F$ correspond, after projection, to branching points of the tree $ \mathcal{T}_ \mathfrak{f}$, whereas  the times realizing one-sided local minima correspond to points of the skeleton of $ \mathcal{T}_{ \mathfrak{f}}$.  We stress that these notions are compatible with the ones that we used for the looptree $\mathcal{L}$. However, note that in the case of $\R$-trees there are no loops, while in the case of the looptree, there are no branching points. 
\medskip

We now recall the construction of the \textbf{height process} $H$ associated with the L\'evy excursion $X$, as built in Le Gall \& Le Jan \cite{LGLJ98}, see also \cite{Du03} and \cite{LG99} for details.  By \cite[Section 1.1 and 1.2]{DLG02}, for every $t\in [0,1]$,  the quantity
\begin{equation}\label{convergence:proba:pour:H}
\varepsilon^{-1} \cdot \int_{0}^{t} \mathrm{d} s ~\mathbbm{1}_{X_s<I_{s,t}+\varepsilon} 
\end{equation}
converges in probability as $\varepsilon \downarrow 0$, under $\mathbf{P}$,  to a random variable that we denote by $H_t$. The process $ t \mapsto H_{t}$ has a continuous modification that we consider from now on, and we keep the notation $H$ for this modification, which is called the height process associated with $X$. Roughly speaking, for every fixed $t\geq 0$, the random variable $H_t$ measures the size of the set: 
\begin{equation*}
    \big\{ 0 \leq s \leq t :~ X_{s-} \leq  I_{s,t} \big\}  = \{ 0 \leq  s  \leq t : ~s \preceq t\}.
\end{equation*}

 In order to make the connection between the height process and the looptree more transparent, let us mention that  by  Equation $(5)$ in \cite{Du03}, the height process satisfies
\begin{equation}\label{def:H}
H_t=\lim_{ \varepsilon \to 0} (\alpha-1)\Gamma(-\alpha)\eps^{\alpha-1}\cdot \#\big\{s\in[0,1]:~s\preceq t\: \text{ and }\:\Delta_{s}>\eps\big\},
\end{equation}
where the convergence holds $\mathbf{P}$-a.s. for a set of values $t\in [0,1]$ of full Lebesgue measure. Since the height process is continuous, we can consider as in \eqref{eq:defpseudodistancearbre} the associated pseudo-distance $\mathfrak{h}$, so that the quotient of $[0,1]$ by the equivalence relation $\sim_{\mathfrak{h}}$, equipped with $ \mathfrak{h}$, is a random $ \mathbb{R}$-tree $ \mathcal{T}_{ \mathfrak{h}}$. The random $\R$-tree $(\mathcal{T}_{\mathfrak{h}}, \mathfrak{h})$ is   the \textbf{$\alpha$-stable tree} of Duquesne--Le Gall--Le Jan, see \cite{DLG05}.

\begin{figure}[!h]
 \begin{center}
 \includegraphics[width=13cm]{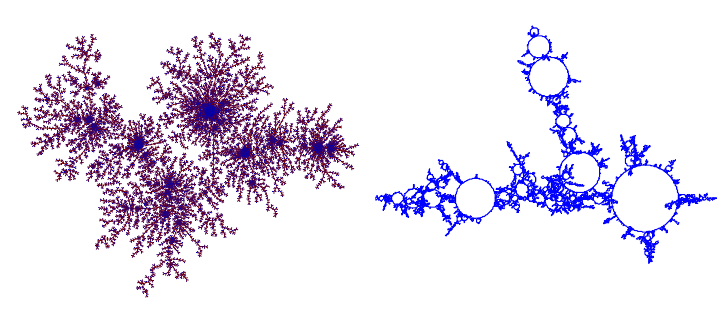}
 \caption{Simulation of an $\alpha$-stable tree and the associated looptree.}
 \end{center}
 \end{figure}
 The looptree $ \mathcal{L}$ and the stable tree $ \mathcal{T}_{\mathfrak{h}}$ have ``the same branching structure'' except that the loops in $ \mathcal{L}$ correspond to the branching points of infinite degree in $ \mathcal{T}_{\mathfrak{h}}$. More precisely, by the definition of $H$ and Proposition \ref{topologie_loop_tree} combined with \cite[Proposition 4.1]{Kor11}, we have  $s \sim_{d}t \Rightarrow s\sim_{\mathfrak{h}}t$, and the only additional identifications in $ \sim_{ \mathfrak{h}}$ are made of the identifications of times corresponding to a common loop in $ \mathcal{L}$. In particular,  the stable tree $ \mathcal{T}_{\mathfrak{h}}$ is a quotient of the looptree $ \mathcal{L}$, and  the skeleton of $ \mathcal{L}$  projects onto the skeleton of $ \mathcal{T}_{ \mathfrak{h}}$. 
 \par
 
As a consequence, the stable tree and its coding height process $H$ are especially useful for ``parametrizing''  the set of points that separate two points in the looptree. This will be play an important role in Section \ref{sec:label:pinch}. Let us make this idea precise. Recall from \eqref{eq:defpinch}  the definition of $ \mathrm{Branch}(0,t)$. For every $t\in [0,1]$ and $r\in [0,H_{t})$, set
\begin{equation}\label{xi:sup}
\xi_{t}(r):=\inf \big\{s\leq t:~H_{u}> r\text{ for every } u\in(s,t]\big\},
\end{equation}
and by convention set $\xi_{t}(r):=t$ for  $r\geq H_{t}$. Notice that the process $(\xi_{t},0\leq t\leq 1)$ is rcll. Moreover, by \eqref{eq:defpseudodistancearbre},  it holds that the image of $[0,H_t]$ by  $\Pi_{\mathfrak{h}} \circ \xi_{t}$ is 
the range of the unique geodesic connecting $\Pi_{\mathfrak{h}}(0)$ and  $\Pi_{\mathfrak{h}}(t)$ in the stable tree $\mathcal{T}_{\mathfrak{h}}$. Specifically,  the point $ \Pi_{\mathfrak{h}}(\xi_{t}(r))$ corresponds to the unique point in the geodesic  at distance $r$ from $\Pi_{ \mathfrak{h}}(0)$  in the stable tree $ \mathcal{T}_{\mathfrak{h}}$. The following lemma states that we can use $\xi_{t}$ to describe  the set $ \mathrm{Branch}(0,t)$, see also Figure \ref{fig:XetH} for an illustration.

\begin{lem}\label{Lem-cut-jump}
$\mathbf{P}$-a.s.,  for every $t\in [0,1]$, the following holds:
\\
\\
$\rm(i)$ $\big\{r\in \mathrm{Branch}(0,t)\setminus\{t\}:~\Delta_r=0\big\}=\big\{\xi_{t}(r):~r\in [0,H_t)\big\}$;
\\
\\
$\rm(ii)$ $\big\{r\in \mathrm{Branch}(0,t)\setminus\{t\}:~\Delta_r>0\big\}=\big\{\xi_{t}(r-):~r\in [0,H_t)\:\:\text{and}\:\:\xi_{t}(r-)\neq \xi_{t}(r)\big\}$.
\end{lem}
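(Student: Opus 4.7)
The plan is to exploit the correspondence between the looptree $\mathcal{L}$ and the stable tree $\mathcal{T}_{\mathfrak{h}}$ coded by the height process $H$. Under the projection $\mathcal{L} \to \mathcal{T}_{\mathfrak{h}}$, the map $r \mapsto \Pi_{\mathfrak{h}}(\xi_{t}(r))$ parameterizes by height the geodesic from $\Pi_{\mathfrak{h}}(0)$ to $\Pi_{\mathfrak{h}}(t)$ in the stable tree; along this geodesic, continuous ancestors of $t$ (with $\Delta_r = 0$) correspond to generic points, whereas jump-time ancestors ($\Delta_r > 0$) correspond to branching points of infinite degree, which force $\xi_t$ to be discontinuous. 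The lemma thus becomes an identification statement that I would extract from standard properties of $H$ combined with properties $(A_1)$ and $(A_2)$.

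First I would establish basic properties of $\xi_t$: it is non-decreasing on $[0, H_t)$ (the constraint ``$H_u > r$ on $(s,t]$'' becomes more restrictive in $r$), rcll by continuity of $H$ on compact intervals, and satisfies $H_{\xi_t(r)} \leq r$ with equality at continuity points. I would then verify that both $\xi_t(r)$ and $\xi_t(r-)$ belong to $\mathrm{Branch}(0, t)\setminus\{t\}$: setting $s = \xi_t(r)$, the definition yields $H_u > r \geq H_s$ for $u \in (s, t]$, hence $H_s = \inf_{[s, t]} H$, and the standard correspondence between $H$ and the ancestor order then gives $s \prec t$.

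For part $(i)$, I would argue by double inclusion. Given $s = \xi_t(r)$, suppose for contradiction $\Delta_s > 0$: the loop $\Pi_d \circ \mathrm{f}_s$ then corresponds to a branching point of infinite degree in $\mathcal{T}_{\mathfrak{h}}$ at height $H_s \leq r$, which forces the existence of times $u \in (s, t]$ with $H_u$ arbitrarily close to $H_s$ and thus $\leq r$, contradicting the defining infimum of $\xi_t$. Conversely, given $s \prec t$ with $\Delta_s = 0$, set $r := H_s$: property $(A_2)$ ensures $H_u > H_s$ on $(s, t]$ (so $\xi_t(r) \leq s$) and also provides times arbitrarily close to $s$ from the left with $H$-values below $H_s$ (so $\xi_t(r) \geq s$), yielding $\xi_t(r) = s$.

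For part $(ii)$, given $s \prec t$ with $\Delta_s > 0$ and $r := H_s$, I would show that $r$ is a discontinuity of $\xi_t$ with $\xi_t(r-) = s$: the loop at $s$ contains ancestors of $t$ clustering at heights just above $r$ but strictly after $s$, producing the required gap. The converse follows from part $(i)$ by contraposition: a discontinuity $r$ of $\xi_t$ has $s_0 := \xi_t(r-) \in \mathrm{Branch}(0,t)\setminus\{t\}$ by Step 2, and if $\Delta_{s_0} = 0$ then $s_0 = \xi_t(H_{s_0})$ by part $(i)$, which together with the monotonicity of $\xi_t$ contradicts $\xi_t(r-) < \xi_t(r)$. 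The main obstacle will be the precise coupling between the discontinuities of $\xi_t$ and the jump times of $X$ that are ancestors of $t$; this requires careful use of property $(A_2)$ and of the defining limit \eqref{def:H}, through which $H$ counts (in a renormalized sense) the jump-time ancestors. Concretely, one must exhibit a bijection between the discontinuity heights of $\xi_t$ and the set $\{H_s : s \preceq t,\ \Delta_s > 0\}$.
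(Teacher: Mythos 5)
Your proposal captures the right geometric picture (ancestors along the branch, the stable tree as quotient of the looptree, loop times as branching points of infinite degree), and several of the individual steps can in fact be completed. But there is a genuine gap, and you have essentially named it yourself at the end: you do not have a precise tool to couple $H$-levels with the looptree ancestor order, and the properties $(A_1)$--$(A_4)$ and the defining limit~\eqref{def:H} alone are not enough. What the paper uses --- and what you are missing --- is the property (called $(C)$ in the paper's proof, imported from Kortchemski's \cite[Proposition 4.1 and Remark 4.3]{Kor11}): for $\ell\preceq\ell'$ one has $H_\ell\leq H_{\ell'}$, and $H_\ell=H_{\ell'}$ if and only if $\{r:\ell\prec r\prec\ell'\}=\varnothing$. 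This exact equivalence is what makes the argument go through; everything hinges on it.

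Concretely, this gap shows up in several places. In the forward inclusion of $(i)$ you argue that the branching point forces ``times $u\in(s,t]$ with $H_u$ arbitrarily close to $H_s$,'' but ``arbitrarily close'' does not suffice to contradict the infimum defining $\xi_t$ --- one needs times with $H_u=H_s$ \emph{exactly}, which is what $(C)$ gives (any $u$ on the loop after $s$ satisfies $\{w:s\prec w\prec u\}=\varnothing$, hence $H_u=H_s$). In the converse inclusion of $(i)$ you write that ``$(A_2)$ ensures $H_u>H_s$ on $(s,t]$,'' but $(A_2)$ is a statement about local minima of $X$ and says nothing about $H$; the correct input is $(A_3)$ (to produce some $w$ with $s\prec w\prec u$) combined with the strict-inequality direction of $(C)$. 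Finally, your contraposition step for $(ii)$ (``$s_0=\xi_t(H_{s_0})$ by $(i)$, which together with monotonicity of $\xi_t$ contradicts $\xi_t(r-)<\xi_t(r)$'') is not yet a proof: monotonicity only forces $H_{s_0}<r$, and to finish one must also invoke the continuity of $H$ together with $H_{\xi_t(\rho)}\leq\rho$ to get $H_{s_0}\geq r$, producing the contradiction. The paper instead handles this direction directly via closedness of $\mathrm{Branch}(0,t)$, right-continuity and monotonicity of $\xi_t$, and then obtains the other inclusion of $(ii)$ by approximating a jump-time ancestor from the left using $(A_4)$ and applying $(i)$, rather than by manipulating $H_s$ directly.
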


\begin{proof} The lemma will follow by combining  properties  {\hypersetup{linkcolor=black}\hyperlink{prop:A:1}{$(A_1)$}}--{\hypersetup{linkcolor=black}\hyperlink{prop:A:4}{$(A_4)$}}, stated in Section \ref{sec:defloop}, with the following extra property, which holds $\mathbf{P}$-a.s.\ as a direct consequence  of \cite[Proposition 4.1 and Remark 4.3]{Kor11}
\\
\\
$(C)$~: For every $\ell \preceq \ell^{\prime}$, we have $H_\ell\leq H_{\ell^{\prime}}$ and the condition $H_\ell=H_{\ell^{\prime}}$ is equivalent to $$\{r\in[0,1]:~\ell \prec r\prec \ell^{\prime}\}=\varnothing.$$
In the rest of the proof we work under the $\mathbf{P}$-a.s.\ event under which $(A_1)$--$(A_4)$ and $(C)$ hold simultaneously.  Moreover, since in the case $t\in\{0,1\}$ there is nothing to prove, we fix 
 $t\in(0,1)$. We start by showing that 
\begin{equation}\label{eq:Pinch:Subset}
\big\{\xi_{t}(r):~r\in [0,H_t)\big\}\subset \big\{r\in \text{Branch}(0,t)\setminus\{t\}:~\Delta_r=0\big\}.
\end{equation}
In this direction, observe that if $\ell \in[0,1]$ is a jumping time for $X$, then, for every $r>\ell$ such that $I_{\ell,r}=X_{r}$, we have $\ell\prec r$ and $\{s\in[0,1]:~\ell \prec s\prec r\}=\varnothing$. Therefore, $(C)$ ensures that $H_{\ell}=H_{r}$ for all such $\ell$ and $r$. From the definition of $\xi_t$ given in \eqref{xi:sup}, we then infer that we must have $\Delta_{\xi_{t}(r)}=0$~, for every $r\in[0,H_t)$. Let us now establish that $\xi_{t}(r)\in \text{Branch}(0,t)$, for every $r\in[0,H_t)$. To this end, fix $r\in[0,H_t)$,
and note that it suffices to show that $I_{\xi_{t}(r),t}\geq  X_{\xi_{t}(r)}$. We argue by contradiction. Namely, if $I_{\xi_{t}(r),t}< X_{\xi_{t}(r)}$ then we let $\ell^\prime>\xi_{t}(r)$ be the smallest element of $(\xi_{t}(r),t]$ such that $I_{\xi_{t}(r),t}= X_{\ell^\prime}$.
In particular, $\ell^\prime$ must be a local minimum, and then, by  {\hypersetup{linkcolor=black}\hyperlink{prop:A:2}{$(A_2)$}}, the common ancestor $\ell=\xi_{t}(r) \curlywedge \ell^\prime$ satisfies $\{s\in[0,1]:~\ell \prec r\prec \ell^{\prime}\}=\varnothing$. An application of $(C)$ then ensures that $H_{\ell}=H_{\ell^\prime}\leq H_{\xi_{t}(r)}=r$. This is in contradiction with the definition of $\xi_{t}(r)$, which completes the proof of \eqref{eq:Pinch:Subset}.

Let us now prove the reverse inclusion. To this end, take $\ell\in \text{Branch}(0,t)$, with $\Delta_\ell=0$ and $\ell\neq t$, and we want to show that $\ell\in \big\{\xi_{t}(r):~r\in [0,H_t)\big\}$. Write $\ell^{\prime}$ for the unique solution of $d(\ell, \ell^\prime)=0$ different from $\ell$. Now remark that since $\ell\prec t$, the definition of $\prec$, combined with Proposition \ref{prop:timeclassification} and property  {\hypersetup{linkcolor=black}\hyperlink{prop:A:2}{$(A_2)$}}, implies that we must have $\ell<t<\ell^{\prime}$ and 
$$X_{\ell}=X_{\ell^{\prime}}<X_{r},\quad \text{ for every } r\in(\ell, \ell^\prime).$$
Then, it follows from   {\hypersetup{linkcolor=black}\hyperlink{prop:A:3}{$(A_3)$}} that $\{r^\prime\in[0,1]:~\ell \prec r^\prime\prec r\}\neq \varnothing$
for every $r\in(\ell,t]$. Consequently, $(C)$ implies that for every $r\in(\ell,t]$ we have $H_\ell<H_{r}$ and we obtain $\ell\in \xi_{t}\big([0,H_t)\big)$, as wanted. This completes the proof of $\rm(i)$.

To obtain $\rm(ii)$  observe that $\sim_{d}$ is a closed equivalence relation, which entails that the set $ \text{Branch}(0,t)$ is closed. Since $r\mapsto \xi_{t}(r)$ is increasing and right-continuous, point $\rm(i)$ then implies the inclusion: 
$$ \big\{r\in \text{Branch}(0,t)\setminus\{t\}:~\Delta_r>0\big\}\supset\big\{\xi_{t}(r-):~r\in [0,H_t)\:\:\text{and}\:\:\xi_{t}(r-)\neq \xi_{t}(r)\big\}~.$$
Inversely, if $r\in \text{Branch}(0,t)$ with $\Delta_r>0$, a standard compactness argument, combined with   {\hypersetup{linkcolor=black}\hyperlink{prop:A:4}{$(A_4)$}}, gives that we can find a sequence $(r_{k})_{k\geq 1}$ increasing to $r$ such that $r_1\prec r_2\prec ....\prec r$ and $\Delta_{r_k}=0$ for every $k\geq 1$. So we can again apply point $\rm{(i)}$ to derive that there exists $\ell_k$ such that $\xi_t(\ell_k)=r_k$, and then we obtain point $\rm{(ii)}$ taking the limit when $k\to \infty$. 

 \end{proof}

\begin{figure}[!h]
 \begin{center}
 \includegraphics[width=13cm]{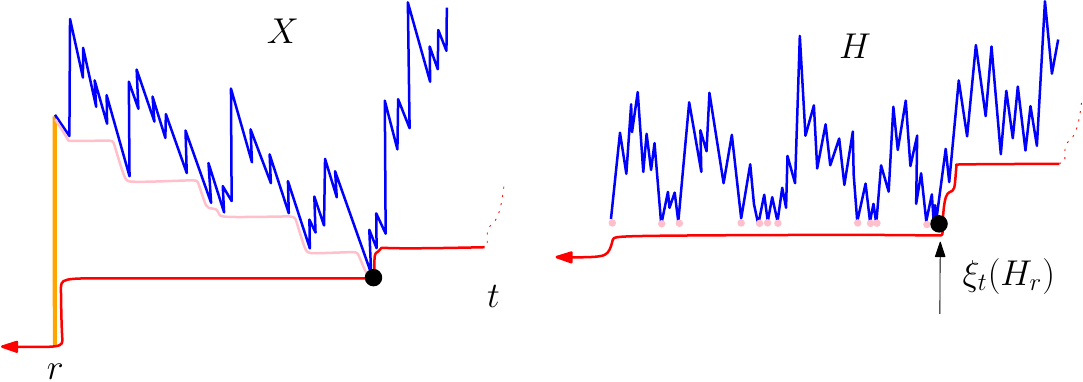}
 \caption{Illustration of a portion of the processes $X$ and $H$, 
   both represented in the vicinity of a time $t$. A jump time for $X$ (in orange) corresponds to  a ``plateau'' of $H$ on which it bounces. In particular, if $r \prec t$ is an ancestor of $t$ at which $\Delta_{r}>0$, we do not have $\xi_{t}(H_{r}) = r$. \label{fig:XetH}}
 \end{center}
 \end{figure}

 \section{Constructions of the label process $Z$ }\label{sec:constr_Z}

The first goal of this section is to show that  the label process $Z$, originally introduced in \cite{LGM09}, can be equivalently defined as  the ``Brownian motion indexed by $ \mathcal{L}$''. 
This process is also referred to as the Gaussian free field on $\mathcal{L}$ pinned at the root, see Section \ref{subsection_Z_Gaussian}, and Figure \ref{fig:sim:Z:*} for a simulation. In Section \ref{sec:equivalence:Z}, using the technology of Gaussian processes, we will be able to obtain 
sharp control on the variations of $Z$. These bounds will be useful later to study scaling limits of planar maps.

Let us now recall the construction of $Z$ given in \cite{LGM09}. To formally introduce this construction, we consider an auxiliary  probability space $(\Omega, \mathcal{G}, P)$ that supports a countable collection $(\mathrm{b}_{i})_{i\in \mathbb{N}}$ of independent Brownian bridges starting and ending at $0$ with lifetime $1$. In what follows, we argue on 
the product space $\mathbb{D}(\mathbb{R}_+,\mathbb{R})\times \Omega$, equipped with the product sigma-field. Moreover, for simplicity and with a slight abuse of notation, we will continue to write  $\mathbf{P}$ for the probability measure $\mathbf{P}\otimes P$ on this extended space. Then  \cite[Proposition 5]{LGM09}, establishes that, under $\mathbf{P}$, for every $t\in [0,1]$, the series 
\begin{equation}\label{Z_represent_Mir}
Z_t  :=    \sum \limits_{i\in \mathbb{N}} \Delta_{\mathrm{t}_{i}}^{\frac{1}{2}}\cdot \mathrm{b}_{i}\big(\frac{x_{\mathrm{t}_{i},t}}{\Delta_{\mathrm{t}_{i}}}\big),
\end{equation}
converges in $L^2$. 
Furthermore, by  \cite[Proposition 6]{LGM09} the process  $ t \mapsto Z_{t}$ has a continuous modification, which is even a.s.~$\frac{1}{2\alpha} -\varepsilon$ H\"older continuous for every $\varepsilon>0$. We consider only this modification, and for simplicity, we keep denoting it by $Z$. Finally, Section \ref{sec:Markov} is devoted to extending the construction of $(X,Z)$ under (enriched versions of) $\mathbf{N}$ and $\mathbf{Q}$, and establishing the associated  Markov properties.

\subsection{The Gaussian Free Field on $ \mathcal{L}$}\label{subsection_Z_Gaussian}
We now construct  the \textbf{Gaussian Free Field} on $( \mathcal{L}, \tilde{d})$ pinned at $\Pi_{d}(0)$.  In this direction, we introduce the function
\begin{equation}\label{def:Cov}
\Gamma(s,t):=\frac{1}{2}\tilde{d}(0,s)+\frac{1}{2}\tilde{d}(0,t)-\frac{1}{2}\tilde{d}(t,s),\qquad s,t\in[0,1]. 
\end{equation}
Our first goal is to show that $\Gamma$ can be used as a covariance function.

\begin{lem}\label{nonnegative_definite}
$\mathbf{P}$-a.s., the function $\Gamma:[0,1]^{2}\to\mathbb{R}$ is symmetric and nonnegative definite i.e:
\begin{equation}\label{equa_nonnegative_definite}
\sum \limits_{i=1}^{n}\sum \limits_{j=1}^{n} \lambda_{i}\lambda_{j}\Gamma(t_{i},t_{j})\geq 0,
\end{equation}
for every integer $n\geq 1$, every $(t_{1},...,t_{n})\in[0,1]^{n}$ and $(\lambda_{1},...,\lambda_{n})\in \mathbb{R}^{n}$.
\end{lem}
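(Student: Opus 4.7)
The plan is to recognize $\Gamma$ as the conditional covariance function of the label process $Z$ given the excursion $X$. Once this identification is made, nonnegative-definiteness follows automatically from the positive semi-definiteness of any covariance matrix of square-integrable random variables, and symmetry is immediate from \eqref{def:Cov}.

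Using the $L^{2}$-convergent series \eqref{Z_represent_Mir} together with the (conditional on $X$) independence of the Brownian bridges $(\mathrm{b}_i)_{i\in\mathbb{N}}$, and the identity $E[(\mathrm{b}(u)-\mathrm{b}(v))^{2}] = |u-v|(1-|u-v|) = \tilde{\delta}(u,v)$ for a standard Brownian bridge $\mathrm{b}$ on $[0,1]$, I obtain
\begin{equation*}
E\bigl[(Z_s - Z_t)^{2}\,\big|\,X\bigr]
= \sum_{i \in \mathbb{N}} \Delta_{\mathrm{t}_i}\,\tilde{\delta}\!\left(\tfrac{x_{\mathrm{t}_i,s}}{\Delta_{\mathrm{t}_i}},\,\tfrac{x_{\mathrm{t}_i,t}}{\Delta_{\mathrm{t}_i}}\right).
\end{equation*}
The next step is to match this sum with the analogue of \eqref{def:distancelooptree} in which $\delta$ is replaced by $\tilde\delta$, by classifying each jump time $\mathrm{t}_i$ relative to $s$, $t$ and $s \curlywedge t$. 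A summand vanishes unless $\mathrm{t}_i$ is an ancestor of $s$ or of $t$, and it also vanishes when $\mathrm{t}_i \prec s \curlywedge t$: in this case $X \geq X_{(s\curlywedge t)-}$ on $[s\curlywedge t,s] \cup [s\curlywedge t,t]$ forces $x_{\mathrm{t}_i,s}=x_{\mathrm{t}_i,s\curlywedge t}=x_{\mathrm{t}_i,t}$. The remaining contributions split into three groups: (i) $s\curlywedge t \prec \mathrm{t}_i \preceq s$, for which $x_{\mathrm{t}_i,t}=0$ by maximality of $s\curlywedge t$, producing exactly the terms of $\tilde d_0(s\curlywedge t,s)$; (ii) the symmetric case producing $\tilde d_0(s\curlywedge t,t)$; and (iii) $\mathrm{t}_i = s\curlywedge t$ when the latter is a jump time, producing the last term of the definition of $\tilde d(s,t)$. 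Summing gives $E[(Z_s-Z_t)^{2}\mid X]=\tilde d(s,t)$, $\mathbf{P}$-a.s.

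Since $Z_0 = 0$ (as $x_{\mathrm{t}_i,0}=0$ for every jump time $\mathrm{t}_i>0$), the same computation yields $E[Z_s^{2}\mid X]=\tilde d(0,s)$, and polarization then gives $E[Z_sZ_t\mid X]=\Gamma(s,t)$, $\mathbf{P}$-a.s. Consequently, for every $n \geq 1$, $(t_1,\ldots,t_n)\in[0,1]^{n}$ and $(\lambda_1,\ldots,\lambda_n)\in\mathbb{R}^{n}$,
\begin{equation*}
\sum_{i,j=1}^{n}\lambda_i\lambda_j\,\Gamma(t_i,t_j)
= E\!\left[\Bigl(\sum_{i=1}^{n}\lambda_i Z_{t_i}\Bigr)^{2}\,\Big|\,X\right] \geq 0,
\end{equation*}
which is \eqref{equa_nonnegative_definite}. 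To upgrade this to the $\mathbf{P}$-a.s.\ statement valid simultaneously for all $(t_i,\lambda_i)$ and all $n$, I would first verify the inequality on a countable dense set of data and then extend by continuity of $(s,t)\mapsto\Gamma(s,t)$, inherited from the continuity of $\tilde d$ (via \eqref{eq:quasiisod} and \cite[Proposition~2.2]{CKlooptrees}). The main obstacle will be the bookkeeping in the case analysis above, which relies on the almost-sure properties $(A_1)$--$(A_4)$ of $X$ recalled in Section \ref{sec:defloop} — most delicately the identity $x_{r,s}=x_{r,s\curlywedge t}=x_{r,t}$ for strict ancestors $r \prec s \curlywedge t$; once this is in place the rest is formal.
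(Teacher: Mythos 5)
Your proof is correct but takes a genuinely different route from the paper. You identify $\Gamma(s,t)$ as the conditional covariance $\mathbf{E}[Z_sZ_t\,|\,X]$ by computing, from the $L^2$-convergent series \eqref{Z_represent_Mir} and the conditional independence of the decorating Brownian bridges, that $\mathbf{E}[(Z_s-Z_t)^2\,|\,X]=\tilde d(s,t)$; nonnegative definiteness then falls out for free because $\Gamma$ is a conditional covariance. The paper instead gives a purely geometric/analytic argument that never touches the process $Z$: it invokes Archer's result \cite[Lemma~4.5]{archer2019brownian} exhibiting, for any finite set of times, a finite electrical network whose effective resistance reproduces $\tilde d$, together with the classical fact that the half-sum $\frac{1}{2}R_G(r\leftrightarrow s)+\frac{1}{2}R_G(r\leftrightarrow t)-\frac{1}{2}R_G(s\leftrightarrow t)$ is nonnegative definite on any conductance network, and from this derives the stronger inequality $\sum_{i,j}\lambda_i\lambda_j\Gamma(t_i,t_j)\geq (\sum_i\lambda_i)^2\tilde d(0,t_1\curlywedge\cdots\curlywedge t_n)$.

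The trade-off is as follows. Your argument is more probabilistically natural and shorter, but it relies on the $L^2$ convergence of the Le Gall--Miermont series and on a case analysis over ancestor types (which, as you observe, requires the almost-sure properties $(A_1)$--$(A_4)$, and in particular the observation that $x_{r,s}=x_{r,s\curlywedge t}=x_{r,t}$ for $r\prec s\curlywedge t$, which is correct once one checks $I_{r,s\curlywedge t}\leq X_{(s\curlywedge t)-}\leq I_{s\curlywedge t,s}$). The computation you carry out is essentially the core of the later Proposition~\ref{b_Brownian_Brigde}, which establishes that $(X,Z)$ and $(X,Z^*)$ agree in distribution; so in effect you are proving part of that equivalence ahead of time. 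This is not circular, but it does somewhat dilute the architecture of Section~\ref{sec:constr_Z}, where the paper prefers to define $Z^*$ from $\Gamma$ alone (using Lemma~\ref{nonnegative_definite} without reference to $Z$), derive its regularity via Gaussian tools, and only afterwards verify the equivalence of constructions. The paper's resistance-metric route buys a self-contained, process-free proof and a stronger refinement \eqref{cut:Gamma}, at the modest cost of importing Archer's network approximation of $\tilde d$. Your extension step at the end — first verify on a countable dense set, then extend by continuity of $\Gamma$ inherited from \eqref{eq:quasiisod} — is standard and fine.
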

\begin{proof}
The function $\Gamma$ is clearly symmetric $\mathbf{P}$-a.s. Let us show that it is also nonnegative definite. This is a straightforward verification. One can establish directly from the definition that $\mathbf{P}$-a.s.:
\begin{equation}\label{cut:Gamma}
\sum \limits_{i=1}^{n}\sum \limits_{j=1}^{n} \lambda_{i}\lambda_{j}\Gamma(t_{i},t_{j})\geq \big(\sum \limits_{i=1}^{n}\lambda_{i}\big)^{2}\tilde{d}(0,t_{1}\curlywedge...\curlywedge t_{n}),
\end{equation}
simultaneously  for every integer $n\geq 1$, $(t_{1},...,t_{n})\in[0,1]^{n}$ and $(\lambda_{1},...,\lambda_{n})\in \mathbb{R}^{n}$. This can be proved by induction on $n$, by cutting at time $t_{1}\curlywedge...\curlywedge t_{n}$. 
   Since this verification is  a little bit tedious, we are going to deduce \eqref{cut:Gamma} from the following result due to  Archer \cite[Lemma 4.5]{archer2019brownian}.  Under $\mathbf{P}$, for every $n\geq 1$ and $(t_{1},...,t_{n})\in[0,1]^{n}$, there exists a graph $G$ with conductances such that:
\begin{itemize}
\item The set vertices of $G$ is  ${V}(G):= \{t_{i}:\:1\leq i\leq n\}\cup \{t_{i}\curlywedge t_{j}:\:1\leq i\leq j\leq  n\}$;
\item For every $s, t\in {V}(G)$:
\[\widetilde{d}(s,t)=R_{G}(s\leftrightarrow t),\]
where $R_{G}(s\leftrightarrow t)$ stands for the effective resistance in $G$ between $s$ and $t$.
\end{itemize}
In particular, notice that the point $r:=t_{1}\curlywedge...\curlywedge t_{n}$ is a vertex of $G$. Moreover, by a classical result on networks  \cite[Exercise 2.59 and the comments about it]{LP16}   the function:
\[{V}({G})\ni(s,t) \mapsto \frac{1}{2} R_{G}(r\leftrightarrow s)+\frac{1}{2} R_{G}(r\leftrightarrow t)-\frac{1}{2} R_{G}(s\leftrightarrow t),  \]
is nonnegative definite. Since for every $1\leq i\leq n$ we have $\tilde{d}(0,t_{i})=\tilde{d}(0,r)+\tilde{d}(r,t_{i})$,
we infer that for every $\lambda_{1},...,\lambda_{n}\in \mathbb{R}^{n}$:
\begin{align*}
\sum \limits_{i=1}^{n}\sum \limits_{j=1}^{n} \lambda_{i}\lambda_{j}\Gamma(t_{i},t_{j})&=\sum \limits_{i=1}^{n}\sum \limits_{j=1}^{n} \lambda_{i}\lambda_{j}\tilde{d}(0,r)
+\frac{1}{2}\sum \limits_{i=1}^{n}\sum \limits_{j=1}^{n} \lambda_{i}\lambda_{j}\big(\tilde{d}(r,t_{i})+\tilde{d}(r,t_{j})-\tilde{d}(t_{i},t_{j})\big)\\
&=\big(\sum \limits_{i=1}^{n}\lambda_{i}\big)^{2}\tilde{d}(0,r)+\frac{1}{2}\sum \limits_{i=1}^{n}\sum \limits_{j=1}^{n} \lambda_{i}\lambda_{j}\big(R_{G}(r\leftrightarrow t_{i})+R_{G}(r\leftrightarrow t_{j})-R_{G}(t_{i}\leftrightarrow t_{j})\big)\\
&\geq \big(\sum \limits_{i=1}^{n}\lambda_{i}\big)^{2}\tilde{d}(0,r),
\end{align*}
and we obtain \eqref{cut:Gamma}. Consequently, $\mathbf{P}$-a.s., the function $\Gamma:[0,1]^{2}\to\mathbb{R}$ is symmetric and nonnegative definite. 
\end{proof}

As a consequence of Lemma \ref{nonnegative_definite} we can consider, conditionally on $X$, a centered Gaussian process $(Z^*_t : t \in [0,1])$ with covariance function $\Gamma$.
In particular, since $\tilde{d}(0,0)=\tilde{d}(0,1)=0$,  we have $Z^*_0=Z^{*}_{1}=0$. To avoid confusion, we  denote the conditional distribution (resp.~expectation) of the above process  by $\mathbf{P}_{X}$ (resp.  $\mathbf{E}_{X}$). In particular, we have $ \mathbf{E}_X[Z^*_sZ^*_t] = \Gamma(s,t)$.  Although the looptree $ \mathcal{L}$ is itself a random fractal object,  the machinery of Gaussian processes is powerful enough to prove that there is a modification of $Z^*$ continuous with respect to $\tilde{d}$ with a strong control on the modulus of continuity:

\begin{prop}\label{variations_Z} There exists a modification of $Z^*$ which is continuous for the pseudo-distance $\tilde{d}$ (hence for the Euclidean norm on $[0,1]$)  for which the following hold:
\begin{itemize}
\item[\rm{(i)}]  There exist constants $c,C,\beta>0$,  such that, for every $x\geq 0$,  
\[ \mathbf{P}(\sup Z^* >x)\leq C\cdot \exp(-c x^{\beta}).\]
\item[\rm{(ii)}] There exist constants $ \mathrm{c}, \mathrm{C}>0$, $x_{0}>1$ such that,  for every  $x\geq x_0$ and every integer   $n\geq1$, 
\[\mathbf{P}\left(\exists s,t\:\text{with}\:\tilde{d}(s,t)<2^{-n}\:\text{such that}\:|Z_{s}^*-Z_{t}^*|\geq x  \sqrt{n} 2^{-n/2}\right)\leq   \mathrm{C}\cdot  \exp(- \mathrm{c} x^{2} n).\]
\end{itemize}
\end{prop}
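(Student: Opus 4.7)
The plan is to view $Z^*$, conditionally on $X$, as a centered Gaussian process on $[0,1]$ whose canonical pseudo-metric is $\sqrt{\tilde{d}}$. Indeed, a direct computation from \eqref{def:Cov} yields
$$\mathbf{E}_X\big[(Z^*_s - Z^*_t)^2\big] = \Gamma(s,s) + \Gamma(t,t) - 2\Gamma(s,t) = \tilde{d}(s,t).$$
This reduces the problem to a pure regularity statement for Gaussian processes indexed by a metric space, to which I would apply the classical toolkit (Dudley's entropy bound, chaining, and the Borell--TIS concentration inequality). The only genuinely ``random geometry'' input is Lemma~\ref{estimate:N:tilde}, which controls the $\tilde{d}$-covering numbers $N_r$ of $[0,1]$. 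A continuous modification of $Z^*$ for $\tilde{d}$ (equivalently, for $d$ by \eqref{eq:quasiisod}, and hence for the Euclidean topology on $[0,1]$ since $\Pi_d$ is continuous) will then come for free from the quantitative modulus of continuity obtained below, via a standard Borel--Cantelli argument.

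For part (ii), I would run a chaining argument. Set $M := \sup_{r \in (0,1]} N_r/(r^{-1}\log(1+r^{-1}))^\alpha$, which by Lemma~\ref{estimate:N:tilde} has a stretched exponential tail. On $\{M \leq m\}$, the $\sqrt{\tilde{d}}$-covering number of $[0,1]$ at scale $2^{-k/2}$ is at most $C m k^\alpha 2^{k\alpha}$. Choose nested $2^{-k/2}$-nets $\mathcal{N}_k$ of that cardinality, and for each $s\in[0,1]$ pick a nearest point $s_k \in \mathcal{N}_k$. For any $s,t$ with $\tilde{d}(s,t)< 2^{-n}$, write the telescoping decomposition
$$Z^*_s - Z^*_t = (Z^*_{s_n} - Z^*_{t_n}) + \sum_{k\geq n} (Z^*_{s_{k+1}} - Z^*_{s_k}) - \sum_{k\geq n} (Z^*_{t_{k+1}} - Z^*_{t_k}).$$
Each increment at scale $k$ is centered Gaussian with variance at most $C\cdot 2^{-k}$; combining a standard Gaussian tail bound with a union over the $\leq (C m k^\alpha 2^{k\alpha})^2$ admissible pairs at scale $k$, and then summing over $k\geq n$, gives that conditionally on $X$ with $M\leq m$,
$$\mathbf{P}_X\Big(\exists s,t:\ \tilde{d}(s,t)<2^{-n},\ |Z^*_s - Z^*_t|\geq x\sqrt{n}\,2^{-n/2}\Big)\ \leq\ C\exp(-cx^2 n),$$
as soon as $x\geq x_0(m)$ (the factor $\sqrt{n}$ absorbs the $(\log N)^{1/2}$ entropy factor and the constant $x_0$ depends on $m$ only through a polynomial in $\log m$). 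Integrating out the stretched-exponential tail of $M$ then gives the unconditional bound of the statement for $x$ beyond some absolute $x_0$.

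Part (i) will follow from the same inputs via Dudley's entropy bound and Borell--TIS. Conditionally on $X$,
$$\mathbf{E}_X[\sup Z^*]\ \leq\ K\int_0^{\sqrt{\mathrm{diam}_{\tilde{d}}}} \sqrt{\log N^{\sqrt{\tilde{d}}}(\varepsilon)}\,\d\varepsilon,$$
and on $\{M\leq m\}$ the integrand is $O\big(\sqrt{\alpha \log(1/\varepsilon) + \log m}\big)$, so $\mathbf{E}_X[\sup Z^*]$ is controlled by a polynomial in $\sqrt{\log m}$ and $\sqrt{\mathrm{diam}_{\tilde{d}}}$. The Borell--TIS inequality then yields
$$\mathbf{P}_X\big(\sup Z^* \geq \mathbf{E}_X[\sup Z^*]+y\big)\ \leq\ \exp\!\Big(-\frac{y^2}{2\,\mathrm{diam}_{\tilde{d}}}\Big).$$
Since both $M$ and $\mathrm{diam}_{\tilde{d}}\leq \mathrm{diam}_d \lesssim \sup_{[0,1]} X$ have stretched exponential tails under $\mathbf{P}$ (the latter being classical for the normalized stable excursion), integrating out $X$ produces the announced bound $\mathbf{P}(\sup Z^*>x) \leq C\exp(-cx^\beta)$ for some $\beta\in(0,1)$ depending on $\alpha$.

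The main obstacle, as always in such mixed-randomness settings, is the bookkeeping at the integration step: the Gaussian concentration gives an $\exp(-cy^2/\mathrm{diam}_{\tilde d})$ tail while the environmental quantities $(M,\mathrm{diam}_{\tilde d})$ themselves have only stretched-exponential tails, and these two speeds must be balanced so that the pre-factors swallowed when conditioning on $\{M\leq m\}$ do not destroy the final tail. In (ii), the quadratic-in-$x$, linear-in-$n$ exponent is exactly what Gaussian chaining yields once the entropy factor $(\log N_r)^{1/2}$ has been absorbed into $x\geq x_0$; the $\log\log$ term in Lemma~\ref{estimate:N:tilde} is responsible for needing $\sqrt{n}$ (rather than a more refined $\sqrt{n/\log n}$) in the modulus.
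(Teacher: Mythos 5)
Your proposal is mathematically sound and follows essentially the same route as the paper: identify $\sqrt{\tilde d}$ as the conditional canonical Gaussian metric, invoke Lemma~\ref{estimate:N:tilde} for the entropy, apply Gaussian concentration (Borell--TIS) to pass from means to tails, and then integrate out the environmental randomness. The only genuine difference is cosmetic: you run the chaining by hand, whereas the paper invokes Dudley's theorem (\cite[Theorem~6.1.2]{marcus_rosen_2006}) twice as a black box to bound $\mathbf{E}_X[\sup Z^*]$ and $\mathbf{E}_X[\sup_{\tilde d(s,t)<2^{-n}}|Z^*_s-Z^*_t|]$, and then applies Borell--TIS once per $\tilde d$-ball $B_{\tilde d}(s_i,2^{-n+1})$ with a union bound over the $N_{2^{-n}}$ balls. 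Since chaining is what sits inside Dudley's proof, the two are the same argument at different levels of abstraction; neither buys much over the other here. Your incidental bound $\mathrm{diam}_{\tilde d}\leq \mathrm{diam}_d\lesssim \sup_{[0,1]}X$ is correct (it follows from $d(0,t)\leq\sum_{r\preceq t}x_{r,t}=X_t$ by property $(A_3)$) and gives an alternative to the paper's $\mathrm{diam}\leq 2N_1\leq 2M$.

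One bookkeeping point deserves more care than your sketch gives it, and it is exactly the spot you yourself flag as the ``main obstacle.'' When you ``integrate out the stretched-exponential tail of $M$'' in (ii), the cutoff on $M$ must be allowed to depend on $n$ and not only on $x$. Your phrasing (``$x_0$ depends on $m$ only through a polynomial in $\log m$'', then ``for $x$ beyond some absolute $x_0$'') suggests fixing $m=m(x)\approx e^{cx^2}$, the worst-case constraint coming from $n=1$; but then $\mathbf{P}(M>m(x))\lesssim\exp(-c\,m(x)^{1/\alpha})$ is a quantity that does not decrease with $n$, so for fixed $x\geq x_0$ and $n$ large it eventually dominates the target $\exp(-cx^2n)$. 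The correct cut is $\{\log M\leq x^2 n\}$, as in the paper: the chaining constraint is actually $\log m\lesssim (x^2-1)n$ (it relaxes as $n$ grows), so this larger threshold is still admissible, and it gives $\mathbf{P}(\log M>x^2n)\lesssim\exp\!\big(-c\,e^{x^2n/\alpha}\big)\lesssim\exp(-c'x^2n)$ uniformly in $n\geq1$ once $x\geq x_0$. With that fix your argument closes.
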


\begin{proof}

The proof relies on classical results on Gaussian processes such as Dudley's theorem. In this direction, recall our upper bound $N_r$ on the $r$-covering number of $ ( \mathcal{L},\tilde{d})$ defined just before Lemma~\ref{estimate:N:tilde}.  We introduce the random variable 
 $$M:= \sup\limits_{r\in (0,1]}\frac{N_{r}}{\big(r^{-1} \log(1+r^{-1})\big)^{\alpha}},$$ which by Lemma \ref{estimate:N:tilde} is almost surely finite, 
 and we write   $\mathrm{diam}= \sup\{\tilde{d}(s,t):~s,t\in [0,1]\}$ for the diameter of $\mathcal{L}$ with respect to $\tilde{d}$.  Next, we observe that:
\begin{align*}
\int_{0}^{\infty}\sqrt{\log\big(N_{r^{2}}\big)}~\d r&\leq  \int_{0}^{1}\sqrt{\log\big(N_{r^{2}}\big)}~\d r+\mathrm{diam}\cdot  \sqrt{\log\big(N_{1}\big)}.
\end{align*}
Since $\mathrm{diam}\leq 2 N_1$, a direct computation using Jensen's inequality  combined with the fact that $M\geq N_1/\log(2)^\alpha>1$  yields the existence of a constant $C_{1}\in(0,\infty)$ such that:
\begin{equation}
\int_{0}^{\infty}\sqrt{\log\big(N_{r^{2}}\big)} ~\d r\leq C_1\cdot M\sqrt{\log(M)}.
\end{equation}
Since $M$ is almost surely finite we can apply   Dudley's theorem (see \cite[Theorem 6.1.2]{marcus_rosen_2006} for a reference, where  the distance $d_{X}$ used therein corresponds to  $ \sqrt{\tilde{d}}$ in our setting), which implies that the process $Z^{*}$ admits a continuous modification  for $ \tilde{d}$  satisfying 
 \begin{align}\label{eq:E:sup:Z}
  \mathbf{E}_{X}[\sup Z^{*}]&\leq  C_2\cdot M\sqrt{\log(M)},
\end{align}
for some  constant $ C_2 \in(0,\infty)$. In the remainder of the proof, we consider this continuous modification.  Dudley's theorem  (see the same reference) also ensures the existence of another constant $C_3\in(0,\infty)$ such that
$$ \mathbf{E}_{X}\Big[\sup \limits_{\tilde{d}(s,t)<2^{-n}}|Z^{*}_{s}-Z^{*}_{t}|\Big]\leq
    C_3\cdot \int_{0}^{2^{-\frac{n}{2}}}\sqrt{\log\big(N_{r^{2}}\big)}~\mathrm{d} r,$$
    for every $n\geq 0$. Therefore, another application of Jensen inequality, combined with $M\geq 1$ and the identify  $\int_{0}^{t}\log(u) \mathrm{d}u=t\log(t)-t$, ensures that there exists
    a constant $C_4\in(0,\infty)$ such that:
\begin{equation}\label{eq:espvario}
 \mathbf{E}_{X}\Big[\sup \limits_{\tilde{d}(s,t)<2^{-n}}|Z^{*}_{s}-Z^{*}_{t}|\Big]\leq   C_4 \cdot 2^{-\frac{n}{2}} \left(\sqrt{ \log (M)} + \sqrt{n} \right), 
\end{equation} 
for every $n\geq 0$. We stress that the constants $C_1,C_2,C_3$ and $C_4$ are universal and do not depend on $X$. To simplify the following expressions, we set $C:=3\vee C_1\vee C_2\vee C_3\vee C_4$.  To strengthen these   expectation estimates into tail estimates as stated in points (i) and (ii), we shall use a version of the Borell--TIS inequality,  which states that the supremum of a Gaussian process exhibits Gaussian tails when centered around its mean.
\par  Let us start proving (i). In this direction, an application of  \cite[Theorem 5.4.3]{marcus_rosen_2006} entails that,  for every $x>0$, we have
$$\mathbf{P}_{X}\Big(\sup Z^{*}-\mathbf{E}_{X}[\sup Z^{*}]>x\Big)\leq 2 \exp(-\frac{x^{2}}{ 2\text{v}_{*}}), \quad \mbox{
where } \text{v}_{*}:=\sup\limits_{s\in[0,1]} \mathbf{E}_{X}[ (Z^{*}_s)^{2}].$$ Let $x>C$, and, to simplify notation, consider the unique $a>1$ such that  $x = C  \cdot a \log (a)$. Then, by \eqref{eq:E:sup:Z}, we have \begin{align*}
    \mathbf{P}(\sup Z^{*}>2x)&=\mathbf{E}\big[\mathbf{P}_{X}(\sup Z^{*}>2x)\big]\\
    &= \mathbf{E}\Big[\mathbf{P}_{X}\big(\sup Z^{*}-\mathbf{E}_{X}[\sup Z^{*}]>2x-\mathbf{E}_{X}[\sup Z^{*}]\big)\Big]\\
    &\leq \mathbf{P}\big(M\geq a\big)+\mathbf{E}\Big[\mathbbm{1}_{M<a}\mathbf{P}_{X}(\sup Z^*-\mathbf{E}_{X}[\sup Z^*]>{x})\Big]\\
    &\leq \mathbf{P}\big(M\geq a\big)+2\mathbf{E}[\mathbbm{1}_{M<a}\exp(- \frac{x^{2}}{2\text{v}_{*}})].
\end{align*}
Point (i) now follows by using Lemma \ref{estimate:N:tilde} and noticing  that, under the event $\{M<a\}$, we have:
\begin{align*}
    \text{v}_{*}&=\sup\limits_{s\in [0,1]} \mathbf{E}_{X}[ (Z^{*}_s)^2]\leq \mathrm{diam}\leq 2 N_1\leq 2M \leq 2x.
\end{align*}
We proceed similarly to establish (ii). Recall the notation $0<s_{1} < ... < s_{N_{2^{-n}}} \leq 1$ introduced prior to Lemma \ref{estimate:N:tilde}, so that $[0,1]= \bigcup \limits_{i=1}^{N_{2^{-n}}}B_{\tilde{d}}(s_{i},2^{-n})$. In particular, for every $s,t\in [0,1]$ with $\tilde{d}(s,t)<2^{-n}$, there exists $1\leq i\leq N_{2^{-n}}$ such that  $s,t\in B_{\tilde{d}}(s_{i},2^{-n+1})$.  It follows that:
\begin{equation*}
\sup\limits_{\tilde{d}(s,t)<2^{-n}}|Z_{s}^*-Z_{t}^*|\leq 2\cdot \sup \limits_{1\leq i\leq N_{2^{-n}}} \sup\limits_{s\in B_{\tilde{d}}(s_{i},2^{-n+1})}|Z_{s}^*-Z_{s_{i}}^*|.
\end{equation*}
To control the right-hand side we note that under $\mathbf{P}_{X}$, for every  $1\leq i\leq N_{2^{-n}}$, the process $$\Big(Z_{s}^*-Z_{s_i}^*:s\in B_{\tilde{d}}(s_{i},2^{-n+1})\Big)$$ is a centered Gaussian process  with
\[\sup \limits_{s\in B_{\tilde{d}}(s_{i},2^{-n+1})}\limits\mathbf{E}_{X}\big[(Z_{s}^*-Z_{s_{i}}^*)^{2}\big]\leq 2^{-n+1}.\]
We can then apply  \cite[Theorem 5.4.3]{marcus_rosen_2006} again combined with the bound \eqref{eq:espvario},  to obtain as above that, under the event $\{\log (M) \leq x^2 n\}$,  for every $x\geq 1$ and $1\leq i\leq N_{2^{-n}}$:
 $$ \mathbf{P}_X\Big(\sup\limits_{s\in B_{\tilde{d}}(s_{i},2^{-n+1})}|Z_{s}^*-Z_{s_{i}}^*|>  5 C  x \cdot \sqrt{n} 2^{-n/2}\Big)\leq 2\exp(- C^2 x^2 n).  $$
 Next, since $\alpha\in(1,2)$,  a  direct computation shows that under $\{\log (M) \leq x^2  n\}$ we also have $N_{2^{-n}}\leq   \exp((4+x^2) n)$. Therefore, recalling that $C\geq 3$, a union bound gives
 $$ \mathbf{P}\Big( \sup \limits_{1\leq i\leq N_{2^{-n}}}  \sup\limits_{s\in B_{d}(s_{i},2^{-n+1})}|Z_{s}^*-Z_{s_{i}}^*|> 5C  x \cdot \sqrt{n} 2^{-n/2}\Big)\leq \mathbf{P}\big(\log (M) \geq x^2 n \big)+ 2 \exp\big(-4 x^2 n\big).$$
 The desired result now follows by Lemma  \ref{estimate:N:tilde}. 
 \end{proof}
 
From now on we consider the $\tilde{d}$-continuous modification of $Z^*$, that we still denote by $Z^*$. As a direct consequence  of Proposition \ref{variations_Z},  we have $\mathbf{P}\text{-a.s.}$, for all $s,t \in [0,1]$
\begin{equation}\label{label} 
d(s,t)=0\implies Z_{s}^{*}=Z_{t}^{*}.
\end{equation}
This allows to interpret the process $Z^{*}$ as a label process on $\mathcal{L}$, and, with a slight abuse of notation, we continue to denote it by $Z^{*}$. Namely,  for every $u\in \mathcal{L}$, set $Z_{u}^{*}:=Z_{t}^{*}$ where $t$ is any preimage of $u$ by $\Pi_{d}$. Furthermore, point $\mathrm{(ii)}$ and the Borel-Cantelli lemma show that $u \in \mathcal{L} \mapsto Z^{*}_{u}$ is $(\frac{1}{2}-\varepsilon)$- H\"older continuous for every $ \varepsilon>0$. By \eqref{eq:archer}, the mapping $(s,t) \mapsto d(s,t)$ is itself $ \frac{1}{\alpha}- \varepsilon$ H\"older continuous for every $ \varepsilon>0$. We deduce that for   every $ \varepsilon>0$, there exists a random variable $W \equiv W_{ \varepsilon}\in(0,\infty)$ such that    \begin{eqnarray}\label{Z:variation:L}
  \vert Z_{u}^*-Z_{v}^*\vert \leq W \cdot d(u,v)^{ \frac{1}{2}- \varepsilon}, \:\: u,v \in \mathcal{L}, \quad \mbox{ and } \quad    \vert Z_{s}^*-Z_{t}^*\vert \leq W\cdot |s-t|^{ \frac{1}{2 \alpha}- \varepsilon}, \:\: s,t \in [0,1].    \end{eqnarray}
In particular, $t \mapsto Z^{*}_{t}$ is $\frac{1}{2\alpha}-\varepsilon$ Hölder continuous, for every $\varepsilon>0$.
\begin{figure}[!h]
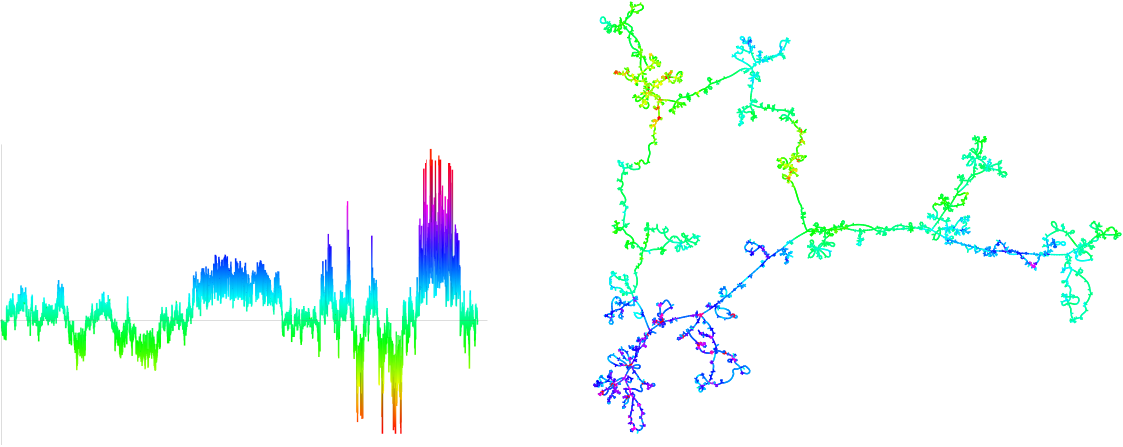

 \begin{center}
   \includegraphics[height=4cm]{images/Z8000bis} \hspace{1cm}
 \includegraphics[height=6cm]{images/Zlooptree8000} 
 \caption{Simulation of the label process $Z^*$ over $[0,1]$ (on the left) and seen on the looptree of Figure \ref{fig:looptree} on the right.}\label{fig:sim:Z:*}
 \end{center}
 \end{figure}

\subsection{Equivalence of the constructions}\label{sec:equivalence:Z}
The goal of this section is to  establish that $(X,Z)$ and $(X,Z^{*})$ have the same distribution under $\mathbf{P}$.  To this end, it will be useful to consider the trace of $ Z^{*}$ on the loops of $ \mathcal{L}$.  Recall that  $(\mathrm{t}_{i})_{i\in \mathbb{N}}$ is a measurable indexation of the jumping times of $X$, and recall the notation 
$ \mathrm{f}_{\mathrm{t}_{i}}(s)=\inf \{r\geq \mathrm{t}_{i}:~X_{r}=X_{\mathrm{t}_{i}}-s\Delta_{\mathrm{t}_{i}}\}$ for $s\in[0,1]$, which stands for the parametrization of the loop associated with $\mathrm{t}_{i}$. For every index $i\in \mathbb{N}$, we consider the function:
\begin{equation}\label{def:b:i:*}
\mathrm{b}_{i}^{*}(s):=\Delta_{\mathrm{t}_{i}}^{-\frac{1}{2}}\cdot\big(Z_{\mathrm{f}_{\mathrm{t}_{i}}(s)}^{*}-Z_{\mathrm{t}_{i}} ^{*}\big),\quad s\in [0,1],
\end{equation}
which is a continuous process. We have the following result:

\begin{prop}[Equivalence of the constructions]\label{b_Brownian_Brigde} With the above notation, under $ \mathbf{P}$ and conditionally on $X$,  the family $(\mathrm{b}_{i}^{*})_{i\in \mathbb{N}}$ is formed of i.i.d.~Brownian bridges with lifetime $1$ starting and ending at $0$. Moreover, we have $(X,Z^{*})= (X,Z)$ in distribution under $\mathbf{P}$.
\end{prop}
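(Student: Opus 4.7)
My plan is to establish the two claims in the order stated, but by first showing the distributional identity $(X, Z^*) \stackrel{(d)}{=} (X, Z)$, and then inferring the structure of $(\mathrm{b}_i^*)_{i \in \mathbb{N}}$ from it. Conditionally on $X$, both $Z$ and $Z^*$ are centered Gaussian processes admitting continuous modifications: the former as the $L^2$-convergent series \eqref{Z_represent_Mir} of independent centered Gaussians, the latter by Lemma \ref{nonnegative_definite} and Proposition \ref{variations_Z}. Their conditional laws are therefore determined by their covariance kernels, so it suffices to match these.

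The cleanest route is via second moments of increments. Using independence of the bridges $(\mathrm{b}_i)_{i\in\mathbb{N}}$ in \eqref{Z_represent_Mir},
\[
\mathbf{E}_X\big[(Z_s - Z_t)^2\big] = \sum_{i\in\mathbb{N}} \Delta_{\mathrm{t}_i}\, \mathbf{E}\Big[\big(\mathrm{b}_i(x_{\mathrm{t}_i,s}/\Delta_{\mathrm{t}_i}) - \mathrm{b}_i(x_{\mathrm{t}_i,t}/\Delta_{\mathrm{t}_i})\big)^2\Big].
\]
For a Brownian bridge with lifetime $1$, a short calculation gives $\mathbf{E}[(\mathrm{b}(a) - \mathrm{b}(b))^2] = |a-b|(1-|a-b|) = \tilde{\delta}(a,b)$, so the right-hand side becomes $\sum_{i} \Delta_{\mathrm{t}_i}\, \tilde{\delta}(x_{\mathrm{t}_i,s}/\Delta_{\mathrm{t}_i},\, x_{\mathrm{t}_i,t}/\Delta_{\mathrm{t}_i})$. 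Splitting the sum according to the position of $\mathrm{t}_i$ (strict common ancestor of $s,t$, equal to $s\curlywedge t$, ancestor of only one, or neither), and using $\tilde{\delta}(a,a)=0$ together with $\tilde{\delta}(0,a)=a(1-a)$, this expression coincides term by term with the definition of $\tilde{d}(s,t)$ obtained by replacing $\delta$ by $\tilde{\delta}$ in \eqref{def:distancelooptree}. Specializing to $t = 0$ yields $\mathbf{E}_X[Z_s^2] = \tilde{d}(0,s)$, and polarization then gives $\mathbf{E}_X[Z_sZ_t] = \Gamma(s,t)$. Hence $(X,Z) \stackrel{(d)}{=} (X,Z^*)$.

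Having established the distributional identity, set $\tilde{\mathrm{b}}_i(u) := \Delta_{\mathrm{t}_i}^{-1/2}(Z_{\mathrm{f}_{\mathrm{t}_i}(u)} - Z_{\mathrm{t}_i})$. Then $(X, (\mathrm{b}_i^*)_{i}) \stackrel{(d)}{=} (X, (\tilde{\mathrm{b}}_i)_{i})$ and it suffices to analyze the latter. Expand both $Z_{\mathrm{f}_{\mathrm{t}_i}(u)}$ and $Z_{\mathrm{t}_i}$ using \eqref{Z_represent_Mir}. Two elementary facts about the ancestor relation collapse the series: for every strict ancestor $\mathrm{t}_j \prec \mathrm{t}_i$ and every $u \in [0,1)$ one has $x_{\mathrm{t}_j, \mathrm{f}_{\mathrm{t}_i}(u)} = x_{\mathrm{t}_j, \mathrm{t}_i}$ (both lineages exit loop $\mathrm{t}_j$ at the same spot, since the infimum of $X$ on $[\mathrm{t}_j, \mathrm{f}_{\mathrm{t}_i}(u)]$ is attained in $[\mathrm{t}_j, \mathrm{t}_i]$); and $x_{\mathrm{t}_i, \mathrm{f}_{\mathrm{t}_i}(u)} = (1-u)\Delta_{\mathrm{t}_i}$ while $x_{\mathrm{t}_i, \mathrm{t}_i} = \Delta_{\mathrm{t}_i}$. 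Since $\mathrm{b}_i(1) = 0$, all terms except the $i$-th one cancel and we obtain $\tilde{\mathrm{b}}_i(u) = \mathrm{b}_i(1 - u)$. Conditionally on $X$, the family $(\mathrm{b}_i(1 - \cdot))_{i \in \mathbb{N}}$ is i.i.d.\ and each coordinate is a Brownian bridge with lifetime $1$ from $0$ to $0$ (time reversal preserves this law), so the same holds for $(\mathrm{b}_i^*)_{i \in \mathbb{N}}$.

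The only delicate step is the term-by-term matching $\sum_i \Delta_{\mathrm{t}_i}\tilde{\delta}(x_{\mathrm{t}_i,s}/\Delta_{\mathrm{t}_i}, x_{\mathrm{t}_i,t}/\Delta_{\mathrm{t}_i}) = \tilde{d}(s,t)$, which requires a clean case analysis according to the position of $\mathrm{t}_i$ relative to $s \curlywedge t$; in particular the case $\mathrm{t}_i = s \curlywedge t$ (when this is a jump time) produces precisely the ``loop at $s \curlywedge t$'' contribution in \eqref{def:distancelooptree}, while strict ancestors produce exactly the two $\tilde{d}_0$-terms. Once this bookkeeping is done, both claims of the proposition reduce to standard Gaussian identifications and the elementary time-reversal symmetry of the Brownian bridge.
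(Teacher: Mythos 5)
Your proposal is correct but reverses the logical order of the paper and uses a different computational route for each half. The paper first computes the conditional covariances $\mathbf{E}_X[\mathrm{b}_i^*(s)\mathrm{b}_j^*(t)]$ of the extracted processes directly from the formula \eqref{def:Cov} for $\Gamma$ to identify them as bridges, and then proves $Z=Z^*$ pointwise under the coupling $\mathrm{b}_i=\mathrm{b}_i^*$ by expanding $\mathbf{E}_X[(Z_t^*-\sum_i\Delta_{\mathrm{t}_i}^{1/2}\mathrm{b}_i^*(\cdots))^2]$ into three terms and showing it vanishes. You instead go from the series definition of $Z$: you compute $\mathbf{E}_X[(Z_s-Z_t)^2]=\tilde d(s,t)$ by summing the bridge increment variances $\tilde\delta(\cdot,\cdot)$ over jump times, polarize to get $\Gamma$, and conclude $(X,Z)\overset{(d)}=(X,Z^*)$ by matching continuous conditional Gaussian laws; you then derive the bridge structure of $\mathrm{b}_i^*$ \emph{as a consequence}, by collapsing the series for $Z_{\mathrm{f}_{\mathrm{t}_i}(u)}-Z_{\mathrm{t}_i}$ using the facts $x_{\mathrm{t}_j,\mathrm{f}_{\mathrm{t}_i}(u)}=x_{\mathrm{t}_j,\mathrm{t}_i}$ for $\mathrm{t}_j\prec\mathrm{t}_i$ and $x_{\mathrm{t}_i,\mathrm{f}_{\mathrm{t}_i}(u)}=(1-u)\Delta_{\mathrm{t}_i}$, together with the absence (via property $(A_1)$) of any jump-time ancestor of $\mathrm{f}_{\mathrm{t}_i}(u)$ strictly between $\mathrm{t}_i$ and $\mathrm{f}_{\mathrm{t}_i}(u)$, to land on the identity $\tilde{\mathrm{b}}_i=\mathrm{b}_i(1-\cdot)$. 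Each approach has merits: the paper's coupling argument yields the pointwise identification $Z=Z^*$ as a by-product, which is convenient elsewhere; your route avoids the $L^2$ expansion, proceeds from the known object $Z$ to the abstract one $Z^*$, and exposes more explicitly the fact that the trace of $Z$ on a loop is nothing but the time-reversed attached bridge. Both ultimately require essentially the same case-by-case bookkeeping on the ancestor relation, just placed in different spots of the argument.
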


\begin{proof}
We begin by showing that $(\mathrm{b}_{i}^{*})_{i\in \mathbb{N}}$ is a sequence of i.i.d.~Brownian bridges with lifetime $1$, starting and ending at $0$, independent of $X$. In this direction, we work under $\mathbf{P}_{X}$. Note that by construction, conditionally on $X$, the process $(\mathrm{b}_{i}^{*}(s))_{(i,s)\in \mathbb{N}\times[0,1]}$ is a Gaussian process. Furthermore, for every $i\in \mathbb{N}$ and  $s,t\in[0,1]$, we have:
\begin{align*}
\mathbf{E}_{X}\big[\mathrm{b}_{i}^*(t)\mathrm{b}_{i}^*(s)]&=\Delta_{\mathrm{t}_{i}}^{-1}\cdot \Big(\mathbf{E}_{X}[(Z_{ \mathrm{f}_{\mathrm{t}_{i}}(s)}^{*}-Z_{\mathrm{t}_{i}}^{*})(Z_{ \mathrm{f}_{\mathrm{t}_{i}}(t)}^{*}-Z_{\mathrm{t}_{i}}^{*})]\Big)\\
&=\Delta_{\mathrm{t}_{i}}^{-1}\cdot \Big(\Gamma( \mathrm{f}_{\mathrm{t}_{i}}(s), \mathrm{f}_{\mathrm{t}_{i}}(t))+\Gamma(\mathrm{t}_{i},\mathrm{t}_{i})-\Gamma( \mathrm{f}_{\mathrm{t}_{i}}(s),\mathrm{t}_{i})-\Gamma( \mathrm{f}_{\mathrm{t}_{i}}(t),\mathrm{t}_{i})\Big).
\end{align*}
A direct computation  using \eqref{def:Cov} gives that the previous display is equal to $\frac{1}{2}\tilde{\delta}(0,t)+\frac{1}{2}\tilde{\delta}(0,s)-\frac{1}{2}\tilde{\delta}(t,s)$,
which is the covariance function of a Brownian bridge with lifetime $1$ starting and ending at $0$. In particular, it does not depend on the realization of $X$. Moreover, for $i\neq j$ in $\mathbb{N}$ and $s,t$ in $[0,1]$, a similar computation shows that
\begin{align*}
\mathbf{E}_{X}\big[\mathrm{b}^{*}_{i}(t)\mathrm{b}^{*}_{j}(s)]=\Delta_{\mathrm{t}_{i}}^{-\frac{1}{2}}\Delta_{t_{j}}^{-\frac{1}{2}}\mathbf{E}_{X}\big[(Z_{ \mathrm{f}_{\mathrm{t}_{i}}(s)}^{*}-Z_{\mathrm{t}_{i}}^{*})(Z_{ \mathrm{f}_{\mathrm{t}_{i}}(t)}^{*}-Z_{t_{j}}^{*})\big]=0.
\end{align*}
Therefore, $(\mathrm{b}_{i}^{*})_{i\in \mathbb{N}}$ has the same finite-dimensional marginals as a family of i.i.d.~Brownian bridges with lifetime $1$, starting and ending at $0$, independent of $X$, and since the processes $b^*_i$, $i\in \mathbb{N}$, have continuous sample paths, we deduce that they indeed form a family of i.i.d.\ Brownian bridges. It is then clear that we can couple the construction of $Z$ and $Z^{*}$ using the same Brownian bridges $\mathrm{b}_{i} = \mathrm{b}_{i}^{*}$.  Since both processes are continuous, to conclude it suffices to establish that, with this coupling and for every fixed $t\in[0,1]$, we have $Z_t=Z_t^*$,  $\mathbf{P}$-a.s. To this end,  note that this reduces to showing that:
$$\mathbf{E}_X\Big[\Big(Z_t^*- \sum \limits_{i\in \mathbb{N}} \Delta_{\mathrm{t}_{i}}^{\frac{1}{2}}\cdot \mathrm{b}_{i}^*\big(\frac{x_{\mathrm{t}_{i},t}}{\Delta_{\mathrm{t}_{i}}}\big)\Big)^2\Big]=0,\quad t\geq 0.$$
For the remaining of the proof we fix $t\in [0,1]$.  To simplify notation, for $i\in\mathbb{N}$, set $u_{i}=\frac{x_{\mathrm{t}_{i},t}}{\Delta_{\mathrm{t}_{i}}}$, and observe that the expectation in the previous display can be decomposed in the form:
\begin{equation}\label{eq:Z:Z:*:deco}
\mathbf{E}_{X}\big[(Z_{t}^{*})^{2}\big] + \mathbf{E}_{X}\Big[\Big(\sum \limits_{\overset{i\in \mathbb{N}}{\mathrm{t}_{i}\preceq t}} \Delta_{\mathrm{t}_{i}}^{\frac{1}{2}}\cdot \mathrm{b}_{i}^{*}\big(u_i\big)\Big)^{2}\Big]-2\sum \limits_{\mathrm{t}_{i}\preceq t} \Delta_{\mathrm{t}_{i}}^{\frac{1}{2}}\cdot\mathbf{E}_{X}\big[ Z_{t}^{*}\: \mathrm{b}_{i}^{*}\big(u_{i}\big)\big].
\end{equation}
 We are going to conclude by computing each term separately. First, it follows from  \eqref{def:Cov} that  $\mathbf{E}_{X}\big[(Z_{t}^{*})^{2}\big]= \tilde{d}(0,t)$. Moreover, using the independence of the $(b_i^*)_{i\in \mathbb{N}}$, we get:
$$\mathbf{E}_{X}\Big[\Big(\sum \limits_{\overset{i\in \mathbb{N}}{\mathrm{t}_{i}\preceq t}} \Delta_{\mathrm{t}_{i}}^{\frac{1}{2}}\cdot \mathrm{b}_{i}^{*}\big(u_i\big)\Big)^{2}\Big]= \sum \limits_{\overset{i\in \mathbb{N}}{\mathrm{t}_{i}\preceq t}} \Delta_{t_i} \cdot \widetilde{\delta}(0,u_i)= \tilde{d}(0,t).$$
 Therefore to conclude, we need to show that the remaining term in \eqref{eq:Z:Z:*:deco} equals $- 2 \tilde{d}(0,t)$. To this end, note that it follows straightforwardly from definitions  \eqref{def:Cov} and \eqref{def:b:i:*}, that under $\mathbf{P}_X$, the random variables $Z_{\mathrm{t}_{i}}^{*}$ and $Z_{t}^{*}-Z_{\mathrm{f}_{\mathrm{t}_{i}}(u_{i})}^{*}$ are independent of $\mathrm{b}_{i}^{*}(u_{i})$. We infer that:
\begin{align*}
\mathbf{E}_{X}\big[ Z_{t}^{*}\: \mathrm{b}_{i}^{*}\big(u_{i}\big)\big]&=\mathbf{E}_{X}\big[ \big(Z_{t}^{*}-Z_{\mathrm{f}_{\mathrm{t}_{i}}(u_{i})}^{*}\big)\: \mathrm{b}_{i}^{*}(u_{i})\big]+\mathbf{E}_{X}\big[ \big(Z_{\mathrm{f}_{\mathrm{t}_{i}}(u_{i})}^{*}-Z_{\mathrm{t}_{i}}^{*}\big)\: \mathrm{b}_{i}^{*}(u_{i})\big]
+\mathbf{E}_{X}\big[ Z_{\mathrm{t}_{i}}^{*}\: \mathrm{b}_{i}^{*}\big(u_{i})\big]\\
&=\Delta_{\mathrm{t}_{i}}^{\frac{1}{2}}\cdot\mathbf{E}_{X}\big[\mathrm{b}_{i}^{*}(u_{i})^{2}\big]=\Delta_{\mathrm{t}_{i}}^{\frac{1}{2}}\cdot\widetilde{\delta}(0,u_i),
\end{align*}
As a consequence, we get:
\begin{align*}
\sum \limits_{\mathrm{t}_{i}\preceq t} \Delta_{\mathrm{t}_{i}}^{\frac{1}{2}}\cdot\mathbf{E}_{X}\big[ Z_{t}^{*}\: \mathrm{b}_{i}^{*}\big(u_{i}\big)\big]=\sum \limits_{\overset{i\in \mathbb{N}}{\mathrm{t}_{i}\preceq t}} x_{\mathrm{t}_{i},t}(\Delta_{\mathrm{t}_{i}}-x_{\mathrm{t}_{i},t})= \widetilde{d}(0,t),
\end{align*}
as wanted.
 \end{proof}

We conclude this subsection with the re-rooting invariance property.   For every $s,t\in [0,1]$, set $s\oplus t=s+t$ if $s+t\leq 1$ and $s\oplus t=(s+t)-1$ otherwise. We then claim that:
\begin{lem}[Invariance by re-rooting and time reversal] \label{lem:invariancereroottime}
For every $s\in [0,1]$, under $\mathbf{P}$ we have:
\begin{equation}\label{re-rooting}
    \big(d(s\oplus t,s\oplus t^{\prime}),\tilde{d}(s\oplus t,s\oplus t^{\prime}),Z_{s\oplus u}-Z_{s}\big)_{t,t^{\prime},u\in[0,1]}\overset{(d)}{=}\big(d( t, t^{\prime}),\tilde{d}( t, t^{\prime}),Z_{ u}\big)_{t,t^{\prime},u\in[0,1]}
\end{equation}
and
\begin{equation}\label{symmetric}
\big(d(1-t,1- t^{\prime}),\tilde{d}(1- t,1- t^{\prime}),Z_{1- u}\big)_{t,t^{\prime},u\in[0,1]}\overset{(d)}{=}\big(d( t, t^{\prime}),\tilde{d}( t, t^{\prime}),Z_{u}\big)_{t,t^{\prime},u\in[0,1]}.    
\end{equation}
\end{lem}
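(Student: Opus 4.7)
The plan is to reduce both identities to invariance properties of the driving pair $(X, (\mathrm{b}_i)_{i \in \mathbb{N}})$ and then verify that the functionals $d$, $\tilde{d}$ and $Z$ transform in the stated way. In both cases, the key point is that $d$, $\tilde{d}$ and $Z$ are intrinsic to the ancestry structure of $X$ and do not genuinely depend on the choice of root or on the orientation of the parametrization.

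For \eqref{re-rooting}, fix $s \in [0,1]$ and consider the cyclically re-rooted excursion $X^{(s)}$, defined by reading $X$ starting from time $s$ (continuing cyclically past time $1$) and subtracting the appropriate running infimum so as to produce a new non-negative excursion of length $1$. By the classical Vervaat-type re-rooting invariance of the normalized spectrally positive stable excursion, $X^{(s)}$ has the same law as $X$ under $\mathbf{P}$. The jumps of $X^{(s)}$ are in bijection with those of $X$ via the cyclic shift $\mathrm{t}_i \mapsto \mathrm{t}_i \ominus s$, and the decorating Brownian bridges $(\mathrm{b}_i)$ transfer without any change. A direct inspection of the definitions \eqref{def:distancelooptree}, \eqref{eq:defdtilde} and \eqref{Z_represent_Mir} then shows that the functionals built from $X^{(s)}$ satisfy
\begin{equation*}
d^{(s)}(t,t') = d(s \oplus t, s \oplus t'), \qquad \tilde{d}^{(s)}(t,t') = \tilde{d}(s \oplus t, s \oplus t'), \qquad Z^{(s)}_u = Z_{s \oplus u} - Z_s,
\end{equation*}
for every $t, t', u \in [0,1]$; combining with the equality in law $X^{(s)} \stackrel{(d)}{=} X$ (and the corresponding transfer of the bridges) yields \eqref{re-rooting}.

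For \eqref{symmetric}, the naive time-reversed process $\check{X}_t := X_{(1-t)-}$ has only negative jumps, so $X$ itself is not literally invariant under $t \mapsto 1-t$. Nevertheless, the relevant data for the looptree is symmetric under this swap: the set of identified pairs characterized by \eqref{eq:equivalent:classes} in Proposition \ref{topologie_loop_tree} is mapped to itself, the sizes $(\Delta_{\mathrm{t}_i})$ are preserved, and the positions $x_{\mathrm{t}_i,t}$ appearing in \eqref{def:d:0:1} and \eqref{Z_represent_Mir} get conjugated by $u \mapsto 1-u$ inside each loop. Since each Brownian bridge $\mathrm{b}_i(\cdot)$ has the same law as $\mathrm{b}_i(1-\cdot)$ and the family remains i.i.d., the joint distribution of the decoration is unchanged. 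A term-by-term comparison then gives $d(1-t,1-t') \stackrel{(d)}{=} d(t,t')$ jointly with the analogous identities for $\tilde{d}$ and $Z_{1-u} \stackrel{(d)}{=} Z_u$, proving \eqref{symmetric}.

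The main technical obstacle is the time-reversal part: one must verify carefully that the law of the \emph{ancestry-plus-jump-size data}, and not merely of $X$ itself, is distributionally invariant under $t \mapsto 1-t$. This is most cleanly phrased via the duality between spectrally positive and spectrally negative L\'evy excursions under the excursion measure $\mathbf{N}$ (from which the statement under $\mathbf{P}$ follows by conditioning on lifetime). Once this structural invariance is established, all the identities involving $d$, $\tilde{d}$ and $Z$ reduce to routine re-indexings in the defining formulas.
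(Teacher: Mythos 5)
Your opening reduction --- using the Gaussian characterization of $Z$ given $X$ (covariance determined by $\tilde d$) to reduce the full statement to the joint invariance of $(d,\tilde d)$ --- is exactly the paper's first step, and that part is sound. The trouble is with how you then justify the invariance of $(d,\tilde d)$.

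For \eqref{re-rooting} you assert that a Vervaat-type cyclic re-rooting $X^{(s)}$ of the normalized stable excursion has the same law as $X$ for a \emph{fixed} $s\in[0,1]$. This is not correct. The normalized excursion is not cyclically exchangeable: reading $X$ starting from a deterministic $s$ gives a path that starts at $X_s>0$ and vanishes only at time $1-s$, which is not an excursion, and there is no natural renormalization (running-infimum reflection does not produce an excursion either) that restores the law. The classical Vervaat transform moves between \emph{bridge} and \emph{excursion} by shifting at the argmin (or, conversely, at an independent uniform time); it does not give invariance of the excursion under a deterministic cyclic shift. Nor is it clear that there even exists an excursion $X^{(s)}$ with $d^{(s)}(t,t')=d(s\oplus t,s\oplus t')$: the map $X\mapsto d$ is far from injective, and the invariance one wants lives at the level of the (parametrized) looptree, not of $X$ itself. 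The paper explicitly flags this point --- ``in principle, this should follow from invariance properties of the excursion process $X$. However, we take a different route'' --- and instead obtains the invariance of $(d,\tilde d)$ by viewing the looptree as the Gromov--Hausdorff scaling limit of uniformly random discrete dissections (and the discrete analogue of the resistance metric), where rotation invariance of the law is manifest, and then transferring it to the limit.

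For \eqref{symmetric}, you correctly note that naive time reversal of $X$ produces negative jumps, and that one must argue at the level of the ancestry/jump-size data. Your suggestion to use the duality between spectrally positive and spectrally negative excursions under $\mathbf{N}$ is a reasonable angle, but you leave it as a programme (``one must verify carefully\dots'') rather than a proof. Again, the paper sidesteps this entirely: the discrete dissections are invariant under reflection of the polygon as well, so the time-reversal invariance of $(d,\tilde d)$ comes along for free in the same limit argument.

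In short, the genuine gap is the claimed cyclic invariance of the stable excursion under a deterministic shift: it is false, and the re-rooting identity for $(d,\tilde d)$ needs a different argument (the discrete-dissection limit theorem being the one the paper uses). The time-reversal part is on the right track conceptually but is not actually proved.
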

\begin{proof}
First, remark that  for every $s\in [0,1]$, conditionally on $X$, the process $(Z_{s\oplus u})_{u\in [0,1]}$ is a Gaussian process with covariance function characterized by $\mathbf{E}_{X}[(Z_{s\oplus u}-Z_{s\oplus u^{\prime}})^{2}]=\tilde{d}(s\oplus u,s\oplus u^{\prime})$, for every $u,u^{\prime}\in [0,1]$. Similarly, the process $(Z_{1- u})_{u\in [0,1]}$ is, conditionally on $X$,  a Gaussian process with covariance function characterized by $\mathbf{E}_{X}[(Z_{1- u}-Z_{1- u^{\prime}})^{2}]=\tilde{d}( 1-u,1- u^{\prime})$, for every $u,u^\prime \in[0,1]$. Consequently, it suffices to prove that:
\begin{equation*}
    \big(d(s\oplus t,s\oplus t^{\prime}),\tilde{d}(s\oplus t,s\oplus t^{\prime})\big)_{t,t^{\prime}\in[0,1]}\overset{(d)}{=}\big(d( t, t^{\prime}),\tilde{d}( t, t^{\prime})\big)_{t,t^{\prime}\in[0,1]}
\end{equation*}
and
\begin{equation*}
\big(d(1-t,1- t^{\prime}),\tilde{d}(1- t,1- t^{\prime})\big)_{t,t^{\prime}\in[0,1]}\overset{(d)}{=}\big(d( t, t^{\prime}),\tilde{d}( t, t^{\prime})\big)_{t,t^{\prime}\in[0,1]}.    
\end{equation*}
In principle, this should follows from invariance properties of the excursion process $X$. However, we take a different route 
by reasoning from the discrete setting. Indeed,  the looptree $\mathcal{L}$ can be obtained as limit (in Gromov-Hausdorff sense) of discrete dissections equipped with the graph distance (we refer to \cite[Theorem 1.3]{CKlooptrees} for more details).  These discrete dissections are also symmetric with respect to the root and are invariant by rotation (a more formal statement is given in Section $4.2$ and Remark 4.6 therein).  Taking the limit, this entails the desired invariance properties for $d$. It is also possible to define a discrete analog of $\tilde{d}$ in the setting of discrete dissections, then  a minor adaptation of the proof of \cite[Proposition 4.6]{archer2019brownian} gives the desired properties for $(d,\tilde{d})$.
\end{proof}

\subsection{Markov property} \label{sec:Markov}
This section is devoted to the Markov properties of the process  $(X,Z)$, inherited from the standard Markov properties of  stable Lévy processes.  Recall from the beginning of Part \ref{PartI} that the canonical process $X$ is distributed under $ \mathbf{Q}$ as the $\alpha$-stable L\'evy process without negative jumps and  under $ \mathbf{N}^{(v)}$ as the  excursion above its running infimum with lifetime $v>0$. Recall also that  $ \mathbf{N}$ stands for  the  excursion measure and we write $\sigma:=\sup \{t\geq 0:~X_t\neq 0\}$ for the excursion lifetime.  As in the case of the measure $\mathbf{P}$, we can enrich $\mathbf{N}^{(v)}$,  $\mathbf{N}$,  and $\mathbf{Q}$ so that they support an i.i.d. sequence of Brownian bridges. More precisely, recall that at the beginning of Section~\ref{sec:constr_Z} we considered an auxiliary probability space $(\Omega, \mathcal{G}, P)$ that supports a countable collection $(\mathrm{b}_{i})_{i\in \mathbb{N}}$ of independent Brownian bridges starting and ending at $0$ with lifetime $1$. Then, as we did for $\mathbf{P}$, we  work on the product space $\mathbb{D}(\mathbb{R}_+, \mathbb{R})\times \Omega$ and consider the measures $\mathbf{N}^{(v)}\otimes P$, $\mathbf{N}\otimes P$, and $\mathbf{Q}\otimes P$. For simplicity we will continue to use the notation $\mathbf{N}^{(v)}$, $\mathbf{N}$ and $\mathbf{Q}$ to refer to these measures. In particular, the disintegration relation \eqref{eq:excursionmeasuredecomp} still holds. 
\par
Next  note that, for $v>0$,  the notation and results of the previous sections extend directly replacing $\mathbf{P}$ by $\mathbf{N}^{(v)}$ and the interval $[0,1]$ by $[0,v]$. In particular, we can consider the associated pseudo-distance $d$, canonical projection $\Pi_{d}$, height process $H$ and label process $Z$. Moreover, by the scaling property of the underlying Lévy process, we have:
\begin{equation}\label{eq:scaling:N:v}
\Big(\big(X,Z,H\big):~\text{ under } \mathbf{N}^{(v)}\Big)\overset{(d)}{=} \Big(\big(v^{\frac{1}{\alpha}}X_{t/v},v^{\frac{1}{2\alpha}}Z_{t/v}, v^{\frac{\alpha-1}{\alpha}}H_{t/v}\big)_{t\geq 0}:~~\text{ under } \mathbf{P}\Big)
\end{equation}
Using the disintegration relation \eqref{eq:excursionmeasuredecomp}, it follows that we can extend also to $\mathbf{N}$ the notation used under $\mathbf{P}$ and in particular  the process $Z$  becomes a continuous process with lifetime $\sigma$. 

We can argue similarly under $ \mathbf{Q}$, although some clarifications are needed. Specifically, we  extend the definitions of $\preceq$, $\prec$ and $x_{s,t}$ for $s,t\geq 0$ directly by replacing $[0,1]$ by $\mathbb{R}_+$ while adding the convention that $0$ is an ancestor for all $s\geq 0$. 
Then under $\mathbf{Q}$, we continue to use the same definition $d_0$, given in \eqref{def:d:0:1}, but we set:
$$d(s,t):=d_{0}(s\curlywedge t,s)+d_{0}(s\curlywedge t,t)+\Delta_{s\curlywedge t} \cdot \delta\big(\frac{x_{s\curlywedge t,s}}{\Delta_{s\curlywedge t}},\frac{x_{s\curlywedge t,t}}{\Delta_{s\curlywedge t}}\big)+ |I_{t}-I_{s}|,$$
for $s,t\geq 0$. In words, we treat time $0$  as corresponding to an ``infinite loop" consisting of the points $\{s \geq 0 : X_{s} = I_s\}$.  These definitions are consistent with the previous ones since, under $\mathbf{N}$ and $\mathbf{P}$, the process $X$ is non negative and thus $I_t=0$ for every $t\geq 0$. The map $d:\mathbb{R}_+^{2}\mapsto \mathbb{R}_+$, under $\mathbf{Q}$, is a pseudo-distance and we can then consider the associated equivalence relation $\sim_d$ and canonical projection $\Pi_d$. Lastly, we must  extend the label process. Under $ \mathbf{Q}$ the limit of the series \eqref{Z_represent_Mir} also exists for the $L^{2}$-norm (see \cite{LGM09} and note that the sum in \eqref{Z_represent_Mir} does not take into account time $0$) and has a continuous modification $ \widetilde{Z}$ as in the previous section. We then specify the labels on the “infinite loop” by further enlarging the underlying probability space and introducing an extra standard Brownian motion $(\mathrm{b}_0(t))_{t\geq 0}$ independent of $X$ and $(\mathrm{b}_i)_{i\in \mathbb{N}}$. Under $\mathbf{Q}$, we then set:
  \begin{eqnarray} \label{eq:ZetZtilde} Z_t := \mathrm{b}_0(-I_t)  + \widetilde{Z}_t, \quad t\geq 0, \end{eqnarray}
  Equivalently, the process $Z$ is obtained by concatenating the processes $Z$ constructed in each excursion of $X$ above its running infimum $t \mapsto I_t$, with each one shifted by the associated value $\mathrm{b}_0(-I_t)$, for $t \ge 0$. We emphasize again that all these definitions are compatible with the previous ones, so there should be no ambiguity.

\par
Under $\mathbf{Q}$, we extend the notation used under $\mathbf{P}$. To keep the framework consistent, we set $\mathrm{b}_0:=0$ under both $\mathbf{N}$ and $\mathbf{P}$ by convention. Moreover, it follows directly from Lemma~\ref{b_Brownian_Brigde} and excursion theory that, under  $\mathbf{N}$ and  $\mathbf{Q}$, and conditioned on $X$, the process remains a Gaussian process with the same covariance function given by \eqref{def:Cov}, except that the interval $[0,1]$ is replaced by $[0,\sigma]$ and $\mathbb{R}_+$, respectively.

\paragraph{Markov property under $\mathbf{Q}$.} For $t \geq 0$, introduce $\mathcal{F}_{t}$ the 
sigma-field generated by $(X_{s}:~0 \leq s \leq t)$, by $(\mathrm{b}_0(s):~0 \leq s \leq -I_t)$, and by the point measure
$$ \sum_{i\in \mathbb{N}, \mathrm{t}_{i}\leq t} \delta_{\mathrm{t}_i, \mathrm{b}_i}~,   $$
and completed by the collection of all $\mathbf{Q}$-negligible sets. Let $T$ be an $(\mathcal{F}_{t})_{t\geq 0}$--stopping time such that $T<\infty$ almost surely under $\mathbf{Q}$.   Since $X$ is strong Markov, $(X_{t\wedge T})_{t\geq 0}$ is $\mathcal{F}_{T}$--measurable.  Also,
it follows from \eqref{Z_represent_Mir} that $(X_{t\wedge T},Z_{t\wedge T})_{t\geq 0}$ is $\mathcal{F}_{T}$--measurable. It is then straightforward to infer from the classical strong Markov property of $X$  that conditionally on $ \mathcal{F}_T$, the shifted process $( X_{T+s}-X_T : s \geq 0)$ whose jumps are decorated with the Brownian bridges has the same law as the initial decorated process. To be more precise, note that we could have constructed the process $(X,Z)$ under $\mathbf{Q}$ as a measurable function of the Brownian motion $\mathrm{b}_0$ and of an indendent Poisson point measure
$$\mathcal{M}=\sum_{i\in \N}\delta_{(\mathrm{t}_i,\Delta_i,\mathrm{b}_i)}$$
with intensity $\mathrm{d} t\,  \Gamma(-\alpha)^{-1}x^{-\alpha-1}\mathrm{d}x\ind_{x>0}\, \mathbb{P}^{(1)}_{0\to 0}(\mathrm{d}b)$, where the atoms $\Delta_i$ are the jumps of the process $X$. Indeed, by the Lévy-Itô representation, $X$ can be reconstructed from this jump process, and this reprensentation is a convenient way to express the fact that every jump of $X$ is marked by an independent Brownian bridge. It is now straightforward that if $T$ is a finite $(\mathcal{F}_t)_{t\geq 0}$--stopping time, then conditionally on $\mathcal{F}_T$, the measure $\mathcal{M}^{(T)}=\sum_{i\in \N:t_i>T}\delta_{(\mathrm{t}_i-T,\Delta_i,\mathrm{b}_i)}$ has same distribution as $\mathcal{M}$: this is proved, as is usual, by first assuming that $T$ takes countably many values, and then by approximating a general stopping time $T$ by $2^{-n}\lceil 2^n T\rceil$, first considering expectations of functionals of $\mathcal{M}^{(T)}$ that do not depend on the values of the atoms $\Delta<\eps$ for some $\eps>0$.

Let us recast this ``enriched'' version of the strong Markov property in terms of excursion theory for $(X,Z)$ in a more practical way. Let $t>0$, under $\mathbf{Q}$, consider $(u_{i},v_{i})_{i \geq 1}$ the connected components of  the open set $\{r>t:\: X_{r}>I_{t,r}\}$ and  
introduce the excursions processes
\[X_{s}^{i}:=X_{(u_{i}+s)\wedge v_i}-X_{u_{i}}\:\:\text{and}\:\:Z_{s}^{i}:=Z_{(u_{i}+s)\wedge v_i}-Z_{u_{i}},\qquad s\geq 0,\]
 and the point measure
\begin{equation}\label{N:point:measure}
\mathcal{N}^{[t]}:=\sum \limits_{i \geq 1}\delta_{X_{u_{i}},X^{i},Z^{i}}.
 \end{equation}

\begin{lem}[Markov property via excursion theory]\label{Markov_lem}
 Let $T$ be an $(\mathcal{F}_t)_{t\geq 0}$--stopping time such that $\mathbf{Q}(T<\infty)=1$. Under $\mathbf{Q}(\cdot\, |\, \mathcal{F}_T)$,  the point measure $\mathcal{N}^{[T]}$ is a Poisson measure with intensity
\[\mathbbm{1}_{(-\infty,X_{T}]}(x) \mathrm{d}x\:\mathbf{N}( \mathrm{d}X\: \mathrm{d}Z).\]
\end{lem}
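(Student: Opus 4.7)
The plan is to reduce the statement to the classical Itô excursion theory for the stable L\'evy process $(X_t - I_t)_{t \geq 0}$ applied to the shifted process after time $T$, and then to propagate the result to the labels by exploiting the fact that $Z$ is a measurable functional of $X$ and the Brownian bridges decorating its jumps.

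First, I would invoke the enriched strong Markov property described in the paragraph preceding the lemma. Using the Lévy--Itô decomposition of $X$ in terms of the Poisson point measure $\mathcal{M} = \sum_i \delta_{(\mathrm{t}_i, \Delta_i, \mathrm{b}_i)}$ together with the independent Brownian motion $\mathrm{b}_0$, we have that conditionally on $\mathcal{F}_T$, the shifted object $(X_{T+\cdot} - X_T, \mathcal{M}^{(T)}, (\mathrm{b}_0(-I_T + s) - \mathrm{b}_0(-I_T))_{s \geq 0})$ has the same distribution as the unshifted one under $\mathbf{Q}$. In particular, the shifted canonical process $\bar X_s := X_{T+s} - X_T$ is, under $\mathbf{Q}(\cdot\mid \mathcal{F}_T)$, a spectrally positive $\alpha$-stable Lévy process starting from $0$, with a decorated family of Brownian bridges attached to its jumps.

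Next, I would apply classical Itô excursion theory to $\bar X - \bar I$ where $\bar I_s = \inf_{[0,s]} \bar X$: the excursions of $\bar X$ above its running infimum $\bar I$ form, under the conditional law, a Poisson point process whose intensity is $\mathrm{d}\ell \otimes \mathbf{N}(\mathrm{d}X)$ when excursions are indexed by the local time $\ell = -\bar I$. Since $-\bar I$ \emph{is} the local time (as recalled just before Section~\ref{sec:looptree}), re-parameterising the excursions by the value of the infimum at their starting time $\bar X_{\bar u_i} = \bar I_{\bar u_i}$ yields a Poisson measure with intensity $\mathbbm{1}_{y \leq 0}\,\mathrm{d}y \, \mathbf{N}(\mathrm{d}X)$ on $(-\infty,0]$. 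Translating back via $x = y + X_T$ (since the excursions $(\bar u_i, \bar v_i)$ of $\bar X - \bar I$ correspond exactly to the excursions $(u_i - T, v_i - T)$ of $X$ above $I_{T,\cdot}$, with $X_{u_i} = X_T + \bar X_{\bar u_i}$) produces the intensity $\mathbbm{1}_{(-\infty, X_T]}(x)\,\mathrm{d}x\,\mathbf{N}(\mathrm{d}X)$ for the point measure $\sum_i \delta_{X_{u_i}, X^i}$.

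It then remains to incorporate the label components $Z^i$. The key observation is that within an excursion $(u_i, v_i)$ of $X$ above $I_{T,\cdot}$, the running infimum from time $T$ is constant equal to $X_{u_i}$, so that $-I_{u_i + s} = -I_{u_i}$ for $s \in [0, v_i - u_i]$ and hence the contribution $\mathrm{b}_0(-I_{u_i + s}) - \mathrm{b}_0(-I_{u_i})$ to $Z^i_s$ vanishes (cf.~\eqref{eq:ZetZtilde}). Therefore $Z^i$ depends only on $X^i$ and on the Brownian bridges attached to the jumps of $X$ occurring in $(u_i, v_i)$, which are precisely the atoms of $\mathcal{M}^{(T)}$ falling in this interval. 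By the construction of $Z$ under $\mathbf{N}$ via \eqref{Z_represent_Mir} and Proposition~\ref{b_Brownian_Brigde}, the pair $(X^i, Z^i)$ is, conditionally on $\mathcal{F}_T$ and on the sequence of excursions of $X$, distributed as the canonical decorated excursion under $\mathbf{N}(\mathrm{d}X\,\mathrm{d}Z)$. Combining this with the Poisson structure for the $X$-marginals via a marking argument (standard once we note that the bridges attached to different excursions are independent) yields the claimed intensity $\mathbbm{1}_{(-\infty, X_T]}(x)\,\mathrm{d}x\,\mathbf{N}(\mathrm{d}X\,\mathrm{d}Z)$.

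The main technical point to be careful about is the bookkeeping of the enrichment: one must verify that the Brownian bridges attached to jumps occurring strictly after $T$ are independent of $\mathcal{F}_T$ and that the reconstruction of $Z^i$ from $X^i$ and these bridges is measurable and matches the definition used to build $Z$ under $\mathbf{N}$. This is essentially the content of the discussion preceding the lemma, reduced to approximating $T$ by discrete-valued stopping times $T_n = 2^{-n}\lceil 2^n T \rceil$ and truncating the small jumps, so no new idea beyond the strong Markov property and Itô's excursion theory is needed.
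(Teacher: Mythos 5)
Your proposal is correct and follows essentially the same route as the paper: reduce to $T=0$ (equivalently, work with the shifted process) via the enriched strong Markov property discussed just before the lemma, apply classical Itô excursion theory to $X$ above its running infimum with local time $-I$, and then transport the result to the decorated excursions $(X^i,Z^i)$ by observing that $Z^i$ is a measurable functional of the jumps of $X^i$ together with their attached Brownian bridges, which are i.i.d.\ conditionally on $X$. Your write-up simply makes explicit some steps the paper leaves implicit (the reparametrisation of the excursion point process by infimum level, and the vanishing of the $\mathrm{b}_0$-contribution within a single excursion).
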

\begin{proof} By the preceding discussion, it suffices to prove this statement for $T=0$. The excursions of the Lévy process $X$ above its past minimum $I$ form a Poisson random measure by a classical result of excursion theory (see \cite[Chap. IV]{Ber96}). Here, we use the fact that the local time associated with the excursion measure $\mathbf{N}$ is  $t \mapsto -I_{t}$. Now, given the fact that the Brownian bridges decorating the jumps of $X$ are i.i.d.\ conditionally on $X$, we deduce that the same property is true within each of the excursions $X^i$ of $X$. 
We conclude since, by \eqref{Z_represent_Mir}, the process $Z^i$ is a measurable functional of the jump process of $X^i$ decorated by the Brownian bridges. 
\end{proof}
\begin{figure}[!h]
 \begin{center}
 \includegraphics[width=13cm]{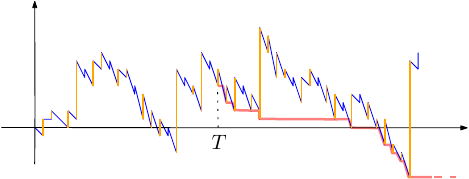}
 \caption{Illustration of the Markov property under $ \mathbf{Q}$. Conditionally on $ \mathcal{F}_{T}$, the excursions of $X$ above the running infimum starting at $T$ (in red above), together with the shifted $Z$ processes, form a Poisson process of intensity given in Lemma \ref{Markov_lem}. }
 \end{center}
 \end{figure}
Note that for a given $t>0$, the definitions of $X_{u_{i}},X^{i},Z^{i}$ and $\mathcal{N}^{[t]}$ can be directly extended under $\mathbf{N}$, where we let $\mathcal{N}^{[t]}=0$ on the event $\{\sigma\leq t\}$. By standard arguments, see for instance \cite[Theorem XII.4.1]{RY99},  the previous discussion applies under $\mathbf{N}$, for a positive stopping time, replacing the Markov property of $X$ under $\mathbf{P}$ by its analog under $\mathbf{N}$. The only difference is that the filtration $(\mathcal{F}_t)_{t\geq 0}$ should now be completed by the negligible sets under $\mathbf{N}$, but we keep the same notation for simplicity. 

\begin{cor}\label{cutting_N} 
Let $T$ be an $(\mathcal{F}_t)_{t\geq 0}$--stopping time such that $\mathbf{N}(T\in \{0,\infty\})=0$. Then, under 
$\mathbf{N}(\cdot\, |\, \mathcal{F}_{T})$, the point measure $\mathcal{N}^{[T]}$ is a Poisson point measure with intensity
\[\mathbbm{1}_{[0,X_{T}]}(x) \mathrm{d}x\:\mathbf{N}( \mathrm{d}X  \mathrm{d}Z).\]
\end{cor}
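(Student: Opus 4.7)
The plan is to transfer Lemma \ref{Markov_lem} to the excursion measure $\mathbf{N}$ by a standard excursion-theoretic argument. The key fact is the well-known extension of the strong Markov property from a Markov process to its Itô excursion measure (see, e.g., \cite[Thm. XII.4.1]{RY99}): the law of $(X, Z)$ restricted to any single excursion of $X - I$ under $\mathbf{Q}$ agrees, up to a deterministic shift, with the law of $(X, Z)$ under $\mathbf{N}$. This identification is valid because the label process $Z$ is constructed within each excursion from i.i.d.\ Brownian bridges decorating its jumps, and these bridges form independent collections across distinct excursions of $X - I$.

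Concretely, fix $\varepsilon > 0$ and work under $\mathbf{N}(\cdot \cap \{\sigma > \varepsilon\})$, which is a finite measure. Let $g_\varepsilon$ denote, under $\mathbf{Q}$, the left endpoint of the first excursion of $X - I$ with lifetime exceeding $\varepsilon$; by Itô's theory, conditionally on the strict past at $g_\varepsilon$, the shifted process $(X_{g_\varepsilon + s} - X_{g_\varepsilon}, Z_{g_\varepsilon + s} - Z_{g_\varepsilon})_{s \geq 0}$ has distribution $\mathbf{N}(\cdot \mid \sigma > \varepsilon)$. Set $\tilde T := g_\varepsilon + T \circ \theta_{g_\varepsilon}$, where $\theta$ denotes the shift on the canonical space; after the standard completion of the filtration, $\tilde T$ is an $(\mathcal{F}_t)$--stopping time under $\mathbf{Q}$ on the relevant event. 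Applying Lemma \ref{Markov_lem} at $\tilde T$, the point measure $\mathcal{N}^{[\tilde T]}$ of post-$\tilde T$ sub-excursions of $X - I$ decorated by $Z$ is, conditionally on $\mathcal{F}_{\tilde T}$, Poisson with intensity $\mathbbm{1}_{(-\infty, X_{\tilde T}]}(x)\, \mathrm{d}x\, \mathbf{N}(\mathrm{d}X\, \mathrm{d}Z)$.

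The last step is to isolate the sub-excursions sitting inside the current excursion, namely the one of $X - I$ straddling $g_\varepsilon$. This excursion terminates precisely when $X$ returns to the level $X_{g_\varepsilon} = I_{g_\varepsilon}$, so its internal sub-excursions are exactly the atoms of $\mathcal{N}^{[\tilde T]}$ with $X_{u_i} \in [X_{g_\varepsilon}, X_{\tilde T}]$. By the restriction property of Poisson point measures, this sub-family is itself Poisson of intensity $\mathbbm{1}_{[X_{g_\varepsilon}, X_{\tilde T}]}(x)\, \mathrm{d}x\, \mathbf{N}(\mathrm{d}X\, \mathrm{d}Z)$, independent of the later sub-excursions. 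After subtracting the shift $X_{g_\varepsilon}$ to translate into the excursion-centric frame, the interval $[X_{g_\varepsilon}, X_{\tilde T}]$ becomes $[0, X_T]$, yielding the announced intensity under $\mathbf{N}(\cdot \mid \sigma > \varepsilon)$. Letting $\varepsilon \downarrow 0$ via the disintegration \eqref{eq:excursionmeasuredecomp} extends the conclusion to $\mathbf{N}$ itself; note that on the complementary event $\{T \geq \sigma\}$, both sides vanish by the convention $\mathcal{N}^{[T]} = 0$ and the fact that $X_T = 0$. The main (mild) obstacle is the careful bookkeeping separating sub-excursions of the current excursion from those of later excursions of $X - I$, which is resolved by the Poisson restriction argument above.
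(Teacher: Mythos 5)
Your approach — realize $\mathbf{N}(\cdot\mid\sigma>\varepsilon)$ as the law of the first excursion of $X-I$ with lifetime $>\varepsilon$ under $\mathbf{Q}$, transport the stopping time, apply Lemma~\ref{Markov_lem}, and then restrict the Poisson measure — is a genuine alternative to the paper's route, which simply invokes the (strong) Markov property of the Itô excursion measure as in \cite[Theorem~XII.4.1]{RY99} and re-runs the argument of Lemma~\ref{Markov_lem} directly under~$\mathbf{N}$. What you are effectively doing is unpacking the proof that the paper cites; this is legitimate and self-contained, but it is also where the delicate point lies.

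The gap is the claim that $\tilde T=g_\varepsilon+T\circ\theta_{g_\varepsilon}$ is an $(\mathcal{F}_t)$-stopping time under $\mathbf{Q}$ ``after the standard completion of the filtration, on the relevant event.'' This is not correct as stated: $g_\varepsilon$ is the \emph{left} endpoint of the first excursion of $X-I$ with lifetime $>\varepsilon$, and left endpoints of such excursions are not stopping times (at time $s$ one cannot yet tell whether the excursion in progress will live longer than $\varepsilon$). Completing the filtration by null sets does not change this, since the obstruction is structural, not a matter of negligible events. What \emph{is} a stopping time is $g_\varepsilon+\varepsilon$; consequently $\tilde T$ is a stopping time only on the event $\{T\circ\theta_{g_\varepsilon}\geq\varepsilon\}$, i.e.\ when the excursion is already identified before time $\tilde T$. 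Since the hypothesis $\mathbf{N}(T=0)=0$ does \emph{not} bound $T$ away from $0$, your argument as written silently throws away the portion of the measure where $T\leq\varepsilon$. To repair it, you would need one extra layer: either run the argument for the genuine stopping times $T\vee\delta$ with $\delta\geq\varepsilon$ and let $\delta\downarrow 0$ at the end (using $\mathbf{N}(T=0)=0$ to recover the full statement by a monotone-class argument on $\mathcal{F}_T$), or first apply the strong Markov property of $\mathbf{Q}$ at the stopping time $g_\varepsilon+\varepsilon$ and then a second time within its future to reach $\tilde T$. The remaining steps — the restriction of the Poisson measure $\mathcal{N}^{[\tilde T]}$ to starting levels in $[X_{g_\varepsilon},X_{\tilde T}]$ and the translation by $-X_{g_\varepsilon}$ — are correctly set up, and the decorating Brownian bridges do factor across excursions of $X-I$ as you assert, so once the stopping-time issue is fixed the argument goes through.
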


\begin{figure}[!h]
 \begin{center}
 \includegraphics[width=8cm]{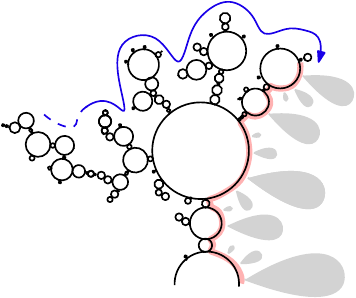}
 \caption{Illustration of the Markov property in terms of the looptree.  We explore (in blue  on the figure) in clockwise order a portion of the looptree --together with its labeling $Z$-- until a stopping time $T$. The remaining pieces (in gray) attached to the ``trunk'' made of the (closure of the) loops linking $\Pi_{d}(0)$ to $\Pi_{d}(T)$  are distributed according to $ \mathbf{N}$ \textit{provided we shift their labels by the labels of their roots}. Indeed, the $Z$-labels on the blue part in the above figure are $ \mathcal{F}_{T}$-measurable and have to be added to recover the actual $Z$-labels in the gray parts. In the proofs of Proposition \ref{pinch_points_are_not_record} and Theorem \ref{technical_uniform_balls}, we shall  use \textit{a priori} estimates given by Proposition~\ref{variations_Z} on the red part.   \label{fig:markovtrunk}}
 \end{center}
 \end{figure}
 
\section{Local minima and records of  $Z$}\label{sec:local:minima}
In this section, we begin our study of the fine properties of the process $Z$, focusing in particular on its  two-sided and one-sided minimal (local) records.  Namely,  we say that a time $t$ is a \textbf{local minimal record} of $Z$ and that  $Z_t$ is a \textbf{local minimal value} on the right, on the left, or on both sides respectively if there exists $ \varepsilon>0$ such that  we have:
 $$( \mbox{left})\quad Z_{t}  = \min_{[(t- \varepsilon)\vee 0,t)} Z, \qquad \quad  (\mbox{right}) \quad Z_t  = \min_{[t,t+\varepsilon]} Z, \qquad \quad (\mbox{two-sided})\quad  Z_{t} = \min_{[(t- \varepsilon)\vee 0, t+ \varepsilon]}Z.$$
  To simplify notation, we write  \begin{eqnarray} \mathrm{LeftRec} \quad \mbox{ and } \quad \mathrm{RightRec},  \end{eqnarray} 
 for  the set of all times $t \geq 0$ at which $Z$ attains a  local left or  right minimum, respectively. Thus, $ \mathrm{LeftRec} \cup \mathrm{RightRec}$ stands for the set of  all one-sided records, while $ \mathrm{LeftRec} \cap \mathrm{RightRec}$ is the set of all two-sided local records. Their images by $ Z$ are the corresponding (local) minimal values. 
Recall also from Section \ref{sec:codagearbre} that local minimal records of $Z$ on one side (resp.\ both sides) correspond to points of degree at least $2$ (resp.\ $3$) in the tree $ \mathcal{T}_{ \mathfrak{z}}$. In Section~\ref{sub:section:Z:0:1:minima}, we prove that, almost surely under $\mathbf{P}$, the following properties hold:
\begin{enumerate}[(i)]
\item  one-sided records do not happen on the skeleton of $\mathcal{L}$ (Proposition \ref{pinch_points_are_not_record}). That is:
$$ \Pi_d\big(\mathrm{LeftRec} \cup \mathrm{RightRec}\big) \cap \mathrm{Skel}= \varnothing.$$
\item  two-sided local minima of $Z$ \textbf{over} $\mathbf{[0,1]}$ are distinct (Proposition \ref{distinct}). In other words:
$$\mbox{$Z$ is injective on $\mathrm{LeftRec} \cap \mathrm{RightRec}$}.$$
\end{enumerate}
In particular, as a consequence of (ii),  the process $Z$ attains its global minimum at a unique time $t_*\in [0,1]$. As explained in  Section \ref{sec:codagearbre}, the image of this time $t_*$ under  $\Pi_{\mathfrak{z}}$ is the root of the tree $\mathcal{T}_{\mathfrak{z}}$.  Section \ref{sec:balls:t:z} uses this 
to  derive tail bounds for the volume of balls near the root in $ \mathcal{T}_{ \mathfrak{z}}$  (Theorem \ref{technical_uniform_balls}). Finally, in Section \ref{sec:label:pinch}, we study the behavior of  $Z$ over branches  of $\mathcal{L}$ and prove that, under $\mathbf{P}$, it holds that:
\begin{enumerate}[(iii)]
\item one-sided local minima of $Z$ \textbf{over a branch} of $ \mathcal{L}$ are surrounded by ``blocking'' loops (Proposition~\ref{lem:non-icnreasealongbranches}).
\end{enumerate}
The above results on $Z$ are  key ingredients in identifying the quotient set $ \mathcal{S}$  (see Theorem \ref{main_theorem_topology}) and in obtaining technical estimates for the volume of balls of $\mathcal{S}$ (Theorem \ref{technical_uniform_balls}).

 \subsection{Local minima of $Z$ over $[0,1]$}\label{sub:section:Z:0:1:minima}

 Recall the definition of pinch points in $\mathcal{L}$,  which, by  Proposition \ref{topologie_loop_tree}, correspond to the equivalence classes of $\sim_d$  with two elements. The goal of this section is to establish the first two items above.  Their proofs rely on the Markov property and the study of the minima of $Z$. We begin with a direct consequence of the Markov property of  Corollary \ref{cutting_N} and the scaling property:
\begin{cor}\label{Z<-1} We have $\mathbf{N}(\inf Z\leq -1)\in(0,\infty)$. Moreover, for every $r>0$:
\[\mathbf{N}(\inf Z\leq -r)=\frac{\mathbf{N}(\inf Z\leq -1)}{r^{2}}~.\]
\end{cor}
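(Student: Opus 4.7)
The plan is to combine the self-similarity of $\mathbf{N}$ with the Gaussian tail estimates of Proposition~\ref{variations_Z}. First, I will establish a scaling invariance for $\mathbf{N}$: for $\lambda > 0$, let $\Phi_\lambda(X, Z)_t := (\lambda^{-1/\alpha} X_{\lambda t}, \lambda^{-1/(2\alpha)} Z_{\lambda t})$; I claim that $\mathbf{N} \circ \Phi_\lambda^{-1} = \lambda^{-1/\alpha}\, \mathbf{N}$. Indeed, by the deterministic scaling \eqref{eq:scaling:N:v}, under $\mathbf{N}^{(v)}$ the rescaled process $\Phi_\lambda(X, Z)$ has law $\mathbf{N}^{(v/\lambda)}$, so pushing \eqref{eq:excursionmeasuredecomp} forward by $\Phi_\lambda$ and substituting $v' = v/\lambda$ produces the Jacobian $\lambda^{-1/\alpha}$. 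Applying this to the event $\{\inf Z \leq -r\lambda^{1/(2\alpha)}\}$, whose $\Phi_\lambda$-preimage is $\{\inf Z \leq -r\}$, gives $\mathbf{N}(\inf Z \leq -r) = \lambda^{1/\alpha}\, \mathbf{N}(\inf Z \leq -r\lambda^{1/(2\alpha)})$, and the choice $\lambda = r^{-2\alpha}$ (so that $\lambda^{1/\alpha} = r^{-2}$) collapses this to the identity $\mathbf{N}(\inf Z \leq -r) = r^{-2}\, \mathbf{N}(\inf Z \leq -1)$ claimed in the corollary.

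To show $\mathbf{N}(\inf Z \leq -1) < \infty$, I use \eqref{eq:excursionmeasuredecomp} and \eqref{eq:scaling:N:v} to rewrite
\[
\mathbf{N}(\inf Z \leq -1) = \frac{1}{|\Gamma(-1/\alpha)|}\int_0^\infty \mathbf{P}\big(\inf Z \leq -v^{-1/(2\alpha)}\big)\, \frac{dv}{v^{1+1/\alpha}}.
\]
On $[1, \infty)$ the integrand is dominated by $v^{-1-1/\alpha}$, which is integrable at infinity since $1/\alpha > 0$. Near $v = 0^+$ the threshold $v^{-1/(2\alpha)}$ tends to $+\infty$, and Proposition~\ref{variations_Z}(i) yields $\mathbf{P}(\sup Z > x) \leq C e^{-c x^\beta}$ for some $c, C, \beta > 0$; by the conditional symmetry $Z \stackrel{d}{=} -Z$ given $X$ (inherited from the symmetry of the decorating bridges $\mathrm{b}_i$ via Proposition~\ref{b_Brownian_Brigde}), the very same bound holds for $\mathbf{P}(\inf Z \leq -x)$. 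Substituting $x = v^{-1/(2\alpha)}$ produces a stretched-exponential factor that dominates the polynomial singularity $v^{-1-1/\alpha}$, ensuring integrability.

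For positivity, I observe that under $\mathbf{P}$ the process $Z$ is continuous, non-constant (its variance $\Gamma(t, t) = \tilde{d}(0, t)$ is $\mathbf{P}$-a.s.\ strictly positive for $t \in (0, 1)$) and vanishes at $0$ and $1$; the same conditional symmetry then forces $\mathbf{P}(\inf Z < 0) > 0$, so $\mathbf{P}(\inf Z \leq -u_0) > 0$ for some $u_0 > 0$. Restricting the above integral to $v \geq u_0^{-2\alpha}$, on which $v^{-1/(2\alpha)} \leq u_0$, then contributes a strictly positive quantity. The main technical point is the finiteness step, which hinges on transferring the Gaussian concentration of $\sup Z$ from Proposition~\ref{variations_Z}(i) to $\inf Z$ via the bridge symmetry and checking that its stretched-exponential decay beats the polynomial blow-up of $v^{-1-1/\alpha}$ at the origin; the scaling identity and the positivity are essentially bookkeeping once \eqref{eq:excursionmeasuredecomp} and \eqref{eq:scaling:N:v} are in hand.
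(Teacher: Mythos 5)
Your proof is correct and, for the finiteness and positivity claims, takes a genuinely different route from the paper. The scaling identity is essentially the same computation in both: your pushforward formulation $\mathbf{N}\circ\Phi_\lambda^{-1}=\lambda^{-1/\alpha}\mathbf{N}$ and the paper's direct change of variables $v\mapsto v/r^{2\alpha}$ inside the disintegration \eqref{eq:excursionmeasuredecomp} are the same argument in two dresses. (One small slip: you state the preimage relationship backwards --- the $\Phi_\lambda$-preimage of $\{\inf Z\le -r\}$ is $\{\inf Z\le -r\lambda^{1/(2\alpha)}\}$, not the other way around --- but the resulting identity and the choice $\lambda=r^{-2\alpha}$ are correct.) Where you diverge is on $\mathbf{N}(\inf Z\le -1)\in(0,\infty)$. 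The paper applies the Markov property (Lemma~\ref{Markov_lem}) under $\mathbf{N}(\cdot\mid\sigma>1)$: conditionally on $X_1$, the number of excursions $Z^i$ above the running infimum after time $1$ with $\inf Z^i\le -1$ is Poisson with parameter $X_1\cdot\mathbf{N}(\inf Z\le -1)$, and the continuity of $Z$ forces this Poisson variable to be a.s.\ finite yet not a.s.\ zero, yielding both the upper and lower bound in one stroke. You instead disintegrate over $\sigma$, rescale, and dominate the tail by the stretched-exponential concentration of $\sup Z$ from Proposition~\ref{variations_Z}(i) together with the conditional symmetry $Z\overset{(d)}{=}-Z$ given $X$ (inherited from Proposition~\ref{b_Brownian_Brigde}), handling finiteness and positivity in two separate but elementary computations. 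Your route is more quantitative and avoids the excursion-theoretic Markov property entirely, at the cost of leaning on the Gaussian machinery (Dudley and Borell--TIS) behind Proposition~\ref{variations_Z}; the paper's route is less computational and sits more naturally within the excursion-theoretic framework that the rest of Section~\ref{sec:local:minima} continues to exploit. Both are valid.
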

\noindent In Proposition \ref{sec:stable-map-3} we will complete this picture by giving the exact value of $\mathbf{N}(\inf Z\leq -1)$, which will require a  detailed analysis. 
\begin{proof} Recall the notation  $\sigma$  for  the lifetime of $X$ under $ \mathbf{N}$. Under $\mathbf{N}(\cdot\:|\: \sigma>1)$ and conditionally on $X_{1}$, Lemma \ref{Markov_lem} entails that the number of atoms of $ \mathcal{N}^{[1]}:=\sum_{i\geq 1}\delta_{X_{u_{i}},X^{i},Z^{i}}$ for which $\inf Z^{i}\leq -1$ 
is a Poisson random variable with parameter $X_{1}\cdot \mathbf{N}(\inf Z\leq -1)$. This random variable is non-trivial and the continuity of $Z$ entails that it  is finite $\mathbf{N}(\cdot\:|\: \sigma>1)$~--~almost surely. Therefore, we have $\mathbf{N}(\inf Z\leq -1) \in (0, \infty)$. The second point follows by disintegration with respect to $\sigma$ and the scaling property. Specifically, we have:
\begin{eqnarray*}
\mathbf{N}\big(\inf Z\leq -r\big)& \underset{\eqref{eq:excursionmeasuredecomp}}{=}&\frac{1}{|\Gamma(-\frac{1}{\alpha})|}\int_{0}^{\infty}\frac{\d v}{v^{\frac{1}{\alpha}+1}} \mathbf{N}^{(v)}\big(\inf Z\leq -r\big)\\
&\underset{ \mathrm{scaling}}{=}&\frac{1}{|\Gamma(-\frac{1}{\alpha})|}\int_{0}^{\infty}\frac{\d v}{v^{\frac{1}{\alpha}+1}} \mathbf{N}^{(v /r^{2\alpha})}(\inf Z\leq -1)\\
&=&\frac{1}{|\Gamma(-\frac{1}{\alpha})|}r^{-2}\int_{0}^{\infty}\frac{\d u}{u^{\frac{1}{\alpha}+1}} \mathbf{N}^{(u)}\big(\inf Z\leq -1\big) = \ \frac{\mathbf{N}\big(\inf Z\leq -1\big)}{r^{2}}~.\end{eqnarray*}
\end{proof}
First we show (i), i.e.\ that  pinch points of $ \mathcal{L}$ cannot be minimal records (from one side) of the process $Z$, see \cite[Proposition 4.2]{LG07} for the similar statement in the case of Brownian motion on the Brownian tree. 
\begin{prop}\label{pinch_points_are_not_record}
$\mathbf{P}$-a.s., we have $\Pi_d(\mathrm{LeftRec} \cup \mathrm{RightRec}) \cap \mathrm{Skel}= \varnothing$.
\end{prop}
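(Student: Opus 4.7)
The plan is to combine the strong Markov property of $(X,Z)$ with the fluctuation estimate of Corollary \ref{Z<-1}, which via the power-law tail $\mathbf{N}(\inf Z \leq -r) = C/r^2$ guarantees that excursions of $Z$ attain arbitrarily low values with positive probability. By the disintegration \eqref{eq:excursionmeasuredecomp}, I reduce to proving the statement under the excursion measure $\mathbf{N}$.

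I then distinguish two types of pinch times $s$: Type~1 where $\Delta_s > 0$ (the countable set of jump times $\{\mathrm{t}_i\}$), and Type~2 where $\Delta_s = 0$ and $s$ is a strict right local minimum of $X$ (an uncountable set of Hausdorff dimension $\alpha-1$). For Type~1, I fix a jump time $s = \mathrm{t}_i$ and apply Corollary \ref{cutting_N}: conditionally on $\mathcal{F}_{\mathrm{t}_i}$, the excursions of $X$ above its running infimum after $\mathrm{t}_i$, together with their shifted $Z$-labels, form a Poisson point process with intensity $\mathbf{1}_{[0, X_{\mathrm{t}_i}]}(x)\,dx\otimes \mathbf{N}(dX\,dZ)$. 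Along the ``trunk'' running-infimum path, the label $Z$ evolves as a Brownian motion in the level drop $x$ (by the decomposition \eqref{eq:ZetZtilde}), so the base labels of nearby excursions differ from $Z_{\mathrm{t}_i}$ by $O(\sqrt{x})$. On the other hand, by Corollary \ref{Z<-1} and scaling, excursions with arbitrarily negative $\inf Z$ appear in every time neighborhood of $\mathrm{t}_i$. A careful comparison of these two contributions produces times $r$ arbitrarily close to $\mathrm{t}_i$ on the right with $Z_r < Z_{\mathrm{t}_i}$, hence $\mathrm{t}_i \notin \mathrm{RightRec}$. The left side follows from the time-reversal invariance of Lemma \ref{lem:invariancereroottime}, and the partner time $\inf\{r > \mathrm{t}_i : X_r = X_{\mathrm{t}_i-}\}$ is also a stopping time to which the same argument applies. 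A countable union over $i$ disposes of all Type~1 pinch times.

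For Type~2, the uncountability of the set precludes a simple union bound, so I would use property $(A_2)$ to write each such $s$ as $s = \mathrm{f}_p(u_s)$ for a unique ancestor jump $p := \sup\{r \leq s : X_r < X_s\}$ and $u_s = (X_p - X_s)/\Delta_p \in (0,1)$. Since $s$ is a strict right local minimum of $X$, the excursion structure of $X$ above its running infimum immediately after $s$ is available, and I would transport the Type~1 Markov argument to $s$ by approximating it with the first-passage stopping times $T_y := \inf\{r > p : X_r \leq y\}$ as $y \uparrow X_s$, capturing $s$ in the limit and passing the Poisson excursion structure together with the tail estimate of Corollary \ref{Z<-1} to the limit. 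This yields $Z$-values strictly below $Z_s$ in every right neighborhood; the left-neighborhood analysis is symmetric via time reversal, and an analogous treatment applies to the partner $t$.

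The principal obstacle will be the Type~2 case: since $s$ is not a stopping time, the Markov argument must be made rigorous \emph{uniformly} over the uncountable family of Type~2 pinch times simultaneously. This will require a Fubini-type decomposition over the countable family of ancestor jumps $p$ and the approximating levels $y$, together with tight control of exceptional null sets. A secondary technical difficulty is that the trunk and excursion contributions in the Type~1 argument are of the same rough order (both a power of the level drop $x$), so the strict inequality $Z_r < Z_s$ requires selecting excursions with unusually low $\inf Z$, which is precisely what the $r^{-2}$ tail of Corollary \ref{Z<-1} makes possible.
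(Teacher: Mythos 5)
Your proposal attacks the pinch times directly, splitting them into a countable set (jump times, Type~1) and an uncountable set (continuous right local minima of $X$, Type~2), and tries to run a Markov argument at each such time. This is a genuinely different organization from the paper, and the difference is precisely where the gap lies. The paper first reduces, via the re-rooting invariance \eqref{re-rooting} and time-reversal \eqref{symmetric}, to the single one-sided statement that $Z_s>\inf_{[0,s]}Z$ for every pinch pair $s\sim_d t$ with $s<t$. It then works under $\mathbf{Q}$ and discretizes not the pinch times but the \emph{$Z$-record times}: the stopping times $S^n_i$ at which $Z$ attains a new running minimum, each followed by $T^n_i$, the first time $X$ drops by $2^{-n}$ from $X_{S^n_i}$. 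These are genuine $(\mathcal{F}_t)$-stopping times, so the Markov property (Lemma \ref{Markov_lem}) yields i.i.d.\ pairs $(R^n_i,V^n_i)$ with explicit laws via Corollary \ref{Z<-1} and the L\'evy--It\^o decomposition; a Borel--Cantelli comparison between the rate of encountering a large attached excursion ($\mathbf{Q}(V^n_i>\delta)\lesssim 2^{-n}$) and the speed at which $Z$ descends along records ($\gtrsim 2^{\eta'n}$ steps in which $R^n_i<-2^{-\eta n}$) then closes the argument, with the a-priori H\"older bound of Proposition \ref{variations_Z} bridging $R^n_i$ and the true displacement $\widetilde R^n_i$. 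Crucially, this quantifies over a countable, stopping-time-indexed exploration of $Z$'s record set rather than over the uncountable set of pinch times.

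This is exactly what your Type~2 plan is missing, and you are right to flag it as ``the principal obstacle'': your proposed fix does not work. Approximating a non-jump pinch time $s=\mathrm{f}_p(u_s)$ by the stopping times $T_y=\inf\{r>p: X_r\leq y\}$ as $y\uparrow X_s$ produces, for each fixed level $y$, a Markov estimate with an $\omega$-null exceptional set depending on $y$; since $y$ ranges over a continuum and the limit point $s$ is itself $\omega$-dependent, these null sets cannot be absorbed by a countable union, and no Fubini-type decomposition over the countable ancestor jumps $p$ rescues this, because for each $p$ the set of attachment heights $u_s$ giving rise to Type~2 pinch times is still uncountable and random. This is precisely the exceptional-null-set problem the paper's global re-rooting reduction and record-time discretization are designed to avoid.

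Two further remarks on Type~1. First, the case is easier than you make it: the trunk label along the loop at $\mathrm{t}_i$ is a Brownian bridge started at $0$ in the level-drop parameter, so it goes strictly negative instantly, and no competition with the excursion tails is needed for the right side; invoking Corollary \ref{Z<-1} there is superfluous. Second, be careful with the time-reversal step for the left side: the time-reversed $X$ is spectrally negative and $1-\mathrm{t}_i$ is not a stopping time for the time-reversed filtration, so the Markov property is not directly available there. What saves this is that Lemma \ref{lem:invariancereroottime} is an invariance of the \emph{pair} $(d,Z)$, not of $X$ itself, so one can argue at the level of the looptree; but this needs to be spelled out, and in any case Type~1 alone covers only a countable subset of the skeleton.
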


\begin{proof} We start by remarking that, by the re-rooting property \eqref{re-rooting} and the invariance by time-reversal \eqref{symmetric}, it is enough to show that 
$\mathbf{P}$-a.s., we have: 
\begin{equation}\label{*:recod}
Z_{s}>\inf \limits_{[0,s]} Z, \quad \mbox{ for every $0<s<t<1$ such that $d(s,t)=0$.}
\end{equation}
Recall that by Proposition \ref{topologie_loop_tree},  the condition $d(s,t)=0$, also written $s\sim_d t$, is equivalent to
$$ X_{t}=X_{s-}  \mbox{ and \ \ } X_{r}>X_{s-} \mbox{ \ for every } r\in (s,t).$$
The strategy of the proof involves discretizing the times at which $Z$ reaches a new minimal record. We then apply the Markov property  at these times  to establish that it is very unlikely to have times nearby corresponding  to pinch points of $\mathcal{L}$ supporting a large dangling looptree. First remark that  by a scaling argument and 
\eqref{eq:ZetZtilde}, it suffices  to prove the lemma for the excursion above the running infimum straddling time $1$ under $\mathbf{Q}$. More precisely,  under $\mathbf{Q}$, set:
 \[\tau_{*}:=\sup\big\{t\in [0,1]: X_{t}= \inf \limits_{[0,1]} X \big\}\]
 the starting time of this excursion and $\tau_{q} := \inf\{t \geq 1 : Z_t = \inf_{[\tau_{*},1]}Z -q\},$ for $q>0$. It will also be useful to introduce, for every $\delta,q\in (0,\infty)$, the event $\mathcal{A}_{\delta,q}$ defined by:
 \\
 \\
 $\bullet$ There exist $1\leq s<s+\delta<t$, with $s< \tau_{q}$ and $s\sim_d t$, such that $Z_{s}=\inf_{[\tau_{*},s]}Z$.
 \\
 \\
 By the density of $\mathbb{Q}_+$ in $\mathbb{R}_+$ and since  $\mathbf{Q}(\tau_{*}\in[1-\eps,1])>0$ for every $\eps>0$, to obtain \eqref{*:recod} it is enough to show that $\mathbf{Q}(\mathcal{A}_{\delta,q})=0$, for every $\delta,q\in (0,\infty)\cap \mathbb{Q}$. In this direction, we fix two such $\delta,q$ and $n \geq 1$. We now discretize time and introduce the stopping times $(S^{n}_i,T^{n}_i)_{i\geq 1}$ defined by induction as follows. First take $$S^{n}_{1}:=\inf\big\{t\geq 1:\:\: Z_{t}=\inf\limits_{[\tau_{*},1]} Z\big\}\quad\text{and}\quad T^{n}_{1}:=\inf\big\{t\geq S^{n}_{1}:\:\: X_{t}=X_{S_1^n}-2^{-n}\big\},$$ and then, for every $i\geq 1$, take
\[S^{n}_{i+1}:=\inf\big\{t\geq T^{n}_{i}:\:\: Z_{t}=\inf\limits_{[\tau_{*},T^{n}_{i}]} Z\big\}\quad \text{and}\quad T^{n}_{i+1}:=\inf\big\{t\geq S^{n}_{i+1}:\:\: X_{t}=X_{S^{n}_{i+1}}-2^{-n}\big\}.\]
Remark that for every $i\geq 1$, the random variables $S^{n}_{i}$ and $T^{n}_{i}$ are $(\mathcal{F}_t)_{t\geq 0}$--stopping times which are finite $\mathbf{Q}$~--~a.s. For every $i\geq 1$, consider $(s^{n}_{i,k},t^{n}_{i,k})_{k\geq 1}$ the connected components of the open set $\{s\in[S^{n}_{i},T^{n}_{i}]:\: X_{s}>\inf\limits_{[S^{n}_{i},s]}X \}$ and introduce the random variables:
\[R^{n}_{i}:=\inf \limits_{k\in \mathbb{N}} \inf \limits_{r\in[s^{n}_{i,k},t^{n}_{i,k}]} \big(Z_{r}-Z_{s^{n}_{i,k}}\big) \quad;\quad \widetilde{R}^{n}_{i}:= \inf \limits_{r\in[S^{n}_{i},T^{n}_{i}]} \big(Z_{r}-Z_{S^{n}_{i}}\big) \quad \mbox{ and } \quad V^{n}_{i}:=\sup \limits_{k\geq 1} \big(t^{n}_{i,k}-s^{n}_{i,k}\big).\]
In words, the random variable $V^{n}_{i}$ is the size of the largest excursion coding for a looptree grafted on the first $2^{-n}$ unit of length on the trunk after time $S_{i}^{n}$, the random variable $\widetilde{R}^{n}_{i}$ is the smallest displacement of $Z$ on these excursions, whereas $R^{n}_{i}$ is the smallest displacement of the process $Z$ shifted by the label of the root in each of these excursions -- see Figure \ref{fig:markovpinch}.
\begin{figure}[!h]
 \begin{center}
 \includegraphics[width=10cm]{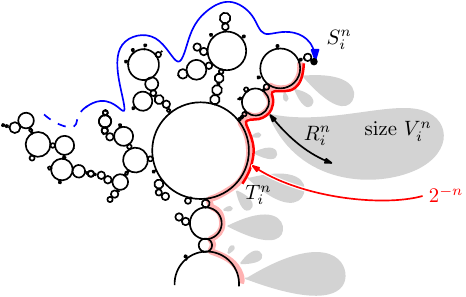}
 \caption{Illustration of the Markov property applied at time $S_{i}^{n}$: among the gray looptrees grafted on the $2^{-n}$ unit of length on the right of the trunk at time $S_{i}^{n}$ (recall the caption of Figure \ref{fig:markovtrunk}), the random variable $V_{i}^{n}$ is the maximal size of such a gray looptree, whereas $R_{i}^{n}$ is the minimal  $Z$ displacement within those looptrees. Note that on this picture, both happen to belong to the same looptrees, which need not be the case in general. The minimal displacement $\widetilde{R}_{i}^{n}$ is obtained by further shifting the labels of the gray looptrees by the values of $Z$ on the light red part. \label{fig:markovpinch}}
 \end{center}
 \end{figure}
 \par
Note now that if there exist $s \sim_d t$ as in the definition of  $\mathcal{A}_{\delta,q}$,  it follows that when $S_i^n \leq s$ and $V^n_i \leq \delta$, we must have $T_i^n \leq s$, since $t-s\geq \delta$. Furthermore, when  $T_i^n \leq s$ we also have $S^n_{i+1} \leq s$, because $Z_s = \inf_{[\tau_*,s]}Z$.  We infer that  $\mathcal{A}_{\delta,q}$ is contained in the event:
\begin{equation}\label{**:recod}
\big\{\exists i \geq 1:~S_{i}^{n}<\tau_{q}\:\:\text{and}\:\:V_{i}^{n}\geq \delta\big\}.
\end{equation}
Moreover, an application of  the Markov property, combined with Lemma \ref{Markov_lem} and  Corollary \ref{Z<-1}, gives that the sequence $(R^{n}_{i},V^{n}_{i})_{i\geq 1}$ is i.i.d., and we have
  \begin{eqnarray} \label{eq:estim1}\mathbf{Q}(R^{n}_{i}<-r)&=&1-\exp\big(-\mathbf{N}(\inf Z<-1) 2^{-n} r^{-2}\big)\\
\mathbf{Q}(V^{n}_{i}>r)&=&1-\exp\big(-\mathbf{N}(\sigma>1) 2^{-n}r^{-\frac{1}{\alpha}}\big)~,     \label{eq:estim2}\end{eqnarray}
for every $r>0$.
Let us now explain the intuition of the rest of the proof. If we had the analog of \eqref{eq:estim1} for $ \widetilde{R}$ instead of $R$, then each $ \widetilde{R}_{i}^{n}$ would be typically of order \textit{at least} $2^{-n/2}$ so that we would need fewer than $ \approx q\cdot 2^{n/2}$  steps of the above process to reach $\tau_{q}$. In the meantime, at each step of the discrete exploration process, the probability that $V_i^n > \delta$, i.e.\ that $S_{i}^{n}$ is an approximate pinch time ``with a mass at least $\delta$ above it'', is of order $2^{-n}$. Since $ 2^{-n} \cdot 2^{n/2} \ll1$  it is very unlikely that we encounter such times. The problem is that the variation $ \widetilde{R}$ is governed by $R$ and the labels of the process $Z_{s_{i,k}^n}$; the labels  on the light blue part of Figure \ref{fig:markovpinch}. To circumvent this difficulty, we will use Lemma \ref{variations_Z} (or more prosaically \eqref{Z:variation:L})  as an  a priori control on the variations of $Z$ to transfer our estimates on $R$ to $ \widetilde{R}$.   
\noindent To this end, fix
\[0<\eta<\frac{1}{2}<\eta^{\prime}<\beta<1\]
with $1-2\eta +\eta^{\prime}<\beta$ and set
\[M_{n}:=\#\{i\leq 2^{\beta n}:\:V^{n}_{i}>\delta \} \:\:;\:\:N_{n}:=\#\{i\leq 2^{\beta n}:\:R^{n}_{i}<-2^{-\eta n} \}.\]
By \eqref{eq:estim1}, we have $\mathbf{Q}(V^{n}_{i}>\delta) \leq C\cdot 2^{-n}$ and $ \mathbf{Q}(R^{n}_{i}<-2^{-\eta n}) \geq c\cdot 2^{-n(1-2 \eta)}$, for some constants $c,C>0$. Since $\beta <1$ and $1-2\eta +\eta^{\prime}<\beta$ it follows by crude  bounds and the Borel--Cantelli lemma that, $ \mathbf{Q}$ -- a.s.~, we  have 
$$ M_{n}  =0 \quad \mbox{ and } \quad N_{n} \geq 2^{\eta^{\prime}n},$$
for  $n\geq 1$ large enough. Fix $\tilde{\eta}\in(\eta,\frac{1}{2})$ and for every $n>1$, let $\mathcal{B}_{n}$ be the event defined as follows:
\\
\\
$\bullet$ For every $m\geq n$, $M_{m}=0$ and $N_{m}\geq 2^{\eta^{\prime}m}$~;
\\
\\
$\bullet$ For every $s,t\in[0,\tau_{q}]$, we have $|Z_{s}-Z_{t}|\leq n\cdot d(s,t)^{\tilde{\eta}}.$
\\
\\
Since $ \tau_{q} < \infty$ almost surely,  the above considerations and  excursion theory, combined with Equation \eqref{Z:variation:L} and the fact that the Brownian motion $\mathrm{b}_0$ on the infinite loop is $\tilde{\eta}$-Hölder continuous, imply  that
$\mathbf{Q}(\mathcal{B}_{n}) \to 1$ as $n \to \infty$. Let us now conclude that $\mathbf{Q}(\mathcal{A}_{\delta,q})=0$. In this direction,  remark that we can write
\begin{align*}
 \mathbf{Q}(\mathcal{A}_{\delta,q})&=  \lim_{n \to \infty}\mathbf{Q}(\mathcal{A}_{\delta,q}\cap \mathcal{B}_{n})\underset{\eqref{**:recod}}{\leq }\lim_{n \to \infty}\mathbf{Q}(\{T_{2^{n\beta}}^{n}<\tau_{q}\}\cap \mathcal{B}_{n}),
\end{align*}
where in the last inequality we use that, under $\mathcal{B}_{n}$, we have $M_{m}=0$ for every $m\geq n$. Consider $n_{0}$, the smallest integer such that for every $n\geq n_{0}$:
$$2^{(\eta^{\prime}-\eta)n} \big(-1+n 2^{(\tilde{\eta}-\eta)n}\big)< -q$$
We are going to show that for every $n\geq n_{0}$, we have $\mathbf{Q}\big( \{T^{n}_{2^{n\beta}}<\tau_{q}\}\cap \mathcal{B}_{n}\big)=0$, which will complete the proof of the proposition. We argue by contradiction. First remark that, by definition, we have:
\[\sup\limits_{k\in \mathbb{N}} d(S_{i}^{n}, s^{n}_{i,k})\leq 2^{-n},\quad \text{ for } n\geq 1.\]
So if $T^{n}_{2^{n\beta}}< \tau_{q}$ under $\mathcal{B}_{n}$, we will have:
\begin{align*}
  Z_{T^{n}_{2^{\beta n}}}&\leq \sum \limits_{1\leq i\leq 2^{\beta n}}\widetilde{R}^{n}_{i}
  \leq \sum \limits_{1\leq i\leq 2^{\beta n}}\min \left\{(R^{n}_{i}+n 2^{-\tilde{\eta}n}) ; 0 \right\} \leq N_{n}\cdot \big(-2^{-\eta n}+n 2^{-\tilde{\eta}n}\big)
\end{align*}
which, for $n\geq n_0$, is smaller that   $2^{\eta^{\prime}n} \big(-2^{-\eta n}+n 2^{-\tilde{\eta}n}\big)< -q$ and we obtain a contradiction.
\end{proof}
We conclude this section deducing item (ii) from Proposition \ref{pinch_points_are_not_record}.

\begin{prop}\label{distinct} $\mathbf{P}$-a.s.~the (two-sided) local minima of $Z$ are distinct. 
\end{prop}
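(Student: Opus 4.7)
The plan is to reduce, via a countability argument, to comparing infima of $Z$ over two disjoint rational intervals, and then to exploit the Markov property of $(X,Z)$ together with the absolute continuity of $\inf Z$ under the excursion measure (Corollary \ref{Z<-1}).

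\emph{Step 1 (Reduction to rational intervals).} If $s_{1}\neq s_{2}$ are two-sided local minima of $Z$ with $Z_{s_{1}}=Z_{s_{2}}$, then by density of $\mathbb{Q}$ in $[0,1]$ one can find disjoint closed intervals $I_{1},I_{2}\subset [0,1]$ with rational endpoints such that $s_{k}\in I_{k}$ and $Z_{s_{k}}=\inf_{I_{k}}Z$ for $k=1,2$. By a countable union argument, it therefore suffices to show that for every deterministic $0\leq a_{1}<b_{1}<a_{2}<b_{2}\leq 1$,
$$\mathbf{P}\bigl(\inf_{[a_{1},b_{1}]}Z=\inf_{[a_{2},b_{2}]}Z\bigr)=0.$$

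\emph{Step 2 (Transfer to $\mathbf{N}$).} Using the disintegration formula \eqref{eq:excursionmeasuredecomp} and the scaling relation \eqref{eq:scaling:N:v}, the claim reduces to showing that $\mathbf{N}\bigl(\{\sigma>b_{2}\}\cap\{\inf_{[a_{1},b_{1}]}Z=\inf_{[a_{2},b_{2}]}Z\}\bigr)=0$, for any deterministic $0<a_{1}<b_{1}<a_{2}<b_{2}$.

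\emph{Step 3 (Markov property at $T=a_{2}$).} By Corollary \ref{cutting_N}, under $\mathbf{N}(\cdot\mid\mathcal{F}_{a_{2}})$ on $\{\sigma>a_{2}\}$ the point measure $\mathcal{N}^{[a_{2}]}$ of sub-excursions of $X$ above its post-$a_{2}$ infimum, decorated by the corresponding $Z$-processes, is a Poisson measure of intensity $\mathbf{1}_{[0,X_{a_{2}}]}(x)\,dx\,\mathbf{N}(dX\,dZ)$. Since every ancestor of any time in $[a_{1},b_{1}]$ lies in $[0,b_{1}]\subset[0,a_{2})$, the random variable $\inf_{[a_{1},b_{1}]}Z$ is $\mathcal{F}_{a_{2}}$-measurable.

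\emph{Step 4 (Atomless conditional law).} On $\{\sigma>b_{2}\}$, the Poisson intensity in Step 3 produces almost surely infinitely many sub-excursions $(u_{i},v_{i})$ with $[u_{i},v_{i}]\subset[a_{2},b_{2}]$ (since the set of excursions whose start lies in $[a_{2},b_{2}]$ and whose lifetime is small has positive, even infinite, Poisson mass). Within each such sub-excursion, $Z$ has the form $Z_{u_{i}}+Z^{(i)}$ where the $Z^{(i)}$ are, conditionally on $\mathcal{F}_{a_{2}}$ and on the Poisson atoms themselves, independent with the law of $Z$ under $\mathbf{N}$ conditioned on the corresponding lifetime. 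In particular, by Corollary \ref{Z<-1} and scaling, each $\inf Z^{(i)}$ has a conditional distribution that is absolutely continuous with respect to Lebesgue measure on $(-\infty,0]$. Decomposing
$$\inf_{[a_{2},b_{2}]}Z-Z_{a_{2}}=\min\Bigl\{\text{backbone contribution},\ \min_{i}\bigl((Z_{u_{i}}-Z_{a_{2}})+\inf Z^{(i)}\bigr)\Bigr\},$$
one isolates a single sub-excursion $(u_{i_{0}},v_{i_{0}})$ with positive length contained in $[a_{2},b_{2}]$ and conditions on every other piece of randomness; the conditional distribution of $\inf Z^{(i_{0})}$ remains absolutely continuous, and consequently so does that of the overall minimum. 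In particular, its conditional law has no atoms.

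\emph{Step 5 (Conclusion).} Writing $c:=\inf_{[a_{1},b_{1}]}Z-Z_{a_{2}}$, which by Step 3 is $\mathcal{F}_{a_{2}}$-measurable, Step 4 gives $\mathbf{N}\bigl(\inf_{[a_{2},b_{2}]}Z-Z_{a_{2}}=c\ \big|\ \mathcal{F}_{a_{2}}\bigr)\mathbf{1}_{\{\sigma>b_{2}\}}=0$, and integrating over $\mathcal{F}_{a_{2}}$ concludes Step 2, hence the proposition.

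\emph{Main obstacle.} The delicate point is Step 4: while the Poisson description of the sub-excursions is directly handed to us by Corollary \ref{cutting_N}, one must handle cleanly the contribution coming from the post-$a_{2}$ backbone of $X$ (on which $Z$ inherits non-trivial behavior from pre-$a_{2}$ ancestral jumps), and argue that singling out one independent sub-excursion $(u_{i_{0}},v_{i_{0}})$ suffices to obtain atomlessness. The key ingredient allowing this is the absolute continuity of $\inf Z$ under $\mathbf{N}$ given by Corollary \ref{Z<-1}, which is a continuous-time counterpart of the fact that independent increments of a Brownian bridge smear out any conditional atom.
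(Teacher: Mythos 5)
Your strategy — reduce to disjoint rational intervals, apply the Markov property of Corollary \ref{cutting_N} at the deterministic time $a_2$, and isolate one fresh sub-excursion — is a genuinely different route from the paper's, which conditions on $X$ and uses the Gaussian-free-field independence of bridges across a \emph{loop separating the two intervals} (after first excluding pinch-point times via Proposition \ref{pinch_points_are_not_record}). Steps 1--3 and 5 are sound, but Step 4 has a real gap. Conditionally on $\mathcal{F}_{a_2}$ and on all Poisson data except the label process $Z^{(i_0)}$ of one sub-excursion $(u_{i_0},v_{i_0})\subset[a_2,b_2]$, one obtains
\[
\inf_{[a_2,b_2]}Z - Z_{a_2}=\min\Big(c,\ (Z_{u_{i_0}}-Z_{a_2})+\inf Z^{(i_0)}\Big),
\]
with $c$ and $Z_{u_{i_0}}-Z_{a_2}$ deterministic under this conditioning. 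Even granting that $\inf Z^{(i_0)}$ is conditionally atomless, $\min(c,\cdot)$ carries an atom at $c$; since $u_{i_0}$ lies in the set over which $c$ is the infimum, $c\leq Z_{u_{i_0}}-Z_{a_2}$, so the mass of that atom, $\mathbf{P}\big(\inf Z^{(i_0)}\geq c-(Z_{u_{i_0}}-Z_{a_2})\big)$, is strictly positive. The statement "consequently so does that of the overall minimum" is therefore false. Patching this by showing $\mathbf{P}(V-Z_{a_2}=c)=0$, where $V=\inf_{[a_1,b_1]}Z$, is circular: $c$ is itself the infimum of $Z$ over a slightly smaller piece of $[a_2,b_2]$, so you are back to the statement you set out to prove.

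There is also a secondary weakness: the absolute continuity of the conditional law of $\inf Z^{(i_0)}$ given the lifetime $\sigma^{i_0}$ does not follow from Corollary \ref{Z<-1} and scaling. That corollary yields $\mathbf{N}(\inf Z\leq -r)\propto r^{-2}$ under the $\sigma$-finite measure $\mathbf{N}$, which is a mixture over lifetimes; a mixture can be absolutely continuous while each fibre $\mathbf{N}^{(v)}$ carries atoms. Atomlessness of the fibres is true, but requires the Gaussian structure of $Z$ conditionally on $X$ — which is exactly what the paper uses, but in the crucially different form of an isolated \emph{difference} $Z_{s_1}-Z_{t_1}$ of labels across a separating loop. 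Conditionally on $X$ this is a non-degenerate Gaussian variable independent of the two sub-looptree infima, so the identity amounts to a Gaussian hitting an independent target and has probability zero. It is precisely this difference-rather-than-minimum structure that sidesteps the atom your construction produces.
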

\begin{proof} It follows straightforwardly  from the invariance under re-rooting and continuity of $Z$ that for every fixed time $t$, with $ \mathbf{P}$-probability one, $t$ is not a local minimal record, so we exclude  times $0$ and $1$ in the following. Suppose that $0  < s < t <1$ are two times such that $Z$ reaches the same local minima at $s$ and at $t$. First,  by Proposition \ref{pinch_points_are_not_record},  we notice that $s$ and $t$ cannot be pinch point times.
 Moreover, if $\Pi_d(s)$ and $\Pi_d(t)$ are leaves belonging to a common loop, then $Z$ cannot realize the same minima at  $s$ and $t$ since  the labels along this loop evolve as a Brownian bridge -- and the minima of a Brownian bridge are distinct. Consequently, by Property {\hypersetup{linkcolor=black}\hyperlink{prop:A:4}{$(A_4)$}}, the points  $\Pi_d(s)$ and $\Pi_d(t)$ are separated in $ \mathcal{L}$ by a countable collection of loops. Let us now show that this is also impossible. To study this case  we work conditionally on $X$, that is under $\mathbf{P}_{X}$.  By definition, one can find rationals $p_{1}<s<p_{2}<q_{1}<t<q_{2}$, with  $Z_{s}= \inf_{[p_{1},p_{2}]} Z$ and $ Z_{t}= \inf_{[q_{1},q_{2}]} Z$, such that $ \Pi_d([p_{1},p_{2}])$ is separated from $\Pi_d([q_{1},q_{2}])$ in the looptree $ \mathcal{L}$ by a non-trivial loop associated with a time $r \in \mathrm{Branch}(s,t) \setminus \{s,t\}$. Recall the notation $\mathrm{f}_r$ for the parametrization of the loop associated with $r$, and  let $u\in \mathrm{f}_{r}([0,1])$ (resp.\ $v\in \mathrm{f}_{r}([0,1])$) be the  closest point of the loop from  $\Pi_d\big([p_{1},p_{2}]\big)$ (resp.\ $\Pi_d\big([q_{1},q_{2}]\big)$).  In particular, $u$ and $v$ are  pinch points of $\mathcal{L}$, and we write $s_{1}<s_{2}$ and $t_{1}<t_{2}$ for the elements of $\Pi_d^{-1}(u)$ and $\Pi_d^{-1}(v)$ respectively. We refer to Figure \ref{fig:minima} for an illustration.
\begin{figure}[!h]
 \begin{center}
 \includegraphics[width=11cm]{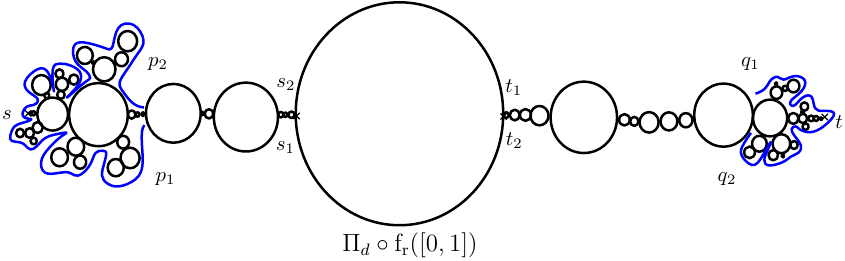}
 \caption{\label{fig:minima} The two times $s$ and $t$ are separated by the loop $\Pi_d\circ  \mathrm{f}_{r}([0,1])$. In blue are represented the set $\Pi_d([p_{1},p_{2}])$ and $\Pi_d([q_{1},q_{2}])$.}
 \end{center}
 \end{figure}
 \noindent Next, using \eqref{Z_represent_Mir} we notice that we have $ \inf_{[p_{1},p_{2}]} Z= \inf_{[q_{1},q_{2}]} Z$ if and only if 
 \begin{equation}\label{nes:disjoint}
 Z_{s_{1}}-Z_{t_{1}}= \inf_{[q_{1},q_{2}]} \big(Z-Z_{s_1})-\inf_{[p_{1},p_{2}]} (Z-Z_{t_1}).
 \end{equation}
Furthermore, by Propositon \ref{b_Brownian_Brigde}, the processes 
$\big(Z_{\ell}-Z_{s}\big)_{\ell\in[s_{1},s_{2}]}~;~\big(Z_{\mathrm{f}_{r}(\ell)}\big)_{\ell\in [0,1]}$ and $\big(Z_{\ell}-Z_{s}\big)_{\ell\in[t_{1},t_{2}]}$ 
are independent under $\mathbf{P}_{X}$. We infer that the left and right sides of \eqref{nes:disjoint} are independent. Finally since $Z_{s_{1}}-Z_{t_{1}}$ is a non-trivial Gaussian random variable, we deduce that, $\mathbf{P}_X$-a.s., identity \eqref{nes:disjoint}   cannot hold. The desired result follows
since $p_1 < p_2 < q_1 < q_2$ are arbitrary rational numbers.
\end{proof}

\subsection{The mass  of balls  centered at the root of $\mathcal{T}_{\mathfrak{z}}$}\label{sec:balls:t:z}
The continuity of $Z$,  together with Proposition \ref{distinct},  implies the existence, $\mathbf{P}$-almost surely, of a unique $t_*\in [0,1]$ realizing the minimum of $Z$, i.e. $Z_{t_*}=\inf_{[0,1]} Z$. We shall consider the tree $ \mathcal{T}_{ \mathfrak{z}}$ coded by $Z$ as in \eqref{eq:defpseudodistancearbre}. We recall that   $\Pi_{\mathfrak{z}}(t_*)$ is the root of $\mathcal{T}_{\mathfrak{z}}$ and that $\mathrm{Vol}_\mathfrak{z}$ is the pushforward of the Lebesgue measure by $\Pi_\mathfrak{z}$. The goal of this section is to control the mass --  for $\mathrm{Vol}_\mathfrak{z}$ -- of closed balls in $\mathcal{T}_{\mathfrak{z}}$ centered at  $\Pi_{\mathfrak{z}}(t_*)$. Namely, for every $r\geq 0$, we set
$$B_{\mathfrak{z}}(\Pi_{\mathfrak{z}}(t_*),r):=\{u\in\mathcal{T}_{\mathfrak{z}}:~\mathfrak{z}(u,\Pi_{\mathfrak{z}}(t_*))\leq r\}.$$
The goal of this section is to prove that:
\begin{theo}\label{technical_uniform_balls}
Fix $\delta >0$. There exist $c,C\in(0,\infty)$ such that 
\begin{equation}\label{vol-t_*}
\mathbf{P}\Big(\mathrm{Vol}_\mathfrak{z}\big(B_{\mathfrak{z}}(\Pi_{\mathfrak{z}}(t_*),r)\big)\geq r^{2\alpha-\delta}\Big)\leq  C\cdot \exp(-r^{-c}),
\end{equation}
for every $r>0$.
\end{theo}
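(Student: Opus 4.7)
The first observation is that
\[
\mathrm{Vol}_\mathfrak{z}\bigl(B_\mathfrak{z}(\Pi_\mathfrak{z}(t_*),r)\bigr) \;=\; \mathrm{Leb}\bigl\{s\in[0,1]:Z_s\leq Z_{t_*}+r\bigr\} \;=:\;V(r),
\]
since $\Pi_\mathfrak{z}$ pushes forward Lebesgue measure to $\mathrm{Vol}_\mathfrak{z}$ and $\mathfrak{z}(\Pi_\mathfrak{z}(t_*),\Pi_\mathfrak{z}(s))=Z_s-Z_{t_*}$. The target bound $V(r)\leq r^{2\alpha-\delta}$ matches the heuristic $V(r)\sim r^{2\alpha}$ coming from the $\tfrac{1}{2\alpha}$-Hölder regularity of $Z$ (equation \eqref{Z:variation:L}) and the conjectural Hausdorff dimension $2\alpha$ of $\mathcal{S}$. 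By the scaling identity \eqref{eq:scaling:N:v} together with the disintegration \eqref{eq:excursionmeasuredecomp}, and by the invariances of Lemma~\ref{lem:invariancereroottime}, it is essentially equivalent to establish the corresponding bound under $\mathbf{Q}$ for the excursion of $(X,Z)$ above its past minimum straddling a time of order~$1$.

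The approach is a discrete Markovian exploration modelled on the argument of Proposition~\ref{pinch_points_are_not_record}. Fix a scale $2^{-n}\simeq r^{\theta}$ with $\theta\in(1,2\alpha)$ to be optimized as a function of $\delta$. Inductively define the $(\mathcal{F}_t)$-stopping times $S_1^n:=\inf\{t\geq 0:Z_t=\inf_{[0,t]}Z\}$, $T_i^n:=\inf\{t\geq S_i^n:X_t=X_{S_i^n}-2^{-n}\}$, and $S_{i+1}^n:=\inf\{t\geq T_i^n:Z_t=\inf_{[0,t]}Z\}$. By Lemma~\ref{Markov_lem}, conditionally on $\mathcal{F}_{S_i^n}$, the excursions of $(X,Z)$ above the past infimum arising in $[S_i^n,T_i^n]$, each shifted to start at $(0,0)$, form an independent Poisson point process with intensity $\mathbf{1}_{[0,2^{-n}]}\mathrm{d}x\,\mathbf{N}(\mathrm{d}X\,\mathrm{d}Z)$. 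The contribution to $V(r)$ of the excursions grafted during step $i$ is bounded by the total duration of those whose $Z$-minimum (after shift) falls below $r$; by Corollary~\ref{Z<-1} each such event occurs with probability $O(2^{-n}r^{-2})$ per excursion. Moreover only $O(r\cdot 2^{n/2})$ steps are needed before the running infimum of $Z$ drops by an $O(1)$ amount, since at each step the spine contribution to $Z_{S_i^n}$ from the Brownian labeling $\mathrm{b}_0(-I_t)$ adds a Gaussian-type increment of scale $2^{-n/2}$.

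Summing the durations of the contributing excursions over the $O(r\cdot 2^{n/2})$ relevant steps yields, after a Bernstein--Bennett concentration argument, an upper bound of the form $r^{2\alpha-o(1)}$ valid outside an event of stretched-exponentially small probability. Contributions of times not resolved at scale $2^{-n}$ are handled by invoking the quantitative modulus of continuity from Proposition~\ref{variations_Z}$\rm(ii)$, which ensures that the $Z$-oscillations on intervals of $d$-diameter $\leq 2^{-n}$ do not exceed $r$ outside a stretched-exponentially small event. A final union bound over dyadic scales $r=2^{-k}$ and Borel--Cantelli yield the claim uniformly in $r$. The main obstacle is the concentration step: the contributing excursions have heavy-tailed durations (in the $\alpha$-stable basin by \eqref{eq:estim2}) combined with only a polynomial $r^{-2}$ inclusion probability, so their sum is \emph{a priori} only polynomially concentrated. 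Upgrading to a stretched-exponential tail requires a careful two-scale truncation---discarding giant excursions (rare by a union bound on \eqref{eq:estim2}), and applying a Bennett--Bernstein inequality to the truncated sum---a step noticeably more delicate than in the Gaussian Brownian-sphere setting, where the analogous durations would themselves be Gaussian rather than stable.
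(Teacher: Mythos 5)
Your proposal gets the opening reduction right (the quantity $V(r)$ is the time $Z$ spends in $[Z_{t_*},Z_{t_*}+r]$), but from there it diverges from the paper and contains two genuine gaps.

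First, the paper's proof begins with a \emph{cyclic re-rooting} step that you omit. Using the Chaumont--Uribe Bravo identity for cyclically exchangeable processes (\cite{Cyclical}), the paper shows
\[
\mathbf{P}\Big(\int_0^1 \mathbbm{1}_{Z_s\leq Z_{t_*}+r}\,\d s\geq r^{2\alpha-\delta}\Big)
\leq
\mathbf{P}\Big(\int_0^1 \mathbbm{1}_{Z_s\leq r}\,\d s\geq r^{2\alpha-\delta}\;\Big|\;\inf Z\geq -r\Big),
\]
together with a rough lower bound $\mathbf{P}(\inf Z\geq -r)\gtrsim r^{2\alpha+\delta}$ to remove the conditioning. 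This converts "time spent near the random global minimum" into "time spent near the deterministic level $0$" for a conditioned process, and it is what makes a forward Markovian exploration tractable. In your setup the stopping times $S_i^n$ track the running minimum of $Z$, but whether a given excursion grafted during step $i$ contributes to $V(r)$ depends on the still-unknown future value $Z_{t_*}$; the Markov property at $S_i^n$ gives no handle on this. Your sentence "by the scaling identity and Lemma~\ref{lem:invariancereroottime} it is essentially equivalent to establish the bound under $\mathbf{Q}$" glosses over this obstruction entirely.

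Second, and more fatally, your concentration step cannot give a stretched-exponential bound. You propose to truncate the heavy-tailed excursion durations and then apply Bennett--Bernstein. But the durations are in the $\alpha^{-1}$-stable domain: the probability that even one excursion out of $K$ exceeds a truncation level $M$ is of order $K\cdot M^{-1/\alpha}$, which is only polynomially small in $M$. Forcing this polynomial to be $\exp(-r^{-c})$ requires $M=\exp(\alpha r^{-c})$, at which point the Bernstein exponent contains $t/M=\exp(-\alpha r^{-c})\,t$ and the resulting bound is useless. No truncation scheme escapes this; the heavy-tailed sum simply does not concentrate at stretched-exponential speed. The paper circumvents the issue by \emph{not} proving concentration of a sum. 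After the re-rooting reduction and passage to $\mathbf{Q}$, it discretizes in absolute time blocks of length $r^{2\alpha}$ (not trunk length $2^{-n}$), bounds the time spent near $0$ by the number of surviving blocks times $r^{2\alpha}$, and shows via the spinal Poissonian decomposition (combined with the a priori modulus from Proposition~\ref{variations_Z}~(ii)) that each block independently has probability $\geq a\,r^{\eta}$ of terminating the game, i.e.\ driving $\widetilde Z$ below $-r$. The resulting survival probability $(1-a r^{\eta})^{\lfloor r^{-\delta}\rfloor}$ is stretched-exponential once $\eta<\delta$. This is a geometric-decay argument, not a sum-concentration argument, and the switch from trunk-length to absolute-time discretization is what makes the block probability uniformly bounded below. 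Finally, a minor point: your count "$O(r\cdot 2^{n/2})$ steps before the running infimum drops by $O(1)$" does not follow from spine increments of scale $2^{-n/2}$; that heuristic would give $\asymp 2^{n/2}$ or $\asymp 2^n$ steps depending on whether one views the running-minimum decrements as a subordinator or a random walk, but in neither case $r\cdot 2^{n/2}$.
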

In Part \ref{part:random_maps} of this paper, this estimate will be used to bound the mass of balls centered at the root in the subsequential scaling limits $(\mathcal{S}, D)$ of $\bq$-Boltzmann maps that were discussed in the Introduction.  In turn, this will allow us to deduce an \textit{a priori} bound of the form $ D^{*} \leq  D^{1- \delta}$ (locally), that is required in the final steps of the proof of our main result stating that $D=D^{*}$, see Proposition \ref{preliminary_control} in Section \ref{Sec:A:priori:bound}. 

\begin{proof} In order to prove Theorem \ref{technical_uniform_balls}, we are going to write $\mathrm{Vol}_\mathfrak{z}\big(B_{\mathfrak{z}}(\Pi_{\mathfrak{z}}(t_*),r)\big)$ in terms of the time that $Z$ spends near its infimum. Since $t_*$ is the argmin of $Z$, and $\mathrm{Vol}_{\mathfrak{z}}$ is the pushforward of the Lebesgue measure on  $[0,1]$ under the projection $\Pi_{\mathfrak{z}}$, we have
$$\mathrm{Vol}_\mathfrak{z}\big(B_{\mathfrak{z}}(\Pi_{\mathfrak{z}}(t_*),r)\big)\underset{ \eqref{eq:defpseudodistancearbre}}{=} \int_0^1 \d s ~\mathbbm{1}_{Z_{s}\leq Z_{t_*}+r}~. $$ 
Consequently, Theorem \ref{technical_uniform_balls} can be translated in terms of $Z$ in the following form:
Fix $\delta>0$, then there exist $c,C\in(0,\infty)$ such that 
\begin{equation*}
\mathbf{P}\Big(\int_{0}^{1}\d s~\mathbbm{1}_{Z_{s}\leq Z_{t_*}+r}\geq r^{2\alpha-\delta}\Big)\leq C\cdot \exp(-r^{-c}),
\end{equation*}
for every $r>0$.
We stress that the constants $C,c$ might  depend on $\delta$. Let us start by presenting the idea of the proof. We will first show using a re-rooting argument due to Chaumont and Uribe Bravo that 
 \begin{eqnarray} \label{eq:resteaudessus}\mathbf{P}\Big(\int_{0}^{1}\d s~\mathbbm{1}_{Z_{s}\leq Z_{t_*}+ r}\geq r^{2\alpha-\delta}\Big)\quad \leq \quad \mathbf{P}\Big(\int_{0}^{1}\d s\mathbbm{1}_{Z_{s}\leq r}\geq r^{2\alpha-\delta}\:\:\Big\vert \inf Z\geq -r\Big),\end{eqnarray}
 for every $r,\delta >0$.
Then we will argue that the process $Z$ conditioned on the event $\{\inf Z \geq - r\}$ cannot spend too much time near $0$, since informally each ``time'' the process $Z$ returns close to  $0$, it has a positive chance to drop below $- r$ in the next $r^{{2\alpha}}$ units of time. \par
To prove \eqref{eq:resteaudessus}, let $r,\delta>0$ and introduce $U$ a uniform random variable on $[0,1]$ -- independent of $(X,Z)$. Recall the notation $s\oplus t=s+t$ if $s+t\leq1$ and $s\oplus t=s+t-1$ otherwise. We also introduce $(A^{(r)}_{t})_{t\in[0,1]}$ the occupation time process associated with $Z$, i.e.
\[A_{t}^{(r)}:=\int_{0}^{t}\d s~\mathbbm{1}_{ Z_{s} \leq   \inf Z+r}~,\quad t\in[0,1],\]
and we consider the quantity $t^{(r)}:=\inf\big\{t\in[0,1]:\:A_{t}^{(r)}=U A_{1}^{(r)} \big\}$.
Now remark that by construction we have:
\begin{align*}
\mathbf{P}\Big(\int_{0}^{1}\d s ~\mathbbm{1}_{Z_{s}\leq  Z_{t_*}+r}\geq r^{2\alpha-\delta}\Big)
&\leq\mathbf{P}\Big(\int_{0}^{1}\d s ~\mathbbm{1}_{Z_{s}\leq Z_{t^{(r)}}+r}\geq r^{2\alpha-\delta}\Big)\\
&=\mathbf{P}\Big(\int_{0}^{1}\d s~\mathbbm{1}_{Z_{t^{(r)}\oplus s}-Z_{t^{(r)}}\leq r}\geq r^{2\alpha-\delta}\Big).
\end{align*}
Since   by \eqref{re-rooting} the process $Z$ has cyclically exchangeable increments,  we can  apply \cite[Theorem 2.2]{Cyclical} to deduce that the process $\big(Z_{t^{(r)}\oplus s}-Z_{t^{(r)}}\big)_{s\in [0,1]}$, under $\mathbf{P}$, is distributed as the process $Z$ under the probability measure $\mathbf{P}(\cdot\:|\:\inf Z>-r)$. In particular, we get \eqref{eq:resteaudessus}. 

Hence, to obtain the desired result, it suffices to show that, for every $\delta \in \mathbb{R}_{+}^{*}$, there exist $c_1, C_1\in(0,\infty)$ such that for every $r>0$:
\begin{equation}\label{eq:prop:volm:*}
\mathbf{P}\Big(\int_{0}^{1}\d s\mathbbm{1}_{Z_{s}\leq r}\geq r^{2\alpha-\delta}\:\:\Big\vert\:\: \inf Z\geq -r\Big)\leq C_1\cdot\exp(-r^{-c_1}).
\end{equation}
We start by showing that for every $\delta>0$, there exists $C_{1}^\prime\in\mathbb{R}_{+}^{*}$ such that:
\begin{equation*}
\mathbf{P}(\inf Z\geq -r) \geq  C_{1}^\prime \cdot
\big(1\wedge r^{2\alpha+\delta}\big),
\end{equation*}
for every $r>0$.
We mention that the previous inequality is not sharp, as we expect  $\mathbf{P}(\inf Z\geq -r)$  to be of order $r^{-2\alpha}$. Remark that by \eqref{Z:variation:L}, the random variable   \begin{eqnarray} \label{eq:variation2}W:=\sup \limits_{s\neq t} |Z_{t}-Z_{s}|\cdot |t-s|^{-\frac{1}{2\alpha+\delta}}  \end{eqnarray}
is finite $\mathbf{P}$-a.s so that $|Z_{t}-Z_{s}| \leq r$ if $|t-s| \leq (r/W)^{2 \alpha+\delta}$. We thus have:
\begin{eqnarray*}
\mathbf{P}\big(\inf Z \geq -r\big) &\underset{ \mathrm{re-rooting\ } \eqref{re-rooting}}{=}& \int_{0}^{1}\d s~\mathbf{E}\big[\mathbbm{1}_{\inf Z_{s\oplus \cdot}-Z_{s}\geq -r}\big]\\
& \underset{ \mathrm{Fubini}}{=} &\mathbf{E}\big[\int_{0}^{1}\d s\mathbbm{1}_{\inf Z_{s\oplus \cdot}-Z_{s}\geq -r}\big]\\
& \underset{}{=}&\mathbf{E}\big[\int_{0}^{1}\d s\mathbbm{1}_{Z_{s}\leq \inf Z+r}\big]~,\\
& \underset{ \eqref{eq:variation2}}{\geq}& \mathbf{E}[(r/W)^{2 \alpha + \delta}],
\end{eqnarray*}
which gives the desired inequality. Hence, to derive \eqref{eq:prop:volm:*} it suffices to show that for every $\delta \in \mathbb{R}_{+}^{*}$ there exist $c_2,C_2\in(0,\infty)$ such that:
\[ \mathbf{P}\Big(\{\inf Z >-r \}\cap \big\{\int_{0}^{1}\d s~\mathbbm{1}_{Z_{s}\leq r}\geq r^{2\alpha-\delta}\big\}\Big)\leq C_2\cdot \exp(- r^{-c_2}),\quad r>0.\]
The idea now is to argue similarly as in the proof of  Proposition \ref{pinch_points_are_not_record}.  We are going to translate the problem under the probability measure $\mathbf{Q}$ and discretize time in order to apply the Markov property.  First, under $\mathbf{Q}$, set:
 \[\tau_{*}:=\sup\big\{t\in [0,1]:~X_{t}=\inf\limits_{[0,1]} X\big\}\:\:\text{and}\:\:\tau^{*}:=\inf\big\{t\geq 1:~X_{t}=\inf\limits_{[0,1]} X\big\}.\] Under $\mathbf{Q}$, the excursion $(X_{t}-\inf_{[0,1]}X)_{t\in [\tau_{*},\tau^{*}]}$ is the excursion above the running infimum straddling time $1$.
To be able to relate it to  the label process under $ \mathbf{P}$, we shall, as in \eqref{eq:ZetZtilde}, consider the process $\widetilde{Z}_{t} := Z_{t }-\mathrm{b}_0(-I_t)$.  We then introduce $\widetilde{\tau}_{r}:=\inf\{t\geq 1:~\widetilde{Z}_{t}=-r \}$.
 Since $\mathbf{Q}(1-r<\tau_{*}< 2-r<\tau^{*}<2)$ decreases polynomially to $0$ as $r\to 0$, an application of the scaling property  shows that to obtain \eqref{eq:prop:volm:*} it is enough to prove that for every $\delta>0$ we can find some constants $c_3,C_3>0$ such that:  
\begin{equation}\label{eq:prop:volm:**}
 \mathbf{Q}\Big(\int_{1}^{2\wedge \widetilde{\tau}_{r}}\d s~\mathbbm{1}_{\widetilde{Z}_{s}\leq r}\geq r^{2\alpha-\delta}\Big)\leq C_3\cdot \exp(- r^{-c_3}),\quad r>0.
\end{equation}
 From now on we fix $\delta>0$ and we consider $r\in (0,1)$. Let us now discretize time to give an upper bound of the left-hand side of the display above.  We introduce the stopping times $(S_{i},T_{i})_{i\in \mathbb{N}}$ defined by induction as follows. First take $S_{1}:=1\:\:\text{;}\:\:T_{1}:=S_{1}+ r^{2\alpha}$, 
and  for every integer $i\geq 0$ set
\[S_{i+1}:=\inf\big\{s\geq T_{i}~:~ \widetilde{Z}_{s}\in [- r, r]\big\}\quad\text{;}\quad T_{i+1}:=S_{i+1}+ r^{2\alpha}.\]
We have the following trivial inequality:
  \begin{eqnarray}\mathbf{Q}\Big(\int_{1}^{2\wedge \widetilde{\tau}_{ r}}\d s~\mathbbm{1}_{\widetilde{Z}_{s}\leq  r}\geq  r^{2\alpha-\delta}\Big)\leq \mathbf{Q}\Big(S_{ \lfloor r^{-\delta}\rfloor }\leq 2\wedge \widetilde{\tau}_{r}\Big).  \label{eq:tropdetemps} \end{eqnarray}
Here is the rough idea to obtain an upper bound of the right-side hand of \eqref{eq:tropdetemps}. First, by scaling, the variations of the process $\widetilde{Z}$ over the time interval $[S_{i},T_{i}]$ of length $  r^{2 \alpha}$ should be of order $ r$. Then, since $\widetilde{Z}_{S_{i}} \leq r$, there should be a probability bounded away from $0$ that $ \widetilde{Z}$ touches $-r$ within $[S_i,T_i]$.  As a consequence, the probability of $\{S_n\leq \widetilde{\tau}_{ r}\}$ should decrease exponentially fast in $n$. As in the proof of Proposition \ref{pinch_points_are_not_record}, the problem may come from the fact that in the Markov property, the $\widetilde{Z}$-labels after time $S_{i}$ are shifted by the labels   on the loops connecting the root $\Pi_d(\tau_*)$ to $\Pi_d(S_i)$ -- see  Figure \ref{fig:markov-volume} for an illustration. We shall again bound them using \textit{a priori} estimates resulting from Proposition \ref{variations_Z} $\rm(ii)$. Of course, this discussion is informal, and we now make this picture precise.

\begin{figure}[!h]
 \begin{center}
 \includegraphics[width=10cm]{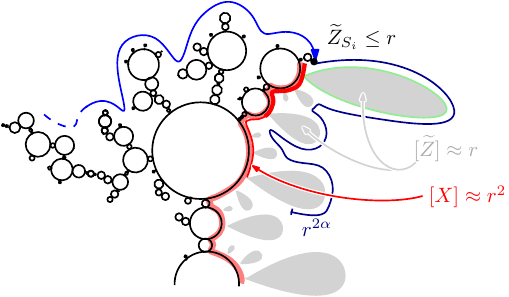}
 \caption{Illustration of the proof. At time $S_{i}$ the $\widetilde{Z}$-value is less than $ r$. By scaling, in the forthcoming $ r^{{2\alpha}}$ units of time, the variation of the infimum process $I$ of $X$ is of order $r^2$, while one expects the variation of process $\widetilde{Z}$ to be of order $r$. One way to ensure this is to ask for an excursion (in green above) to have a $\widetilde{Z}$-variation less than $-2 r$, and to happen ``early'' on the trunk, i.e.\ within $ r^{2+\eta}$ units of length after $S_i$ (in bold red above). Here we write $[\cdot ]$ to denote a variation.\label{fig:markov-volume}}
 \end{center}
 \end{figure}
\noindent Similarly to  the proof of Proposition \ref{pinch_points_are_not_record},  for every $i\geq 1$, let  $(s_{i,k},t_{i,k})_{k\in\mathbb{N}}$ be the connected components of the open set $$\{s\geq S_{i}:\: X_{s}>\inf\limits_{[S_{i},s]}X \}$$  and consider the associated excursions $ X^{i,k}_{s} := X_{(s_{i,k}+s)\wedge t_{i,k}}-X_{s_{i,k}}$ and $\widetilde{Z}^{i,k}_{s} := \widetilde{Z}_{(s_{i,k}+s)\wedge t_{i,k}}-\widetilde{Z}_{s_{i,k}}$, for $s \geq 0$. We also write $\sigma_{i,k}:=t_{i,k}-s_{i,k}$ and $\ell_{i,k}:=X_{S_i}-\inf_{[S_i,s_{i,k}]} X$. Next, we fix $\eta\in(0,1/2)$ and, for every $i\in \mathbb{N}$, we introduce the event:
\begin{eqnarray} \label{eq:graftedtotetgrand} \mathcal{B}_{i} := \Big\{ \exists k\in \mathbb{N} :~ \ell_{i,k}\leq r^{2+\eta} \mbox{ and } \min \widetilde{Z}^{i,k} \leq -4 r \mbox{ and } \sum_{\ell_{i,j}<\ell_{i,k}}\sigma_{i,j}<r^{2\alpha} \Big\}.
\end{eqnarray}
 In words, under $\mathcal{B}_{i}$, at step $i$ a looptree with a (shifted) minimal label below $- 4r$ is  branched within less than $ r^{2+\eta}$ unit of length along the ``trunk", see Figure \ref{fig:markov-volume}. By construction the $\mathcal{B}_{i}$'s are independent, and we are going to show using excursion theory that we have $\mathbf{Q}(\mathcal{B}_{i}) \geq a\cdot r^{\eta}$, for some constant $a>0$ (not depending on $r\in(0,1)$). The theorem will then follow easily from this estimate. Fix $i\in \mathbb{N}$,  and remark that by excursion theory the measure
 $$\sum \limits_{k\in \mathbb{N}} \delta_{\ell_{i,k}, \sigma_{i,k}, X^{i,k}, \widetilde{Z}^{i,k}} $$
 is a Poisson point measure with intensity $\mathbbm{1}_{[0,\infty)}(\ell)\mathrm{d}\ell\big( \mathrm{d} v/ (|\Gamma(-\frac{1}{\alpha})| v^{\frac{1}{\alpha}+1}) \big) \mathbf{N}^{(v)}(\d X, \d Z)$. In particular, the process $R^{i}_{t}:= \sum_{\ell_{i,k}\leq t} \sigma_{i,k},$  $t\geq 0, $ 
 is an $1/\alpha$-stable subordinator. By definition we have
 \begin{align*}
 \mathbf{Q}(\mathcal{B}_{i})\geq \mathbf{Q}\Big(\exists k\in\mathbb{N}: ~ \ell_{i,k}\leq r^{2+\eta} ~, ~ R^{i}_{\ell_{i,k}-}\in[0,r^{2\alpha}/{2}) ~,~ R^{i}_{\ell_{i,k}}\in[2r^{2\alpha}/3,r^{2\alpha})~,~ \min \widetilde{Z}^{i,k}\leq -4 r\Big). 
 \end{align*}
 Since  $R^{i}$ is non-decreasing,  we infer from the discussion above that the right-hand side of the previous display equals 
\begin{align*}
\mathbf{Q}\Big(\sum \limits_{k\in \mathbb{N}}& \mathbbm{1}_{\ell_{i,k}\leq r^{2+\eta}, R^{i}_{\ell_{i,k}-}\in[0,r^{2\alpha}/2), R^{i}_{\ell_{i,k}}\in[2r^{2\alpha}/3,r^{2\alpha})}\cdot \mathbf{N}^{(\Delta R^{i}_{\ell_{i,k}})}(\min Z<-4 r)  \Big)\\
&=\frac{1}{|\Gamma(-\frac{1}{\alpha})|}\int_0^{r^{2+\eta}} \d \ell~ \mathbf{Q}\Big( \mathbbm{1}_{ R^{i}_{\ell-}\in[0,r^{2\alpha}/2)}\int_{2 r^{2\alpha}/3-R_{\ell-}^i}^{ r^{2\alpha}-R_{\ell-}^i} \d v ~v^{-\frac{1}{\alpha}-1}  \cdot \mathbf{N}^{(v)}(\min Z<-4 r)\Big),
\end{align*}
where  $ \Delta R^{i}_{\ell_{i,k}} $ stands for the jump of $R^{i}$ at $\ell_{i,k}$, and where the second line follows by an application of the compensation formula. Using again scaling invariance, we derive that $\mathbf{N}^{(v)}(\min Z<-4 r)\geq \mathbf{P}(\min Z<-24)$, for every $v\geq r^{2\alpha}/6$. Therefore,  a straightforward computation, using again that $R^i$ is non-decreasing, entails that the previous display is bounded below by:
$$\frac{ \mathbf{P}(\min Z<-24)}{2 |\Gamma(-\frac{1}{\alpha})| r^{2}}\int_0^{r^{2+\eta}} \d \ell~ \mathbf{Q}\Big( \mathbbm{1}_{ R^{i}_{\ell-}\in[0,r^{2\alpha}/2)} \Big) \geq \frac{ \mathbf{P}(\min Z<- 24) \mathbf{Q}(R^{i}_{r^{2+\eta}}<r^{2\alpha})}{2 |\Gamma(-\frac{1}{\alpha})| }\cdot r^{\eta}.$$
Since $R^i$ is an $1/\alpha$-stable subordinator, we have $\mathbf{Q}(R^{i}_{r^{2+\eta}}<r^{2\alpha})=\mathbf{Q}(R^{i}_{1}<r^{-\alpha \eta})>\mathbf{Q}(R^{i}_{1}<1)$ which is a positive quantity. Furthermore, it follows from the Gaussian character of $Z$ that  $\mathbf{P}(\min Z<-24) >0$. We derive  that there exists a constant $a>0$ (not depending on $r\in(0,1)$) such that $\mathbf{Q}(\mathcal{B}_{i})\geq a \cdot r^{\eta}$. Finally, to conclude the proof of the theorem, we  consider the event
$$ \mathcal{A} := \left\{|\widetilde{Z}_{t}-\widetilde{Z}_{s}|\leq d(s,t)^{\frac{1}{2}- \frac{\eta}{10}}: ~~ \mbox{ for every }s,t\in[0,2] \mbox{ with }|s-t|\leq  r\right\}.$$
Proposition \ref{variations_Z} $\rm(ii)$ and excursion theory imply that  $\mathbf{Q}( \mathcal{A}^{\text{c}}) \leq C_4\cdot \exp(- r^{-c_4})$ for some $c_4, C_4>0$ (which only depend on $\eta$). Notice now that by construction, for every $i,k\in \mathbb{N}$ such that $\ell_{i,k}\leq r^{2+\eta}$, we must have $d(S_i,s_{i,k})\leq  r^{2+\eta}$. Thus, on the event $ \mathcal{A}  \cap \mathcal{B}_{i}$,  the minimal $\widetilde{Z}$ label over the time interval $[S_i,T_i]$ is smaller than $ \widetilde{Z}_{S_i} - 4r +   \left(r^{2+\eta}\right)^{ \frac{1}{2}-\frac{\eta}{10}} \leq - 2 r$, which implies  that $\widetilde{\tau}_r \leq T_i$.  We infer that:
\begin{align*}
\mathbf{Q}\big( S_{\lfloor r^{-\delta}\rfloor} \leq 2 \wedge \widetilde{\tau}_{ - r}\big) &\leq \mathbf{Q}\big(  \mathcal{A}^{\mathrm{c}})+ \mathbf{Q}\big(\cap_{i\leq\lfloor r^{-\delta}\rfloor } \mathcal{B}_{i}^{\mathrm{c}}\big)\leq \exp(- r^{-c_4}) + (1- a \cdot r^{\eta})^{\lfloor r^{-\delta}\rfloor} 
\end{align*}
where to obtain the second inequality we used the independence of the $\mathcal{B}_{i}$'s and \eqref{eq:graftedtotetgrand}.  Putting everything together, assuming $\eta\in (0,\delta)$ and using \eqref{eq:tropdetemps},  we deduce that we can find two constants $c,C>0$ (only depending on $\eta$), such that we have
$$\mathbf{Q}\Big(\int_{0}^{2\wedge \widetilde{\tau}_{ r}}\d s~\mathbbm{1}_{\widetilde{Z}_{s}\leq  r}\geq  r^{(2\alpha-\delta)}\Big) \leq C_4\cdot \exp(- r^{-c_4}) + (1- a \cdot r^{\eta})^{\lfloor r^{-\delta}\rfloor} \leq C\cdot \exp(-  r^{-c}).$$  Since the previous display holds for every $r\in (0,1)$, this concludes  the proof of the theorem. 
\end{proof}

\subsection{Local minima of $Z$ along branches of $ \mathcal{L}$}\label{sec:label:pinch}
In this section, we refine  Proposition \ref{pinch_points_are_not_record} by studying the local minima of  $Z$ along ``branches'' of the looptree $ \mathcal{L}$. This analysis  culminates in Proposition~\ref{lem:non-icnreasealongbranches} below (see Figure \ref{fig:non-icnreasealongbranches} for an illustration). In this direction, recall the definition of $\mathrm{Branch}(s,t)$, for $s,t\in [0,1]$, given in \eqref{eq:defpinch}   as well as its simpler form \eqref{eq:defpinch:0:t}  when either $s$ or $t$ equals $0$.

\begin{prop}[Local minimum of labels along branches] \label{lem:non-icnreasealongbranches} The following property holds $ \mathbf{P}$-almost surely:
 For any $0<t_{1}<t_{2}<1$ with $d(t_{1},t_{2})=0$  such that 
 \begin{equation}\label{prop:minimum:labels:branches}
Z_{t_{1}}=Z_{t_{2}} < Z_{r}\:\:\text{for every}\:\:r\in \mathrm{Branch}(0,t_{1})\setminus\{t_{1}\},
\end{equation}
and  every $\varepsilon>0$, there exist $t^{\prime}_{1}\in \big((t_{1}-\varepsilon)\vee 0, t_{1}\big)$ and $t^{\prime}_{2}\in \big(t_{2},(t_{2}+ \varepsilon)\wedge 1\big)$  pinch point times such that:
\[Z_{t^{\prime}_{1}}<Z_{t_{1}}=Z_{t_{2}}< Z_{r}\:\:\text{for every}\:\:r \in \mathrm{Branch}(t_{1},t^{\prime}_{1})\setminus \{t_{1},t^{\prime}_{1}\}\]
and
\[Z_{t^{\prime}_{2}}<Z_{t_{1}}=Z_{t_{2}}<Z_{r}\:\:\text{for every}\:\:r\in \mathrm{Branch}(t_{2},t^{\prime}_{2})\setminus \{t_{2},t^{\prime}_{2}\}.\]
\end{prop}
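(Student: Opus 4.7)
By the time-reversal invariance \eqref{symmetric}, the statement is symmetric under swapping $t_1$ with $t_2$: applying the argument to the time-reversed process $s \mapsto Z_{1-s}$ exchanges "slightly before $t_1$" with "slightly after $t_2$". I will therefore concentrate on producing $t'_1\in ((t_1-\varepsilon)\vee 0, t_1)$ and indicate at the end that $t'_2$ follows by the same argument applied to the time-reversed process. The plan is to find $t'_1$ deep inside a sub-looptree grafted onto the loop associated with a jump-time ancestor $s^*$ of $t_1$ chosen very close to $t_1$, and to invoke the Markov property (Corollary \ref{cutting_N}) together with the fact $\mathbf{N}(\inf Z\leq -r)=\mathbf{N}(\inf Z\leq -1)/r^2$ (Corollary \ref{Z<-1}) to guarantee the existence of such a sub-looptree whose labels dip strictly below $Z_{t_1}$.

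First, using property {\hypersetup{linkcolor=black}\hyperlink{prop:A:4}{$(A_4)$}} pick a jump-time ancestor $s^*\prec t_1$ with $\Delta_{s^*}>0$, $t_1-s^*<\varepsilon/2$ and $d(s^*,t_1)$ so small that, by continuity of $Z$ on $\mathcal{L}$, the values $Z_r$ for $r\in \mathrm{Branch}(s^*,t_1)$ are all within $\eta/2$ of $Z_{t_1}$ for some small $\eta>0$ to be fixed. Because $s^*\prec t_1$, by hypothesis these values are also strictly greater than $Z_{t_1}$. The jump at $s^*$ produces a loop $\Pi_d\circ\mathrm{f}_{s^*}([0,1])$ in $\mathcal{L}$; the spine from the root to $\Pi_d(t_1)$ enters this loop at $u=0$ and exits at the parameter $u^*=x_{s^*,t_1}/\Delta_{s^*}\in(0,1)$. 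By Proposition \ref{b_Brownian_Brigde}, the labels along the loop form (conditionally on $X$) a Brownian bridge of scale $\Delta_{s^*}^{1/2}$ pinned at $Z_{s^*}$, and the "outside arc" obtained by traversing the loop through the parameters $u\in(u^*,1)$ corresponds exactly to (a subset of) the time interval $(s^*,t_1)$ in the global time, provided one selects the appropriate representatives.

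Next, by the Markov property (Corollary \ref{cutting_N}) applied at suitable $(\mathcal{F}_t)$-stopping times bracketing the loop, the sub-excursions of $X$ hanging off the outside arc, together with their label decorations, form a Poisson point process of independent excursions distributed according to $\mathbf{N}(\d X\, \d Z)$, with intensity comparable to the loop-length $\Delta_{s^*}$. At each such pinch point (dense on the outside arc), a sub-looptree is grafted with root label given by the Brownian bridge at the corresponding parameter, hence within $\eta$ of $Z_{t_1}$ by continuity. Using Corollary \ref{Z<-1} and the scaling \eqref{eq:scaling:N:v}, the probability that at least one sub-looptree among those grafted on an outside arc of total length $\ell$ has relative minimum label less than $-2\eta$ is $1-\exp(-c\,\ell/\eta^2)$ for some $c>0$. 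Iterating over nested ancestors $s^*$ (or equivalently shrinking the arc down to $u=u^*$) makes $\ell/\eta^2$ arbitrarily large, so almost surely there exist such sub-looptrees with support in $(t_1-\varepsilon,t_1)$; in particular there is one in which the label $Z$ actually takes a value strictly less than $Z_{t_1}$.

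Given such a sub-looptree grafted at a pinch time $r\in(s^*,t_1)$, explore it in clockwise contour order starting from $\Pi_d(r)$ and define $t'_1$ to be the first moment at which $Z$ drops strictly below $Z_{t_1}$; by continuity of $Z$, such a time exists inside the sub-looptree and lies in $(t_1-\varepsilon,t_1)$. By Proposition \ref{pinch_points_are_not_record} applied inside the sub-looptree, this first descent cannot occur at a pinch point, so $t'_1$ can actually be chosen to be a pinch-point time just past the first level-crossing, with $Z_{t'_1}<Z_{t_1}$. By construction, $t_1\curlywedge t'_1=s^*$, and $\mathrm{Branch}(t_1,t'_1)\setminus\{t_1,t'_1\}$ splits into the spine arc from $s^*$ to $t_1$ (exclusive) and the arc from $s^*$ to $t'_1$ (exclusive) inside the sub-looptree: the former has labels $>Z_{t_1}$ by hypothesis, and the latter by the "first descent" choice of $t'_1$. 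This gives the desired $t'_1$, and the symmetric construction via \eqref{symmetric} yields $t'_2$. The main obstacle is the coupling in the third paragraph: one must simultaneously control the bridge fluctuations along the loop near $u^*$ (to prevent a premature descent below $Z_{t_1}$ on the loop itself) and the freshly grafted subtree labels (which must fall strictly below $Z_{t_1}$), arranging that the very first global descent occurs inside a grafted subtree rather than on the loop; this is precisely where Proposition \ref{pinch_points_are_not_record} and the sharper record estimates of Section \ref{sub:section:Z:0:1:minima} are needed to exclude pathological configurations.
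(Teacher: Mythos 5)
Your proposal takes a genuinely different route from the paper and, unfortunately, the route does not close.

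The paper's proof splits into two cases based on whether the labels $Z$ on the closure of the loops joining $\Pi_d(0)$ to $\Pi_d(t_1)$ (the ``Trunk'') dip below $z=Z_{t_1}$ near $t_1$. The easy case is when they do, and the crucial case is when they do not. In the latter case, the paper does not try to show that a ``blocking'' sub-looptree exists with high conditional probability; it shows by a \emph{combinatorial} argument that it must exist: since $t_1$ is not a left minimal record of $Z$ (Proposition \ref{pinch_points_are_not_record}), one can locate $m_0=\lceil 1/(2-\alpha)\rceil+1$ distinct ``grafted sub-looptrees'' near $t_1$, each with labels dipping below $z$; each of the corresponding record sets $\mathcal{R}^{(i)}$ is forced to cross the level $z$; and if every crossing were continuous (no jump), the level $z$ would belong to the intersection of $m_0$ record sets, which is impossible by Proposition \ref{intersection} (the empty a.s.\ intersection of $m_0$ independent $(\alpha-1)$-stable regenerative sets, via Hawkes). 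That contradiction forces some crossing to be a jump, producing the blocking loop $t_1'$.

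Your argument replaces this with a direct Borel--Cantelli-type estimate: graft on the outside arc of the loop at $s^*$, estimate via Corollary \ref{Z<-1} that the probability of finding a sub-looptree dipping $\eta$ below $z$ is $1-\exp(-c\ell/\eta^2)$, and let $\ell\to 0$ ``so that $\ell/\eta^2\to\infty$.'' This is where the gap lies, and it is not a side issue you can patch with the cited references. The label at the grafting point is not at level $z$; it sits at $z+\delta$ for some random $\delta>0$, so the grafted looptree must actually overshoot by at least $\delta$, and $\delta$ itself fluctuates at the bridge scale $\sqrt{\ell}$. The quantity $\ell/\delta^2$ is therefore of \emph{order one}, not tending to infinity, and the conditional probability of finding a blocking loop at a given scale does not tend to $1$. (Indeed, Proposition \ref{I_infty} shows the record set along the spine is the range of a $(2-\alpha)$-stable subordinator, a sparse set; the hypothesis on $t_1$ is precisely the degenerate event that $z$ lies in this sparse set, and your unconditional Poisson estimate does not apply on that event.) There is a further, independent problem of simultaneity: you need the statement to hold a.s.\ for \emph{all} $t_1$ satisfying \eqref{prop:minimum:labels:branches}, of which there may be uncountably many, and your estimate at a fixed $t_1$ and fixed scale does not upgrade to such a uniform statement without the kind of deterministic combinatorial input that Propositions \ref{pinch_points_are_not_record} and \ref{intersection} provide. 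The decisive ingredient you are missing is precisely Proposition \ref{intersection}; without it, or something playing the same role, the ``main obstacle'' you name at the end of your write-up remains fatal rather than a technicality.

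Two smaller inaccuracies worth noting. First, you assert that $\mathrm{Branch}(s^*,t_1)$ has labels close to $z$ ``by continuity of $Z$ on $\mathcal{L}$,'' and then use this to conclude that the \emph{loop arc} from $u^*$ to the grafting point $r$ has labels $>z$; but $\mathrm{Branch}(s^*,t_1)$ contains only pinch-point (spine) times, not the loop-arc times, so no control on the arc labels is obtained this way. Second, ``the first descent cannot occur at a pinch point'' (your use of Proposition \ref{pinch_points_are_not_record}) does not prevent the first descent from occurring at a \emph{leaf} point on the loop arc, which would again spoil the choice of $t_1'$ you make. Both are symptoms of the deeper issue that the loop arc labels near $u^*$ and the grafted sub-looptree labels cannot be controlled independently, which is what the regenerative-set intersection argument is designed to resolve.
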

\begin{figure}[!h]
 \begin{center}
 \includegraphics[width=10cm]{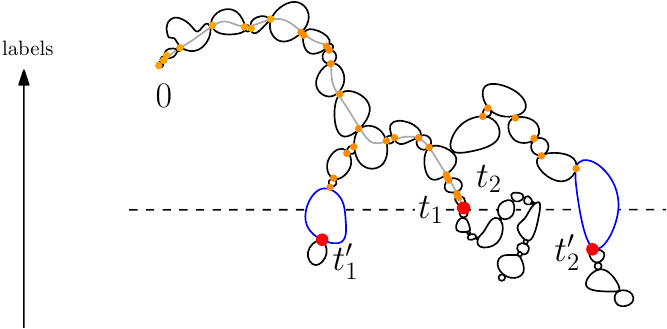}
 \caption{Illustration of the result of Proposition~\ref{lem:non-icnreasealongbranches}: Consider a branch in $ \mathcal{L}$ and stop it at a pinch point $\Pi_{d}(t_{1})$ when reaching a new minimum for the label along that branch. Then we can find two blue loops on both sides ``blocking'' the level $Z_{t_{1}}$. \label{fig:non-icnreasealongbranches}}
 \end{center}
 \end{figure}
 Let us describe first informally the main observations of this section before setting some notation which is necessary for the proofs. For this informal discussion, we argue under $\mathbf{P}$. If $s,t\in [0,1]$, by analogy with the stable tree, we refer to $\Pi_d( \mathrm{Branch}(s,t))$ as the branch between $x=\Pi_d(s)$ and $y=\Pi_d(t)$ in $ \mathcal{L}$. Let us stress that this notion is consistent with the definition of $\mathcal{L}$ since as explained in Section \ref{sec:defloop}, the set $\Pi_d( \mathrm{Branch}(s,t))$ corresponds to the points in $\mathcal{L}$ separating $x$ and $y$, and thus does not depend on the  choice of representatives $s$ and $t$. It was already observed in \cite[Lemma 16 (ii)]{LGM09} that the process $Z$ over a typical branch of $ \mathcal{L}$ evolves as a symmetric L\'evy process with index $ 2 \alpha-2$, see \eqref{one_point} below. In particular, the range\footnote{ In this work we use the standard convention that the range of a function is the closure of its image.} of $Z$ along a typical branch has Hausdorff dimension $1 \wedge (2\alpha-2)$. Notice that when $\alpha \in (1, 3/2)$,  this dimension is strictly less than $1$ and so the intersection  of $\lceil \frac{1}{1- (2\alpha -2)}\rceil +1 $ ``independent'' such closed  subsets is reduced to $\varnothing$. This heuristic suggests that: \begin{center}
the process $Z$ cannot take more than $\lceil \frac{1}{1- (2\alpha -2)}\rceil +1$ times the same value on the skeleton of $ \mathcal{L}$.\end{center}
Although we do not prove it,  this fact together with Proposition \ref{pinch_points_are_not_record} actually yield the above Proposition~\ref{lem:non-icnreasealongbranches}. In general, when $\alpha \in (1,2)$ we rather look more precisely at the  set of record values along branches. Indeed, when going along a branch in a given direction, since the process of labels evolves as a symmetric $2\alpha-2$ stable process, its running infimum evolves as an $\alpha-1$ stable subordinator, so that its range is of Hausdorff dimension $\alpha-1$. The same argument as above shows that the intersection of more than $\lceil \frac{1}{2- \alpha}\rceil +1$ such sets is reduced to $\varnothing$. This suggests as above that: 
\begin{center}
the running infimum process of $Z$ along branches cannot take more than $\lceil \frac{1}{2- \alpha}\rceil +1$ times\\ the same value on the skeleton of $ \mathcal{L}$.\end{center}
In order to make this statement precise, which is the key ingredient in the proof of Proposition \ref{lem:non-icnreasealongbranches}, let us introduce some notation and intermediate results. In this direction, recall the construction of the height process $H$ given in Section \ref{sec:codagearbre} and in particular the notation $\xi_{t}(r)$ which gives a pre-image of the ancestor of $\Pi_{ \mathfrak{h}}(t)$ at height $r \leq H_{t}$ in the stable tree $ \mathcal{T}_{ \mathfrak{h}}$. Now remark that thanks to the continuity of $Z$ and Lemma  \ref{Lem-cut-jump}, we get:
\begin{cor} \label{cor:labelbranch}
The following holds $\mathbf{P}$-a.s., for every $t\in [0,1]$ and $0\leq r_1\leq r_2\leq H_t$, we have:
\begin{equation}\label{iden:inf} 
\big\{Z_s:~\xi_{t}(r_1)\preceq s\preceq \xi_{t}(r_2)\big\}=\big\{Z_{\xi_{t}(r)}:~r\in[r_1,r_2]\big\} \cup \big\{Z_{\xi_{t}(r-)}:~r\in[r_1,r_2]\big\}.
\end{equation}
\end{cor}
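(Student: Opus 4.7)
The plan is to reduce the statement to Lemma \ref{Lem-cut-jump} applied to the truncated branch $\mathrm{Branch}(0, \xi_{t}(r_2))$, together with the label identification \eqref{label} ensuring that the two representatives of any pinch point carry the same value of $Z$. Since the map $r \mapsto \xi_{t}(r)$ is non-decreasing and rcll, and constant on the plateaus of $H$ that correspond to jump ancestors, its values and left-limits parametrize the entire branch from $\Pi_{d}(0)$ to $\Pi_{d}(t)$ in the looptree; restricting to heights in $[r_1, r_2]$ should yield precisely the portion of the branch lying between $\xi_{t}(r_1)$ and $\xi_{t}(r_2)$.

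I would first argue the inclusion $\supseteq$: for each $r \in [r_1, r_2]$, the monotonicity of the heights along a branch (property $(C)$ used in the proof of Lemma \ref{Lem-cut-jump}) gives $\xi_{t}(r_1) \preceq \xi_{t}(r) \preceq \xi_{t}(r_2)$, and similarly for $\xi_{t}(r-)$ when it differs from $\xi_{t}(r)$, placing $Z_{\xi_{t}(r)}$ and $Z_{\xi_{t}(r-)}$ in the LHS. For the reverse inclusion $\subseteq$, I would take $s$ with $\xi_{t}(r_1) \preceq s \preceq \xi_{t}(r_2)$ and split according to whether $\Delta_{s} = 0$ or $\Delta_{s} > 0$. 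In the continuous case, Lemma \ref{Lem-cut-jump}(i) applied to the branch ending at $\xi_{t}(r_2)$ (whose map $\xi_{\xi_{t}(r_2)}$ coincides with $\xi_{t}$ on $[0, r_2]$) gives $s = \xi_{t}(r)$ for some $r \in [0, r_2]$, and the constraint $s \succeq \xi_{t}(r_1)$ combined with monotonicity of heights forces $r \in [r_1, r_2]$. In the jump case, Lemma \ref{Lem-cut-jump}(ii) shows that either $s = \xi_{t}(r-)$ (handled identically), or $s$ is the ``return'' representative $s^{*}$ of a jump ancestor; in this last subcase $s \sim_{d} \xi_{t}(r-)$, so \eqref{label} forces $Z_{s} = Z_{\xi_{t}(r-)}$, reducing to the previous case.

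The main obstacle will be the careful bookkeeping of the second representative $s^{*}$: one must verify that having $\xi_{t}(r_1) \preceq s^{*} \preceq \xi_{t}(r_2)$ corresponds to the associated jump time $\xi_{t}(r-)$ sitting at a height in $[r_1, r_2]$, so that invoking \eqref{label} is legitimate and produces an $r$ in the right range. Once this is in place, the continuity of $Z$ together with the rcll nature of $r \mapsto \xi_{t}(r)$ yields the claimed equality for any fixed $t,r_1,r_2$, and since property $(C)$ and Lemma \ref{Lem-cut-jump} hold $\mathbf{P}$-a.s.\ uniformly in the parameters, the $\mathbf{P}$-a.s.\ simultaneous statement over $t$ and $(r_1,r_2)$ follows.
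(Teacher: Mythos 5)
Your approach coincides with the paper's one‑line justification (continuity of $Z$ together with Lemma~\ref{Lem-cut-jump}), and the $\subseteq$ direction you sketch is essentially sound once you also account for a possible second $\sim_d$-representative of $\xi_t(r_2)$ itself, which \eqref{label} handles. However, there is a gap in the $\supseteq$ direction that you pass over too quickly: you assert that $\xi_t(r_1)\preceq\xi_t(r-)$ holds ``similarly'' for every $r\in[r_1,r_2]$, but at $r=r_1$, whenever $\xi_t$ jumps there, one has $\xi_t(r_1-)<\xi_t(r_1)$, hence $\xi_t(r_1)\not\preceq\xi_t(r_1-)$, and $\xi_t(r_1-)$ simply does not belong to $\{s:\xi_t(r_1)\preceq s\preceq\xi_t(r_2)\}$.

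One cannot invoke $\sim_d$ to rescue this endpoint: by Lemma~\ref{Lem-cut-jump}(ii), $s:=\xi_t(r_1-)$ is a jump time, and by Proposition~\ref{topologie_loop_tree} its unique $\sim_d$-partner is the first return time $s'=\inf\{u>s:X_u=X_{s-}\}$. By property $(A_4)$, one has $x_{s,t}>0$ for every jump ancestor $s$ of $t$ except possibly $\sup\mathcal{A}_t$, which forces $I_{s,t}>X_{s-}$ and therefore $s'>t\geq\xi_t(r_2)$; in particular $s'\neq\xi_t(r_1)$ and $s'\not\preceq\xi_t(r_2)$. So \emph{neither} pre‑image of $\Pi_d(s)$ lies in the left‑hand set, and the value $Z_{\xi_t(r_1-)}$ is not visibly attained on the left side of \eqref{iden:inf}. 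You should either supply an argument for why $Z_{\xi_t(r_1-)}$ is nevertheless among $\{Z_w:\xi_t(r_1)\preceq w\preceq\xi_t(r_2)\}$ (I do not see one), or observe that in the downstream uses — e.g.\ the identification, stated just after the corollary, of the range of $R^{(i)}_{t_1,\dots,t_m}$ with the labels along $\mathrm{Branch}(r_i,t_i)\setminus\{r_i\}$, where indeed $r_1=H_{r_i}$ with $r_i=\xi_{t_i}(r_1-)$ a jump time that is explicitly removed — the endpoint $r_1$ is precisely excluded, so a formulation with $r\in(r_1,r_2]$ in the second set of \eqref{iden:inf} would be both safe and sufficient.
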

In particular,  even though $\xi_t$ does not parametrize $\mathrm{Branch}(0,t)$, the (closed) range of the process $r\mapsto Z_{\xi_t(r)}$ covers all the labels along  $\mathrm{Branch}(0,t)$.  To simplify some arguments, recall that $H$ and $\xi$ can also be defined under the measure $\mathbf{N}$ by scaling. Then \cite[Lemma 16 (ii)]{LGM09} states that for every  bounded continuous function, $F:\mathbb{D}( \mathbb{R}_{+},\mathbb{R})\mapsto\mathbb{R}$, we have
\begin{equation}\label{one_point}
    \mathbf{N}\Big(\int_{0}^{\sigma} \d t F\big((Z_{\xi_{t}(r)})_{r\geq 0}\big)\Big)=\int_{0}^{\infty} \d h\: \mathbf{E}\big[F((Y_{r\wedge h})_{r\geq 0})\big],
\end{equation}
where $Y$ is a symmetric stable processes with L\'evy-Khintchine function\footnote{Lemma 16 (ii) in \cite{LGM09} does not provide the explicit normalization constant $\alpha 2^{1-\alpha}\frac{\Gamma(\alpha)^{2}}{\Gamma(2\alpha)}$. The exact value is not essential for our study. However, we include it since it can be obtained by a straightforward integral computation.} $\lambda\mapsto \alpha 2^{1-\alpha}\frac{\Gamma(\alpha)^{2}}{\Gamma(2\alpha)}|\lambda|^{2(\alpha-1)}$.  Equation \eqref{one_point} has the following direct consequence for labels along branches:

\begin{lem}\label{lem:onePinch} Under $\mathbf{P}$, let $U_1 \in [0,1]$ be uniform and independent of $(X,Z)$. Almost surely, for any $[a,b] \subset [0,H_{U_1}]$, the infimum of $ s \mapsto Z_{\xi_{U_{1}}(s)}$ over $[a,b]$ is attained. Furthermore, its two-sided minima are distinct, so that the process attains its overall minimum only once.
\end{lem}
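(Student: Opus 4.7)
The plan is to use the identity \eqref{one_point} to reduce both claims to classical properties of a symmetric stable Lévy process of index $2(\alpha-1)$, and then transfer back to the process $s \mapsto Z_{\xi_{U_1}(s)}$.

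First, I would transfer \eqref{one_point} from $\mathbf{N}$ to $\mathbf{P}$. Using the disintegration \eqref{eq:excursionmeasuredecomp}, writing $\int_0^\sigma \d t = \sigma \int_0^1 \d u$ with the change of variables $t = \sigma u$, and applying the scaling identity \eqref{eq:scaling:N:v} to relate $\mathbf{N}^{(v)}$ to $\mathbf{P}$, one obtains that for every bounded measurable functional $F$,
\begin{equation*}
\mathbf{E}\Big[F\big((Z_{\xi_{U_1}(r)})_{r\geq 0},\, H_{U_1}\big)\Big] = \mathbf{E}\Big[F\big((Y_{r\wedge H})_{r\geq 0},\, H\big)\, \Phi(H)\Big],
\end{equation*}
where $Y$ is a symmetric stable Lévy process of index $2(\alpha-1)$, $H$ is an independent $(0,\infty)$-valued random variable with an explicit density (obtained from the scaling weights), and $\Phi$ is a positive bias factor. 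In particular, the (joint conditional) law of $(Z_{\xi_{U_1}(s)})_{s \in [0, H_{U_1}]}$ is absolutely continuous with respect to that of $(Y_s)_{s \in [0, H]}$, so it suffices to establish the analogues of (i) infimum attainment and (ii) distinctness of two-sided local minima for the symmetric stable Lévy process $Y$ on any deterministic compact sub-interval $[a,b]$.

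For (ii), the two-sided local minimum times of $Y$ form a countable set, and for any two distinct deterministic times $t_1 < t_2$ the law of $Y_{t_2}-Y_{t_1}$ is absolutely continuous, so $\mathbf{P}(Y_{t_1}=Y_{t_2})=0$; a Fubini argument over a countable dense family then shows that almost surely no two distinct two-sided local minima have the same value. For (i), I would argue by contradiction: if the infimum on $[a,b]$ were attained only as a left limit at some $t^*\in(a,b]$, then $Y$ would make an upward jump at $t^*$ with $Y_s \geq Y_{t^*-}$ for every $s\in[a,b]$. Applying duality/time reversal for Lévy processes together with symmetry of $Y$, the reversed process $\tilde Y_s := Y_{t^*-} - Y_{(t^*-s)-}$, $s\in[0,t^*-a]$, has the same distribution as $Y$ started from $0$; the above would force $\tilde Y_s \leq 0$ on $[0,t^*-a]$, contradicting Blumenthal's 0--1 law, since a symmetric stable process started from $0$ immediately visits both half-lines. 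Finally, transferring (i) and (ii) back via the first paragraph applied to a countable dense family of rational intervals $[a,b]\subset [0,H_{U_1})$, and extending to arbitrary $[a,b]\subset[0,H_{U_1}]$ by an elementary monotone-approximation argument, yields the simultaneous statement.

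The main obstacle is the attainment claim (i): unlike the Brownian case, $Y$ has jumps of both signs, so the infimum over $[a,b]$ could \emph{a priori} be realized only as a left limit; ruling this out requires the time-reversal/Blumenthal argument above. A second subtlety is ensuring that the transfer of Step~1 yields a clean absolutely continuous relation valid for $U_1$ uniform on $[0,1]$ (including good control of the density of $H_{U_1}$ near $0$), which is handled by the scaling property but must be done carefully.
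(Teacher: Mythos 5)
Your overall route is the same as the paper's: both reduce, via \eqref{one_point} and the scaling relation \eqref{eq:scaling:N:v}, to showing that a symmetric $(2\alpha-2)$-stable Lévy process $Y$ on a compact interval attains its infimum and has distinct two-sided local minima, and then transfer back. The paper simply cites these two facts about $Y$ as standard; your Fubini argument for (ii) is a correct way to supply one of them.

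There is, however, a genuine issue in your argument for (i). Time reversal of a Lévy process, $\tilde Y_s = Y_{T} - Y_{(T-s)-}$, is an equality in law with the dual process only when $T$ is a \emph{deterministic} time. Your $t^*$ is the (random, path-dependent) location of the ``unattained'' infimum, so the identity $\tilde Y \overset{(d)}{=} Y$ does not apply as stated and the appeal to Blumenthal's $0$--$1$ law does not go through directly. One can repair this by conditioning on the Poisson point process of jumps (a Palm/Mecke argument: given a positive jump at $t^*$, the path on $[a,t^*)$ is an independent Lévy segment, which does admit time reversal), or, more simply, by invoking quasi-left-continuity: taking stopping times $T_n=\inf\{s\geq a: Y_s< Y_{t^*-}+1/n\}\uparrow t^*$ forces $Y_{T_n}\to Y_{t^*}$, while by construction $Y_{T_n}\to Y_{t^*-}<Y_{t^*}$, a contradiction. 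Alternatively one can cite the classical result that $0$ being regular for $(-\infty,0)$ implies the running infimum over any compact interval is attained (Bertoin, \emph{Lévy Processes}, Prop.~VI.4). Your conclusion is correct and standard; only the specific time-reversal step as written does not hold.
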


\begin{proof} By a standard result for symmetric stable Lévy processes, it is well known  that, for any interval $[a,b]\subset \mathbb{R}_+$, the process $Y|_{[a,b]}$ attains its infimum. Moreover, all the local minima of $Y$ are distinct. The statement of the lemma is then a direct consequence of \eqref{one_point} and the scaling property.
\end{proof}
Let us also mention that the above statement also holds true simultaneously for all pinch point times $t$ instead of the uniform time $U_{1}$, simply because for every pinch points time $t$, there is a positive probability that $t\prec U_1$, and this implies that $(\xi_{t}(r))_{r\geq 0}=(\xi_{U_1}(r\wedge H_{t}))_{r\geq 0}$.
  
We can now introduce, under $\mathbf{P}$, the necessary notation to study the intersections of the range of the running infimum of $Z$ along branches. We refer to Figure \ref{fig:branches} for an illustration. Let $m\geq 1$ and  $0<t_{1}<t_{2}<\cdots<t_{m}<1$. For $1 \leq i \leq m$, we set
$$r_{i} := \sup \limits_{j\neq i} (t_{i}\curlywedge t_{j}),$$ the largest ancestor of $t_{i}$ with another $t_{j}$. Remark that for every $1\leq i\leq m$, the set   $\mathrm{Branch}\big(r_{i},t_{i}\big)\setminus \{r_{i}\}$ is the set of $t\in \mathrm{Branch}(0,t_{i})$ such that, for every $j\neq i$, $t$ is not an ancestor of $t_{j}$. In particular, these sets (and their projections by $\Pi_{d}$) are disjoint.  For every $1\leq i\leq m$, introduce the rcll processes $R_{t_{1},..., t_{m}}^{(i)}$ defined by:
\[ R_{t_{1},..., t_{m}}^{(i)}(r):=Z_{\xi_{t_{i}}(H_{r_{i}}+r)}~~;\quad \text{for } r\geq 0.\]
 In words, by \eqref{iden:inf},   the range of the  process  $R_{t_{1},..., t_{m}}^{(i)}$ is encoding the  labels $Z_{r}$ for $r\in \mathrm{Branch}\big(r_{i},t_{i}\big)\setminus \{r_{i}\}$~, when going from  $r_{i}$ to $t_{i}$. We denote  the minimal record values of $R_{t_{1},..., t_{m}}^{(i)}$ by $$\mathcal{R}_{t_{1},..., t_{m}}^{(i)}:=\Big\{\inf\limits_{ [0,r]} R_{t_{1},..., t_{m}}^{(i)}(r):~r\geq 0\Big\},$$ where we recall from Lemma \ref{lem:onePinch} that these infima are  attained, and consequently $\mathcal{R}_{t_{1},..., t_{m}}^{(i)}$ is  a closed set.  
 \begin{figure}[!h]
\begin{center}
\includegraphics[width=13cm]{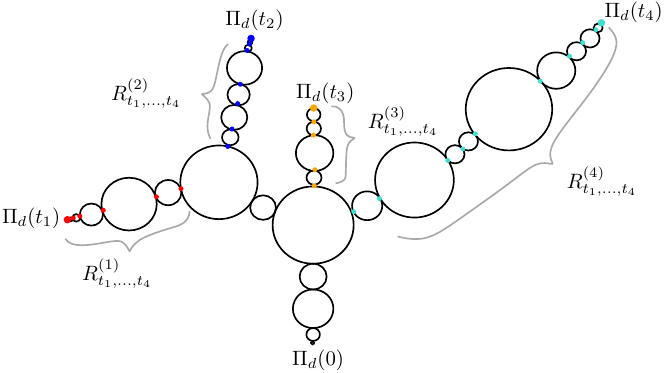}
\caption{Illustration of $\big(R_{t_{1},..., t_{m}}^{(i)}\big)_{1\leq i\leq m}$  in the case $m=4$. \label{fig:branches}}
\end{center}
\end{figure}
 
 We are going to deduce Proposition \ref{lem:non-icnreasealongbranches}  from the following result, which formalizes the above discussion:
 
\begin{prop}\label{intersection}
Fix $m \geq  \lceil \frac{1}{2-\alpha}\rceil+1$. Under $ \mathbf{P}$, let $U_{1}, ... , U_{m}$ be independent uniform random variables on $[0,1]$, also independent of $(X,Z)$. Then we have:
\[\bigcap\limits_{i=1}^{m} \mathcal{R}_{U_{1},...,U_{m}}^{(i)}=\varnothing, \quad \mathbf{P}-\mbox{a.s.}.\]
\end{prop}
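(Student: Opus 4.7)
My plan is to combine a spinal decomposition of the labeled stable tree around the $m$ uniformly-marked points with the classical theory of intersections of independent stable regenerative sets. The first step generalizes the one-point identity \eqref{one_point} to a joint statement. By iteratively applying the strong Markov property of $(X,Z)$ under $\mathbf{N}$ (Corollary \ref{cutting_N}) along the spine spanned by $U_1,\dots,U_m$ in $\mathcal{T}_{\mathfrak{h}}$, I would show that, conditionally on the shape of that subtree, the heights $H_{r_i}$ of its branch points, and the starting labels $R^{(i)}(0)=Z_{\xi_{U_i}(H_{r_i})}$, the processes $R^{(1)}_{U_1,\dots,U_m},\dots,R^{(m)}_{U_1,\dots,U_m}$ are mutually independent, each evolving as a symmetric $(2\alpha-2)$-stable L\'evy process started from its own $R^{(i)}(0)$. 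When several $U_i$ share a common deepest ancestor $r_i$ (a ``loop'' situation, for instance when they descend from distinct points of the same loop), the Brownian bridge decorating that loop must be included in the conditioning; the continuity of that bridge then guarantees that the labels $R^{(i)}(0)$ attached to the distinct loop positions are $\mathbf{P}$-almost surely pairwise distinct.

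Once this conditional independence is in hand, the second step identifies each $\mathcal{R}^{(i)}$ as an $(\alpha-1)$-stable regenerative set. Indeed, by the Wiener--Hopf factorization for a symmetric $(2\alpha-2)$-stable L\'evy process, its descending ladder-height subordinator has index $(2\alpha-2)/2=\alpha-1$. Consequently, the closed range of the running infimum of $R^{(i)}$, which by definition is $\mathcal{R}^{(i)}$, coincides (up to reflection and translation by $R^{(i)}(0)$) with the closed range of an $(\alpha-1)$-stable subordinator, and in particular has Hausdorff dimension $\alpha-1$ almost surely.

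The third step is a direct appeal to Hawkes's classical theorem on intersections of independent ranges of stable subordinators (see also Fitzsimmons--Fristedt--Shepp, or Bertoin's book): the intersection of $m$ independent $(\alpha-1)$-stable subordinator ranges with pairwise distinct starting points is $\mathbf{P}$-a.s.\ of Hausdorff dimension $\max\{0,\,1-m(2-\alpha)\}$, and is actually empty as soon as $m(2-\alpha)>1$ (the pairwise distinctness of the starting points is what rules out the only possible forced common point, namely the shared starting value that would appear if several $R^{(i)}(0)$ coincided). Since by assumption $m\geq\lceil 1/(2-\alpha)\rceil+1$, we have $m(2-\alpha)>1$ strictly, hence $\bigcap_{i=1}^m \mathcal{R}^{(i)}=\varnothing$ $\mathbf{P}$-a.s., as desired.

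The main obstacle is the clean implementation of Step 1 when the ``multi-spine'' has degenerate branch configurations $r_{i_1}=\cdots=r_{i_k}$ (several pendant edges sharing a common loop). In that case the compensation formula must be combined with excursion theory and with the Brownian-bridge decomposition of $Z$ along the shared loop, in the spirit of the proof of \eqref{one_point} in \cite{LGM09}; Propositions \ref{pinch_points_are_not_record} and \ref{distinct} provide the relevant non-degeneracy statements to ensure that such loops do not force spurious coincidences of the starting labels. Once this delicate bookkeeping is carried out, Steps 2 and 3 are standard consequences of fluctuation theory for stable processes and of the Hausdorff-dimension formula for intersections of independent regenerative sets.
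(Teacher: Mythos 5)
Your outline follows the same route as the paper: decompose around the marked points, use \eqref{one_point} to identify the label process along each pendant branch with a symmetric $(2\alpha-2)$-stable process (hence its record set with an $(\alpha-1)$-stable subordinator range, via the descending ladder-height factorization you cite), check the starting labels are a.s.\ pairwise distinct, and conclude by Hawkes's theorem on intersections of independent stable regenerative sets since $m(2-\alpha)>1$.

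Two points where you should be more careful, both in Step~1. First, the claim that the $R^{(i)}$ are \emph{exactly} independent symmetric $(2\alpha-2)$-stable processes conditionally on the subtree shape and heights is too strong: working either under $\mathbf{P}$ or under $\mathbf{N}$ conditioned on multiple marked points, there is a global conditioning that breaks exact independence. The paper sidesteps this by replacing the $m$ uniform marks with Poissonian marks $\tau_1<\tau_2<\cdots$ of an independent rate-$1$ process under $\mathbf{N}$; the excursion decomposition then shows that the pendant sub-excursions $(X^{(i)},\tau_i-s_i)$ are \emph{absolutely continuous} with respect to i.i.d.\ copies of $(X,\tau_1)$ under $\mathbf{N}(\cdot\,|\,M=1)$, which is all one needs to transport the a.s.\ empty intersection. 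Second, the pairwise distinctness of the $R^{(i)}(0)$ does not rely on Propositions~\ref{pinch_points_are_not_record} or~\ref{distinct} (those concern local minima of $Z$, not distinctness of its values at given points); it follows directly from the fact that, conditionally on $X$, the vector $(R^{(i)}(0))_{1\leq i\leq m}$ is non-degenerate Gaussian and independent of the centered processes $R^{(i)}-R^{(i)}(0)$, which is the argument used in the paper. With these two adjustments your proof coincides with the one in the text.
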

Notice that the proposition does not hold simultaneously for all times $t_{1}, ... , t_{m}$ because $Z$ does attain some common values at infinitely many (leaf) times. However, by the same argument as the one appearing after Lemma \ref{lem:onePinch}, it holds that $\mathbf{P}$ almost surely, for all pinch points  times $t_{1}, ... , t_{m} \in \mathcal{L}$ such that  $t_i\npreceq t_j$ for every $i\neq j$, we have $\bigcap\limits_{1\leq i\leq m} \mathcal{R}_{t_{1},...,t_{m}}^{(i)}=\varnothing$. Indeed, as already mentioned above,  for every pinch point times  $t_{1}, ... , t_{m}$, there is a positive probability that $t_i\prec U_i$, for every $1\leq i\leq m$.
\begin{proof} 
We work again under $\mathbf{N}$, since by a scaling argument it is equivalent to prove the proposition under $\mathbf{P}$ or $\mathbf{N}$, and we fix  $m_{0}:=\lceil \frac{1}{2-\alpha}\rceil+1$. For technical reasons it will be useful to consider a  homogeneous  Poisson point process on $\mathbb{R}_+$ with intensity $1$ defined under the measure $\mathbf{N}$, independently of $X$ and $Z$. We write $0<\tau_{1}<\tau_{2}< \cdots$ for the jumping times of this Poisson point process and set $M:=\#\{i:~\tau_{i}\leq \sigma\}$. In particular, remark that:
\[\mathbf{N}\big(M\geq 1\big)=\mathbf{N}\big(1-\exp(-\sigma)\big) \underset{ \eqref{eq:excursionmeasuredecomp}}{=}1,\]
and we get that the measure $\overline{\mathbf{N}}:=\mathbf{N}(\cdot\:|\:M\geq 1)$ is a probability measure. By comparing $ \mathbf{N}$ and $\overline{\mathbf{N}}$,  to prove the proposition it suffices to show that we have 

\begin{equation}\label{eq:ancienne:(*):4}
M<m_{0}\:\:\:\: \text{ or }\:\:\:\:\bigcap\limits_{i=1}^{M} \mathcal{R}_{\tau_{1},...,\tau_{M}}^{(i)}=\varnothing~~,\quad\overline{\mathbf{N}}\text{-a.s.}
\end{equation}
Let us prove \eqref{eq:ancienne:(*):4}, reasoning on the event $M\geq m_0$, so in particular $m\geq 2$. As we noticed after Proposition \ref{topologie_loop_tree}, for every $i\leq M$, the ancestor time $$r_{i}:=\sup\limits_{j\neq i} (\tau_{i}\curlywedge \tau_{j})$$ is a.s.~a jumping time for $X$, corresponding to a loop in $\mathcal{L}$. Let us introduce the looptree dangling from this loop and containing $\Pi_d(\tau_i)$. In this direction, we set $s_{i}:=\inf \{r\geq r_{i}:~X_{r}\leq I_{r,\tau_{i}}\}\:\:\text{and}\:\:s_{i}^{\prime}:=\inf \{r> s_{i}:~X_{r}\leq X_{s_{i}}\}.$ Remark now that by {\hypersetup{linkcolor=black}\hyperlink{prop:A:2}{$(A_2)$}}   we have $s_{i}<\tau_{i}<s_{i}^{\prime}$ and  $\Delta_{s_{i}}=0$ and, to simplify notation, we write $X^{(i)}$ for the excursion
\[X^{(i)}_{r}:=X_{s_{i}+r}-X_{s_{i}}~~;\quad r\in [0,s^{\prime}_{i}-s_{i}].\] 
We can use the excursion $X^{(i)}$ to define a looptree exactly in the same way as in Section \ref{sec:defloop}, which is in fact the same as the set $\{u\in \mathcal{L}:~\Pi_{d}(s_{i})\preceq u\}$ equipped with the restriction of the distance $d$, and rooted at $\Pi_{d}(s_{i})=\Pi_{d}(s_i^\prime)$. It is then easy to see that the process $R_{\tau_{1},...,\tau_{M}}^{(i)}-R_{\tau_{1},...,\tau_{M}}^{(i)}(0)$ only depends on $X^{(i)}$ and on the Brownian bridges attached to its jumps.  
The idea now is to prove that under the event $M\geq m_{0}$~, the sequence $(X^{(i)},\tau_{i}-s_{i})_{1\leq i\leq m_{0}}$ is absolutely continuous  with respect to a sequence of $m_{0}$ independent copies of $(X,\tau_{1})$ under $\mathbf{N}(\cdot\:|\:M=1)$. Let us assume this temporarily, and explain why it implies the statement of the  proposition. By independence of the Brownian bridges conditionally on the looptree, we can then apply 
equation \eqref{one_point} to see that  under the event $M\geq m_{0}$, the law of the sequence $(\mathcal{R}_{\tau_{1},...\tau_{M}}^{(i)}-R_{\tau_{1},...\tau_{M}}^{(i)}(0))_{1\leq i\leq m_{0}}$ is absolutely continuous  with respect to that of a sequence of $m_{0}$ independent copies of the range of an $(\alpha-1)$-stable subordinator (each of these copies being stopped at a random time). 
Moreover, conditionally on $X$, on $M\geq m_{0}$ and on the times $\tau_{1},...,\tau_{M}$, the random variables $(R_{\tau_{1},...\tau_{M}}^{(i)}-R_{\tau_{1},...\tau_{M}}^{(i)}(0))_{1\leq i\leq m_{0}}$ and $(R_{\tau_{1},...\tau_{M}}^{(i)}(0))_{1\leq i\leq m_{0}}$ are independent. In order to see this, just remark that under this conditioning, all the random variables  $(R_{\tau_{1},...\tau_{M}}^{(i)}-R_{\tau_{1},...\tau_{M}}^{(i)}(0))_{1\leq i\leq m_{0}}$ and $(R_{\tau_{1},...\tau_{M}}^{(i)}(0))_{1\leq i\leq m_{0}}$ are  Gaussian, and a direct computation of the covariance function gives the desired independence. 
Now remark that by the Gaussian character of $Z$ conditionally on $X$,  the quantities $R_{\tau_{1},...\tau_{M}}^{(i)}(0)$,  for $1\leq i\leq m_{0}$ are a.s.\ all different. It follows that, when we condition on $M\geq m_{0}$, the law of the intersection in \eqref{eq:ancienne:(*):4} is absolutely continuous with respect to the law of the intersection of pieces of $m_{0}$ independent ranges of $(\alpha-1)$-stable subordinators, started from independent random points with different locations. It is then a classical result of regenerative sets theory, see \cite[Example 1]{hawkes1977intersections}, that since $m_{0} = \lceil \frac{1}{2-\alpha}\rceil+1$  this intersection is almost surely empty.

Hence, to conclude, it remains to  show that, on the event $M\geq m_{0}$, the sequence $(X^{(i)},\tau_{i}-s_{i})_{1\leq i\leq m_0}$ has a law that is absolutely continuous  with respect to that of a sequence of $m_{0}$ independent copies of $(X,\tau_{1})$ under $\mathbf{N}(\cdot\:|\:M=1)$. To this end, under $\mathbf{N}$ and independently of $M$, we introduce $(\widetilde{X}^{(i)},\widetilde{T}^{(i)})_{i\geq 1}$ an i.i.d. sequence distributed as $(X,\tau_{1})$ under $\mathbf{N}(\cdot\:|\:M=1)$~, and we set:
\[
(\overline{X}^{(i)},\overline{T}^{(i)}):=\begin{cases}
(X^{(i)},\tau_{i}-s_{i} )~& \text{if}~~ 1\leq i\leq M \\
(\widetilde{X}^{(i)},\widetilde{T}^{(i)})~& \text{if}~~ i>M.
  \end{cases}
\]
Remark now that it suffices  to show that:
\begin{equation}\label{eq:**:section4.3}
(\overline{X}^{(i)},\overline{T}^{(i)})_{i\geq 1}\overset{(d)}{=}(\widetilde{X}^{(i)},\widetilde{T}^{(i)})_{i\geq 1}~.
\end{equation}
On the event  $M=1$, the identity \eqref{eq:**:section4.3} holds directly from the definition. On the event $M\geq 2$, define $\ell=\tau_{1}\curlywedge \tau_{2} \curlywedge \dots \curlywedge \tau_{M}$, and denote by $(u_{j},v_{j})_{j\geq 1}$ the connected components of the open set $\{s\geq \ell:~I_{\ell,s}<X_{s}\}$. Next, introduce  the excursions:
\[X^{j}_{r}:=X_{u_{j}+r}-X_{u_{j}}~;\quad r\in [0,v_{j}-u_{j}],\] 
for every $j\geq 1$. In words, $(X^{j})_{j\geq 1}$ are the excursions of $X$ above the minimum after time $\ell$. We also introduce $\zeta:=\#\{j\in \mathbb{N}:\: \exists i\in[\![1,M]\!],\: \tau_{i}\in(u_{j},v_{j})\}$ the number of such excursions having at least one Poissonnian mark  $\tau_{1},..., \tau_{M}$. Write $j_{1},...,j_{\zeta}$ for the elements of $\{j\in  \mathbb{N}:\: \exists i\in[\![1,M]\!],\: \tau_{i}\in(u_{j},v_{j})\}$ in increasing order. Then classical properties of excursion theory and Poisson measures give that, conditionally on $\zeta$, the random variables $(X^{j_{k}})_{1\leq k\leq \zeta}$ with their Poissonnian marks (translated by $(-u_{j_{k}})_{1\leq k\leq \zeta}$)  are i.i.d. copies of  the process $X$ marked at $\tau_{1},...,\tau_{M}$ under $\overline{\mathbf{N}}$. Remark that $\ell$ is not a stopping time, so to obtain this property an approximation procedure  is needed. Since this a standard verification, for example cutting at dyadic times and using that $X$ is rcll, we leave the details to the reader. Finally since   $M<\infty$,  $\overline{\mathbf{N}}$-a.s~.,  the identity \eqref{eq:**:section4.3} follows by iterating the previous argument.
\end{proof}

We can now use   Propositions \ref{pinch_points_are_not_record} and \ref{intersection} to derive our Proposition~\ref{lem:non-icnreasealongbranches}:

\begin{proof}[Proof of Proposition~\ref{lem:non-icnreasealongbranches}]  We argue under $\mathbf{P}$. Fix $0<t_1<t_2<1$ such that $t_1\sim_d t_2$ and verifying \eqref{prop:minimum:labels:branches}. To simplify notation, we set  $z= Z_{t_{1}}= Z_{t_{2}}$. Note that by time-reversal \eqref{symmetric},  it suffices to show 
 that for every $\varepsilon\in(0, t_1)$, we can find $t_{1}^{\prime}\in[t_1-\varepsilon,t_1]$ such that
 \[Z_{t^{\prime}_{1}}<z< Z_{r}\:\:\text{for every}\:\:r \in \mathrm{Branch}(t_{1},t^{\prime}_{1})\setminus \{t_{1},t^{\prime}_{1}\}.\]  In this direction,  fix $\varepsilon\in(0,t_1)$, and  let 
$$  \mathrm{Trunk} := \bigcup_{ \begin{subarray}{c}  s \preceq t_{1}\\
\Delta_{s} >0\end{subarray}} \left\{ \mathrm{f}_{s}(u) : 0 \leq u \leq 1, \mbox{ such that } \mathrm{f}_{s}(u) \leq t_{1}\right\},$$
which informally corresponds (after taking a closure and projecting it to $ \mathcal{L}$) to the left side of the trunk linking $\Pi_{d}(0)$ to $\Pi_{d}(t_1)$. For $\ell \in [0,t_{1}]$, we set $ \mathrm{r}_{\ell}:=\sup\{s\in \mathrm{Trunk}:~s\preceq \ell\}$. By our assumption  \eqref{prop:minimum:labels:branches}, we know that $Z_{s} > z$ as soon as $s \prec t_{1}$. If  there exists $s\in \mathrm{Trunk} \cap [t_{1}- \varepsilon, t_{1}]$ such that $Z_s<z$ then, since  for every jump time $t$ of $X$ the pinch point times of $\mathrm{f}_t([0,1])$ are dense in  $\mathrm{f}_t([0,1])$,  without loss of generality we can assume that $s$ is a pinch point time and we infer from the definition of $\mathrm{Trunk}$ that we can directly take $t_1^{\prime}=s$.  It remains to treat the case $Z \geq z$ on $\mathrm{Trunk} \cap [t_{1}- \varepsilon, t_{1}]$, and we make this assumption for the rest of the proof. 

\begin{figure}[!h]
 \begin{center}
 \includegraphics[width=16cm]{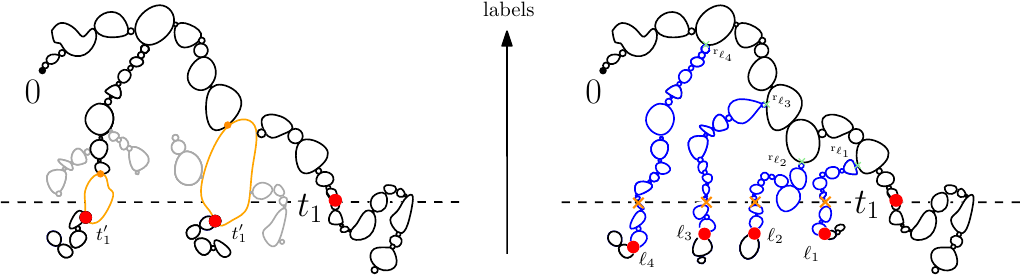}
 \caption{ \label{fig:illustrationintersection} Illustration of the proof of Proposition~\ref{lem:non-icnreasealongbranches}. Left: finding a time $t_{1}'$ corresponds to finding a loop (in orange on the figure) blocking level $z = Z_{t_{1}}$ i.e.\ such that the label process along this loop takes values that are greater than $z$, and values that are less than $z$. Right: if such blocking loops cannot be found on the trunk from $t_{1}$ to $0$, then by Proposition \ref{pinch_points_are_not_record} we can find $m_{0}$ distinct sub-looptrees (in blue in the figure) going from a level strictly above $z$ to strictly below $z$. By Proposition \ref{intersection}, one of them must possess such a blocking loop. }
 \end{center}
 \end{figure}
 
 Now, recall from Proposition \ref{pinch_points_are_not_record} that the time $t_{1}$ cannot be a left minimal record of the process $Z$. This combined with the fact that pinch point times are dense,    implies that for $m_{0} = \lceil \frac{1}{2-\alpha}\rceil +1$, we can find $t_{1}-\eps<\ell_{1}< \cdots <\ell_{m_0}<t_{1}$ such that:
\begin{itemize}
\item $\ell_{1}, ... , \ell_{m_{0}}$ are pinch point times,
\item $ Z_{\ell_{i}} < z$,
\item the values $ \mathrm{r}_{\ell_{1}}, \mathrm{r}_{\ell_{2}}, \dots , \mathrm{r}_{\ell_{m_{0}}}$ are all distinct and are elements of $\mathrm{Trunk} \cap [t_{1}- \varepsilon, t_{1}]$.
 \end{itemize}
One can then consider the processes  $R_{ \ell_{1},... , \ell_{m_{0}}}^{(i)}$, $1 \leq i \leq m_{0}$, as above  and the associated   minimal records $ \mathcal{R}^{{(i)}}_{ \ell_{1},... , \ell_{m_{0}}}$, $1 \leq i \leq m_{0}$, must all cross the value $z$ by construction. If such a crossing is made by a jump, we have found a $t_{1}'$ as desired. Otherwise, the value $z$ is common to the $m_{0}$ closed sets $\mathcal{R}^{{(i)}}_{ \ell_{1},... , \ell_{m_{0}}}$,  $1 \leq i \leq m_{0}$, which is impossible by  Proposition  \ref{intersection} and the remark just after it.\end{proof}

\section{The two-point function and local minima of $Z$ on loops  } \label{sec:2pt}
We continue our investigation of local minima of $Z$ and prove in this section that, under $ \mathbf{P}$, we have:

\begin{enumerate}[(i)]
\item[-] the sets $\Pi_d( \mathrm{LeftRec}) \cap \mathrm{Loops}$ and $\Pi_d( \mathrm{RightRec}) \cap \mathrm{Loops}$ are dense in $ \mathrm{Loops}$,
\item[-] if $\alpha \in [ \frac{3}{2},2)$ the set of local right minimal values of $Z$ that happen on a loop and the set of local left minimal values of $Z$ that happen on a loop are disjoint, whereas if $ \alpha \in (1, \frac{3}{2})$ they almost surely intersect.
\end{enumerate}
See Proposition \ref{prop:records-loops} for a precise statement. These properties will play a crucial role when identifying the topology of $ \mathcal{S}$ as being that of the Sierpinski carpet in the dilute phase and when studying the behavior of simple geodesics (namely the fact that they bounce on the faces of $ \mathcal{S}$). The proof is based on a covering argument due to Shepp \cite{FiFrSh85} which has already been used in the study of increase points of L\'evy processes by Bertoin \cite{bertoin1994increase} and later in the study of random maps \cite[Lemma 29]{Bet11} or \cite[Lemma 14]{CMMinfini}. It crucially relies on the computation of the constant $\mathbf{N}(\inf Z\leq -1)$ which appeared in Corollary~\ref{Z<-1}, specifically we prove  that $
\bN(\sup Z> 1)=\frac{\alpha(\alpha-1)}{2}$ in Proposition \ref{sec:stable-map-3}. This computation is based on an integral equation satisfied by the function defined for every  $(\lambda,x)\in \mathbb{R}_{+}\times \mathbb{R}$ by
\begin{equation}\label{eq:N:two:points}
w_{\lambda}(x)=\mathbf{N}\big(1-\exp(-\lambda \sigma)\mathbbm{1}_{\sup Z<x}\big).
\end{equation}

This analysis is of independent interest and in particular,   \eqref{eq:N:two:points} characterizes the joint law of $(\sup Z,\sigma)$ under $\mathbf{N}$, which can be related to the distance in $\mathcal{S}$ between two typical points (see Proposition \ref{theo:sub} and equation \eqref{Distance*:rho_*} below). For this reason we refer to  \eqref{eq:N:two:points} as the \textbf{two-point function}.

\subsection{An equation for the two-point function}
\label{sec:twopoints}
We begin by studying  the function \eqref{eq:N:two:points}. We stress that when $x\leq 0$, we simply have $w_{\lambda}(x)=\infty$. Also note that,  when $\lambda=0$ , we have $w_0(x)=\mathbf{N}(\sup Z>x)$ and, by dominated convergence and \eqref{eq:excursionmeasuredecomp}, we obtain
$$\lim \limits_{x\to \infty}w_{\lambda}(x)=\mathbf{N}\big(1-\exp(-\lambda \sigma)\big)=\lambda^{1/\alpha}.$$ The goal of this section is to prove that $w_{\lambda}(x)$ satisfies an integral equation that allows us to compute  $w_{0}(x)$.  Since we will be dealing with  several variants of Brownian motions and Bessel processes, in the next two sections, it will be convenient  to denote  the canonical process on $\mathcal{C}(\R_+,\R)$ by $\big(B_{t}:~t\geq 0\big)$,  and we will endow this  space with various probability measures. Namely, for every $x\in \R$ (resp.\ $x\in \mathbb{R}_+^*$), the probability measure $\P_x$ (resp.\ $\P^{\langle\nu\rangle}_x$) is the law of a Brownian motion (resp.\ Bessel process with index\footnote{There are several equivalent ways to parametrize Bessel processes, according to their parameter $a$, their index $\nu$, or their dimension $d$. These are related via the formulas $d=2a+1$ and $\nu = a- \frac{1}{2}$ and they are (locally) solutions to the stochastic differential equation 
$$ \mathrm{d}X_{t} = \frac{a}{ X_{t}} \mathrm{d}t +  \mathrm{d}W_{t},$$ where $W$ is a standard Brownian motion. See \cite[Chapter XI]{RY99}.} $\nu$) started from $x$. We also write $\P_{x}^{(s)}$ for the law of a Brownian motion with lifetime $s>0$ starting from $x$, as well as  $\P_{x\to y}^{(s)}$ for the law of a Brownian bridge with lifetime $s>0$ starting from $x$ and ending at $y$; where  by convention under $\P_{x\to y}^{(s)}$ we take $B_t=y$ for every $t\geq s$. Specifically, this section is devoted to the proof of the following result:
\begin{theo}[The two-point function]\label{two_points_function}
For every $(\lambda,x)\in \mathbb{R}_{+}\times \mathbb{R}_+^*$ we have
\begin{equation}
\label{eq:inteqw}
\int_{0}^{\infty}\frac{ \mathrm{d}s}{s^{\alpha+1}}\mathbb{E}_{0\rightarrow
  0}^{(s)}\Big[1-s w_{\lambda}(x)-\exp\Big(-\int_{0}^{s}\d
u\:w_{\lambda}(x+B_{u})\Big)\Big]=-\Gamma(-\alpha)\lambda,
\end{equation}
where the integral is well-defined. 
\end{theo}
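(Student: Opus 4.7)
The equation \eqref{eq:inteqw} has the flavor of a L\'evy--Khintchine fixed-point identity for $w_\lambda$, and I would prove it by a spine decomposition of $X$ under $\mathbf{N}$ along its jumps combined with the exponential formula for Poisson point measures; the factor $\Gamma(-\alpha)^{-1} s^{-\alpha-1}\,\mathrm{d}s$ is, up to the $\Gamma(-\alpha)$ placed on the right-hand side, precisely the L\'evy measure of $X$. As a preliminary step I would check that $w_\lambda$ is finite, non-increasing and continuous on $(0,\infty)$, with $\lim_{x\to\infty} w_\lambda(x) = \lambda^{1/\alpha}$: finiteness uses $\mathbf{N}(\sigma>\delta)<\infty$ together with $\mathbf{N}(\sup Z>x)<\infty$ from Corollary \ref{Z<-1}, while continuity follows from the absence of atoms of $\sup Z$ under $\mathbf{N}(\,\cdot\,\cap\{\sigma>\delta\})$, itself a consequence of Proposition \ref{distinct} via scaling.

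\textbf{Decomposition at one jump.} Fix a jump time $r$ of $X$ with $\Delta_r = s$ and let $r_s := \inf\{t \geq r : X_t \leq X_{r-}\}$. By Corollary \ref{cutting_N} applied at $r$, conditionally on $\mathcal{F}_r$ the point measure $\mathcal{N}^{[r]}$ of sub-excursions (defined as in \eqref{N:point:measure}) is Poisson with intensity $\mathbf{1}_{[0,X_r]}(x)\,\mathrm{d}x\,\mathbf{N}(\mathrm{d}X\,\mathrm{d}Z)$. Its restriction to the sub-excursions attached to the jump, namely those with starting height $x \in [X_{r-}, X_r]$, reparameterized by $u = X_r - x \in [0,s]$, becomes a Poisson point measure $\sum_i \delta_{u_i, X^{(i)}, Z^{(i)}}$ with intensity $\mathbf{1}_{[0,s]}(u)\,\mathrm{d}u\,\mathbf{N}(\mathrm{d}X\,\mathrm{d}Z)$. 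Proposition \ref{b_Brownian_Brigde} together with \eqref{Z_represent_Mir} identifies the $Z$-values along the loop coded by this jump as a Brownian bridge $B$ of lifetime $s$, while each attached sub-excursion $Z^{(i)}$ is globally shifted by $B_{u_i}$. Hence, conditionally on $(s, B)$, the exponential formula for Poisson point measures yields
\[
\mathbb{E}\!\left[\prod_i e^{-\lambda\sigma^{(i)}}\,\mathbf{1}_{\sup Z^{(i)} < x - B_{u_i}}\right] = \exp\!\left(-\int_0^s w_\lambda(x - B_u)\,\mathrm{d}u\right),
\]
which, by the symmetry $B \overset{(d)}{=} -B$ under $\mathbb{E}_{0\to 0}^{(s)}$, shares the same bridge-expectation as $\exp\!\left(-\int_0^s w_\lambda(x+B_u)\,\mathrm{d}u\right)$.

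\textbf{From one jump to the equation.} To assemble \eqref{eq:inteqw}, I would sum the single-jump contribution above over all jumps of $X$ by integrating against the L\'evy intensity $\Gamma(-\alpha)^{-1} s^{-\alpha-1}\,\mathrm{d}s$ via Campbell's formula, and identify the result with the $\alpha$-stable Laplace exponent $\mathbf{N}(1-e^{-\lambda\sigma}) = \lambda^{1/\alpha}$, which accounts for the factor $-\Gamma(-\alpha)\lambda$ on the right. The three terms inside the bracket admit a clean interpretation: the constant $1$ is the ``no jump'' reference, the exponential captures the full contribution of the loop together with all its grafted sub-excursions, and the linear compensator $-sw_\lambda(x)$ is the first-order Taylor term of the exponential as $s \to 0$, matching the L\'evy--Khintchine compensation needed for integrability at the origin (the bracket is $O(s^2)$ there, integrable against $s^{-\alpha-1}$ since $\alpha<2$). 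As a consistency check, sending $x \to \infty$ with $w_\lambda(x) \to \lambda^{1/\alpha}$ reduces \eqref{eq:inteqw} to the scalar identity $\int_0^\infty s^{-\alpha-1}(1 - s\mu - e^{-s\mu})\,\mathrm{d}s = -\Gamma(-\alpha)\mu^\alpha$ with $\mu = \lambda^{1/\alpha}$, which follows from two integrations by parts.

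\textbf{Main obstacle.} The delicate point is the rigorous summation over jumps, since $X$ has infinite jump activity under $\mathbf{N}$. A natural workaround is to truncate to jumps of size at least $\varepsilon$ (finitely many on $\{\sigma > \delta\}$, $\mathbf{N}$-a.e.), establish the identity at the truncated level via elementary excursion theory, and then let $\varepsilon \downarrow 0$ using the $O(s^2)$ cancellation of the three-term integrand near $s = 0$ to justify the exchange of limit and integral. A subsidiary technicality is to correctly separate the sub-excursions attached to the distinguished jump from those sitting below $X_{r-}$, which is handled by the explicit height-parameterization appearing in Corollary \ref{cutting_N}.
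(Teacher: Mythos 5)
Your single-jump analysis is essentially correct and matches the computation the paper performs inside the quantity $J_\lambda^{(\ell)}(y)$: at a jump of size $s$ the labels along the associated loop form a Brownian bridge of lifetime $s$, the grafted sub-looptrees form a Poisson point measure of intensity $\mathbf{1}_{[0,s]}(u)\,\mathrm{d}u\,\mathbf{N}$, and the exponential formula turns their cumulative contribution into $\exp\big(-\int_0^s w_\lambda(x-B_u)\,\mathrm{d}u\big)$. The regularity preliminaries you list and your consistency check as $x\to\infty$ (using $\int_0^\infty t^{-\alpha-1}(e^{-t}-1+t)\,\mathrm{d}t = \Gamma(-\alpha)$) are also right.

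However, the assembly step has a genuine gap, in two places. First, ``summing the single-jump contribution over all jumps by integrating against the L\'evy intensity'' is not well-posed as stated: the jumps of $X$ are nested, so each jump's sub-looptree contains other jumps, and a na\"ive sum over all jumps of size $\geq \varepsilon$ double-counts the grafted pieces. Truncating to jumps of size $\geq \varepsilon$ does not cure this; you must select only the \emph{ancestrally first} jumps of size $\geq\ell$ (the paper's $s_1^{(\ell)},\dots,s_{M_\ell}^{(\ell)}$). This is not a minor technicality: it is precisely what gives, conditionally on the residual excursion $X^{(\ell)}$, a clean Poisson count $M_\ell$ of large jumps with the full sub-looptree below each one. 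Second, the claim that one ``identifies the result with $\mathbf{N}(1-e^{-\lambda\sigma})=\lambda^{1/\alpha}$, which accounts for $-\Gamma(-\alpha)\lambda$'' is not correct; $\lambda^{1/\alpha}$ is not $-\Gamma(-\alpha)\lambda$ and the right-hand side of \eqref{eq:inteqw} does not arise from the full Laplace exponent. In the paper's argument the $\lambda$ on the right-hand side survives because the residual excursion contributes a factor $e^{-\lambda\sigma_\ell}$ whose expectation is $\phi_\ell^{-1}(\lambda+\cdots)$, and its leading-order expansion $(\alpha-1)\Gamma(-\alpha)\ell^{\alpha-1}(\lambda+\cdots)$ is matched at scale $\ell^{\alpha-1}$ against $\frac{\alpha-1}{\alpha}\ell^{-1}J_\lambda^{(\ell)}(x)$; it is exactly this cancellation that produces the constant $-\Gamma(-\alpha)\lambda$ after dividing by $\ell^{\alpha-1}$ and letting $\ell\downarrow 0$. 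To make that limit work one further needs to control the label process $Z^{(\ell)}$ on the trimmed excursion (that it is $o(\ell^{1/2-\delta})$ with high $\mathbf{N}$-probability, Lemma \ref{lem:boundZl}) so that $J_\lambda^{(\ell)}(x-Z_{s_i^{(\ell)}})$ can be Taylor-expanded around $x$, and one needs the smoothness of $w_\lambda$ (Lemma \ref{lem:basicpropw}) for that expansion. These three ingredients --- the firstmost-jump selection, the $Z^{(\ell)}$ smallness, and the $\phi_\ell^{-1}$ asymptotic matching --- are not present in your plan, and the argument cannot close without them.
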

This result is later used in the case $\lambda=0$ to prove Proposition \ref{sec:stable-map-3}. The previous theorem is reminiscent of the connection between the Brownian motion indexed by the Brownian tree -- or Brownian snake -- and differential equations, see \cite{LeG99}. In this case, the analog of $w_{\lambda}(x)$ was computed exactly in \cite[Lemma 6 \& 7]{delmas2003computation} in the Brownian case (which morally corresponds to the limit $\alpha \to 2$) by showing that  $x \mapsto \mathbf{N}\big(1-\exp(- \lambda \sigma) \mathbf{1}_{ \sup Z < x}\big)$ solves the differential equation:
$$ \frac{1}{2} w''(x) = 2 w^{2}(x)-\lambda \quad \mbox{ in }(0, \infty) \quad \mbox{ with } w(0) = \infty \mbox{ and }w(\infty) = \sqrt{\lambda/2}. $$
 A similar study of Brownian motion indexed by the stable tree (as opposed to the stable looptree here) has recently been done \cite{archer2024some}. The equation \eqref{eq:inteqw} can be understood informally by exploring the underlying looptree infinitesimally: under $ \mathbf{N}$ ``the first loop" encountered has a length $s$ ``distributed" according to the L\'evy measure \eqref{eq:levymeasure} given by $  \frac{ \mathrm{d}s}{\Gamma(-\alpha) s^{\alpha+1}}$. Then the Markov property (Corollary \ref{cutting_N}) states that the looptrees (with labels shifted by the values at their root) attached to this loop form a Poisson process with intensity $  \mathbbm{1}_{x \in [0,s]} \d x \mathbf{N}$. Conditionally on $s$, and neglecting the rest of the looptree, since the label of the loop is given by a  Brownian bridge of length $s$, an application of the exponential formula for Poisson process shows that conditionally on $s$ we have:
$$ \exp(-\lambda \sigma)\mathbbm{1}_{\sup Z<x} =   \mathbb{E}_{0\rightarrow
  0}^{(s)}\Big[\exp\Big(-\int_{0}^{s}\d
u\:w_{\lambda}(x+B_{u})\Big)\Big].$$  This explains the appearance of the corresponding term in \eqref{eq:inteqw}. The proof is however more subtle since delicate compensations are involved. 
Let us first gather some useful properties of $w_\lambda$. 
\begin{lem}\label{lem:basicpropw} Fix $\lambda\geq 0$. The following properties hold. 
\begin{itemize}
\item[$\mathrm{(i)}$] The function $x\mapsto w_{\lambda}(x)$ is in $\mathcal{C}^{\infty}((0,\infty),\R)$.
\item[$\mathrm{(ii)}$]  The function
\[x\longmapsto \int_{0}^{\infty}\frac{ \mathrm{d} s}{s^{\alpha+1}}\mathbb{E}_{0\rightarrow 0}^{(s)}\Big[1-s w_{\lambda}(x)-\exp\big(-\int_{0}^{s}\d u\:w_{\lambda}(x+B_{u})\big)\Big]\]
is well-defined on $(0,\infty)$, in the sense that the integral is
absolutely convergent, and continuous. 
\end{itemize}
\end{lem}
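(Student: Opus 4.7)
The plan is to first establish (i) by deriving an explicit integral representation of $w_\lambda$ through scaling, and then to use this smoothness together with a Taylor expansion to prove (ii).

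For (i), I would combine the disintegration \eqref{eq:excursionmeasuredecomp} with the scaling property \eqref{eq:scaling:N:v}, which gives that under $\mathbf{N}^{(v)}$, $\sup Z$ has the same distribution as $v^{1/(2\alpha)} \sup Z$ under $\mathbf{P}$. After the change of variables $y = x v^{-1/(2\alpha)}$, this should yield the representation
\begin{equation*}
w_\lambda(x) = \frac{2\alpha}{|\Gamma(-1/\alpha)| \, x^2} \int_0^\infty \Bigl(1 - e^{-\lambda x^{2\alpha}/y^{2\alpha}} \, \mathbf{P}(\sup Z < y)\Bigr)\, y \, dy, \qquad x>0.
\end{equation*}
Smoothness in $x$ on $(0, \infty)$ then follows by the standard theorem on differentiation under the integral sign: on every compact subset of $(0, \infty)$, each $x$-derivative of the integrand is dominated by a function of $y$ which is integrable near $0$ (thanks to the super-exponential factor $e^{-\lambda x^{2\alpha}/y^{2\alpha}}$ killing any polynomial blow-up in $y^{-1}$) and integrable near infinity (thanks to $1-e^{-t} \leq t$ together with the stretched exponential tail of $\sup Z$ provided by Proposition~\ref{variations_Z}(i)).

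For (ii), set $\Psi(s, x) := 1 - s w_\lambda(x) - \mathbb{E}_{0 \to 0}^{(s)}[\exp(-\int_0^s w_\lambda(x + B_u)\, du)]$. Since the expectation lies in $[0, 1]$ we have the crude bound $|\Psi(s, x)| \leq 1 + s w_\lambda(x)$, which combined with $\alpha > 1$ already gives absolute convergence of $\int_1^\infty |\Psi(s, x)|\, s^{-\alpha - 1}\, ds$, locally uniformly in $x \in (0, \infty)$. For the small-$s$ piece, the goal is to show $\Psi(s, x) = O(s^2)$ locally uniformly, which is integrable against $ds/s^{\alpha+1}$ since $\alpha < 2$. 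To establish this, I would split according to whether the bridge maximum $M_s := \sup_{u \leq s} |B_u|$ exceeds $x/2$: on the event $\{M_s > x/2\}$ the Gaussian tail $\mathbb{P}_{0 \to 0}^{(s)}(M_s > x/2) \lesssim e^{-c x^2/s}$ beats any power of $s$ and absorbs the factor $1 + s w_\lambda(x)$, whereas on $\{M_s \leq x/2\}$ the argument $x + B_u$ stays in $[x/2, 3x/2]$, a compact set on which $w_\lambda$ is $\mathcal{C}^\infty$ by (i). A second-order Taylor expansion of $w_\lambda$ combined with the bridge moments $\mathbb{E}_{0 \to 0}^{(s)}[B_u] = 0$ and $\mathbb{E}_{0 \to 0}^{(s)}[B_u^2] = u(s-u)/s$ should then yield
\begin{equation*}
\mathbb{E}_{0 \to 0}^{(s)}\!\Bigl[\exp\!\bigl(-\!\!\int_0^s w_\lambda(x + B_u)\, du\bigr) \mathbf{1}_{M_s \leq x/2}\Bigr] = e^{-s w_\lambda(x)}\bigl(1 + O(s^2)\bigr),
\end{equation*}
whence $\Psi(s, x) = O(s^2)$. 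Continuity of the integral in $x$ will then follow by dominated convergence, since all the bounds above are locally uniform in $x$.

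The main technical difficulty will be the careful bookkeeping in the small-$s$ expansion, where one has to control the remainder from the Taylor expansion together with the correction from the indicator $\mathbf{1}_{M_s \leq x/2}$. The essential point is that the singularity of $w_\lambda$ at $0$ prevents a direct expansion without first localizing the Brownian bridge away from this singularity; once this localization is in place, the argument reduces to routine moment estimates for the Brownian bridge.
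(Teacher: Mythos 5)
Your proof is essentially correct, and for part (ii) it mirrors the paper's argument quite closely: you use the same key decomposition (split on the event $\{\sup_u |B_u| > x/2\}$, estimate the tail by Gaussian concentration, and on the complementary event Taylor-expand $w_\lambda$ around $x$), and you arrive at the same $O(s^2)$ bound. Two small points on (ii). First, the step that kills the first-order term is not just $\mathbb{E}^{(s)}_{0\to0}[B_u]=0$ but the symmetry $\mathbb{E}^{(s)}_{0\to0}\!\big[\int_0^s B_u\,du\,\mathbbm{1}_{\sup|B|\leq x/2}\big]=0$, which uses that both the bridge law and the truncation event are invariant under $B\mapsto -B$; this is what the paper invokes, and your phrasing glosses over it slightly. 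Second, the paper actually peels off the explicit piece $\int_0^\infty \frac{\d s}{s^{1+\alpha}}\big(1-sw_\lambda(x)-\mathrm{e}^{-sw_\lambda(x)}\big)=-\Gamma(-\alpha)w_\lambda(x)^\alpha$ before bounding the remainder; your version combines the two pieces, which is perfectly fine for the purposes of Lemma~\ref{lem:basicpropw} but means you would have to re-derive that identity later in the proof of Theorem~\ref{two_points_function}.

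For part (i), your route is genuinely different from the paper's. The paper first observes that $w_0(x)=\mathbf{N}(\sup Z\geq 1)\,x^{-2}$ by scaling, then shows that $\lambda\mapsto w_\lambda(x)$ is $\mathcal{C}^\infty$ on $(0,\infty)$ (for fixed $x$) by differentiating under $\mathbf{N}$, and finally transfers smoothness to the $x$-variable via the scaling identity $w_\lambda(x)=x^{-2}w_{\lambda x^{2\alpha}}(1)$. You instead derive the explicit one-dimensional integral representation and differentiate under the integral sign. Your formula is correct (the change of variables checks out, and the asymptotics you invoke—super-exponential decay near $y=0$ when $\lambda>0$, and $y^{1-2\alpha}$ plus the stretched-exponential tail of $\sup Z$ near $y=\infty$—do give the needed domination). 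Your approach is more explicit but involves more bookkeeping than the paper's scaling trick; the paper's is shorter because it exploits the one-parameter family structure to reduce an $x$-differentiation to a $\lambda$-differentiation under the sigma-finite measure, which is a cleaner dominated-convergence argument. Both are valid proofs of (i).
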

\begin{proof}
(i) The case $\lambda=0$ follows  directly from scaling, which gives $w_{0}(x)=x^{-2}\mathbf{N}(\sup Z>1)$, and the fact that $\mathbf{N}(\sup Z>1)\in (0,\infty)$ by Corollary \ref{Z<-1}. Next, for a fixed positive value of $x$, note that $\lambda\mapsto w_\lambda(x)$ is of class $\mathcal{C}^\infty$ on $(0,\infty)$, since by dominated convergence, we have:
\[\frac{\partial^{n} w_{\lambda}(x)}{\partial \lambda^{n}}=(-1)^{n+1}\mathbf{N}\big(\sigma^{n}\exp(-\lambda \sigma)\mathbbm{1}_{\sup Z <x}\big).\] 
Scaling properties under the measure $\mathbf{N}$ yield $w_{\lambda}(x)=x^{-2} w_{\lambda x^{2\alpha}}(1)$. As a consequence, we  conclude that $w_\lambda(x)$ is also of class $\mathcal{C}^\infty$ in the variable $x\in (0,\infty)$.

(ii) Fix $\lambda\geq 0$ and $x>0$, and observe that the integral expression
\[\int_{0}^{\infty}\frac{ \mathrm{d}s}{s^{1+\alpha}}\big(1-s w_{\lambda}(x)-\exp(-s w_{\lambda}(x))\big)\]
is absolutely convergent, and a direct computation gives that it equals $-\Gamma(-\alpha)w_{\lambda}(x)^{\alpha}$, so that it is
continuous in $x$. Therefore, to obtain (ii), it is enough to show that the integral 
\[\int_{0}^{\infty}\frac{\mathrm{d}s}{s^{1+\alpha}} \Big( \exp\big(-s w_{\lambda}(x)\big)-\mathbb{E}_{0\rightarrow 0}^{(s)} \big[\exp\big(-\int_{0}^{s}\d u\:w_{\lambda}(x+B_{u})\big)\big]\Big)\]
is also absolutely convergent and continuous in the variable $x$.  By dominated convergence, this will be entailed by proving
that, for every compact set $K\subset (0,\infty)$, we have:
\[\exp\big(-s w_{\lambda}(x)\big)- \mathbb{E}_{0\rightarrow
    0}^{(s)}\Big[\exp\big(-\int_{0}^{s}\d
  u\:w_{\lambda}(x+B_{u})\big)\Big]=O_K(1\wedge s^2)\, ,\] 
where this notation means that the left-hand side divided by $1\wedge
s^2$ is uniformly bounded in absolute value for $s>0$ and $x\in K$.
Since the left-hand side is clearly bounded by $1$ in absolute value,
it suffices to show that it is $O_K(s^2)$. 
To obtain this
bound, note that by scaling and  standard properties of Brownian
bridges, it follows that:
\[\mathbb{P}^{(s)}_{0\to 0}\Big(\sup|B|>
  \frac{x}{2}\Big)=\mathbb{P}^{(1)}_{0\to 0}\Big(\sup |B|>
  \frac{x}{2\sqrt{s}}\Big)\leq
  c_{1}\exp\Big(-c_{2}\frac{x^{2}}{s}\Big)=O_K(s^2),\]
for some universal constants $c_1,c_2>0$.
Therefore, we can write:
\begin{align*}
    &\exp\big(-s w_{\lambda}(x)\big)-\mathbb{E}_{0\rightarrow 0}^{(s)}\Big[\exp\big(-\int_{0}^{s}\d u\:w_{\lambda}(x+B_{u})\big)\Big]\\
    &= \exp\big(-s w_{\lambda}(x)\big)\:\:\mathbb{E}_{0\rightarrow 0}^{(s)}\Big[1-\exp\Big(-\int_{0}^{s}\d u\:\big(w_{\lambda}(x+B_{u})- w_{\lambda}(x)\big)\Big)\Big]\\
    &= \exp\big(-s w_{\lambda}(x)\big)\:\:\mathbb{E}_{0\rightarrow
      0}^{(s)}\Big[\Big(1-\exp\Big(-\int_{0}^{s}\d
      u\:\big(w_{\lambda}(x+B_{u})-
      w_{\lambda}(x)\big)\Big)\Big)\mathbbm{1}_{\sup
      |B|<\frac{x}{2}}\Big]+O_K(s^2)\, .
\end{align*}
By (i), we may Taylor expand $w_\lambda(x+B_u)-w_\lambda(x)=
w_\lambda'(x)B_u+S_\lambda(x,B_u)B_u^2$, where the remainder term
$S_\lambda(x,y)$ is a bounded function on $\{(x,y):x\in K,|y|\leq
x/2\}$. Using the fact that $1- \mathrm{e}^{-y}=y+O(y^2)$ for $y$ in a
compact neighborhood of $0$, we then write, on the event $\{\sup |B|<x/2\}$,  
$$1- \exp\Big(-\int_{0}^{s}\d u\:\big(w_{\lambda}(x+B_{u})- w_{\lambda}(x)\big)\Big)\Big)=\int_0^sw_\lambda'(x)B_u\d u+\int_0^sS_\lambda(x,B_u)B_u^2\d u+O_K(s^2)\, .$$ 
Taking expectations, and using the fact that $\mathbb{E}_{0\rightarrow 0}^{(s)}[B_u^2]\leq u$, we obtain:
\begin{align*}
\mathbb{E}_{0\rightarrow 0}^{(s)}\Big[\Big(1-\exp\Big(-\int_{0}^{s}\d u\:\big(w_{\lambda}(x+B_{u})- &w_{\lambda}(x)\big)\Big)\Big)\mathbbm{1}_{\sup |B|<\frac{x}{2}}\Big]\\
  &=  w_\lambda'(x)\mathbb{E}_{0\rightarrow 0}^{(s)}\Big[\int_{0}^{s} B_{u}\d u\mathbbm{1}_{\sup |B|<\frac{x}{2}}\Big]+O_K(s^{2}).
\end{align*}
By symmetry, the last expectation vanishes, and this completes the proof of the lemma.
\end{proof}
Let us continue our way towards \eqref{eq:inteqw},  and introduce some more notation. Fix $\ell>0$, and, working under the measure $\mathbf{N}$, write $s^{(\ell)}_{1},\dots, s^{(\ell)}_{M_{\ell}}$ for the elements of 
$$\Big\{t\geq 0:\: \Delta_{t}\geq \ell\text{ such that } \Delta_{s}<\ell \text{ for every } s\prec t\Big\}$$ in increasing order. Alternatively, these can be defined as the finite elements in the sequence of stopping times $(s_i^{(\ell)},i\geq 1)$ defined inductively by 
$s^{(\ell)}_1=\inf\{t\geq 0:\Delta_t\geq \ell\}$, and then, for $i\geq 1$, by setting
$$t^{(\ell)}_i:=\inf\{t\geq s^{(\ell)}_{i}:\:X_{t}=X_{s^{(\ell)}_{i}-}\}\, ,\qquad 
s^{(\ell)}_{i+1}:=\inf\{t\geq t_i^{(\ell)}:\Delta_t\geq \ell\}\, .$$  We also introduce the set 
$$\mathcal{A}^{(\ell)}:=\bigcup_{1\leq i\leq M_\ell}  [s^{(\ell)}_{i},t^{(\ell)}_{i}]. $$
In words, the image by $\Pi_{d}$ of the complement of $\mathcal{A}^{(\ell)}$ corresponds to the looptree obtained after removing from $\mathcal{L}$ all the loops with size larger than $\ell$ as well as all their descendants. In order to encode the latter and the associated labels we reparametrize  it. In this direction, we let $\Gamma^{(\ell)}$ be the right-inverse of $t\mapsto \int_{0}^{t} \mathrm{d} s \mathbbm{1}_{s\notin \mathcal{A}^{(\ell)}}$, and set
$$(X_t^{(\ell)},Z^{(\ell)}_t):= (X_{\Gamma^{(\ell)}_t},Z_{\Gamma^{(\ell)}_t}), \quad \text{ for } t\geq 0.$$
Notice that $Z^{(\ell)}$ can also be defined as a Gaussian process, by adapting the construction of $Z$ from $X$, but using the process $X^{(\ell)}$ instead. Furthermore, we claim that under $\mathbf{N}$, the process $X^{(\ell)}$ is the excursion of a L\'evy process with Laplace exponent 
\begin{eqnarray*}\phi_{\ell}(\lambda)&:=&\frac{1}{\Gamma(-\alpha)}\int_{0}^{\infty}(\exp(-\lambda r)-1+\lambda r)\frac{ \mathrm{d}r}{r^{1+\alpha}} - \frac{1}{\Gamma(-\alpha)}\int_{\ell}^{\infty}(\exp(-\lambda r)-1)\frac{ \mathrm{d}r}{r^{1+\alpha}}\\&=& \frac{\alpha}{\Gamma(2-\alpha)}\ell^{-(\alpha-1)}\lambda+\frac{1}{\Gamma(-\alpha)}\int_{0}^{\ell}(\exp(-\lambda r)-1+\lambda r)\frac{ \mathrm{d}r}{r^{1+\alpha}}.  \end{eqnarray*}
Indeed, working first under the probability distribution $\mathbf{Q}$
and applying the strong Markov property at the stopping times
$s^{(\ell)}_i,t^{(\ell)}_i$, we see that $X^{(\ell)}$ is a version of
the process $X$, where the jumps of size $\geq \ell$ have been
trimmed, and the computation of the Laplace exponent $\phi_\ell$
follows from the Lévy-Itô representation of an $\alpha$-stable L\'evy
process from a Poisson point measure, by restricting the latter to
atoms of size $<\ell$. Details are left to the reader,  a similar discussion in the context of Lévy trees can be found in \cite[Proposition 1.1]{ADN:LT}.   
In particular, if we write  $\sigma_\ell$ for the lifetime of
$X^{(\ell)}$ under $\mathbf{N}$, we have $\mathbf{N}(1-\exp(-\lambda
\sigma_\ell))=\phi^{-1}_{\ell}(\lambda)$, by \cite[Theorem 1, Chapter VII]{Ber96}. As a consequence,  note that:
\begin{equation}\mathbf{N}(\sigma_{\ell})=
(\phi_{\ell}^{-1})'(0)=(\alpha-1)\Gamma(-\alpha)\ell^{\alpha-1}\, ,\qquad 
\mathbf{N}(\sigma_\ell^2)=(\phi_{\ell}^{-1})''(0)=\frac{(\alpha-1)^3\Gamma(-\alpha)^2}{2-\alpha}\ell^{2\alpha-1}\, .
\end{equation}
Moreover, conditionally on $(X^{(\ell)},Z^{(\ell)})$, $M_\ell$ is a Poisson random variable with intensity
\[\sigma_\ell\int_\ell^\infty\frac{\d r}{\Gamma(-\alpha)r^{1+\alpha}}=\sigma_{\ell}\frac{\ell^{-\alpha}}{\alpha\Gamma(-\alpha)},\]
and therefore we have
\begin{equation}
  \label{eq:9}
  \mathbf{N}(M_{\ell})=\frac{\alpha-1}{\alpha\, \ell}\, ,\qquad 
  \mathbf{N}(M_\ell^2)= \frac{\alpha-1}{\alpha^2(2-\alpha) \ell}.
\end{equation}
The next lemma establishes that  $Z^{(\ell)}$ is in a sense small as  $\ell\to 0$. 

\begin{lem}\label{lem:boundZl} For every $\delta>0$, we have:
\begin{equation}\label{eq:integral:eq:**}
\mathbf{N}(\sup |Z^{(\ell)}|>\ell^{\frac{1}{2}-\delta})=o(\ell).
\end{equation}
\end{lem}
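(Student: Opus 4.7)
The strategy is to use a scaling identity that reduces the estimate \eqref{eq:integral:eq:**} to a tail bound for a fixed reference spectrally positive Lévy excursion and its Gaussian label process, and then to establish that tail bound using standard Gaussian concentration adapted from Section \ref{sec:constr_Z}.

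\textbf{Step 1 (scaling to a universal reference process).} First I would observe the factorization
$$\phi_\ell(\lambda)=\ell^{-\alpha}\psi(\ell\lambda), \qquad \psi(\mu):=\tfrac{\alpha}{\Gamma(2-\alpha)}\mu+\tfrac{1}{\Gamma(-\alpha)}\int_0^1 (e^{-\mu r}-1+\mu r)\tfrac{\d r}{r^{1+\alpha}}\, ,$$
which in particular gives $\phi_\ell^{-1}(\lambda)=\ell^{-1}\psi^{-1}(\ell^\alpha\lambda)$. Denote by $\mathbf{N}_\psi$ the excursion measure of the spectrally positive L\'evy process $Y$ with Laplace exponent $\psi$ (so $Y$ has jumps bounded by $1$), and by $Z^Y$ the associated label process defined by the series \eqref{Z_represent_Mir}, which converges in $L^2$ since $\int_0^1 r^2 \cdot |\Gamma(-\alpha)|^{-1}r^{-1-\alpha}\,\d r<\infty$ for $\alpha\in (1,2)$. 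Since an excursion measure is determined, up to a multiplicative constant, by the Laplace transform of its lifetime, matching Laplace transforms yields
$$\mathbf{N}\big(F(X^{(\ell)},Z^{(\ell)})\big)=\ell^{-1}\,\mathbf{N}_\psi\!\Big(F\big(\ell\, Y_{\cdot/\ell^\alpha},\, \ell^{1/2}\, Z^Y_{\cdot/\ell^\alpha}\big)\Big)$$
for any nonnegative measurable $F$, so that \eqref{eq:integral:eq:**} becomes equivalent to $\mathbf{N}_\psi(\sup|Z^Y|>\ell^{-\delta})=o(\ell^2)$ as $\ell\to 0$, i.e.\ to proving that $\mathbf{N}_\psi(\sup|Z^Y|>x)$ decays faster than any polynomial as $x\to\infty$.

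\textbf{Step 2 (Gaussian tail and tail of $\sup Y$).} Conditionally on $Y$, the process $Z^Y$ is centered Gaussian with pointwise variance at most $d^Y(0,t)\leq 2\sup Y$. Because the jumps of $Y$ are bounded by $1$ and its small-jumps L\'evy measure coincides with that of the $\alpha$-stable L\'evy process, Archer's estimate \eqref{eq:archer} and the covering-number bound of Lemma \ref{estimate:N:tilde} transfer with the same proof to the looptree coded by $Y$ on each excursion under $\mathbf{N}_\psi$. Plugging this into Dudley's inequality and the Borell--TIS concentration principle, as in the proof of Proposition \ref{variations_Z}, one gets a stretched-exponential-in-$x$ tail for $\mathbf{N}_\psi(\sup|Z^Y|>x,\sup Y\leq r)$ as soon as $x\geq r^{1/2+\eta}$, for any fixed $\eta>0$. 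On the other hand, since $\psi'(0)=\alpha/\Gamma(2-\alpha)<\infty$, one has $\mathbf{N}_\psi(\sigma)<\infty$, and the standard first-passage argument for spectrally positive L\'evy excursions (applying the Markov property at $\inf\{t:Y_t\geq r\}$ together with the fact that upon hitting level $r$, the process needs a positive time before being pushed back below $0$) yields $\mathbf{N}_\psi(\sup Y >r)\lesssim 1/r$ for large $r$. Integrating these two estimates against each other produces the required super-polynomial decay and closes the proof.

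\textbf{Main obstacle.} The process $Y$ is not self-similar, so the scaling arguments from Sections \ref{sec:looptree}--\ref{sec:constr_Z} do not apply as a black box and, in particular, Lemma \ref{estimate:N:tilde} has to be re-derived for the looptree coded by $Y$. This is the most delicate point, but it is manageable: because jumps of $Y$ are bounded by $1$ and the L\'evy measure coincides with the stable one near $0$, Archer's argument leading to \eqref{eq:archer} carries over essentially verbatim, and the additional linear term in $\psi$ only contributes negligibly at the small scales that dominate the covering-number estimate.
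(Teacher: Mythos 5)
Your Step~1 is correct and is in fact the same reduction the paper makes: since $\phi_1=\psi$, your triple $(Y,Z^Y,\mathbf{N}_\psi)$ is exactly $(X^{(1)},Z^{(1)},\mathbf{N})$, and the scaling identity you write is the paper's $\mathbf{N}(\sup Z^{(\ell)}>\ell^{1/2-\delta})=\ell^{-1}\mathbf{N}(\sup Z^{(1)}>\ell^{-\delta})$. The target is therefore to show that $\mathbf{N}(\sup Z^{(1)}>x)$ decays faster than any polynomial as $x\to\infty$.

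The gap is in Step~2, and it is twofold. First, the tail bound $\mathbf{N}_\psi(\sup Y>r)\lesssim 1/r$ that you extract from $\mathbf{N}_\psi(\sigma)<\infty$ is far too weak: optimizing the cutoff $r\approx x^{2-2\eta}$ against your Gaussian estimate (valid for $x\gtrsim r^{1/2+\eta}$) gives only $\mathbf{N}_\psi(\sup|Z^Y|>x)\lesssim x^{-2+2\eta}$, not the super-polynomial decay you claim. Since the lemma requires $o(x^{-2/\delta})$ for \emph{every} $\delta>0$, a decay barely below $x^{-2}$ cannot close the argument. The paper sidesteps this by controlling the excursion length $\sigma_1$ rather than $\sup X^{(1)}$: because $\phi_1^{-1}$ is $\mathcal{C}^\infty$ at $0$, $\sigma_1$ has moments of all orders under $\mathbf{N}$, so $\mathbf{N}(\sigma_1>\ell^{-\eta})\leq \ell^{n\eta}\mathbf{N}(\sigma_1^n)$ decays at any polynomial rate by taking $n$ large; the quantity $\sup X^{(1)}$ does not have this moment property under the excursion measure, and your first-passage heuristic cannot upgrade $1/r$ to what is needed. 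Second, you wave off the re-derivation of Archer's estimate and of Lemma~\ref{estimate:N:tilde} for the non-self-similar process $Y$ as ``essentially verbatim,'' but Archer's proof relies on the stable scaling that $Y$ lacks, so this would need a separate, non-trivial argument. The paper avoids producing any new Gaussian estimate for the trimmed looptree via a comparison trick that your proposal is missing: conditionally on $(X^{(1)},Z^{(1)})$, the number $M_1$ of removed big jumps is Poisson with mean $\sigma_1/(\alpha\Gamma(-\alpha))$, and on $\{M_1=0\}$ one has $(Z^{(1)},\sigma_1)=(Z,\sigma)$. Inserting $1=\mathrm{e}^{\sigma_1/(\alpha\Gamma(-\alpha))}\mathbf{P}(M_1=0\mid X^{(1)},Z^{(1)})$, whose exponential cost is harmless once the event $\{\sigma_1\leq\ell^{-\eta}\}$ is imposed, replaces $\sup Z^{(1)}$ by $\sup Z$, and then the already-proved stretched-exponential tail of $\sup Z$ under $\mathbf{P}$ (Proposition~\ref{variations_Z}(i)) is invoked directly after disintegrating $\mathbf{N}$ over $\sigma$ and rescaling.
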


We stress that \eqref{eq:integral:eq:**} is not sharp, and we could use the same proof to obtain that $\mathbf{N}(\sup |Z^{(\ell)}|>\ell^{\frac{1}{2}-\delta})=o(\ell^c)$ for any exponent $c>0$. 
\begin{proof} We first notice that it suffices to prove the one-sided estimate $\mathbf{N}(\sup Z^{(\ell)}>\ell^{\frac{1}{2}-\delta})=o(\ell)$, since $Z^{(\ell)}$ and $-Z^{(\ell)}$ have the same law. 
By scaling, we have $\mathbf{N}(\sup Z^{(\ell)}>\ell^{\frac{1}{2}-\delta})=\ell^{-1}\mathbf{N}(\sup Z^{(1)}>\ell^{-\delta})$ and it suffices to establish that:
$$\mathbf{N}(\sup Z^{(1)}>\ell^{-\delta})= o(\ell^2).$$
The idea  is to control different events involving $\sup Z^{(1)}$ in terms of the excursion lengths $\sigma_{\ell}$ and $\sigma$, disintegrate with respect to the latter, and use the stretched-exponential controls on $\sup Z$ under $ \mathbf{P}$ derived in Proposition 
\ref{variations_Z}. Fix $\eta\in (0,2\alpha\delta)$. We first write 
\begin{equation}\label{eq:Z:ell}
\mathbf{N}(\sup Z^{(1)}>\ell^{-\delta})= \mathbf{N}(\sigma_1>\ell^{-\eta})+  \mathbf{N}(\sup Z^{(1)}>\ell^{-\delta}, \sigma_1\leq \ell^{-\eta}), 
\end{equation}
and study each term separately. First, since $\mathbf{N}(1-\exp(-\lambda \sigma_1))= \phi_{1}^{-1}(\lambda)$, and since $\phi_1$ is of class $\mathcal{C}^\infty$ on $\R$, it follows that $\mathbf{N}\big(\sigma_{1}^{n}\big)<\infty$ for every $n\geq 1$. Thus, by the Markov inequality, we deduce that
\begin{align*}
    \mathbf{N}(\sigma_{1}> \ell^{-\eta})=\mathbf{N}(\sigma_{1}^{n}\geq \ell^{-n\eta})\leq \ell^{n\eta}\mathbf{N}(\sigma_{1}^{n}).
\end{align*}
By choosing $n$ large enough, we obtain that $\mathbf{N}(\sigma_{1}\geq \ell^{-\eta})=o(\ell^{2})$. Let us now consider the remaining term in \eqref{eq:Z:ell}.  
  Since, conditionally on $(X^{(1)},Z^{(1)})$, the random variable $M_1$ is Poisson  with mean $\frac{\sigma_1}{\alpha\Gamma(-\alpha)}$ we have:
\begin{align*}
 \mathbf{N}(\sup Z^{(1)}>\ell^{-\delta}, \sigma_1\leq \ell^{-\eta})&= \mathbf{N}\big(\mathbbm{1}_{\sup Z^{(1)}>\ell^{-\delta}}\mathbbm{1}_{\sigma_1\leq \ell^{-\eta}}\exp\big(\frac{\sigma_1}{\alpha\Gamma(-\alpha)} \big)\mathbbm{1}_{M_{1}=0}\big)\\
 &\leq \exp\big(\frac{\ell^{-\eta}}{\alpha\Gamma(-\alpha)} \big)  \mathbf{N}\big(\mathbbm{1}_{\sup Z^{(1)}>\ell^{-\delta}}\mathbbm{1}_{\sigma_1\leq \ell^{-\eta}}\mathbbm{1}_{M_{1}=0}\big).
\end{align*}
Now, since $(Z^{(1)}, \sigma_1)=(Z,\sigma)$ on the event $\{M_1=0\}$, we infer that the previous display is bounded above by:
$$ \exp\big(\frac{\ell^{-\eta}}{\alpha\Gamma(-\alpha)} \big)  \mathbf{N}\big(\sup Z>\ell^{-\delta}, \sigma\leq \ell^{-\eta}\big). $$
Moreover,  by scaling and disintegration with respect to $\sigma$ we have:
\begin{align*}
    \mathbf{N}(\sup Z>\ell^{-\delta}\:;\: \sigma <\ell^{-\eta})&=\frac{1}{|\Gamma\big(-\frac{1}{\alpha}\big)|}\int_{0}^{\ell^{-\eta}}\frac{ \mathrm{d}v}{v^{1+\frac{1}{\alpha}}}\mathbf{N}^{(v)}\big(\sup Z>\ell^{-\delta}\big)\\
 &=   \frac{1}{|\Gamma\big(-\frac{1}{\alpha}\big)|}\int_{0}^{\ell^{-\eta}}\frac{ \mathrm{d}v}{v^{1+\frac{1}{\alpha}}}\mathbf{P}\big(\sup Z>v^{-\frac{1}{2\alpha}}\ell^{-\delta}\big)\\
    &=\frac{2}{\Gamma\big(1-\frac{1}{\alpha}\big)} \ell^{2\delta}\int_{\ell^{\frac{\eta}{2\alpha}-\delta}}^{\infty}\, \d u\,  u \cdot \mathbf{P}(\sup Z > u).
\end{align*}
Finally, by (i) in Proposition \ref{variations_Z}, we can find positive, finite constants $\beta, c^\prime,C^\prime$ such that, for every $\ell>0$, the previous display is bounded above by $C^\prime \exp(-c^{\prime} \ell^{-\beta(\delta-\eta/2\alpha)})$.  Since $\eta<2\alpha\delta$, this implies $$ \mathbf{N}(\sup Z^{(1)}>\ell^{-\delta}, \sigma_1\leq \ell^{-\eta})=o(\ell^2)\, ,$$ completing the proof of the lemma.
\end{proof}

We finally derive the integral equation \eqref{eq:inteqw}. 
\begin{proof}[ Proof of Theorem \ref{two_points_function}]
We fix $\lambda\geq 0$. For every $\ell, y>0$, consider the quantity
\[J_{\lambda}^{(\ell)}(y):=\alpha
  \ell^{\alpha}\int_{\ell}^{\infty}\frac{\mathrm{d}s}{s^{\alpha+1}}\mathbb{E}_{0\rightarrow
    0}^{(s)}\Big[1-\exp\big(-\int_{0}^{s} \d u\:w_{\lambda}(y+B_{u})\big)\Big]\quad
  \in \quad [0,1]\, .\]
Notice that we can rewrite $J_{\lambda}^{(\ell)}(y)$ as follows:
\begin{align*}
     J_{\lambda}^{(\ell)}(y)=  \frac{\alpha}{\alpha-1}\ell
                              w_{\lambda}(y)+\alpha
                              \ell^{\alpha}\int_{\ell}^{\infty}\frac{\mathrm{d}s}{s^{\alpha+1}}\mathbb{E}_{0\rightarrow
                              0}^{(s)}\Big[1-s
                              w_{\lambda}(y)-\exp\big(-\int_{0}^{s} \d u\:w_{\lambda}(y+B_{u})\big)\Big]\, .
\end{align*}
Therefore, by  Lemma \ref{lem:basicpropw} (ii), if $y$ is
restricted to a compact subset $K$ of $(0,\infty)$, it holds that:
\begin{align}
  J_\lambda^{(\ell)}(y)&= \frac{\alpha}{\alpha-1}\ell
                              w_{\lambda}(y)+\alpha
                              \ell^{\alpha}\int_{0}^{\infty}\frac{\mathrm{d}s}{s^{\alpha+1}}\mathbb{E}_{0\rightarrow
                              0}^{(s)}\Big[1-s
  w_{\lambda}(y)-\exp\big(-\int_{0}^{s} \mathrm{d} u\:w_{\lambda}(y+B_{u})\big)\Big]+o_K(\ell^\alpha)\nonumber\\
\label{eq:8}
                       &=O_K(\ell)\, .
\end{align}
As before, the subscript $K$ in the $ o,O$ notation above means that it is uniform in $y\in K$.

We now fix $x>0$ in the rest of this proof, and observe that the desired  integral equation \eqref{eq:inteqw} is equivalent to
\begin{equation}\label{eq:integral:eq:(*)}
\lim \limits_{\ell\to 0} \ell^{-\alpha+1}\big(w_{\lambda}(x)-\frac{\alpha-1}{\alpha}\ell^{-1} J_{\lambda}^{(\ell)}(x)\big)=\frac{\Gamma(2-\alpha)}{\alpha-1}\lambda.
\end{equation}
Recall now the notation $s^{(\ell)}_1,\ldots,s^{(\ell)}_{M_{\ell}}$, and for every $i\leq M_{\ell}$, introduce the process 
$$B_{i}(t):=Z_{\mathrm{f}_{s^{(\ell)}_{i}}(t/\Delta_{s^{(\ell)}_i})}-Z_{s^{(\ell)}_i},\qquad
0\leq t\leq \Delta_{s^{(\ell)}_i}\, , $$ describing the labels $Z$ as
one circles around the loop encoded by the jump time $s^{(\ell)}_i$,
shifted by their value at $s^{(\ell)}_i$. Note that conditionally
on $\Delta_{s^{(\ell)}_i}$, this process is a Brownian bridge with
duration $\Delta_{s^{(\ell)}_i}$. 
We also let  $(u_{i,j},v_{i,j})_{j\in \mathbb{N}}$ be the connected components of the open set $\{t\in[s^{(\ell)}_{i},t^{(\ell)}_{i}] :\:X_{t}>I_{s^{(\ell)}_{i},t}\}$: these intervals describe the looptrees grafted on the latter loop. Precisely, we encode the positions, encodings and labelings of these looptrees by introducing the quantities
$x^{i,j}:=X_{s^{(\ell)}_{i}}-X_{u_{i,j}}$ and 
\[X^{i,j}_{t}:=X_{(u_{i,j}+t)\wedge v_{i,j}}-X_{u_{i,j}}\quad \text{ and }\quad Z^{i,j}_{t}:=Z_{(u_{i,j}+t)\wedge v_{i,j}}-Z_{u_{i,j}}\:\:,\quad\text{for}\:\:t\geq 0, .\]
We also let $\sigma^{i,j}=v_{i,j}-u_{i,j}$. 
Now notice that $w_{\lambda}(x)$ equals
\begin{eqnarray}\label{eq:winterm}
&=&\mathbf{N}\big(1-\exp(-\lambda
\sigma)\mathbbm{1}_{\sup Z <x}\big)\nonumber\\
&=&
  \mathbf{N}\Big(1-\exp\Big(-\lambda
\sigma_{\ell}-\sum \limits_{i=1}^{M_{\ell}}\sum \limits_{j\in
  \mathbb{N}}\big(\lambda\sigma^{i,j}-\log(\mathbbm{1}_{\sup_{[u^{i,j},v^{i,j}]}
  Z <x})\big)\Big)\Big)\nonumber\\
&\underset{Z_{u^{i,j}}=Z_{s^{(\ell)}_i}+B_i(x^{i,j})}{=}& \mathbf{N}\Big(1-\exp\Big(-\lambda \sigma_{\ell}-\sum \limits_{i=1}^{M_{\ell}}\sum \limits_{j\in \mathbb{N}}\big(\lambda\sigma^{i,j}-\log\big(\mathbbm{1}_{\sup Z^{i,j} <x-B_i(x^{i,j})-Z_{s^{(\ell)}_i}}\big)\big)\Big)\Big).
\end{eqnarray}
It is now a consequence of the strong Markov property at times
$s^{(\ell)}_i$ and $t^{(\ell)}_i$ that conditionally on $\sigma_\ell$, $M_\ell$, and
$(Z_{s^{(\ell)}_i},\Delta_{s^{(\ell)}_{i}},B_i)$ for $1\leq i\leq M_\ell$,  the point measures
\[\mathcal{N}^{i}:=\sum \limits_{j\in J}\delta_{x^{i,j}, X^{i,j},
    Z^{i,j}},\quad 1\leq i\leq M_{\ell}, \]
are independent Poisson point measures with intensities 
$$\mathbbm{1}_{x\in[0,\Delta_{s^{(\ell)}_{i}}]} \mathrm{d}x \mathbf{N}( \mathrm{d}(X,Z))\,
,\qquad  1\leq i\leq M_{\ell}\, .$$ 
An application of the Laplace functional formula for Poisson measures shows that \eqref{eq:winterm} can be written in the form: 
$$
  \mathbf{N}\Big(1-\exp\Big(-\lambda \sigma_{\ell}-\sum
  \limits_{i=1}^{M_{\ell}}\int_0^{\Delta_{s^{(\ell)}_i}}\d
  u\,  w_\lambda(x-B_i(u)-Z_{s^{(\ell)}_i})\Big)\Big)\, .
  $$
Now note that conditionally given $\sigma_\ell,M_\ell,Z_{s^{(\ell)}_i}$ for $1\leq i\leq M_\ell$, the random variables  $(\Delta_{s^{(\ell)}_i},B_i)$ 
 are i.i.d.\ with common distribution $\alpha\ell^\alpha
s^{-\alpha-1}\d s\ind_{s\geq \ell}\mathbb{P}^{(s)}_{0\rightarrow 0}(\d b)$. Hence, we
may integrate with respect to these random variables and rewrite the
preceding expression as: 
\begin{align} 
\lefteqn{\mathbf{N}\Big(1-\exp(-\lambda
\sigma_{\ell})\prod_{i=1}^{M_\ell}\Big(\alpha\ell^\alpha\int_\ell^\infty\frac{\d s}{s^{\alpha+1}}\E^{(s)}_{0\rightarrow0}\Big[\exp\big(-\int_0^s\d u\,  w_\lambda(x-B_u-Z_{s_i^{(\ell)}})\big)\Big]\Big)\Big)}\nonumber\\
&=\mathbf{N}\Big(1-\exp(-\lambda
\sigma_{\ell})\prod_{i=1}^{M_\ell}\big(1-J_\lambda^{(\ell)}(x-Z_{s^{(\ell)}_i})\big)\Big)\nonumber
\\
&=\mathbf{N}\Big(\ind_{\sup |Z^{(\ell)}|<\ell^{\frac{1}{2}-\delta}}\big(1-\exp(-\lambda
\sigma_{\ell})\prod_{i=1}^{M_\ell}\big(1-J_\lambda^{(\ell)}(x-Z_{s^{(\ell)}_i})\big)\big)\Big)+o(\ell)
\end{align}
where we used Lemma \ref{lem:boundZl} in the second equality, for some $\delta$ that we choose in $(0,1-\alpha/2)$, and
where the $o(\ell)$ term is uniform in $x\in (0,\infty)$. Using
$0\leq  \mathrm{e}^{-y}-(1-y)\leq y^2$ for $y\geq 0$, we obtain that this is equal to 
\begin{equation}
  \label{eq:2}
  \mathbf{N}\Big(\ind_{\sup |Z^{(\ell)}|<\ell^{\frac{1}{2}-\delta}}\big(1-\exp(-\lambda
\sigma_{\ell}-\sum_{i=1}^{M_\ell}J_\lambda^{(\ell)}(x-Z_{s^{(\ell)}_i})\big)\big)\Big)+R(\ell,x)+
o(\ell)
\end{equation}
where the remainder satisfies, for fixed $x>0$, 
$$0\leq R(\ell,x)\leq \mathbf{N}\Big(
  \ind_{\sup
    |Z^{(\ell)}|<\ell^{\frac{1}{2}-\delta}}\sum_{i=1}^{M_\ell}J^{(\ell)}_\lambda(x-Z_{s^{(\ell)}_i})^2\Big)=
  O(\ell^2)\mathbf{N}(
  M_\ell)=O(\ell)\, ,
  $$
  due to 
\eqref{eq:8} and \eqref{eq:9}. 

Now note that, by (ii)
in Lemma \ref{lem:basicpropw} and \eqref{eq:8}, we have 
\[J_{\lambda}^{(\ell)}(x+\eta)-J_{\lambda}^{(\ell)}(x)=\frac{\alpha}{\alpha-1}\ell
  w'_{\lambda}(x)\eta+o(\ell^\alpha)\, ,\]
as $\ell\to 0$, where the remainder term is uniform over the
choice of $\eta\in (-\ell^{1/2-\delta},\ell^{1/2-\delta})$. Hence, by developing the exponential in
\eqref{eq:2}, we obtain:
\begin{align*}
     w_\lambda(x)&=\mathbf{N}\Big(\mathbbm{1}_{\sup |Z^{(\ell)}|<\ell^{\frac{1}{2}-\delta}}\Big(1-\exp\big(-\lambda \sigma_{\ell}-\sum \limits_{i=1}^{M_{\ell}}J_{\lambda}^{(\ell)}\big(x-Z_{s^{(\ell)}_i}\big)\big)\Big)\Big)+o(\ell)\\
     &= \mathbf{N}\Big(1-\exp\big(-\lambda \sigma_{\ell}-\sum
       \limits_{i=1}^{M_{\ell}}J_{\lambda}^{(\ell)}(x)\big)\Big)+\frac{\alpha\,
       \ell}{\alpha-1}w_\lambda'(x)\mathbf{N}\Big(\mathrm{e}^{-\lambda \sigma_\ell-\sum_{i=1}^{M_\ell}J^{(\ell)}_\lambda(x)}\sum_{i=1}^{M_\ell} Z_{s^{(\ell)}_i}\Big)\\
       &+o\Big(\mathbf{N}(M_\ell)\ell^\alpha\Big)
       +O(\mathbf{N}(M_\ell^2)\ell^{3-2\delta})\, .
\end{align*}
Using the fact that
$\mathbf{N}\big(\sum \limits_{i=1}^{M_{\ell}} Z_{s^{(\ell)}_i}\, |\, \sigma_\ell,M_\ell\big)=0$ by
symmetry of the law of $Z_{s^{(\ell)}_i}$,
and that the remainder terms are $o(\ell^{\alpha-1})$ by \eqref{eq:9}, we finally obtain:      
     \begin{align*}
     w_\lambda(x)
     &=\mathbf{N}\Big(1-\exp\Big(-(\lambda+\frac{\alpha-1}{\Gamma(2-\alpha)} \ell^{-\alpha}J_{\lambda}^{(\ell)}(x)) \sigma_{\ell}\Big)\Big)+o(\ell^{\alpha-1})\\
     &=\phi_{\ell}^{-1}\Big(\lambda+\frac{\alpha-1}{\Gamma(2-\alpha)} \ell^{-\alpha}J_{\lambda}^{(\ell)}(x)\Big) +o(\ell^{\alpha-1})\\
     &= \frac{\Gamma(2-\alpha)}{\alpha}\ell^{\alpha-1}
       \Big(\lambda+\frac{\alpha-1}{\Gamma(2-\alpha)}
       \ell^{-\alpha}J_{\lambda}^{(\ell)}(x)
       \Big)+o(\ell^{\alpha-1})
\end{align*}
and \eqref{eq:integral:eq:(*)} follows.
\end{proof}

\subsection{Computation of $\mathbf{N(\sup Z>1)}$}
We now use Theorem \ref{two_points_function} in the case $\lambda=0$ to deduce that:

\begin{prop}\label{sec:stable-map-3} For $x>0$, we have
\begin{equation}w_{0}(x) = \bN(\sup Z\geq x) =  \frac{\bN(\sup Z\geq 1)}{x^{2}}=\frac{\alpha(\alpha-1)}{2x^{2}}.\label{exp:inf:Z}
\end{equation}
\end{prop}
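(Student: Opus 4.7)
The proof combines a scaling reduction with an explicit Bessel-function computation applied to the integral equation of Theorem~\ref{two_points_function}.

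Since the Brownian bridges $\mathrm{b}_{i}$ appearing in the construction of $Z$ in Section~\ref{sec:constr_Z} are symmetric, we have $Z \stackrel{(d)}{=} -Z$ under $\mathbf{N}$. Combined with Corollary~\ref{Z<-1}, this gives $w_{0}(x) = \mathbf{N}(\sup Z \geq x) = c/x^{2}$ for $x > 0$, where $c := \mathbf{N}(\sup Z \geq 1) \in (0,\infty)$. The task reduces to proving $c = \alpha(\alpha-1)/2$. Substituting $w_{0}(y) = c/y^{2}$ (with the convention $w_{0}(y) = +\infty$ on $(-\infty,0]$) and $\lambda = 0$ into \eqref{eq:inteqw}, and using Brownian scaling to set $x=1$, yields
\[
\int_{0}^{\infty} \frac{ds}{s^{\alpha+1}}\, \mathbb{E}_{0 \to 0}^{(s)}\!\left[1 - sc - \exp\!\Big(\!-c\!\int_{0}^{s}\!\frac{du}{(1+B_{u})^{2}}\Big)\ind_{\{T_{-1}(B) > s\}}\right] = 0,
\]
where $T_{-1}(B) := \inf\{u \geq 0 : B_{u} = -1\}$ and the indicator encodes the $+\infty$ convention.

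Setting $U_{u} := 1 + B_{u}$, under $\P_{0\to 0}^{(s)}$ the process $U$ is a Brownian bridge from $1$ to $1$ of length $s$, and the key input is the classical absolute continuity between Brownian and Bessel bridges (see e.g.\ \cite[Chap.~XI]{RY99}): for any $\nu > 1/2$,
\[
\mathbb{E}^{(s)}_{1\to 1}\!\left[\exp\!\Big(\!-\!\tfrac{\nu^{2}-1/4}{2}\!\int_{0}^{s}\!\frac{du}{U_{u}^{2}}\Big)\ind_{\{T_{0}(U)>s\}}\right]
= \frac{q^{\langle\nu\rangle}_{s}(1,1)}{p^{\mathrm{BM}}_{s}(1,1)}
= \sqrt{\tfrac{2\pi}{s}}\, e^{-1/s}\, I_{\nu}(1/s),
\]
where $I_{\nu}$ is the modified Bessel function of the first kind. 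Matching the killing rate $c/U^{2} = (\nu^{2}-1/4)/(2U^{2})$ forces $\nu = \sqrt{2c + 1/4}$; the target value $c = \alpha(\alpha-1)/2$ corresponds precisely to $\nu = \alpha - 1/2$, i.e.\ to the Bessel process of dimension $2\alpha+1$. After the change of variables $t = 1/s$, the integral equation becomes the scalar identity
\[
\int_{0}^{\infty}\!\Big[ t^{\alpha-1} - \tfrac{\nu^{2}-1/4}{2}\, t^{\alpha-2} - \sqrt{2\pi}\, t^{\alpha-1/2}\, e^{-t}\, I_{\nu}(t)\Big]\, dt = 0.
\]

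The three terms above are individually divergent at $\infty$, and the matching relation $c = (\nu^{2}-1/4)/2$ is precisely what cancels the leading $t^{\alpha-1}$ and $t^{\alpha-2}$ singularities of $\sqrt{2\pi}\,t^{\alpha-1/2} e^{-t} I_{\nu}(t)$, through the expansion $I_{\nu}(t) = e^{t}/\sqrt{2\pi t}\,(1 - (4\nu^{2}-1)/(8t) + O(t^{-2}))$, leaving an $O(t^{\alpha-3})$ integrand which is absolutely integrable since $\alpha < 2$ (in accordance with Lemma~\ref{lem:basicpropw}(ii)). To evaluate the remaining finite part, one uses the Laplace transform $\int_{0}^{\infty} e^{-\lambda t} I_{\nu}(t)\,dt = ((\lambda + \sqrt{\lambda^{2}-1})^{\nu}\sqrt{\lambda^{2}-1})^{-1}$ together with fractional differentiation in $\lambda$ and analytic continuation to $\lambda \to 1^{+}$. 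After simplification via the Gamma reflection and duplication formulas, the equation collapses to an algebraic identity whose unique positive solution is $\nu = \alpha - 1/2$, hence $c = \alpha(\alpha-1)/2$. The main technical obstacle is this last step: the individually divergent terms must be tracked through a careful analytic continuation past the branch point $\lambda = 1$ of the Bessel Laplace transform, with classical hypergeometric and Gamma-function identities playing a central role in the final simplification.
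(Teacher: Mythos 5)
Your proposal follows the paper's argument essentially step for step: reduce to the constant $c=\mathbf{N}(\sup Z\geq 1)$ by scaling, substitute $w_0(x)=c/x^2$ into the $\lambda=0$ integral equation, convert the bridge expectation to a Bessel-function expression via absolute continuity (the paper isolates this as Lemma~\ref{lem:E:a:to:b}), change variables to $t=1/s$, use the leading-order cancellation to make the integral converge, and continue analytically past $u\downarrow 1$ via the hypergeometric connection formula (the paper uses the known closed-form Laplace-type transform of $s^{\alpha-1/2}I_\nu(s)$ as a ${}_2F_1$ rather than fractional differentiation, but this is the same computation). One point worth stating more carefully: the final equation does not collapse to an algebraic identity; one finds the value of the integral is $\Gamma(\nu+\alpha+1/2)\Gamma(-\alpha)/\bigl(2^{\alpha+1/2}\sqrt{\pi}\,\Gamma(\nu-\alpha+1/2)\bigr)$, and setting this to zero requires the denominator Gamma to diverge, i.e.\ $\nu-\alpha+1/2\in\{0,-1,-2,\ldots\}$, of which only $\nu=\alpha-1/2$ is compatible with $\nu>1/2$ and $\alpha\in(1,2)$.
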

The proof of the above proposition heavily relies  on exact computations involving Bessel and hypergeometric functions.  We start with a  lemma which  can be understood as a variation on the well-known absolute continuity relations between Brownian motion and Bessel processes (see \cite[Exercise
XI.1.22]{RY99}).

\begin{lem}\label{lem:E:a:to:b}
Let $a<b<x$ and $\ell>0$. Then, for every $c>0$ we have:
\begin{equation*}
\mathbb{E}_{a\to b}^{(\ell)}\left[\exp\left(-\int_{0}^{\ell}\frac{c~\d u}{(B_u+x)^{2}}\right)\right]=\sqrt{\frac{2\pi}{\ell}}\sqrt{(x+a) (x+b)}\exp\Big(-\frac{(x+a)(x+b)}{\ell}\Big
   )I_{\nu}\Big(\frac{(x+a)(x+b)}{\ell}\Big),
\end{equation*}
where $\nu=\frac{\sqrt{8c+1}}{2}$ and  $I_{\nu}$ stands for the modified Bessel function of the first kind with index $\nu$: 
$$ I_{\nu}(z) := \left(  \frac{z}{2}\right)^{\nu} \sum_{k =0}^{\infty} \frac{(z^{2}/4)^{k}}{k! \, \Gamma(\nu+k+1)}, \quad z\in \mathbb{R}.$$
\end{lem}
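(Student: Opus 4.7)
The plan is to reduce the computation to a Feynman--Kac formula involving an exponential functional of the type $\exp(-c \int_0^\ell du/R_u^2)$, which is the canonical object interchanging Brownian motion and Bessel processes.

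First I would perform a translation by $x$: if $B$ is a Brownian bridge of length $\ell$ from $a$ to $b$, then $\tilde B_u := B_u + x$ is a Brownian bridge from $y := a+x$ to $z := b+x$, both positive. The quantity to compute becomes
\[
E_{y \to z}^{(\ell)}\Big[\exp\Big(-c \int_0^\ell \frac{du}{\tilde B_u^2}\Big)\Big].
\]
Then I would undo the bridge conditioning: by the standard Doob representation of the bridge, for $F$ measurable nonnegative,
\[
E_{y \to z}^{(\ell)}[F] \;=\; \frac{1}{p_\ell(y,z)}\, q_\ell(y,z),
\]
where $p_t$ is the Brownian heat kernel and $q_t(y,z)$ is the subprobability kernel of Brownian motion killed at rate $c/B_u^2$, namely $q_\ell(y,z)\,dz = E_y\big[\exp(-c\int_0^\ell du/B_u^2); B_\ell \in dz\big]$ (making sense of the limit $t \uparrow \ell$ by monotone convergence applied to $F_t = \exp(-c \int_0^t du/B_u^2)$, since $B$ avoids $0$ under the bridge up to the terminal time).

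The core computation is the identification of $q_\ell(y,z)$. For this I would use the Girsanov/$h$-transform relation between Brownian motion and a Bessel process of dimension $2\nu+2$: set $h(x) = x^{\nu+1/2}$ and $\alpha := \nu + 1/2$. A direct Itô computation on $\log h(B_t)$ shows that $\alpha^2 - \alpha = 2c$, i.e.\ $\nu = \frac{1}{2}\sqrt{8c+1}$, gives the identity
\[
E_y\Big[f(B_t)\exp\Big(-c\int_0^t \frac{du}{B_u^2}\Big); t < T_0\Big] \;=\; y^{\alpha}\, E_y^{\langle \nu\rangle}\big[X_t^{-\alpha}\, f(X_t)\big],
\]
equivalently, $q_t(y,z) = y^\alpha z^{-\alpha}\, p_t^{\langle\nu\rangle}(y,z)$. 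Plugging in the classical Bessel transition density
\[
p_t^{\langle\nu\rangle}(y,z) \;=\; \frac{z}{t}\Big(\frac{z}{y}\Big)^{\nu} \exp\Big(-\frac{y^2+z^2}{2t}\Big)\, I_\nu\!\Big(\frac{yz}{t}\Big),
\]
and using $\alpha - \nu = 1/2$, one obtains
\[
q_\ell(y,z) \;=\; \frac{\sqrt{yz}}{\ell}\, \exp\Big(-\frac{y^2 + z^2}{2\ell}\Big)\, I_\nu\!\Big(\frac{yz}{\ell}\Big).
\]
Dividing by $p_\ell(y,z) = (2\pi\ell)^{-1/2}\exp(-(z-y)^2/2\ell)$ makes the cross-term $yz/\ell$ combine with $(y^2+z^2)/2\ell - (z-y)^2/2\ell = yz/\ell$, and yields
\[
E_{y \to z}^{(\ell)}\Big[\exp\Big(-c\int_0^\ell \frac{du}{\tilde B_u^2}\Big)\Big] = \sqrt{\frac{2\pi yz}{\ell}}\, \exp\Big(-\frac{yz}{\ell}\Big)\, I_\nu\!\Big(\frac{yz}{\ell}\Big).
\]
Substituting back $y = x+a$, $z = x+b$ gives the claimed formula.

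The only non-routine point is the rigorous justification of the Feynman--Kac identity on the event $\{t < T_0\}$ and the passage to the limit $t \uparrow \ell$ in the bridge decomposition, as the potential $c/B^2$ is singular near $0$. Since $a+x, b+x > 0$ and the Brownian bridge paths are bounded away from $0$ on each $[0,\ell-\epsilon]$ with probability $1$, and since the exponential functional is monotone in $t$ and bounded by $1$, both points reduce to standard approximation arguments.
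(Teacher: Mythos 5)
Your proof is correct and uses essentially the same approach as the paper: disintegration of the Brownian bridge over the free Brownian motion, the Girsanov/absolute-continuity relation between Brownian motion and the Bessel process of index $\nu=\sqrt{8c+1}/2$, and the explicit Bessel transition density. The paper presents the bridge decomposition as a limit $\varepsilon\downarrow 0$ of $\mathbb{E}_0^{(\ell)}[\exp(\cdot)\,p_\varepsilon(b-B_{\ell-\varepsilon})/p_\ell(b)]$ rather than phrasing it directly as a kernel ratio $q_\ell(y,z)/p_\ell(y,z)$, but this is only a cosmetic difference.
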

\begin{proof} 
By translation invariance of Brownian motion, it is enough to establish the lemma in the case $0=a <b<x$. Now, note that  by monotone convergence, we have
\begin{eqnarray*}
\mathbb{E}_{0\to b}^{(\ell)}\Big[\exp\big(-c\int_{0}^{\ell}\frac{ \d u}{(B_u+x)^{2}}\big)\Big]&=&\lim \limits_{\varepsilon\to 0} \mathbb{E}_{0\to
  b}^{(\ell)}\Big[\exp\Big(-c\int_0^{\ell-\eps}\frac{ \d
  u}{(B_u+x)^2}\Big)\Big]\\
&=&\lim \limits_{\varepsilon\to 0}\mathbb{E}_{0}^{(\ell)}\Big[\exp\Big(-c\int_0^{\ell-\eps}\frac{\, \d
  u}{(B_u+x)^2}\Big)\frac{p_\eps(b-B_{\ell-\eps})}{p_\ell(b)}\Big]
\end{eqnarray*}
where  $p_t(x)=(2\pi t)^{-1/2}\exp(-x^2/2t)$ stands for  the
one-dimensional Gaussian density function. Let us perform the computation of the previous display.
Using again the invariance by translation of Brownian motion and the absolute
continuity relations between Brownian motion and the Bessel processes (see \cite[Exercise
XI.1.22]{RY99}), we infer that:
$$\mathbb{E}_{0}^{\ell}\Big[\exp\Big(-c\int_0^{\ell-\eps}\frac{\, \d
  u}{(B_u+x)^2}\Big)\frac{p_\eps(b-B_{\ell-\eps})}{p_\ell(b)}\mathbbm{1}_{ T_{-x} > \ell- \varepsilon}\Big]=\E_x^{\langle \nu \rangle}\Big[\Big(\frac{x}{B_{t-\eps}}\Big)^{\nu + \frac{1}{2}} \frac{p_\eps(b+x-B_{\ell-\eps})}{p_\ell(b)}  \mathbbm{1}_{ T_{0}> \ell - \varepsilon}\Big]$$
where $\nu=\frac{\sqrt{8c+1}}{2}$ is the index of the Bessel process on the right-hand side (equivalently, dimension $d= 2 \nu +2$) starting from $x>0$, and as usual $T_{z} = \inf\{ t \geq 0 : B_{t} =z\}$. Notice that since $c>0$ we have $\nu > 1/2$ so under $ \mathbb{P}^{\langle \nu \rangle }_{x}$ the process $B$ never touches $0$ and the indicator function in the right-hand side of the previous display is superfluous (it is also superfluous in the left-hand side since the integral a.s. diverges on the event when $B$ touches $-x$). If we write 
$$p^{\langle \nu \rangle}_t(x,y)=\frac{\P_x^{\langle \nu \rangle}(B_t\in \d
  y)}{\d y},$$
for the density function of the one-dimensional marginals of the
canonical process under $ \mathbb{P}_x^{\langle \nu \rangle}$, then  we obtain
$$\E_{0\to
  0}^{(\ell)}\Big[\exp\Big(-\int_0^{\ell-\eps}\frac{c\, \d
  u}{(x+B_u)^2}\Big)\Big]= \lim_{ \varepsilon \to 0}\int_0^\infty\d y\, p_{\ell-\eps}^{\langle
   \nu\rangle}(x,y) \left(\frac{x}{y}\right)^{\nu+ \frac{1}{2}} \frac{ p_\eps(b+x-y)}{p_{\ell}(b)}\, ,$$
and by an easy dominated convergence argument, the latter converges as
$\eps\to 0$ to
$$\frac{p_\ell^{\langle
   \nu\rangle}(x,x+b)}{p_\ell(b)}\, .$$
    The desired result now follows using the explicit expression of Bessel densities that can be found
in \cite[Chapter XI.1 p.446]{RY99}. 
\end{proof}
\noindent We can now proceed with the proof of \eqref{sec:stable-map-3}.
\begin{proof}[Proof of Proposition \ref{sec:stable-map-3}.]
For simplicity, we let $c:=c(\alpha)=\bN(\sup Z> 1)$.  We first use Theorem \ref{two_points_function} with $\lambda=0$ together with Proposition \ref{Z<-1} to deduce that, for every 
$x>0$, the function $w_{0}(x) = \frac{c}{x^{2}}$ satisfies: 
\begin{equation}\label{eq:3}
\int_0^\infty\frac{\d t}{t^{1+\alpha}}\E^{(t)}_{0\to
  0}\bigg[1-\frac{ct}{x^2}-\exp\Big(-\int_0^t\frac{c\, \d
  u}{(x+B_u)^2}\Big)\bigg]=0\, .
\end{equation}
We fix $x=1$ henceforth. By Lemma \ref{lem:E:a:to:b},  we have
$$\int_0^\infty\frac{\d t}{t^{1+\alpha}}\Big(1-ct- \sqrt{\frac{2\pi}{t}} \mathrm{e}^{-1/t}I_\nu\Big(\frac{1}{t}\Big)\Big)=0,$$
where $\nu=\frac{\sqrt{8c+1}}{2}$ and  $I_{\nu}$ is  the modified Bessel function with index $\nu$. We now perform 
the change of  variables $s=1/t$ to get that: 
\begin{equation}\label{eq:5}
\int_0^\infty\d s\,
\mathrm{e}^{-s}s^{\alpha-1/2}\bigg(I_\nu(s)-\frac{\mathrm{e}^s}{\sqrt{2\pi
    s}}\Big(1-\frac{c}{s}\Big)\bigg)=0.
\end{equation}
It is not obvious at first that the integral on the left hand-side is
well-defined. However, since $\alpha\in (1,2)$ and $\nu \geq 1/2$, the function $I_\nu$ is
continuous with $I_\nu(0)=0$ and the integral is well-defined in the
vicinity of $0$. On the other hand, the asymptotic properties of Bessel
functions are that, when $s\to\infty$,
$$I_\nu(s)=\frac{\mathrm{e}^s}{\sqrt{2\pi
    s}}\Big(1-\frac{4\nu^2-1}{8s}+O(\frac{1}{s^2})\Big)\, ,$$
and $(4\nu^2-1)/8$ is precisely $c$. Therefore,
as $s\to \infty$, 
$$\mathrm{e}^{-s}s^{\alpha-1/2}\bigg(I_\nu(s)-\frac{\mathrm{e}^s}{\sqrt{2\pi
    s}}\Big(1-\frac{c}{s}\Big)\bigg)=O\Big(\frac{1}{s^{3-\alpha}}\Big)\,
,$$
which is integrable near $\infty$. To evaluate the integral in
\eqref{eq:5}, we observe that it is equal to 
$$\lim_{u\downarrow 1}\int_0^\infty\d s\,
\mathrm{e}^{-us}s^{\alpha-1/2}\bigg(I_\nu(s)-\frac{\mathrm{e}^s}{\sqrt{2\pi
    s}}\Big(1-\frac{c}{s}\Big)\bigg)\, ,$$ which comes from a
dominated convergence argument, that is easily justified using the above  asymptotic properties of $I_\nu$. Now, for every fixed
$u>1$, we can split the above integral into
\begin{eqnarray}
  \lefteqn{\int_0^\infty\d s\,
    \mathrm{e}^{-us}s^{\alpha-1/2}I_\nu(s)-\int_0^\infty\d s\frac{s^{\alpha-1}\mathrm{e}^{-(u-1)s}}{\sqrt{2\pi
      }}+c\int_0^\infty\d s\frac{s^{\alpha-2}\mathrm{e}^{-(u-1)s}}{\sqrt{2\pi
      }}}\nonumber\\
  &=&\int_0^\infty\d s\,
  \mathrm{e}^{-us}s^{\alpha-1/2}I_\nu(s) -
  \frac{\Gamma(\alpha)}{(u-1)^\alpha\sqrt{2\pi}}+\frac{c\,
    \Gamma(\alpha-1)}{(u-1)^{\alpha-1}\sqrt{2\pi}}\, .\label{eq:6}
\end{eqnarray}
On the other hand, the value of the above Laplace-type transform involving
$I_\nu$ is known and equals
\begin{equation}\label{eq:prem:I:nu:desp}
\frac{\Gamma(\nu+\alpha+1/2)}{2^{\nu}
  u^{\nu+\alpha+1/2}\Gamma(\nu+1)}\, \cdot\, {} _2F_1\bigg(\begin{array}{cc}
\frac{\nu+\alpha+1/2}{2},\frac{\nu+\alpha+3/2}{2}\\ 
\nu+1
\end{array}
;\frac{1}{u^2}\bigg)
\end{equation} where $_2F_1(a,b;c;z)$ is a hypergeometric
function (we drop the indices in the sequel and simply write
$F={}_2F_1$), as defined in \cite{AnAsRo99}. The problem in letting
$u\downarrow 1$ in the latter expression is that (for  real
parameters $a,b,c$) the analytic function $F(a,b;c;z)$ diverges at its
radius of convergence $z=1$ whenever $c-a-b\leq 0$, as it is the case
here. By \cite[Corollary 2.3.3]{AnAsRo99} we can rewrite (we let
$z=1/u^2$):
\begin{eqnarray*}
\lefteqn{F\bigg(\begin{array}{cc}
\frac{\nu+\alpha+1/2}{2},\frac{\nu+\alpha+3/2}{2}\\ 
\nu+1
\end{array}
;z\bigg)}\\
&=&\frac{\Gamma(\nu+1)\Gamma(-\alpha)}{\Gamma\Big(\frac{\nu-\alpha+1/2}{2}\Big) \Gamma\Big(\frac{\nu-\alpha+3/2}{2}\Big)}F\bigg(\begin{array}{cc}
\frac{\nu+\alpha+1/2}{2},\frac{\nu+\alpha+3/2}{2}\\ 
\alpha+1
\end{array}
;1-z\bigg)\\
& &+\frac{\Gamma(\nu+1)\Gamma(\alpha)}{\Gamma\Big(\frac{\nu+\alpha+1/2}{2}\Big) \Gamma\Big(\frac{\nu+\alpha+3/2}{2}\Big)}(1-z)^{-\alpha}F\bigg(\begin{array}{cc}
\frac{\nu-\alpha+1/2}{2},\frac{\nu-\alpha+3/2}{2}\\ 
1-\alpha
\end{array}
;1-z\bigg)\, .
\end{eqnarray*}
Since the function $z\mapsto F(a,b;c;z)$ is analytic in $\{|z|<1\}$, and $F(a,b;c;0)=1$,
this form allows to deduce that, as $z\uparrow 1$, 
\begin{eqnarray*}
\lefteqn{F\bigg(\begin{array}{cc}
\frac{\nu+\alpha+1/2}{2},\frac{\nu+\alpha+3/2}{2}\\ 
\nu+1
\end{array}
;z\bigg)}\\
&=&\frac{A}{(1-z)^\alpha}+\frac{B}{(1-z)^{\alpha-1}}+
\frac{\Gamma(\nu+1)\Gamma(-\alpha)}{\Gamma\Big(\frac{\nu-\alpha+1/2}{2}\Big)
  \Gamma\Big(\frac{\nu-\alpha+3/2}{2}\Big)}+ O((1-z)^{2-\alpha})\\
&=&\frac{A}{(1-z)^\alpha}+\frac{B}{(1-z)^{\alpha-1}}+
\frac{2^{\nu-\alpha-1/2}\Gamma(\nu+1)\Gamma(-\alpha)}{\sqrt{\pi}\, \Gamma(\nu-\alpha+1/2)}+ O((1-z)^{2-\alpha})\,
,
\end{eqnarray*}
for some real constants $A,B$ that can be made explicit. In the second
line, we rewrote the constant term using the Gauss duplication formula
for the Gamma function.  Otherwise
said, using \eqref{eq:6} and the following displayed expression, the
integral in \eqref{eq:5} is equal to the limit as $z\uparrow 1$ of an
expression of the form
$$\frac{A'}{(1-z)^\alpha}+\frac{B'}{(1-z)^{\alpha-1}}+\frac{\Gamma(\nu+\alpha+1/2)\Gamma(-\alpha)}{2^{\alpha+1/2}\sqrt{\pi}\,
  \Gamma(\nu-\alpha+1/2)}+O((1-z)^{2-\alpha})\, ,$$
for some constants $A',B'$. A (tedious) computation shows that $A'=B'=0$, but we
can also argue that if this were not the case, then the above
expression would not have a finite limit as $z\uparrow 1$, and this
would be a contradiction. Putting things together, we finally obtain
that 
$$\int_0^\infty\d s\,
\mathrm{e}^{-s}s^{\alpha-1/2}\bigg(I_\nu(s)-\frac{\mathrm{e}^s}{\sqrt{2\pi
    s}}\Big(1-\frac{c}{s}\Big)\bigg)=\frac{\Gamma(\nu+\alpha+1/2)\Gamma(-\alpha)}{2^{\alpha+1/2}\sqrt{\pi}\,
  \Gamma(\nu-\alpha+1/2)}=0\, .$$ Since $\alpha\in (1,2)$ and $\nu\geq
1/2>0$, this can only happen if the denominator explodes, namely, if
$\nu-\alpha+1/2\in \{0,-1,-2,\ldots\}$, and the
only possible value is in fact $0$, giving
$$\nu=\alpha-\frac{1}{2} \qquad \mbox{ and
}\qquad c=\frac{4\nu^2-1}{8}=\frac{\alpha(\alpha-1)}{2}\, ,$$ as
wanted. 
\end{proof}

\subsection{Records of $Z$ on loops} \label{sec:recordsloops}
Our goal now is to use \eqref{sec:stable-map-3} to study the local minima of $Z$ on loops. More precisely, we aim to  establish the properties  stated  at the beginning  of the section. In this direction, we start with the ideal model made of the process $(X,Z)$ under the law $ \mathbf{Q}$ as described in Section \ref{sec:Markov}. Next, 
we consider the associated excursions of $(X,Z)$ above the running infimum  $X$. Namely, we let
 $(a_i,b_i)_{i\in \mathbb{N}}$ be the connected components of $\{t\geq 0:~X_t>I_t\}$, and
$$\ell_i:= -I_{a_i}\quad; \quad X^{i}_t:=X_{(a_i+t)\wedge b_i}-X_{a_i}\quad \text{ and }\quad  Z^{i}_t:=Z_{(a_i+t)\wedge b_i}-Z_{a_i},\quad t\geq 0,$$
for $i\in \mathbb{N}$. By excursion theory, the point measure 
  \begin{eqnarray}\mathcal{P}:=\sum \limits_{i\in \mathbb{N}} \delta_{\ell_{i},X^i,Z^i},   \label{eq:mesdecoration}\end{eqnarray}
is a Poisson measure with intensity  $\mathbbm{1}_{\ell\geq 0} \d \ell \otimes \mathbf{N}(\d X \d Z)$, independent of the Brownian motion $\mathrm{b}_0$ of the ``infinite loop''. To keep a picture in mind, the reader can informally think of  the looptree coded by $(X^i,Z^i)$ as glued at position $\ell_{i}$ on the ``infinite loop'' coded by $ \mathbb{R}_{+}$. We shall be interested in times $t$ which are left minimal record times of $Z$ and which happen on the infinite loop attached to time $0$. 
More precisely, we consider the set
\begin{equation}\label{def:mathscr:I}
 \mathscr{B}:=\big\{\mathrm{b}_0(t):~ \text{ such that }\mathrm{b}_0(\ell_i)+\inf Z^{i}> \mathrm{b}_{0}(t) \text{ for every } i\in \mathbb{N} \text{ with } 0 \leq \ell_i<t \big\},
\end{equation}
and we stress that the times $t \geq 0$ here only parametrize the infinite loop and not the whole process $(X,Z)$, and we refer to Figure \ref{fig:shepp} for an illustration.
\begin{figure}[!h]
 \begin{center}
 \includegraphics[width=13cm]{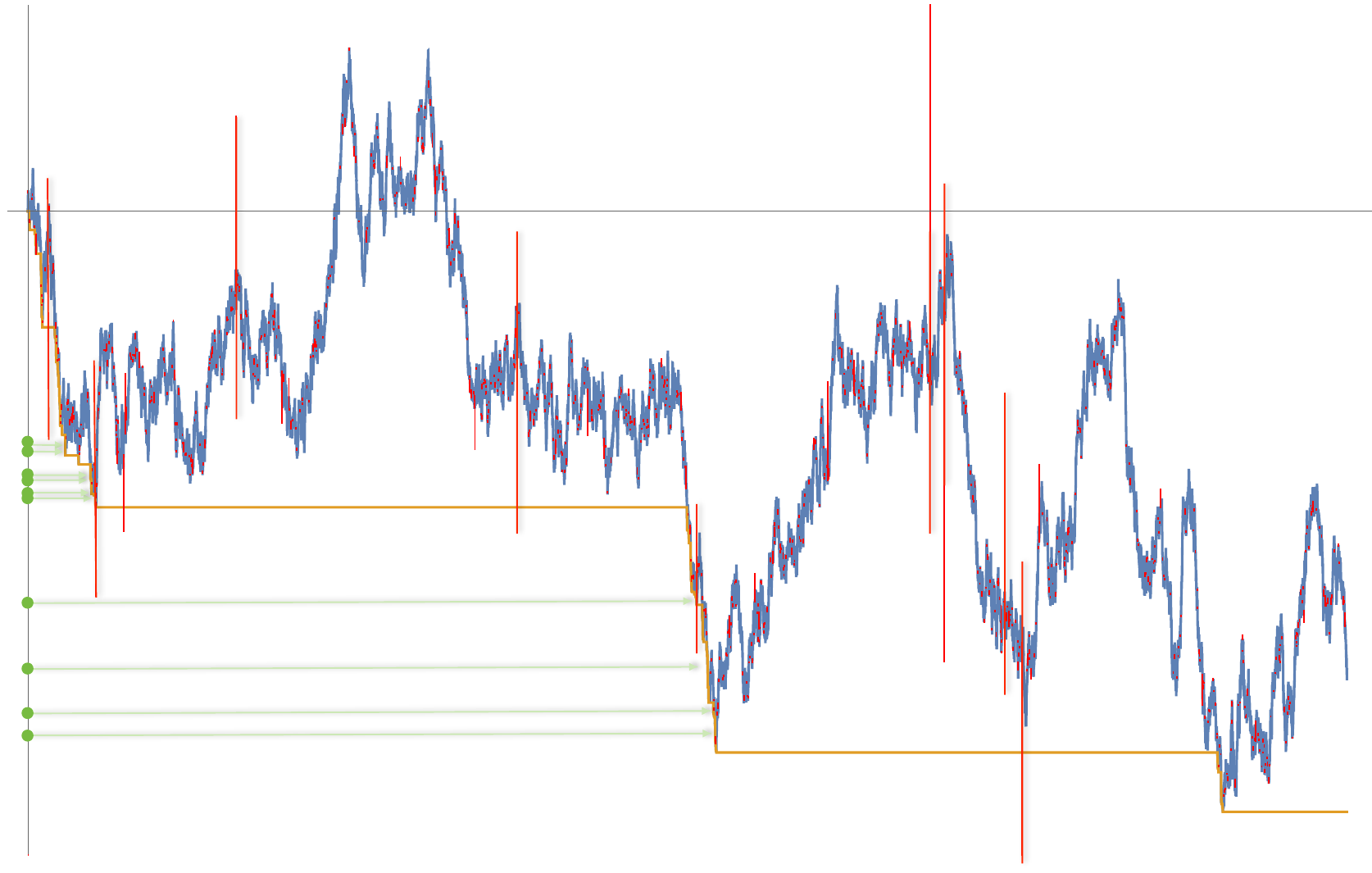}
 \caption{A simulation of a Brownian motion (in blue) and its running infimum process (in orange). The trace is decorated with red slits which happen at time $t_{i}$ and extend over $[\inf Z^{i} + \mathrm{b_0}(t_{i}) , \mathrm{b}_0(t_{i}) + \sup Z^{i}]$. We are interested in the set of values $  \mathscr{B}$ (in green on the left) which correspond to the coordinates of the points $(t,\mathrm{b}_{0}(t))$ which are visible from the left, i.e. not blocked by a red slit: a few examples are shown in light green. \label{fig:shepp}}
 \end{center}
 \end{figure}

\begin{prop}\label{I_infty}
Under $\mathbf{Q}$, the set $-\mathscr{B}$ is distributed as the range of a stable subordinator of index $2-\alpha$.
\end{prop}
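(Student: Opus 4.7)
My plan is to realize $-\mathscr{B}$ as the uncovered set of a Shepp-type covering, show it is a regenerative self-similar closed subset of $[0,\infty)$ containing $0$ as a regular accumulation point, and then identify the index $\beta=2-\alpha$ of the associated stable subordinator via a Hausdorff dimension computation that exploits the explicit value $c=\alpha(\alpha-1)/2$ from Proposition~\ref{sec:stable-map-3}. The first step is purely a reformulation. Setting $I_i := -\inf_{[0,\ell_i]}\mathrm{b}_0$ and $v_i := -(\mathrm{b}_0(\ell_i)+\inf Z^i)$, I would observe that the condition $\ell_i<T_{-x}$ (where $T_{-x}$ denotes the first passage time of $\mathrm{b}_0$ below $-x$) amounts to $I_i<x$, and that $\mathrm{b}_0(\ell_i)+\inf Z^i\leq -x$ amounts to $v_i\geq x$. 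Consequently,
\[
-\mathscr{B}\;=\;[0,\infty)\setminus\bigcup_{i\in\mathbb{N}}(I_i,v_i],
\]
which displays $-\mathscr{B}$ as the complement of a random countable union of intervals.

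The next step is to establish the regenerative structure and a scale invariance. Applying the strong Markov property of $\mathrm{b}_0$ at $T_{-x}$ together with Lemma~\ref{Markov_lem} applied to $\mathcal{P}$, one checks that on $\{x\in -\mathscr{B}\}$ every past slit satisfies $v_i<x$, hence cannot forbid any level $y>x$, and that the post-$T_{-x}$ system is a fresh independent copy of the original; this gives $(-\mathscr{B}-x)\cap[0,\infty)\stackrel{d}{=}-\mathscr{B}$, so $-\mathscr{B}$ is a regenerative subset of $[0,\infty)$. For the self-similarity, Corollary~\ref{Z<-1} together with Proposition~\ref{sec:stable-map-3} identifies the projection of $\mathcal{P}$ on the pair $(\ell_i,\inf Z^i)$ as a Poisson measure on $\mathbb{R}_+\times(-\infty,0)$ with intensity $\mathrm{d}\ell\otimes (2c/|z|^3)\mathrm{d}z$. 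A direct Jacobian calculation shows that the affine map $(\ell,z)\mapsto(\ell/L^2,z/L)$ leaves this intensity invariant and simultaneously sends Brownian motion to Brownian motion; since both $I_i$ and $v_i$ are then rescaled by $1/L$, I obtain $L^{-1}(-\mathscr{B})\stackrel{d}{=}-\mathscr{B}$ for every $L>0$. Combined with the regenerative property, this already identifies $-\mathscr{B}$ as the range of a stable subordinator of some index $\beta\in(0,1)$.

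The remaining and most delicate task is to pin down $\beta=2-\alpha$. My strategy is to parametrize the covering intervals using the excursions of $\mathrm{b}_0-\inf\mathrm{b}_0$ above $0$: by It\^o excursion theory, these excursions form a Poisson point process on the level $x=-\inf\mathrm{b}_0$ with intensity $\mathrm{d}x\otimes\mathbf{n}$, where $\mathbf{n}$ is It\^o's excursion measure. On each excursion $e$ at level $x$, every slit landing in $e$ has $I_i=x$, so the individual intervals collapse into the single interval $(x,x+V(e)]$ with $V(e):=\sup_i(-z_i-R_i)_+$ and $R_i=e(\ell_i-\mathrm{start})$. A Poisson computation using $\mathbf{N}(\inf Z\leq -r)=c/r^2$ yields the conditional tail
\[
\mathbf{P}\bigl(V(e)>v\bigm|e\bigr)\;=\;1-\exp\!\Big(-c\int_0^{\sigma(e)}\!\frac{\mathrm{d}\ell}{(e_\ell+v)^2}\Big),
\]
and the Brownian excursion scaling reduces the $\mathbf{n}$-tail to an expression of the form $\mathbf{n}(V>v)=K(\alpha)/v$ with an explicit constant $K(\alpha)$. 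Using the Bessel-$3$ bridge representation of Brownian excursion and the same modified-Bessel / hypergeometric calculus as in the proof of Proposition~\ref{sec:stable-map-3} (via Lemma~\ref{lem:E:a:to:b}), I would compute $K(\alpha)$ explicitly, and the Fitzsimmons--Fristedt--Shepp covering theory~\cite{FiFrSh85} then identifies the Hausdorff dimension of the uncovered set as $2-\alpha$, fixing the index of the subordinator. The main obstacle is this last integration: all the earlier steps are relatively standard consequences of Markov properties, scaling, and excursion theory, but the identification of the index hinges on the same delicate Bessel-type computation that underlay Proposition~\ref{sec:stable-map-3}.
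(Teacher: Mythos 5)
Your reformulation of $-\mathscr{B}$ as the uncovered part of a Shepp-type covering is exactly the right first move, and your grouping of the individual slits $(I_i,v_i]$ by excursions of $\mathrm{b}_0$ above its running infimum is what the paper does as well. However, the overall route is longer than necessary, and a few steps would need to be tightened.

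First, the intermediate step where you establish abstractly that $-\mathscr{B}$ is regenerative and $1$-self-similar (hence the range of a stable subordinator of unknown index) is redundant. The Shepp covering theorem---the same reference \cite{FiFrSh85} (or \cite[Theorem~7.2]{Ber99}) you invoke at the very end---produces directly, from a Poisson covering with intensity $\mathrm{d}t\otimes\mu(\mathrm{d}\ell)$ satisfying $\mu((z,\infty))=\beta/z$ with $\beta\in(0,1)$, the conclusion that the uncovered set is the range of a stable subordinator of index $1-\beta$; there is no need to first establish stability abstractly and then go back to compute a Hausdorff dimension. Your step (b) also has a conceptual imprecision: you condition on $\{x\in -\mathscr{B}\}$, but this event has probability zero (as it must, if the claim is true), so the strong Markov argument as stated does not go through directly; one would have to phrase the regeneration via the inverse local time of the associated Markov process, which is precisely the machinery that Shepp's theory already packages. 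Moreover, you implicitly need $0$ to be a regular accumulation point of $-\mathscr{B}$ (equivalently, that the covering is not complete near $0$), and this is not addressed in your proposal; it is exactly the integral criterion built into \cite{FiFrSh85}.

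Second, for the key computation, you propose to compute $\mathbf{n}(V>v)$ via the Bessel-$3$ bridge representation of the Brownian excursion and the hypergeometric/modified-Bessel calculus from Lemma~\ref{lem:E:a:to:b}. Note that Lemma~\ref{lem:E:a:to:b} is stated for the \emph{Brownian} bridge, not the Bessel-$3$ bridge, so it does not apply as written; you would need a separate (and harder) Bessel-bridge version. The paper avoids this entirely: it applies Bismut's decomposition of the Itô excursion measure to turn the $\mathbf{n}$-integral into a Brownian expectation $\mathbb{E}_{a+z}[\exp(-c(\alpha)\int_0^{T_z}\mathrm{d}y/B_y^2)]$, which is computed in one line via the bounded local martingale $(\tfrac{a+z}{B_{t\wedge T_z}})^{\alpha-1}\exp(-\tfrac{\alpha(\alpha-1)}{2}\int_0^{t\wedge T_z}\mathrm{d}s/B_s^2)$. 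This gives $\mu((z,\infty))=(\alpha-1)/z$ directly, after which Shepp's theorem immediately yields the stable$(2-\alpha)$ subordinator. Your plan is not wrong, but it substitutes a substantially heavier computation for a light martingale argument, and it runs the covering theorem twice (once implicitly to pin down regularity of $0$, once to compute dimension) where the paper runs it once.
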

\begin{proof} Since we are looking at values which are in particular running minimal values for $\mathrm{b}_0$,  we can first gather the contribution by excursions of $\mathrm{b}_0$ above its running infimum. To this end, we consider $(s_k,t_k)_{k\geq 1}$ an indexation of the connected components of the open set $\{t\geq 0:~\mathrm{b}_0(t)>\inf\limits_{[0,t]} \mathrm{b}_0\}$. To fix notation, write $\mathrm{b}_0^{k}(t):=\mathrm{b}_0((s_k+t)\wedge t_k) - \mathrm{b}_0(s_{k})$, for $t\geq 0$, and introduce the quantity
$$M_{k}:=\inf\big\{\mathrm{b}_0(\ell_i)+\inf Z^{i}:~\ell_{i}\in [s_k,t_k]\big\} -\mathrm{b}_0(s_{k}).$$ 
The reason to define these random variables is that the set 
$\mathscr{B}$ can be rewritten in the form:
\begin{equation}\label{eq:mathscr:I}
 \mathscr{B}= \mathbb{R}_- \setminus \bigcup\limits_{k=1}^{\infty}\big(\mathrm{b}_0(s_k)~,~\mathrm{b}_0(s_k)+M_k\big).
\end{equation}
We will obtain the desired result by examining the point measure:
\begin{equation}\label{eq:mathsc:I}
\sum \limits_{k\geq 1}\delta_{\mathrm{b}_{0}(s_k), M_k}.
\end{equation}
In this direction remark that,  for $z >0$, by properties of Poisson random measures we have:
  \begin{eqnarray} \mathbf{P}(M_k<-z\:|\:\mathrm{b}_0)&=&1-\exp\Big(-\int_{s_{k}}^{t_{k}} \d y \,\mathbf{N}\Big(\inf Z\leq -\big(z +\mathrm{b}_0(y) - \mathrm{b}_0(s_{k})\big)\Big) \Big)\nonumber \\
 & \underset{ \mathrm{Cor.\ } \ref{Z<-1} \  \& \  \mathrm{Prop.\ } \ref{sec:stable-map-3}}{=} &1-\exp\Big(-\int_{0}^{t_{k}-s_k} \d y \frac{\alpha (\alpha-1)}{2(z +\mathrm{b}^{k}_0(y))^2}  \Big).   \label{eq:decopoisson}\end{eqnarray}

Recall now that the excursion measure of $\mathrm{b}_0$ reflected above its infimum is $2\mathbf{n}(\d e)$, where $\mathbf{n}$ is the Itô measure of positive excursions of Brownian motion. 
It follows  by excursion theory that the measure \eqref{eq:mathsc:I}  
is a Poisson point measure with intensity $\mathbbm{1}_{t\geq 0}\d t \mu(\d z),$ 
where $\mu$ is the measure with support on $\mathbb{R}_+$ defined by:
\begin{equation}\label{eq:mu:shepp}
\mu((z,\infty))=\int 2\mathbf{n}(\d e)\Big( 1-\exp\Big(-\int_{0}^{\sigma(e)} \d y \frac{\alpha (\alpha-1)}{2(e_y+z)^2}  \Big)\Big)\, 
\end{equation}
here $\sigma(e)$ stands for the lifetime of the excursion $e$.  Let us compute the latter expression. To simplify notation, we write $c(\alpha):=\alpha(\alpha-1)/2$. First, we note that:
\begin{eqnarray*}
1-\exp\Big(-\int_0^{\sigma(e)}\frac{c(\alpha)\, \d
  y}{(e_y+z)^2}\Big)
&=&\int_0^{\sigma(e)}\d s\,
\frac{c(\alpha)}{(e_s+z)^2}\exp\Big(-\int_s^{\sigma(e)}\frac{c(\alpha)\d
y}{(e_y+z)^2}\Big)
\, .
\end{eqnarray*}
Now we use Bismut's
decomposition of the Brownian excursion  \cite[Theorem 4.5, Chap. XII, p502]{RY99} to obtain:
\begin{eqnarray*}
  \mu((z,\infty))&=&2\int \mathbf{n}(\d e) \int_0^{\sigma(e)}\d s\,
  \frac{c(\alpha)}{(e_s+z)^2}\exp\Big(-\int_s^{\sigma(e)}\frac{c(\alpha)\d
    y}{(e_y+z)^2}\Big)\\
  &=&2c(\alpha)\int_0^\infty \frac{\d a}{(a+z)^2}\E_a\Big[\exp\Big(-
  \int_0^{T_0}\frac{c(\alpha)\d
    y}{(B_y+z)^2}\Big)\Big]\\
&=&2c(\alpha)\int_0^\infty \frac{\d a}{(a+z)^2}\E_{a+z}\Big[\exp\Big(-
  \int_0^{T_z}\frac{c(\alpha)\d
    y}{B_y^2}\Big)\Big]\, ,
\end{eqnarray*}
where $T_z=\inf\{t\geq 0:~B_t=z\}$. The computation of the expectation inside the above integral is standard. Specifically, an  application of Itô's formula 
entails that the process 
$$  M_t:=\left(\frac{a+z}{B_{t \wedge T_z}}\right)^{\alpha-1} \exp\left( - \frac{\alpha (\alpha-1)}{2} \int_0^{t \wedge T_z} \frac{ \mathrm{d}s}{ B_s^2}\right)$$ 
is a local martingale (for the filtration generated by $B$). Since, it is bounded above by $ (\frac{a+z}{z})^{\alpha-1}$, we infer from  the optional sampling theorem and letting $t \to \infty$  that 
$$\E_{a+z}\Big[\exp\Big(-
  \int_0^{T_z}\frac{c(\alpha)\d
    y}{B_y^2}\Big)\Big] =  \left( \frac{z}{a+z}\right)^{\alpha-1}.$$
     We conclude  that: 
$$\mu((z,\infty))=\frac{2c(\alpha)}{\alpha}\cdot\frac{1}{z}=\frac{\alpha-1}{z}\, .$$
Recalling \eqref{eq:mathscr:I}, our proposition is now  a direct consequence of   Shepp's covering theorem, as it appears in \cite[Corollary 1]{FiFrSh85} or more explicitly in \cite[Theorem 7.2]{Ber99}, which states that the set \eqref{eq:mathscr:I} is the range of a stable subordinator of index $2-\alpha$.
\end{proof}
Since the range of a $(2-\alpha)$-stable subordinator contains arbitrarily small values, by invariance by translation and an obvious symmetry, the previous result shows that {local} left (or right) minimal record of $Z$ are actually dense within loop times under $ \mathbf{Q}$.  We complete this picture by showing that there are no two-sided records on loops:
\begin{lem} \label{lem:twosidedloops} $ \mathbf{Q}$-a.s.,  for every  $ \varepsilon>0$ we have $\{t > 0 : \mathrm{b}_{0}(\ell_i)+\inf Z^{i} \geq  \mathrm{b}_{0}(t), \forall \ell_i \in [t- \varepsilon, t+ \varepsilon]\} = \varnothing.$
\end{lem}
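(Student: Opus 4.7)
The lemma asserts that no time $t>0$ can be a simultaneous left- and right-$\varepsilon$-record for the decorated Poisson minima $\bigl(\mathrm{b}_0(\ell_i)+\inf Z^i\bigr)_{i\in\mathbb N}$. The plan is to prove this in two steps: first a pointwise statement (for each fixed $t$, almost surely), then an upgrade to ``for all $t$ simultaneously''.

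\textbf{Step 1 (pointwise version).} I first show that for each fixed $t>0$,
$$\mathbf{Q}\Big(\forall\,\varepsilon>0,\ \exists\,\ell_i\in(t,t+\varepsilon)\text{ with }\mathrm{b}_0(\ell_i)+\inf Z^i<\mathrm{b}_0(t)\Big)=1,$$
and similarly for $\ell_i\in(t-\varepsilon,t)$. By the strong Markov property applied at the deterministic time $t$ to the joint process $(\mathrm{b}_0,\mathcal{P})$, the shifted data $(\mathrm{b}_0(t+\cdot)-\mathrm{b}_0(t),\mathcal{P}(t+\cdot))$ is independent of $\mathcal{F}_t$ and has the original distribution. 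The joint scaling $(s,y)\mapsto(\lambda^{2}s,\lambda y)$, combined with the intensity $c|z|^{-3}\d z\d\ell$ of the atoms $(\ell_i,\inf Z^i)$ (see Corollary \ref{Z<-1} and Proposition \ref{sec:stable-map-3}), leaves this law invariant, so the event
$$A_\varepsilon:=\Big\{\exists\,\ell_i\in(t,t+\varepsilon):\ \mathrm{b}_0(\ell_i)+\inf Z^i<\mathrm{b}_0(t)\Big\}$$
has probability $p>0$ independent of $\varepsilon$ (one checks positivity for $\varepsilon=1$ by an explicit Poisson calculation, using that with constant probability some atom has $|\inf Z^i|$ larger than $1+\sup_{[t,t+1]}|\mathrm{b}_0-\mathrm{b}_0(t)|$). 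Since $A_\varepsilon$ is decreasing in $\varepsilon$ and $\bigcap_{\varepsilon>0}A_\varepsilon$ lies in the germ $\sigma$-algebra at $0^{+}$ of the shifted Markov process, Blumenthal's zero-one law forces $\mathbf{Q}(\bigcap_\varepsilon A_\varepsilon)\in\{0,1\}$, and positivity then gives the value $1$. The symmetric left-sided statement follows either from the time-reversal identity of Brownian motion on $[0,t]$ (the process $(\mathrm{b}_0(t)-\mathrm{b}_0(t-s))_{s\in[0,t]}$ is a Brownian bridge) coupled with the obvious time symmetry of the Poisson decoration, or from Proposition \ref{I_infty} applied ``backwards''.

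\textbf{Step 2 (upgrade to all $t$).} Intersecting Step 1 over rational $t>0$, $\mathbf{Q}$-almost surely no rational $t$ is a two-sided $\varepsilon$-record for any rational $\varepsilon>0$. Suppose for contradiction some $t^{*}>0$ is a two-sided $\varepsilon$-record. The key observation is monotonicity in the reference level: if $t'\in(t^{*}-\varepsilon/3,t^{*}+\varepsilon/3)$ satisfies $\mathrm{b}_0(t')\le\mathrm{b}_0(t^{*})$, then for every $\ell_i\in(t'-\varepsilon/3,t'+\varepsilon/3)\subset(t^{*}-\varepsilon,t^{*}+\varepsilon)$ one has $\mathrm{b}_0(\ell_i)+\inf Z^i\ge\mathrm{b}_0(t^{*})\ge\mathrm{b}_0(t')$, so $t'$ is itself a two-sided $\varepsilon/3$-record. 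If $t^{*}$ is not a local minimum of $\mathrm{b}_0$, the open set $\{s\in(t^{*}-\varepsilon/3,t^{*}+\varepsilon/3):\mathrm{b}_0(s)<\mathrm{b}_0(t^{*})\}$ is non-empty and therefore contains a rational $t'$, contradicting the rational version.

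The delicate case is when $t^{*}$ coincides with a local minimum of $\mathrm{b}_0$. I plan to handle this case via a Williams-type decomposition of $\mathrm{b}_0$ at its local minima, which represents $\mathrm{b}_0$ locally (on both sides of $t^{*}$, conditionally on $\mathrm{b}_0(t^{*})$) as a pair of Bessel-3 processes. The two-sided record condition then reads $|\inf Z^i|<\mathrm{b}_0(\ell_i)-\mathrm{b}_0(t^{*})$ for every $\ell_i\in(t^{*}-\varepsilon,t^{*}+\varepsilon)$. Because $\mathrm{b}_0(\ell_i)-\mathrm{b}_0(t^{*})$ scales like $\sqrt{|\ell_i-t^{*}|}$ while $\mathbf{N}(\inf Z<-M)=c/M^{2}$, the expected number of ``violating'' atoms in each window $(t^{*}-\delta,t^{*}+\delta)$ is a positive constant independent of $\delta$. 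A zero-one argument under the Bessel-3 law (using the independence of $\mathcal P$ from $\mathrm{b}_0$ and the scale invariance of the conditioned process) provides the desired violating atom almost surely.

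\textbf{Main obstacle.} Step 1 is essentially routine. The real difficulty is the upgrade in Step 2 at Brownian local minima: the naive Fubini argument only yields that the set of two-sided $\varepsilon$-records has Lebesgue measure zero, which is compatible with the uncountable set of Brownian local minima (of Hausdorff dimension $1/2$). The Williams decomposition together with the scale-invariant Poisson tail $c/M^{2}$ is what ultimately rules out even this ``thin'' set of candidates.
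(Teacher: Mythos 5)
Your core intuition is right --- everything hinges on the divergence of $\int (\mathrm{b}_0(\cdot)-m)^{-2}$ near a Brownian minimum, combined with the exact tail $\mathbf{N}(\inf Z<-M)=\tfrac{\alpha(\alpha-1)}{2}M^{-2}$ --- but the proposal has one factual error and one substantive gap, and it is also considerably more convoluted than the paper's route.

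\textbf{Factual error.} You write that the set of Brownian local minima is ``uncountable (of Hausdorff dimension $1/2$)''. The set of \emph{two-sided local minimum times} of Brownian motion is in fact a.s.\ countable: every such time is the (a.s.\ unique) argmin of $\mathrm{b}_0$ over some rational interval $[p,q]$, so the set is a countable union of singletons. You are likely conflating it with the level set $\{t:\mathrm{b}_0(t)=\inf_{[0,t]}\mathrm{b}_0\}$ (the running-minimum set), which indeed is a Lebesgue-null Cantor-like set. This error does not break your structure --- local minimum times are random, so you cannot simply intersect your deterministic-time Step 1 over them and you would still need the Williams decomposition --- but your stated motivation for needing a ``zero-one argument under Bessel-3'' rests on a wrong premise.

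\textbf{Substantive gap.} Your Step 2 is only a plan: ``I plan to handle this case via a Williams-type decomposition\ldots\ provides the desired violating atom almost surely.'' This is precisely the non-routine part, and it is not carried out. If carried out it would work (condition on the pair $(T_{p,q},\mathrm{b}_0(T_{p,q}))$, note that $\mathcal P$ is independent of $\mathrm{b}_0$, use the Bessel-3 description of the BM on each side of $T_{p,q}$ together with $R_y\asymp\sqrt{y}$ to get $\int_0^\delta R_y^{-2}\,\d y=\infty$, then intersect over the countably many $(p,q)$), but as written the proposal does not prove the lemma.

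\textbf{Comparison with the paper's proof.} The paper sidesteps both the pointwise Step 1 and the Williams decomposition. Using translation invariance together with the density of the marks $\ell_i$, it observes that any $t$ satisfying the defect condition with all $\ell_i\in[0,t+\varepsilon]$ must be a \emph{running-minimum} time of $\mathrm{b}_0$ (a stronger restriction than your local-minimum reduction), and then, since $0$ is regular and recurrent for $\mathrm{b}_0-\inf\mathrm{b}_0$, must actually be an excursion start $s_k$. The expected number of admissible $s_k$ is computed in one stroke by the compensation formula for the Poisson process of excursions of $\mathrm{b}_0$ above its infimum, yielding
$$\mathbf{Q}\Big[\sum_{k\geq 1}\mathbbm{1}_{M_k(\varepsilon)>0}\Big]=2\int_0^\infty\d t\int\mathbf{n}(\d e)\exp\Big(-\int_0^{\sigma(e)\wedge\varepsilon}\frac{\alpha(\alpha-1)}{2e_y^2}\,\d y\Big)=0,$$
the vanishing coming exactly from the divergence you identified, now in the form $\int_0^\cdot e_y^{-2}\d y=\infty$ for $\mathbf n$-a.e.\ excursion $e$. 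So the two arguments are morally the same, but excursion theory replaces (and supersedes) both your Blumenthal argument and your sketched Williams analysis: it handles the random ``bad'' times and the BM-near-minimum behaviour in a single computation. If you pursue your route, you should correct the cardinality statement and actually execute the Williams-decomposition argument for each $T_{p,q}$.
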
 
\begin{proof}
First remark that by Brownian motion's invariance by translation, combined with the fact that $\mathcal{P}$ is a Poisson measure with intensity $\mathbbm{1}_{t\geq 0} \d t \otimes \mathbf{N}(\d X \d Z)$ and the density of rational numbers, it suffices to establish that, for every $\varepsilon>0$,  the set 
\begin{equation}\label{eq:B:Z:i:B:t:i:t:varepsilon}
\big\{t > 0 : \mathrm{b}_{0}(\ell_i)+\inf Z^{i} \geq  \mathrm{b}_0(t), \forall \ell_i \in [0, t+ \varepsilon]\big\}
\end{equation} is empty $\mathbf{Q}$-a.s. To this end, fix $\varepsilon>0$ and, as in the previous proof, write $(s_k,t_k)_{k\geq 1}$ for an indexation of the connected components of the open set $\{t\geq 0:~\mathrm{b}_0(t)>\inf_{[0,t]} \mathrm{b}_0\}$. Since the point $0$ is regular recurrent for $\mathrm{b}_0(t)-\inf_{[0,t]} \mathrm{b}_0$, $t\geq 0$, we infer that any time in  \eqref{eq:B:Z:i:B:t:i:t:varepsilon} must be of the form $s_k$, for some $k\geq 1$. Now remark  that a point $s_k$, for $k\geq 1$, belongs to \eqref{eq:B:Z:i:B:t:i:t:varepsilon} if and only if the quantity
$$M_{k}(\varepsilon):=\inf\big\{\mathrm{b}_0(\ell_i)+\inf Z^{i}:~\ell_{i}\in [s_k,(s_k+\varepsilon)\wedge t_k]\big\} - \mathrm{b}_0(s_k)$$
is non negative. Lastly, with the notation of the previous lemma, an application of excursion theory gives:
$$\mathbf{Q}\Big[\sum_{k\geq 1} \mathbbm{1}_{M_k(\varepsilon)>0}\Big]=2\int_0^{\infty} \mathrm{d}t ~\int \mathbf{n}(\d e) \exp\big(-\int_0^{\sigma(e)\wedge \varepsilon} \d y \frac{\alpha (\alpha-1)}{2e_y^2} \big) .$$
The previous display is null since $\int_0^\cdot \d y ~e_y^{-2}=\infty$, $\mathbf{n}(\mathrm{d} e)$-a.e. This implies that the set \eqref{eq:B:Z:i:B:t:i:t:varepsilon} is empty and concludes the proof.
\end{proof}

We now use the ideal model under $ \mathbf{Q}$ to deduce the analog result under $ \mathbf{P}$:
\begin{prop}[Records on loops] \label{prop:records-loops}
$\mathbf{P}$-almost surely, the following properties hold:
\begin{itemize}
\item[$\mathrm{(i)}$] the sets $ \mathrm{LeftRec} \cap \Pi_d^{-1}(\mathrm{Loops})$ and $ \mathrm{RightRec} \cap  \Pi_d^{-1}(\mathrm{Loops})$ are dense in $ \Pi_d^{-1}(\mathrm{Loops})$;
\item[$\mathrm{(ii)}$]  $ \mathrm{LeftRec} \cap \mathrm{RightRec} \cap  \Pi_d^{-1}(\mathrm{Loops})$ is empty;
\item[$\mathrm{(iii)}$]  if $\alpha \in [3/2,2)$ then $Z$ is injective on $(\mathrm{LeftRec} \cup \mathrm{RightRec}) \cap  \Pi_d^{-1}(\mathrm{Loops})$, whereas if $\alpha < 3/2$, one can  find times $0<t<t'<1$ belonging to $ \Pi_d^{-1}(\mathrm{Loops})$ which are not identified in the looptree, i.e.\ $d(t,t')>0$, but for which 
$$ Z_{t}=Z_{t'} = \inf_{[t,t']}Z.$$
\end{itemize}
\end{prop}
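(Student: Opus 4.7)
The strategy is to transfer the analysis carried out under $\mathbf{Q}$ on the ``infinite loop'' in Section~\ref{sec:recordsloops} to an arbitrary loop of $\mathcal{L}$ under $\mathbf{P}$, and then to exploit the classical intersection criterion for independent regenerative sets in order to handle the dichotomy in (iii).

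\textbf{Reduction to a single loop.} Fix a jump time $\mathrm{t}_i$ of $X$ with associated loop $\mathcal{O}_i$ of length $\Delta_{\mathrm{t}_i}$. Combining the Markov property (Corollary~\ref{cutting_N}) with the description of $Z$ via independent Brownian bridges (Proposition~\ref{b_Brownian_Brigde}), one checks that, conditionally on $\Delta_{\mathrm{t}_i}$ and on the loop-label bridge $\mathrm{b}_i^*$, the sub-looptrees grafted on $\mathcal{O}_i$ form a Poisson point measure with intensity $\mathbbm{1}_{[0,\Delta_{\mathrm{t}_i}]}\,\mathrm{d}x\,\mathbf{N}(\mathrm{d}X\,\mathrm{d}Z)$. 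This is the exact analogue of the structure underlying Proposition~\ref{I_infty} and Lemma~\ref{lem:twosidedloops}, with $\mathrm{b}_0$ replaced by a finite-length Brownian bridge. Since both proofs only rely on \emph{local} properties of $\mathrm{b}_0$ (namely, Shepp's covering theorem applied excursion by excursion of $\mathrm{b}_0$ above its running infimum) and a Brownian bridge is locally absolutely continuous with respect to Brownian motion, the same arguments yield: locally near any loop point, the labels of the left (resp.\ right) minimal records on $\mathcal{O}_i$ form, up to an additive shift given by the root-label of $\mathcal{O}_i$, the range of a $(2-\alpha)$-stable subordinator; and no loop point is simultaneously a left and right record. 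Items (i) and (ii) follow: since a $(2-\alpha)$-subordinator range accumulates at its starting point, left records of $\mathcal{O}_i$ accumulate at every $t_0 = \mathrm{f}_{\mathrm{t}_i}(s_0)$, $s_0\in(0,1)$, from the left, and symmetrically for right records by time-reversal \eqref{symmetric}, which proves (i); while (ii) is the direct transfer of Lemma~\ref{lem:twosidedloops}.

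\textbf{Item (iii).} For two distinct jump times $\mathrm{t}_i,\mathrm{t}_j$, the Markov property applied at the most recent common ancestor $\mathrm{t}_i\curlywedge \mathrm{t}_j$ shows that, conditionally on the label of this ancestor, the record-label structures on $\mathcal{O}_i$ and $\mathcal{O}_j$ are two \emph{independent} $(2-\alpha)$-subordinator ranges, each additively shifted. Hawkes' theorem~\cite{hawkes1977intersections} then asserts that two independent $\beta$-stable subordinator ranges intersect almost surely iff $2\beta>1$; with $\beta=2-\alpha$, this is precisely the dichotomy between the dense and dilute phases. In the dilute phase $\alpha\in[3/2,2)$, the two shifted ranges do not intersect; together with the strict monotonicity of a single subordinator (preventing label coincidences among records of a single type on a single loop) and with item (ii), this yields the injectivity in (iii). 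In the dense phase $\alpha\in(1,3/2)$, one can choose $\mathrm{t}_i\prec \mathrm{t}_j$ with $\mathrm{t}_i=\mathrm{t}_i\curlywedge \mathrm{t}_j$: by Hawkes' theorem the right-record-label range on $\mathcal{O}_i$ and the left-record-label range on $\mathcal{O}_j$ intersect at uncountably many values a.s., and since each shifted range extends down to $-\infty$, one can pick a common value $v$ below $\inf_{\mathrm{Branch}(\mathrm{t}_i,\mathrm{t}_j)} Z$. Such a $v$ produces a right record $t\in \mathcal{O}_i$ and a left record $t'\in \mathcal{O}_j$ with $t<t'$, $Z_t = Z_{t'}=v$, $Z\geq v$ on $[t,t']$, and $d(t,t')>0$ since the loops are distinct.

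The main technical obstacle is carrying out the absolute-continuity transfer rigorously while keeping track of the additive label shifts, and, in the dense case, establishing by elementary Shepp-theoretic estimates that the two intersecting subordinator ranges admit common values at arbitrarily low levels so that $v$ can indeed be chosen below $\inf_{\mathrm{Branch}(\mathrm{t}_i,\mathrm{t}_j)}Z$.
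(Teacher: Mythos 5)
Items (i) and (ii) follow the paper's strategy --- Markov property plus absolute continuity of the Brownian bridge against Brownian motion to import Proposition~\ref{I_infty} and Lemma~\ref{lem:twosidedloops} --- and are fine. The trouble is item (iii). In the dense case you take two distinct loops with $\mathrm{t}_i\prec\mathrm{t}_j$ and try to find a common record-label $v<\inf_{\mathrm{Branch}(\mathrm{t}_i,\mathrm{t}_j)}Z$. Three things go wrong. First, under $\mathbf{P}$ or $\mathbf{N}$ the record-label ranges on a finite loop are bounded; they do \emph{not} ``extend down to $-\infty$'' --- that picture only holds under $\mathbf{Q}$ where $\mathrm{b}_0$ is an unbounded Brownian motion --- so nothing guarantees a common value below $\inf_{\mathrm{Branch}}Z$. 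Second, even if such a $v$ existed, the $[0,1]$-interval $[t,t']$ passes through sub-looptrees grafted along the branch, not merely the pinch points of $\mathrm{Branch}(\mathrm{t}_i,\mathrm{t}_j)$, so ``$Z\geq v$ on $[t,t']$'' is an additional, unverified requirement. Third, with $\mathrm{t}_i\prec\mathrm{t}_j$ the two record structures are \emph{not} independent in the form needed for Hawkes' theorem: the right records on $\mathcal{O}_i$ depend on the $Z$-infimum of the very sub-looptree that contains $\mathcal{O}_j$. The paper sidesteps all of this by building $t,t'$ on the \emph{same} loop, as the two visible-record-label sets on the arcs $[0,\Delta_T/4)$ and $(\Delta_T/4,\Delta_T/2]$ seen from the common boundary point $\mathrm{f}_T(1/4)$. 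These are absolutely continuous with respect to two independent $(2-\alpha)$-subordinator ranges \emph{started from the same value} $B_T(\Delta_T/4)$, so by Hawkes their intersection is a non-degenerate $(3-2\alpha)$-subordinator range when $\alpha<3/2$, containing values arbitrarily close to $B_T(\Delta_T/4)$; and the required relation $Z_t=Z_{t'}=\inf_{[t,t']}Z$ is automatic from the definition of the visibility sets.

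For the dilute case your case split (same loop/same type via subordinator monotonicity; same loop/different type via item (ii); different loops via Hawkes) also leaves gaps. Strict monotonicity only forbids coincidences \emph{within one sweep from a single reference point}; it says nothing about records of the same type seen from different reference points on the same loop. And item (ii) only asserts that no time is simultaneously a left and a right record --- it does not prevent $Z$ from taking the same value at a left-record time and at a distinct right-record time on the same loop. The paper avoids the case split entirely by reducing to a universal statement over all disjoint rational arcs $\mathrm{f}_t((p_1,p_2))$ and $\mathrm{f}_{t'}((p_1',p_2'))$ (deliberately allowing $t=t'$), whose record sets are absolutely continuous with respect to pieces of two independent $(2-\alpha)$-subordinator ranges shifted by independent random variables, so Hawkes with $2(2-\alpha)\leq 1$ kills all coincidences at once. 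Since any pair of distinct loop times can be separated by such rational arcs, this covers your cases and the ones you missed.
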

\begin{proof}
As usual, we are going to establish the statement of the proposition under the excursion measure $\mathbf{N}$, the desired result under $\mathbf{P}$ then follows directly by scaling.   Let $\eps>0$ and $T$ a $(\mathcal{F}_t)_{t\geq 0}$--stopping time taking values in $\{s\geq 0:~\Delta_s\geq \eps\}$ -- if this set is empty we take $T=\infty$ by convention. Under  $\mathbf{N}(\cdot | T<\infty,\Delta_T)$, recall the definition of  $\mathcal{N}^{[T]}:=\sum \limits_{i\in \mathbb{N}}\delta_{X_{u_{i}},X^{i},Z^{i}}$
given in \eqref{N:point:measure}, and set:
$$\mathcal{P}^{[T]}:=\mathop{\sum_{i\in \mathbb{N},~X_{u_i}>X_T-\Delta_T}}\delta_{X_T-X_{u_{i}},X^{i},Z^{i}}$$
and  $B_T(t):=Z_{\mathrm{f}_{T}(t/\Delta_T)}-Z_{\rm{f}_{T}(0)},$ for $t\in[0,\Delta_T]$.   We can now apply the Markov property, as stated in Corollary \ref{cutting_N},  and Proposition \ref{b_Brownian_Brigde}, to deduce that under  $\mathbf{N}(\cdot | T<\infty,\Delta_T)$, the measure  $\mathcal{P}^{[T]}$ is a Poisson point measure with intensity $\mathbbm{1}_{[0,\Delta_T]}(t)\d t \mathbf{N}(\d X \d Z)$, and that the process $B_{T}$ haw law $\P_{0\to 0}^{(\Delta_T)}(\mathrm{d} B)$. Moreover, $\mathcal{P}^{[T]}$ and $B_T$ are independent since we have conditioned on $\Delta_T$. Now remark that the set 
$$\Big\{Z_{\rm{f}_T(r)}:~ r\in(0,1/2]\text{ such that } Z_{\rm{f}_T(r)}=\inf_{s\in [\mathrm{f}_T(0),\mathrm{f}_T(r)]} Z_s \Big\}$$
is exactly
$$Z_{\rm{f}_{T}(0)}+\Big\{B_{T}(t):~t\in (0,\Delta_T/2] \text{ such that }B_{T}(\ell_i)+\inf Z^{i}> B_{T}(t) \text{ for every } i\in \mathbb{N} \text{ with } \ell_i<t \Big\},$$
where, by analogy with the notation under $\mathbf{Q}$, we write $\ell_i:=X_T-X_{u_{i}}$. By absolute continuity of the law of the Brownian bridge over $[0, \Delta_{T}/2]$ with respect to that of Brownian motion, and the fact that the range of the $2-\alpha$ stable subordinator contains arbitrarily small values, we deduce from Proposition~\ref{I_infty} that there exist left-local minima of $Z$ belonging to  $ \mathrm{f}_{T}([0,1])$ that are arbitrarily close to $ \mathrm{f}_{T}(0)$. By translation invariance of Brownian motion and Brownian bridge, the argument extends to any point  $ \mathrm{f}_{T}(r)$, for fixed  $r \in [0,1]$ and proves that $ \mathrm{LeftRec}$ is dense within $ \mathrm{f}_{T}([0,1])$.   The same absolute continuity argument together with Lemma~\ref{lem:twosidedloops} shows that there are no two-sided records of $Z$ in $ \mathrm{f}_{T}([0,1])$. Since $\varepsilon$ can be taken as close to $0$ as wanted, this gives points (i) for  $\mathrm{LeftRec}$ and (ii). But point (i)  for $ \mathrm{RightRec}$ follows directly from the analog result for $\mathrm{LeftRec}$ and the invariance by time-reversal \eqref{symmetric}.
\par
Let us move on to the third point. In this direction, under  $\mathbf{N}(\cdot | T<\infty,\Delta_T)$, remark that
$$\Big\{z:~\exists (r,r^\prime)\in [0,1/4)\times (1/4,1/2] \text{ such that }z=Z_{\rm{f}_T(r)}=Z_{\rm{f}_{T}(r^{\prime})}=\inf\limits_{[\rm{f}_T(r),\rm{f}_{T}(r^\prime)]}Z\Big\}$$
can be written as the intersection of:
$$\Big\{B_{T}(t):~t\in [0,\Delta_T/4) \text{ such that }B_{T}(\ell_i)+\inf Z^{i}> B_{T}(t) \text{ for every } i\in \mathbb{N} \text{ with }  t<\ell_i\leq \Delta_T/4 \Big\} $$
and
$$\Big\{B_{T}(t):~t\in (\Delta_T/4,\Delta_T/2] \text{ such that }B_{T}(\ell_i)+\inf Z^{i}> B_{T}(t) \text{ for every } i\in \mathbb{N} \text{ with } \Delta_{T}/4<\ell_i<t \Big\}, $$
shifted again by $Z_{\rm{f}_{T}(0)}$. Since, under $\mathbf{Q}$ and by Proposition \ref{I_infty} combined with classical results from regenerative set theory (see \cite[Example 1]{hawkes1977intersections}), the intersection of two independent copies of $-\mathscr{B}$ is the range of a $(3-2\alpha)$-stable subordinator when $\alpha < 3/2$ and empty otherwise, we infer from the translation invariance of Brownian motion that the intersection is non-empty if and only if $\alpha < 3/2$. Here, we again use the fact that the range of a stable subordinator contains arbitrarily small values. Since this holds for every $\varepsilon > 0$, we deduce statement (iii) when $\alpha \in (1, 3/2)$. For the remainder of the proof, we assume that $\alpha \in [3/2, 2)$. As the argument parallels the previous cases, we omit some details. First, note that by re-rooting invariance \eqref{re-rooting}, it suffices to show that for every set of rational numbers $p_1, p_2, p_1^\prime, p_2^\prime$ in $(0,1)$ with $p_1 < p_2$ and $p_1^\prime < p_2^\prime$, and  jumping times  $t, t^\prime$ of $X$ such that $\mathrm{f}_t(p_2)<\mathrm{f}_{t^\prime}(p_1^\prime)$, the process $Z$ does not take the same value on 
$$\mathrm{f}_t((p_1,p_2))\cap (\mathrm{LeftRec}\cup \mathrm{RightRec})  \quad \text{ and }\quad \mathrm{f}_{t^\prime}((p_1^\prime,p_2^\prime))\cap  (\mathrm{LeftRec}\cup \mathrm{RightRec}).$$
To this end, consider a second $(\mathcal{F}_t)_{t \geq 0}$-stopping time $T^\prime$ taking values in the set $\{ s \geq 0 : \Delta_s \geq  \varepsilon \}$ such that either $T^\prime = T$ (to handle the case $t = t^\prime$) or $T^\prime$ is measurable with respect to $X_{t + \mathrm{f}_T(p_2)} - X_{\mathrm{f}_{T}(p_2)}$ for $t \geq 0$ (to apply the Markov property and treat the case $t\neq t^\prime$). Using the above arguments, we infer that the set
$$\big\{Z_{u} :  \ u \in \mathrm{f}_{T}((p_1,p_2))\cap \mathcal{R}\big\} \cap \big\{Z_{u} : \ u \in \mathrm{f}_{T'}((p_1^\prime,p_2^\prime))\cap \mathcal{R}^\prime\big\},$$
where $\mathcal{R}, \mathcal{R}^\prime \in \{ \mathrm{LeftRec}, \mathrm{RightRec} \}$, has an absolutely continuous law with respect to that of two (pieces of) ranges of independent $(2 - \alpha)$-stable subordinators (excluding the origin), each shifted by a random variable independent of the ranges. Hence, since $2 \cdot (2 - \alpha) \leq 1$ when $\alpha \in [3/2, 2)$, this intersection is empty almost surely by \cite[Example 1]{hawkes1977intersections}. Point (iii) for $\alpha\in[3/2,2)$   now follows  letting $\varepsilon \to 0$.
\end{proof}

\section{Neighborhood of the point with minimal label along the spine} \label{sec:prison}
The goal of this section is to derive a technical estimate for the process $(X,Z)$, which will later be translated into a geometric estimate for typical points  on geodesics in the scaling limits of random planar maps. The results of this section will be used exclusively in Section \ref{sub:goodpointestimate}, and the remainder of this work can be read independently.

In order to state the main result of this section, let us introduce some notation. In this direction, recall the definition of the height process $H$ and  of $\xi_t$ given in \eqref{xi:sup} and extended by scaling in Section~\ref{sec:Markov}, in particular recall from Corollary~\ref{cor:labelbranch} that the process $s \mapsto Z_{\xi_{t}(s)}$  encodes the labels along the pinch points of $ \mathrm{Branch}(0,t)$ (a closure  is required to include the labels at the origin of loops).  
Introduce the measure $ \mathbf{N}^{\bullet}$ defined by: 
\begin{equation}\label{def:N:bullet}
\mathbf{N}^{\bullet}\big(F\big(X,Z, t_\bullet\big)\big)=\mathbf{N}\Big(\int_0^{\sigma}\d t~F\big(X,Z,t\big)\Big),
\end{equation} 
 which can be thought of as the measure $ \mathbf{N}$ together with a uniform distinguished time $t_{\bullet}$.
In this section we shall work under $\mathbf{N}^\bullet$. We also introduce  $Y^\bullet_s:=Z_{\xi_{t_\bullet}(s)}$, $s\in [0,H_{t_\bullet}]$,  the process of labels  along the branch connecting $0$ and $t_\bullet$. Recall from  \eqref{one_point} that, under $\mathbf{N}^\bullet$, we have the following properties:
\begin{itemize}
\item The ``distribution'' of $H_{t_{\bullet}}$ is  $\mathbbm{1}_{h\geq 0}\d h$;
\item Conditionally on  $H_{t_{\bullet}}=h$, the process $Y^\bullet$ is  a $2(\alpha-1)$- symmetric stable L\'evy process stopped at time $h$.
\end{itemize}
By standard properties of stable processes (see Lemma \ref{lem:onePinch}), there exists a.s.~a unique time $\varpi \in (0,H_{t_\bullet})$ such that $Y^{\bullet}_{\varpi}=\min Y^{\bullet}$ and $\varpi$ is not a jumping time of $Y^\bullet$. By the classification of points in the looptree (Section \ref{sec:defloop}), we infer  that ${\varpi}_1=\xi_{t_\bullet}(\varpi)$ is a pinch point time and let us consider ${\varpi}_2$ the unique element strictly larger than ${\varpi}_1$ such that $d({\varpi}_1,{\varpi}_2)=0$. Now, let us introduce the event that we aim to study in this section, see also Figure \ref{fig:trapped} for an illustration. For $ \varepsilon>0$, we say that the point $ \mathfrak{X}=\Pi_{d}(\varpi_{1}) = \Pi_{d}(\varpi_{2})$ is \textbf{$ \varepsilon$-trapped}  if there exist $0 < s_1<{\varpi}_1 <  s_2 <  t_\bullet <  s_3 < {\varpi}_2 < s_4 < \sigma$  
 such that:

\begin{figure}[!h]
 \begin{center}
 \includegraphics[width=16cm]{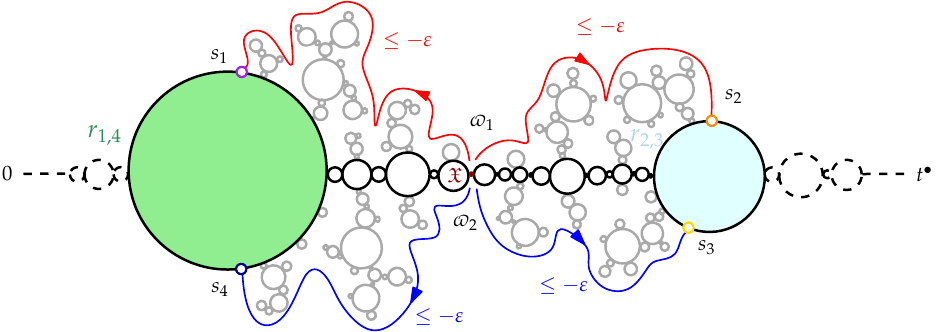}
 \caption{ Illustration of an $ \varepsilon$-trapped point $\mathfrak{X}$ (in dark red on the figure). The spine between $0$ and $t_\bullet$ is made of the black loops. When following the looptree from the point $\mathfrak{X}$  in one of the four directions, one encounters a (one-sided) minimal record time of $Z$ corresponding to a one-side minima below $Z_{\varpi_{1}}-  \varepsilon$ on two loops of the spine. \label{fig:trapped}}
 \end{center}
 \end{figure}
\noindent There exist $0 < s_1<{\varpi}_1 <  s_2 <  t_\bullet <  s_3 < {\varpi}_2 < s_4 < \sigma$  
 such that:
\begin{itemize}
\item $\max(Z_{s_1},Z_{s_2},Z_{s_3},Z_{s_4})<Z_{{\varpi}_1}-\eps.$
\item  $Z_{s_1}<Z_{t}$  for every $t\in(s_1,{\varpi}_1)$, \\
 $Z_{s_2}<Z_{t}$  for every $t\in({\varpi}_1,s_2)$, \\
  $Z_{s_3}<Z_{t}$ for every $t\in(s_3,{\varpi}_2)$\\ $Z_{s_4}<Z_{t}$ for every $t\in({\varpi}_2,s_4)$.
\item  The images of $s_1,s_4$ and $s_2,s_3$ belong to a same loop of the spine, i.e. there exists $r_{1,4}\prec  {\varpi}_1$  verifying that 
$$X_{s_1}<X_t \ \ \mbox{ for every } t \in(r_{1,4},s_{1}), \quad \text{ and } \quad X_{s_4}<X_t \ \ \mbox{ for every } t \in(r_{1,4},s_{4}),$$
and there exists $ {\varpi}_1\prec r_{2,3}\prec t_\bullet$  verifying that 
$$X_{s_2}<X_t \ \ \mbox{ for every } t \in ( r_{2,3},s_2), \quad \text{ and } \quad X_{s_3}<X_t \ \ \mbox{ for every } t \in (r_{2,3},s_3).$$
\end{itemize}

This section is devoted to the proof of the following uniform bound:
\begin{theo}[Polynomial tail for not being trapped]\label{prop:malo}
There exists $C,c>0$, such that:
$$ \mathbf{N}^\bullet\Big(\mathfrak{X}  \textnormal{ is } \varepsilon-\textnormal{trapped}~\Big|~ H_{t_\bullet}=h\Big)\geq 1-C\cdot  \Big(\frac{\eps}{h^{2(\alpha-1)}}\Big)^c, \quad \text{ for } h,\eps>0.$$
\end{theo}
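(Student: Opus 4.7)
The plan is to exploit the spine of the looptree from the root to $t_\bullet$, together with the stable-process description \eqref{one_point} of the labels $Y^\bullet$ along this spine and the Brownian-bridge decoration of its ancestor loops (Proposition~\ref{b_Brownian_Brigde} combined with the Markov property, Corollary~\ref{cutting_N}). Using the scaling property \eqref{eq:scaling:N:v}, the conditioning on $H_{t_\bullet}=h$ reduces essentially to a fixed reference scale with $\eps$ appearing via a power; the exponent $2(\alpha-1)$ in the statement is not critical and can be obtained as a (possibly non-optimal) consequence of any polynomial upper bound on $1-P$ after rescaling. Under this conditioning, $Y^\bullet$ is a symmetric $2(\alpha-1)$-stable Lévy process attaining its unique minimum $m$ at a non-jump time $\varpi$ by Lemma~\ref{lem:onePinch}, so $\varpi_1=\xi_{t_\bullet}(\varpi)$ is a pinch-point time with a unique partner $\varpi_2>\varpi_1$. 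Millar's classical post-minimum decomposition then expresses the pre-min and post-min portions of $Y^\bullet$ as conditionally independent copies of a $2(\alpha-1)$-stable process conditioned to stay positive, stopped at their respective durations, so that the two required blocking loops $r_{1,4}\prec\varpi_1$ and $\varpi_1\prec r_{2,3}\prec t_\bullet$ can be constructed independently on the two sides of $\varpi$.

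\textbf{The core Brownian-bridge estimate.} Iterating Corollary~\ref{cutting_N} along the spine and invoking Proposition~\ref{b_Brownian_Brigde}, conditionally on the spine and on $Y^\bullet$ each ancestor loop of size $\Delta_r$ carries an independent Brownian bridge $\sqrt{\Delta_r}\,\mathrm{b}_r$. Writing $\ell_r$ for the label at the top of such a loop, the minimum of $Z$ on the loop sits below $m-\eps$ with probability exactly $\exp(-2(\ell_r-m+\eps)^2/\Delta_r)$, which is bounded below by a universal constant $p=p(K)>0$ as soon as $\ell_r-m+\eps\leq K\sqrt{\Delta_r}$. The key geometric input is then that on each side of $\varpi$, the number of ancestor loops satisfying both $\sqrt{\Delta_r}\geq 2\eps$ and $\ell_r-m\leq K\sqrt{\Delta_r}$ is at least of order $\eps^{-\kappa}$ with probability $1-O(\eps^{c_0})$, for explicit constants $\kappa, c_0>0$. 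This density estimate uses the stable structure of $Y^\bullet$ near its argmin, via the Lévy-Itô description of the jump measure of the $2(\alpha-1)$-stable process conditioned to stay positive that appears in the post-min decomposition. With this in hand, a union bound over the conditionally independent Brownian bridges on these loops forces at least one of them to dip below $m-\eps$ with failure probability at most $(1-p)^{\eps^{-\kappa}}$, which is smaller than any polynomial in $\eps$.

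\textbf{Selecting the $s_i$ and main obstacle.} Once a blocking loop is located on each side of $\varpi$, the times $s_1, s_4$ (respectively $s_2, s_3$) are taken as the argmins of $Z$ on the two half-loops of $r_{1,4}$ (respectively $r_{2,3}$) lying on opposite sides of the spine passage: planarity and the orientation of the spine yield the ordering $s_1<\varpi_1<s_2<t_\bullet<s_3<\varpi_2<s_4$ automatically, and the record conditions $Z_{s_i}<Z_t$ for $t$ in the appropriate half-interval hold up to a routine one-level-deeper Brownian-bridge union bound that controls the sub-looptrees grafted inside each half-loop, at the cost of replacing $\eps$ by $2\eps$. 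The main obstacle is the density estimate of the previous paragraph: quantifying the number of ancestor loops with small $\ell_r-m$ relative to $\sqrt{\Delta_r}$ near $\varpi$ requires fine entrance-law, overshoot and creeping estimates for $2(\alpha-1)$-stable Lévy processes conditioned to stay positive, combined with the Lévy-Itô description of the jumps of $\xi_{t_\bullet}$ as functionals of the loop sizes $\Delta_r$. The time-reversal symmetry of Lemma~\ref{lem:invariancereroottime} via \eqref{symmetric} is then used to transfer the one-sided estimate to the other side with no loss.
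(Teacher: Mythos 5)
Your general setup is sound --- the spine decomposition, the $2(\alpha-1)$-stable description of $Y^\bullet$ along the spine, the Brownian bridges on the spine loops via Proposition~\ref{b_Brownian_Brigde}, the post-minimum decomposition yielding a stable process conditioned to stay positive, and the one-sided reduction via time-reversal --- all of these match the paper's architecture. However, the core probabilistic estimate has a genuine gap at precisely the step the paper spends the most effort on, namely the \emph{record} condition.

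Your ``core Brownian-bridge estimate'' computes the probability that the bridge on a single spine loop dips below $m-\varepsilon$. But a bridge dip is very far from producing an $\varepsilon$-trap: the definition requires $s_i$ to be a one-sided \emph{minimal record of the full process $Z$} over the whole interval from $s_i$ back to $\varpi_1$ (resp.\ forward to $t_\bullet$), and this interval contains every sub-looptree grafted onto the current loop, onto each intermediate spine loop, and onto the spine itself. Under $\mathbf{N}$ a grafted looptree satisfies $\mathbf{N}(\inf Z<-a)=\alpha(\alpha-1)/(2a^2)$ (Corollary~\ref{Z<-1} and Proposition~\ref{sec:stable-map-3}), and there are infinitely many of them grafted densely along each loop. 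Whether a dip on a loop survives as a record is governed by the Shepp covering theorem (Proposition~\ref{I_infty}): the set of visible record values on a loop is the range of a $(2-\alpha)$-stable subordinator, a closed set of zero Lebesgue measure that no ``one-level-deeper union bound'' over sub-looptrees reproduces. This is exactly why the paper introduces the variables $\mathscr{R}_r$ and $\mathscr{I}_r$ (see \eqref{eq:def:R:r:-} and \eqref{eq:def:I:r:-}), and why even establishing $\mathbb{Q}_1(\mathscr{I}<-x)>0$ in Lemma~\ref{lem:tech:Q}(i) already requires Shepp's argument; calling the treatment of the grafted looptrees routine badly underestimates the difficulty.

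A second, related problem is the proposed union bound over $\varepsilon^{-\kappa}$ candidate loops. Even granting each loop a bounded-below probability of a bridge dip, the event ``loop $r$ produces an $\varepsilon$-trap'' depends on the running infimum $\inf_{\varpi\leq s<r}\big(Y^\bullet_{s-}+\mathscr{R}_s\big)$ over \emph{all intermediate loops}, so a single deep sub-looptree near $\varpi$ spoils every candidate $r$ farther out: the failure events for distinct $r$ are strongly positively correlated, and the factor $(1-p)^{\varepsilon^{-\kappa}}$ does not follow. The paper avoids this entirely by iterating over dyadic scales: at each scale the stopping time $T_{-1}$ for the running infimum $\underline{\mathscr{R}}$ produces a $1$-trapping time with probability bounded away from zero (Lemma~\ref{lem:stoptrap}), and a logarithmic number of such scale iterations (Proposition~\ref{prop:eq:P:C:H}, with tail control from Lemma~\ref{Lem:bad:uni}) delivers the polynomial bound. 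This one-stopping-time-per-scale construction is structurally different from --- and cannot be replaced by --- a single-scale density-plus-union-bound argument; the proposal would need to be rebuilt along those lines.
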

We make a couple of remarks. First, the conditional version of $\mathbf{N}^\bullet$ with respect to $H_{t_\bullet}$ is well-defined by scaling and disintegration. Second, again by scaling, it suffices to establish the estimate for $ h=1$ (or $ \varepsilon=1$), and third, although our application will require the existence of the four times $s_1,s_2,s_3,s_4$, it is sufficient by Lemma \ref{lem:invariancereroottime} to prove the analogous  estimate for the one-sided event where we only require the existence of the times $s_2,s_3$ in the definition above, in which case we say that \textbf{$ \mathfrak{X}$ is $ \varepsilon$-trapped from the right}. Before embarking on the proof, let us sketch the underlying idea: we already proved in Proposition \ref{prop:records-loops} that one-sided minimal record times of the process $Z$ may happen on loops so that the reader should be convinced that, at least, the event in the above theorem has a positive probability to happen. Actually for each scale $ \varepsilon$, the point $\mathfrak{X}$  has a probability bounded away from $0$ to be $ \varepsilon$-trapped at this scale, i.e., with times $s_1,s_2, s_3 , s_4$ satisfying $Z_{ \varpi_1} - Z_{s_i}  \in [\varepsilon, 2 \varepsilon]$. If scales were independent, we would naturally get that the probability that $ \mathfrak{X}$ is not trapped at all scales between  $\varepsilon$ and $1$ should decay at least as $ \varepsilon^c$ for some $c>0$, which is the statement of the theorem. The rest of this section will implement this heuristic rigorously since the behaviors at different scales are  not independent.

We divide the proof of Theorem \ref{prop:malo} in three parts. In Section \ref{sec:spinal}, we study the law of the spine  towards $t_\bullet$, that is informally the labeled-loops containing points of $\Pi_d(\mathrm{Branch}(0,t_\bullet))$, and of the labeled looptrees that are attached on the left and right sides of it, this will lead to a spinal decomposition -- see \cite{Spine} for similar results in Brownian geometry and \cite[Section 6]{RRO:2024} for an analog decomposition in the setting of Markov processes indexed by Lévy trees. In Section \ref{sec:comparison:ideal}, we compare the distribution of the spine with an ideal model in which the analog of  Theorem \ref{prop:malo} is easier to establish.  Finally in Section \ref{sec:technical}, we prove the technical lemmas used along the way.

\subsection{Spinal decomposition}\label{sec:spinal}
In this section we work under $\mathbf{N}^\bullet$. Our first goal is to evaluate the probability that  $\mathfrak{X}$  is $ \varepsilon$-trapped on the right \textit{after conditioning on the process} $( Y^\bullet_s:=Z_{\xi_{t_\bullet}(s)}, s \in [0, H_{t_{\bullet}}] )$ of the labels along the spine. Informally, we first describe  the law of the spine (i.e. of the loops and their labels connecting $0$ and $t_\bullet$ in the looptree) conditionally on $Y^\bullet$ and then argue that the labeled looptrees attached to this spine are, after subtracting the label of their root, independent and distributed according to  the standard excursion measure $ \mathbf{N}$. To proceed formally, we need some notation which we illustrate in Figure \ref{fig:spinou}.\\

By Lemma~\ref{Lem-cut-jump}, if $\Delta Y^\bullet_r\neq 0$ then $\xi_{t_\bullet}(r-)$ is a jumping time for $X$ and this defines a one-to-one correspondence between $\{r\geq 0:~\Delta Y^\bullet_r\neq 0\}$ and 
$\{r\in [0,\sigma]:~r\preceq t_\bullet\text{ and } \Delta X_r>0\} $; here we use that $\Delta X_{t_\bullet}=0$. Now we write:
$$\check{ \mathfrak{S}}_{r}:=X_{\xi_{t_\bullet}(r-)}-X_{\xi_{t_\bullet}(r)}\text{ and } \mathfrak{S}_{r}:= X_{\xi_{t_\bullet}(r)}-X_{\xi_{t_\bullet}(r-)-},$$
and remark that $\Delta X_{\xi_{t_\bullet}(r-)}=\check{\mathfrak{S}}_r+\mathfrak{S}_r$. We also introduce the associated labels along the corresponding loop:
$$ \check{B}_{s}^{(r)}:=Z_{\mathrm{f}_{\xi_{t_\bullet}(r-)}(\frac{s}{\check{\mathfrak{S}}_r+\mathfrak{S}_r})},\quad s\in[0,\check{\mathfrak{S}}_r],~~~ \text{ and }~~~ B_{s}^{(r)}:=Z_{\mathrm{f}_{\xi_{t_\bullet}(r-)}(\frac{\check{\mathfrak{S}}_r+\mathfrak{S}_r-s}{\check{\mathfrak{S}}_r+\mathfrak{S}_r})},\quad s\in[0,\mathfrak{S}_r]. $$
By convention, we also let $\check{B}_{s}^{(r)}$ (resp. $B_{s}^{(r)}$) equal to $Z_{\mathrm{f}_{\xi_{t_\bullet}(r-)}(\frac{\check{\mathfrak{S}}_{r}}{\check{\mathfrak{S}}_r+\mathfrak{S}_{r}})}$, for $s\geq \check{\mathfrak{S}}_r$ (resp. $s\geq \mathfrak{S}_r$). These describe what is happening on the loops connecting $0$ and $t_\bullet$. We also need to keep track of the labeled looptrees glued along this spine. In order to encapsulate this information,  for every $r$ with $\Delta Y^\bullet_r\neq 0$,  consider $(u_{i},v_{i})_{i\geq 1}$ the connected components of  the open set $$\Big\{t\in[\mathrm{f}_{\xi_{t_\bullet}(r-)}(0), \mathrm{f}_{\xi_{t_\bullet}(r-)}(1) ]:\: X_{t}>I_{\xi_{t_\bullet}(r-),t}\Big\}$$ and  
introduce the excursion processes
\[X_{s}^{i}:=X_{(u_{i}+s)\wedge v_i}-X_{u_{i}}\:\:\:\:\:\text{ and }\:\:\:\:\:Z_{s}^{i}:=Z_{(u_{i}+s)\wedge v_i}-Z_{u_{i}},\qquad s\geq 0.\]
Finally, we consider the point measures
\begin{equation}\label{eq:P_r}
\check{\mathcal{P}}_r:=\sum \limits_{i\geq 1:~u_{i}<\xi_{t_\bullet}(r)} \delta_{X_{\xi_{t_\bullet}(r-)}-X_{u_i},X^{i},Z^{i}}\:\:\:\:\text{ and }\:\:\:\:\mathcal{P}_r:=\sum \limits_{i\geq 1:~u_{i}>\xi_{t_\bullet}(r)} \delta_{X_{u_i}-X_{\xi_{t_\bullet}(r-)-},X^{i},Z^{i}}.
\end{equation}
We stress that the excursion $(X^{i},Z^{i})$ associated with the interval $[u_i,v_i]$ containing $t_\bullet$ is not taken into account in $\check{\mathcal{P}}_r$ nor in $\mathcal{P}_r$. For definiteness, if $r$ is not a jumping time we simply take $\check{\mathcal{P}}_r=\mathcal{P}_r=0$. Recall from Section \ref{sec:twopoints} the notation $ \mathbb{P}^{(r)}_{a \to b}$ for the law of a Brownian bridge going from $a$ to $b$ with lifetime $r$.

\begin{figure}[!h]
 \begin{center}
 \includegraphics[width=13cm]{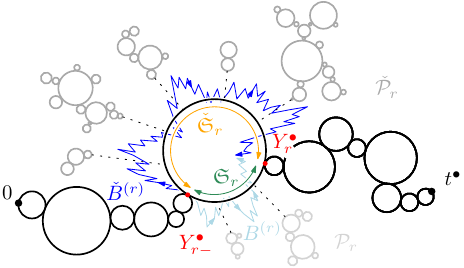}
 \caption{Illustration of Proposition \ref{prop:spine}: construction of the spine (and the dangling looptrees) from the process $Y^{\bullet}$. \label{fig:spinou}}
 \end{center}
 \end{figure}
\begin{prop}[Spinal decomposition]\label{prop:spine}
Under $\mathbf{N}^{\bullet}$ and conditionally on $r\mapsto Y_r^\bullet$, the collection of random variables:
$$\Big(\check{\mathfrak{S}}_r, \mathfrak{S}_r, \check{B}^{(r)}, B^{(r)}, \check{\mathcal{P}}_{r}, \mathcal{P}_r \Big), \quad \text{ for every } r \text{ with } \Delta Y_r^\bullet\neq 0,$$
 are independent and their conditional distribution can be determined  as follows. First, the law of $\big(\check{\mathfrak{S}}_{r},\mathfrak{S}_{r}\big)$ is:
$$\mathbbm{1}_{\ell_1,\ell_2>0}C(\alpha) \cdot |\Delta Y_r^\bullet|^{2\alpha-1} \frac{(\ell_1+\ell_2)^{-\alpha-\frac{1}{2}}}{\sqrt{2\pi\ell_1 \ell_2} }\exp\left(-(\Delta Y_r^\bullet)^{2}\frac{\ell_1+\ell_2}{2\ell_1\ell_2}\right) \d \ell_1 \d \ell_2,$$
where $C(\alpha):=2^{\alpha-1}(2\alpha-1)/\Gamma(\alpha)$ is a normalization constant.   Then conditionally on the family $\big((Y^\bullet_r,\check{\mathfrak{S}}_r,\mathfrak{S}_r):~r\geq 0\big)$, all the random variables $\big((\check{B}^{(r)}, B^{(r)}, \check{\mathcal{P}}_{r}, \mathcal{P}_r):~r\geq 0\big)$ are independent  and for every $r$ jumping time of $Y^\bullet$ we have:
\\
\\
$\bullet$ The law of $\big( \check{B}^{(r)}, B^{(r)}\big)$ is  $\P_{Y^\bullet_{r-}\to Y^\bullet_{r}}^{(\check{\mathfrak{S}}_r)}\otimes \P_{Y^\bullet_{r-}\to Y^\bullet_{r}}^{(\mathfrak{S}_r)}\big(\d \check{B}, \d B\big);$\\
\\
$\bullet$ The measures $\big( \check{\mathcal{P}}_{r}, \mathcal{P}_r\big)$ are two independent Poisson point measures, also independent of $(\check{B}_r,B_r)$, with respective intensities:
$$\mathbbm{1}_{[0,\check{\mathfrak{S}}_r]}(t) \d t \mathbf{N}(\d X, \d Z)  \text{ and } \mathbbm{1}_{[0,\mathfrak{S}_r]}(t) \d t \mathbf{N}(\d X, \d Z).$$ 
\end{prop}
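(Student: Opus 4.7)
The plan is to derive the spinal decomposition in two stages: first obtain the intensity of the ``marked spine" (sizes $(\check{\mathfrak{S}}_r, \mathfrak{S}_r)$ only) from the It\^o--L\'evy structure of $X$ under $ \mathbf{N}^\bullet$, then add the Brownian bridges by conditioning. The starting point is Lemma \ref{Lem-cut-jump}: the jumping times of $Y^\bullet = Z \circ \xi_{t_\bullet}$ are in bijection with the loops the spine passes through, i.e.\ with the $t \preceq t_\bullet$ such that $\Delta X_t > 0$. For such a loop, $\check{\mathfrak{S}}_r + \mathfrak{S}_r = \Delta X_{\xi_{t_\bullet}(r-)}$ is the loop length and $\check{\mathfrak{S}}_r = x_{\xi_{t_\bullet}(r-),t_\bullet}$ is the position at which the spine exits.

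First, I would derive the law of the marked spine $\{(r,\check{\mathfrak{S}}_r,\mathfrak{S}_r)\}$. By the It\^o--L\'evy representation of $X$ under $ \mathbf{N}$ and the disintegration $ \mathbf{N}^\bullet( \mathrm{d}X,\mathrm{d}t_\bullet) = \mathbf{1}_{t_\bullet \in [0,\sigma]} \mathrm{d}t_\bullet\, \mathbf{N}( \mathrm{d}X)$, integrating $ \mathrm{d}t_\bullet$ over the descendance of each jump of $X$ size-biases the L\'evy measure by a factor proportional to the local ``mass" of the sub-looptree below this jump; a standard Bismut-type argument, together with the fact that conditionally on being visited, the quotient $x_{t,t_\bullet}/\Delta X_t$ is uniform on $[0,1]$ (this is a consequence of the exchangeability of the jumps of $X$ on a loop under conditioning), gives that at each spine height $r$ the pair $(\check{\mathfrak{S}}_r,\mathfrak{S}_r)$ is a Poisson mark with ``intensity" $\mathrm{d}r \otimes (\Gamma(-\alpha))^{-1}(\ell_1+\ell_2)^{-\alpha-1}\mathrm{d}\ell_1\mathrm{d}\ell_2$ on $\mathbb{R}_+ \times (0,\infty)^2$. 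This is the spinal decomposition for the $\alpha$-stable L\'evy tree of Duquesne--Le Gall, restated in looptree coordinates.

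Second, I would add the labels. By Proposition \ref{b_Brownian_Brigde}, conditionally on $X$ each loop of length $\check{\mathfrak{S}}_r + \mathfrak{S}_r$ is decorated by an independent Brownian bridge of that duration, starting and ending at the common value $Y^\bullet_{r-}$; the value at position $\check{\mathfrak{S}}_r$ is $Y^\bullet_r$, so $\Delta Y^\bullet_r$ has the Gaussian density
\[
\phi_{\ell_1,\ell_2}(y) = \sqrt{\tfrac{\ell_1+\ell_2}{2\pi\ell_1\ell_2}}\exp\!\Big(-\tfrac{y^2(\ell_1+\ell_2)}{2\ell_1\ell_2}\Big).
\]
Conditionally on the endpoints, the two arcs of the bridge are independent Brownian bridges of lengths $\check{\mathfrak{S}}_r$ and $\mathfrak{S}_r$ between $Y^\bullet_{r-}$ and $Y^\bullet_r$, which gives the claimed $\check B^{(r)}, B^{(r)}$. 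Combining this with the intensity above and Bayes' formula, the conditional density of $(\check{\mathfrak{S}}_r,\mathfrak{S}_r)$ given $\Delta Y^\bullet_r = y$ is proportional to $(\ell_1+\ell_2)^{-\alpha-1}\phi_{\ell_1,\ell_2}(y)$; the marginal in $y$ must match the L\'evy measure $c|y|^{-(2\alpha-1)}\mathrm{d}y$ of the $2(\alpha-1)$-stable symmetric process $Y^\bullet$ from \eqref{one_point}, forcing the prefactor to be $C(\alpha)|y|^{2\alpha-1}$, the explicit constant being recovered by a direct Laplace computation against the L\'evy--Khintchine exponent $\alpha 2^{1-\alpha}\Gamma(\alpha)^2/\Gamma(2\alpha)\cdot|\lambda|^{2(\alpha-1)}$ given in \eqref{one_point}.

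Finally, for the Poisson measures $\check{\mathcal{P}}_r, \mathcal{P}_r$, I would apply the Markov property of Corollary \ref{cutting_N} at the stopping time $\xi_{t_\bullet}(r-)$ and at the exit time from the loop: on each side of each spine loop (and away from the sub-excursion containing $t_\bullet$, which is itself re-analyzed at the next level by induction), the sub-excursions of $(X,Z)$ above the running infimum form independent Poisson measures of the claimed intensities, and this independence is preserved when conditioning on $Y^\bullet$ since $Y^\bullet$ depends only on the spine data and the Brownian bridges on spine loops, both independent of the grafted sub-excursions. The main obstacle is a clean justification of the size-biasing step giving the intensity $(\ell_1+\ell_2)^{-\alpha-1}\mathrm{d}\ell_1\mathrm{d}\ell_2$, including the combinatorial factor $\Delta$ that arises because large loops have a correspondingly larger ``leaf mass" below them and are therefore preferentially visited by the uniformly chosen $t_\bullet$. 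The constant $C(\alpha)$ extraction is a tedious but elementary Gaussian/Gamma computation that I expect to carry out explicitly once the structural part is in place.
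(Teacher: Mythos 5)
Your proposal follows essentially the same route as the paper's proof: the jump-measure of the marked spine is the Duquesne--Le Gall spinal decomposition \cite[Proposition 3.1.3]{DLG02} translated into looptree coordinates, the bridges on the spine loops come from Proposition \ref{b_Brownian_Brigde}, the conditional density of $(\check{\mathfrak{S}}_r,\mathfrak{S}_r)$ given $\Delta Y^\bullet_r$ follows by Bayes' formula (with the normalization in $y$ matching the L\'evy measure of the symmetric $2(\alpha-1)$-stable process of \eqref{one_point}, which after integrating out $y$ gives exactly the intensity $(\ell_1+\ell_2)^{-\alpha-1}/\Gamma(-\alpha)$ appearing in the paper's $\mathfrak{M}$), and the grafted data come from the Markov property of Corollary \ref{cutting_N}. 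In that sense the structural programme is correct.

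There is, however, one concrete step that would not go through as written. You propose to ``apply the Markov property of Corollary \ref{cutting_N} at the stopping time $\xi_{t_\bullet}(r-)$ and at the exit time from the loop.'' But $\xi_{t_\bullet}(r-)$ is \emph{not} an $(\mathcal{F}_t)_{t\geq 0}$-stopping time: its definition \eqref{xi:sup} involves the whole trajectory of $H$ up to $t_\bullet$, and $t_\bullet$ itself is a random time that depends on all of $X$ under $\mathbf{N}^\bullet$. Even after conditioning on the value of $t_\bullet=t$, the map $r\mapsto \xi_{t}(r-)$ selects ``last visit'' times which look into the \emph{past} of $t$ and are anticipating for the natural filtration. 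Corollary \ref{cutting_N} is therefore not directly applicable loop by loop. The paper circumvents this by applying the Markov property a single time, at $t_\bullet$ (legitimate because $\mathbf{N}^\bullet$ disintegrates as $\ind_{t_\bullet\in[0,\sigma]}\,\mathrm{d}t_\bullet\,\mathbf{N}$, so one applies Corollary \ref{cutting_N} at the deterministic time $t$ and then integrates in $t_\bullet$), which produces $\mathcal{N}^{[t_\bullet]}$ and hence all the $\mathcal{P}_r$'s at once via the identity $X_{t_\bullet}=\sum_r \mathfrak{S}_r$ from property {\hypersetup{linkcolor=black}\hyperlink{prop:A:3}{$(A_3)$}}; the measures $\check{\mathcal{P}}_r$ on the other side of the spine are then obtained by the re-rooting invariance \eqref{re-rooting}, not by a second Markov argument. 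You should replace the loop-by-loop Markov step with this global argument plus time-reversal, otherwise the claimed independences are not justified. The Bismut-type derivation of the spine intensity is also only sketched, but since you correctly identify it as the \cite[Proposition 3.1.3]{DLG02} statement in disguise, citing that reference (as the paper does) is enough.
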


\begin{proof}
We start by introducing the measure:
\begin{equation}\label{M:bullet:mesure:spine}
\sum \limits_{r\geq 0,~\Delta Y^\bullet_r\neq 0}\delta_{r; ~\check{\mathfrak{S}}_r;~ \mathfrak{S}_r;~ \Delta Y^\bullet_r}. 
\end{equation}
In order to describe the law of \eqref{M:bullet:mesure:spine}, we let $\mathfrak{M}$ be the distribution of   a Poisson point measure, say $\mathscr{M}(\d r, \d \ell_1, \d \ell_2, \d y)$, with intensity:
$$\mathbbm{1}_{r,\ell_1,\ell_2> 0}\d r \,\d \ell_1\, \d \ell_2\, \d y \frac{1}{\Gamma(-\alpha)} \frac{(\ell_1+\ell_2)^{-\alpha-\frac{1}{2}}}{\sqrt{2\pi \ell_1 \ell_2}}\exp(-y^{2}\frac{\ell_1+\ell_2}{2\ell_1\ell_2}),$$
and for every $a>0$, we write $\mathfrak{M}_a$ for  the distribution of the restriction 
$$\mathscr{M}_a(\d r, \d \ell_1, \d \ell_2, \d y):=\mathbbm{1}_{[0,a]}(r)\mathscr{M}(\d r, \d \ell_1, \d \ell_2, \d y).$$ 
Next, we notice that by Proposition \ref{b_Brownian_Brigde}, conditionally on  the point measure:
$$\sum \limits_{r\geq 0,~\Delta Y^\bullet_r\neq 0}\delta_{r; ~\check{\mathfrak{S}}_r;~ \mathfrak{S}_r}$$
the random variables  $ (\Delta Y^\bullet_r:~s\geq 0 \text{ with }~\Delta Y^\bullet_s\neq 0)$ are independent  and the distribution of  $\Delta Y^\bullet_r$ is a centered Gaussian random variable with variance $\check{\mathfrak{S}}_r \mathfrak{S}_r/(\check{\mathfrak{S}}_r +\mathfrak{S}_r) $. Furthermore, the point measure $\sum \limits_{r\geq 0,~\Delta Y^\bullet_r\neq 0}\delta_{r; ~\check{\mathfrak{S}}_r;~ \mathfrak{S}_r}$ has already been studied in the context of Lévy trees  and is distributed as  $$\int_{0}^{\infty} \d a\int_{y\in \mathbb{R}} \mathscr{M}_a(\d r, \d \ell_1, \d \ell_2, \d y),$$
we refer to \cite[Proposition 3.1.3]{DLG02} for a proof. Putting all together, we have obtained that the distribution of  \eqref{M:bullet:mesure:spine} under $\mathbf{N}^\bullet$ is precisely $\int_{0}^{\infty}\d a ~ \mathfrak{M}_a$. Consequently, if we condition on $(r, \Delta Y^{\bullet}_r)_{r\geq 0}$, the random variables $(\check{\mathfrak{S}}_r,\mathfrak{S}_r)_{r\geq 0}$ become independent with common distribution:
$$\mathbbm{1}_{\ell_1,\ell_2>0}C(\alpha) \cdot |\Delta Y_r^\bullet|^{2\alpha-1} \frac{(\ell_1+\ell_2)^{-\alpha-\frac{1}{2}}}{\sqrt{2\pi\ell_1 \ell_2} }\exp(-(\Delta Y_r^\bullet)^{2}\frac{\ell_1+\ell_2}{2\ell_1\ell_2}) \d \ell_1 \d \ell_2,$$
where:
$$C(\alpha)^{-1}:=\int_{\ell_1,\ell_2>0} \d \ell_1 \d \ell_2 \frac{(\ell_1+\ell_2)^{-\alpha-\frac{1}{2}}}{\sqrt{2\pi\ell_1 \ell_2} }\exp(-\frac{\ell_1+\ell_2}{2\ell_1\ell_2}).$$
A direct computation then gives the explicit expression $C(\alpha):=2^{\alpha-1}(2\alpha-1)/\Gamma(\alpha)$. 
Moreover, since $Y^{\bullet}$ is a L\'evy process without Brownian part, we can recover  the entire process  $Y^{\bullet}$ from the structure of its jumps $(r, \Delta Y^{\bullet}_r)_{r\geq 0}$. This gives us the first statement of the proposition. Let us now explain how to obtain the second part.  First we remark that the collection $(\mathcal{P}_r:~r\geq 0)$ can be recovered directly from the sequence $( \mathfrak{S}_{r}:~r\geq 0)$ and the point measure $\mathcal{N}^{[t_\bullet]}$ defined in~\eqref{N:point:measure}. Now we can apply the Markov property, as stated in Corollary   \ref{cutting_N}, to see that  conditionally on $X_{t_\bullet}$, the measure $\mathcal{N}^{[t_\bullet]}$ is a Poisson point measure with intensity $\mathbbm{1}_{[0,X_{t_\bullet}]}(p) \mathrm{d}p\:\mathbf{N}( \mathrm{d}X  \mathrm{d}Z)$ independent of $\big((Y^\bullet_r,\check{\mathfrak{S}}_r, \mathfrak{S}_r, \check{B}^{(r)}, B^{(r)}, \check{\mathcal{P}}_{r}):~r\geq 0\big) $. This implies the desired result for $(\mathcal{P}_r:~r\geq 0)$, since by property ({\hypersetup{linkcolor=black}\hyperlink{prop:A:3}{$A_3$}}) above Proposition \ref{topologie_loop_tree} we have $X_{t_\bullet}=\sum_{r\geq 0}  \mathfrak{S}_{r}$. Furthermore, by the definition of $\mathbf{N}^\bullet$ given in \eqref{def:N:bullet}  and the re-rooting property \eqref{re-rooting}, we obtain the same property for $(\check{\mathcal{P}}_r:~r\geq 0)$ -- replacing $\mathfrak{S}_r$ by $\check{\mathfrak{S}}_r$. Therefore to conclude it remains to show that, conditionally on $\big((Y^{\bullet}_r,\check{\mathfrak{S}}_r,\mathfrak{S}_r):~r\geq 0\big)$, 
the processes  $\big( (\check{B}^{(r)}, B^{{(r)}}):\:r\geq 0\big)$ are independent and the distribution of  $\big( \check{B}^{(r)},B^{(r)}\big)$ is 
$$\P_{Y^\bullet_{r-}\to Y^\bullet_{r}}^{(\check{\mathfrak{S}}_r)}\otimes \P_{Y^\bullet_{r-}\to Y^\bullet_{r}}^{(\mathfrak{S}_r)}\big(\d \check{B}, \d B\big), $$
but this is a direct consequence of  Proposition \ref{b_Brownian_Brigde}. 
\end{proof}

The idea to check if $\mathfrak{X} = \Pi_d(\varpi_1)= \Pi_d(\varpi_2)$ is $ \varepsilon$-trapped from the right is to explore the spine from $\mathfrak{X}$ to $\Pi_{d}( t_{\bullet})$ and for each loop encountered along the way, to check whether we can find two times $s_2,s_3$ on that loop satisfying the desired  condition, see also  Figure \ref{fig:trapped} for an illustration. In particular, if those times exist, their labels must be smaller than the labels encountered on all the looptrees attached on the respective sides of  the loops discovered so far. As in Section \ref{sec:recordsloops} this is conveniently encoded for each jump time $r \geq \varpi$ of $Y^{\bullet}$ by recording the minimal label attained by the looptrees attached on that loop. In order to formalize this, let us rewrite the point measures of \eqref{eq:P_r} in the form $\mathcal{P}_{r}=:\sum_{i\in \mathcal{I}_r}\delta_{t_i,X^i,Z^i}$, and $\check{\mathcal{P}}_r=:\sum_{i\in \check{\mathcal{I}}_r}\delta_{\check{t}_i,\check{X}^i,\check{Z}^i}$, then we set
 \begin{equation}\label{eq:def:R:r:-}\mathscr{R}_r:=\inf \big\{B_{t_i}^{(r)}+\inf Z^{i} :~ i\in \mathcal{I}_r \big\} \wedge \inf \big\{\check{B}_{\check t_i}^{(r)}+\inf \check{Z}^{i} :~ i\in \check{\mathcal{I}}_r \big\} - Y_{r-}^\bullet,  \end{equation}
 which is the (shifted) minimal label attained by a looptree attached to that loop,  as well as
  \begin{eqnarray}\label{eq:def:I:r:-}
\mathscr{I}_r&:=& \max\Big\{ \inf\big\{B_t^{(r)} - Y^{\bullet}_{r-}:~ \text{ s.t.  }B_{t_i}^{(r)}+\inf Z^{i}> B_{t}^{(r)}, \ \forall i\in \mathcal{I}_r \text{ with } 0 \leq t_i<t \big\},\nonumber \\
&& \qquad \inf\big\{\check{B}_t^{(r)} - Y^{\bullet}_{r-}:~ \text{ s.t. }\check{B}_{\check{t}_i}^{(r)}+\inf \check{Z}^{i}> \check{B}_{t}^{(r)}, \ \forall i\in \check{\mathcal{I}}_r \text{ with } 0 \leq \check{t}_i<t \big\}\Big\}
\end{eqnarray}
which corresponds to the smallest possible minimal record on each side of the loop, only taking into account the looptrees attached to that loop. By convention, if $\Delta Y^\bullet_r=0$, we set $(\mathscr{R}_r, \mathscr{I}_r):=(0,0)$.

\begin{figure}[!h]
 \begin{center}
 \includegraphics[width=14cm]{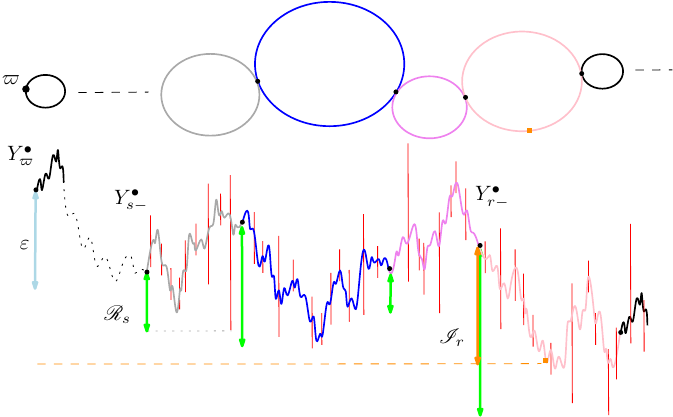}
 \caption{Illustration of the definition of the random variables $ \mathscr{R}_r$ and $ \mathscr{I}_r$. Here, only one side of the loops is displayed for visibility. The label on one side of the spine is obtained by concatenating the shifted process $B^{(r)}$ (in different colors above). The red slits correspond to the range of $Z$-values reached by the looptrees attached to the spine as in Figure \ref{fig:shepp}. The maximal negative displacements of those looptrees over a single loop is recorded in the random variables $ \mathscr{R}_\cdot$ and  are displayed by thicker green slits. If  \eqref{eq:trapped} holds, there is a point a the loop corresponding to a new minimal record of $Z$ (visible from the left, not blocked by the green slits) with label below $Y^{\bullet}_{\varpi} - \varepsilon$. A corresponding point exists on the otherside of the spine, thus trapping $\mathfrak{X}$ at level $  \varepsilon$.\label{fig:easytosee}}
 \end{center}
 \end{figure}
 
 The key  observation, being that if $ r > \varpi$ is a jump time of $ Y^{\bullet}$ satisfying
  \begin{eqnarray} \label{eq:trapped} Y_{r_{-}}^{\bullet} + \mathscr{I}_{r} \leq \inf_{ \varpi \leq s <r}\big( Y_{s_{-}}^{\bullet} + \mathscr{R}_{s} \big)\leq Y^{\bullet}_{\varpi} - \varepsilon,  \end{eqnarray}
then the loop corresponding to time $r$  supports two times $s_{2},s_{3}$ trapping the point $\mathfrak{X}$ at level $\varepsilon$. We will thus call such time $r$ an \textbf{$ \varepsilon$-trapping time}.  Notice that this is not an equivalence, since we do not split the contribution of each side in the definition of $ \mathscr{R}_{r}$.  See Figures \ref{fig:easytosee} and \ref{fig:exc-deco} for an illustration of these definitions.
 
   \begin{figure}[!h]
 \begin{center}
 \includegraphics[width=13cm]{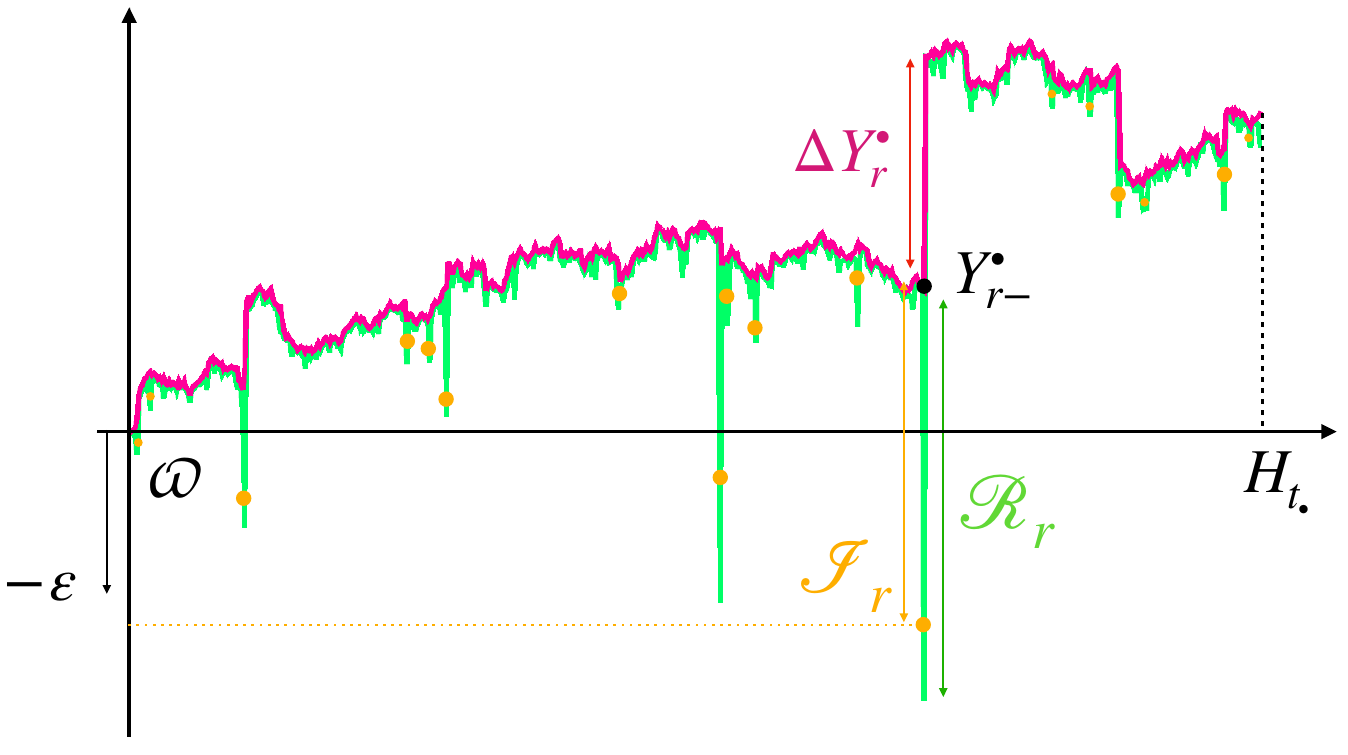}
\caption{Companion illustration to Figure \ref{fig:easytosee} where the contributions of each loop have been regrouped: The process $ Y^{\bullet}$ is in pink, the green sticks attached to each of its jumps correspond to the minimal label attained by looptrees attached to the corresponding loop of the spine, and the orange points correspond to the values $Y_{r-}^{\bullet} + \mathscr{I}_{r}$. If such an orange point is not shadowed by the previous green sticks (as illustrated by the horizontal orange dotted line above), then one can trap the point $ \mathfrak{X}$ using that loop. \label{fig:exc-deco}}
 \end{center}
 \end{figure}
 
 Thanks to Proposition \ref{prop:spine}, we see that, conditionally on $Y^{\bullet}$ the random variables $(\mathscr{R}_{r}, \mathscr{I}_{r})_{r \geq 0}$ are independent. We now formally describe the law of $( \mathscr{R}_{r}, \mathscr{I}_{r})$ given $ \Delta Y^{\bullet}_{r} $. In this direction, fix $a\in \mathbb{R}\setminus \{0\}$ and, under $\mathbb{P}_{0\to a}^{(\ell)}$ and independently of $B$,  consider a Poisson point measure $\mathcal{P}(\d t, \d X, \d Z)$ with intensity $\mathbbm{1}_{[0,\ell]}(t) \d t \mathbf{N}(\d X, \d Z).$ We also introduce the measure
\begin{align}\label{eq:Q:a:b}
\mathbb{Q}_{a}&\big(\d (\check{B}, \check{\mathcal{P}}), \d (B, \mathcal{P}) \big) \nonumber\\
&=C(\alpha) \cdot |b-a|^{2\alpha-1} \int_{0}^{\infty} \mathrm{d} \ell \int_{0}^{1} \d u ~ \frac{\ell^{-\alpha-\frac{1}{2}}}{\sqrt{2\pi u(1-u)} }\exp(-\frac{(b-a)^{2}}{u(1-u)\ell}) \P_{0\to a}^{(u\ell)}\otimes \P_{0\to a}^{((1-u)\ell)}.
\end{align}
By convention, we take $\mathbb{Q}_{0}:=\P^{(0)}_{0\to 0}\otimes \P_{0\to 0}^{(0)}$,   i.e.  $B$ and $\check{B}$ are constant processes equal to $0$ with $0$ lifetime and  $\mathcal{P}=\check{\mathcal{P}}=0$.  Under $\mathbb{Q}_a$, we define $\mathscr{R}$ and $\mathscr{I}$ replacing in \eqref{eq:def:R:r:-} and \eqref{eq:def:I:r:-} the random variables  $\big( (\check{B}^{(r)},\check{\mathcal{P}}_r) , (B^{(r)},\mathcal{P}_r)\big)$  by $\big( (\check{B},\check{\mathcal{P}}) , (B,\mathcal{P})\big)$ and $Y_r^\bullet$ by $0$.
We consider these measures because, by Proposition \ref{prop:spine}, under  $\mathbf{N}^{\bullet}$ and  conditionally on $\big(Y^\bullet_r:~r\geq 0\big)$,  for every $r\geq 0$,  the pair $\big(\mathscr{R}_r,\mathscr{I}_r\big)$ is distributed as $\big(\mathscr{R},\mathscr{I}\big)$ under $\mathbb{Q}_{\Delta Y^\bullet_{r}}$. We also  stress that that by scaling, for $a\neq 0$, the pair
\begin{equation}\label{scaling:a:Q}
\Big((|a|\cdot \mathscr{R},|a|\cdot \mathscr{I}):~\text{ under } \mathbb{Q}_{\mathrm{sign}(a)}\Big)\overset{(d)}{=} \Big((\mathscr{R}, \mathscr{I}):~\text{ under } \mathbb{Q}_{a}\Big)
\end{equation}
We conclude this section with some estimates under $\mathbb{Q}_a$, for $a\neq 0$. 

 \begin{lem}\label{lem:tech:Q}
The following properties hold:
\begin{itemize}
\item[$\mathrm{(i)}$] For every $x>0$, we have $\mathbb{Q}_{1}\big( \mathscr{I}<-x\big)>0$ and $\mathbb{Q}_{-1}\big( \mathscr{I}<-x\big)>0$.
\item[$\mathrm{(ii)}$]  There exists $C_{\mathbb{Q}}>0$, such that for every $x>0$ we have:
$$\mathbb{Q}_{\pm 1}\big( \mathscr{R}<-x\big)\leq C_{\mathbb{Q}}  \cdot q_\alpha\Big(\frac{1}{x}\Big),$$
where $q_\alpha:\mathbb{R}_+\to \mathbb{R}_+$ is the function
\begin{equation*}
 q_\alpha(y)= 
\begin{cases}
y^{2\alpha-1} &\text{if }\alpha\in(1,3/2),\\
y^{2}\cdot \log\big(2+y \big) &\text{if }\alpha=3/2,\\
y^{2}&\text{if }\alpha\in(3/2,2).
\end{cases}
\end{equation*}
\end{itemize}
\end{lem}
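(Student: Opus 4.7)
Set $C_\alpha := \alpha(\alpha-1)/2$, so that by Corollary \ref{Z<-1}, Proposition \ref{sec:stable-map-3}, and the symmetry of $Z$ conditionally on $X$, one has $\mathbf{N}(\inf Z<-r)=C_\alpha/r^2$ for every $r>0$. Under $\mathbb{Q}_a$ and conditionally on $(\ell,u,B,\check B)$, the two Poisson point measures $\mathcal{P},\check{\mathcal{P}}$ implicitly present in the definition of $\mathscr{R}$ are independent, so the exponential formula yields
\begin{equation}\label{eq:plan:expformula}
\mathbb{Q}_{\pm 1}\big(\mathscr{R}\geq -x\,\big|\,\ell,u,B,\check B\big)
= \mathbbm{1}_{\min B>-x,\,\min\check B>-x}\,
\exp\!\Big(\!-C_\alpha\!\int_0^{u\ell}\!\tfrac{dt}{(B_t+x)^2}\Big)
\exp\!\Big(\!-C_\alpha\!\int_0^{(1-u)\ell}\!\tfrac{dt}{(\check B_t+x)^2}\Big).
\end{equation}
Integrating out the two independent bridges via Lemma \ref{lem:E:a:to:b}, applied with $c=C_\alpha$ (so that $\nu=\alpha-1/2$ and $(4\nu^2-1)/8=C_\alpha$ by design), produces a closed Bessel-type formula for $\mathbb{Q}_{\pm 1}(\mathscr{R}\geq -x\mid\ell,u)$, which will be the common starting point for (i) and (ii).

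For (i), I would produce a lower bound by restricting the integration over $(\ell,u)$ in \eqref{eq:Q:a:b} to a convenient region. Fix $M>0$ large (to be chosen below) and consider the slab $\ell\in[Mx^2,2Mx^2]$, $u\in[1/4,3/4]$: on this set, the density of $(\ell,u)$ given by \eqref{eq:Q:a:b} is bounded below by an explicit positive constant times $x^{1-2\alpha}$. For $\ell\sim Mx^2$ and $u\sim 1/2$, each Brownian bridge has lifetime at least $Mx^2/4$ and typical fluctuations of order $\sqrt{M}\,x$, so with positive probability it contains a long stretch on which $B_t\in[-2x,-x-1]$; on such a stretch, the Poisson atoms fail to shadow a record of $B$ below $-x$ with positive probability, by the same Shepp-covering mechanism that underlies Proposition \ref{I_infty}, transported to Brownian bridges by absolute continuity. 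The two sides of the loop being conditionally independent (Proposition \ref{prop:spine}), this yields $\mathbb{Q}_{\pm 1}(\mathscr{I}<-x)>0$ provided $M$ is chosen large enough.

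For (ii), I would analyze the $x\to\infty$ asymptotics of $\mathbb{Q}_{\pm 1}(\mathscr{R}<-x)=1-\mathbb{Q}_{\pm 1}(\mathscr{R}\geq -x)$ (the case of bounded $x$ being trivial up to adjusting $C_{\mathbb{Q}}$; the constraint $x>1$ needed in Lemma \ref{lem:E:a:to:b} for the $\mathbb{Q}_{-1}$-case is harmless). Setting
\[
\varphi(L):=\sqrt{\tfrac{2\pi}{L}}\,\sqrt{x(x\pm 1)}\,\mathrm{e}^{-x(x\pm 1)/L}\, I_{\alpha-1/2}\!\big(x(x\pm 1)/L\big),
\]
Lemma \ref{lem:E:a:to:b} turns \eqref{eq:plan:expformula} into $\mathbb{Q}_{\pm 1}(\mathscr{R}\geq -x\mid\ell,u)=\varphi(u\ell)\,\varphi((1-u)\ell)$. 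I would then split the $\ell$-integral at the threshold $\ell=x^2$. For $\ell\leq x^2$, the large-argument expansion $I_\nu(z)=\mathrm{e}^z(2\pi z)^{-1/2}(1-(4\nu^2-1)/(8z)+O(z^{-2}))$ gives
\[
1-\varphi(L)=\frac{C_\alpha\,L}{x(x\pm 1)}+O\!\Big(\frac{L^2}{x^4}\Big),\qquad L\in(0,x^2],
\]
so that $1-\varphi(u\ell)\,\varphi((1-u)\ell)=O(\ell/x^2)$ uniformly in $u\in(0,1)$. Integrating against the $(\ell,u)$-density in \eqref{eq:Q:a:b} (which for $\ell\geq 1$ behaves as a constant times $\ell^{-\alpha-1/2}$, the Gaussian factor $\exp(-1/(u(1-u)\ell))$ ensuring integrability near $\ell=0$ and absorbing the $1/\sqrt{u(1-u)}$ endpoint singularities) produces a contribution
\[
\frac{C}{x^2}\int_1^{x^2}\ell^{-\alpha+1/2}\,d\ell,
\]
which is of order $x^{1-2\alpha}$ for $\alpha\in(1,3/2)$, $x^{-2}\log x$ for $\alpha=3/2$, and $x^{-2}$ for $\alpha\in(3/2,2)$, matching $q_\alpha(1/x)$ exactly in each case. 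On the complementary range $\ell\geq x^2$, the trivial bound $1-\varphi(u\ell)\,\varphi((1-u)\ell)\leq 1$ together with $\int_{x^2}^\infty\ell^{-\alpha-1/2}\,d\ell=O(x^{1-2\alpha})$ is always dominated by $q_\alpha(1/x)$; the region $\ell\leq 1$ is negligible thanks again to $\exp(-1/(u(1-u)\ell))$.

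The main technical obstacle is carrying out this Bessel asymptotic analysis uniformly in $u\in(0,1)$, especially near the endpoints $u\in\{0,1\}$ where one of the two bridge lifetimes degenerates and $\varphi$ develops a singularity: the Gaussian weight $\exp(-1/(u(1-u)\ell))$ in \eqref{eq:Q:a:b} is precisely what tames these endpoints, but converting this heuristic into a clean uniform error estimate in the expansion of $\varphi(u\ell)\varphi((1-u)\ell)$ will require some care, and is the only step of the plan that is not essentially routine.
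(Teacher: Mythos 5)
Your plan follows the same route as the paper's: the Poisson exponential formula, Lemma \ref{lem:E:a:to:b} with $c=C_\alpha$ (hence $\nu=\alpha-\tfrac12$), and asymptotics in $x$ of the resulting $(\ell,u)$-integral. For (i) the paper is terse --- it invokes Proposition \ref{I_infty} together with the bridge-to-motion transfer used in the proof of Proposition \ref{prop:records-loops} --- and your explicit slab restriction is a reasonable way to make that argument concrete; there is no difficulty there.

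For (ii), the ``uniformity in $u$'' obstacle you flag does not actually arise once one observes that, since $0\le\varphi\le 1$,
$$1-\varphi(u\ell)\,\varphi\big((1-u)\ell\big)\ \le\ \big(1-\varphi(u\ell)\big)+\big(1-\varphi((1-u)\ell)\big)\,,$$
and that $g(p):=1-\sqrt{2\pi p}\,\mathrm{e}^{-p}I_{\alpha-1/2}(p)$ admits the \emph{global}, non-asymptotic bound $g(p)\le 1\wedge (C/p)$ (boundedness near $p=0$, the Bessel asymptotic as $p\to\infty$, continuity in between). With $1-\varphi(L)=g\big(x(x\pm1)/L\big)$, this gives $1-\varphi(L)\le C L/\big(x(x\pm1)\big)$ uniformly in $L>0$, and all dependence on $u$ vanishes. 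This is exactly what the paper does: it union-bounds first, integrates out $u$ exactly via the identity $\int_0^1 u^{-1/2}(1-u)^{-1/2}\mathrm{e}^{-1/(u(1-u)\ell)}\,\mathrm{d}u=\pi\,\mathrm{erfc}(2/\sqrt\ell)$, and then estimates $F(x)=\int_0^\infty s^{\alpha-3/2}g(s)\,\mathrm{erfc}(2\sqrt s/x)\,\mathrm{d}s$ using $g(s)\le 1\wedge Cs^{-1}$ and $\mathrm{erfc}(t)\le 1\wedge Ct^{-1}\mathrm{e}^{-t^2}$. Your split of the $\ell$-integral at $\ell=x^2$ reproduces the same three regimes $\alpha<3/2,\ \alpha=3/2,\ \alpha>3/2$, so the step you label ``not essentially routine'' is in fact routine once the correct global bound is invoked in place of the two-term Bessel expansion.

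One small slip: the constraint $x>1$ inherited from Lemma \ref{lem:E:a:to:b} (which requires $a<b<x$) is forced by the $\mathbb{Q}_1$ case, not the $\mathbb{Q}_{-1}$ case as you wrote: under $\mathbb{Q}_1$ the bridge goes from $0$ to $1$, so one needs $0<1<x$, whereas under $\mathbb{Q}_{-1}$ the bridge runs from $-1$ to $0$ after reflection and one only needs $x>0$. As you note, this is harmless since the statement is trivial for bounded $x$.
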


\begin{proof} 
Point (i) follows straightforwardly from Proposition \ref{I_infty} (where we showed that the minimal record values on an infinite ideal loop are distributed as the range of a stable subordinator) and standard absolute continuity between Brownian bridges and Brownian motion as in the proof of Proposition \ref{prop:records-loops}.
Let us proceed with the proof of  the second item which is more involved. In this direction, remark that, for $z>0$, we have:
\begin{eqnarray*}
\P_{0\to 1}^{(z)}(\mathscr{R}<-x)&=&1-
 \mathbb{E}_{0\to 1}^{(z)}\Big[\exp\Big(-\int_{0}^{z}\mathbf{N}(\sup Z\geq B_u+x) \d u\Big)\Big]\\
 &\underset{  \mathrm{Cor. \ }\  \ref{Z<-1}  \ \& \ \mathrm{Prop. \ } \ref{sec:stable-map-3}}{=} &1-\mathbb{E}_{0\to 1}^{(z)}\Big[\exp\Big(-\int_{0}^{z}\frac{\alpha(\alpha-1)~\d u}{2(B_u+x)^{2}}\Big)\Big]\\ 
 &\underset{ \mathrm{Lem. \ } \ref{lem:E:a:to:b}}{=}& g\left( \frac{x(x+1)}{z} \right),
 \end{eqnarray*}
 where $g(p) :=  1-\sqrt{2\pi  p}\cdot \exp(-p)\cdot I_{\alpha-1/2}(p)$. It then follows from the definition of $\mathbb{Q}_{1}$, given in \eqref{eq:Q:a:b}, that
  \begin{eqnarray*} 
 \mathbb{Q}_{1}( \mathscr{R}< -x)   & \leq & C(\alpha) \int_0^{\infty}\int_0^\infty \mathrm{d}\ell_{1} \mathrm{d}\ell_{2} \ \frac{(\ell_{1}+ \ell_{2})^{-\alpha - \frac{1}{2}}}{ \sqrt{2 \pi \ell_{1}\ell_{2}}} \mathrm{e}^{- \frac{\ell_{1} + \ell_{2}}{ 2 \ell_{1}\ell_{2}}} \Big(g\big( \frac{x(x+1)}{\ell_{1}}\big) + g\big( \frac{x(x+1)}{\ell_{2}}\big) \Big) \\ 
  &\leq &   2 C(\alpha) \cdot \int_{0}^{\infty} \mathrm{d} \ell ~ \frac{\ell^{-\alpha-\frac{1}{2}}}{\sqrt{2\pi} } g\big(\frac{x^2}{\ell} \big) {\int_{0}^{1} \d u ~ u^{-\frac{1}{2}} (1-u)^{-\frac{1}{2}}\exp(-\frac{1}{u(1-u)\ell})},
   \end{eqnarray*}
   where to obtain the second line we used that since $ g : \mathbb{R}_{+} \to [0,1]$ is non-increasing, we have $g( \frac{x(x+1)}{\ell_{i}}) \leq g(\frac{x^2}{\ell_1+\ell_2})$, for $i\in\{1,2\}$. Next, a  computation shows that:
\begin{equation}\label{mellin:erfc}
\int_{0}^{1} \d u ~ u^{-\frac{1}{2}} (1-u)^{-\frac{1}{2}}\exp(-\frac{1}{u(1-u)\ell})=\pi \cdot \mathrm{erfc}\big(2\cdot \ell^{-1/2}\big),
\end{equation}
where $\mathrm{erfc}$ stands for the  complementary error function. More precisely, if we consider the function  $f(\lambda)= \int_{0}^{1}\d u~ u^{-1/2}(1-u)^{-1/2}\exp\big(\frac{\lambda}{u(1-u)}\big)$, $\lambda>0$, then  for every $s>0$, we have 
\begin{align*}
\int_{0}^{\infty}\d \lambda~\lambda^{s-1}f(\lambda) &=\int_{0}^{1} \d u \frac{1}{\sqrt{u(1-u)}} \int_{0}^{\infty}\d \lambda ~\lambda^{s-1} \exp(-\frac{\lambda}{u(1-u)})\\
&=\int_{0}^{1} \d u~ u^{s-1/2}(1-u)^{s-1/2} \int_{0}^{\infty}\d \lambda ~\lambda^{s-1} \exp(-\lambda)=\text{Beta}(s+\frac{1}{2},s+\frac{1}{2}) \Gamma(s),
\end{align*}
where $\text{Beta}(\cdot,\cdot)$ stands for the Beta function. We can now rewrite the last display in the form $2^{-2s}\Gamma(1/2)\Gamma(s+1/2)s^{-1}$ 
which is the Mellin transform of the function $\lambda \mapsto \pi\cdot \text{erfc}(2\sqrt{\lambda})$; this can be established directly from the definition of $\mathrm{erfc}$ applying Fubini.   Since the Mellin transform is an injective transformation, we derive that $
f(\lambda)=\pi\cdot \text{erfc}(2\sqrt{\lambda})$ which gives \eqref{mellin:erfc}.  We have obtained that $\mathbb{Q}_{1}\big( \mathscr{R}<-x\big)$ is bounded above by 
$$\sqrt{2\pi} C(\alpha)\cdot \int_0^{\infty}\d \ell~ \ell^{-\alpha-\frac{1}{2}} g\big(\frac{x^{2}}{\ell}\big)\mathrm{erfc}\big(2\cdot \ell^{-1/2}\big),$$
and performing the change of variable $s\leftarrow x^{2}/\ell$, we can write the previous display in the form:
$$\sqrt{2 \pi} C(\alpha)x^{-(2\alpha-1)}\cdot \int_0^{\infty}\d s~ s^{\alpha-\frac{3}{2}} g\big(s\big) \mathrm{erfc}\big(2\cdot \frac{\sqrt{s}}{x} \big).$$
To conclude we need to study the behavior of: 
$$F(x):=\int_0^{\infty}\d s~ s^{\alpha-\frac{3}{2}} g\big(s\big) \mathrm{erfc}\big(2\cdot \frac{\sqrt{s}}{x} \big). $$
In this direction, notice that $\mathrm{erfc}$ and $g$ are bounded above by $1$. Moreover, by  asymptotic properties of Bessel
functions we have $g(t)={O}(t^{-1})$ as $t \to \infty$ , and an integration by parts gives:
$$\text{erfc}(t)=\frac{2}{\sqrt{\pi}}\int_{t}^{\infty}\d v~\exp(-v^{2})\leq \frac{\exp(-t^{2})}{\sqrt{\pi} t},\quad t\geq 0.$$
We then infer that there exists $C\geq1$ such that
$$g(t)\leq 1\wedge \big (C t^{-1}\big)\quad \text{and }\quad \mathrm{erfc}(t)\leq 1\wedge \big( C t^{-1}\exp(-t^{2}) \big)  ,$$
for every $t\geq 0$. Next, we distinguish two cases. First if $1<\alpha<3/2$, we simply write:
$$F(x)\leq C\cdot \Big( \int_0^{1}\d s~ s^{\alpha-\frac{3}{2}} +\int_1^{\infty}\d s~ s^{\alpha-\frac{5}{2}} \Big)<\infty.$$
This implies that, if $1<\alpha<3/2$, there exists $C_{\mathbb{Q}}$ such that $\mathbb{Q}_{1}\big( \mathscr{R}<-x\big)\leq C_{\mathbb{Q}}\cdot x^{-(2\alpha-1)}$, for every $x>0$. Finally, if $\frac{3}{2}\leq \alpha <2$, we write:
\begin{align*}
F(x)&\leq C ^2\cdot  \Big(\int_0^{1}\d s~ s^{\alpha-\frac{3}{2}} + \int_{1}^{x^{2}} \d s~ s^{\alpha-\frac{5}{2}} + \frac{x}{2}\int_{x^{2}}^{\infty} \d s~ s^{\alpha-3} \exp\big(-2 \frac{s}{x^{2}}\big)\Big).
\end{align*}
The latter is  ${O}(x^{2\alpha-3})$ as $x \to \infty$, if $\alpha>3/2$, and ${O}(\log(x))$ if $\alpha=3/2$. We derive the desired result in the regime $3/2\leq\alpha<2$. This completes the proof of the lemma in the case of $ \mathbb{Q}_1$, the case of $ \mathbb{Q}_{-1}$ follows by time reversal.
\end{proof}

\subsection{Comparison with an ideal model}\label{sec:comparison:ideal}
Our next step, in estimating the probability of trapping the point $ \mathfrak{X}$, is to compare the process $ Y^{\bullet}$ to an ideal model where, intuitively, $h=\infty$. More precisely,  under $ \mathbf{N}^{\bullet}( \cdot \mid  H_{t_{\bullet}}=h)$, let us consider the process 
$$ Y^{\bullet, \uparrow}_{r} := Y^{\bullet}_{\varpi + r} - Y^{\bullet}_{\varpi}, \quad \mbox{ for } 0 \leq r \leq h- \varpi,$$ 
where we recall that $\varpi$ is the a.s.~unique instant where $Y^{\bullet}$ reaches its overall minimum. We are going to prove Theorem \ref{prop:malo} by comparing  the  process $Y^{\bullet, \uparrow}$, under $ \mathbf{N}^{\bullet}( \cdot \mid  H_{t_{\bullet}}=h)$  when $h \to \infty$, with  a $(2\alpha-2)$-stable symmetric L\'evy process started from $0$ and conditioned to stay positive over $ (0, \infty)$. Since this is a degenerate conditioning, some care is needed and we first recall the definition of this process, see \cite[Chapter 5]{kyprianou2022stable} for details. In this direction, recall that $Y^\bullet$ under $\mathbf{N}^\bullet(\cdot | H_{t_\bullet}=h)$ is a $(2\alpha-2)$-stable symmetric Lévy process with L\'evy-Khintchine function $\lambda\mapsto c(\alpha)|\lambda|^{2(\alpha-1)}$, with $c(\alpha):=\alpha 2^{1-\alpha}\frac{\Gamma(\alpha)^{2}}{\Gamma(2\alpha)}$.  We start by constructing the  associated stable symmetric L\'evy process conditioned to  stay positive and started from $x>0$. Since this is a positive self-similar Markov process, it can be constructed by using the Lamperti transformation, and    we need to introduce some notation.

In order to have a comprehensive framework, we introduce $\mathbb{D}(\mathbb{R}, [-\infty,\infty))$ the space of rcll paths indexed by the entire real line  and taking values on $[-\infty,\infty)$. We write $(\varkappa_t)_{t\in\mathbb{R}}$ for the associated canonical process and, for $z\in \mathbb{R}$, we set 
$$\varsigma(z):=\inf\{t>  -\infty:~\varkappa_t>z\}.$$  
We then consider the Lamperti transformation of $\varkappa$, given by the process
\begin{equation}\label{def:chi}
Y^{\uparrow}_t:=\exp(\varkappa_{\kappa(t)}), \quad \mbox{ where }\quad \kappa(t):=\inf\Big\{r\in \mathbb{R}_+:~\int_{-\infty}^{r}\d u\, \mathrm{e}^{(2\alpha-2) \varkappa_u}>t\Big\},
\end{equation}
defined for every $0\leq t\leq \int_{-\infty}^{\infty}\d u\, \mathrm{e}^{(2\alpha-2) \varkappa_u}$. Here, we adopt the convention $\mathrm{e}^{-\infty}=0$. Next we consider  $(\mathtt{P}_x)_{x>0}$ a family of probability measures on $\mathbb{D}(\mathbb{R}, [-\infty,\infty))$, such that under $\mathtt{P}_x$,   $(\varkappa_{t})_{t< 0}=-\infty$ and   $(\varkappa_t)_{t\geq 0}$ is   a L\'evy process started from $\log(x) \in \mathbb{R}$  with L\'evy--Khintchine function
  \begin{eqnarray} \label{eq:LKpsi} \psi(\lambda):=c(\alpha)^{-1}\cdot\Big(\mathrm{i}a_\psi\lambda+\int_{\mathbb{R}} \left( \mathrm{e}^{\mathrm{i}\lambda y}-1-\mathrm{i}\lambda( \mathrm{e}^{y}-1)\mathbbm{1}_{| \mathrm{e}^{y}-1|\leq 1}\right ) \mathrm{e}^{\alpha y}| \mathrm{e}^{y}-1|^{-2\alpha+1}~ \mathrm{d}y  \Big) \end{eqnarray}
where $a_\psi$ is the positive constant:
$$a_\psi:=\int_{0}^{1}\frac{(1+y)^{\alpha-1}-(1-y)^{\alpha-1}}{y^{2(\alpha-1)}}~\d y. $$
We stress that, under  $\mathtt{P}_x$, the  process $(\varkappa_{t})_{t< 0}$ is superfluous and we could simply work with processes indexed by the half-line, however this will no longer be the case when studying limits in law as $x\downarrow 0$.   Under $\mathtt{P}_x$, the  process $Y^{\uparrow}$ is a self-similar Markov process of index $ 2\alpha-2$ and remark that $Y^\uparrow_t$ is well defined for every $t\geq 0$, since $\int_{-\infty}^{\infty}\d u\, \mathrm{e}^{(2\alpha-2) \varkappa_u}=\infty$ because $(\varkappa_{t})_{t\geq 0}$ drifts towards $\infty$.  Namely, it is a Markov process and  if  $\Theta$ denotes the scaling operator 
\begin{equation}\label{sca:operator}
\Theta(c,f):=\big(cf(c^{-(2\alpha-2)}t)\big)_{t\geq 0},
\end{equation}
then we have the following identity in distribution:
\begin{equation}\label{sca:dist:operator}
\big(\Theta(c,Y^\uparrow):~\mathtt{P}_x\big)\overset{(d)}{=} \big(Y^\uparrow:~\mathtt{P}_{cx}\big),\quad \text{for } (x,c)\in (0,\infty)^{2}.
\end{equation}
Under $\mathtt{P}_x$, by \cite[Theorem 5.14]{kyprianou2022stable},  the process $Y^\uparrow$ can be understood  as a Lévy process, with  L\'evy-Khintchine function $\lambda\mapsto c(\alpha)|\lambda|^{2(\alpha-1)}$, started from $x$ and conditioned  to stay positive. Moreover, the law of $Y^\uparrow$ under $\mathtt{P}_x$ converges as $ x \searrow 0$ in distribution.  Let us now give an explicit  construction of this limiting distribution and precise this convergence. From \cite[Lemma 3]{BerSav}, there exists  $\rho(\mathrm{d}r, \mathrm{d}s)$ a measure  on $ \mathbb{R}_- \times \mathbb{R}_+$, such that, for every $z\in\mathbb{R}$, we have the following weak convergence
\begin{equation}\label{eq:limit:overshoot}
\lim\limits_{x\downarrow 0}\mathtt{P}_{x}\big((\varkappa_{\varsigma(z)-} -z) \in \mathrm{d}r, (\varkappa_{\varsigma(z)} -z) \in \mathrm{d}s\big)=\rho(\mathrm{d}r, \mathrm{d}s).
\end{equation}
 Furthermore, under $\mathtt{P}_x$, since $(\varkappa_t)_{t\geq 0}$ does not creep upwards (this follows directly from Vigon's criterion \cite{Vigon}) we have  $\rho( \mathbb{R}_-\times\{0\})=0$. The $(2\alpha-2)$-stable symmetric L\'evy process conditioned to  stay positive and started from $0$ can then be constructed as follows. We consider   $\mathtt{P}_0$ a probability measure on $\mathbb{D}(\mathbb{R}, [-\infty,\infty))$, under which the law of 
$(\varkappa_{0-}, \varkappa_0)$ is $\rho$, and conditionally on $(\varkappa_{0-}, \varkappa_0)$ the processes $(\varkappa_t)_{t\geq 0}$ and $(-\varkappa_{-t-})_{t\geq 0}$ are independent and distributed according to $(\varkappa_{t})_{t\geq 0}$ under $ \mathtt{P}_{ \mathrm{e}^{\varkappa_0}}$ and $ \mathtt{P}_{ \mathrm{e}^{-\varkappa_{0-}}}( \cdot \mid \inf_{t\geq 0} \varkappa_t >0)$ respectively. This latter conditioning is well-defined because  $\rho( \mathbb{R}_-\times\{0\})=0$ and, under $ \mathtt{P}_x$, the process $(\varkappa_{t})_{t\geq 0}$ 
starts from $\log x$ and drift towards $\infty$. Under $ \mathtt{P}_0$, the process $\varkappa$ satisfies, for every $z\in \mathbb{R}$, the following stationarity property:
\begin{itemize}
\item      $(\varkappa_{\zeta(z)-}-z, \varkappa_{\zeta(z)}-z) \sim \rho$,
\item    conditionally on $(\varkappa_s : -\infty < s \leq \varsigma(z))$, the process $(\varkappa_{s + \varsigma(z)} : s\geq0)$ is distributed as $(\varkappa_{t})_{t\geq 0}$ under $ \mathtt{P}_{ \mathrm{e}^{\varkappa_{\varsigma(z)}}}$,
\end{itemize}
we refer  to   \cite[Section 2.4]{BerSav} for details.  Furthermore, the process $Y$ is rcll,  leaves $0$ continuously and  instantaneously, drifts towards $\infty$ and  is scale invariant with index $2 \alpha-2$, i.e. satisfies \eqref{sca:dist:operator} with $x=0$, see  \cite[Theorem 5.3]{kyprianou2022stable} and \cite[Corollary 4]{BerSav}.\footnote{Let us note that \cite{BerSav} deals with self-similar Markov processes with index $1$ (instead of $2(\alpha-1)$), but this is not a problem since the results apply to $(Y^{\uparrow})^{2(\alpha-1)}$ under $\mathtt{P}_x$, which is self-similar with index $1$} The same references also establish that the  law of  $Y^{\uparrow}$, under $\mathtt{P}_x$, converges in distribution towards the law $Y^{\uparrow}$, under $\mathtt{P}_0$. For this reason, this law is referred to as that of the $(2\alpha-2)$-stable symmetric L\'evy process conditioned to  stay positive and started from~$0$. For simplicity,  for $x\in [0,\infty)$,  denote  the expectation with respect to $\mathtt{P}_x$ by $\mathtt{E}_x$.

Let us now extend the notion of trapping points under $\mathtt{P}_x$, with $x\in[0,\infty)$. To this end, under  $\mathtt{P}_x$ and conditionally on $Y^\uparrow$ (up to enlarging the underlying probability space), for each jump time $r$ of $Y^\uparrow$, we introduce a pair $( \mathscr{R}_r, \mathscr{I}_r)$ of random variables with law $ \mathbb{Q}_{\Delta Y^\uparrow_r}$ (we set $(0,0)$ for the non-jump times). By analogy with \eqref{eq:trapped}, under $\mathtt{P}_x$, we say that a jump time $r$ is \textbf{an $ \varepsilon$-trapping time} if 
$$   Y_{r-}^{\uparrow} + \mathscr{I}_r  \leq  \inf_{0 \leq s < r}\big( Y_{s-}^{\uparrow} + \mathscr{R}_s\big) \leq Y_0^{\uparrow} - \varepsilon.$$
 Our first step towards Theorem \ref{prop:malo} is to prove:
\begin{prop}\label{prop:eq:P:C:H}
There exist $c,C>0$ such that 
 \begin{equation}\label{eq:P:C:H}
 \mathtt{P}_0\big( \exists t \leq h : t \textnormal{ is a }1 \textnormal{-trapping time}\big) \geq 1-C\cdot  h^{-c}, \quad \text{ for every } h>0.
 \end{equation}
\end{prop}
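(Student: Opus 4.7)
The plan is to combine the self-similarity of $Y^\uparrow$ under $\mathtt{P}_0$ with a multiscale quasi-independence argument. First, by \eqref{sca:dist:operator} with $x=0$ and $c = h^{-1/(2(\alpha-1))}$, together with the decoration scaling \eqref{scaling:a:Q}, the scaling operator $\Theta$ maps a $1$-trapping time in $[0,h]$ to a $c$-trapping time in $[0,1]$. Setting $\varepsilon := h^{-1/(2(\alpha-1))}$, the probability in \eqref{eq:P:C:H} therefore equals $\mathtt{P}_0(\exists\, r \in [0,1] : r \text{ is an } \varepsilon\text{-trapping time})$, and the task reduces to proving that there exist $c', C' > 0$ such that
$$\mathtt{P}_0\big(\text{no } \varepsilon\text{-trapping time in } [0,1]\big) \leq C'\, \varepsilon^{c'}, \qquad \varepsilon \in (0,1].$$

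To produce many quasi-independent chances for a trapping event, I would introduce the dyadic levels $\varepsilon_k := 2^k \varepsilon$ for $k = 0, 1, \dots, K$, with $K := \lfloor \tfrac{1}{2}\log_2 \varepsilon^{-1} \rfloor$, and the first passage times $\tau_k := \inf\{t \geq 0 : Y_t^\uparrow > \varepsilon_k\}$. By the scaling relation, $\tau_K \stackrel{(d)}{=} \varepsilon_K^{2(\alpha-1)}\, \tau_1$, so $\mathtt{P}_0(\tau_K > 1) \to 0$ polynomially as $\varepsilon \to 0$. Applying the strong Markov property at each $\tau_{k-1}$ together with \eqref{sca:dist:operator}, the rescaled process $(\varepsilon_{k-1}^{-1} Y^\uparrow_{\tau_{k-1} + \varepsilon_{k-1}^{2(\alpha-1)}\cdot})$ has law $\mathtt{P}_{\varepsilon_{k-1}^{-1} Y^\uparrow_{\tau_{k-1}}}$, and the stationary-type structure of the overshoot at $\tau_{k-1}$ (inherited from the construction of $\mathtt{P}_0$ via \eqref{eq:limit:overshoot}) ensures that the normalized starting value $\varepsilon_{k-1}^{-1} Y^\uparrow_{\tau_{k-1}}$ has a distribution bounded uniformly in $k$ and $\varepsilon$. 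On each annulus $[\tau_{k-1}, \tau_k]$, the process $Y^\uparrow$ takes values of order $\varepsilon_k$ and, by the Lévy structure of the jumps of $Y^\uparrow$, contains an order-one number of jumps of size comparable to $\varepsilon_{k-1}$. For any such jump $r$, Lemma~\ref{lem:tech:Q}(i) combined with \eqref{scaling:a:Q} gives a probability bounded below by a universal constant that $\mathscr{I}_r \leq -C_0 \varepsilon_{k-1}$, where $C_0$ will be chosen large.

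To turn this into an actual $\varepsilon$-trapping condition, I would control the running infimum $M_t := \inf_{s \leq t}\bigl(Y^\uparrow_{s-} + \mathscr{R}_s\bigr)$ on two sides: on the one hand, using Proposition \ref{I_infty}-type arguments near $t=0$ (where $Y^\uparrow$ is near $0$), $M_t$ falls below $-\varepsilon$ with high probability before time $\tau_0$; on the other hand, using Lemma~\ref{lem:tech:Q}(ii) and a union bound over the jumps in $[0, \tau_K]$, with probability $1 - O(\varepsilon^{c''})$ the running infimum stays above $-C_1 \varepsilon_{k-1}^{1-\eta}$ at time $\tau_{k-1}$ for some arbitrarily small $\eta > 0$ (this uses the polynomial tail given by $q_\alpha$). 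Choosing $C_0$ larger than $C_1$, these two controls together produce, for each $k$, a high-probability event $\mathcal{G}_k \in \mathcal{F}_{\tau_{k-1}}$ on which the conditional probability of finding an $\varepsilon_k$-trapping time (and hence an $\varepsilon$-trapping time, since $\varepsilon_k \geq \varepsilon$) in $(\tau_{k-1}, \tau_k]$ is at least some universal $p_0 > 0$. By the strong Markov property, the decorations in disjoint annuli are conditionally independent given the past, so iterating over the $K \sim \log_2 \varepsilon^{-1}$ scales yields
$$\mathtt{P}_0\big(\text{no } \varepsilon\text{-trap in } [0, \tau_K]\big) \leq (1 - p_0)^K + \sum_{k=1}^{K} \mathtt{P}_0(\mathcal{G}_k^{c}) \leq C'\, \varepsilon^{c'},$$
with $c'$ essentially $\log_2(1/(1-p_0))$ (up to the scale-by-scale loss), which combined with the scaling reduction yields \eqref{eq:P:C:H}.

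The main obstacle is the careful handling of the accumulated effect of past $\mathscr{R}_s$ values on the running infimum $M$: since Lemma~\ref{lem:tech:Q}(ii) only gives polynomial (not exponential) tails for $\mathscr{R}$, a single anomalously negative $\mathscr{R}_s$ at a large scale could in principle drive $M$ so far below $-\varepsilon$ that the trapping condition $Y^\uparrow_{r-} + \mathscr{I}_r \leq M_{r-}$ becomes unattainable at all subsequent smaller scales. The balance between the number of scales $K$ and the probability of such a rare deep excursion, which determines the value of the exponent $c$, is what dictates the precise choice of $K$ in the argument.
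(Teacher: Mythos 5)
Your reduction by scale invariance at the start is correct (under $\mathtt{P}_0$, \eqref{sca:dist:operator} and \eqref{scaling:a:Q} do turn $1$-trapping in $[0,h]$ into $\varepsilon$-trapping in $[0,1]$ with $\varepsilon=h^{-1/(2(\alpha-1))}$), and the multiscale iteration strategy is also the strategy of the paper. But there is a real gap at the step you yourself flag: your dyadic scales $\varepsilon_k=2^k\varepsilon$ are fixed in advance and adapted to $Y^\uparrow$, whereas the quantity that governs the trapping condition is the running infimum $\underline{\mathscr{R}}_r := \inf_{s\le r}(Y^\uparrow_{s-}+\mathscr{R}_s)$, which can, and with polynomially small but non-negligible probability does, fall far below the $Y^\uparrow$-scale. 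Concretely, on your good event $\mathcal{G}_k$ you only ensure $\underline{\mathscr{R}}_{\tau_{k-1}}\ge -C_1\,\varepsilon_{k-1}^{1-\eta}$. To realize an $\varepsilon_k$-trapping time on the $k$-th annulus you need a jump $r$ with $Y^\uparrow_{r-}+\mathscr{I}_r\le \underline{\mathscr{R}}_{r-}\le -C_1\,\varepsilon_{k-1}^{1-\eta}$; since by \eqref{scaling:a:Q} the decoration $\mathscr{I}_r$ scales like $|\Delta Y^\uparrow_r|\asymp\varepsilon_k$, this asks for $\mathscr{I}_r/|\Delta Y^\uparrow_r|\lesssim -\varepsilon_{k-1}^{-\eta}\to-\infty$, whose $\mathbb{Q}_{\pm 1}$-probability tends to $0$ by Lemma~\ref{lem:tech:Q}(i). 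So the conditional trapping probability on $\mathcal{G}_k$ is \emph{not} bounded below by a universal $p_0$, and taking ``$C_0$ larger than $C_1$'' cannot compensate a polynomially growing ratio. Tightening $\mathcal{G}_k$ to $\underline{\mathscr{R}}_{\tau_{k-1}}\ge -C_1\varepsilon_{k-1}$ (no $\eta$) would repair the trap probability, but then you would have to rule out, simultaneously at all $K\sim\log\varepsilon^{-1}$ scales, any jump $s<\tau_{k-1}$ with $\mathscr{R}_s<-C_1\varepsilon_{k-1}$, and the polynomial tails of Lemma~\ref{lem:tech:Q}(ii) do not give such a simultaneous bound with probability $1-O(\varepsilon^{c''})$.

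The paper resolves precisely this by making the scales adaptive to $\underline{\mathscr{R}}$ rather than to $Y^\uparrow$: it defines $T_a:=\inf\{t:\underline{\mathscr{R}}_t\le a\}$, $S_1:=-1$, and $S_{k+1}:=2\bigl(\underline{\mathscr{R}}_{T_{S_k}}-Y^\uparrow_{T_{S_k}}\bigr)$. After each stopping time the scale is recalibrated to the \emph{current} gap between the running infimum and $Y^\uparrow$, so that the Markov property plus the scaling \eqref{sca:dist:operator} reduces every step to the normalized statement of Lemma~\ref{lem:stoptrap}, which supplies a genuinely uniform lower bound $c_1>0$ for $x\in[0,1]$. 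The ``anomalously deep drops'' that you worry about are then handled a posteriori: one takes $k(h)\sim p\log h$, bounds a small moment $\mathtt{E}_0[S_{m}^\gamma]\le(2^\gamma C_4)^{m-1}$ via the $\gamma$-th moment of $|\underline{\mathscr{R}}_{T_{-1}}-Y^\uparrow_{T_{-1}}|$ from Lemma~\ref{Lem:bad:uni}, and uses Markov's inequality together with the tail of $T_{-1}$ to show $T_{S_{k(h)}}<h$ with probability $1-O(h^{-c})$. This reversal — first a uniform trap probability per adaptive step, then a polynomial bound on the number of steps needed — is the structural idea your fixed dyadic grid is missing, and I do not see how to patch your version without essentially reintroducing it.
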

\noindent During the proof of Proposition \ref{prop:eq:P:C:H}, we will state, in the form of lemmas, some technical properties of $Y^{\uparrow}$ and $( \mathscr{R}_r, \mathscr{I}_r)_{r \geq 0}$, under $\mathtt{P}_x$ for $x\in [0,\infty)$. The proofs of these results will be delayed to the next section. Here, we present the main arguments and strategy of the proof.
\begin{proof} We write $ \underline{ \mathscr{R}}_r = \inf_{s \leq r} \big(Y^\uparrow_{s-} + \mathscr{R}_s\big)$ for the running infimum process and for each $a \leq x$ consider under $ \mathtt{P}_x$ the stopping times 
\begin{equation}\label{def:T:A:trap}
T_{a}: = \inf\{ t \geq 0 : \underline{ \mathscr{R}}_t  \leq a\}.
\end{equation} 
Although the process $ Y^\uparrow$ stays positive, it is easily seen by scaling arguments that $T_a$ for $a \leq x$ are all a.s. finite. Actually we even have:
\begin{lem} \label{lem:stoptrap} There exists $c_1>0$ such that for every $x \in [0,1]$ we have
$$ \mathtt{P}_x( T_{-1} \textnormal{ is a  $1$-trapping time}) > c_1 .$$
\end{lem}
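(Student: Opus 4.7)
My plan is to construct, conditionally on the sample path of $Y^\uparrow$, an explicit event of probability bounded below uniformly in $x\in[0,1]$ on which the first passage time $T_{-1}$ of $\underline{\mathscr{R}}$ to $-1$ is itself a $1$-trapping time. The key input is the analog, under $\mathtt{P}_x$, of the spinal decomposition of Proposition \ref{prop:spine}: conditionally on $Y^\uparrow$, the pairs $(\mathscr{R}_r,\mathscr{I}_r)$ indexed by the jump times of $Y^\uparrow$ are independent with marginals $\mathbb{Q}_{\Delta Y^\uparrow_r}$. The scaling relation \eqref{scaling:a:Q} reduces these marginals to $\mathbb{Q}_{\pm 1}$, for which Lemma \ref{lem:tech:Q}(i) gives the crucial lower bound $\mathbb{Q}_{\pm 1}(\mathscr{I}<-y)>0$ for every $y>0$.

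Concretely, I would exhibit two well-separated jump times $0<r_1<r_2$ of $Y^\uparrow$, lying in some fixed deterministic time window $[0,T]$, such that: (a) $Y^\uparrow_{r_i-}$ and $|\Delta Y^\uparrow_{r_i}|$ lie in prescribed compact subintervals of $(0,\infty)$; (b) $Y^\uparrow_{r_1-}+\mathscr{R}_{r_1}$ falls in a narrow window $(x-1-\delta,x-1]$ while no other jump of $Y^\uparrow$ in $(0,r_2)$ contributes any lower value; (c) $Y^\uparrow_{r_2-}+\mathscr{R}_{r_2}$ is strictly below $-1$; and (d) $\mathscr{I}_{r_2}$ is negative enough to guarantee $Y^\uparrow_{r_2-}+\mathscr{I}_{r_2}\leq \underline{\mathscr{R}}_{r_2-}$. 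Under these four conditions, $T_{-1}=r_2$ and both inequalities defining a $1$-trapping time are satisfied.

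Each of (a)--(d) has positive probability bounded away from zero, obtained by combining the Lévy-Khintchine data \eqref{eq:LKpsi} of $\varkappa$ (yielding positivity of the Lévy measure of $Y^\uparrow$ in the desired jump range), standard fluctuation estimates for positive self-similar Markov processes on a bounded time window, Lemma \ref{lem:tech:Q}(i) at scale $|\Delta Y^\uparrow_{r_i}|$, and the conditional independence provided by the spinal decomposition. This yields the desired bound uniformly in $x$ bounded away from $0$. For $x$ close to the entrance boundary, I would transfer the lower bound using the weak convergence $\mathtt{P}_x\Rightarrow \mathtt{P}_0$ described after \eqref{eq:limit:overshoot}, combined with the scale invariance \eqref{sca:dist:operator}.

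The hard part will be ensuring that the lower bound is uniform as $x\downarrow 0$: the window $(x-1-\delta,x-1]$ in which $\underline{\mathscr{R}}_{r_1}$ must land sits just above the forbidden level $-1$, and its width $\delta$ cannot be allowed to depend on $x$. The technical heart of the argument is then a quantitative localization refinement of Lemma \ref{lem:tech:Q} that places $\mathscr{R}$ in prescribed intervals, together with a Shepp-type covering argument analogous to Proposition \ref{I_infty} that controls, under $\mathtt{P}_0$, the distribution of $\underline{\mathscr{R}}_{T_{-1}-}$ near the target level.
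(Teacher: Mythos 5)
Your high-level strategy — exploit the conditional independence of the $(\mathscr{R}_r,\mathscr{I}_r)$, invoke Lemma~\ref{lem:tech:Q}(i) for a positive probability of a very negative $\mathscr{I}$, and exhibit an explicit event forcing $T_{-1}$ to be a trapping time — is in the right spirit, but the route you take is genuinely different from the paper's and, more importantly, the plan you outline for the difficult step does not close the gap.

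The key idea you are missing is the \emph{scaling reduction}. The paper does not try to establish a uniform bound directly on the event that $T_{-1}$ is a $1$-trapping time for all $x\in[0,1]$. Instead it first observes, using the self-similarity $\eqref{sca:dist:operator}$ and the scaling relation $\eqref{scaling:a:Q}$ for $\mathbb{Q}_a$, that for $x\geq r_1$ the quantity $\mathtt{P}_x(T_{-1}\text{ is a }1\text{-trapping time})$ can be bounded below by $\mathtt{P}_{r_1}(T_{-1}\text{ is a }(1/r_1)\text{-trapping time})$. This reduces the problem to a single, fixed initial condition $x\in[0,r_1]$ and a relaxed trapping threshold, after which the difficulty you identify — the window in which $\underline{\mathscr{R}}_{T_{-1}-}$ must land shrinking with $x$ — simply does not arise. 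Without this rescaling step, your estimates have to be uniform over a family of windows whose widths go to zero, which is exactly the obstruction you flag but propose to handle via weak convergence $\mathtt{P}_x\Rightarrow\mathtt{P}_0$. That does not help: as $x\downarrow 0$ the event you are trying to hit becomes \emph{harder} (your own constraint $\delta\leq x$ shows the target window $(x-1-\delta,x-1]$ must collapse), and weak convergence of the laws cannot turn a degenerating sequence of events into a uniformly positive one.

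The other substantive difference is that the paper needs no ``localization refinement of Lemma~\ref{lem:tech:Q}'' and no Shepp-type control of $\underline{\mathscr{R}}_{T_{-1}-}$. After the rescaling step, the only stochastic ingredients are: (i) the one-sided tail bound on $\underline{\mathscr{R}}_{\zeta(r_2)-}$ provided by Lemma~\ref{bound:V:zeta}, which becomes small as $r_2\downarrow 0$; (ii) the Portmanteau theorem together with the overshoot convergence $\eqref{eq:limit:overshoot}$, giving a lower bound for the overshoot event $\{Y^\uparrow_{\zeta(r_2)}>p\,r_2\}$ uniformly over $x\in[0,r_1]$; and (iii) the Markov property at the single stopping time $\zeta(r_2)$ combined with Lemma~\ref{lem:tech:Q}(i). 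Your two-jump-time construction with four simultaneous constraints (a)--(d) is not wrong in spirit, but it overcomplicates the argument and would require exactly the kind of quantitative localization estimates for $\mathscr{R}$ that the paper circumvents by using only the crude tail bound of Lemma~\ref{bound:V:zeta}. So: right raw ingredients, but the scaling reduction is the step that makes everything else work, and without it your plan has a real hole.
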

Given the lemma above, which will be proved in the next section, the idea of the proof of Proposition~\ref{prop:eq:P:C:H} is then clear: at each scale $ \ell>1$, the stopping time $T_{-\ell}$ produces a $1$-trapping time with probability at least $c_1$ so that the probability that no $1$-trapping time is found among the logarithmic number of scales needed to reach time $t$ should decay polynomially fast in $t$.  
To proceed let us consider the sequence of scales $(S_k)_{k\geq 1}$  defined by induction as follows: $S_1:=-1$ and 
$$S_{k+1}:=  2  \big(\underline{ \mathscr{R}}_{T_{S_k}}-Y^\uparrow_{T_{S_k}}\big),\quad k\geq 1.$$ 
By the Markov property and scaling invariance, conditionally on the past before time $T_{S_k}$,  Lemma \ref{lem:stoptrap} entails that the next stopping time $T_{S_{k+1}}$  has a probability at least  $c_1>0$ to be a $1$-trapping time. By iterating this argument we have: 
$$ \mathtt{P}_0\big( \exists t \leq T_{S_k} : t \mbox{ is a }1 \mbox{-trapping time}\big)  \geq 1-c_1^k.$$ 
Taking $k = k(h) = 1 + \lfloor  p \log (h \vee 1) \rfloor$ for some small constant $p>0$, the previous display becomes polynomially small in $h$ as $ h \to \infty$.  Hence, to conclude it  suffices to show that for  $p$ small enough, there exists $c_2,C_2\in (0,\infty)$ 
such that:
  \begin{eqnarray} \label{eq:goalpoly}\mathtt{P}_0\big(T_{S_{k(h)}}\geq h\big) \leq   C_2\cdot h^{- c_2}, \end{eqnarray}
  for every $h\geq 1$.   To this end, for $h \geq 1$, we write 
  \begin{eqnarray} \label{eq:techtechtech1} \mathtt{P}_0\big(T_{S_{k(h)}}\geq {h}\big)\leq \mathtt{P}_0\big(S_{k(h)}\geq h^{\frac{1}{4(\alpha-1)}}\big) + \mathtt{P}_0\big(T_{-h^{1/(4(\alpha-1))}}\geq h\,\big),  \end{eqnarray} and we will show that we can bound each term  as an  $O(h^{- \mathrm{cst}})$ for some $ \mathrm{cst}>0$ using the following lemma (whose proof is postponed to the next section):
  \begin{lem}[Polynomial tail estimates at $T_{-1}$]\label{Lem:bad:uni}
There exist two constants $c_3,C_3\in(0,\infty)$ such that for every $x\in [0, 1]$ and $r>0$ we have
$$\max\Big(\mathtt{P}_x\big(Y^{\uparrow}_{T_{-1}}>r\big), \mathtt{P}_x\big(T_{-1}>r\big),  \mathtt{P}_x\big( \underline{ \mathscr{R}}_{T_{-1}}<-r\big)\Big)\leq C_3 \cdot r^{-c_3}.$$
\end{lem}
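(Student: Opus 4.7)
The three polynomial bounds reduce to a single uniform-in-$x$ base-case estimate combined with iteration via the strong Markov property and the scaling relation \eqref{sca:dist:operator}. Uniformity over $x \in [0,1]$ is a consequence of the weak convergence of $\mathtt{P}_x$ towards $\mathtt{P}_0$ as $x \downarrow 0$ (recalled just before the statement) together with the scaling, which relates starting values $x \in (0,1]$ to each other.

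\textbf{Base case.} The heart of the argument will be to show that there exist $M, p > 0$ such that
\[
\mathtt{P}_x\bigl(T_{-1}\leq M,\ Y^\uparrow_{T_{-1}}\leq M,\ \underline{\mathscr{R}}_{T_{-1}}\geq -M\bigr)\geq p, \qquad \text{uniformly in } x\in[0,1].
\]
I would prove this by forcing the first passage to occur at a carefully chosen jump. Let $\tau$ denote the first jump of $Y^\uparrow$ of size in $[1,2]$. Using the Lamperti representation \eqref{def:chi} together with the L\'evy-Khintchine characterization \eqref{eq:LKpsi} and the weak convergence of $\mathtt{P}_x$ towards $\mathtt{P}_0$, the event $\{\tau \leq M,\ \sup_{[0,\tau]}Y^\uparrow \leq M\}$ has probability bounded below uniformly in $x \in [0,1]$ for $M$ large. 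Conditionally on $(Y^\uparrow_{\tau-}, \Delta Y^\uparrow_\tau) = (y,a)$, the attached pair $(\mathscr{R}_\tau, \mathscr{I}_\tau)$ has law $\mathbb{Q}_a$ by the spinal decomposition underlying Proposition~\ref{prop:spine}; by the scaling \eqref{scaling:a:Q} and the fact that $\mathbf{N}(\inf Z < -r) > 0$ for every $r > 0$ (Corollary~\ref{Z<-1}), a uniform positive lower bound holds on $\mathbb{Q}_a(\mathscr{R} \in [-M, -1-y])$ for $a \in [1,2]$ and $y \in [0, M]$. On the resulting good event, $T_{-1} = \tau$ with all three controls in place.

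\textbf{From the base case to polynomial tails.} For bound (c), the argument is direct: the pure-jump non-increasing process $t \mapsto \underline{\mathscr{R}}_t$ decreases only at jumps of $Y^\uparrow$, and at each such jump $r$ the probability that $\mathscr{R}_r < -\rho$ given $\Delta Y^\uparrow_r = a$ is bounded above, using Lemma~\ref{lem:tech:Q}(ii) and \eqref{scaling:a:Q}, by $C_\mathbb{Q}\, q_\alpha(|a|/\rho)$; combining this with the first-passage decomposition and an elementary integration in the overshoot yields the desired $r^{-c_3}$ bound on $\mathtt{P}_x(\underline{\mathscr{R}}_{T_{-1}} < -r)$. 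For the bounds (a) and (b), I would iterate the base case: on $\{T_{-1} > M\}$, applying the strong Markov property and the scaling \eqref{sca:dist:operator}, the rescaled process $(Y^\uparrow_{M+t}/Y^\uparrow_M)_{t \geq 0}$ evolves as under $\mathtt{P}_1$ and aims for the rescaled target $-1/Y^\uparrow_M$; provided $Y^\uparrow_M$ is not too large a fresh application of the base case still succeeds with uniform probability $p$, producing a geometric decay in the number of restarts and hence polynomial tails both in $T_{-1}$ and in $Y^\uparrow_{T_{-1}}$.

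\textbf{Main obstacle.} The delicate point lies in the iteration for (b): because $Y^\uparrow$ drifts to $+\infty$, the effective target $-1/Y^\uparrow_M$ shrinks along successive restarts, so one must carefully balance the polynomial growth of $Y^\uparrow$ against the geometric decay coming from repeated base-case attempts, typically by organizing the iteration along logarithmic scales of $Y^\uparrow$. A cleaner alternative would be to transfer the whole problem to the Lamperti-transformed process $\varkappa$ via \eqref{def:chi}: since $\varkappa$ is a genuine L\'evy process drifting to $+\infty$, sharp fluctuation estimates are directly available, and the resulting tail bounds for $T_{-1}$ can then be pulled back to $Y^\uparrow$ through the time change $\kappa$.
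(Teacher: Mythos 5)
Your overall plan—reduce to a uniform base-case estimate and iterate via the strong Markov property and scaling—is the right strategy, and you have correctly diagnosed the main obstacle (the shrinking target under successive restarts) together with the correct fix: organize the iteration along geometric scales of the position $Y^\uparrow$, not along fixed time blocks. This is precisely what the paper does, via the sequence of passage times $S_1=\zeta(1)$, $S_{i+1}=\zeta(2Y^\uparrow_{S_i})$ and the reduction of all three bounds to the single estimate $\sup_{x\in[0,1]}\mathtt{P}_x(T_{-1}>\zeta(r))\leq Cr^{-c}$. So at that structural level your proposal matches the paper's argument.

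There are, however, two places where the sketch is substantially thinner than the real work required. First, your treatment of bound $\mathtt{P}_x(\underline{\mathscr{R}}_{T_{-1}}<-r)$ by ``elementary integration in the overshoot'' hides what is actually one of the more delicate points. The paper instead splits at $\zeta(\sqrt r)$: on $\{T_{-1}\geq\zeta(\sqrt r)\}$ one uses the claim, and on $\{T_{-1}<\zeta(\sqrt r)\}$ one needs to control $\mathtt{P}_x(\underline{\mathscr{R}}_{\zeta(\sqrt r)-}<-r)$, which (after scaling) is precisely Lemma~\ref{bound:V:zeta}. That lemma is proved via the compensation formula applied to the jump process of the Lamperti-transformed L\'evy process $\varkappa$, together with the moment estimates of Lemma~\ref{lem:exp:tau:log:z}; it is not an elementary integration and you should not expect Lemma~\ref{lem:tech:Q}(ii) and a naive summation over jumps to suffice, because you need a bound that is simultaneously uniform in $x\in[0,1]$ and integrable against the full jump intensity. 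Second, in the base case you would need to ensure $T_{-1}=\tau$, i.e.\ that no earlier jump already caused the first passage with a bad overshoot; your sketch conditions on the first jump of size in $[1,2]$ but does not rule out an earlier, small jump dragging $\underline{\mathscr{R}}$ far below $-M$. Ruling that out is again exactly the content of Lemma~\ref{bound:V:zeta}. Your alternative suggestion of transferring the whole problem to $\varkappa$ is reasonable—and the paper does in fact rely on $\varkappa$ for its technical lemmas—but it would not sidestep the need for a quantitative control of the $\mathscr{R}$-decorations, since these live on the jumps and are independent of $\varkappa$ only conditionally on the jump sizes.
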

\noindent Taking these estimates as granted, we can prove that \eqref{eq:techtechtech1} yields \eqref{eq:goalpoly}. In this direction,  we start by noticing that by scaling invariance we have:
$$\mathtt{P}_0\big(T_{-h^{1/(4(\alpha-1))}}\geq h\big)=\mathtt{P}_0\big(T_{-1}\geq h^{1/2}\big) \underset{ \mathrm{Lem.\ }  \ref{Lem:bad:uni}}{\leq}  C_3\cdot  h^{-c_3/2}.  $$
It remains to control the  first term in the right-hand side  of \eqref{eq:techtechtech1}. In this direction, writing $ \mathfrak{Y}_m:= {Y^\uparrow_{T_{S_{m}} }}/({2 | \underline{ \mathscr{R}}_{T_{S_{m}}}-  Y^{\uparrow}_{T_{S_{m}}}}|)$, an application of the Markov property and scaling invariance gives:
$$\mathtt{E}_0\big[S_{m+1}^{\gamma}\big]=\mathtt{E}_0\Big[S_m^{\gamma}\cdot \mathtt{E}_{\mathfrak{Y}_m}\Big[2^{\gamma}  \big|\underline{ \mathscr{R}}_{T_{-1}}-Y^\uparrow_{T_{-1}}\big|^\gamma\Big]\Big].$$
Using Lemma \ref{Lem:bad:uni}, the constant $\gamma$ can be chosen small enough such that there exists $C_4>0$ (depending on $\gamma$) such that
$$ \sup_{x \in [0,1]}\mathtt{E}_x\left[ \big|\underline{ \mathscr{R}}_{T_{-1}}-Y^\uparrow_{T_{-1}}\big|^\gamma\right] \leq C_4.$$
Therefore, by induction, we have $\mathtt{E}_0\big[S_{m+1}^{\gamma}\big] \leq  ( 2^\gamma C_4)^m$, for every $m\geq 1$.  Finally, an application of  Markov inequality gives:
$$ \mathtt{P}_0\big(S_{k(h)}\geq h^{\frac{1}{4(\alpha-1)}}\big)=\mathtt{P}_0\big(S_{k(h)}^\gamma\geq h^{\frac{\gamma}{4(\alpha-1)}}\big)\leq ( 2^\gamma C_4)^{k(h)-1} \cdot h^{-\frac{\gamma}{4(\alpha-1)}},$$
and  choosing  $k(h):=1+\lfloor p \log(h\vee 1)\rfloor$, with $0<p< \gamma/(4(\alpha-1)\log( 2^\gamma C_4)$,  we derive that the right-side term of the previous display decreases polynomially fast in $h$ as $h\to \infty$ as desired. \end{proof}	

We are going to prove Theorem \ref{prop:malo} using Proposition \ref{prop:eq:P:C:H} and quantitative absolute continuity properties, which enable us to compare $Y^\bullet$ under $\mathbf{N}^\bullet$ with the self-similar Markov processes constructed earlier. In this context, with a slight abuse of notation and possibly enlarging the underlying probability space once again, we consider $Y$ under $\mathtt{P}_{x}$ as a Lévy process with L\'evy-Khintchine function $\lambda \mapsto c(\alpha)|\lambda|^{2(\alpha-1)}$ starting at $x$. We rely on the following uniform estimate:

\begin{lem}\label{sup:Y:local}
There exists $\gamma>0$, such that:
$$\mathtt{P}_{0}\big(\sup\limits_{[0,1]} Y\leq r \big)=c^\prime(\alpha)\cdot r^{\alpha-1}+o(r^{\alpha-1+\gamma}), \quad \mbox{ as } r \to 0,$$
where $c^\prime(\alpha):=c(\alpha)^{-1/2}/(\Gamma(\frac{1}{2})\Gamma(\alpha))$.
\end{lem}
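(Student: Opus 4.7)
The plan is to convert the small-$r$ asymptotic into a tail estimate for a first-passage time, and then extract the precise asymptotic using the Lamperti / Doob $h$-transform description of $Y$ under $\mathtt{P}_0$.

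First, set $\sigma(r):=\inf\{t\geq 0:Y_t>r\}$, so that $\{\sup_{[0,1]}Y\leq r\}=\{\sigma(r)>1\}$. The self-similarity of index $2(\alpha-1)$ of $Y^{\uparrow}$ under $\mathtt{P}_0$ (the case $x=0$ of \eqref{sca:dist:operator}) gives $\sigma(r)\stackrel{(d)}{=}r^{2(\alpha-1)}\sigma(1)$, hence
$$\mathtt{P}_0\Big(\sup_{[0,1]}Y\leq r\Big)=\mathtt{P}_0\big(\sigma(1)>r^{-2(\alpha-1)}\big).$$
Setting $t:=r^{-2(\alpha-1)}$, the lemma is therefore equivalent to proving that
$$\mathtt{P}_0(\sigma(1)>t)=c'(\alpha)\,t^{-1/2}+o(t^{-1/2-\gamma'}),\qquad t\to\infty,$$
for some $\gamma'>0$; i.e.\ a sharp polynomial tail for the first upward passage time of the conditioned-to-stay-positive stable process issued from $0$ above level $1$.

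Second, for the sharp tail I would use the standard Doob $h$-transform representation of $\mathtt{P}_x$ (for $x>0$) as the free $(2\alpha-2)$-symmetric stable L\'evy process $\mathbb{P}_x$ killed at $T_0:=\inf\{s:Y_s\leq 0\}$ and weighted by the harmonic function $h(y)=y^{\alpha-1}$, cf.\ \cite[Chap.\ 5]{kyprianou2022stable}. This yields, for every $x\in(0,1)$ and $t>0$,
$$\mathtt{P}_x(\sigma(1)>t)=x^{-(\alpha-1)}\,\mathbb{E}_x\Big[Y_t^{\alpha-1}\mathbf{1}_{\{0<Y_s<1,\,\forall s\leq t\}}\Big].$$
The long-time behaviour of the right-hand side is governed by the principal eigenpair of the symmetric stable semigroup killed on exiting $(0,1)$, which for symmetric stable processes is known (Ba\~nuelos--Ku\-lczy\-cki). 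Parallel to this, and in fact more convenient to handle the limit $x\downarrow 0$, I would use the Lamperti representation $Y^{\uparrow}_t=\exp(\varkappa_{\kappa(t)})$ of \eqref{def:chi}, under which
$$\sigma(1)=\int_{-\infty}^{\tau(0)}\exp\big((2\alpha-2)\varkappa_u\big)\,\mathrm{d}u,\qquad\tau(0):=\inf\{s\in\mathbb{R}:\varkappa_s>0\}.$$
Under $\mathtt{P}_0$, the auxiliary process $\varkappa$ enjoys the stationarity property recalled between \eqref{eq:limit:overshoot} and the statement of the lemma, and classical results on exponential functionals of L\'evy processes (Bertoin--Yor) give a polynomial tail whose exponent is the positive root of the Laplace exponent $\psi$ in \eqref{eq:LKpsi}; a direct computation on $\psi$ gives this root to be $1/(2(\alpha-1))$ in the time variable of $\varkappa$, which translates precisely into exponent $1/2$ for $\sigma(1)$, matching the claimed $r^{\alpha-1}$ behaviour.

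Third, to identify the constant $c'(\alpha)=c(\alpha)^{-1/2}/(\Gamma(1/2)\Gamma(\alpha))$ I would combine two inputs: the Bertoin--Yor explicit formula $\lim_{t\to\infty}t^{1/2}\mathtt{P}_0(\sigma(1)>t)=\mathtt{E}_0[\mathrm{e}^{-(\alpha-1)\varkappa_0}]\cdot C_\psi$ (where $C_\psi$ is an explicit constant computable from $\psi$), and the explicit form of the overshoot measure $\rho$ from \eqref{eq:limit:overshoot} for symmetric stable L\'evy processes, as worked out in \cite[Chap.\ 5]{kyprianou2022stable}. The combinatorial factor $1/(\Gamma(1/2)\Gamma(\alpha))$ should drop out of the resulting Beta-type integral while the $c(\alpha)^{-1/2}$ factor reflects the normalization of the L\'evy-Khintchine exponent. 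The remainder $o(t^{-1/2-\gamma'})$ is then obtained by a spectral-gap argument in the killed symmetric stable semigroup on $(0,1)$, $\gamma'$ being any exponent strictly less than the second eigenvalue's contribution. The main obstacle will be Step 3: getting the \emph{quantitative} error control, not merely the leading asymptotic. A robust fallback, if the spectral route proves cumbersome, is to use the absolute continuity between $\mathtt{P}_0$ and $\mathtt{P}_x$ for small $x$ via $\rho$ to transfer the remainder estimate from $\mathtt{P}_x$ (for which Lemma \ref{Lem:bad:uni} already supplies polynomial decay) to $\mathtt{P}_0$, provided matching leading asymptotics for $\mathtt{P}_x(\sigma(1)>t)$ are established uniformly in $x\in[0,1]$.
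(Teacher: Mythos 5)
There is a misreading at the outset that derails the whole plan. In this lemma, the notation $Y$ under $\mathtt{P}_0$ refers, by the abuse of notation introduced in the sentence immediately preceding the lemma, to the \emph{free} symmetric $2(\alpha-1)$-stable L\'evy process started at $0$ (not the conditioned process $Y^{\uparrow}$). You use \eqref{sca:dist:operator} — which is the scaling of $Y^{\uparrow}$ — and then pass to the Lamperti picture of $Y^{\uparrow}$, so the quantity you end up bounding is $\mathtt{P}_0(\sup_{[0,1]}Y^{\uparrow}\leq r)$. That has the same polynomial exponent $r^{\alpha-1}$ as the free process (since the positivity parameter is $1/2$ in both cases), but a different constant, and the constant is part of the conclusion: it is precisely what is needed, via \cite{Cha96} and the normalization $\mathbf{n}(\sigma>1)=1$, to identify the renewal function $\mathscr{H}$ in \eqref{const:mathscr:H:x}. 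The paper's proof confirms the intended reading by working directly with Kuznetsov's explicit formula \cite{Kuzne} for the Mellin transform of $\sup_{[0,1]}Y$ for a free stable process (Barnes' double Gamma functions), locating the simple pole at $s_0=2-\alpha$, and shifting the contour to get both the constant $c'(\alpha)$ and the polynomial error term in one stroke.

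Even if one corrects the target process, your outline leaves the two hard parts unproved. The Bertoin--Yor tail theory for exponential functionals applies to a genuine L\'evy process drifting to $+\infty$; here $\sigma(1)$ under $\mathtt{P}_0$ (in your reading) is an exponential functional of a L\'evy process \emph{conditioned to stay positive}, started from a random overshoot — that is not the standard Cram\'er/Bertoin--Yor setting, and the "explicit formula" you invoke does not directly exist for it. The constant $c'(\alpha)=c(\alpha)^{-1/2}/(\Gamma(1/2)\Gamma(\alpha))$ is asserted to "drop out of the resulting Beta-type integral" but no computation is given; the paper obtains it by an explicit residue calculation on the Barnes functions. Finally, the genuinely essential output — the polynomial rate $o(r^{\alpha-1+\gamma})$ — is left as a spectral-gap sketch, and your proposed fallback (transfer from $\mathtt{P}_x$ to $\mathtt{P}_0$ using the overshoot) is circular, since it presupposes the uniform leading-order asymptotic with a rate that you are trying to prove. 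The Mellin route is not a stylistic choice here: the meromorphic continuation with exponential decay along vertical lines is exactly what makes the quantitative error term accessible, and your plan has no substitute for it.
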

The proof of Lemma \ref{sup:Y:local} is technical but builds on standard results for stable Lévy processes. Therefore, we defer also its proof to the next section and conclude this section by proving Theorem~\ref{prop:malo}.

\begin{proof}[Proof of Theorem \ref{prop:malo}] 
As explained right after Theorem \ref{prop:malo} and using our reduction to \eqref{eq:trapped}, it suffices to prove that, under $\mathbf{N}^\bullet$ and  conditionally on $H_{t_{\bullet}}=h$, we can find a $1$-trapping time before time $ H_{t_{\bullet}} -\varpi$ with probability at least $1-C\cdot  h^c$, for some constants $c,C\in (0,\infty)$. In this direction, for $A>0$, under $\mathbf{N}^\bullet$ (resp. $\mathtt{P}_0$), we write $\mathcal{C}(A)$ for the event where there exists a $1$-trapping time for $(Y^{\bullet}_{ \varpi + r }, \mathscr{R}_{\varpi+r}, \mathscr{I}_{\varpi+r})_{r\in [0, H_{t_\bullet}-\varpi]}$ (resp.  $( Y^{\uparrow}_r, \mathscr{R}_r, \mathscr{I}_r)_{r\in[0,\infty)}$)  before time $A$.  Here we recall that, under $\mathbf{N}^\bullet$,  $\varpi$ stands for the instant when $Y^{\bullet}$ attains its global infimum. We are going to conclude by showing that
 \begin{equation}\label{primera:sqr:h:C}
\mathbf{N}^\bullet\big({\mathcal{C}}\big(\sqrt{h}~\big)~\big|~ H_{t_\bullet}=h\big)\geq \mathtt{P}_0\big({\mathcal{C}}\big(\sqrt{h}~\big)\big)-C \cdot h^{-c},\quad h>0,
\end{equation}
for some constants $c,C\in (0,\infty)$. The latter display, combined with  Proposition \ref{prop:eq:P:C:H}, directly implies Theorem  \ref{prop:malo}. To prove \eqref{primera:sqr:h:C} we rely on standard results of excursion theory of Lévy processes.  First, recall that under $\mathtt{P}_0$, the process $Y$ is a  stable symmetric L\'evy process with exponent $\lambda\mapsto c(\alpha) |\lambda|^{2\alpha-2}$. We consider, $L$ a local time at the running infimum of $Y$ and we denote the associated excursion measure by $\mathbf{n}$. We keep the notation $Y$ for the excursion process under $\mathbf{n}$ and we denote its lifetime by $\sigma:=\inf\{t\geq 0:~Y_t\leq 0\}$. The process $L$ and $\mathbf{n}$ are defined up to a positive constant that we fix such that $\mathbf{n}(\sigma>1)=1$. In particular, by scaling we have:
\begin{equation}\label{eq:z:n:sigma}
\mathbf{n}(\sigma >z)= z^{-1/2} \cdot \mathbf{n}(\sigma>1), \quad z>0. 
\end{equation}
For later use, let us introduce the associated renewal function:
$$\mathscr{H}(x):= \texttt{E}_0\Big[\int_0^\infty \mathrm{d} L_t ~\mathbbm{1}_{\inf Y_t>-x}\Big],\quad x\geq 0. $$
It directly follows by scaling invariance that $\mathscr{H}$ is a constant times $x\mapsto x^{\alpha-1}$. This constant can be computed by standard results of Lévy processes. More precisely, by \cite[Equation (4)]{Cha96} combined with \eqref{eq:z:n:sigma} and Lemma \ref{sup:Y:local} we have:
\begin{equation}\label{const:mathscr:H:x}
\mathscr{H}(x)=c^{\prime}(\alpha) \cdot x^{\alpha-1},\quad x\geq 0,
\end{equation}
where $c^\prime(\alpha)$ is the constant appearing in Lemma \ref{sup:Y:local}. By standard results of excursion theory (see e.g. \cite[Section 4]{Getoor} or \cite{Maisonneuve:Certain}), under $\mathbf{N}^{\bullet}(\cdot | H_{t_\bullet}=h)$ and conditionally on $\varpi=a$ with $a<h$, the distribution of $(Y^{\bullet}_{ t+\varpi})_{t\leq h-a}$ is that of  $(Y_{t})_{t\leq h-a}$ under $\mathbf{n}(\cdot | \sigma> h-a)$. We can also decorate the jumps of $Y$ using the measures $(\mathbb{Q}_{a})_{a\in \mathbb{R}}$ defined in \eqref{eq:Q:a:b}  in order to define the associated processes $( \mathscr{R}_{\cdot }, \mathscr{I}_{\cdot})$ under $\mathbf{n}$ (and we keep the same notation). For simplicity, for every $A$,  under $\mathbf{n}$, we  use the notation ${\mathcal{C}}(A)$ for the event where there exists $r\in[0,A]$ such that:
$$   Y_{r-} + \mathscr{I}_r  \leq  \inf_{0 \leq s < r}\big( Y_{s-} + \mathscr{R}_s\big) \leq Y_0^{\uparrow} - 1,$$
which, in words, corresponds to  the existence of a $1$-trapping time for  $( Y_r, \mathscr{R}_r, \mathscr{I}_r)_{r\in[0,\infty)}$  before time $A$. We thus have
\begin{equation*}
\mathbf{N}^\bullet\big({\mathcal{C}}\big(\sqrt{h}~\big)~\big|~ H_{t_\bullet}=h\big)\geq 
\int_{0}^{h-h^{2/3}} \mathbf{N}^\bullet\big(\varpi\in \d a~\big|~ H_{t_\bullet}=h\big) \mathbf{n}\big({\mathcal{C}}\big(\sqrt{h}~\big)\big|\sigma>h-a\big).
\end{equation*}
We now claim that there exist  constants $c,C>0$ such that  for every $h>0$ and $z\geq h^{2/3}$ we have
\begin{equation}\label{eq:claim:C:sqrt:h}
 \mathbf{n}({\mathcal{C}}\big(\sqrt{h}~\big)|\sigma>z)\geq \mathtt{P}_0\big({\mathcal{C}}\big(\sqrt{h}~\big)\big)-C \cdot h^{-c}.
 \end{equation}
Before proving the claim \eqref{eq:claim:C:sqrt:h}, let us explain why \eqref{primera:sqr:h:C} (and then Theorem \ref{prop:malo}) follows from it. The claim \eqref{eq:claim:C:sqrt:h} and the display above it entail that there exist  constants $c,C\in(0,\infty)$ such that: 
 \begin{equation*}
\mathbf{N}^\bullet\big({\mathcal{C}}\big(\sqrt{h}~\big)~\big|~ H_{t_\bullet}=h\big)\geq \mathtt{P}_0\big({\mathcal{C}}\big(\sqrt{h}~\big)\big)-C \cdot h^{-c}-  \mathbf{N}^\bullet\big(\varpi\in [h-h^{\frac{2}{3}}, h]~\big|~ H_{t_\bullet}=h\big),
\end{equation*}
for every $h>0$. Recall now that, under $ \mathbf{N}^\bullet\big(\cdot~\big|~ H_{t_\bullet}=h\big)$, the process  $Y^\bullet$ is a $2(\alpha-1)$-stable symmetric L\'evy process stopped at time $h$ so that by  \cite[Theorem 13 p169]{Ber96} the law $ \mathbf{N}^\bullet\big(h^{-1}\varpi\in \d a~\big|~ H_{t_\bullet}=h\big)$ is a generalized  arcsine law and in particular  $\mathbf{N}^\bullet\big(\varpi\in [h-h^{\frac{2}{3}}, h]~\big|~ H_{t_\bullet}=h\big)$ decays polynomially fast as $h \to \infty$. Therefore, \eqref{primera:sqr:h:C} is a consequence of \eqref{eq:claim:C:sqrt:h}.

It remains to establish the claim \eqref{eq:claim:C:sqrt:h}. To this end, fix $h>1$ and $z\geq h^{2/3}$ and notice that by the Markov property of the measure $ \mathbf{n}$ we have
$$ \mathbf{n}\big({\mathcal{C}}\big(\sqrt{h}~\big)\big|\sigma>z\big)= \mathbf{n}\Big(\mathbbm{1}_{{\mathcal{C}}(\sqrt{h})} \mathtt{P}_{Y^{\uparrow}_{\sqrt{h}}}\big(\sigma>z-\sqrt{h}\,\big)\Big)  \frac{1}{\mathbf{n}(\sigma>z)}, $$
where under $\mathtt{P}_r$ the process $Y$ is a $2(\alpha-1)$ stable L\'evy process started from $r\in \mathbb{R}_+$ and we use the notation $\sigma:=\inf\{t>0:~Y_{t}\leq 0\}$. Since $\mathbf{n}(\sigma>z)=z^{-1/2}$ and $Y$ is a symmetric stable process the previous display equals
\begin{equation*} \mathbf{n}({\mathcal{C}}\big(\sqrt{h}~\big)|\sigma>z)= z^{1/2}\cdot \mathbf{n}\Big(\mathbbm{1}_{{\mathcal{C}}(\sqrt{h})}\mathtt{P}_{Y^{\uparrow}_{\sqrt{h}}/z_0^{1/(2\alpha-2)}}\big(\sup_{[0,1]} Y>0\big)\Big), 
 \end{equation*}
 where $z_0:= z-\sqrt{h}$. For simplicity, let us write $F(r)=c^\prime(\alpha)^{-1}r^{-\alpha+1}\mathtt{P}_r(\sup_{[0,1]}Y>0)$, where again  $c^\prime(\alpha)$ is the constant appearing in Lemma \ref{sup:Y:local}.  Now we use absolute continuity relations between $\mathbf{n}$ and $\mathtt{P}_0$. Specifically, an application of \cite[Theorem 3]{Cha96}, combined with \eqref{const:mathscr:H:x}, entails that: 
 $$ \mathbf{n}({\mathcal{C}}\big(\sqrt{h}~\big)|\sigma>z)=\sqrt{\frac{z}{z_0}} \cdot \widehat{\texttt{E}}_0\Big[ \mathbbm{1}_{{\mathcal{C}}(\sqrt{h})}F\Big(\frac{Y^{\uparrow}_{\sqrt{h}}}{z_0^{1/(2\alpha-2)}}\Big)\Big].$$
To conclude, we use the fact that the Mellin transform of $Y^{\uparrow}_{1}$ under $\mathtt{P}_0$ is explicit (see \cite[Theorems 4.13 and 5.3]{kyprianou2022stable}). In particular, there exists $\gamma > 0$ such that $\mathtt{E}_0[(Y_1^{\uparrow})^{\gamma}] < \infty$. Applying Lemma~\ref{sup:Y:local}, and potentially reducing $\gamma$ further, we deduce the existence of a constant $C_2\in(0,\infty)$ such that $F(y)\geq 1-C_2 \cdot y^{\gamma}, $ for every $y>0$. Hence, we have:
\begin{align*}
 \mathbf{n}\big({\mathcal{C}}\big(\sqrt{h}~\big)\big|\sigma>z\big)
 &\geq \sqrt{\frac{z}{z_0}} \cdot \mathtt{P}_0\Big(\mathcal{C}(\sqrt{h})\Big)- C_2 \sqrt{\frac{z}{z_0}}\cdot \mathtt{E}_0\Big[\Big(\frac{Y^{\uparrow}_{\sqrt{h}}}{z_0^{1/(2\alpha-2)}}\Big)^\gamma\Big]\\
 &=\sqrt{\frac{z}{z_0}} \cdot \mathtt{P}_0\Big(\mathcal{C}(\sqrt{h})\Big)- C_2 \sqrt{\frac{z}{z_0}}\cdot \Big(\frac{h^{\gamma/(4\alpha-4)}}{ z_0^{\gamma/(2\alpha-2)}}\Big)\cdot \mathtt{E}_0[(Y_1^\uparrow)^{\gamma}],
\end{align*}
where to obtain the second line we used the scaling invariance under $\mathtt{P}_0$. The desired result \eqref{eq:claim:C:sqrt:h} now holds, for $h>1$, recalling that $z_0=z-\sqrt{h}$ and $z>h^{2/3}$. We can then increase the constant $C$ in order to extend the \eqref{eq:claim:C:sqrt:h}  to every $h>0$.
\end{proof}

\subsection{Proof of the technical lemmas} \label{sec:technical}
In this section, we prove  the technical lemmas used in Section \ref{sec:comparison:ideal}. We begin with Lemma~\ref{sup:Y:local}, which was   used in the previous proof  to obtain a coupling with a polynomial error.

\begin{proof}[Proof of Lemma \ref{sup:Y:local}]
The content of the result is the presence of a polynomial gap between the first and second order in the above asymptotic. Our proof is built entirely upon the results and arguments of \cite{Kuzne}, see also \cite[Chapter 7]{kyprianou2022stable}. If $\alpha$ is irrational, the desired result follows directly from  \cite[Theorem 9]{Kuzne}, which gives an explicit infinite series representation for  the density of  $\sup_{[0,1]} Y$ under $\mathtt{P}_0$. Unfortunately, the statement of the lemma, although weaker, cannot be directly derived from the results in \cite{Kuzne} for general $\alpha$. However, we can deduce our lemma by adapting some of the arguments therein.  First, we introduce the Mellin transform of  $\sup_{[0,1]} Y$ under $\mathtt{P}_0$, that is the function:
$$\mathcal{M}(s):=\mathtt{E}_0\Big[(\sup_{[0,1]} Y)^{s-1}\Big],$$
which by standard result on Lévy processes  is well defined and finite if $\mathrm{Re}(s)$ is sufficiently close to $1$,  see for e.g. \cite[Proposition 7.5]{kyprianou2022stable}. 
Furthermore \cite[Theorem 7]{Kuzne}  establishes that $\mathcal{M}$ can be analytically continued to a meromorphic function in $\mathbb{C}$ and  gives a closed  formula for this extension. Namely, it shows that
$$\mathcal{M}(s)=  \Big(2(\alpha-1) c(\alpha)^{\frac{1}{2(\alpha-1)}}\Big)^{s-1} \cdot  \frac{G(\alpha-1; 2\alpha-2)}{ G(\alpha; 2\alpha-2)}\cdot \frac{G(\alpha+1-s; 2\alpha-2)}{G(2\alpha-1-s; 2\alpha-2)}\cdot \frac{G(2\alpha-3+s; 2\alpha-2)}{G(\alpha-2+s; 2\alpha-2)},$$
where $G(s; \tau)$ is Barnes' double Gamma function, we refer to  \cite{Bar99,Bar01} for background. The function $G$ can be defined as an infinite product in Weierstrass’s form as follows:
$$G(s,\tau)=\frac{s}{\tau}\exp(\frac{a(\tau) s}{\tau}+ \frac{b(\tau) s^2}{2\tau})\prod_{m\geq 0} \prod_{n\geq 0}\!'\big(1+\frac{s}{m\tau +n}\big)\exp\big(-\frac{s}{m\tau+ n}+ \frac{s^2}{2(m\tau+n)^2}\big) , \quad s\in \mathbb{C}, \tau\geq 0,$$
where $a,b$ are two fine tuned functions only depending on $\tau$ and  the prime in the second product means that the term corresponding to $m = n = 0$ is omitted;  see \cite{Bar99}.
In particular,  $s\mapsto G(s, 2\alpha-2)$ is an entire function on $\mathbb{C}$, and only vanishes on the lattice $m+2(\alpha-1)n$,  for $m,n\leq 0$. Moreover, the multiplicity of the zero at a point  $m_0+2(\alpha-1)n_0$ is the cardinal of the set $\{(m,n)\in \mathbb{Z}_{\geq 0}^{2}:~m_0+2(\alpha-1)n_0=m+2(\alpha-1)n \}$. In particular, when $\alpha$ is rational the zeros might not be simple in general. However, the point $0$ is always a simple zero of $s\mapsto G(s,2\alpha-2)$. Now, fix $v\in \mathbb{R}$, such that  $3-2\alpha<v< 2-\alpha$. Then, by the previous discussion, we infer that  $G(\alpha+1-s;2\alpha-2), G(2\alpha-3+s;2\alpha-2)$ and $G(2\alpha-1-s;2\alpha-2)$ do not vanish on $\{s\in\mathbb{C}:~\mathrm{Re}(s)\in[v,1]\}$, and $G(\alpha-2+s;2\alpha-2)$ only vanishes on the latter set at $s_0:=2-\alpha$ which is a simple zero. Hence, $s_0:=2-\alpha$ is the unique  pole of $\mathcal{M}(s)$ with $\mathrm{Re}(s)\in [v,1]$.
Moreover, using the the  infinite product in Weierstrass’s form, we infer that the residue at $s_0$ equals:
$$\widetilde{c}(\alpha):=(2\alpha-2)^{2-\alpha}\cdot c(\alpha)^{\frac{1}{2}} \cdot  \frac{G(\alpha-1; 2\alpha-2)}{ G(\alpha; 2\alpha-2)}\cdot \frac{G(2\alpha-1; 2\alpha-2) G(\alpha-1; 2\alpha-2)}{G(3\alpha-3; 2\alpha-2)}.$$
The latter expression can be simplified using the quasi-periodic properties of $G$ and the fact that $G(1,2\alpha-2)=1$, see \cite{Bar99} and (4.6) in \cite{Kuzne}, and we obtain $\widetilde{c}(\alpha)=c(\alpha)^{\frac{1}{2}}/(\Gamma(\frac{1}{2})\Gamma(\alpha-1))$.
To derive the desired result, we use that since $(\alpha-1)\in(0,1)$ and the Lévy process is symmetric,  \cite[Lemma 3]{Kuzne} entails that $\mathcal{M}(s)$ decreases exponentially fast as $|\text{Im}(s)|\to \infty$. Therefore, $\sup_{[0,1]} Y$ under $\mathbb{P}_0$ has a smooth density function $x\mapsto p(x)$, and  the inverse Mellin transform gives:
$$p(x)=\frac{1}{2\pi \mathrm{i}}\int_{1+\mathrm{i}\mathbb{R}} x^{-s}\mathcal{M}(s) \d s, $$
for every $x>0$ (for $x\leq 0$ we simply have $p(x)=0$ since $\sup_{[0,1]} Y>0$ a.s.). Finally, shifting the contour of integration from $1+\mathrm{i} \mathbb{R}$ to $v+\mathrm{i} \mathbb{R}$, we get:
$$p(x)=\tilde{c}(\alpha)\cdot x^{\alpha-2}+ \int_{v+\mathrm{i}\mathbb{R}}  x^{-s}\mathcal{M}(s)\d s.  $$
 The desired result follows since:
 \begin{align*}
\big|\int_{v+\mathrm{i}\mathbb{R}}  x^{-s}\mathcal{M}(s)\d s\big|= x^{-v} \big|\int_{\mathbb{R}}  x^{-\mathrm{i} t}\mathcal{M}(v+\mathrm{i} t)\d t\big|\leq  x^{-v} \int_{\mathbb{R}}  \big|\mathcal{M}(v+\mathrm{i} t)\big|\d t={O}(x^{-v}),
\end{align*}
where to obtain the last equality we used again that $\mathcal{M}(s)$ decreases exponentially  fast as $|\text{Im}(s)|\to \infty$, and writing $\mathtt{P}_{0}\big(\sup_{[0,1]} Y\leq r \big)=\int_0^r p(x) \mathrm{d} x$.
\end{proof}
The rest of the section is devoted to the proof of Lemmas \ref{lem:stoptrap} and \ref{Lem:bad:uni}, used to establish Proposition~\ref{prop:eq:P:C:H}. Their proof is more involved,  and we start proving two intermediate result. In this direction, we introduce the notation:
 $$\zeta(z):=\inf\{t\geq 0:~Y^{\uparrow}_t>z\},$$ for every $z\geq 0$. We also stress that, under $\mathtt{P}_x$ and recalling \eqref{def:chi} as well as the notation above it, we must have $\zeta(z)=\int_{-\infty}^{\varsigma(\log(z))}\exp((2\alpha-2)\varkappa_u) \d u $.

\begin{lem}\label{lem:exp:tau:log:z}
 The following properties hold:
\begin{itemize}
\item[$\mathrm{(i)}$] For every $\gamma> -2(\alpha-1)$, there exists $C_\gamma>0$ such that:
\begin{equation}\label{lem:eq:lim:22}
 \sup \limits_{x\in[0,z]}\mathtt{E}_x\Big[\int_{0}^{\zeta(z)}~ (Y_t^\uparrow)^\gamma~ \d t\Big]\leq C_\gamma\cdot z^{\gamma+2(\alpha-1)},\quad z>0.
\end{equation}
\item[$\mathrm{(ii)}$] For every $\gamma\leq \alpha-1$, there  there exists $\widetilde{C}_\gamma>0$ such that:
\begin{equation}\label{eq:Y:gamma:zeta}
 \sup \limits_{x\in[0,z]}\mathtt{E}_x\Big[(Y^\uparrow_{\zeta(z)})^\gamma\Big]\leq \widetilde{C}_\gamma \cdot z^{\gamma},\quad z>0.
\end{equation}
\end{itemize}
\end{lem}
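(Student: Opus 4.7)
The plan is to use the self-similarity of $Y^\uparrow$ to reduce to the case $z=1$, invoke the Lamperti representation \eqref{def:chi} to recast both quantities as functionals of the underlying Lévy process $\varkappa$, and exploit classical fluctuation theory to obtain bounds that are uniform in the starting point.

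Applying the scaling identity \eqref{sca:dist:operator} with $c=1/z$, under $\mathtt{P}_x$,
$$\int_0^{\zeta(z)}(Y^\uparrow_t)^\gamma\, dt \ \stackrel{d}{=}\ z^{\gamma+2(\alpha-1)} \int_0^{\zeta(1)}(Y^\uparrow_t)^\gamma\, dt, \qquad Y^\uparrow_{\zeta(z)} \ \stackrel{d}{=} \ z\, Y^\uparrow_{\zeta(1)},$$
with right-hand sides taken under $\mathtt{P}_{x/z}$. Thus (i) and (ii) reduce to uniform bounds over $y \in [0,1]$ at level $z=1$. For $y \in (0,1]$, setting $\varkappa_0 = \log y \leq 0$ and performing the change of variable $u=\kappa(t)$, so that $dt = e^{(2\alpha-2)\varkappa_u}\, du$, in \eqref{def:chi}, transforms the two quantities of interest into
$$\int_0^{\varsigma(0)} e^{\beta \varkappa_u}\, du \quad \text{with } \beta := \gamma + 2(\alpha-1), \qquad \text{respectively} \qquad e^{\varkappa_{\varsigma(0)}},$$
where $\varsigma(0) = \inf\{u \geq 0 : \varkappa_u > 0\}$ and $\beta > 0$ in case (i). The case $y=0$ is handled through the convergence in distribution of $Y^\uparrow$ under $\mathtt{P}_y$ to $Y^\uparrow$ under $\mathtt{P}_0$, recalled after \eqref{eq:limit:overshoot}, combined with standard monotone/Fatou arguments applied to the functionals above.

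From \eqref{eq:LKpsi}, the Lévy measure of $\varkappa$ has density equivalent to $e^{-(\alpha-1)y}$ as $y \to +\infty$ and to $e^{\alpha y}$ as $y \to -\infty$; hence $\varkappa$ has finite positive mean (so drifts to $+\infty$) and $\mathtt{E}[e^{q\varkappa_1}] < \infty$ for $q \in (-\alpha, \alpha-1)$. For (i), fluctuation theory expresses the expected occupation measure on $(-\infty, 0]$ of $\varkappa$ killed at $\varsigma(0)$, starting from $x \leq 0$, via the renewal measure of the descending ladder-height subordinator, with renewal function $\widehat{V}$ growing only linearly at $-\infty$ because $\mathtt{E}[\varkappa_1] > 0$; integrating $e^{\beta v}$ against this measure yields
$$\sup_{x \leq 0} \mathtt{E}_x\Big[\int_0^{\varsigma(0)} e^{\beta \varkappa_u}\, du\Big]\ \leq\ \int_{-\infty}^{0} e^{\beta v}\, \widehat{V}(-v)\, dv \ <\ \infty,$$
since $\beta > 0$. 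For (ii), the overshoot $\varkappa_{\varsigma(0)}$ starting from $x \leq 0$ is stochastically dominated, uniformly in $x$, by its law under $\mathtt{P}_0$; the tail $e^{-(\alpha-1)y}$ of the Lévy measure is inherited by the ascending ladder-height Lévy measure, so $\mathtt{E}_0[e^{\gamma \varkappa_{\varsigma(0)}}] < \infty$ for $\gamma \leq \alpha - 1$, the case $\gamma \leq 0$ being trivial since $\varkappa_{\varsigma(0)} \geq 0$.

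The principal obstacle is uniformity as $y \to 0$, i.e.\ as $\varkappa_0 = \log y \to -\infty$: in this regime $\mathtt{E}_y[\varsigma(0)]$ grows like $|\log y|$, so the crude bound $e^{\beta \varkappa_u} \leq 1$ together with control of $\varsigma(0)$ is fatally lossy. It is precisely the integration against the exponentially damped renewal function above that captures the fact that $\varkappa$ spends most of its time near $\log y$, where $e^{\beta\varkappa_u} \approx y^\beta$ is very small, and this is what restores uniformity.
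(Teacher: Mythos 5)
Your route is genuinely different from the paper's, and both halves of it have gaps.

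The paper's central device is to convert the supremum over $x\in[0,z]$ into a quantity under $\mathtt{P}_0$ alone: applying the Markov property at $\zeta(x)$ \emph{under $\mathtt{P}_0$}, using that $Y^\uparrow_{\zeta(x)}/x$ has law $\widetilde\rho$, and exploiting that the scaling factor $(Y^\uparrow_{\zeta(x)}/x)^{\gamma+2(\alpha-1)}\geq 1$ because $Y^\uparrow_{\zeta(x)}\geq x$ and $\gamma>-2(\alpha-1)$, one obtains
\[
\mathtt{E}_0\Big[\int_0^{\zeta(az)}(Y^\uparrow_t)^\gamma\,\d t\Big]\geq\widetilde\rho\big((1,a)\big)\,\sup_{x\in[0,z]}\mathtt{E}_x\Big[\int_0^{\zeta(z)}(Y^\uparrow_t)^\gamma\,\d t\Big]\, ,
\]
so the threshold $\gamma>-2(\alpha-1)$ is precisely what makes the reduction painless. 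The $\mathtt{P}_0$-expectation is then computed exactly from the time-reversal duality of \cite{BerSav} for (i), and from the explicit overshoot density \eqref{eq:rhobertoin} for (ii). Your scaling reduction and Lamperti change of variables coincide with the paper's, but you replace this step by an attempt to bound each $\mathtt{E}_x$ individually via fluctuation theory of $\varkappa$.

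Two concrete problems with that. In (i), the inequality $\sup_{x\leq 0}\mathtt{E}_x\big[\int_0^{\varsigma(0)}e^{\beta\varkappa_u}\,\d u\big]\leq\int_{-\infty}^0 e^{\beta v}\,\widehat{V}(-v)\,\d v$ is asserted without derivation and is not the correct expression: the occupation density of $\varkappa$ killed at $\varsigma(0)$ starting from $x<0$ is a convolution of both ladder renewal measures and is not monotone in $x$, nor pointwise bounded by $\widehat{V}(-v)$. What actually works is simpler — the killed Green's function is dominated by the free $0$-potential density, which is uniformly bounded because $\varkappa$ is transient with drift to $+\infty$, after which $\int_{-\infty}^0 e^{\beta v}\,\d v<\infty$ for $\beta>0$ suffices — but this is not the argument you wrote. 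You have also swapped the behaviors of $V$ and $\widehat{V}$: for a process drifting to $+\infty$, the descending renewal function $\widehat{V}$ is \emph{bounded}; it is the ascending one that grows linearly. In (ii), the claim that the overshoot $\varkappa_{\varsigma(0)}$ from $x\leq 0$ is stochastically dominated, uniformly in $x$, by the stationary overshoot under $\mathtt{P}_0$ is false in general (a deterministic renewal process already gives a counterexample), and you give no reason it should hold for this particular process. The paper avoids both issues entirely via the $\mathtt{P}_0$-reduction above. Finally, since the ladder-height tail of $\varkappa$ is exactly $\asymp e^{-(\alpha-1)y}$, an exponential-moment bound on the ascending ladder-height L\'evy measure only yields $\gamma<\alpha-1$, not the boundary case $\gamma=\alpha-1$; the paper reads that off the explicit density of $\widetilde\rho$ (and in any case only needs some $\gamma>0$ downstream).
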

\begin{proof}  Recall from Section \ref{sec:comparison:ideal} the notation $\rho$ for the distribution of the overshoot at $0$ of $\varkappa$  under $ \mathtt{P}_0$ and write $\widetilde{\rho}$ for the pushforward of $ \rho( \mathrm{d}r, \mathrm{d}s)$ by $ (r, s) \mapsto \mathrm{e}^s$. We fix $a>1$ such that $\widetilde{\rho}((1,a))>0$. To simplify  notation, for every $x\geq 0$ we consider $\mathtt{P}_x^\prime$ a copy of $\mathtt{P}_x$. Under $\mathtt{P}_x^\prime$ we use the same notation as under  $\mathtt{P}_x$ adding a prime in the expressions, and we write $\mathtt{E}_x^\prime$ for the associated mathematical expectation. In particular, $Y^{\uparrow, \prime}$ is a self-similar Markov process and $\zeta^\prime(z):=\inf\{ t\geq 0:~Y_t^{\uparrow,\prime}> z\}$. 
We start by showing that it suffices to establish \eqref{lem:eq:lim:22} and \eqref{eq:Y:gamma:zeta} with respect to $\mathtt{P}_0$ and with $z=1$ only. To this end,  let $z,x>0$, with  $0 \leq  x \leq z$, and fix  $\gamma > -2(\alpha-1)$. Next remark that  we have:
\begin{eqnarray*}
\mathtt{E}_0\Big[\int_{0}^{\zeta(az)}  ~\big(Y_t^\uparrow\big)^\gamma~\d t\Big] &\geq&\texttt{E}_0\big[\big(\int_{\zeta(x)}^{\zeta(az)} ~ \big(Y_t^\uparrow\big)^\gamma~\d t\big)\mathbbm{1}_{Y_{\zeta(x)}^\uparrow \leq a x}\big]\\
& \underset{ \mathrm{Markov}}{=} &\texttt{E}_0\Big[\texttt{E}_{Y_{\zeta(x)}}^{\prime}\Big[\int_{0}^{\zeta^\prime(az)} ~ (Y_t^{\uparrow,\prime})^\gamma~\d t\Big]\mathbbm{1}_{Y_{\zeta(x)}^\uparrow \leq a x}\Big]\\
& \underset{ \mathrm{scaling}}{=} & \texttt{E}_0\Big[\texttt{E}_{x}^\prime\Big[\int_{0}^{\zeta^\prime(az x/Y^{\uparrow}_{\zeta(x)})} ~ (Y_t^{\uparrow,\prime})^\gamma~\d t\Big]\Big(\frac{Y_{\zeta(x)}^{\uparrow}}{ x}\Big)^{\gamma+2(\alpha-1)} \mathbbm{1}_{Y_{\zeta(x)}^{\uparrow}\leq a x}\Big]\\
& \underset{\begin{subarray}{c} Y_{\zeta(x)} \geq x \\ \gamma > -2(\alpha-1) \end{subarray}}{\geq} & \widetilde{\rho}((1,a ))\cdot   \mathtt{E}_{x}^\prime\Big[\int_{0}^{\zeta^\prime(z)} ~ (Y_t^{\uparrow,\prime})^\gamma~\d t\Big].
 \end{eqnarray*}  For point (ii), similarly by the Markov property and scaling, we have:
$$
\texttt{E}_0\Big[(Y^\uparrow_{\zeta(az)})^\gamma\Big]\geq \texttt{E}_0\Big[\texttt{E}_{Y_{\zeta(x)}}^{\prime}\Big[\big(Y^{\uparrow,\prime}_{\zeta(az)}\big)^\gamma\Big]\mathbbm{1}_{Y_{\zeta(x)}\leq a x}\Big]\geq  \widetilde{\rho}((1,a)) \cdot \texttt{E}_x\Big[\big(Y_{\zeta(z)}^\uparrow\big)^\gamma\Big].
$$
We derive that, for every $z>0$, we have:
$$ \sup \limits_{x\in[0,z]}\mathtt{E}_x\big[\int_{0}^{\zeta(z)}~ \big(Y_t^\uparrow\big)^\gamma~ \d t\big]\leq 
\frac{\mathtt{E}_0\big[\int_{0}^{\zeta(az)}  ~\big(Y_t^\uparrow\big)^\gamma~\d t\big]}{ \widetilde{\rho}((1,a))}=\frac{\mathtt{E}_0\big[\int_{0}^{\zeta(1)}  ~\big(Y_t^\uparrow\big)^\gamma~\d t\big]}{ \widetilde{\rho}((1,a))}\cdot (az)^{\gamma+2(\alpha-1)},$$
and 
$$ \sup \limits_{x\in[0,z]}\mathtt{E}_x\Big[\big(Y^{\uparrow}_{\zeta(z)}\big)^\gamma\Big]\leq \frac{\mathtt{E}_0\Big[\big(Y_{\zeta(az)}^{\uparrow}\big)^\gamma\Big]}{\widetilde{\rho}((1,a))}=\frac{\mathtt{E}_0\Big[\big(Y_{\zeta(1)}^\uparrow\big)^\gamma\Big]}{\widetilde{\rho}((1,a))} \cdot (az)^\gamma $$
where to obtain the right terms  we used again the scaling invariance.  Hence, to deduce \eqref{lem:eq:lim:22} and  \eqref{eq:Y:gamma:zeta}, it suffices to show that $\mathtt{E}_0\big[\int_{0}^{\zeta(1)}  ~(Y_t^\uparrow)^\gamma~\d t\big]<\infty$, for $\gamma>-2(\alpha-1)$, and $\mathtt{E}_0\Big[(Y^\uparrow_{\zeta(1)})^\gamma\Big]<\infty$, for $\gamma\leq \alpha-1$. This follows from the Lamperti construction. Indeed, under $ \mathtt{P}_0$,  we have 
$(Y^\uparrow_{\zeta(1)})^\gamma = \exp(\gamma \varkappa_0)$ and $\varkappa_0 \sim \widetilde{\rho}(\mathrm{d}v)$. The measure $\rho$ is explicit \cite[Eq. (5)]{BerSav} and can be written in the form
  \begin{eqnarray} \label{eq:rhobertoin}\widetilde{\rho}(\d v)= c\cdot\Big(\int_1^{\infty} \texttt{P}_{u}(\inf_{t\geq 0} \varkappa_t >0) \cdot u^{\alpha-1}(u v-1)^{-2\alpha+1}  \d u\Big) v^{\alpha-1} \mathbbm{1}_{v\geq 1} \d v,  \end{eqnarray}
where $c>0$ is a normalization constant. 
This  entails that $\mathtt{E}_0\Big[\big(Y_{\zeta(1)}^\uparrow\big)^\gamma\Big]<\infty$, for $\gamma\leq \alpha-1$. To see it, just write:
$$\mathtt{E}_0\Big[\big(Y^\uparrow_{\zeta(1)}\big)^\gamma\Big]= \int_{1}^{\infty} v^\gamma~ \widetilde{\rho}(\d v)\leq \exp(2\gamma ) + c \int_{[1,\infty]\times [2,\infty]} \frac{u^{-\alpha}\cdot v^{\gamma+\alpha-1}}{(v-1)^{2\alpha-1}} \d u\d v$$
and the right-hand term is finite since $\alpha+\gamma\leq 2\alpha-1$. The argument to obtain $\mathtt{E}_0\big[\int_{0}^{\zeta(1)}  ~\big(Y_t^\uparrow\big)^\gamma~\d t\big]<\infty$ is similar. First by the Lamperti transformation \eqref{def:chi} we have: 
$$\mathtt{E}_0\Big[\int_{0}^{\zeta(1)}  ~\big(Y_t^\uparrow\big)^\gamma~\d t\Big] = \mathtt{E}_0\Big[\int_{-\infty}^{0}    \ \mathrm{e}^{(2(\alpha-1)+\gamma) \varkappa_t} ~\mathrm{d}t\Big].$$ Recalling that under $\mathtt{P}_0$ the process $(\varkappa_{-t-})_{t\geq 0}$ is conditionally on $\varkappa_{0-}$ distributed as $(\varkappa_{t})_{t\geq 0}$ under $ \mathtt{P}_{ \mathrm{e}^{- \varkappa_{0-}}}(\cdot \mid \inf_{t\geq 0} \varkappa_t>0)$, it follows from  the explicit description of (the first marginal of) $\rho$ given in \cite[Eq. (5)]{BerSav} combined with the 
the duality relation \cite[Theorem 1 (ii)]{BerSav} that 
$$ \mathtt{E}_0\Big[\int_{0}^{\zeta(1)}  ~\big(Y_t^\uparrow\big)^\gamma~\d t\Big]=\tilde{c}\int_0^{\infty} \mathtt{P}_{\exp(s)}\big(\inf\limits_{t\geq 0}\varkappa_t>0\big) \exp(-(2(\alpha-1)+\gamma)s)\d s, $$
for some fixed constant $\tilde{c}>0$. Here, we used that, since $(\varkappa_t)_{t\geq 0}$ drift towards $\infty$, the function  $s\mapsto \mathtt{P}_{\exp(s)}\big(\inf\varkappa>0\big)$ is a  renewal function of the dual of the Lévy process $(\varkappa_t)_{t\geq 0}$. 
 Finally, remark that $s\mapsto \mathtt{P}_{\exp(s)}\big(\inf_{t\geq 0}\varkappa_t>0\big)$
is sub-additive and therefore  the right-hand side  of the previous display is finite as soon as $\gamma>-2(\alpha-1)$. This completes the proof of the lemma since $s\mapsto \mathtt{P}_{\exp(s)}\big(\inf_{t\geq 0}\varkappa_t>0\big)$
is sub-additive.
\end{proof}
\noindent The second intermediate result is:
\begin{lem}\label{bound:V:zeta}
There exist $c,C\in(0,\infty)$ such that for every $r>0$ we have
$$\sup \limits_{x\in [0,r]} \mathtt{P}_x \big(\underline{ \mathscr{R}}_{\zeta(r)-}\leq -1\big)\leq C\cdot r^{c}. $$

\end{lem}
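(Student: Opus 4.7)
The plan is to reduce the estimate to a tail bound at a fixed scale via the self-similarity of $Y^\uparrow$, and then to control the tail using a union bound over the jumps of $Y^\uparrow$ together with the tail estimate of Lemma~\ref{lem:tech:Q}~(ii) for $\mathscr{R}$ and the integral estimate of Lemma~\ref{lem:exp:tau:log:z}~(i) for $Y^\uparrow$.

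First, the self-similarity identity \eqref{sca:dist:operator} combined with the scaling relation \eqref{scaling:a:Q} shows that, for every $r>0$,
$$
\sup_{x\in[0,r]}\mathtt{P}_x\big(\underline{\mathscr{R}}_{\zeta(r)-}\leq -1\big)=\sup_{y\in[0,1]}\mathtt{P}_y\big(\underline{\mathscr{R}}_{\zeta(1)-}\leq -R\big),\qquad R:=1/r,
$$
so it suffices to exhibit $c,C>0$ such that $\sup_{y\in[0,1]}\mathtt{P}_y(\underline{\mathscr{R}}_{\zeta(1)-}\leq -R)\leq C R^{-c}$ for every $R\geq 1$, the case $r\geq 1$ being trivial. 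Since $\underline{\mathscr{R}}$ only varies at jump times of $Y^\uparrow$, a union bound combined with the fact that, conditionally on $Y^\uparrow$, each $\mathscr{R}_s$ has law $\mathbb{Q}_{\Delta Y^\uparrow_s}$, yields
$$
\mathtt{P}_y\big(\underline{\mathscr{R}}_{\zeta(1)-}\leq -R\big)\leq \mathtt{E}_y\Big[\sum_{s<\zeta(1):\,\Delta Y^\uparrow_s\neq 0}\mathbb{Q}_{\Delta Y^\uparrow_s}\big(\mathscr{R}\leq -R-Y^\uparrow_{s-}\big)\Big].
$$
Using $Y^\uparrow_{s-}\geq 0$, the monotonicity of $q_\alpha$, together with the scaling \eqref{scaling:a:Q} and Lemma~\ref{lem:tech:Q}~(ii), each conditional probability above is bounded by a constant times $q_\alpha(|\Delta Y^\uparrow_s|/R)\wedge 1$.

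The next step is the compensation formula. Through the Lamperti transformation $Y^\uparrow_t=\exp(\varkappa_{\kappa(t)})$, the jumps of $Y^\uparrow$ are inherited from those of the Lévy process $\varkappa$, whose Lévy measure has density proportional to $\mathrm{e}^{\alpha z}|\mathrm{e}^z-1|^{-(2\alpha-1)}$. After the time change, this translates into a jump kernel $\nu_u(\mathrm{d}v)$ of $Y^\uparrow$ at state $u>0$ which enjoys the scaling $\int \nu_u(\mathrm{d}v)f(|v|)=\int\tilde\nu(\mathrm{d}w)f(u|w|)$, for a fixed measure $\tilde\nu$ on $(-1,\infty)$. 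A direct case analysis of $\int\tilde\nu(\mathrm{d}w)(q_\alpha(a|w|)\wedge 1)$, splitting the domain at $|w|=1/a$, shows that this quantity is bounded by a constant times $a^{\alpha-1}$ as $a\downarrow 0$ in all three regimes of $q_\alpha$. Taking $a=Y^\uparrow_t/R$ and plugging back gives
$$
\mathtt{P}_y\big(\underline{\mathscr{R}}_{\zeta(1)-}\leq -R\big)\leq C\, R^{-(\alpha-1)}\,\mathtt{E}_y\Big[\int_0^{\zeta(1)}(Y^\uparrow_t)^{\alpha-1}\,\mathrm{d}t\Big],
$$
and since $\alpha-1>-2(\alpha-1)$, Lemma~\ref{lem:exp:tau:log:z}~(i) bounds the last expectation uniformly in $y\in[0,1]$ by a constant, yielding the lemma with $c=\alpha-1$.

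The main technical obstacle will be the estimation of the inner integral against the kernel $\tilde\nu$: the three regimes of $q_\alpha$ (subquadratic for $\alpha<3/2$, $y^2\log$-type at $\alpha=3/2$, and quadratic for $\alpha>3/2$) interact with the singularity of $\tilde\nu$ at $w=0$ and its $|w|^{-\alpha}$ decay at infinity, so the polynomial exponent $a^{\alpha-1}$ arises only after a careful case analysis and non-trivial cancellations in each regime. The case $y=0$ is finally handled by the convergence $\mathtt{P}_y\Rightarrow \mathtt{P}_0$ as $y\downarrow 0$ combined with a Fatou-type argument applied to the bound proved for $y>0$.
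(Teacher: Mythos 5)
Your overall strategy matches the paper's: a union bound over the jumps, Lemma~\ref{lem:tech:Q}~(ii) together with the scaling~\eqref{scaling:a:Q} to control each conditional probability $\mathbb{Q}_{\Delta Y^\uparrow_s}(\mathscr{R}\leq\cdot)$, the compensation formula transported through the Lamperti transform, and Lemma~\ref{lem:exp:tau:log:z}~(i) to close. However, your jump-kernel scaling is wrong. Since $Y^\uparrow_t=\exp(\varkappa_{\kappa(t)})$ with $\mathrm{d}\kappa/\mathrm{d}t=(Y^\uparrow_t)^{-(2\alpha-2)}$, translating the compensation formula for the jump process of $\varkappa$ back to $Y^\uparrow$-time picks up the Jacobian $(Y^\uparrow_t)^{-(2\alpha-2)}$; the correct identity is $\int\nu_u(\mathrm{d}v)\,f(|v|)=u^{-(2\alpha-2)}\int\tilde\nu(\mathrm{d}w)\,f(u|w|)$, so the exponent on $Y^\uparrow_t$ in your displayed inequality should be $(\alpha-1)-(2\alpha-2)=1-\alpha$, not $\alpha-1$. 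Fortuitously, $1-\alpha>-2(\alpha-1)$ as well (both inequalities reduce to $\alpha>1$), so Lemma~\ref{lem:exp:tau:log:z}~(i) with $\gamma=1-\alpha$ still gives a uniform constant and your final bound $C\,r^{\alpha-1}$ stands; but the intermediate display as written is incorrect.

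Two smaller remarks on presentation. The paper avoids your three-regime case analysis of $\int\tilde\nu(\mathrm{d}w)\,(q_\alpha(a|w|)\wedge 1)$ by first noting that, for any fixed $\beta\in(2\alpha-2,(2\alpha-1)\wedge 2)$, one has $q_\alpha(y)\wedge 1\leq C_\beta\,y^\beta$; this collapses the three regimes into a single power bound, after which the compensation formula produces the integral $\int_0^{r/Y^\uparrow_t}u^{\alpha-1}|u-1|^{\beta-2\alpha+1}\,\mathrm{d}u\leq C'_\beta\,(r/Y^\uparrow_t)^{\beta-\alpha+1}$, giving the result in one line (and with the stronger exponent $c=\beta>2\alpha-2>\alpha-1$). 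Also, your handling of $y=0$ by weak convergence $\mathtt{P}_y\Rightarrow\mathtt{P}_0$ is not robust as stated: $\underline{\mathscr{R}}_{\zeta(1)-}$ is not a functional of $Y^\uparrow$ alone but also depends on the auxiliary Poissonian decoration of its jumps, so the convergence of the law of $Y^\uparrow$ does not immediately give convergence of this tail probability. The paper instead establishes the compensation-formula identity for $x>0$, and extends it to $x=0$ by applying the Markov property after a small time $\varepsilon$ and letting $\varepsilon\downarrow 0$ by monotone convergence; that is the argument you should use here.
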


 \begin{proof} Let $r>0$ and $0 \leq x\leq r$. We start with the following trivial bound:
\begin{align*}
\texttt{P}_x \big(\underline{ \mathscr{R}}_{\zeta(r)-}\leq -1\big)\leq \texttt{E}_x\big[\sum_{t< \zeta(r)}\mathbbm{1}_{\mathscr{R}_t < -Y^\uparrow_{t-} -1} \big]= \texttt{E}_x\big[\sum_{t<\zeta(r)}\mathbb{Q}_{\Delta Y^\uparrow_{t}}\big( \mathscr{R} < -Y^\uparrow_{t-} -1\big)\big].
\end{align*}
Let $ 2\alpha-2 < \beta< (2 \alpha-1) \wedge 2 $, and notice that  Lemma \ref{lem:tech:Q} ensures the existence of a constant
 $C_\beta>0$ such that:
$$ \mathtt{P}_x \big(\underline{ \mathscr{R}}_{\zeta(r)-}\leq -1\big)\leq C_\beta \cdot  \mathtt{E}_x\big[\sum_{t<\zeta(r)}\big|\Delta Y_t^\uparrow \big|^{\beta}\big].$$
Assume for a moment that $x\neq 0$, and remark that in this case we can apply \eqref{def:chi} to translate the right term of the last display  in the following form:
$$C_\beta\cdot  \texttt{E}_x\Big[\sum_{t>0:~\Delta \varkappa_t\neq 0} \mathbbm{1}_{\sup\limits_{s\in [0,t]} \varkappa_{s}<\log(r)} \exp(\beta \varkappa_{t-})|\exp(\Delta \varkappa_t)-1|^{\beta}\Big],$$
where recall that under $ \mathtt{P}_x$ the L\'evy $ (\varkappa_t)_{t\geq 0}$ starts from $\log x$. We can now apply the compensation formula to the jump process of  $\varkappa$ to get that the expectation in the previous display equals:
\begin{align*}
& \quad \texttt{E}_x\Big[\sum\limits_{t>0:~\Delta_t \varkappa\neq 0} \mathbbm{1}_{\sup\limits_{s\in[0,t]} \varkappa_{s-}<\log(r)} \exp(\beta\varkappa_{t-})|\exp(\Delta \varkappa_t)-1|^{\beta}\mathbbm{1}_{\varkappa_{t-}+\Delta_t \varkappa<\log(r)}\Big]\\
&\underset{ \mathrm{Comp.\ form.} \& \eqref{eq:LKpsi}}{=}  \texttt{E}_x\Big[\int_{0}^{\infty} \d t~   \mathbbm{1}_{\sup\limits_{s\in[0,t]} \varkappa_{s-}<\log(r)}  \exp(\beta\varkappa_{t})\cdot \int_{-\infty}^{\log(r)-\varkappa_{t-}}\d s~\exp(\alpha s) |\exp(s)-1|^{\beta-2\alpha+1}
\Big]\\
&\underset{ \eqref{def:chi} , u = \mathrm{e}^s}{=} \texttt{E}_x\Big[\int_{0}^{\zeta(r)} \d t ~ (Y_{t}^\uparrow)^{\beta-2(\alpha-1)} 
\cdot \int_{0}^{r/Y_{t}^\uparrow}\d u~u^{\alpha-1} |u-1|^{\beta-2\alpha+1}
\Big].
\end{align*}
We have obtained that:
$$ \texttt{E}_x\Big[\sum_{t<\zeta(r)}\big|\Delta Y_t^\uparrow \big|^{\beta}\Big]=    \texttt{E}_x\Big[\int_{0}^{\zeta(r)} \d t ~ (Y_{t}^\uparrow)^{\beta-2(\alpha-1)} 
\cdot \int_{0}^{r/Y_{t}^\uparrow}\d u~u^{\alpha-1} |u-1|^{\beta-2\alpha+1}
\Big],$$
for $x\in(0,r]$, and this equality extends to the case $x=0$ by considering the process $Y_{\varepsilon+\cdot}^\uparrow$, for $\varepsilon>0$,  applying Markov property and then taking the limit $\varepsilon\downarrow 0$ thanks to monotone convergence. Next remark that, since $\beta>2\alpha- 2$, a  straightforward computation gives the existence of a constant $C_\beta^\prime>0$ such that $\int_{0}^{v}\d u~ u^{\alpha-1} |u-1|^{\beta-2\alpha+1}\leq C_\beta^\prime \cdot v^{\beta-\alpha+1} $
for every $v\geq 1$. We derive that: 
$$ \mathtt{P}_x \big( \underline{ \mathscr{R}}_{\zeta(r)-}\leq -1\big)\leq C_\beta \cdot C_\beta^\prime \cdot r^{\beta-\alpha+1}\cdot \mathtt{E}_x\Big[\int_{0}^{\zeta(r)} \d t ~ (Y_{t}^\uparrow)^{-\alpha+1} \Big],  $$
and the desired result now follows from Lemma \ref{lem:exp:tau:log:z} (i) with $\gamma=-\alpha+1$.

\end{proof}

We can now proceed with the proofs of Lemmas \ref{lem:stoptrap} and \ref{Lem:bad:uni}, and we start with the former. In this direction, recall from \eqref{def:T:A:trap} that $T_{a}$ is the first time at which the process $ \underline{ \mathscr{R}}_{t} = \inf_{s \leq t}( \mathscr{R}_{s} + Y^\uparrow _{s-})$ drops below level $a$. 

\begin{proof}[Proof of Lemma \ref{Lem:bad:uni}] 
We begin by reducing the proof of the desired result to the case where $x$ is near $0$ and then we conclude by applying Lemma \ref{lem:tech:Q}.  In this direction, we use the same notation as in the previous proof and we write   $\widetilde{\rho}$ for the pushforward of $ \rho( \mathrm{d}r, \mathrm{d}s)$ by $ (r, s) \mapsto \mathrm{e}^s$. We also  fix  $p>1$ such that $\widetilde{\rho}((p,\infty))>0$. By Lemma \ref{bound:V:zeta} and the convergence of the law of the overshoots \eqref{eq:limit:overshoot}, combined with Portemanteau theorem, we infer that we can find  $0\leq r_1\leq r_2<1$ such that: \begin{equation}\label{eq:Y:z:p:r:2} \inf_{x\in [0,r_1]}\Big( \mathtt{P}_{x}\big(Y_{\zeta(r_2)}^\uparrow>p r_2\big)- \mathtt{P}_{x}\big(\underline{ \mathscr{R}}_{\zeta(r_2)-}\leq -1\big)\Big)>0. \end{equation} 
Next, we claim that it suffices to show  that the quantity:
$$C:= \inf_{x\in [0,r_1]}  \mathtt{P}_x\big( T_{-1} \textnormal{ is a  $(1/r_1)$-trapping time}\big)  $$
is positive. Actually,  this implies the desired result since directly $C\leq  \inf_{x\in [0,r_1]}  \mathtt{P}_x\big( T_{-1} \textnormal{ is a  $1$-trapping time}\big)$, and by scaling, for every $x\in[r_1,1]$, we have:
\begin{align*}
\mathtt{P}_{x}\big(T_{-1} \textnormal{ is a  $1$-trapping time}\big)&= \mathtt{P}_{r_1}\big(T_{-x/r_1} \textnormal{ is a  $(x/r_1)$-trapping time}\big)\\
&\geq \mathtt{P}_{r_1}\big(T_{-1} \textnormal{ is a  $(1/r_1)$-trapping time}\big)\geq C, 
\end{align*}
where to obtain the first inequality we used that $x\geq r_1$ and the definition of trapping times.  Let us now establish that $C>0$, see Figure \ref{fig:sketch} for an illustration of the argument.
\begin{figure}[!h]
 \begin{center}
 \includegraphics[width=10cm]{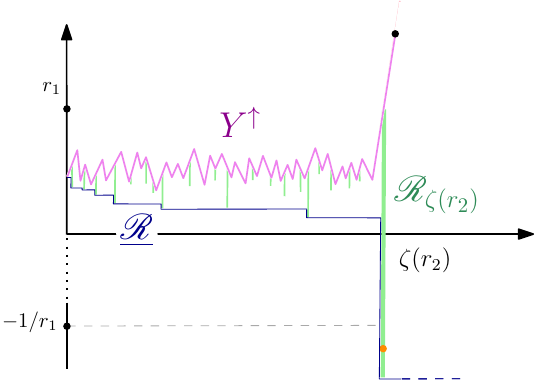}
 \caption{Illustration of the proof of $C>0$ in Lemma \ref{lem:stoptrap}: with a positive probability, the processes $ Y^\uparrow$ (in pink) and $ \mathscr{R}$ (in green) may barely move over $[0,\zeta(r_2))$ and the first large jump at time $\theta$ produces a $(1/r_1)$-trapping time. \label{fig:sketch}}
 \end{center}
 \end{figure}
To this end consider  $x\leq r_1$, and note that the Markov property entails that: $$ \texttt{P}_x\Big(\{T_{-1}=\zeta(r_2)\}\cap\{\mathscr{I}_{\zeta(r_2)} \leq -1/r_1 \}\Big)= \texttt{E}_x\Big[\mathbbm{1}_{\underline{ \mathscr{R}}_{\zeta(r_2)-}>-1}\mathbb{Q}_{\Delta Y_{\zeta(r_2)}^{\uparrow }}\big(\mathscr{I} \leq -1/r_1-Y^{\uparrow}_{\zeta(r_2)-}\big)\Big]. $$ Next, by scaling \eqref{scaling:a:Q} and taking $M:=\frac{2}{(p-1)r_1 r_2}$,  under the event $\{Y_\zeta(r_2)>p r_2\}$, we have: $$\mathbb{Q}_{\Delta Y_{\zeta(r_2)}^{\uparrow }}\big(\mathscr{I} \leq -1/r_1-Y^{\uparrow}_{\zeta(r_2)-}\big)\geq\mathbb{Q}_{1}\big(\mathscr{I} \leq -M\big)>0,$$ where the second inequality comes from Lemma \ref{lem:tech:Q} (i).  Hence, we obtain:
\begin{align*}
\texttt{P}_x\Big(\{T_{-1}=\zeta(r_2)\}\cap\{\mathscr{I}_{\zeta(r_2)} \leq -1 \}\Big)&\geq\mathbb{Q}_{1}\big(\mathscr{I} \leq -M\big)\cdot \mathtt{P}_{x}\big(\{Y_{\zeta(r_2)}^\uparrow>p r_2\}\cap\{ \underline{ \mathscr{R}}_{\zeta(r_2)-}> -1\}\big)\\
&\geq \mathbb{Q}_{1}\big(\mathscr{I} \leq -M\big)\cdot \Big(\mathtt{P}_{x}\big(Y_{\zeta(r_2)}>p r_2\big)- \mathtt{P}_{x}\big( \underline{ \mathscr{R}}_{\zeta(r_2)-}\leq -1\big)\Big).
\end{align*}
Therefore, it follows  from \eqref{eq:Y:z:p:r:2} that $C>0$.
\end{proof}
\noindent We conclude  Section \ref{sec:prison} and Part \ref{PartI} by proving Lemma \ref{Lem:bad:uni}.

\begin{proof}[Proof of Lemma \ref{Lem:bad:uni}]
As in the proof of Lemma \ref{lem:stoptrap}, the main idea is to argue that large jumps of the process $Y^{\uparrow}$ are likely to produce large negative $ \mathscr{R}$-values so that $\underline{ \mathscr{R}}$ drops below $-1$. More precisely, recalling for $r \geq 0$ that $ \zeta(r) = \inf \{ t \geq 0 : Y^{\uparrow}_{t} \geq r\}$, we start by claiming that there exist $c,C>0$ satisfying
\begin{equation}\label{eq:Y:T:1:r:f}\sup_{x\in [0,1]} \mathtt{P}_x(T_{-1}>\zeta(r))\leq C\cdot r^{-c}, \end{equation}
for every $r\geq 0$. Before proving the claim, let us explain why the lemma follows from it. On one hand, notice that $ \mathtt{P}_x(Y^\uparrow_{T_{-1}}>r)\leq  \mathtt{P}_x(T_{-1}>\zeta(r))$, and thus \eqref{eq:Y:T:1:r:f} entails
\begin{equation*}
\sup_{x\in [0,1]}  \mathtt{P}_x(Y^\uparrow_{T_{-1}}>r)\leq C\cdot r^{-c}, 
\end{equation*}
for every $r>0$. On the other hand, we have:
  \begin{eqnarray*} \mathtt{P}_x(\underline{ \mathscr{R}}_{T_{-1}}<-r) &\leq&  \mathtt{P}_x(T_{-1}\geq \zeta(\sqrt{r}))+ \mathtt{P}_x(\underline{ \mathscr{R}}_{\zeta(\sqrt{r})-}<-r) \\ & \underset{ \mathrm{scaling}}{=}&  \mathtt{P}_x(T_{-1}\geq \zeta(\sqrt{r}))+ \mathtt{P}_{x/r}(\underline{ \mathscr{R}}_{\zeta(1/ \sqrt{r})-}<-1).   \end{eqnarray*}
Combining Lemma \ref{bound:V:zeta} with \eqref{eq:Y:T:1:r:f}, we infer that there exist $c^\prime,C^\prime>0$ such that 
$$\sup_{x\in[0,1]}  \mathtt{P}_x\big(\underline{ \mathscr{R}}_{T_{-1}}<-r\big)\leq   C^\prime\cdot  r^{-c^\prime},$$
for every $r>1$. We can then increase the constant $C^\prime$ in order to extend the previous inequality to every $r>0$. Finally, to control $ \mathtt{P}_x(T_{-1}>r)$, we similarly  write:
 \begin{eqnarray*} \mathtt{P}_x(T_{-1}>r) &\leq&  \mathtt{P}_x(T_{-1}\geq \zeta(\sqrt{r}))+ \mathtt{P}_x(\zeta(\sqrt{r})\geq r)\\
 & \underset{ \mathrm{Markov}}{\leq}& \mathtt{P}_x(T_{-1}\geq \zeta(\sqrt{r}))+ r^{-1} \mathtt{E}_{x}[\zeta(\sqrt{r})].  \end{eqnarray*}
The desired result now follows by \eqref{lem:eq:lim:22} with $\gamma=0$,  and \eqref{eq:Y:T:1:r:f}. Therefore, to conclude it  remains to prove  \eqref{eq:Y:T:1:r:f}. The argument is similar to the one used in the proof of Proposition \ref{prop:eq:P:C:H}. In this direction,  let $x\in[0,1]$ and $r>1$ and introduce a sequence of (time) scales defined by $S_{1}:=\zeta(1)$ and  $S_{i+1}:=\zeta(2Y^\uparrow_{S_i})$ for every $i\geq 1$. We also set $R_i:=Y^\uparrow_{S_{i+1}}/Y^\uparrow_{S_i}$ and we remark that by self-similarity the random variables $R_i$, $i\geq 1$, are i.i.d. with common distribution $2Y^\uparrow_{\zeta(1)}$, under $\mathtt{P}_{1/2}$. 
Since $S_{i+1}$ is the first time when $Y^\uparrow$ cross level $2Y_{S_i}^\uparrow$, we have:
$$\sup \limits_{s\leq S_{m+1}}Y^\uparrow_s=Y^\uparrow_{\zeta(1)}\cdot \prod_{1\leq i\leq m} R_i~,\quad m\geq 2. $$
Hence, for every $m\geq 2$, we get:
  \begin{eqnarray} \label{eq:techtechtech2} \mathtt{P}_x\big(T_{-1}> \zeta(r)\big)\leq  \mathtt{P}_x\Big(Y^\uparrow_{\zeta(1)}\cdot \prod_{1\leq i\leq m-1} R_i> r\Big)+ \mathtt{P}_x\Big(T_{-1} \geq S_m\Big).  \end{eqnarray}
 We are going to control each term separately in terms of $m$ and then choose $m$ in a convenient regime. To this end, note that it follows from Lemma \ref{lem:exp:tau:log:z} (ii) that  we can find $\gamma>0$, such that:
$$\sup_{x\in [0,1]} \mathtt{E}_x\big[(Y^\uparrow_{\zeta(1)})^{\gamma}\big]<\infty.$$
Therefore,  an application of Markov inequality shows
  \begin{eqnarray*}  \mathtt{P}_x\Big(Y^\uparrow_{\zeta(1)}\cdot \prod_{1\leq i\leq m} R_i> r\Big) &\underset{ \mathrm{Markov}}{\leq}& r^{-\gamma}  \cdot \mathtt{E}_x\big[(Y^\uparrow_{\zeta(1)})^\gamma\cdot \prod_{1\leq i\leq m} R_i^\gamma\big] \\ 
  &=&r^{-\gamma}  \cdot \mathtt{E}_x\big[(Y^\uparrow_{\zeta(1)})^\gamma\big]\cdot \mathtt{E}_{1/2}\big[(Y^\uparrow_{\zeta(1)})^\gamma\big]^{m} \\
  & \leq & A^{m}\cdot  r^{-\gamma},  \end{eqnarray*} for some $A >0$. Let us now give an upper bound of the remaining term. To this end, we set: $$a:= \mathtt{E}_1\Big[\mathbb{Q}_{\Delta Y^\uparrow_{\zeta(2)}}\big(\mathscr{I} >-Y^\uparrow_{\zeta(2)-}-1\big)\Big],$$ and remark that by Lemma \ref{lem:tech:Q} (i) we have $a\in[0,1)$ since $\zeta(2)$ is a jumping time for $Y^\uparrow$, under $\mathtt{P}_1$. The reason to introduce the constant $a$ is that by scaling and,  for every $y\geq 1$ we have: $$L(y):=\mathtt{E}_y\Big[\mathbb{Q}_{\Delta Y^\uparrow_{\zeta(2y)}}\big(\mathscr{I} >-Y^\uparrow_{\zeta(2y)-}-1\big)\Big]=\mathtt{E}_1\Big[\mathbb{Q}_{\Delta Y^\uparrow_{\zeta(2)}}\Big(\mathscr{I} >- Y^{\uparrow}_{\zeta(2)-} -1/y\Big)\Big]\leq a. $$  The latter combined with the Markov property entails: $$\mathtt{P}_x\big(T_{-1} \geq S_m\big)\leq \mathtt{P}_x\big(\bigcap \limits_{2\leq i\leq m}\{\mathscr{I}_{S_i}>-1\}\big)=\mathtt{E}_{x}\Big[\big(\prod_{i=2}^{m-1} \mathbbm{1}_{\mathscr{I}_{S_i}>-1} \big)\cdot L(Y^\uparrow_{S_{m-1}})\Big]\leq a\cdot \mathtt{P}_x\big(\bigcap \limits_{2\leq i\leq m-1}\{\mathscr{I}_{S_i}>-1\}\big). $$ Iterating this argument we get $\mathtt{P}_x\big(\bigcap \limits_{2\leq i\leq m}\{\mathscr{I}_{S_i}>-1\}\big)\leq a^{m-1}.$ 
 Putting all together, we have obtained that 
$$ \mathtt{P}_x\big(T_{-1}> \zeta(r)\big)\leq A^{m}\cdot r^{-\gamma}+a^{m-1}; $$
for every $m\geq 2$. So taking $m=1+ \lfloor(\gamma \log(r))/(2\log(A))\rfloor$ we derive that $ \mathtt{P}_x(T_{-1}> \zeta(r))\leq C \cdot r^{-c}$, for some constant $c,C\in(0,\infty)$.
\end{proof}

\part{Random maps}\label{part:random_maps}

In this part, we prove the main results on scaling limits of discrete stable Boltzmann planar maps. We start by recalling the classical Bouttier--Di Francesco--Guitter (BDG) construction of those maps from bicolored labeled trees also called well labeled mobiles. Passing to the scaling limit of this construction leads us directly to the process $(X,Z)$ analyzed in detail in the previous section. As presented in the introduction, the proof of our main result is divided into several steps:
\begin{itemize}
\item In Section \ref{sec:BDG}, we recall the framework of \cite{LGM09} and construct the candidate $ (\mathcal{S}, D^{*})$ for the scaling limit using the (already established) scaling limits of the BDG construction. We also prove useful bounds on  distances.
\item In Section \ref{sec:topology}, we prove that the distance $D^{*}$ constructed in \eqref{def:dstar} induces the same topology as any sub sequential limits $D$, see Theorem  \ref{main_theorem_topology}.
 We identify exactly this topology in the dilute phase (Theorem \ref{main-topo}) using Proposition \ref{prop:records-loops} and Moore's theorem.
\item In Section \ref{sec:D=D*} we present the proof of our main result, admitting two results on the geometry of geodesics. This serve as a motivation for the last three sections which are devoted to the study of geodesics.
\item To do this, Section \ref{sec:boltzm-stable-maps} introduces a variant of the BDG construction of Section \ref{sec:BDG} which gives more information on discrete geodesics. This construction is then passed to the scaling limit in Section \ref{sec:scalingunicyclo}.
\item The final Section \ref{secP:uni:geo} builds upon the previous two in order derive properties of (typical) geodesic in the scaling limit of our stable Boltzmann maps and in particular prove the results used in Section \ref{sec:D=D*}.
\end{itemize}

\section{BDG$^\bullet$-bijection and subsequential scaling limits} \label{sec:BDG}

After recalling the classical BDG bijection, we present here the results of \cite{LGM09} and introduce the candidate for the scaling limits of Boltzmann maps with large faces. 
\subsection{BDG$^\bullet$ construction}\label{sec:bdg-construction}

We recall here the construction of planar maps from labeled
mobiles. Details can be found in \cite{MM07}.  A \textbf{mobile} is a plane tree $ \mathcal{T}$ with black and white vertices such that the root of the tree is white and all the neighbors of a black (resp.\ white) vertex are white (resp.\ black). Equivalently, a vertex of a {mobile} is white if its distance to the root is even, and black otherwise. We write $ {V}_\circ( \mathcal{T})$ and $ {V}_\bullet( \mathcal{T})$ the set of white and black vertices of $\mathcal{T}$. We
denote  the number of children of a vertex 
$v\in\mathcal{T}$ by $k_v(\mathcal{T})$, and we let $\widehat{V}_\circ(\mathcal{T}):=\{v\in
V_\circ(\mathcal{T}):k_v(\mathcal{T})=0\}$ be the set of \textbf{white leaves}
of $\mathcal{T}$.   A \textbf{label function} on $\mathcal{T}$ is a
function $\ell:V_\circ(\mathcal{T})\to \mathbb{Z}$ and we  say that
$\bT:=(\mathcal{T},\ell)$ is a labeled mobile. Moreover,  a labeled mobile $(\mathcal{T},\ell)$ is  called {\bf well-labeled} if the function $\ell$ satisfies the following properties:
\begin{itemize}
\item The label of the root is $0$;
\item  For every black vertex, $v\in V_\bullet( \mathcal{T})$, and  two neighbors of $v$ consecutive in the clockwise order $u_{1},u_{2}\in V_\circ(\mathcal{T})$, we have:
$$ \ell(u_{2})\geq \ell(u_{1})-1.$$
\end{itemize}

 We now follow \cite{BDFG04} and present a bijection between the set of pairs $(\bT,\epsilon)=(\mathcal{T},\ell,\epsilon)$, where 
 $\bT$ is  a well-labeled mobile and $\epsilon \in\{-1,1\}$ is a sign, and the set $\mathcal{M}^\bullet$ of pointed  bipartite planar maps. See Figure \ref{fig:BDGconstruction} for an illustration. Recall that $\mathcal{T}$ is a plane tree, which we view as a planar map with one face. The contour of this unique face, that is, the cyclic sequence of the corners of the face appearing in counterclockwise order, defines what we call the \textbf{contour order} around $\mathcal{T}$.
 We extend the definition of the labeling function $\ell$ to the set of corners of $\mathcal{T}$ incident to a white vertex (we naturally call them ``white corners''), by letting $\ell(c)=\ell(v)$ if the corner $c$ is incident to the white vertex $v$.

 The bijection now goes as follows. First, we introduce a new vertex $v_*$ belonging to the face of $\mathcal{T}$.  Second, we draw an arc going from every white corner $c$ to its successor corner, which is the first white corner with label $\ell(c)-1$ appearing after $c$ in the contour of $\mathcal{T}$. If $c$ has no successor, that is, if $\ell(c)$ equals the minimum value of $\ell$, then we link it to the vertex $ v_{*}$. These arcs can be drawn in a non-crossing manner following the orientation of the plane, and the embedded graph, whose vertex set equals $V_\circ(\mathcal{T})\cup\{v_*\}$, and whose edges are given by the arcs (hence discarding the edges of $\mathcal{T}$ and its black vertices), is a bipartite planar map that we denote by $ \mathrm{BDG}^\bullet( \bT, \epsilon)$. This map is pointed at $v_{*}$, and is rooted at the arc from the root corner of $\mathcal{T}$ to its successor, with this orientation if $\epsilon=1$, or with the reverse orientation if $\epsilon=-1$. In particular, when $\bT$ is made of a single white vertex, the resulting map $ \mathrm{BDG}^\bullet(\bT, \epsilon)$ is the ``edge-map'' $\to$ with has no face, one oriented edge and  two vertices, and which is pointed either at the origin or the extremity of the oriented edge depending on $ \epsilon$.

A few geometric properties of $(\bm,v_*)=\mathrm{BDG}^\bullet( \bT,\epsilon)$ can be directly deduced from this construction. To state them properly we recall that we denote the graph distance of $\bm$ by  $ \mathrm{d}^{ \mathrm{gr}}_{\bm}$. We also use the 
following standard terminology: a path of length $k$ in $\bm$ is a sequence $x_{0},e_{1},x_{1},e_{2},..., x_{k-1},e_{k},x_{k}$ where $x_{0},x_{1},...,x_{k}$ are vertices of $\bm$ and $e_{1},...,e_{k}$ are edges of $\bm$ such that $e_{i}$ connects $x_{i-1}$ and $x_{i}$ for every $i\in[\![1,k]\!]$. The path is called  a \textbf{geodesic} if its length is exactly the graph distance between $x_0$ and $x_k$. Then it follows from the definition of $\mathrm{BDG}^\bullet$ that:
\begin{itemize}
\item The faces  of degree $2k$  in $  \bm=\mathrm{BDG}^\bullet( \bT, \epsilon)$ are in correspondence with the black vertices of $\bT$ of degree $k$ in $ \mathcal{T}$. 
\item For every  $v\in  V_\circ(\mathcal{T})$,  starting from a corner adjacent to $v$ and following the arcs joining the consecutive iterated successors of $c$ until we reach $v_{*}$, we obtain  a geodesic path in $ \mathrm{BDG}^{\bullet}( \bT, \epsilon)$ connecting $v$ and $v_{*}$. In particular, we have:
\begin{equation}\label{dist:v_*:BDG}
   \mathrm{d}^{ \mathrm{gr}}_{\bm}(v,v_{*})=\ell(v)-\min\ell +1.
\end{equation}
\end{itemize}
The so-called ``\textbf{Schaeffer bound}'' is an improved version of the previous argument and states that for every vertices $v,v^{\prime}$ of $\mathrm{BDG}^{\bullet}( \bT, \epsilon)\setminus\{v_*\}$:
\begin{equation}\label{d_n^circ:BDG}
  |\ell(v)-\ell(v^{\prime})|\leq  \mathrm{d}^{  \mathrm{gr}}_{\bm}(v,v^{\prime})\leq \ell(v)+\ell(v^{\prime})-2\max\big( \min\limits_{[v,v^{\prime}]_{\mathcal{T}}}\ell;\min\limits_{[v^{\prime},v]_{\mathcal{T}}}\ell\big) +2~,
\end{equation}
where $[v,v^{\prime}]_{\mathcal{T}}$ stands for a minimal set of vertices appearing in clockwise order when going from $v$ to $v^{\prime}$, see the display after \cite[Equation (74)]{LGM09}, or  Section \ref{sec:D<D*} below for more details.

\begin{figure}[!h]
 \begin{center}
 \includegraphics[width=14.5cm]{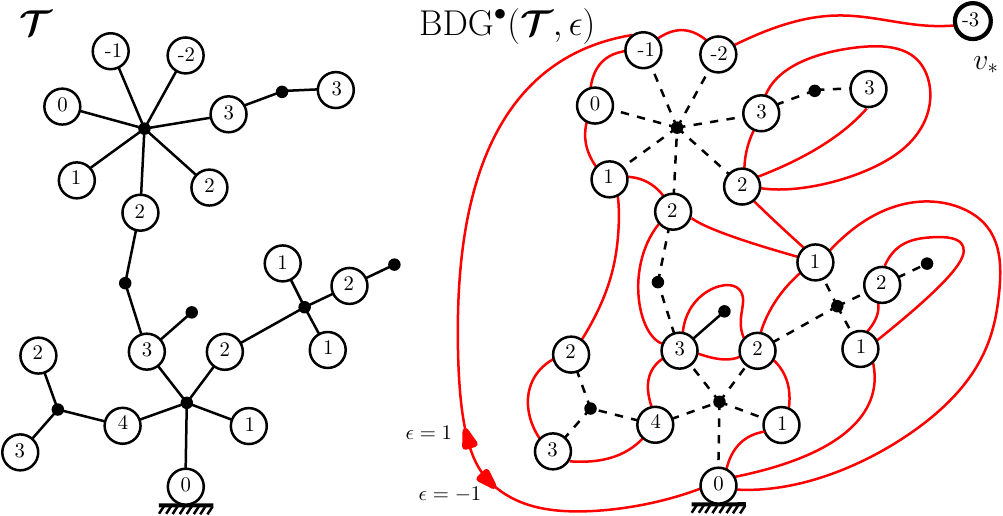}
 \caption{Illustration of the Bouttier--Di Francesco--Guitter construction of a planar pointed map (in red on the right) from a well-labeled mobile (on the left). The pointed vertex is $ v^*$ and the orientation of the root edge is given by an independent sign $ \epsilon$. \label{fig:BDGconstruction}}
 \end{center}
 \end{figure}

 \paragraph{Link with Boltzmann laws.}  \label{section:loiboltz}The connection with our model of $ \mathbf{q}$-Boltzmann planar map defined in the Introduction is as follows.  Let $ \mathbf{q}= (q_{k})_{ k \geq 1}$ be a non-zero sequence  of non-negative numbers and recall the definition of the $ \mathbf{q}$-\textbf{Boltzmann measures}  $w_{\mathbf{q}},w_\bq^\bullet$ given in and after \eqref{eq:defface}.   We assume that $w_{\mathbf{q}}$ is \textbf{admissible} in the sense that the total mass $w_{\mathbf{q}}(\mathcal{M})$ is finite.   Perhaps surprisingly, this implies, by  \cite[Corollary 3.15]{CurStFlour}, the stronger property that  $w_\bq^\bullet(\mathcal{M}^\bullet)\in (0,\infty)$. In particular, the $\bq$-Boltzmann distributions $w_\bq/w_\bq(\mathcal{M})$ and $w_\bq^\bullet/w_\bq^\bullet(\mathcal{M}^\bullet)$ are well-defined. 
We also recall that $w_\bq^\bullet(\mathcal{M}^\bullet)=2z_\bq$, where the quantity $z_\bq$ is defined at \eqref{eq:zbq}.

Then we  consider a $\mathbf{q}$-alternating two-type Bienaym\'{e}--Galton--Watson tree 
$\mathcal{T}$:  a tree with  white and black vertices at even and odd generations respectively  which reproduce independently according to the following offspring distributions
\begin{equation}
  \label{eq:mucirc_mubullet}
  \mu_\circ(k) = \zbq^{-1} (1 - \zbq^{-1})^k
\qquad\text{and}\qquad
\mu_\bullet(k) = \frac{\zbq^{k+1} \binom{2k+1}{k} q_{k+1}}{\zbq-1},
\end{equation}
for all $k \ge 0$. Note that the generating functions of these laws are respectively
\begin{equation}
  \label{eq:gencircbullet}
g_\circ(x)=\frac{1}{\zbq-x(\zbq-1)}\, ,\qquad g_\bullet(x)=\frac{g_\bq(\zbq x)-1}{(\zbq-1)x}\, ,  
\end{equation}
where we recall the definition of  $g_\bq$, given in \eqref{eq:genq},
$$g_\bq(x)=1+\sum_{k\geq 1}\binom{2k-1}{k-1}\, q_k\, x^k\, ,\qquad x\geq 0 .$$
 The law of the mobile $\mathcal{T}$ thus obtained  is denoted by $ \mathrm{GW}_{ \mathbf{q}}$.
Conditionally on $\mathcal{T}$, we consider random labels which are uniform over all well-labelings of $ \mathcal{T}$, and let $\bT$ be the resulting random well-labeled mobile. We also let $\epsilon$ be a uniform random variable on $\{-1;1\}$, independent of $ \bT$.  Proposition 7 of \cite{MM07}\footnote{Recast according to our convention of including the edge-map, instead of the vertex-map used in \cite{MM07}.} then states  that: 
$$ \mathrm{BDG}^{\bullet}( \bT,\epsilon) \mbox{ under } \mathrm{GW}_{ \mathbf{q}}( \mathrm{d} \mathcal{T}) \quad \overset{(d)}{=} \quad  \frac{w_\bq^\bullet(\cdot)}{w_\bq^\bullet(\mathcal{M}^\bullet)}.$$ For simplicity, we will say that  $\bT$  is a $\mathbf{q}$-\textbf{Boltzmann mobile}. Note in particular that if let $\bTn$ be a $\mathbf{q}$-{Boltzmann mobile} conditioned to have $n-1$ white vertices, then the map  $\mathrm{BDG}^\bullet(  \bTn,\epsilon)$ whose distinguished  vertex $v_{*}$ has been forgotten is a $ \mathbf{q}$-Boltzmann map conditioned to have $n$ vertices, meaning that it has law $w_\bq(\cdot \, |\, \#V=n)$.

 \subsection{Non-genericity assumption and scaling constants}  \label{sec:non-generic}
 Our basic assumption \eqref{eq:nongenerictail}, or equivalently \eqref{eq:nongenericgenf}, that $\bq$ is non-generic with exponent $\alpha\in (1,2)$ has simple interpretations in terms of the random tree $\bT$. First, note that \eqref{eq:nongenericgenf}  implies that $g'_{\bq}(\zbq)=1$, and therefore that 
 the two-type branching process associated with $\mathcal{T}$ is critical, in the sense that $ m_{\bullet }m_{\circ}=1$, where $m_{\bullet}=(\zbq-1)^{-1}$ and $m_{\circ}=\zbq-1$ are the means of $\mu_{\bullet}$ and $ \mu_{\circ}$. This also implies that 
 the law $\mu_{\circ\bullet}$ of the number of grandchildren of the root vertex of $\mathcal{T}$ has mean $1$, and satisfies:   \begin{eqnarray} \label{eq:tailmu}\mu_{\circ\bullet}((k,\infty))\sim \frac{\scal}{|\Gamma(1-\alpha)|(2k)^\alpha} \, ,\qquad k\to\infty\, .  \end{eqnarray}
To see this, note that $\mu_{\circ\bullet}$ has generating function $g_{\circ\bullet}(x)=g_\circ(g_\bullet(x))$, see \cite[Equation (14)]{LGM09}. 

As we pointed in the Introduction, ensuring the non-genericity assumption \eqref{eq:nongenericgenf} requires a fine-tuning of the weight sequence $(q_{k})_{k \geq 1}$. In order to convince the reader that this is still a reasonable assumption, we shall discuss several examples below. To help the reader navigate between the relevant references \cite{LGM09} and \cite{CurStFlour}, we provide a table translating the notation:

\begin{center}
\begin{tabular}{|c|c|c|}
\hline
This paper & \cite{LGM09} & \cite{CurStFlour}\\
\hline
$\alpha$ & $\alpha$ & $\alpha-\frac{1}{2}$\\
$g_{\bq}(x)$ & $x f_{\bq}(x+1)$ & $f_{\bq}(x)$ \\
$\zbq$ & $Z_{\bq}$ & $Z_{\bq}= \frac{c_\bq}{4}$\\
$\scal$ & $(2 c_0)^{\alpha}$ & $\kappa Z_{\bq}^{a-\frac{1}{2}}= \frac{p_{\bq} Z_{\bq} (2\sqrt{\pi})}{\Gamma\big(a+\frac{1}{2}\big)}$\\
\hline
\end{tabular}
\end{center}

As a first example, let us adapt the discussion around \cite[Proposition 2]{LGM09}:  Fix any weight sequence $\bq^\circ$ such that $q^\circ_k\sim k^{-\alpha-1/2}$ as $k\to\infty$.
Then, there exists a unique choice of the positive constants $C$ and $\beta$, with explicit values
$$C=\frac{1}{g_{\bq^\circ}'(1/4)}\, ,\qquad \beta=1-\frac{4(g_{\bq^\circ}(1/4)-1)}{g_{\bq^\circ}'(1/4)}$$
such that the sequence $\bq$ defined by 
\begin{equation}
  \label{eq:asymptoticqk}
  q_k=C\, \left(\frac{\beta}{4}\right)^{k-1}\, q^\circ_k\, ,\qquad k\geq 1
\end{equation}
is admissible, critical and non-generic with exponent $\alpha$. Moreover, for this particular choice, it holds that $\beta=\zbq^{-1}$.  In this case, the constant $\scal$ of (\ref{eq:nongenericgenf}) is given explicitly by:
$$\scal=\frac{2^{\alpha+1}C\Gamma(-\alpha)}{\beta\sqrt{\pi}}=\frac{2^{\alpha+1}\Gamma(-\alpha)}{(g_{\bq^\circ}'(1/4)-4g_{\bq^\circ}(1/4)+4)\sqrt{\pi}}\, .$$

\medskip 

A second example is provided by \cite{BBG11}, see also \cite{BudOn}, in connection with  random maps endowed with a statistical mechanics model. A loop-decorated rigid quadrangulation $ (\mathfrak{q},  \mathbf{l})$ is a planar map whose faces are all quadrangles, together with a family of non-intersecting loops $ \mathbf{l}=(l_{i})_{i \geq 1}$ on the dual map, in such a way that these loops can only cross quadrangles through opposite sides.  A measure on such configurations is defined by putting 
$$ W_{h,g, \mathfrak{n}}( ( \mathfrak{q}, \mathbf{l})) := g^{| \mathfrak{q}|}h^{| \mathbf{l}|}  \mathfrak{n}^{\# \mathbf{l}},$$
for $g,h>0$ and $ \mathfrak{n} \in (0,2)$ where $| \mathfrak{q}|$ is the number of edges of the quadrangulation, $| \mathbf{l}|$ is the total length of the loops and $\# \mathbf{l}$ is the number of loops. Provided that the measure $W_{h,g, \mathfrak{n}}$ has finite total mass, one can use it to define  a random loop-decorated quadrangulation denoted by $ \mathfrak{M}_{h,g, \mathfrak{n}}$. We then consider the \textbf{gasket} of $ \mathfrak{M}_{h,g, \mathfrak{n}}$ obtained by pruning off the interiors of the outer-most loops. It is easy to see that this gasket is actually a Boltzmann map for some (complicated) weight sequence depending on $h,g, \mathfrak{n}$. Fix $ \mathfrak{n} \in (0,2)$. For most of the parameters $(g,h)$ the gasket, once conditioned to be large, converges towards the Brownian sphere.  However, there is a fine tuning of $g$ and $h$ (actually a critical line) for which they are non-generic, with exponent
$$ \alpha =  \frac{3}{2} \pm \frac{1}{\pi} \arccos( \mathfrak{n}/2),$$ see \cite{BBG11,BudOn} or   \cite{kammerer2024gaskets} for the case $ \mathfrak{n}=2$.

The last example is a totally explicit family of non-generic critical weight sequences parametrized by the exponent $\alpha \in (1,2)$. This family appears in \cite{ambjorn2016generalized}, generalizing the Kazakov one-matrix model, and is used in \cite[Section 3.5.4]{CurStFlour} or  \cite[Section 6]{BC16}. It reads
$$q_k^{\mathrm{Kazakov}}:=2\left(\frac{1}{4\alpha}\right)^{k}\frac{\mathrm{Beta}(1/2,k)}{\mathrm{Beta}(-\alpha,k)}\ind_{\{k\geq
  2\}}\, ,$$
for which $\scal^{\mathrm{Kazakov}}:=2^\alpha/\alpha$, and where $\mathrm{Beta}( z_1,z_2) := \frac{\Gamma(z_1) \Gamma(z_2)}{\Gamma(z_1+z_2)}$ is Euler's beta function. \bigskip

All the above examples are non-generic in the sense of \eqref{eq:nongenerictail} and  \eqref{eq:nongenericgenf}, and actually satisfy the more stringent assumption 
  \begin{eqnarray} \label{eq:strictly-non-generic}  q_{k} \sim \mathrm{c} \cdot   \mathrm{C} ^{k} \cdot k^{-  \alpha - \frac{1}{2}}, \quad \mbox{ as }k \to \infty,  \end{eqnarray} where $c=\sqrt{\pi}\scal/(2^{\alpha+1}\Gamma(-\alpha)\zbq)$ and $C=1/( 4\zbq)$   which implies the pointwise asymptotic $\mu_{\circ\bullet}(\{k\})\sim \frac{ \scal}{\Gamma(-\alpha)2^\alpha k^{\alpha+1}}$ as $k \to \infty$, a property stronger than  \eqref{eq:tailmu}. We shall call such weight sequences \textbf{strictly non-generic}. In the opposite direction, our definition of non-genericity is stronger than that used in \cite{curien2018duality} or in Marzouk \cite{marzouk2018scalingstable} which allows for the presence of slowly varying functions: those weight sequences will be called \textbf{weakly non-generic}. We shall need to assume strict non-genericity in Section \ref{sec:scalingunicyclo} in order to apply certain local limit theorems. But our main results are indeed valid for non-generic weight sequences (and possibly for weakly non-generic ones to the cost of dealing more carefully with scaling sequences), see the discussion in Section \ref{sub:reroot}.

\subsection{Scaling limits of mobiles, and tightness of the associated maps} \label{sec:tightness}
This section presents the scaling limits results for large $\mathbf{q}$-Boltzmann mobiles, and derives a few consequences for their corresponding planar maps. To this end, we encode a labeled mobile $\bT=(\mathcal{T},\ell)$ using two processes. Namely, set $m=\#V_\circ(\mathcal{T})-1$ and consider $v_{0},...,v_{m} \in V_\circ(\mathcal{T})$ the list of the white vertices  in lexicographic (depth first) order, starting from the root corner. Then, we introduce (see Figure \ref{fig:coding}): 
\begin{itemize}
\item The \textbf{white  Lukasiewicz path} of $\bT$, which is the sequence $S^{\bT}:=(S^{\bT}_{0},S^{\bT}_{1},...)$ defined by induction as follows. First take $S^{\bT}_{0}:=0$ and, for every $i\in [\![0,m]\!]$, let   $S^{\bT}_{i+1}-S^{\bT}_{i}+1$ be the number of grandchildren of $v_{i}$.  For every $i>m$, we put $S_{i}^{\bT}:=-1$.
\item The \textbf{label  path} defined as follows:
\[ L_{i}^{\bT}:=\ell(v_{i}),\quad \text{for every } i\in[\![0,m]\!],\]
and $L_{i}^{\bT}:=0$ for all $i>m$.
\end{itemize} 
\begin{figure}[!htbp]
 \begin{center}
 \includegraphics[width=15cm]{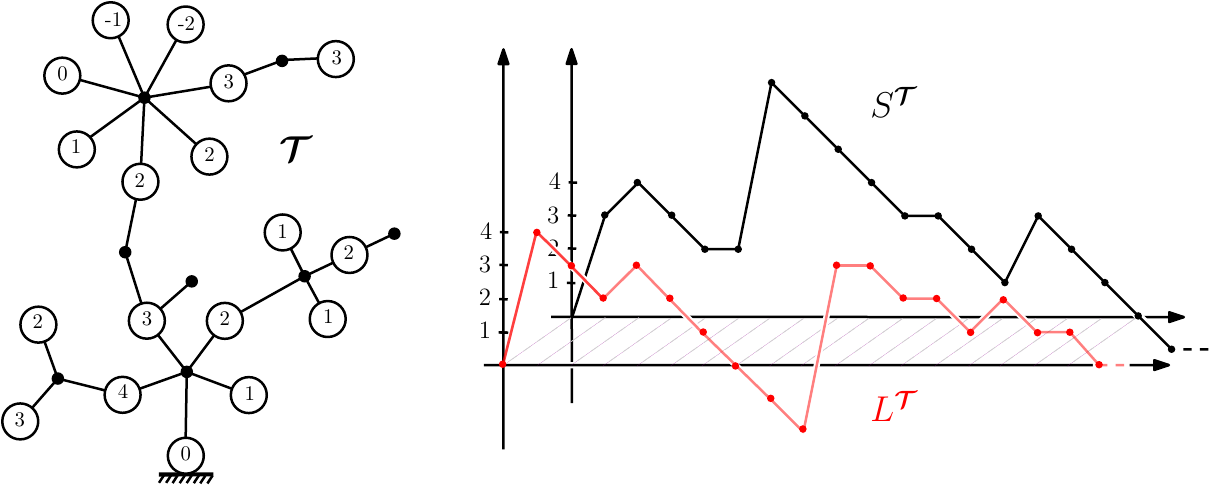}
 \caption{An example of a well-labeled mobile $ \bT = ( \mathcal{T}, \ell)$ with the associated Lukasiewicz and label paths $(S^{ \bT}, L^{ \bT})$. \label{fig:coding}}
 \end{center}
 \end{figure}
 \vspace{0.001cm}
 
\noindent Heuristically, the white Lukasiewicz path encapsulates the tree structure of $V_\circ(\mathcal{T})$, and the label process represents the label function $\ell$. These processes are introduced because their scaling limits can be studied using standard techniques from the theory of random paths.
 More precisely, let 
 $ \bTn $ be a $ \mathbf{q}$-Boltzmann mobile conditioned to have $n-1$ white vertices. Then, with our notation $(X,Z)$ of the previous part,  \cite[Lemma 12]{LGM09} states\footnote{To be more precise, \cite[Lemma 12]{LGM09} assumes that the weight sequence is strictly non-generic and of the form given in the first example presented in Section \ref{sec:non-generic}. However, a closer inspection at the proof shows that it is valid in the setting of the present paper. Indeed, the asymptotic behavior of $\bq$ is only needed to get tail bounds for the offspring distributions $\mu_\bullet, \mu_\circ$ and $\mu_{\circ\bullet}$, which are still valid under our more general situation. } that
\begin{equation}\label{first_couple}
  \left(2(\scal n)^{-\frac{1}{\alpha}}S_{\lfloor (n-1)\cdot\rfloor}^{\bTn},(\scal n)^{-\frac{1}{2\alpha}}L_{\lfloor (n-1)\cdot\rfloor}^{\bTn}\right)_{n\geq 1}
  \xrightarrow{(d)}\left(X,Z\right)~~\text{under} ~~\mathbf{P},
\end{equation}
where $\scal$ is the constant appearing in  \eqref{eq:nongenerictail}, and 
where the convergence takes place on $\mathbb{D}([0,1],\R)^2$ (here  $\mathbb{D}([0,1],\R)$ is the Skorokhod (J1) space of rcll functions from $[0,1]$ to $\R$).  Let us now draw a few consequences of the above scaling limit on the random map side, beginning with (a slight extension of) the subsequential scaling limit result of \cite{LGM09}.

\paragraph{Subsequential scaling limits.} Let $\mathfrak{M}_{n}$ be the random $ \mathbf{q}$-Boltzmann map conditioned to have $n$ vertices which is obtained as $ \mathrm{BDG}^{\bullet}( \bTn,\epsilon)$ after forgetting the distinguished point. Recall that $V(\mathfrak{M}_{n})$ is endowed with its graph distance $\mathrm{d}^{ \mathrm{gr}}_{ \mathfrak{M}_{n}}$, and the uniform probability measure $\mathrm{vol}_{ \mathfrak{M}_{n}}$. Write  $v_{0}^{n},...,v_{n-2}^{n}$ for the sequence of white vertices of $\bTn$ in lexicographical order, and for $s,t \in [0,1]$ set 
 \begin{eqnarray}\label{def:dn} d_{n}(s,t)= \mathrm{d}^{ \mathrm{gr}}_{ \mathfrak{M}_{n}}(v_{i}^{n},v_{j}^{n}), \quad \mbox{ where } \lfloor (n-1)s\rfloor =i \mbox{ and } \lfloor (n-1)t\rfloor =j,  \end{eqnarray}
with the convention $v_{i}^{n}=v_{0}^{n}$ if $i\geq n-1$.
The random metric space $\{v_{0}^n, ..., v_{n-2}^n\}$  endowed  with the graph distance  $\mathrm{d}^{ \mathrm{gr}}_{ \mathfrak{M}_{n}}$ and the uniform measure  can equivalently be seen as the quotient of $[0,1]$ by the equivalence relation $ \sim_{d_n}$, equipped with distance function induced by $d_n$, and the pushforward of the Lebesgue measure on  $[0,1]$ under the associated canonical projection.  We extend the definition of $d_{n}$ to $\mathbb{R}_{+}^{2}$ by bilinear interpolation, namely: \begin{align*} d_{n}(s,t)=&(s-\lfloor s\rfloor)(t-\lfloor t\rfloor)d(\lfloor s\rfloor,\lfloor t\rfloor)+(s-\lfloor s\rfloor)(\lceil t\rceil-t)d(\lfloor s\rfloor,\lceil t\rceil)\\ &+(\lceil s\rceil-s)(t-\lfloor t\rfloor)d(\lceil s\rceil,\lfloor t\rfloor)+(\lceil s\rceil-s)(\lceil t\rceil-t)d(\lceil s\rceil,\lceil t\rceil)\, . \end{align*}
Using  \eqref{first_couple} and the Schaeffer bound \eqref{d_n^circ:BDG}, it is easy to see that the laws of 
$$D_{n}:=\big((\scal n)^{-\frac{1}{2\alpha}}d_{n}((n-1)s,(n-1)t)\big)_{0\leq s,t \leq 1}\, ,\qquad n\geq 1, $$ 
form a tight family of probability measures on the space $\mathcal{C}([0,1]^2,\R)$ of continuous functions from $[0,1]^{2}$ to $\mathbb{R}$, see \cite[Theorem 5]{LGM09}. Therefore,  by the Skorokhod representation theorem,  we can extract a subsequence $(n_{k})_{k\geq 1}$ so that, along this subsequence, we have: 
  \begin{eqnarray}
\label{eq:second_couple}
 \Big(2(\scal n)^{-\frac{1}{\alpha}}S_{\lfloor (n-1)\cdot\rfloor}^{\bTn},(\scal n)^{-\frac{1}{2\alpha}}L_{\lfloor (n-1)\cdot\rfloor}^{\bTn},D_{n}\Big) \xrightarrow[\begin{subarray}{c}n\to\infty \\ \mathrm{along\  }(n_k)_{k\geq1} \end{subarray}]{\mathrm{a.s.}} \Big(X,Z,D\Big),~  \end{eqnarray}
where $D:[0,1]^{2}\to \mathbb{R}$ is a random continuous pseudo-distance, and the convergence takes place on the space $\mathbb{D}([0,1],\R)\times \mathbb{D}([0,1],\R)\times \mathcal{C}([0,1]^2,\R)$. 

\begin{center} \hrulefill \textit{ From now on, we fix the subsequence $(n_k)_{k\geq 1}$.} \hrulefill  \end{center}
 
\medskip 

 Let us now come back to the planar map side of the story. The space $(V(\mathfrak{M}_{n}), \mathrm{d}^{ \mathrm{gr}}_{ \mathfrak{M}_{n}}, \mathrm{vol}_{ \mathfrak{M}_{n}})$ is a random weighted compact metric space  -- a compact metric space endowed with a finite measure~-- and we let $\mathbb{M}$ be the set of all isometry classes of weighted compact metric spaces. To lighten notation, we shall often identify a compact weighted metric space with its equivalence class, and we leave the reader check that the definitions and results provided in these pages actually do not depend on the chosen representative. We equip $\mathbb{M}$ with the classical Gromov--Hausdorff--Prokhorov metric, namely for every $\textbf{M}:=(M,d_M,\mu)$ and  $\textbf{M}^{\prime}:=(M^{\prime},d_{M^{\prime}},\mu^{\prime})$ in $\mathbb{M}$:
$$d_{\mathrm{GHP}}\big(\textbf{M},\textbf{M}^{\prime}\big):=\inf\limits_{\phi,\phi^{\prime}}\Big( \delta_{\text{H}}\big(\phi(M),\phi^{\prime}(M^{\prime})\big)\vee \delta_{\text{P}}\big(\phi_* \mu,\phi^{\prime}_* \mu^{\prime}\big)\Big)~,$$
where the infimum is taken over all isometries $\phi$, $\phi^{\prime}$ from $M$, $M^{\prime}$ into a metric space $(Z, \delta)$ and $\delta_{\text{H}}$ (resp.~$\delta_{\text{P}}$) stands for the classical Hausdorff distance (resp.~the Prokhorov distance). The space $(\mathbb{M},d_{\mathrm{GHP}})$ is a Polish space, see \cite{Mie09} for more details. In order to state some re-rooting properties it will also be useful to introduce the set of   all isometry classes of \textit{marked} weighted compact metric spaces, namely $\mathbb{M}_{\mathrm{root}}:=\{(M,d_M,\mu,x): x\in M\} / \mathrm{iso}$,\footnote{Here we say that $(M,d_M,\mu,x)$ and $(M^\prime,d_{M^\prime},\mu^\prime,x^\prime)$  if there exists an isometric bijection $\varphi: M\to M'$ such that $\varphi_*\mu=\mu'$ and $\varphi(x)= x^\prime$.} and to endow it with the marked Gromov--Hausdorff--Prokhorov metric, i.e.
$$d_{\mathrm{GHP}}^{\mathrm{root}}\big((M,d_M,\mu,x),(M^\prime,d_{M^\prime},\mu^\prime,x^{\prime})\big):=\inf\limits_{\phi,\phi^{\prime}}\Big( \delta_{\text{H}}\big(\phi(M),\phi^{\prime}(M^{\prime})\big)\vee \delta_{\text{P}}\big(\phi_* \mu,\phi^{\prime}_* \mu^{\prime}\big)\vee \delta\big(\phi(x),\phi^{\prime}(x^{\prime})\big)\Big)~,$$
for every $(M,d_M,\mu),(M^\prime,d_{M^\prime},\mu^\prime) \in \mathbb{M}$ and $(x,x^{\prime})\in M\times M^{\prime}$ and where as before  the infimum is taken over all isometries $\phi$, $\phi^{\prime}$ from $M$, $M^{\prime}$ into a metric space $(Z, \delta)$. The space $(\mathbb{M}_{\mathrm{root}}, d_{\mathrm{GHP}}^{\mathrm{root}})$ is also a Polish space. As usual we equip $(\mathbb{M}, d_{\mathrm{GHP}})$ and $(\mathbb{M}_{\mathrm{root}}, d_{\mathrm{GHP}}^{\mathrm{root}})$
with the associated Borel sigma-field. We also mention  that the projection mapping a rooted  weighted compact metric space $\big(M,d_M,\mu,x\big)$ to $\big(M,d_M,\mu\big)$  plainly defines a projection from  $\mathbb{M}_{\mathrm{root}}$ onto $\mathbb{M}$.

Now recall from Proposition \ref{distinct} that the process $Z$ a.s.~attains its global minimum at a unique time $t_*\in [0,1]$. From the convergence \eqref{eq:second_couple}, along the subsequence $(n_{k})_{k\geq 1}$,  it is easy to see that:
\begin{prop}\label{theo:sub} Along the subsequence $(n_{k})_{k\geq 1}$ we have 
\begin{equation}\label{sub:conv:M}
\big( V(\mathfrak{M}_{n}) , (\scal n)^{-\frac{1}{2\alpha}} \cdot \mathrm{d}^{ \mathrm{gr}}_{ \mathfrak{M}_{n}}, \mathrm{vol}_{\mathfrak{M}_{n}} \big) \xrightarrow[n\to\infty]{a.s.} \big([0,1] / \sim_{D}, D,~\mathrm{Vol}_{D} \big)~,
\end{equation}
where the convergence holds in the Gromov--Hausdorff--Prokhorov sense. Moreover recalling that $\Pi_D : [0,1] \to [0,1]/\sim_D$ is the canonical projection, we have the following re-rooting property:
\begin{equation}\label{eq:re-rooting-S}
  \Big ([0,1] / \sim_{D}, D,~\mathrm{Vol}_{D},~\Pi_{D}(0) \Big)\overset{(d)}{=} 
  \Big ([0,1] / \sim_{D}, D,~\mathrm{Vol}_{D},~\Pi_{D}(t_*) \Big)\overset{(d)}{=} \Big([0,1] / \sim_{D}, D,~\mathrm{Vol}_{D},~\Pi_{D}(U) \Big)~,
\end{equation}
where $U$ is a uniform random variable on $[0,1]$ independent of $(X,Z,D)$. 
\end{prop}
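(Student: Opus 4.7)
My plan is to enhance the joint convergence in \eqref{eq:second_couple} to Gromov--Hausdorff--Prokhorov convergence via a standard correspondence argument, and then to extract the re-rooting identity from the symmetries of the pointed Boltzmann measure. Along the fixed subsequence $(n_k)$, define the correspondence
\[
R_n := \big\{(v^n_{\lfloor(n-1)s\rfloor}, \Pi_D(s)) : s \in [0,1]\big\} \cup \big\{(v^n_*, \Pi_D(t_*))\big\} \subset V(\mathfrak{M}_n) \times ([0,1]/\sim_D),
\]
where the inclusion of the pair $(v^n_*, \Pi_D(t_*))$ is justified as follows. By \eqref{dist:v_*:BDG},
\[
(\scal n)^{-\frac{1}{2\alpha}} \mathrm{d}^{\mathrm{gr}}_{\mathfrak{M}_n}(v^n_{\lfloor(n-1)s\rfloor}, v^n_*) = (\scal n)^{-\frac{1}{2\alpha}}(L^{\bTn}_{\lfloor(n-1)s\rfloor} - \min L^{\bTn} + 1),
\]
which by \eqref{eq:second_couple} converges uniformly in $s$ to $Z_s - Z_{t_*}$; combined with the uniqueness of $t_*$ from Proposition \ref{distinct}, this forces the scaling limit of $v^n_*$ (as a point entering the coupling) to be $\Pi_D(t_*)$. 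The distortion of $R_n$ with respect to the rescaled graph distance and $D$ is then bounded by $\sup_{s,t}|D_n(s,t)-D(s,t)| + O((\scal n)^{-1/(2\alpha)})$, which vanishes a.s. For the measure side, the pushforward of Lebesgue on $[0,1]$ by $s \mapsto (v^n_{\lfloor(n-1)s\rfloor}, \Pi_D(s))$ is a coupling supported on $R_n$ whose first marginal sits at Prokhorov distance $O(1/n)$ from $\mathrm{vol}_{\mathfrak{M}_n}$ (the only discrepancy being the mass $1/n$ carried by $v^n_*$) and whose second marginal is exactly $\mathrm{Vol}_D$. Combining yields \eqref{sub:conv:M}, upgraded to the convergence of the marked spaces $(V(\mathfrak{M}_n), (\scal n)^{-1/(2\alpha)} \mathrm{d}^{\mathrm{gr}}_{\mathfrak{M}_n}, \mathrm{vol}_{\mathfrak{M}_n}, v^n_*)$ towards $([0,1]/\sim_D, D, \mathrm{Vol}_D, \Pi_D(t_*))$ in the marked GHP topology.

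For the re-rooting identities \eqref{eq:re-rooting-S}, the equality $\Pi_D(t_*) \stackrel{d}{=} \Pi_D(U)$ is a direct consequence of the defining property of the pointed Boltzmann measure $w_\bq^\bullet$: given $\mathfrak{M}_n$, the distinguished vertex $v^n_*$ is uniformly distributed among the $n$ vertices, whose conditional law lies at total-variation distance $O(1/n)$ from that of the nearly-uniform white vertex $v^n_{\lfloor(n-1)U\rfloor}$ (which is exactly uniform among the $n-1$ white vertices); passing to the limit in the marked GHP topology along $(n_k)$ gives the identity. For the equality $\Pi_D(0) \stackrel{d}{=} \Pi_D(t_*)$, we invoke the classical re-rooting invariance of bipartite rooted Boltzmann maps (Le Gall's re-rooting trick alluded to in Section \ref{sub:reroot}): at the discrete level, the origin $\rho^n = v^n_0$ of the root edge of $\mathfrak{M}_n$ is equidistributed, up to asymptotically negligible corrections, with a uniformly chosen vertex of $\mathfrak{M}_n$, so that $\Pi_D(0) \stackrel{d}{=} \Pi_D(U)$. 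Chaining the two equalities yields the full identity \eqref{eq:re-rooting-S}.

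The main obstacle is the rigorous justification of this second re-rooting invariance: transferring Le Gall's re-rooting trick to our setting requires controlling the size-bias that arises when converting ``uniform oriented edge'' to ``uniform vertex'' in the scaling limit. Under the non-genericity assumption \eqref{eq:nongenerictail}, face degrees are heavy-tailed but typical vertex degrees are well-behaved, so this bias is asymptotically negligible; still, this step must be checked carefully. By contrast, the identification of $v^n_*$ with $\Pi_D(t_*)$ in the marked GHP topology is handled cleanly by the convergence of labels combined with the uniqueness result of Proposition \ref{distinct}, and the tightness/correspondence arguments are routine given the a.s.~uniform convergence of $D_n$ to $D$ along $(n_k)$.
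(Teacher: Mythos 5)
Your treatment of the GHP convergence \eqref{sub:conv:M} and of the identity $\Pi_D(t_*)\overset{(d)}{=}\Pi_D(U)$ is correct and coincides in substance with the paper's proof: for the first, the correspondence $R_n$ and the coupling measure supported on it, with distortion controlled by $\sup_{s,t}|D_n(s,t)-D(s,t)|$; for the second, the observation that under $w_\bq^\bullet$ the distinguished vertex $v_*^n$ is uniform over $V(\mathfrak{M}_n)$, together with the identification of the limit of $v_*^n$ as $\Pi_D(t_*)$ via \eqref{dist:v_*:BDG} and Proposition \ref{distinct}.

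The gap is in the third step, $\Pi_D(0)\overset{(d)}{=}\Pi_D(U)$. Your claim that the origin $v_0^n$ of the root edge is ``equidistributed, up to asymptotically negligible corrections, with a uniformly chosen vertex'' is false as a total-variation statement: conditionally on $\mathfrak{M}_n$, $v_0^n$ is a uniformly chosen extremity of a uniformly chosen oriented edge, hence its law over $V(\mathfrak{M}_n)$ is degree-biased, not uniform, and the total-variation distance between these two laws does \emph{not} vanish as $n\to\infty$. The fact that is true and exact at finite $n$ is the re-rooting invariance of $\mathfrak{M}_n$ under re-rooting at a uniform oriented edge $e'_n$; what then has to be argued is that, for a suitable coupling, the extremity $v'_n$ of $e'_n$ converges in the marked GHP sense to $\Pi_D(U)$. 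This is \emph{not} a triviality about ``well-behaved vertex degrees'': the correct argument couples the uniform white corner of the mobile (equivalently, the uniform oriented edge of the map) with the uniform time $U\in[0,1]$ parametrizing white vertices in depth-first order, and uses the uniform convergence of $\phi_n(\lfloor C_\circ(\mathcal{T}_n)\cdot\rfloor)/n$ to the identity on $[0,1]$ (Lemma 13 of \cite{LGM09}) to conclude that $v'_n$ has the same GHP limit as the uniform white vertex. Your sketch does flag that ``this step must be checked carefully,'' but the route you indicate (a direct comparison of the two vertex laws) would not work; you need the contour-order coupling.
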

It is  a classical technique to derive a convergence of the type \eqref{sub:conv:M} from one relating coding processes as  \eqref{eq:second_couple}. Actually, if we forget about the measures $ \mathrm{vol}_{\mathfrak{M}_{n}}$ and $\mathrm{Vol}_{D} $, the convergence \eqref{sub:conv:M} already appears  in the proof of \cite[Theorem 4]{LGM09} and it is easy to derive our case adapting this proof. For completeness, we  give the argument. 
\begin{proof}
Recall that $(v_0^{n},..., v_{n-2}^{n})$ is the sequence of white vertices of $\bTn$ and  that along the subsequence $(n_k)_{k\geq 1}$  we have \eqref{eq:second_couple}. 
We also write $v_*^{n}$ for the unique vertex of $\mathfrak{M}_n$ not appearing in  $(v_0^{n},..., v_{n-2}^{n})$. Now remark that the Gromov--Hausdorff--Prokhorov distance between 
$$\left( V(\mathfrak{M}_{n}) , n^{-\frac{1}{2\alpha}} \cdot \mathrm{d}^{ \mathrm{gr}}_{ \mathfrak{M}_{n}}, \mathrm{vol}_{\mathfrak{M}_{n}} \right)\quad \text{and}\quad \left( V( \mathfrak{M}_{n})\setminus \{v_*\} , n^{-\frac{1}{2\alpha}} \cdot \mathrm{d}^{ \mathrm{gr}}_{ \mathfrak{M}_{n}}, \mathrm{vol}_{\mathfrak{M}_{n}}\big(\cdot \cap \{v\neq v_*^n\}\big) \right)$$ is smaller than $n^{- \frac{1}{2 \alpha}}$. So it is enough to show that the GHP-distance between the latter space, that we denote  by $\mathfrak{M}_n^{\prime}$,  and $[0,1]/\sim_{D}$ tends to $0$. Recall that $\mathfrak{M}_n^{\prime}$ can be seen as the quotient $[0,1]/ \sim_{d_{n}}$ for the pseudo-distance $d_{n}$ defined in \eqref{def:dn}.  In this direction, we consider the obvious correspondence $\mathfrak{R}_n$  between $\left([0,1]/\sim_{d_{n}} \right)$ and $\left([0,1]/\sim_{D}\right)$ made of all $(\Pi_{d_{n}}(s), \Pi_{D}(s))$ for some $s \in [0,1]$.   We also equip $\mathfrak{R}_n$  with the measure $\nu_n$ obtained by the pushforward of the uniform measure on $[0,1]$ by $s \mapsto (\Pi_{d_{n}}(s), \Pi_{D}(s))$. 
In particular, $\nu_n$ is a coupling supported on $\mathfrak{R}_n$ between  $ \frac{n}{n-1} \cdot \mathrm{vol}_{\mathfrak{M}_{n}}\big(\cdot \cap \{v\neq v_*\}\big)$ and $\mathrm{Vol}_{D}$. The convergence \eqref{sub:conv:M} is now a direct application of \cite[Proposition 6]{Mie09} noticing that the distortion of $\mathfrak{R}_n$ tends to $0$ since by  \eqref{eq:second_couple}, we have:
\begin{align*}
\text{dis}\big(\mathfrak{R}_n\big)&=\sup\Big\{\big| D(s, t) - D_{n}(s,t)  \big|: (s,t)\in [0,1]\Big\}  \xrightarrow[n_{k}\to\infty]{a.s.} 0.
\end{align*}
 Let us now turn to the proof of \eqref{eq:re-rooting-S}, starting with the second equality in distribution. Let $u^{n}$ be a uniformly chosen random vertex in $\{v_0^{n},...,v_{n-2}^{n},v_*^n\}$, and remark that by the very definition of the measure $w_ \mathbf{q}^\bullet$ we have:
$$\big(V(\mathfrak{M}_n), \mathrm{d}^{ \mathrm{gr}}_{ \mathfrak{M}_{n}}, \mathrm{vol}_{\mathfrak{M}_{n}}, v_*^n\big)\overset{(d)}{=}\big(V(\mathfrak{M}_n), \mathrm{d}^{ \mathrm{gr}}_{ \mathfrak{M}_{n}}, \mathrm{vol}_{\mathfrak{M}_{n}}, u^n\big).$$ 
Now \eqref{sub:conv:M} gives:
$$\big(V(\mathfrak{M}_n), (\scal n)^{-\frac{1}{2\alpha}}\mathrm{d}^{ \mathrm{gr}}_{ \mathfrak{M}_{n}}, \mathrm{vol}_{\mathfrak{M}_{n}}, u^n\big)\longrightarrow \big([0,1]/\sim_D, D,\mathrm{Vol}_D, \Pi_{D}(U)\big),  $$
where $U$ is an independent uniform random variable on $[0,1]$ -- the convergence takes place a.s.\ along the subsequence $(n_k)_{k\geq1}$. Consequently, it remains to show that $\big(V(\mathfrak{M}_n), \mathrm{d}^{ \mathrm{gr}}_{ \mathfrak{M}_{n}}, \mathrm{vol}_{\mathfrak{M}_{n}}, v_*^n\big)$ converges along   $(n_k)_{k\geq1}$ towards $\big([0,1]/\sim_D, D,\mathrm{Vol}_D, \Pi_{D}(t_*)\big)$.  Let $w^{n}$ be the first vertex  -- in lexicographical order --  with label $\ell(v_*^n)+1$ and set $t_*^n:=\inf\{t\in[0,1]:~v_{\lfloor (n-1) t\rfloor}^n=w^n\}$. In particular, $w^{n}$ is a neighbor of $v_*^{n}$ in $\mathfrak{M}_n$. Moreover, since a.s.\ the process $Z$ is continuous and has a unique minimum (Proposition \ref{distinct}),  we have $t_*^n\to t_*$ and we then infer from  \eqref{eq:second_couple} that:
$$\lim \limits_{n\to \infty}D(t_*,t^{n}_*)=0.$$
We deduce that the distance, with respect to the $d_{\mathrm{GHP}}^{\mathrm{root}}$-metric, between
$\big(\mathfrak{M}_n,  v_*^n\big)$ and $\big(\mathfrak{M}_n^\prime,w^n\big) $ (resp. $\big([0,1]/\sim_D,  \Pi_D(t_*)\big)$ and $\big([0,1]/\sim_D,  \Pi_D(t_*^n)\big)$)
tends to $0$, as $n\to \infty$. Therefore, to conclude, it remains to show that  the $d_{\mathrm{GHP}}^{\mathrm{root}}$-distance between $\big(\mathfrak{M}_n^\prime,w^n\big) $ and $([0,1]/\sim_D,\Pi_D(t_*^n))$ tends to $0$, along $(n_k)_{k\geq 1}$. This is again a direct consequence, using \cite[Theorem 3.6]{GHPm},  of the fact that $(w^n,\Pi_D(t_*^n))$ belongs to $\mathfrak{R}_n$ and that the distortion of $\mathfrak{R}_n$ tends to $0$, along the subsequence $(n_k)_{k\geq 1}$.

Finally, let us show that the first and last terms of (\ref{eq:re-rooting-S}) are equal in distribution. To start with, note that the same arguments as the above entail that $([0,1]/\sim_D,\mathrm{Vol}_D,\Pi_D(0))$ is the a.s.\ limit in $\mathbb{M}_{\mathrm{root}}$ of $(V(\mathfrak{M}_n),  (\scal n)^{-\frac{1}{2\alpha}} \cdot\mathrm{d}^{ \mathrm{gr}}_{ \mathfrak{M}_{n}}, \mathrm{vol}_{\mathfrak{M}_{n}}, v_0^n)$ as $n\to\infty$ along the subsequence $(n_k)_{k\geq 1}$. Next, observe that $v_0^n$ has same distribution as a uniformly chosen vertex incident to the root edge of $\mathfrak{M}_n$ (depending on the value of the Rademacher random variable $\epsilon$). Moreover, if $e'_n$ is a uniformly random oriented edge of $\mathfrak{M}_n$, then $\mathfrak{M}_n$ re-rooted at $e'_n$ has same distribution as $\mathfrak{M}_n$. Therefore, to conclude, it suffices to show that for some appropriate choice of $e'_n$, it holds that $(V(\mathfrak{M}_n),  (\scal n)^{-\frac{1}{2\alpha}} \cdot\mathrm{d}^{ \mathrm{gr}}_{ \mathfrak{M}_{n}}, \mathrm{vol}_{ \mathfrak{M}_{n}}, v'_n)$ converges to $([0,1]/\sim_D,\mathrm{Vol}_D,\Pi_D(U))$ in $\mathbb{M}_{\mathrm{root}}$, where $v'_n$ is chosen at random uniformly among the two vertices incident to $e'_n$. 

In order to provide such a coupling of $e'_n$ with $\mathfrak{M}_n$, we proceed as follows. Recall that the contour order of white vertices around the mobile $\mathcal{T}_n$ is the sequence of white corners $c^n_0,c^n_1,\ldots,c^n_{C_\circ(\mathcal{T}_n)-1}$ that appear when going around the unique face of $\mathcal{T}_n$ in counterclockwise order,  starting from the root corner. Here, $C_\circ(\mathcal{T}_n)$ denotes the total number of white corners of $\mathcal{T}_n$.
We choose $e'_n$ to be the arc going from the corner $c^n_{\lfloor C_\circ(\mathcal{T}_n)U \rfloor}$ to its successor, with an orientation chosen uniformly at random among the two possible ones. In this way, $e'_n$ is indeed a uniformly chosen oriented edge of $\mathfrak{M}_n$. 

 To show that $e'_n$ has the required property, we let $\phi_n(i)$ be the unique index such that $c^n_i$ is incident to the vertex $v_{\phi_n(i)}^n$. Then, \cite[Lemma 13]{LGM09} implies that $\phi_n(\lfloor C_\circ(\mathcal{T}_n)\cdot \rfloor)/n$ converges uniformly to the identity map of $[0,1]$.
  This implies in particular, by the same arguments as above, that if
  $v''_n=v_{\phi_n(\lfloor C_\circ(\mathcal{T}_n)U \rfloor)}^n$, then it holds that
  $(V(\mathfrak{M}_n),  (\scal n)^{-\frac{1}{2\alpha}} \cdot\mathrm{d}^{ \mathrm{gr}}_{ \mathfrak{M}_{n}}, \mathrm{vol}_{\mathfrak{M}_{n}}, v_n'')$ converges in $\mathbb{M}_{\mathrm{root}}$ to $([0,1]/\sim_D,D,\mathrm{Vol}_D,\Pi_D(U))$ along the subsequence $(n_k)_{k\geq 1}$. The same holds with $v''_n$ replaced by a uniformly random extremity $v'_n$ of $e'_n$, since $v'_n$ is then at graph distance at most $1$ from $v''_n$. This concludes the proof.   
\end{proof}
Let us stress that, given the previous tightness result,  to prove  our main result, Theorem \ref{thm:main}, it suffices to establish that, under $\mathbf{P}$, we have
$$D(s,t)=D^{*}(s,t),\quad s,t\in[0,1], $$
 where $D^{*}$ is the pseudo metric constructed from $(X,Z)$ in the Introduction. The rest of this work is devoted to the proof of the previous display which in particular relies on the study of geodesics. Let us conclude this section by introducing some standard terminology. Similarly to planar maps, a metric space $(M,d_M)$ is called a \textbf{geodesic} metric space if for every $x,y\in M$ there exists an isometry $\gamma:[0,d(x,y)]\to M$ such that $(\gamma(0), \gamma(d_M(x,y)))=(x,y)$. In this case, we say that $\gamma$ is a \textbf{geodesic} going from $x$ to $y$. For  compact metric spaces, being a geodesic metric  space is equivalent to being a path metric space, i.e.\ an arcwise connected metric space $(M,d_M)$ such that the distance $d_M(x,y)$ is equal to 
  $$d_M(x,y)= \inf_{c : [0,T] \to M}\,  \sup_{0=t_1\leq t_2\leq  \dots \leq t_n=T}\sum_{i=1}^{n-1} d_M\big(c(t_i),c(t_{i+1})\big),$$
where the infimum is over all  continuous paths $c:[0,T]\to M$ with $(c(0),c(T))=(x,y)$ and where the supremum is over all choices of the integer $n\geq 1$ and of finite sequences $t_1\leq t_2 \leq \dots \leq t_n$ satisfying $t_0=0$ and $t_n=T$. In particular, it is a property invariant by isometry. Let us mention that formally, $(V(\mathfrak{M}_n),  (\scal n)^{-\frac{1}{2\alpha}} \cdot\mathrm{d}^{ \mathrm{gr}}_{ \mathfrak{M}_{n}})$ is not a geodesic compact metric space, but this problem can be circumvent  by  replacing $V(\mathfrak{M}_n)$ with the union of all its edges, each edge being represented by a copy of the interval $[0, (\scal n)^{-\frac{1}{2\alpha}}]$. We denote the space of isometry classes  of  (resp. rooted) weighted  geodesic compact metric spaces by $\mathbb{PM}$ (resp. $\mathbb{PM}_{\mathrm{root}}$). By \cite[Theorem 7.5.1]{BBI01},  $\mathbb{PM}$ and $\mathbb{PM}_{\mathrm{root}}$ are closed subsets of $\mathbb{M}$ and $\mathbb{M}_{\mathrm{root}}$ respectively. In particular, since the convergences of Proposition~\ref{theo:sub} plainly hold if we add the edges of the map (as copies of $[0, (\scal n)^{-\frac{1}{2\alpha}}]$ for $n\geq 1$), we infer that $\mathbf{P}$-a.s. we can view
$([0,1]/\sim_D,D, \mathrm{Vol}_D)$  and  $([0,1]/\sim_D,D, \mathrm{Vol}_D,\rho_*), ([0,1]/\sim_D,D, \mathrm{Vol}_D,\Pi_D(0)) $ as elements of
   $\mathbb{PM}$ and $\mathbb{PM}_{\mathrm{root}}$, respectively. In particular,  $([0,1]/\sim_D,D)$ is a geodesic metric space.

\subsection{Bounds on $D$} \label{sec:boundsonD}

In this section, we derive a few bounds on the pseudo-metrics $D$ and $D^{*}$ obtained by passing simple discrete estimates to the scaling limit. These bounds will be especially useful when proving  that $\sim_{D}$ and $\sim_{D^{*}}$ are $\mathbf{P}$-a.s.~the same equivalence relations (Theorem \ref{main_theorem_topology}). 
\subsubsection{Simple geodesics and $D \leq D^*$}
\label{sec:D<D*}
 First, recall from Proposition \ref{distinct} that the minimum of $Z$ is realized at a unique random time $t_*$.  Combining \eqref{dist:v_*:BDG}, the convergence \eqref{eq:second_couple} and the continuity of $Z$,
it follows that:
\begin{align}\label{Distance:rho_*}
 D( t_{*},{s}) = Z_s- Z_{t_*},\quad s \in [0,1].
\end{align} 
Similarly, it is possible to take limits along  the subsequence $(n_{k})_{k\geq 1}$ in \eqref{d_n^circ:BDG} to obtain the well-known continuous version of the Schaeffer bounds: Namely, for every $s,t \in [0,1]$, we have 
\begin{equation}\label{trivial_bounds}
|Z_{t}-Z_{s}|\leq D(t,s)\leq Z_{s}+Z_{t}-2\max\left( \min_{[s \wedge t, s \vee t]} Z , \min_{[s \vee t, s\wedge t]}Z \right) \underset{ \mathrm{Def. \ } \eqref{eq:defD0} }{=:}  \mathfrak{z}(s,t).
\end{equation}
This has been proved in  \cite[display after $(74)$]{LGM09} using variants of the coding functions $S^{\bT},L^{\bT}$ for the contour sequence rather than the lexicographical sequence, see \cite[Section 6]{LGM09}.

Let us introduce the notion of discrete and continuous \textbf{simple geodesics} which will give a geometric interpretation to  \eqref{trivial_bounds}. In the discrete BDG$^\bullet$ construction, if we pick a corner white $c$ of $ \bT$, we can draw (in red on Figure \ref{fig:cactus}) the associated \textbf{simple geodesics} targeting $v_{*}$ obtained by starting from $c$ and following the arcs to its consecutive successors. As we observed already just before \eqref{dist:v_*:BDG}, the path produced this way is a (discrete) geodesic which we denote by $\gamma^{{(c)}}$. By construction, the two simple geodesics started from different corners $c$ and $c'$ will merge at a corner of label   \begin{eqnarray} \label{eq:defintervallecorner}\min\left(  \min\limits_{[c,c']_{\mathcal{T}}}\ell;\min\limits_{[c',c]_{\mathcal{T}}}\ell \right) -1,   \end{eqnarray} where $[c_1,c_2]_{ \mathcal{T}}$ is the set of white corners going from $c_1$ to $c_2$ in clockwise order around $ \mathcal{T}$. If \eqref{eq:defintervallecorner} is smaller than the minimal label on $\bT$, then $\gamma^{(c)}$ and $\gamma^{(c')}$ merge at the vertex $v_{*}$ directly.

Let us now pass this construction to the scaling limit. For every $s\in[0,1]$, we set 
$$\eta^{(s)}_r:=\left\{\begin{array}{ll}
\inf\{t\geq s: Z_{t}=Z_s-r\}\quad&\hbox{if }\ \inf\{Z_{t}:t\geq s\}\leq Z_s-r,\\
\inf\{t\leq s: Z_t=Z_s-r\}\quad&\hbox{if }\ \inf\{Z_{t}:t\geq s\}> Z_s-r,
\end{array}
\right.  $$
for $r\in [0, Z_s -\inf Z]$. 
Remark that, by the very definition of $\eta^{(s)}$ and Proposition \ref{distinct}, we have $\eta^{(s)}_{Z_s-\inf Z}=t_*$ and that by \eqref{Distance:rho_*} and \eqref{trivial_bounds} the path $(\Pi_D \circ \eta^{(s)}(r),0\leq r \leq Z_{s}-\inf Z)$ is a geodesic in $([0,1]\sim_{D},D)$ which we call the \textbf{simple geodesic} started from  $s$ (it is a geodesic from $\Pi_D(s)$ to $\Pi_D(t_{*})$), and we denote it by $\gamma^{(s)}$. The reader has noticed that we use a similar notation as in the discrete setting, however the context should avoid any confusion. As in the discrete setting,   the two simple geodesics $\gamma^{{(s)}}$ and $\gamma^{{(t)}}$ in $([0,1]/\sim_{D},D)$ merge at the point $$\gamma^{(s)}\left( \frac{Z_{s}-Z_{t} + \mathfrak{z}(s,t)}{2} \right) = \gamma^{(t)}\left( \frac{Z_{t}-Z_{s} + \mathfrak{z}(s,t)}{2} \right)$$ of label equal to $ \max (\min_{[s\wedge t, s \vee t]} Z, \min_{[0, s \wedge t] \cup [s \vee t, 1]}Z)$, and coincide afterwards. In words, the bound $D \leq \mathfrak{z}$ just says that to go from $\Pi_D(s)$ to $\Pi_D(t)$ we can go towards the root using $\gamma^{(s)}$ until we merge with $\gamma^{(t)}$, and then ``climb-up'' $\gamma^{(t)}$ to $\Pi_D(t)$.  Formally, we denote by $\gamma^{(s\to t)}$ the path obtained by
$$
\gamma^{(s\to t)}(r):=\left\{\begin{array}{ll}
\gamma^{(s)}(r)\quad&\hbox{if }\ 0 \leq r\leq \frac{Z_{s}-Z_{t} + \mathfrak{z}(s,t)}{2} ,\\
\gamma^{(t)}( \mathfrak{z}(s,t) -r)\quad&\hbox{if }\  \frac{Z_{s}-Z_{t} + \mathfrak{z}(s,t)}{2}  \leq r \leq  \mathfrak{z}(s,t).
\end{array}\right.$$
In particular $\gamma^{(s\to t)}$ is a path connecting $\Pi_D(s)$ and $\Pi_D(t)$  and  its $D$-length is equal to $ \mathfrak{z}(s,t)$. For further use, notice from the above discussion that: \begin{eqnarray} \label{eq:geocoincide} \gamma^{(s)} \mbox{ and }\gamma^{(t)} \mbox{ coincide outside of } B_{D}( \Pi_{D}(s), \mathfrak{z}(s,t)).  \end{eqnarray}

\bigskip

It is also not hard to see that $D$ passes to the quotient of the looptree, which means with the notation of Section \ref{sec:looptree} that we have:

\begin{prop}\label{D_sur_le_loop_tree}
$\mathbf{P}$-a.s., for every $(s,t)\in[0,1]$, if $s\sim_{d}t$ then we have $s\sim_{D}t$. 
\end{prop}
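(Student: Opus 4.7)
The plan is to reduce to a dense family of identified pairs using continuity, handle the root pair by a direct computation, and pass to the discrete setting for the general case.

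First, both maps $(s,t)\mapsto d(s,t)$ and $(s,t)\mapsto D(s,t)$ are continuous on $[0,1]^2$: for $d$ this was established in Section~\ref{sec:looptree}, and for $D$ it follows from the Schaeffer bound \eqref{trivial_bounds} together with the continuity of $Z$. Hence both sets $\{(s,t):s\sim_d t\}$ and $\{(s,t):D(s,t)=0\}$ are closed in $[0,1]^2$, and it suffices to verify the inclusion on a dense subset of $\{s\sim_d t\}$. By Proposition~\ref{topologie_loop_tree}, any non-trivial identification $s\sim_d t$ with $s<t$ satisfies $X_t=X_{s-}$ and $X_r>X_{s-}$ for every $r\in(s,t)$; combined with the density of jumping times of $X$ (property $(A_4)$), a convenient dense subfamily is furnished by the pairs with $\Delta_s>0$ and $t=\inf\{r>s:X_r=X_{s-}\}$, together with the root pair $(s,t)=(0,1)$. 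For the latter, the Schaeffer bound \eqref{trivial_bounds} is conclusive: $Z_0=Z_1=0$ and the ``opposite side'' $[1,0]$ degenerates to $\{0,1\}$, so $D(0,1)\leq\mathfrak{z}(0,1)=0$.

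For a general jump--return pair $(s,t)$, the plan is to pass to the discrete approximation via \eqref{eq:second_couple}. Setting $i_n:=\lfloor (n-1)s\rfloor$ and $j_n:=\lfloor (n-1)t\rfloor$, the convergence reads
\[
D(s,t)=\lim_{k\to\infty}(\scal n_k)^{-1/(2\alpha)}\,\mathrm{d}^{\mathrm{gr}}_{\mathfrak{M}_{n_k}}\!\big(v^{n_k}_{i_{n_k}},v^{n_k}_{j_{n_k}}\big),
\]
so the task reduces to proving that this discrete graph distance is $o_{\mathbb{P}}\big((\scal n)^{1/(2\alpha)}\big)$. Combinatorially, the jump of $X$ at $s$ corresponds in $\bTn$ to a large black vertex $b_n$ whose parent is $v^n_{i_n}$, while $v^n_{j_n}$ is the first white vertex visited in lexicographic order once the subtree rooted at $v^n_{i_n}$ has been fully explored. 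In particular $v^n_{i_n}$ and $v^n_{j_n}$ are consecutive children of a common black ancestor $b'_n$ of $\bTn$, so both lie on the boundary of the face of $\mathfrak{M}_n$ associated with $b'_n$. A direct inspection of the Bouttier--Di Francesco--Guitter successor rules will exhibit a short chain of arcs in $\mathfrak{M}_n$ joining them, whose length is controlled by $|\ell(v^n_{i_n})-\ell(v^n_{j_n})|+O(1)$; since $Z_s=Z_t$ under $s\sim_d t$, this label difference is $o\big((\scal n)^{1/(2\alpha)}\big)$ by \eqref{eq:second_couple}, and rescaling concludes.

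The hard part will be to make the last step rigorous when $b'_n$ itself has large degree -- which occurs precisely when the immediate ancestor of $s$ in the coding structure is another jump of $X$. The face-perimeter bound alone fails in that regime, and one has to exploit the observation that the BDG successor of the corner immediately preceding $v^n_{j_n}$ in the contour lands directly at a corner of $v^n_{j_n}$ as soon as the labels coincide (up to a bounded error). Iterating along the ancestral line of $s$ if needed, one should obtain the desired uniform control of the graph distance and thus complete the proof.
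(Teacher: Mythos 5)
Your overall strategy is the right one: pass to the $\mathrm{BDG}^\bullet$ coding via the Lukasiewicz/label processes and control the graph distance between the two discrete representatives via the Schaeffer bound. This is indeed what the paper does. However, there are three issues in how you carry it out, and one of them is a genuine gap.

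First, the reduction to a ``dense subfamily'' of jump pairs is an unnecessary detour and is itself not established. You assert that the pairs $(s,t)$ with $\Delta_s>0$ and $t=\inf\{r>s:X_r=X_{s-}\}$ are dense in the full set of $d$-identified pairs, but the identified pairs with $\Delta_s=0$ form an uncountable set, and showing that each of them is a limit of jump pairs $(s',t')$ with \emph{both} coordinates converging requires a non-trivial argument (the difficulty is that as $s'\uparrow s$ with $X_{s'-}<X_{s-}$, the corresponding return time $t'$ need not converge to $t$). The paper's proof sidesteps this entirely: it treats an arbitrary identified pair $(s,t)$ directly, choosing $i_n,j_n$ with $i_n/n\to s$, $j_n/n\to t$ and such that $S^{\bTn}_{i}>S^{\bTn}_{i_n}=S^{\bTn}_{j_n}$ for $i\in ]\!]i_n,j_n[\![$. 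This freedom in choosing $i_n,j_n$ (realizing the Lukasiewicz excursion minimum rather than blindly setting $i_n=\lfloor (n-1)s\rfloor$, $j_n=\lfloor (n-1)t\rfloor$) is what makes the combinatorics uniform.

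Second, the combinatorial identification is off. With the above choice, $v^n_{j_n}$ is the \emph{last grandchild} of $v^n_{i_n}$, i.e.\ the last child of the last black child $b_n$ of $v^n_{i_n}$. So $v^n_{i_n}$ and $v^n_{j_n}$ are consecutive in the cyclic order of white neighbours of $b_n$, but as parent and last child of $b_n$, not as ``consecutive children of a common black ancestor.'' This configuration is precisely what makes one of the two contour intervals $[v^n_{i_n},v^n_{j_n}]_{\mathcal{T}}$, $[v^n_{j_n},v^n_{i_n}]_{\mathcal{T}}$ collapse to the two-point set, and the Schaeffer bound \eqref{d_n^circ:BDG} then yields $d_n(i_n,j_n)\leq |\ell(v^n_{i_n})-\ell(v^n_{j_n})|+2$ with no further inspection of successor rules needed.

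Third and most importantly, the ``hard part'' you identify is not the real one, and your proposed fix (``iterating along the ancestral line of $s$'') does not address what is actually missing. The Schaeffer bound works regardless of the degree of $b_n$; the degree is irrelevant once you know the two vertices are consecutive white neighbours of $b_n$. What is missing is a \emph{uniform} control, simultaneously over all such consecutive pairs in the tree $\bTn$, of the label increment $|\ell(v^n_{i_n})-\ell(v^n_{j_n})|$. Your argument only gives $o(n^{1/(2\alpha)})$ for a fixed limit pair $(s,t)$ via the uniform convergence of the rescaled label process; this is insufficient to conclude the ``$\mathbf{P}$-a.s.\ for every $(s,t)$'' statement without the density reduction you haven't justified. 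The paper instead invokes \cite[Lemma 1]{LGM09}, which gives $o_{\mathrm{a.s.}}(n^{\varepsilon})$ for every $\varepsilon>0$ for the maximal label displacement between consecutive white vertices around a black vertex, uniformly over the whole tree. With that lemma, $d_n(i_n,j_n)=o_{\mathrm{a.s.}}(n^{1/(2\alpha)})$ uniformly over all identified pairs, and the conclusion follows directly from \eqref{eq:second_couple} and the continuity of $D$, without density, iteration, or any special treatment of the root pair.
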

\begin{proof} The proof necessarily goes back to the discrete setting. Fix $0 \leq s < t \leq 1$ that are identified in the looptree, i.e.~from  Proposition \ref{topologie_loop_tree} such that: 
\[X_{s-}=X_{t}\:\:\text{and}\:\:X_{r}>X_{s-}\:\:\text{for every}\:\:r\in(s,t).\]
Using \eqref{eq:second_couple} we can find integers $i_{n}$,$j_{n}$ such that 
$\frac{i_{n}}{n}\to s \:\:,\:\:\frac{j_{n}}{n}\to t$
and
\[S_{i}^{\bTn}>S_{i_{n}}^{\bTn}=S_{j_{n}}^{\bTn},\quad\text{for every}\:\:i\in ]\!]i_{n},j_{n}[\![.\]
This means that the white vertex $v_{j_{n}}^{n}$ is the last grandchildren of $v_{i_{n}}^{n}$ (in clockwise order). In particular,  by the Schaeffer bound \eqref{d_n^circ:BDG} we see that $d_{n}(i_{n},j_{n})$ is bounded above by $2$ plus the maximum label displacement between two consecutive white vertices around a black vertex of $ \bTn$. After conditioning on the plane tree $\mathcal{T}_n$, it is easy to see that this maximal displacement is $o_{a.s.}(n^{ \varepsilon})$ for every $ \varepsilon>0$ using \cite[Lemma 1]{LGM09}. As a consequence, we have $d_{n}(i_{n},j_{n}) = o_{a.s.}(n^{ \frac{1}{2 \alpha}})$, where the upper bound is uniform over the choice of the identified points $s,t$.  Passing to the limit along the subsequence $(n_k)_{k\geq 1}$ with \eqref{eq:second_couple} and using the continuity of the limiting pseudo-distance $D$, it follows that $D(s,t)=0$ as  desired.  \end{proof}
\noindent We now introduce the pseudo-distance $D^{*}$, which is the largest pseudo-distance (i.e.~a symmetric, non-negative function obeying the triangle inequality) which is bounded above by $ \mathfrak{z}$ and passes to the quotient of the looptree. Namely, it is given by the  expression   \eqref{def:dstar} which we recall here:
\begin{eqnarray} \label{def:dstar:2}D^{*}(s,t):=\inf \sum \limits_{k=1}^{p}\mathfrak{z}(s_{k},t_{k})~,  \end{eqnarray}
where the infimum is over all choices of the integer $p\geq 1$ and all finite sequences $(s_{k},t_{k})_{1\leq k\leq p}$ with $s=s_1$ and $t_p=t$ such that $ t_{k} \sim_{d} s_{k+1}$ for every $1\leq k\leq p-1$. Notice that  \eqref{Distance:rho_*} and \eqref{trivial_bounds} also hold for $D^*$ instead of $D$, and as a consequence, the simple geodesics defined above are also geodesics for the pseudo-distance $D^*$. We record the equivalent of \eqref{Distance:rho_*} for later uses: \begin{align}\label{Distance*:rho_*}
 D^*( t_{*},{s}) = Z_s- Z_{t_*},\quad s \in [0,1].
\end{align} 
In particular, by \eqref{trivial_bounds} and Proposition \ref{D_sur_le_loop_tree}, we have: \begin{equation}\label{triangle_inequality}
D\leq D^{*}.
\end{equation}
The previous display ensures that the pseudo-distance $D$ factorizes through the projection $\Pi_{D^{*}}$ -- the canonical projection associated with $\sim_{D^*}$.  If we keep the notation $D$ for this projection, we obtain that $D$ can be seen as a pseudo-distance on $$\mathcal{S} := [0,1]/\sim_{D^*}$$ and the goal of Section \ref{sec:topology} is to show that $D$  actually defines a distance on $\mathcal{S}$ and to understand the induced topology. Before doing that, let us deduce a few easy estimates on the distance $D$ from our study of the process $Z$: 
\begin{lem}\label{D:small} \label{D:expectation}
  $\mathrm{(i)}$ The $D$-diameter has stretch exponential tails, i.e. there exist $c,C, \beta >0$ such that 
$$   \mathbf{P}\Big(\sup_{s,t \in [0,1]}D(s,t) > x\Big) \leq C \mathrm{e}^{- c x^{\beta}}.$$

$\mathrm{(ii)}$ For any $\delta >0$, there exists $C_\delta>0$ so that if $U_{1},U_{2}$ are two independent uniform random variables on $[0,1]$, also independent of  $(X,Z,D)$, then
$$\mathbf{P}(D(U_{1},U_{2})\leq \eps)\leq C_{\delta}\cdot \eps^{2\alpha-\delta},\quad \text{ for every } \eps\geq 0.$$
\end{lem}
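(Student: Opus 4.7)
My plan is to deduce both bounds from results already established in Parts~I and~\ref{sec:tightness}, essentially by reducing each to an estimate on the label process $Z$ around its minimum.

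For (i), I would use the triangle inequality together with~\eqref{Distance:rho_*} to obtain, for every $s,t\in[0,1]$,
\[
D(s,t)\;\leq\; D(s,t_{*})+D(t_{*},t)\;=\;(Z_{s}-Z_{t_{*}})+(Z_{t}-Z_{t_{*}})\;\leq\;4\sup_{[0,1]}|Z|.
\]
Thus $\sup_{s,t}D(s,t)\leq 4\sup|Z|$. Since $Z$ and $Z^{*}$ have the same distribution under $\mathbf{P}$ (Proposition~\ref{b_Brownian_Brigde}), and the law of $(Z_{t})$ is symmetric (e.g.\ by Lemma~\ref{lem:invariancereroottime} or directly from the Gaussian construction), applying Proposition~\ref{variations_Z}~(i) to both $Z^{*}$ and $-Z^{*}$ gives a stretched-exponential tail for $\sup|Z|$, and hence for the $D$-diameter.

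For (ii), the key step is a re-rooting argument to pass from two independent uniform points to one fixed at $t_{*}$. Conditionally on $(X,Z,D)$, the pair $(\Pi_{D}(U_{1}),\Pi_{D}(U_{2}))$ has law $\mathrm{Vol}_{D}\otimes\mathrm{Vol}_{D}$, so
\[
\mathbf{P}(D(U_{1},U_{2})\leq\varepsilon)\;=\;\mathbf{E}\!\left[\int_{\mathcal{S}}\mathrm{Vol}_{D}\!\bigl(B_{D}(x,\varepsilon)\bigr)\,\d\mathrm{Vol}_{D}(x)\right]
\;=\;\mathbf{E}\!\left[\mathrm{Vol}_{D}\!\bigl(B_{D}(\Pi_{D}(U),\varepsilon)\bigr)\right],
\]
where $U$ is uniform on $[0,1]$ independent of the space and we used Fubini. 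The function $(M,d,\mu,x)\mapsto\mu(B_{d}(x,\varepsilon))$ is a measurable functional on $\mathbb{M}_{\mathrm{root}}$, so the re-rooting identity \eqref{eq:re-rooting-S} yields
\[
\mathbf{E}\!\left[\mathrm{Vol}_{D}\!\bigl(B_{D}(\Pi_{D}(U),\varepsilon)\bigr)\right]\;=\;\mathbf{E}\!\left[\mathrm{Vol}_{D}\!\bigl(B_{D}(\rho_{*},\varepsilon)\bigr)\right].
\]

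Finally, by \eqref{Distance:rho_*} we have $D(t_{*},s)=Z_{s}-Z_{t_{*}}=\mathfrak{z}(t_{*},s)$, since $t_{*}$ realizes the minimum of $Z$. Hence
\[
\mathrm{Vol}_{D}\!\bigl(B_{D}(\rho_{*},\varepsilon)\bigr)\;=\;\bigl|\{s\in[0,1]:\,Z_{s}-Z_{t_{*}}\leq\varepsilon\}\bigr|\;=\;\mathrm{Vol}_{\mathfrak{z}}\!\bigl(B_{\mathfrak{z}}(\Pi_{\mathfrak{z}}(t_{*}),\varepsilon)\bigr),
\]
so Theorem~\ref{technical_uniform_balls} applies directly. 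Using $0\leq\mathrm{Vol}_{D}(B_{D}(\rho_{*},\varepsilon))\leq 1$, splitting the expectation yields, for $\varepsilon$ small,
\[
\mathbf{E}\!\left[\mathrm{Vol}_{D}\!\bigl(B_{D}(\rho_{*},\varepsilon)\bigr)\right]\;\leq\;\varepsilon^{2\alpha-\delta}+\mathbf{P}\!\left(\mathrm{Vol}_{\mathfrak{z}}(B_{\mathfrak{z}}(\Pi_{\mathfrak{z}}(t_{*}),\varepsilon))\geq\varepsilon^{2\alpha-\delta}\right)\;\leq\;\varepsilon^{2\alpha-\delta}+C\exp(-\varepsilon^{-c}),
\]
and the $\exp(-\varepsilon^{-c})$ term is absorbed by a constant times $\varepsilon^{2\alpha-\delta}$ as $\varepsilon\to 0$; for $\varepsilon$ bounded away from $0$, the trivial bound $\mathrm{Vol}_{D}\leq 1$ suffices up to adjusting $C_{\delta}$. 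There is no real obstacle here beyond a careful bookkeeping of the re-rooting step, which is the only non-routine ingredient; every other step is a direct invocation of material already proved in Part~I.
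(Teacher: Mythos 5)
Your proof is correct and takes essentially the same approach as the paper: part (i) reduces the diameter to $\sup Z - \inf Z$ and invokes Proposition~\ref{variations_Z}~(i), while part (ii) rests on the re-rooting invariance~\eqref{eq:re-rooting-S}, the identity $D(t_*,s)=Z_s-Z_{t_*}$ from~\eqref{Distance:rho_*}, and Theorem~\ref{technical_uniform_balls}. The only cosmetic difference is that in (ii) you pass through the Fubini identity $\mathbf{P}(D(U_1,U_2)\le\varepsilon)=\mathbf{E}[\mathrm{Vol}_D(B_D(\Pi_D(U),\varepsilon))]$ before re-rooting, whereas the paper conditions directly on the event $\{\mathrm{Vol}_D(B_D(\rho_1,\varepsilon))\ge\varepsilon^{2\alpha-\delta}\}$; both reduce to the same tail estimate for $\mathrm{Vol}_{\mathfrak{z}}(B_{\mathfrak{z}}(\Pi_{\mathfrak{z}}(t_*),\varepsilon))$.
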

\begin{proof} The first point is a consequence of \eqref{trivial_bounds} and point (i) in Proposition \ref{variations_Z}. Namely,  \eqref{trivial_bounds} entails that $\sup_{s,t \in [0,1]}D(s,t)  \leq 2( \sup Z-\inf Z)$. We can then use the fact that the random variables $\sup Z$ and $-\inf Z$ have same distribution to derive the first point from Propositions \ref{variations_Z} and \eqref{b_Brownian_Brigde}. For the second point, write $\rho_{1} = \Pi_{D}(U_{1})$ and $\rho_{2} = \Pi_{D}(U_{2})$ and notice that for any $\varepsilon, \delta >0$ we have
\begin{align*}
\mathbf{P}\big(D(\rho_{1},\rho_{2})\leq \varepsilon \big)&\leq \mathbf{P}\Big(\mathrm{Vol}_{D}\big(B_{D}(\rho_{1},\varepsilon)\big)\geq \varepsilon^{2\alpha-\delta}\Big)+ \mathbf{P}\Big( \rho_{2}\in B_{D}(\rho_{1},\varepsilon), \mathrm{Vol}_{D}\big(B_{D}(\rho_{1},\varepsilon)\big)< \varepsilon^{2\alpha-\delta}\Big).
\end{align*}
Conditionally on $(X,Z,D)$, since $U_2$ is uniform on $[0,1]$ the point $\rho_2$ has law $ \mathrm{Vol}_D$, so the second term in the right-side hand is by definition  smaller than $ \varepsilon^{2 \alpha -\delta}$. For the first term, recall that by \eqref{Distance:rho_*} and re-rooting invariance (Theorem \ref{theo:sub})  we have
$$ \Big( \mathrm{Vol}\big(B_{D}(\rho_1,\varepsilon)\big),  \text{ under } \mathbf{P} \Big)\overset{(d)}{=}\Big(  \mathrm{Vol}_\mathfrak{z}\big(B_\mathfrak{z}(\Pi_\mathfrak{z}(t_*),\varepsilon)\big), \text{ under } \mathbf{P} \Big), $$
so that the probability $ \mathbf{P}\Big(\mathrm{Vol}_{D}\big(B_{D}(\rho_{1},\varepsilon)\big)\geq \varepsilon^{2\alpha-\delta}\Big)$ decays stretched exponentially fast as $ \varepsilon \to 0$ for fixed $\delta >0$ by Theorem \ref{technical_uniform_balls}.
\end{proof}

\subsubsection{Le Gall's re-rooting trick} \label{sub:reroot}
Imagine for an instant that our main result, Theorem \ref{thm:main}, was proved. Then, combining the equality $D=D^*$ with the re-rooting property of $D$  inherited from the discrete setting \eqref{eq:re-rooting-S}, we would deduce the following straightforward corollary: For $U,V$ independent uniform random variables on $[0,1]$, independent from $(X,Z,D)$, it holds that
\begin{equation}
  \label{eq:rerootDstar}
  D^*(U,V)\overset{(d)}=D^*(U,t_*)\, .  
\end{equation}
This corollary could seem much less powerful than the equality $D=D^*$. However, Le Gall's argument \cite[Section 8.3]{LG11}, which we reproduce below, shows that the above display, together with the rough inequalities deduced in the previous section is actually sufficient to prove $D=D^*$:

\begin{lem}[Le Gall's re-rooting trick]  \label{lem:LGtrick}Suppose that the two continuous pseudo-distances $D,D^*$ satisfy $D \leq D^*$, together with \eqref{Distance:rho_*}, \eqref{Distance*:rho_*} and are both invariant by re-rooting  \eqref{eq:re-rooting-S}, \eqref{eq:rerootDstar}. Then $D=D^*$ a.s.
\end{lem}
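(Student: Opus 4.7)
The plan is to show the equality $D = D^*$ by first establishing it $\mathrm{Vol}_D$-almost everywhere via a re-rooting argument, and then upgrading to the pointwise equality by continuity. The argument has three steps.

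First, I would introduce $U_1, U_2$ two i.i.d.\ uniform random variables on $[0,1]$, independent of $(X, Z, D, D^*)$. The re-rooting invariance \eqref{eq:re-rooting-S} asserts that the marked metric-measure spaces $(\mathcal{S}, D, \mathrm{Vol}_D, \Pi_D(t_*))$ and $(\mathcal{S}, D, \mathrm{Vol}_D, \Pi_D(U_1))$ have the same law. Since, conditionally on $(X, Z, D)$, the point $\Pi_D(U_2)$ is distributed as an independent $\mathrm{Vol}_D$-sample, this identity upgrades to an equality in law of the bi-marked spaces $(\mathcal{S}, D, \mathrm{Vol}_D, \Pi_D(t_*), \Pi_D(U_2)) \stackrel{(d)}{=} (\mathcal{S}, D, \mathrm{Vol}_D, \Pi_D(U_1), \Pi_D(U_2))$. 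Applying the continuous functional ``distance between the two marked points'' yields
$$D(t_*, U_2) \stackrel{(d)}{=} D(U_1, U_2),$$
and the same reasoning applied to \eqref{eq:rerootDstar} gives $D^*(t_*, U_2) \stackrel{(d)}{=} D^*(U_1, U_2)$.

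Second, I would use the identities \eqref{Distance:rho_*} and \eqref{Distance*:rho_*} to equate the left-hand sides: both are deterministically equal to $Z_{U_2} - Z_{t_*}$. Combining with the first step yields $D(U_1, U_2) \stackrel{(d)}{=} D^*(U_1, U_2)$. Since $D \leq D^*$ almost surely by hypothesis and the two random variables share the same distribution, a standard argument (if $X \leq Y$ a.s.\ and $X \stackrel{(d)}{=} Y$, then for every $c$, $\mathbf{P}(X \leq c < Y) = 0$, hence $X = Y$ a.s.) gives $D(U_1, U_2) = D^*(U_1, U_2)$ almost surely.

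Finally, I would conclude by Fubini and continuity. Write $A = \{(s,t) \in [0,1]^2 : D(s,t) < D^*(s,t)\}$, which is open by continuity of $D$ and $D^*$. Conditioning on $(X, Z, D, D^*)$, the previous step and Fubini give $\mathrm{Leb}^{\otimes 2}(A) = 0$ almost surely, and since $A$ is open this forces $A = \varnothing$ almost surely, i.e.\ $D = D^*$ almost surely. The argument is entirely soft; the only delicate point is the careful interpretation of the re-rooting invariance as an identity of pointed metric-measure spaces, which is what allows the sampling of a further uniform point and the reduction of the bivariate quantity $D(U_1, U_2)$ to the known identity $D(t_*, \cdot) = Z_\cdot - Z_{t_*}$.
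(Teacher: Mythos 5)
Your proposal is correct and follows essentially the same route as the paper's own proof: use the re-rooting invariances to identify the law of $D(U_1,U_2)$ with that of $D(U_1,t_*)$ and similarly for $D^*$, observe that $D(\cdot,t_*)$ and $D^*(\cdot,t_*)$ coincide pointwise by \eqref{Distance:rho_*}--\eqref{Distance*:rho_*}, combine the resulting equality in law with the a.s.\ ordering $D\leq D^*$ to get $D(U_1,U_2)=D^*(U_1,U_2)$ a.s., and finish by continuity. Your version is slightly more explicit than the paper's on the step of upgrading the single-marked re-rooting identity \eqref{eq:re-rooting-S} to a bi-marked one by sampling a second independent $\mathrm{Vol}_D$-point, but the substance is identical.
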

\proof Independently of $D,D^*$, let $U,V$ be two independent uniform random variables on $[0,1]$. 
It follows from \eqref{eq:re-rooting-S}, \eqref{eq:rerootDstar} that $D(U,V)$ and $D^*(U,V)$ have respectively the same law as $D(U,t_*)$ and $D^*(U,t_*)$, which by \eqref{Distance:rho_*}, \eqref{Distance*:rho_*} are equal. Using  $0\leq D(U,V)\leq D^*(U,V)$ we deduce that $D(U,V)=D^*(U,V)$ almost-surely.  By continuity we have $D=D^*$, as wanted.
\endproof 

The key observation is that the invariance under re‑rooting of \eqref{eq:rerootDstar} is a (rather intricate) measurable property that depends only on the processes $(X,Z)$. However, once Theorem \ref{thm:main} is established for one particular model of random map, \eqref{eq:rerootDstar} automatically follows. In turn, Lemma \ref{lem:LGtrick} then shows—a posteriori—that Theorem \ref{thm:main} holds for any random‑map model to which the results of the preceding section apply, and, in particular, for every $\mathbf{q}$‑Boltzmann map whose weight sequence $\bq$ is non‑generic with exponent $\alpha$. Given this, the remainder of our route to Theorem \ref{thm:main} can focus on a single non‑generic weight sequence with exponent $\alpha$. From now on we therefore assume that $\mathbf{q}$ is strictly non‑generic in the sense of Section \ref{sec:non-generic}, principally so that the local‑limit theorems of Section~\ref{sec:scalingunicyclo} are available.

\begin{center} \hrulefill \textit{ From now on, we suppose that $ \mathbf{q}$ is strictly non-generic with exponent $\alpha$.} \hrulefill  \end{center}

\subsubsection{Cactus-Bound}
While the definition of $D^*$ given in Section \ref{sec:D<D*} is based on an upper bound on $D$, we now prove a lower bound on $D$ improving upon the trivial inequality $ D(s,t) \geq |Z_s-Z_t|$ of \eqref{trivial_bounds}. In this direction, we elaborate on the so-called ``\textbf{Cactus bound}", see  \cite{CLGMcactus}. Recall the notation 
  \begin{eqnarray}\text{Branch}(r_{1},r_{2}):=\big\{r\in [0,1]:\: r_{1}\curlywedge r_{2}\prec r\prec r_{1}\:\:\text{or}\:\:r_{1}\curlywedge r_{2}\prec r\prec r_{2}\big\}\cup \big\{r_{1},r_{2}\big\}.   \label{eq:defpinch:recall} \end{eqnarray}
  for every $r_{1},r_{2}\in [0,1]$ and that the  set $\Pi_{d}(\mathrm{Branch}(r_{1},r_{2}))$ is precisely the set of pinch points between $\Pi_{d}(r_{1})$ to $\Pi_{d}(r_{2})$ together with $\{\Pi_{d}(r_{1}),\Pi_{d}(r_{2})\}$. Our new bound  reads as follows:
\begin{lem}[Cactus-bound] \label{lem:cactusbound}
$\mathbf{P}$-a.s., for every $0\leq s<t\leq  1$ and every $r_{1}\in [0,s]\cup [t,1]$, $r_{2}\in [s,t]$ we have:
\begin{equation}\label{cactus}
    D(s,t)\geq (Z_{s}-Z_{r_{1}})\wedge (Z_{s}-Z_{r_{2}})\wedge   \min\limits_{r\in \mathrm{Branch}(r_{1},r_{2})} |Z_{s}-Z_{r}|~.
\end{equation}
\end{lem}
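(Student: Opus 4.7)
The plan is to prove a discrete analog of \eqref{cactus} inside the BDG$^\bullet$ construction and pass to the scaling limit via \eqref{eq:second_couple}. The key geometric ingredient is a Jordan-curve argument that exploits the fact that $r_1$ lies outside $[s,t]$ (in cyclic order) while $r_2$ lies inside.

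In the discrete setup, pick indices $i_n,j_n,k_n^1,k_n^2\in\{0,\dots,n-2\}$ with $i_n/(n-1)\to s$, $j_n/(n-1)\to t$, $k_n^a/(n-1)\to r_a$, satisfying $k_n^1\in[0,i_n]\cup[j_n,n-2]$ and $k_n^2\in[i_n,j_n]$. Let $T_n$ be the path in the mobile $\mathcal T_n$ joining $v_{k_n^1}^n$ to $v_{k_n^2}^n$, and let $\sigma^1_n,\sigma^2_n$ be the discrete simple geodesics starting from corners incident to $v_{k_n^1}^n,v_{k_n^2}^n$ and following the successor arcs either up to $v_*^n$ or up to the first vertex where they meet. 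Choosing these starting corners on opposite sides of $T_n$ (possible because $\mathcal T_n$ has a unique face), the concatenation $\mathcal J_n:=T_n\cup\sigma^1_n\cup\sigma^2_n$ is a simple closed curve on the sphere, and the cyclic positions of $i_n,j_n,k_n^1,k_n^2$ force, by the Jordan curve theorem, $v_{i_n}^n$ and $v_{j_n}^n$ to lie in distinct components of its complement. Since the BDG arcs are non-crossing and drawn inside the unique face of $\mathcal T_n$, any discrete path $\pi$ in $\mathfrak M_n$ joining $v_{i_n}^n$ and $v_{j_n}^n$ must visit some vertex $u$ of $\mathcal J_n$.

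Using the trivial label bound $\mathrm{d}^{\mathrm{gr}}_{\mathfrak M_n}(x,y)\geq |\ell(x)-\ell(y)|$, three cases arise. If $u\in\sigma^a_n$ for $a=1,2$, the labels along $\sigma^a_n$ are at most $\ell(v_{k_n^a}^n)$, yielding $|\pi|\geq \ell(v_{i_n}^n)-\ell(v_{k_n^a}^n)$; if $u\in T_n$, then $|\pi|\geq \min_{w\in V(T_n)}|\ell(v_{i_n}^n)-\ell(w)|$. The minimum of the three gives a discrete version of \eqref{cactus}. Passing to the limit along $(n_k)_{k\geq 1}$ via \eqref{eq:second_couple}, the three bounds rescaled by $(\scal n)^{-1/(2\alpha)}$ converge respectively to $Z_s-Z_{r_1}$, $Z_s-Z_{r_2}$ and $\min_{r\in \mathrm{Branch}(r_1,r_2)}|Z_s-Z_r|$, where the last convergence relies on Lemma \ref{Lem-cut-jump} to identify the set of labels on $V(T_n)$ with the closure of $\{Z_r : r\in\mathrm{Branch}(r_1,r_2)\}$ in the scaling limit. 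A density argument based on rational approximations of $s,t,r_1,r_2$, together with the continuity of $Z$ and $D$, upgrades \eqref{cactus} to a simultaneous almost-sure statement.

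The main obstacle is the Jordan-curve separation: one must verify that, with the correct choice of starting corners for the simple geodesics on opposite sides of $T_n$, the closed curve $\mathcal J_n$ really does separate $v_{i_n}^n$ from $v_{j_n}^n$ in the planar embedding; particular care is needed around the vertex $v_*^n$, where $\sigma^1_n$ and $\sigma^2_n$ may merge, and at the endpoints $v_{k_n^1}^n,v_{k_n^2}^n$, where $T_n$ meets the simple geodesics. A secondary but non-trivial difficulty is the Hausdorff convergence of the label set along $T_n$: the branch $T_n$ not only traces the ancestors of $k_n^1$ and $k_n^2$ up to their common ancestor, but also traverses, for each ancestor of large degree, a macroscopic portion of the surrounding face of $\mathcal T_n$---which in the limit corresponds to a loop of $\mathcal L$---and Lemma \ref{Lem-cut-jump} is precisely the tool providing the correct parametrization of the resulting set of pinch points via the mapping $\xi_t$ of \eqref{xi:sup}.
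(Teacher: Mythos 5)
Your overall strategy matches the paper's proof: a discrete planar-separation argument in the $\mathrm{BDG}^\bullet$ picture, followed by a passage to the scaling limit via \eqref{eq:second_couple}. The three-case label bound is correct and coincides with the paper's. However, two points deserve flagging.

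First, the passage to the limit for the branch term is the actual technical content of the lemma, and you do not carry it out. Evaluation maps are not continuous for the Skorokhod topology, so one cannot simply assert that the set of rescaled labels on $V(T_n)$ converges to $\{Z_r : r\in\mathrm{Branch}(r_1,r_2)\}$. What the paper does is extract a subsequence along which the normalized visit time $h^n/n$ of a discrete minimizer converges to some $h\in[0,1]$, and then argue — by identifying the possible limit points of $2(\scal n)^{-1/\alpha}S^{\bTn}_{\lfloor (n-1)\cdot\rfloor}$ evaluated at $h^n/(n-1)$ (which lie in $\{X_{h-},X_h\}$) and of the running infima of the Lukasiewicz walk over the relevant intervals, using crucially that $X$ has only positive jumps — that $h$ must lie in $\mathrm{Branch}(r_1,r_2)$. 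This yields only the $\liminf$ inequality, which suffices. Your appeal to Lemma~\ref{Lem-cut-jump} is misplaced: that lemma is a statement about the continuous looptree alone (a parametrization of $\mathrm{Branch}(0,t)$ via the height process $\xi_t$) and provides no bridge between the discrete and continuous pictures. The ``secondary difficulty'' you describe — that $T_n$ traverses a macroscopic portion of a face of the mobile — is also a misreading: the discrete branch is precisely the set of white vertices on the tree path between $v_{k_n^1}^n$ and $v_{k_n^2}^n$, and no face is traversed; it is simply that a high-degree black vertex on $T_n$ is adjacent to two of those white vertices, which in the limit become two pinch points on the same loop.

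Second, your explicit Jordan-curve construction is an avoidable complication. As you acknowledge, the merging of $\sigma^1_n$ and $\sigma^2_n$ before $v_*^n$, and the behavior at the endpoints of $T_n$, prevent $\mathcal{J}_n$ from being a simple closed curve in general. The paper avoids this by the softer assertion that the tree branch together with the two simple geodesics (which stay in the unique face of $\mathcal{T}_n$ and do not cross the tree) forms a barrier that any map-path from $v_{s^n}$ to $v_{t^n}$ must meet — a statement about vertex-sets and planarity, not about simplicity of a curve. Your version would require genuinely resolving the non-simple cases, which you flag but do not do; switching to the paper's softer formulation removes this burden.
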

The above bound is only useful when $Z_{r_1}$ and $Z_{r_2}$ are smaller than $Z_s$. The reader may replace the role of $s$ and $t$ in the second part of the proof of Lemma \ref{lem:cactusbound} to obtain \eqref{cactus} with $s$ replaced by $t$ in the right-hand  term. We break the symmetry to simplify notation and since~\eqref{cactus} will be sufficient for our applications.
\begin{figure}[!h]
 \begin{center}
 \includegraphics[height=8cm,width=10cm]{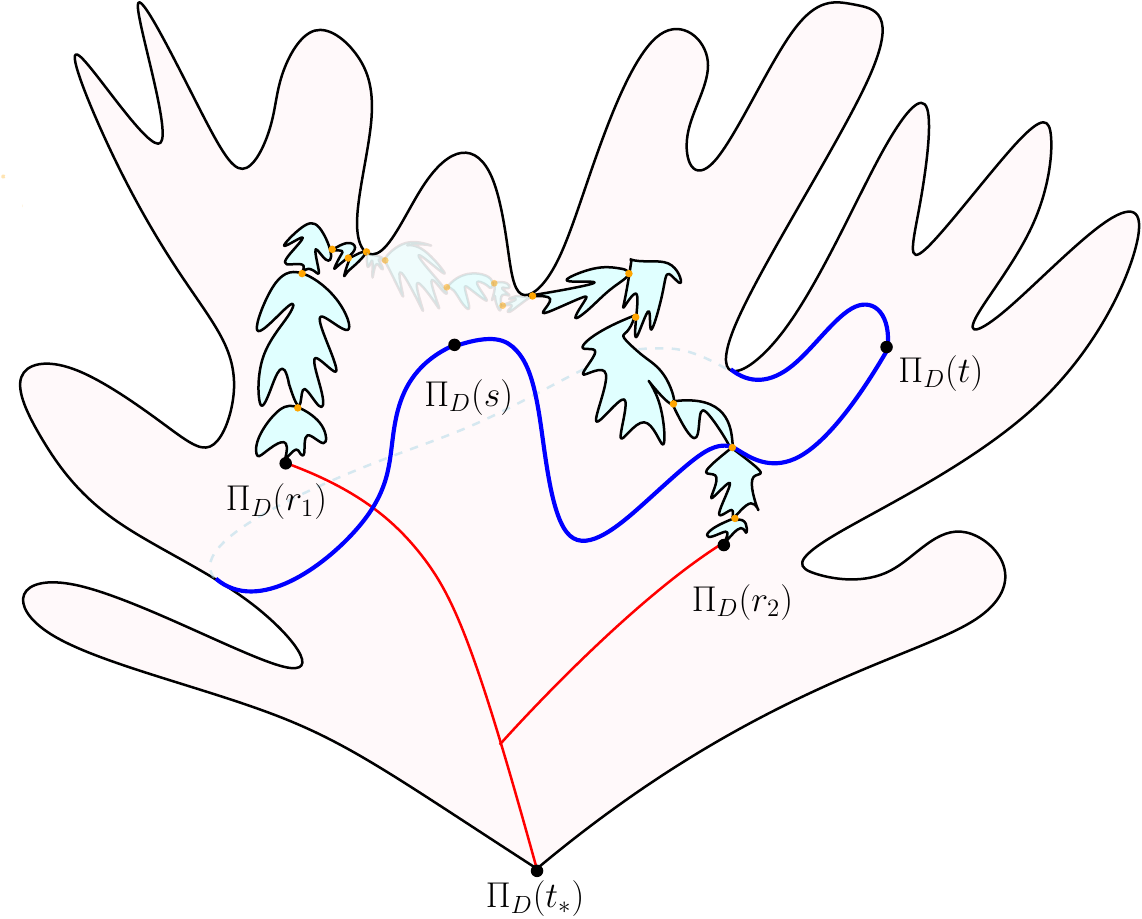}
 \caption{\label{fig:cactus:bound}Illustration of the Cactus bound in the cactus representation of $([0,1]/\sim_D,D)$. The vertical distances represent  distances to $\Pi_{D}(t_*)$. In red, we can see the two simple geodesics starting from $\Pi_{D}(r_{1})$ and $ \Pi_{D}(r_{2})$ respectively. The bound follows by arguing that a $D$-geodesic going from $\Pi_{D}(s)$ to $\Pi_{D}(t)$ must either cross the union of those two geodesics, or must pass through a pinch point on the branch in-between $\Pi_{D}(s)$ and $\Pi_{D}(t)$ (in orange on the figure above) since they are ``blocked'' by the faces of $[0,1]/\sim_D$ (in light blue in the figure).}
 \end{center}
 \end{figure}
\begin{proof} We use the notation of the statement of the lemma and we  suppose that $r_{1}\in [0,s)$, the case $r_{1}\in (t,1]$ can be obtained with the same method by symmetry.  We first establish a similar bound in the discrete setting using the BDG construction and then pass it to the limit. See Figure \ref{fig:cactus} for an illustration. Consider, in the $\mathbf{q}$-Boltzmann mobile  $\bTn= ( \mathcal{T}_{n}, \ell_{n})$, four white vertices  $v_{r_1^n}, v_{s^n}, v_{r_2^n}, v_{t^n}$ visited in the lexicographical order at times $0 \leq r_{1}^{n}< s^{n} < r_{2}^{n} < t^{n} \leq n-1$. 

\begin{figure}[!h]
 \begin{center}
 \includegraphics[width=13.5cm]{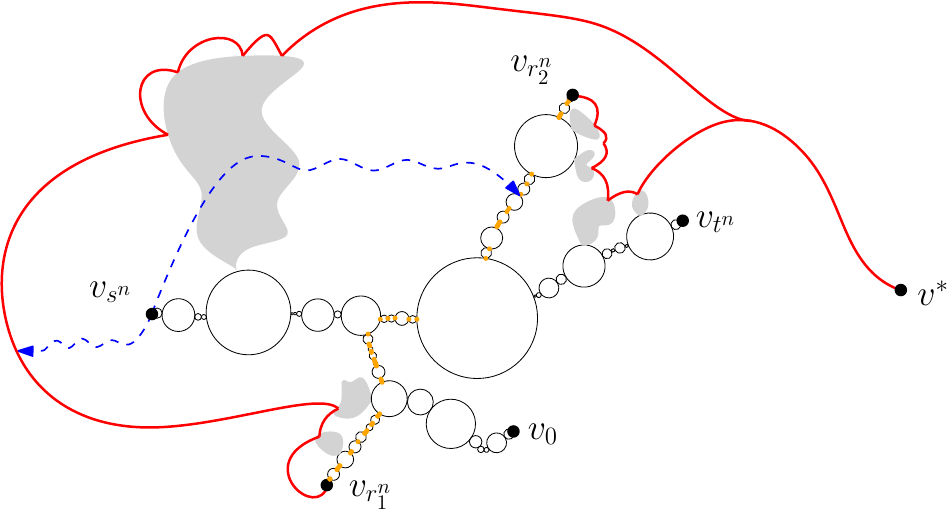}
 \caption{\label{fig:cactus} Illustration of the cactus bound in the construction of maps from well-labeled mobiles. We draw in red the geodesics going from $v_{r_{1}^{n}}$ and $v_{r_{2}^{n}}$ to $v_{*}$ obtained in the construction of BDG bijection. In orange we draw the set  $\mathrm{Branch}(v_{r_{1}^{n}},v_{r_{2}^{n}})$.}
 \end{center}
 \end{figure}
From the first corners in the contour of the vertices visited at times $r_{1}^{n}$ and $ r_{2}^{n}$ we draw (in red on Figure \ref{fig:cactus}) the {simple geodesics} targeting $v_{*}^{n}$, obtained by following the arcs to their consecutive successors in the BDG$^\bullet$ construction. Those two paths eventually merge and reach the distinguished vertex $v_{*}^{n}$. We also consider the set $\mathrm{Branch}(v_{r_{1}^{n}},v_{r_{2}^{n}})$ (in orange on Figure \ref{fig:cactus})  of all white vertices in the branch in $ \mathcal{T}_n$ between $v_{r_{1}^{n}}$ and $v_{r_{2}^{n}}$ (forgive the abuse of notation). Our goal  now is to give a lower bound of  the distance between $v_{s^{n}}$ and $v_{t^{n}}$. In this direction, consider a geodesic path $\gamma$ (in blue in Figure \ref{fig:cactus}) going from $v_{s^{n}}$ to $v_{t^{n}}$ in the map $ \mathfrak{M}_{n} = \mathrm{BDG}^\bullet( \bTn, \epsilon)$. By planarity and from the construction of the edges of the map, this path must visit (the vertices of) one of the two simple geodesics above\footnote{In particular, remark that the simple geodesics go around $ \mathcal{T}_n$ and do not cross it.}, or the set $\mathrm{Branch}(v_{r_{1}^{n}},v_{r_{2}^{n}})$. In the first case, since the labels of the vertices along these simple geodesics are all less than or equal to $ \max (\ell_n( v_{r_{1}^{n}}), \ell_n( v_{r_{2}^{n}}))$,  we obtain using the lhs of \eqref{trivial_bounds} that: 
$$ \mathrm{d}^{ \mathrm{gr}}_{ \mathfrak{M}_{n}}(  v_{s^{n}}, v_{t^{n}}) \geq \ell_n( v_{s^{n}}) + \ell_n( v_{t^{n}}) - 2 \max (\ell_n( v_{r_{1}^{n}}), \ell_n( v_{r_{2}^{n}})).$$
In the second case, we use the same bound and get: 
$$ \mathrm{d}^{ \mathrm{gr}}_{ \mathfrak{M}_{n}}(  v_{s^{n}}, v_{t^{n}}) \geq \min_{v \in \mathrm{Branch}(v_{r_{1}^{n}},v_{r_{2}^{n}})}\left( \left|\ell_n( v_{s^{n}})- \ell_n(v)\right| + \left|\ell_n( v_{t^{n}})- \ell_n(v)\right|\right).$$ In all cases we can thus write:
$$ \mathrm{d}^{ \mathrm{gr}}_{ \mathfrak{M}_{n}}(  v_{s^{n}}, v_{t^{n}}) \geq \min \left\{\min_{v \in \mathrm{Branch}(v_{r_{1}^{n}},v_{r_{2}^{n}})}\left|\ell_n( v_{s^{n}})- \ell_n(v)\right| ; \ell_n( v_{s^{n}})- \ell_n( v_{r_{1}^{n}}) ; \ell_n( v_{s^{n}}) - \ell( v_{r_{2}^{n}}) \right\}.$$
The desired estimate on $D$ is obtained by passing to the limit in the above display along the subsequence $(n_k)_{k\geq 1}$ using \eqref{eq:second_couple}. Let us proceed.
 Fix $0 \leq r_{1}  < s < r_{2} < t \leq 1$, and fix discrete times  $s^{n},t^{n}, r_{1}^{n}$ and $r_{2}^{n}$ as above  such that 
 $$ \left(\frac{s^{n}}{n},\frac{t^{n}}{n}, \frac{r_{1}^{n}}{n}, \frac{r_{2}^{n}}{n} \right) \to (s,t,r_{1},r_{2}),$$ almost surely along the subsequence $(n_k)_{k\geq1}$.
 By virtue of the convergence \eqref{eq:second_couple} along the subsequence $(n_{k})_{k\geq1}$, and  the continuity of the process $Z$, we have that the renormalized labels $(\scal n)^{-1/2\alpha}L^{\bTn}_\cdot$ at times $s^n,t^n,r_1^n$ and $r_2^n$ converge to $Z_s,Z_t,Z_{r_1}$ and $Z_{r_2}$. Therefore, to prove the proposition, it remains to establish that  along $(n_k)_{k\geq 1}$ we have:
 \begin{eqnarray} \label{eq:goalpinch}\liminf_{n \to \infty} \   (\scal n)^{-\frac{1}{2 \alpha}} \cdot \min_{v \in \mathrm{Branch}(v_{r_{1}^{n}},v_{r_{2}^{n}})}\left|\ell_{n}( v_{s^{n}})- \ell_{n}(v)\right| \geq  \min_{r \in \mathrm{Branch}(r_{1},r_{2})} | Z_{s}- Z_{r}|.  \end{eqnarray}
 In this direction, consider a visit time $h^{n}$ of a white vertex in $\mathrm{Branch}(v_{r_{1}^{n}},v_{r_{2}^{n}})$ realizing the minimum in the left-hand side of the previous display. By compactness, up to a further subsequence extraction, one can suppose that $h^{n}/n \to h \in [0,1]$, so that it suffices to prove that $h\in \mathrm{Branch}(r_1,r_2)$ to conclude. In order to avoid trivialities, we will assume that $h\notin\{r_1,r_2\}$.
 In terms of the discrete exploration $S^{\bTn}$, we must have one of the two alternatives:
\begin{equation}
  \label{eq:hnfirst}
  h^{n}\leq r^{n}_{1}\:\:;\:\:S_{h^{n}}^{\bTn}= \min \limits_{[\![h^{n}, r^{n}_{1}]\!]} S^{\bTn}\:\:\text{and }\:\:S_{h_{n}}^{\bTn}\geq  \min \limits_{[\![ r^{n}_{1}, r^{n}_{2}]\!]} S^{\bTn}
\end{equation}
or
\begin{equation}
  \label{eq:hnsecond}
  r^{n}_{1}\leq h^{n}\leq r^{n}_{2}\:\:\text{and }\:\:S_{h^{n}}^{\bTn}= \min \limits_{[\![h^{n}, r^{n}_{2}]\!]} S^{\bTn}.
\end{equation}
Let us pass these estimates to the limit using \eqref{eq:second_couple}, along the subsequence discussed above. This requires a little bit of care, since the evaluation functions $f\in \mathbb{D}([0,1],\mathbb{R})\mapsto f(u)$ are not continuous for the Skorokhod topology. However, if $(f_n)_{n\geq 1}$ is a sequence in $\mathbb{D}([0,1],\mathbb{R})$ converging to some limit $f$, and $(a_n)_{n\geq 1}$ and $(b_n)_{n\geq 1}$ are two sequences in $[0,1]$ with $a_n\leq b_n$ and with limits $a_n\to a$ and $b_n\to b$, then it holds that the limit points of $(f_n(a_n))_{n\geq 1}$ and $(\min_{[a_n,b_n]}f_n)_{n\geq 1}$ are included respectively in
$\{f(a-),f(a)\}$ and $\{\min_{[a,b)}f, \min_{[a,b]}f, f(a-)\wedge \min_{[a,b)}f\}$. If moreover $f$ has only positive jumps, then this second set is equal to $\{\min_{[a,b]}f, f(a-)\wedge \min_{[a,b]}f\}$. 
We will apply these few remarks to $f_n=2(\scal n)^{-1/\alpha}S^{\bTn}_{\lfloor (n-1)\cdot\rfloor}$ and $f=X$, always considering $n$ along appropriate subsequences.

At first, let us suppose that (\ref{eq:hnfirst}) holds
for infinitely many values of $n$. Then, passing to the limit along these values, we have $h< r_1$ (since we assumed that $h\neq r_1$), and we further distinguish two sub-cases:  

(i) If we further assume that $X_{h-}=X_h$, then passing to the limit in (\ref{eq:hnfirst}) yields
$$X_{h-}=X_h\leq \min_{[h,r_1]}X\, \quad \mbox{and}\quad X_{h-}=X_h\geq X_{r_1-}\wedge \min_{[r_1,r_2]} X\, ,$$
where we applied the above discussion with $a_n=h^n/(n-1),b_n=r_1^n/(n-1)$ to get the first inequality, and $a_n=r_1^n/(n-1),b_n=r_2^n/(n-1)$ to get the second one. But the first inequality implies in particular that $X_{h-}\leq X_{r_1-}$, which shows that the second inequality can be replaced by $X_{h-}\geq \min_{[r_1,r_2]} X$. 

(ii) If, on the other hand, it holds that $X_{h-}<X_h$, then we claim that $2(\scal n)^{-1/\alpha}S^{\bTn}_{h^n}$ converges to $X_{h-}$. Otherwise, we could find a subsequence along which it rather converges to $X_h$, and the same argument as before would imply that $X_h\leq \min_{[h,r_1]}X$, a contradiction with the property ($A_1$) in Section \ref{sec:defloop}. Then, the same limiting argument as above implies again that 
\[X_{h-}\leq  \min \limits_{[h, r_{1}]} X\:\:\text{ and }\:\:X_{h-}\geq  \min \limits_{[r_{1},r_{2}]} X\, ,\]
in both cases, this implies that $h\in \text{Branch}(r_{1},r_{2})$. 
Finally, it remains to discuss the situation where (\ref{eq:hnsecond}) holds for 
 infinitely any values of $n$, in which case a similar argument shows that $r_{1}< h< r_{2}$ and 
\[X_{h-}\leq \min \limits_{[h, r_{2}]} X.\]
Again, this implies that  $h\in \text{Branch}(r_{1},r_{2})$. Hence, in all cases, \eqref{eq:goalpinch} follows as desired.
\end{proof}

\section{The topology of  $ \mathcal{S}$} \label{sec:topology}
In this section, we identify the topology of $( [0,1]/\sim_{D},D)$ of any subsequential limit of rescaled stable discrete $ \mathbf{q}$-Boltzmann maps. We begin by describing the points identified by $\sim_{D}$ and $ \sim_{D^{*}}$ in Theorem~\ref{main_theorem_topology}. Next, we encode these equivalence classes using laminations of the disk. Following the work of Le Gall and Paulin \cite{LGP08}, we then apply Moore's theorem to embed $ \mathcal{S}$ on the sphere $ \mathbb{S}_{2}$. This allows us, in the dilute phase, to identify the topology of $ \mathcal{S}$ as that of the Sierpinski carpet; see Theorem \ref{main-topo}.

\subsection{Point identifications}\label{sect:proof:topo}
This section is devoted to the proof of the following result:
\begin{theo}[Point identification]\label{main_theorem_topology}  $ \mathbf{P}$-almost surely, for every $s,t \in [0,1]$, we have
$$ D(s,t) = 0 \quad \iff \quad D^*(s,t)=0 \quad \iff \quad \left\{ \begin{array}{c} \mathfrak{z}(s,t)=0 \\ \textnormal{ or }\\
d(s,t)=0.\end{array}\right.$$
\end{theo}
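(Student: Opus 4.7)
The plan is to establish the chain of equivalences in two parts. The direct implications $D^{*}(s,t)=0 \Rightarrow D(s,t)=0$ and ``$\mathfrak{z}(s,t)=0$ or $d(s,t)=0$'' $\Rightarrow D^{*}(s,t)=0$ follow essentially formally: the first is already contained in~\eqref{triangle_inequality}; for the second, taking $p=1$ in definition~\eqref{def:dstar:2} yields $D^{*}\leq\mathfrak{z}$, while taking $p=2$ with $(s_1,t_1)=(s,s)$, $(s_2,t_2)=(t,t)$ and using the looptree identification $s \sim_d t$ gives $D^{*}(s,t)\leq \mathfrak{z}(s,s)+\mathfrak{z}(t,t)=0$. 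The real content of the theorem is the converse $D(s,t)=0 \Rightarrow$ ``$\mathfrak{z}(s,t)=0$ or $d(s,t)=0$'', which I would prove by contradiction using the cactus bound of Lemma~\ref{lem:cactusbound} combined with the structural results on $Z$ developed in Section~\ref{sec:local:minima}.

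Assume toward a contradiction that some $s<t$ satisfy $D(s,t)=0$, $d(s,t)>0$ and $\mathfrak{z}(s,t)>0$. The bound $D\geq |Z_s-Z_t|$ from~\eqref{trivial_bounds} forces $Z_s=Z_t$, and the inequality $\mathfrak{z}(s,t)>0$ then translates into $Z_s > \max(\min_{[s,t]}Z,\min_{[t,s]}Z)$, so that both cyclic arcs between $s$ and $t$ carry proper excursions of $Z$ below the level $Z_s$. Choosing $r_2\in(s,t)$ and $r_1\in[0,s)\cup(t,1]$ with $Z_{r_1},Z_{r_2}<Z_s$, the cactus bound~\eqref{cactus} gives
\[ 0 \;=\; D(s,t) \;\geq\; (Z_s-Z_{r_1})\wedge(Z_s-Z_{r_2})\wedge \inf_{r\in\mathrm{Branch}(r_1,r_2)}|Z_s - Z_r|, \]
so that the last infimum must vanish. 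Since every interior element of $\mathrm{Branch}(r_1,r_2)$ is a pinch point time (cf.~\eqref{eq:defpinch}) and $Z$ is continuous, a compactness argument --- backed by Proposition~\ref{pinch_points_are_not_record} to exclude that the limit is a leaf accumulated by pinch points whose labels tend to $Z_s$ --- produces a genuine pinch point time $r^{\ast}$ separating $\Pi_{d}(s)$ from $\Pi_{d}(t)$ in the looptree, with $Z_{r^{\ast}}=Z_s$.

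To derive a contradiction, I would then place $r^{\ast}$ in a configuration where Proposition~\ref{lem:non-icnreasealongbranches} applies, namely one where $Z_{r^{\ast}}$ is a strict minimum of $Z$ along $\mathrm{Branch}(0,r^{\ast})\setminus\{r^{\ast}\}$. If $r^{\ast}$ does not already satisfy this condition, an iterative refinement running the cactus argument on the sub-pairs $(s,r^{\ast})$ and $(r^{\ast},t)$, and using Proposition~\ref{intersection} to bound the depth of the iteration, either closes the argument outright (by producing $\mathfrak{z}=0$ on a sub-interval, hence on the original pair by a concatenation of simple geodesics) or places us in the required configuration. Proposition~\ref{lem:non-icnreasealongbranches} then delivers, for every $\eps>0$, ``blocking'' pinch point times $r'_1,r'_2$ on either side of $r^{\ast}$ in the cyclic order, with $Z_{r'_i}<Z_s$ and no point of $\mathrm{Branch}(r^{\ast},r'_i)\setminus\{r^{\ast},r'_i\}$ having label below $Z_s$. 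Feeding the improved pair $(r'_1,r'_2)$ back into the cactus bound, the new branch $\mathrm{Branch}(r'_1,r'_2)$ is bounded away from level $Z_s$ by construction, so $D(s,t)>0$, contradicting $D(s,t)=0$.

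The hardest step will be the transition from the vanishing of the cactus infimum to the existence of a genuine pinch point $r^{\ast}$ at level exactly $Z_s$ satisfying the strict-minimum condition of Proposition~\ref{lem:non-icnreasealongbranches}, together with a planarity check ensuring that after applying the latter the improved witnesses $(r'_1,r'_2)$ still straddle $s$ and $t$ along the cyclic order --- this is precisely the geometric picture sketched in Figure~\ref{fig:illusD=0}, and it is where the fine structural properties of minima and records of $Z$ from Section~\ref{sec:local:minima} are genuinely used.
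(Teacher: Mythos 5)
Your high-level intuition is right---the content of the theorem is that a nontrivial identification $D(s,t)=0$ would be blocked by a face, and the proof indeed combines the Cactus bound with Propositions~\ref{pinch_points_are_not_record} and~\ref{lem:non-icnreasealongbranches}---but the concrete plan has a genuine gap at the ``iterative refinement'' stage, and the paper takes a structurally different route precisely to avoid needing it.

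The missing first step is to show that $Z_{r}\geq z$ for \emph{every} $r\in\mathrm{Branch}(s,t)$, not just to pick some $r_1,r_2$ with $Z<z$. This follows immediately from the Cactus bound applied with the \emph{two representatives} of a putative low pinch point: if $r_1\in\mathrm{Branch}(s,t)$ had $Z_{r_1}<z$, then taking $r_2$ to be the other preimage of $\Pi_d(r_1)$, one has $r_1,r_2$ straddling $s,t$ and $\mathrm{Branch}(r_1,r_2)=\{r_1,r_2\}$, so the Cactus bound gives $D(s,t)\geq z-Z_{r_1}>0$ outright. Once this reduction is in place, there is no iteration: the set $\{r\in\mathrm{Branch}(s,t)\setminus\{s,t\}:Z_r=z\}$ is either empty (and one argues according to whether $s,t$ are pinch point or leaf times, using $\mathfrak{z}(s,t)>0$ to manufacture witnesses with labels below $z$), or else it is a \emph{single} point by Lemma~\ref{lem:onePinch} plus re-rooting, and Proposition~\ref{lem:non-icnreasealongbranches} applied at that point immediately produces the blocking pinch points $r_1',r_2'$; a final Cactus bound gives $D(s,t)>0$.

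Your proposed iteration is problematic for two reasons. First, running the argument on the sub-pairs $(s,r^*)$ and $(r^*,t)$ presupposes that $D$ vanishes on those pairs as well, which you have not shown (the only available a priori bound is $D\geq|Z_\cdot-Z_\cdot|$, a lower bound, and it yields nothing here). Second, Proposition~\ref{intersection} does not give a ``depth bound'' for any recursion: it is a statement about the intersection of the minimal-record sets along $m$ \emph{distinct} branches being empty, and has no bearing on how many times one can refine a single pair $(s,t)$. The appeal to Proposition~\ref{pinch_points_are_not_record} at the compactness step is also unnecessary: $\mathrm{Branch}(r_1,r_2)$ is closed, $Z_{r_1},Z_{r_2}<z$, so the infimum is attained at an interior point, which is automatically a pinch point time by the structure recalled after~\eqref{eq:defpinch}; no appeal to Proposition~\ref{pinch_points_are_not_record} is needed there. (Where that proposition and Lemma~\ref{lem:onePinch} genuinely enter is in ruling out certain degenerate accumulations and in ensuring uniqueness of the $z$-level pinch point before Proposition~\ref{lem:non-icnreasealongbranches} is invoked.)
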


As presented in the Introduction, the main idea behind the above result is that if two times $s,t$ are not identified either by  $\mathfrak{z}$ or by $d$ (corresponding to the ``trivial identifications'' mentioned in  the Introduction), then ``there is a face''  separating the two, ensuring they are at a strictly positive distance from one another; see Figure \ref{fig:cactus:bound}. This argument is very different from the one used by Le Gall \cite{LG07} to identify the topology of the Brownian sphere, but it is reminiscent of the approach used in \cite[Theorem 4.3]{bjornberg2019stable}.

\begin{proof}
Fix $0 \leq  s < t \leq 1$ such that $D(s,t)=0$. As announced above, the informal idea is to show that except if $\mathfrak{z}(s,t)=0$ or $d(s,t)=0$ then the (projections of) $s$ and $t$ must be separated by two faces (appearing at the forthcoming times $r_{1}$ and $r_{2}$) which must force $D(s,t)$ to be positive by the Cactus bound \eqref{cactus}.

 Let us begin by a couple of simple observations. First, by \eqref{trivial_bounds} we must have $Z_{s}=Z_{t}$ and we put $z=Z_{s}=Z_{t}$ to simplify notation in the rest of the proof. We suppose next that these times are not identified in the looptree, i.e.  $d(s, t) \ne 0$, and we argue according to  the form of the set $ \mathrm{Branch}(s,t)$ -- as defined in \eqref{eq:defpinch}. Notice that $ \mathrm{Branch}(s,t)$ may be reduced to $\{s,t\}$ (e.g.~if $\Pi_{d}(s)$ and $\Pi_{d}(t)$ are on  a same loop of $ \mathcal{L}$). We first claim that for every $r \in \mathrm{Branch}(s,t)$ we must have $Z_{r} \geq z$. Because otherwise if there exists $r_{1}\in\text{Branch}(s,t)$ such that $Z_{r_{1}}<z$, we could consider $r_{2}$ the unique  element different of $r_{1}$ such that $d(r_{1},r_{2})=0$ and then we must be in the configuration $r_{1}\in  [s,t]$ and $r_{2}\in  [t,s]$ or  $r_{1}\in  [t,s]$ and $r_{2}\in  [s,t]$, and  in both cases we would get 
\[D(s,t)\geq z-Z_{r_{1}}>0,\]
by an application of  the Cactus bound as stated in Lemma \ref{lem:cactusbound}.

\begin{figure}[!h]
 \begin{center}
 \includegraphics[width=16cm]{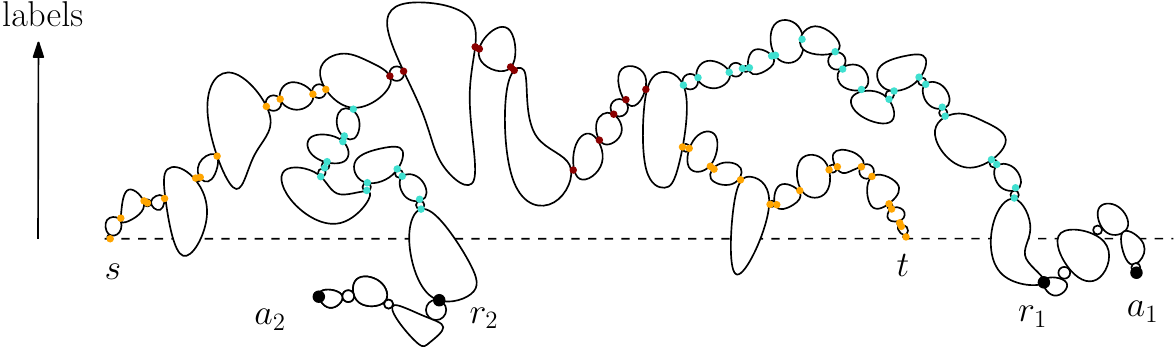}
 \caption{ \label{fig:setup1bis} Setup of the proof of Theorem \ref{main_theorem_topology} after excluding the trivial identifications $d(s,t)=0$ and $\mathfrak{z}(s,t)=0$. We can always find $r_{1}\in (s,t)$ and $r_{2}\in(s,t)$ pinch point times such that $Z_{r_{1}}<z$, $Z_{r_{2}}<z$ and such that the labels on $ \mathrm{Branch}(r_{1},r_{2}) \backslash \{r_{1},r_{2}\}$ are strictly larger than $z$. Then the Cactus bound \eqref{cactus} implies that $D(s,t)>0$.}
 \end{center}
 \end{figure}

Let us now assume that $\mathfrak{z}(s,t) >0$ since otherwise $s,t$ are trivially identified.  We start by treating the case when  $\{r\in\text{Branch}(s,t)\setminus\{s,t\}\::\:Z_{r}=z\}$ is empty. First assume that $s$ is a pinch point time and recall that then  $Z_{r} > z$  for every $r \in \mathrm{Branch}(s,t) \backslash\{s,t\}$. In this  case  the re-rooting property \eqref{re-rooting} and Proposition~\ref{lem:non-icnreasealongbranches} ensure that there are $r_{1}\in(s,t)$ and $r_{2}\in(t,s)$ such that 
\[Z_{r_{1}}<z< Z_{r},\:\:\text{for every}\:\:r \in \text{Branch}(s,r_{1})\setminus \{s,r_{1}\},\]
and
\[Z_{r_{2}}<z<Z_{r},\:\:\text{for every}\:\:r\in \text{Branch}(s,r_{2})\setminus \{s, r_{2}\},\] see Figure \ref{fig:setup1bis} for an illustration.  In particular, since $ \text{Branch}(r_{1},r_{2})\subset  \big(\text{Branch}(s,r_{1})\cup\text{Branch}(s,r_{2}))\setminus\{s\}$, we have $Z_{r}>z$ for every $r\in \text{Branch}(r_{1},r_{2})\setminus\{r_{1},r_{2}\}$. Using that $\text{Branch}(r_{1},r_{2})$ is a compact set, we get $\min \{|z-Z_r|:~r\in  \text{Branch}(r_{1},r_{2})\}>0$. 
Hence,   the Cactus bound (Lemma \ref{lem:cactusbound}), with $(r_{1},r_{2})$,  entails:
\[D(s,t)\geq (z-Z_{r_{1}})\wedge (z-Z_{r_{2}})\wedge   \min\limits_{r\in \text{Branch}(r_{1},r_{2})} |z-Z_{r}|>0,\]
and we can apply exactly the same argument if $t$ is a pinch point time. Assume now that both $s$ and $t$  are leaf times.
Since $\mathfrak{z}(s,t) >0$ there exists $a_{1}\in[0,s[ \cup ]t,1]$ and $a_{2} \in [s,t]$ such that $Z_{a_{1}}<z$ and $Z_{a_{2}}<z$. 
Next, introduce the pinch point times:
\[a_{1}^\prime:=\inf\big\{r\in \mathrm{Branch}(s,a_1)\setminus \{s\}\::\:Z_{r}<z\big\}\quad ;\quad a_{2}^\prime:=\inf\big\{r\in \mathrm{Branch}(t,a_2)\setminus \{t\}\::\:Z_{r}<z\big\}.\]
If $Z_{a_{1}}<z$ (resp. $Z_{a_{2}}<z$) take $r_{1}:=a_{1}$ (resp. $r_{2}:=a_{2}$). If it is not the case, by the discussion at the beginning of the proof  there exists $r_{1}$ (resp. $r_{2}$) with $r_{1}\in ]s,t[$ (resp. $r_{2}\in [0,t^{\prime}[\cup]s^{\prime},1[$) such that:
\[Z_{r_{1}}<z<\inf \{Z_{r}:\: r\in \mathrm{Branch}(a_{1},r_{1})\setminus \{a_{1},r_{1}\}\}\]
(resp. $Z_{r_{2}}<z=\inf \{Z_{r}:\: r\in \mathrm{Branch}(a_{2},r_{2})\setminus \{a_{2},r_{2}\}\}$). 
In every case, we can then apply again the Cactus bound  \eqref{cactus} with $(r_1,r_2)$ to deduce that in this case we also have $D(s,t) >0$, which is excluded. Finally, it remains to treat the case when the set $\{r\in\text{Branch}(s,t)\setminus\{s,t\}\::\:Z_{r}=z\}$ is not empty. Then by Lemma  \ref{lem:onePinch} and the re-rooting property \eqref{re-rooting}, the set $\{r\in\text{Branch}(s,t)\setminus\{s,t\}\::\:Z_{r}=z\}$ contains only one time $r$.   One can then apply Proposition~\ref{lem:non-icnreasealongbranches} to this time and deduce the existence of the required $r_{1},r_{2}$ as above, we refer to Figure \ref{fig:steup2bis} for an illustration.

\begin{figure}[!h]
 \begin{center}
 \includegraphics[width=15cm]{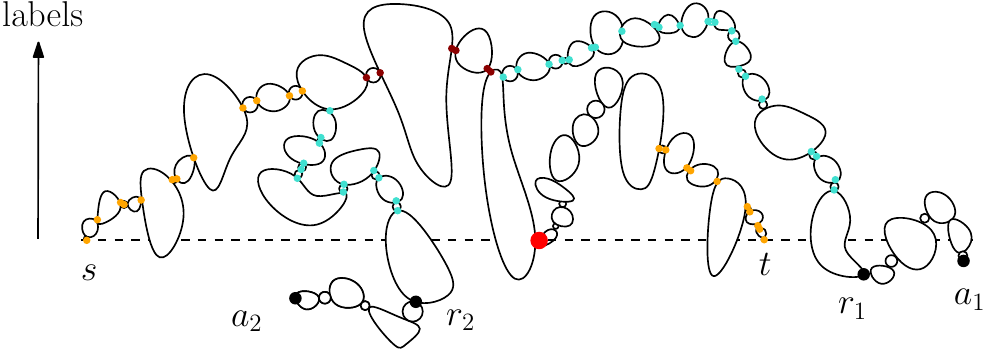}
 \caption{\label{fig:steup2bis} In the case when there exists $ r \in \mathrm{Branch}(s,t) \backslash\{s,t\}$ such that $Z_{r}=z$, then such a time is unique and we can find $r_{1}\in (t,s)$ and $r_{2}\in(s,t)$ directly in the vicinity of $r$ for the looptree distance  $d$ by Proposition~\ref{lem:non-icnreasealongbranches}.}
 \end{center}
 \end{figure}
 \end{proof}

\subsection{Lamination encoding of the equivalence classes and Moore's theorem}
Let us derive an easy consequence of Theorem \ref{main_theorem_topology}.
\begin{cor}\label{cor:topo:D:D^*}
 $\mathbf{P}$ almost surely, we have $\big([0,1]/\sim_D\!\!\big)= \big([0,1]/\sim_{D^*}\!\!\big) =  \mathcal{S}$ as point sets. Moreover the topological spaces
$$ \left( \mathcal{S},D\right)   \mbox{ and  } \left( \mathcal{S},D^*\right)  \mbox{ are a.s. homeomorphic},$$ and their topology is  the quotient topology associated with $\Pi_{D} : [0,1] \to [0,1]/ \sim_{D}$. Finally $ \mathrm{Vol}_D$ and  $\mathrm{Vol}_{D^*}$ define the same measure on the Borel sigma-field of $ \mathcal{S}$ which will from now on be denoted by $ \mathrm{Vol}$.
\end{cor}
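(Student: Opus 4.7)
The plan is to extract everything from Theorem \ref{main_theorem_topology} together with the classical principle that a continuous bijection from a compact space to a Hausdorff space is a homeomorphism.

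For the point-set equality, Theorem \ref{main_theorem_topology} gives the equivalence $D(s,t)=0 \iff D^*(s,t)=0$ for every $s,t \in [0,1]$, so the two equivalence relations $\sim_D$ and $\sim_{D^*}$ have identical graphs and one may unambiguously denote their common quotient set by $\mathcal{S}$. In particular the canonical projections $\Pi_D$ and $\Pi_{D^*}$ coincide as maps $[0,1] \to \mathcal{S}$.

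For the homeomorphism, observe that both $D$ and $D^*$ descend to \emph{genuine} metrics on $\mathcal{S}$ (since one has quotiented exactly by the zero-distance pairs), and that \eqref{triangle_inequality} gives $D \leq D^*$, so the identity map $(\mathcal{S},D^*) \to (\mathcal{S},D)$ is $1$-Lipschitz and hence continuous. It then suffices to check that $(\mathcal{S},D^*)$ is compact and $(\mathcal{S},D)$ is Hausdorff. Hausdorffness of $(\mathcal{S},D)$ is automatic since $D$ is a genuine metric. For compactness of $(\mathcal{S},D^*)$, the key observation is that although $D^*$ is defined by an infimum in \eqref{def:dstar:2}, the pseudo-metric triangle inequality combined with $D^*(s,s') \leq \mathfrak{z}(s,s')$ (which tends to $0$ as $s' \to s$ by continuity of $\mathfrak{z}$) yields that $D^*:[0,1]^2 \to \mathbb{R}_+$ is continuous; consequently $\Pi_{D^*}:[0,1] \to (\mathcal{S},D^*)$ is continuous and $\mathcal{S}$ is the continuous image of the compact interval $[0,1]$. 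The continuous bijection between compact and Hausdorff is then a homeomorphism.

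For the identification with the quotient topology, note that both the $D$-topology and the quotient topology defined by $\Pi_D$ are compact Hausdorff topologies on $\mathcal{S}$: the former by the argument above (applied to $\Pi_D$ via $D \leq \mathfrak{z}$), and the latter because it is the final topology of a map from the compact space $[0,1]$, while Hausdorffness follows from the closedness of $\sim_D$ in $[0,1]^2$ (itself a consequence of the continuity of $D$). Continuity of $\Pi_D:[0,1] \to (\mathcal{S},D)$ shows that the $D$-topology is coarser than the quotient topology, and two comparable compact Hausdorff topologies necessarily coincide. Finally, $\mathrm{Vol}_D$ and $\mathrm{Vol}_{D^*}$ are, by definition, the pushforwards of Lebesgue measure on $[0,1]$ under the same map $\Pi_D = \Pi_{D^*}$, hence they coincide as Borel measures on $\mathcal{S}$ (the two Borel sigma-fields agree since the topologies do). The only non-routine step is the continuity of $D^*$ on $[0,1]^2$, which cannot be read off directly from its infimum definition but, as sketched above, follows painlessly from the triangle inequality and $D^* \leq \mathfrak{z}$.
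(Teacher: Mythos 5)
Your proof is correct and follows essentially the same approach as the paper: Theorem \ref{main_theorem_topology} gives the identification of quotient sets, and the classical principle that a continuous bijection from a compact space onto a Hausdorff space is a homeomorphism drives both the $D$-vs-$D^*$ comparison and the quotient-topology identification. The only cosmetic difference is that you streamline slightly by noting that you only need compactness on the $D^*$-side and Hausdorffness on the $D$-side, whereas the paper records both properties for both spaces; and you spell out the continuity of $D^*$ via $D^* \leq \mathfrak{z}$, which the paper treats as already established at the point where $D^*$ is introduced.
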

\begin{proof} The first statement follows directly from Theorem \ref{main_theorem_topology}. Since $D:[0,1]^{2}\mapsto \mathbb{R}_{+}$ is continuous, the equivalence relation $\sim_{D}$ is closed and the quotient space $( [0,1]/\sim_{D},D)$ is a Hausdorff compact space. The same is true for $( [0,1]/\sim_{D^{*}},D^{*})$. Recalling that $D \leq D^*$, the natural projection $( [0,1]/\sim_{D^{*}},D^{*}) \to ( [0,1]/\sim_{D},D)$ is thus continuous, obviously surjective and, by Theorem \ref{main_theorem_topology}, it is also  injective.	 By a standard result in topology,  its inverse is also continuous and we infer that $( [0,1]/\sim_{D^{*}},D^{*})$ is homeomorphic to $( [0,1]/\sim_{D},D)$. The second point is similar: Let $\mathfrak{T}_{ \mathrm{quotient}}$ be the quotient topology, i.e.~the finest topology that makes $\Pi_{D} : [0,1] \to [0,1]/ \sim_{D}$  continuous. Since $ \sim_{D}$ is closed,  $\mathfrak{T}_{ \mathrm{quotient}}$ separates points. As a consequence $([0,1]/\sim_{D},\mathfrak{T}_{ \mathrm{quotient}})$ is a Hausdorff compact and  we can then repeat the argument above. Finally, since the Borel sigma fields on $( [0,1]/\sim_{D},D)$ and $( [0,1]/\sim_{D^*},D^*)$ coincide and since the projections $\Pi_D$ and $\Pi_{D^*}$ are measurable (when $[0,1]$ is equipped with the Borel sigma field), we deduce from equality of the projections the equality of measures $ \mathrm{Vol}_D = \mathrm{Vol}_{D^*}$.\end{proof}

In the sequel, we will abuse notation and write $\mathcal{S}$ for $ ([0,1]/\sim_{D}) = (0,1]/\sim_{D^{*}}$. As stated in Theorem~\ref{main_theorem_topology} the graph of equivalence classes $\{ (s,t) \in [0,1]^2 : s \sim_{D} t\}$ is the union of:
\begin{equation}\label{sets:topo}
\{(s,t)\in[0,1]^{2}:~d(s,t)=0\}\quad\text{and}\quad  \{(s,t)\in[0,1]^{2}:~\mathfrak{z}(s,t)=0\},
\end{equation}
and by Proposition \ref{pinch_points_are_not_record} the intersection of the above two sets is the diagonal $\{(s,s):~s\in[0,1]\}$.  
Furthermore, by Propositions \ref{prop:timeclassification} and \ref{distinct}, each equivalence class for $\sim_{d}$ or $\sim_{ \mathfrak{z}}$  contains at most $2$ or $3$ points respectively. The interest of writing  the two sets of \eqref{sets:topo} in terms of $X$ and $Z$ is that we can encode these sets as geodesic laminations  in the same vein as Le Gall \& Paulin  \cite{LGP08}, in order to apply Moore's theorem.  In this direction, we  work on $\mathbb{D}$,  the closed unit disk of the complex plane $\mathbb{C}$ and we write $\mathbb{S}_1$ for the boundary of $\mathbb{D}$. 
For every $a,b\in \mathbb{D}$, we  write $[a,b]_{\mathbb{D}}$ for the geodesic arc connecting $a$ and $b$ in $\mathbb{D}$ and set
$p(t):=\exp(2\pi \rm{i} t)$, for every $t\in[0,1]$. Then introduce the two compact subsets of $\mathbb{D}$:
\[L(X):=\bigcup_{s \sim_{d} t}[p(s),p(t)]_{\mathbb{D}}\quad \:\:\text{and}\:\:\quad L(Z):=\bigcup_{s \sim_{ \mathfrak{z}} t}[p(s),p(t)]_{\mathbb{D}}.\]
 By \cite[Propositions 2.9 and 2.10]{Kor11} (resp.  \cite[Proposition 2.5]{CLGrecursive}) the sets   $L(X)$ and $L(Z)$ are geodesic laminations of $\mathbb{D}$, i.e.\ they are unions of a collection of non crossing geodesic arcs $[a,b]_{\mathbb{D}}\setminus\{a,b\}$ with $a,b\in \mathbb{S}_{1}$. Moreover since the minima of $Z$ are distinct, Proposition 2.5 in \cite{CLGrecursive} also implies that $L(Z)$ is maximal -- meaning that any arc $[a,b]_{\mathbb{D}}$ with $a,b\in \mathbb{S}_{1}$ intersects $L(Z)$. It is then  straightforward to verify that  the connected components of $\mathbb{D}\setminus L(Z)$ are open triangles whose vertices are of the form $a,b,c\in\mathbb{S}_{1}$ with $a\sim_{ \mathfrak{z}}b \sim_{\mathfrak{z}} c$.  
 
On the contrary, the  connected components of $\mathbb{D}\setminus L(X)$ are not  triangles. The random set  $L(X)$ has previously been studied by Kortchemski under the name of \textbf{stable lamination} \cite{Kor11}. Let us import a few of his results. Recall that $(\mathrm{t}_i)_{i\in \mathbb{N}}$ denotes the collection of jump times of $X$ and $\mathrm{f}_{\mathrm{t}_i}$ is a parametrization of the associated faces, see \eqref{def:loops}. Proposition 3.10 in \cite{Kor11} states that for every jumping time $\mathrm{t}_i$ there exists a unique connected component $\mathcal{C}_{i}$ of $\mathbb{D}\setminus L(X)$ such that $\text{Cl}(\mathcal{C}_{i})$ is the convex envelope of
\[\big\{p(\mathrm{f}_{\mathrm{t}_i}(s)):\:s\in[0,1]\big\},\]
and $\mathcal{C}_{i}$ is its interior, it also states that the mapping $i\mapsto \mathcal{C}_{i}$  is a one-to-one correspondence between the jumps of $X$ and the connected components of $\mathbb{D}\setminus L(X)$. 

\begin{figure}[!h]
 \begin{center}
 \includegraphics[height=5.5cm]{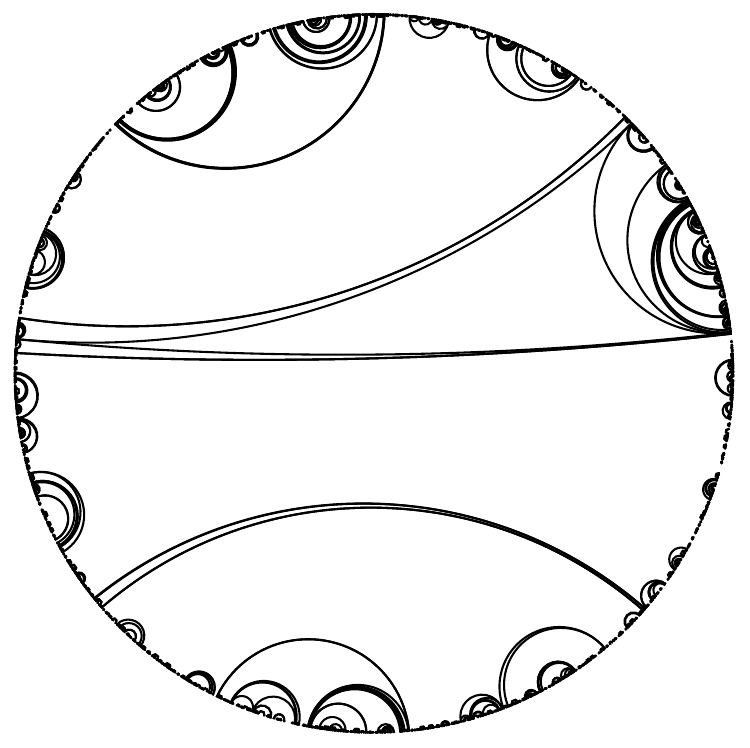}\:\:\:\:\:\:
  \includegraphics[height=5.5cm]{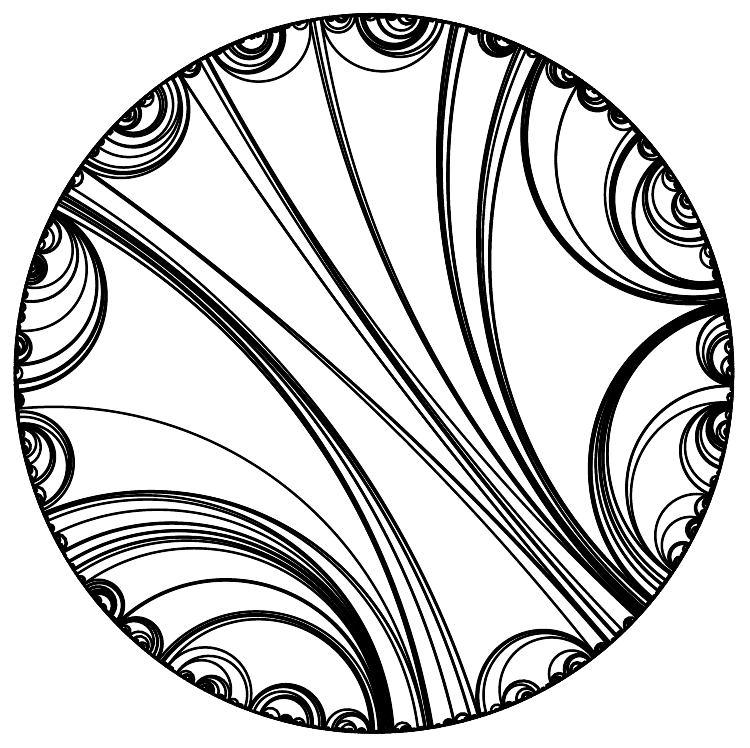}
 \caption{\label{fig:Lamination} Simulations of the geodesic laminations $L(X)$  induced by $\sim_d$ (on the left) and $L(Z)$ induced by $\sim_{ \mathfrak{z}}$ (on the right). For clearness we have changed the Euclidean arcs $[a,b]_{ \mathbb{D}}$ by hyperbolic geodesics. Notice that the connected components of $ \mathbb{D} \backslash L(Z)$ are triangles and convex polygons with infinitely many sides in the case of $L(X)$.}
 \end{center}
 \end{figure}

\paragraph{Lifting the equivalence relations on $\mathbb{S}_2$.} We shall now  ``glue'' these laminations on the sphere. More precisely, consider the $2$-dimensional sphere $ \mathbb{S}_{2}$. The closed upper and lower hemispheres $\mathbb{H}_{+}:=\{(x_{1},x_{2},x_{3})\in \mathbb{S}_{2}:\:x_{3}\geq 0\}$ and $\mathbb{H}_{-}:=\{(x_{1},x_{2},x_{3})\in \mathbb{S}_{2}:\:x_{3}\leq 0\}$  can be identified with two closed disks via the stereographic projections from the poles and this enables us to push the lamination $L(X)$ on $ \mathbb{H}_{+}$ and $L(Z)$ on $ \mathbb{H}_{-}$.

We now define a relation $\approx$ on $ \mathbb{S}_{2}$ using the two images of the laminations $L(X)$ and $L(Z)$:
\begin{itemize}
\item In the upper hemisphere $ \mathbb{H}_{+}$ we say that $a \approx b$ if 
\begin{itemize}
\item $a$ and $b$ belong to the same (image of) an arc of $L(X)$.
\end{itemize}
\item In the lower hemisphere $ \mathbb{H}_{-}$ we say that $a \approx b$ if 
\begin{itemize}
\item $a$ and $b$ belong to the same (image of) an arc of $L(Z)$,
\item or $a$ and $b$ belong to the (image of the) closure of a connected component (a triangle) of $ \mathbb{D}\backslash L(Z)$.
\end{itemize}
\end{itemize}
Notice that in the upper hemisphere we do not identify the points (of the images) of the connected components of $ \mathbb{D} \backslash L(X)$, see Figure \ref{fig:lam:sphere}. We shall abuse notation and write $\mathbb{S}_{1}$ for the equator of $\mathbb{S}_{2}$, obtained as the intersection of the upper and lower hemispheres. 

\begin{figure}[!h]
 \begin{center}
 \includegraphics[height=7cm]{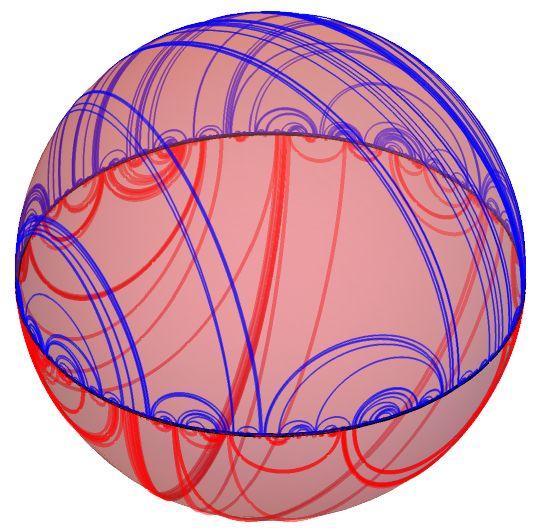}
 \caption{\label{fig:lam:sphere} An illustration of the images of $L(X)$ in blue on the top hemisphere, and $L(Z)$ in red, on the bottom hemisphere.}
 \end{center}
 \end{figure}
 
 \begin{lem}\label{Lem:injection} \label{prop:moore} Almost surely $\approx$ is a closed equivalence relation on $ \mathbb{S}_{2}$ and the quotient space $\mathbb{S}_{2}/\approx$, equipped with the quotient topology, is homeomorphic to $\mathbb{S}_{2}$. Moreover the projection of the equator $\mathbb{S}_{1}/\approx$ is homeomorphic to $(\mathcal{S},D)$. 
\end{lem}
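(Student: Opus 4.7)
The plan is to prove the lemma in three steps: verify that $\approx$ is a closed equivalence relation on $\mathbb{S}_2$; apply Moore's decomposition theorem to identify $\mathbb{S}_2/\!\approx$ with $\mathbb{S}_2$; and finally match the equator quotient with $(\mathcal{S},D)$.

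Reflexivity and symmetry of $\approx$ are built into the definition, so the delicate point is transitivity. The only potentially problematic case is a chain $a\approx b\approx c$ in which $b$ lies on the equator and the two identifications come from different laminations (one from $L(X)$ in $\mathbb{H}_+$, the other from $L(Z)$ in $\mathbb{H}_-$), so that $a$ and $c$ could fall in different hemispheres. For such a chain to exist, the parameter $p^{-1}(b)\in[0,1]$ would simultaneously be a pinch-point time of the looptree (to produce a non-trivial $\sim_d$-class) and a local minimal record time of $Z$ (to produce a non-trivial $\sim_\mathfrak{z}$-class). Proposition~\ref{pinch_points_are_not_record} forbids this, so all non-trivial chains stay within a single hemisphere and transitivity reduces to the lamination property (two arcs/triangles of a geodesic lamination meet only at their boundary). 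To verify that the graph of $\approx$ is closed in $\mathbb{S}_2\times\mathbb{S}_2$, I would write it as a finite union of continuous images of compact sets. The $\mathbb{H}_+$-part is the image of the compact set $\{(s,t,\lambda_1,\lambda_2)\in[0,1]^4:\,s\sim_d t\}$ under the continuous map sending $(s,t,\lambda_1,\lambda_2)$ to the pair of points at respective parameters $\lambda_1,\lambda_2$ along the lift to $\mathbb{H}_+$ of the Euclidean chord $[p(s),p(t)]_{\mathbb{D}}$; continuity of $d$ and compactness of $[0,1]^2$ make the source compact. The analogue holds for $\mathbb{H}_-$ using $\mathfrak{z}$, handling both arcs and the closed triangles of $L(Z)$.

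Next, I would apply Moore's decomposition theorem. The non-trivial classes of $\approx$ are of three types: closed chords (arcs of $L(X)$ lifted to $\mathbb{H}_+$, or of $L(Z)$ lifted to $\mathbb{H}_-$), and closed curved triangles contained in $\mathbb{H}_-$ bounded by three arcs of $L(Z)$. Each is a compact connected subset of $\mathbb{S}_2$ whose complement is connected: for arcs this is elementary, and for a closed triangle contained in the closed hemisphere $\mathbb{H}_-$, the complement contains the open hemisphere $\mathrm{int}(\mathbb{H}_+)$ and is path-connected through it. Since several distinct classes exist, Moore's theorem yields $\mathbb{S}_2/\!\approx\;\cong\mathbb{S}_2$.

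For the last assertion, the restriction of $\approx$ to $\mathbb{S}_1\times\mathbb{S}_1$ corresponds via $p$ to the union of the relations $\sim_d$ and $\sim_\mathfrak{z}$ on $[0,1]$ (the non-mixing argument of paragraph~1 showing transitivity does not enlarge it), and by Theorem~\ref{main_theorem_topology} this union equals $\sim_D=\sim_{D^*}$. Hence $\mathbb{S}_1/\!\approx$ and $\mathcal{S}$ share the same underlying set. Both carry the quotient topology induced from $[0,1]$ (through the parametrization $p$ composed with the respective projections); the relation being closed, both quotients are compact Hausdorff, and the natural bijection is automatically a homeomorphism (a continuous bijection between compact Hausdorff spaces). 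The main obstacle in this outline is the closedness of $\approx$, which requires the parametric description above to control limits of degenerating arcs and triangles near the equator.
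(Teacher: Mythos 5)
Your proof follows the same route as the paper: you invoke Proposition~\ref{pinch_points_are_not_record} to rule out a mixed chain straddling the equator (this is exactly the non-trivial transitivity check), classify the equivalence classes as points, arcs, or closed triangles contained in a single hemisphere, apply Moore's theorem, and finally use Theorem~\ref{main_theorem_topology} together with the compact-Hausdorff argument to identify $\mathbb{S}_1/\!\approx$ with $(\mathcal{S},D)$. Your parametric verification that the graph of $\approx$ is closed (writing it as a finite union of continuous images of compact sets indexed by $\sim_d$- and $\sim_{\mathfrak{z}}$-identified tuples, plus barycentric coordinates for the triangles) is more explicit than the paper, which asserts closedness directly from the structure of the classes, but it establishes the same fact and does not change the overall argument.
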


In particular, for any $\alpha \in (1,2)$, there exists a continuous injection from $(\mathcal{S},D)$ onto $\mathbb{S}_{2}$ a.s.
\begin{proof} First, notice that Proposition \ref{pinch_points_are_not_record} ensures that  a.s.\ if two (images of the) equivalence classes of $\sim_{d}$ and $\sim_{ \mathfrak{z}}$ intersect on $ \mathbb{S}_{1}$ then these equivalence classes are reduced to a single point. 
Hence  $\approx$ defines an equivalence relation on $\mathbb{S}_{2}$ and any equivalence class  of $\approx$ is contained in the upper or lower hemisphere (it might be both if and only if the equivalence class is a singleton). In particular, we deduce that  the equivalence classes of $\approx$ are either, a point, an (image of an) arc, or (the image of) a closed triangle with extremities on $ \mathbb{S}_{1}$ in the lower hemisphere. Consequently, $\approx$ is a closed equivalence relation on $\mathbb{S}_{2}$ such that its equivalence classes are  compact path connected subsets of $\mathbb{S}_{2}$ with connected complements. In particular, we are under the assumptions of  Moore theorem \cite{Moore} which  gives that $\mathbb{S}_{2}/ \approx$ is homeomorphic to $\mathbb{S}_{2}$.  
It remains to show that  $\mathbb{S}_{1}/ \approx$ is homeomorphic to $(\mathcal{S},D^{*})$. In this direction, remark that by Theorem \ref{main_theorem_topology}, for every $s,t\in[0,1]$, we have
$p(t)\approx p(s) $ if and only if $D(s,t)=0$. This implies that the identity map from $\mathbb{S}_{1}/ \approx$ onto $(\mathcal{S},D)$ is a continuous bijection. Moreover, since $\approx$ is closed,  both spaces are (Hausdorff and) compact  ensuring that the identity map is an homeomorphism.
\end{proof}

\subsection{Identification of the topology in the dilute case}\label{sect:topo:dilute}

By Lemma \ref{prop:moore}, the topology of $(\mathcal{S},D)$ is that of the image of $ \mathbb{S}_{1}$ after taking the quotient by $\approx$. Informally, the connected components of the complement of the lamination $L(X)$ in $ \mathbb{H}_{+}$ are the ``faces'' of $ \mathcal{S}$. We will refine this image in the dilute phase since a fundamental difference arises depending on the position of $\alpha$ with respect to $3/2$. Specifically, combining Proposition \ref{prop:records-loops} with Theorem~\ref{main_theorem_topology}, we deduce  that if $\alpha \in (1,3/2)$, then $\approx$ may identify two points of the same face (a connected component of $ \mathbb{H}_{+} \backslash L(X)$) via an arc of $L(Z)$ in $ \mathbb{H}_{-}$,  whereas in the dilute case $\alpha \in [3/2,2)$ this does not happen. Our goal now is to characterize the topology of $ \mathcal{S}$, when $\alpha \in [3/2,2)$, as being  that of the Sierpinski carpet, $\mathbf{Sierp}$, which  is the unique homeomorphism type of a non-empty compact connected space $K$ in the sphere $\mathbb{S}_{2}$, such that its complement consists of countably many connected components $C_{1}, C_{2}, \dots$, satisfying:

\begin{itemize}
\item the diameter of $C_{i}$ goes to $0$ as $i \to \infty$;
\item $\bigcup_{i \geq 1} \partial C_{i}$ is dense in $K$;
\item the boundaries $ \partial C_{i}$ of $ C_{i}$ are simple closed curves which do not intersect each other.
\end{itemize}

\noindent We refer to the work of Whyburn \cite{Whyburn} for a proof of this characterization.
\begin{theo}[Identification of the topology in the dilute case]\label{main-topo}
For $ \alpha \in [3/2,2)$,  the space $(\mathcal{S},D)$ is almost surely homeomorphic to the Sierpinski carpet $\mathbf{Sierp}$.
\end{theo}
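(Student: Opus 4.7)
The plan is to apply Whyburn's topological characterization of the Sierpinski carpet, as recalled in the introduction, to the embedding of $\mathcal{S}$ into $\mathbb{S}_2$ provided by Lemma~\ref{prop:moore}; under this embedding, $\mathcal{S}$ sits as the image $\mathbb{S}_1/\approx$ inside $\mathbb{S}_2/\approx \cong \mathbb{S}_2$. The first step is to identify the complementary components of $\mathcal{S}$. Since the lamination $L(Z)$ on the lower hemisphere is maximal, the relation $\approx$ collapses every arc of $L(Z)$ and every triangle of $\mathbb{D} \setminus L(Z)$ to a point of $\mathbb{S}_1$, so that the image of the lower hemisphere is entirely absorbed into $\mathcal{S}$. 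In the upper hemisphere only the chords of $L(X)$ are collapsed (each to a single pinch point of $\mathcal{S}$), and the connected components of $\mathbb{S}_2 \setminus \mathcal{S}$ are then exactly the open polygons $(\mathcal{C}_i)_{i \geq 1}$ of $\mathbb{D} \setminus L(X)$, indexed by the jumps of $X$ via Kortchemski's description of the stable lamination.

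The vanishing of the diameters and the density of the boundaries are the more tractable Whyburn conditions. The $D$-diameter of $\partial \mathcal{C}_i = \Pi_D(\mathrm{f}_{\mathrm{t}_i}([0,1]))$ is bounded above by its $\mathfrak{z}$-diameter, which is of order the oscillation of the Brownian bridge of variance $\Delta_{\mathrm{t}_i}$ encoding the labels on the corresponding loop by Proposition~\ref{b_Brownian_Brigde}; combined with the Poissonian summability of the jumps of $X$, this forces $\mathrm{diam}(\mathcal{C}_i) \to 0$. The density of $\bigcup_i \partial \mathcal{C}_i$ in $\mathcal{S}$ follows from the identity $\mathcal{L} = \mathrm{Cl}(\mathrm{Loops})$ recalled in Section~\ref{sec:defloop} combined with the continuity of $\Pi_D$.

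The heart of the proof, and the only step where the assumption $\alpha \in [3/2, 2)$ is genuinely used, consists in checking that each $\partial \mathcal{C}_i$ is a simple closed curve and that distinct boundaries are pairwise disjoint. By Theorem~\ref{main_theorem_topology}, any identification between points of $\mathrm{Loops}$ under $\sim_D$ must come either from $\sim_d$ or from $\sim_\mathfrak{z}$. The $\sim_d$ identifications along a single loop only collapse the sub-arcs corresponding to child-loop attachments to single points, so that each boundary remains a topological circle. The $\sim_\mathfrak{z}$ identifications between distinct points lying on any loops---be it the same loop or two different loops---are precisely excluded in the dilute phase by Proposition~\ref{prop:records-loops}(iii), which forces $Z$ to be injective on $(\mathrm{LeftRec} \cup \mathrm{RightRec}) \cap \Pi_d^{-1}(\mathrm{Loops})$. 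Consequently no $\mathfrak{z}$-identification can create a self-intersection of a boundary or a crossing between two distinct boundaries.

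The main remaining obstacle is to handle the ancestor-descendant pinch points in $\mathcal{L}$, at which two adjacent face boundaries tangentially share a single point in $\mathcal{S}$. I expect this to be the most delicate part of the proof: one would need either a planar resolution of each such tangency, performed consistently over the countably many such points to produce an equivalent embedding with genuinely disjoint boundaries, or the use of an equivalent form of Whyburn's characterization tolerating tangential single-point meetings of boundary curves. Once this step is settled, Whyburn's theorem directly yields the almost-sure homeomorphism $(\mathcal{S}, D) \cong \mathbf{Sierp}$.
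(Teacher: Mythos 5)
Your strategy matches the paper's essentially step for step: Moore's embedding, Kortchemski's identification of the complementary polygons with the jumps of $X$, and Whyburn's characterization, with Theorem~\ref{main_theorem_topology} and Proposition~\ref{prop:records-loops}(iii) doing the dilute-phase work for the boundary curves. Your probabilistic argument for the vanishing of diameters (Brownian-bridge oscillations plus summability of the jumps) differs from the paper's, which instead invokes the Schönflies theorem together with local connectedness, but both routes are valid.

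The gap is the final paragraph. The ``ancestor--descendant pinch points at which two adjacent face boundaries tangentially share a single point'' do not exist: distinct loops of the stable looptree are already pairwise \emph{disjoint} as subsets of $\mathcal{L}$, so $\sim_d$ never identifies a time of $\mathrm{f}_{\mathrm{t}_i}([0,1])$ with a time of $\mathrm{f}_{\mathrm{t}_j}([0,1])$ for $i\neq j$, and there is no tangency to resolve. To see this, let $\{s,t\}$ with $s<t$ be the two pre-images of a pinch point, so that by Proposition~\ref{topologie_loop_tree} one has $X_t=X_{s-}$ and $X_r>X_{s-}$ on $(s,t)$. If $\Delta_s>0$, then $s$ and $t$ are the two endpoints of the loop $\mathrm{f}_s$; property $(A_1)$, namely $\inf_{[s-\varepsilon,s]}X<X_{s-}$ for all small $\varepsilon$, prevents either of them from being a time of $\mathrm{f}_{s'}([0,1])$ for any jump time $s'<s$, and a jump time $s'\in(s,t)$ with $t\in\mathrm{f}_{s'}([0,1])$ would force $X_{s'-}=X_{s-}$, hence $d(s',t)=d(s,t)=0$ while $d(s,s')>0$, violating the triangle inequality for the pseudo-metric $d$. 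If $\Delta_s=0$, property $(A_2)$ furnishes the unique jump ancestor $s'$ with $X_{s'-}<X_s<X_{s'}$; then $s$ lies only on $\mathrm{f}_{s'}([0,1])$ and the same style of argument shows $t$ lies on no loop at all. Thus every pinch point of $\mathcal{L}$ lies on at most one loop, distinct face boundaries $\partial C_i,\partial C_j$ are automatically disjoint once the $\mathfrak{z}$-identifications are ruled out by Proposition~\ref{prop:records-loops}(iii), and the planar ``resolution of tangencies'' you anticipated is vacuous. Once you make this observation, your argument closes the proof exactly as the paper's does.
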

\begin{proof}
To simplify notation, write $p_+$ for the stereographic projection from the south pole  and 
introduce $\tau:\mathbb{S}_{2}\to\mathbb{S}_{2}/\approx$ the canonical projection. We also  fix $F:(\mathbb{S}_{2}/\approx) \ \to \ \mathbb{S}_{2}$ an homeomorphism which exists a.s.~by Lemma \ref{Lem:injection}. Remark that the map $F\circ \tau$ is continuous and that by the second statement of Lemma \ref{Lem:injection} the set $\mathbb{X}:=F(\tau(\mathbb{S}_{1}))$ is homeomorphic to $(\mathcal{S},D)$. So our goal is to show that $\mathbb{X}$ is homeomorphic to the Sierpinski carpet $\mathbf{Sierp}$. First note that for every $h\in \mathbb{H}_{-}$ there exists $h^{\prime}\in \mathbb{S}_{1}$ such that $h\approx h^{\prime}$ --  since $L(Z)$ is a maximal geodesic lamination and the  connected components of $\mathbb{D}\setminus L(Z)$ are open triangles whose vertices are of the form $a,b,c\in\mathbb{S}_{1}$ with $a\approx b\approx c$. This implies that $(\mathbb{H}_{-}/\approx)~=(\mathbb{S}_{1}/\approx)$ and consequently $\mathbb{X}=F(\tau(\mathbb{H}_{-}))$. 
\par We also get that  the mapping $\mathcal{C}\mapsto F(\tau(p_{+}^{-1}(\mathcal{C})))$  is a one-to-one correspondence between connected components of $\mathbb{D}\setminus L(X)$ and $\mathbb{S}_{2}\setminus \mathbb{X}$. Recall that $(\mathrm{t}_i)_{i\in \mathbb{N}}$ denote the collection of jumping times of $X$ and $\mathrm{f}_{\mathrm{t}_i}$ is a parametrization of the associated faces. We again use the notation $\mathcal{C}_i$ for the interior of the convex envelope of $\big\{p(\mathrm{f}_{\mathrm{t}_i}(s)):\:s\in[0,1]\big\}$.
As recalled in the previous section,  Proposition 3.10 in \cite{Kor11} states that the mapping $i\to \mathcal{C}_{i}$  is a one-to-one correspondence between the jumps of $X$ and the connected components of $\mathbb{D}\setminus L(X)$. Consequently, we have:
\begin{eqnarray}\mathbb{S}_2\setminus \mathbb{X}=\bigcup \limits_{i\in\mathbb{N}}C_{i},   \label{eq:facesembed}\end{eqnarray}
where $C_i=F\circ \tau \circ p_{+}^{-1}(\mathcal{C}_i)$. In particular $\mathbb{S}_{2}\setminus \mathbb{X}$ is not empty. Since by construction $\mathbb{X}$ is a non-empty connected compact metric space embedded on $\mathbb{S}_2$ (it is the image of $[0,1]$ by a continuous function), it is enough to show that $\mathbb{X}$ satisfies Whyburn's topological characterization of the Sierpinski carpet:
 \\
 \\
\textbf{The diameter of $C_i$ goes to $0$ as $i\to \infty$.} Since $[0,1]$ is locally connected,  so it is $\mathbb{X}$. A result of Sch\"onflies, see Theorem 10 page 515 \cite{Kuratowski}, then implies that the diameter of $ C_{i}$ goes to $0$ as $i \to \infty$.  
\\
\\
\textbf{$\bigcup_{i \geq 1} \partial C_{i}$ is dense in $\mathbb{X}$.}  By definition we have $\text{Cl}\big(\bigcup\limits_{i=0}^{\infty}\partial C_{i}\big)\subset F(\tau(\mathbb{S}_{1}))$. Since, for every $i\in \mathbb{N}$,  $\mathrm{t}_i\in \mathcal{C}_{i}$, the density of the jumping times of $X$  in $[0,1]$ implies that:
\[\text{Cl}\big(\bigcup_{i=0}^{\infty}\partial C_{i}\big)= F(\tau(\mathbb{S}_{1}))=\mathbb{X}.\]
 \textbf{The boundaries $ \partial C_{i}$ of $ C_{i}$ are simple closed curves that do not intersect each other.} First note that for every $i\in \mathbb{N}$, the boundary $\partial C_{i}$ is the image of $[0,1]$ by the map $\ell_i:=F\circ \tau\circ p_{+}^{-1}\circ p \circ \mathrm{f}_{\mathrm{t}_i}$. Moreover since $\alpha\in [\frac{3}{2},2)$, for $i\neq j$,  we have $\partial C_{i}\cap \partial C_{j}=\varnothing$ and $\ell_i$ is injective by Proposition~\ref{prop:records-loops} and Theorem~\ref{main_theorem_topology}. Here we also used that, by Proposition \ref{topologie_loop_tree}, for every $t,t^\prime\in\mathrm{f}_{\mathrm{t}_i}([0,1])$, with $t\neq t^\prime$, we must have $d(t,t^\prime)\neq 0$. To conclude it then remains to show that, for every  $i\in \mathbb{N}$, the function  $\ell_{i}$ is also continuous. By definition, the map $F\circ\tau\circ p_{+}^{-1}\circ p$ is continuous and since $\mathrm{f}_{\mathrm{t}_i}$ is rcll, the continuity of  $\ell_{i}$ will follow if we show that, for every $s\in[0,1]$ with $\mathrm{f}_{\mathrm{t}_i}(s)<\mathrm{f}_{\mathrm{t}_i}(s+)$, the two points $\mathrm{f}_{\mathrm{t}_i}(s)$ and $\mathrm{f}_{\mathrm{t}_i}(s+)$  have the same image by  $\tau\circ p_{+}^{-1}\circ p$.  In this direction, remark that  we have $X_{\mathrm{f}_{\mathrm{t}_i}(r)}= X_{\mathrm{t}_i}-r\Delta_{\mathrm{t}_i}$ for every $r\in [0,1]$. Now fix $s\in[0,1]$ such that $\mathrm{f}_{\mathrm{t}_i}(s)<\mathrm{f}_{\mathrm{t}_i}(s+)$.  By definition and the previous remark we get: \[X_{\mathrm{f}_{\mathrm{t}_i}(s)}=X_{\mathrm{f}_{\mathrm{t}_i}(s+)}= X_{\mathrm{t}_i}-s\Delta_{\mathrm{t}_i}\:\:\text{and}\:\: X_{r}\geq  X_{\mathrm{t}_i}-s\Delta_{\mathrm{t}_i}\:\:\text{for every}\:\:r\in(\mathrm{f}_{\mathrm{t}_i}(s),\mathrm{f}_{\mathrm{t}_i}(s+)),\] and it follows straightforwardly from the definition of $d$, given in \eqref{def:distancelooptree}, that  $d(\mathrm{f}_{\mathrm{t}_i}(s),\mathrm{f}_{\mathrm{t}_i}(s+))=0$. Consequently, we have $$\tau(p_{+}\big(p(\mathrm{f}_{\mathrm{t}_i}(s)))\big)=\tau(p_{+}\big(p(\mathrm{f}_{\mathrm{t}_i}(s+)))\big)$$ and we deduce that $\ell_{i}$ is continuous.
 \end{proof}

\noindent Let us conclude this section with a few  remarks. 

\paragraph{Separating cycles.} The embedding $  \mathbb{X} \subset \mathbb{S}^2$ on the sphere constructed in the above proof enables us to apply different versions of the Jordan theorem.  In the dilute case $\alpha \in [3/2,2)$, thanks to Theorem~\ref{main-topo}, we can give an analog of  \cite[Corollary 1.2]{LGP08}. More precisely, recall the notation $\mathfrak{M}_n$ for our Boltzmann map with $n$ vertices and that we can draw it directly on the sphere -- we make this assumption in the rest of this section. Recall that a  path of length $m$ is a sequence $x_{0},e_{1},x_{1},e_{2},..., x_{m-1},e_{m},x_{m}$ where $x_{0},x_{1},...,x_{m}$ are vertices of $\mathfrak{M}_{n}$ and $e_{1},...,e_{m}$ are edges of $\mathfrak{M}_{n}$ such that $e_{i}$ connects $x_{i-1}$ and $x_{i}$ for every $i\in[\![1,m]\!]$, and we say that it is  an injective cycle if $x_{0}=x_{m}$ and the vertices $x_{0},x_{1},...,x_{m-1}$ are all distinct. For an injective cycle $C$, we denote the union of its edges by $R(C)$ and notice that, by the Jordan theorem, $\mathbb{S}_{2}\setminus R(C)$ has two connected components.  Replacing the sphere $ \mathbb{S}_{2}$ by the Siperpinski carpet (which has not cut-point) in the proof of \cite[Corollary 1.2]{LGP08} yields:
\begin{cor}\label{inf:cycle}
Fix $\alpha\in[\frac{3}{2},2)$. Let $\delta>0$, and $\theta:\mathbb{N}\to \mathbb{R}_{+}$ a function such that $\theta(n)=o(n^{\frac{1}{2\alpha}})$ as $n\to \infty$. The probability that $\mathfrak{M}_n$ has an injective cycle $C$ with length smaller than $\theta(n)$ and such that the two connected components of $\mathbb{S}_{2}\setminus R(C)$ have diameter  larger than $\delta n^{\frac{1}{2\alpha}}$ tends to $0$ as $n$ goes to infinity.
\end{cor}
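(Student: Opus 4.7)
The plan is to argue by contradiction, adapting the strategy of Le Gall and Paulin from \cite{LGP08}, with the role of $\mathbb{S}_{2}$ (for the Brownian sphere) replaced by the Sierpinski carpet thanks to Theorem \ref{main-topo}. Suppose that the conclusion fails: then along some subsequence of the subsequence $(n_{k})_{k\geq 1}$ of Proposition \ref{theo:sub}, with positive probability, there exists an injective cycle $C_{n}\subset \mathfrak{M}_{n}$ of length at most $\theta(n)$ whose removal yields two components of $\mathbb{S}_{2}\setminus R(C_{n})$, each containing a vertex at graph distance at least $\delta n^{1/(2\alpha)}/3$ from $R(C_{n})$. I work on this event together with the a.s.\ event on which the GHP convergence \eqref{sub:conv:M} holds.

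Next, I would extract a further subsequence along which: a distinguished vertex $u_{n}\in V(C_{n})$ converges, after rescaling by $(\scal n)^{-1/(2\alpha)}$, to some $\rho\in \mathcal{S}$, and vertices $v_{n}^{(1)},v_{n}^{(2)}$ chosen deep in each of the two components converge to points $\rho^{(1)},\rho^{(2)}\in \mathcal{S}$. Since the graph diameter of $C_{n}$ is bounded by its length $\theta(n)=o(n^{1/(2\alpha)})$, the rescaled Hausdorff distance between $V(C_{n})$ and $\{u_{n}\}$ tends to $0$, so the whole cycle collapses to the single point $\rho$ in the GHP limit. Moreover one has $D(\rho,\rho^{(i)})\geq \delta/3$ for $i=1,2$.

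By Theorem \ref{main-topo}, $\mathcal{S}$ is a.s.\ homeomorphic to the Sierpinski carpet, which has no cut points. Hence $\mathcal{S}\setminus \{\rho\}$ is arcwise connected and there is a continuous path $\gamma:[0,1]\to \mathcal{S}\setminus \{\rho\}$ joining $\rho^{(1)}$ to $\rho^{(2)}$. By compactness, $\gamma([0,1])$ lies at positive $D$-distance $3\eta>0$ from $\rho$. Using the correspondences $\mathfrak{R}_{n}$ from the proof of Proposition \ref{theo:sub}, I would cover $\gamma$ by finitely many small $D$-balls of radius $\ll \eta$, lift their centers to vertices of $\mathfrak{M}_{n}$ via $\mathfrak{R}_{n}$, and concatenate geodesics between consecutive such vertices in $\mathfrak{M}_{n}$. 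For $n$ sufficiently large this produces a path $\gamma_{n}$ in $\mathfrak{M}_{n}$ from $v_{n}^{(1)}$ to $v_{n}^{(2)}$ whose vertices all lie at graph distance at least $\eta\,n^{1/(2\alpha)}$ from $u_{n}$, hence (since the graph diameter of $C_{n}$ is $o(n^{1/(2\alpha)})$) at positive graph distance from the whole of $V(C_{n})$. Viewing $\gamma_{n}$ as a continuous curve in $\mathbb{S}_{2}$ through the embedding of $\mathfrak{M}_{n}$, we obtain a path in $\mathbb{S}_{2}\setminus R(C_{n})$ joining $v_{n}^{(1)}$ to $v_{n}^{(2)}$, contradicting Jordan's theorem since those vertices lie in distinct components of $\mathbb{S}_{2}\setminus R(C_{n})$.

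The main technical step is the discrete approximation producing $\gamma_{n}$: one needs the intermediate vertices supplied by the correspondence $\mathfrak{R}_{n}$ and the short geodesic connectors in $\mathfrak{M}_{n}$ between them to remain in a uniform neighborhood of $\gamma([0,1])$, hence away from $u_{n}$ and in turn away from the shrinking cycle $C_{n}$. This is the standard mechanism by which GHP convergence transfers a continuous arcwise-connectedness statement in the limit into a discrete connectedness statement for $n$ large, and the absence of cut points in the Sierpinski carpet is the essential topological input that is unavailable in the dense phase $\alpha\in(1,3/2)$.
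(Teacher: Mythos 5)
Your proposal reconstructs precisely the Le Gall--Paulin argument from \cite[Corollary 1.2]{LGP08} with the $2$-sphere replaced by the Sierpinski carpet, which is exactly what the paper does (it merely cites \cite{LGP08} and points out that the Sierpinski carpet has no cut points). The key steps---collapse of the short cycle to a single point $\rho$ in the GHP limit, absence of cut points in $\mathcal{S}$ giving a continuous arc joining $\rho^{(1)}$ to $\rho^{(2)}$ in $\mathcal{S}\setminus\{\rho\}$, discretization via the correspondence $\mathfrak{R}_{n}$, and the contradiction with Jordan separation---are all correct and match the intended proof; the only cosmetic point is that you should first extract a ``bad'' subsequence (where the probability stays bounded away from $0$) and then apply tightness to that subsequence, rather than restricting directly to $(n_k)_{k\geq1}$.
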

In the dense case $\alpha\in (1,\frac{3}{2})$, the analog of Corollary \ref{inf:cycle} does not hold since it is easy to see that any two loop times identified by $\Pi_{D}$ produce a cut point. Actually, the study of injective cycles in planar maps is a rich topic and very precise results can be obtained. See for example \cite{LGL} for the study of injective cycles in planar quadrangulations, \cite{Iso-Rie} for a direct study in  Brownian geometry which in particular states the  isoperimetric  profile of the Brownian plane, and the recent work \cite{bouttier2022bijective} which used a class of injective cycles to derive bijective enumerations of planar maps with three boundaries. 

\paragraph{Graph of faces.} The embedding $ \mathbb{X} \subset \mathbb{S}^2$ and \eqref{eq:facesembed} gives a precise topological meaning to the ``faces'' of $\mathcal{S}$, as the connected components of the complement of $ \mathbb{X}$ (notice that these faces require the embedding to be defined, especially in the dense phase). Some topological properties of $ \mathbb{X}$ can then be defined in terms of touching faces. In particular, we can consider the graph $ \mathscr{G}$ whose vertices are the faces of $\mathbb{X}$ and where there is an edge between two vertices if the corresponding faces touch each other. We then believe that the techniques developed in  \cite[Section 5.2]{BC16} can be used to prove that:
\begin{conjecture} The graph $ \mathscr{G}$ of the touching faces  is almost surely connected in the dense phase. \end{conjecture}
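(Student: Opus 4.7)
The plan is to establish connectedness by an iterative exploration of the touching-graph $\mathscr{G}$ from a fixed face $F_{0}$, combined with local density estimates extracted from the Markov property and the record structure of the label process $Z$. The starting point is the reformulation, valid by Theorem~\ref{main_theorem_topology} and the classification of points in $\mathcal{L}$, that two distinct faces $F_{i},F_{j}$ associated with jump times $\mathrm{t}_{i},\mathrm{t}_{j}$ of $X$ touch in $\mathbb{X}$ if and only if there exist $s\in \mathrm{f}_{\mathrm{t}_{i}}([0,1])$ and $t\in \mathrm{f}_{\mathrm{t}_{j}}([0,1])$ with $s \not\sim_{d} t$ and $\mathfrak{z}(s,t)=0$. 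In the dense phase $\alpha\in(1,3/2)$, Proposition~\ref{prop:records-loops}(iii) exhibits such pairs, and the first step is to upgrade it into a local statement: for any face $F$ and any sub-arc $\gamma\subset\partial F$ of positive length, there exist infinitely many faces that touch $F$ along $\gamma$. This should follow by applying Corollary~\ref{cutting_N} at the first jump time of $X$ corresponding to $\gamma$ and repeating the proof of Proposition~\ref{prop:records-loops}(iii) on the shifted excursion, using the independence of the Brownian bridges decorating subsequent loops.

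Next, I would set $\mathcal{F}_{0}:=\{F_{0}\}$ and $\mathcal{F}_{k+1}:=\mathcal{F}_{k}\cup\{F:F \text{ touches some }F'\in\mathcal{F}_{k}\}$, and put $\mathcal{F}_{\infty}:=\bigcup_{k}\mathcal{F}_{k}$ together with $K_{\infty}:=\mathrm{Cl}\bigl(\bigcup_{F\in\mathcal{F}_{\infty}}\overline{F}\bigr)\subset\mathbb{S}_{2}$. The local density from the previous step, iterated through the generations together with the density in $\mathbb{X}$ of $\bigcup_{i}\partial F_{i}$ (which follows from the density of jump times of $X$ in $[0,1]$), should yield $\mathrm{Vol}(K_{\infty}\cap\mathbb{X})=1$. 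A parallel full-measure statement at the discrete level is proved by a peeling exploration in \cite[Section~5.2]{BC16}; the continuum counterpart would essentially transpose this argument, replacing each peeling step by the exploration of the next jump of $X$ attached to $\partial F$ for every $F\in\mathcal{F}_{k}$.

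The main obstacle is promoting this full-measure statement into the pointwise exhaustion $\mathcal{F}_{\infty}=\{F_{i}:i\geq 1\}$. Indeed, a priori, a small face $F'$ could satisfy $\partial F'\cap K_{\infty}=\varnothing$ despite $\mathrm{Vol}(F')=0$, and would then be missed by the exploration while not affecting the measure argument. To rule this out I would argue by contradiction using the re-rooting invariance \eqref{re-rooting} combined with Proposition~\ref{prop:records-loops}(iii) applied to $F'$ itself: if $F'$ were isolated from $\mathcal{F}_{\infty}$, then its own connected component in $\mathscr{G}$ would produce a second closed set $K'_{\infty}$ disjoint from $K_{\infty}$, and a uniform random point in $\mathcal{S}$ would have a positive probability of lying in its boundary, contradicting the fact that, by \eqref{re-rooting}, such a point sees a neighborhood with the same law as $F_{0}$. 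The delicate point here is to obtain quantitative bounds on the two-point touching function \emph{uniformly in the scale} of $F'$, which I expect to require sharpening Theorem~\ref{two_points_function} together with a zero-one law built from the stationarity properties established in Section~\ref{sec:comparison:ideal}.
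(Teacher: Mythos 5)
The statement you are trying to prove is labeled a \emph{conjecture} in the paper, and the authors explicitly leave it open, remarking only that they ``believe'' the discrete peeling techniques of \cite[Section 5.2]{BC16} can be adapted; there is no proof in the paper to compare your sketch against. Your general direction --- exploring the touching graph from a fixed face, using the records-on-loops structure (Proposition~\ref{prop:records-loops}(iii)) to propagate, and then trying to pass from a full-measure statement to pointwise exhaustion --- is a sensible one and is consistent with the route the paper gestures at. You correctly identify that the crux is the last step, and you honestly flag it as unresolved. But as written, that step contains a genuine logical error, not just a quantitative gap.

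Here is the problem. You establish (step 3) that $\mathrm{Vol}(K_{\infty}\cap\mathbb{X})=1$, where $K_{\infty}$ is the closure of the union of $\overline{F}$ over $F$ in the connected component of $F_0$. Since $\mathrm{Vol}$ is the pushforward of Lebesgue measure by the continuous surjection $\Pi_D$, it has full support, so a full-measure closed subset is all of $\mathbb{X}$: $K_{\infty}=\mathbb{X}$. But then the set $K'_{\infty}$ built from a hypothetical second component is automatically a subset of $K_{\infty}$, so they cannot be disjoint as you claim. (Independently, $\partial F'$ has Hausdorff dimension $2$ in a space of dimension $2\alpha>2$ by Proposition~\ref{prop:HDface=2}, hence zero $\mathrm{Vol}$-measure, so ``a uniform random point would have positive probability of lying in $\partial F'$'' is false; the correct value is zero, which yields no contradiction.) What is actually missing is the implication $K_{\infty}=\mathbb{X}\Rightarrow \mathcal{F}_{\infty}=\{F_i : i\geq 1\}$: a face $F'$ could have $\partial F'\subset K_{\infty}$ because it is approximated by a sequence of distinct faces $F_n\in\mathcal{F}_{\infty}$ whose diameters tend to $0$, without $\overline{F'}$ ever meeting $\overline{F_n}$ for any $n$. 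Ruling this out requires an argument of a different nature (e.g.\ a quantitative lower bound on the probability that a macroscopic face at scale $\delta$ meets the cluster, uniform in $\delta$, together with a Borel--Cantelli or zero--one argument), which your sketch does not supply. Step 3 is also not established: ``iterating the local density'' of touching faces does not obviously give full measure, since the neighboring faces could accumulate inside a bounded region; this is exactly where the Markovian peeling structure of \cite{BC16} would be needed, and transposing it to the continuum is itself a nontrivial task. In short, the plan is plausible in outline but neither step 3 nor step 4 is close to complete, and the contradiction argument you propose in step 4 does not work as stated.
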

See \cite[Question 11.2]{DMS14} and \cite{gwynne2020connectivity,doherty2024connectivity} for an analogous  question in SLE random fractals and \cite[Open Problem (4)]{MP10} in the case of  planar Brownian motion.  As we said in the introduction, we believe that the topology of $(\mathcal{S},D)$, or more precisely of its embedding $ \mathbb{X} \subset \mathbb{S}^2$ constructed above is in fact \textit{random}  in the dense case. A similar situation arises for the topology of the SLE$_{\kappa}$ curve when $\kappa >4$, as was shown in \cite{yearwood22}. We take inspiration from this work and give a heuristic argument supporting our belief,  which is summarized in Figure \ref{fig:topologyrandom} and its caption. Based on this heuristic, we make the more precise conjecture:
\begin{conjecture} If $\mathbb{X}'$ is an independent copy of $\mathbb{X}$, then almost surely, no two neighborhoods of $\mathbb{X}$ and $\mathbb{X}'$ can be mapped to each other by a homeomorphism of $ \mathbb{S}^2$.
\end{conjecture}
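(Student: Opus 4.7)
My plan follows the strategy of Yearwood~\cite{yearwood22} for non-simple $\mathrm{CLE}_{\kappa}$, $\kappa>4$: extract from every small neighborhood of $\mathbb{X}$ a topological invariant of infinite combinatorial complexity whose law is random enough to distinguish two independent samples. The natural candidate is the \emph{face-tangency pattern} of $\mathbb{X}$ in an open set $U\subset\mathbb{S}^2$. Any homeomorphism $\phi:U\to U'$ carrying $U\cap\mathbb{X}$ onto $U'\cap\mathbb{X}'$ induces a bijection between the faces of $\mathbb{X}$ in $U$ and those of $\mathbb{X}'$ in $U'$ that preserves which pairs of faces are tangent, preserves the cardinality $N_{ij}=\#(\overline{F_i}\cap\overline{F_j})\in\{1,2,\dots\}$ of tangency points of any two touching faces, and preserves the cyclic order of these tangencies around each face boundary. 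This bundle of combinatorial data is manifestly a topological invariant of $(U, U\cap \mathbb{X})$, and by the density result in Theorem \ref{main_theorem_topology} it is non-trivial in every neighborhood of $\mathbb{X}$ in the dense phase.

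To prove that this pattern almost surely distinguishes $\mathbb{X}$ from $\mathbb{X}'$, I would first quantify the randomness of $N_{ij}$. Extending the two-point computation of Section~\ref{sec:2pt}, combined with the spinal decomposition of Proposition~\ref{prop:spine} and the regenerative-set intersection argument of Proposition~\ref{intersection}, should yield, for an idealized pair of adjacent faces produced by the Markov property along a spine, a non-degenerate law of $N_{ij}$ supported on $\{1,2,\ldots,\lceil(3-2\alpha)^{-1}\rceil+1\}$ in the dense phase, with strictly positive mass on at least the first two values. Then iterated use of the Markov property of Lemma~\ref{Markov_lem} at the jump times of $X$ corresponding to distinct faces should decouple the multiplicities attached to different parts of the looptree, producing an effectively independent array of random integer invariants indexed by face-pairs in $U$.

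The conjecture would then follow from a Borel--Cantelli-type argument combined with a rigidity statement: if $\phi$ existed, the invariants above would force a tangency-pattern isomorphism between two independent random countable planar combinatorial structures, an event one hopes to show has probability zero. The principal obstacle is exactly this rigidity step. A priori, a tangency pattern can admit many non-trivial combinatorial automorphisms, and one must either enumerate them or refine the invariant until the automorphism group collapses; natural refinements would incorporate data about triples and higher-order configurations of mutually touching faces, their cyclic arrangements around shared tangency points, and the recursive structure of the sub-pattern ``inside'' each face (itself a random fractal of the same type). Carrying out such a refinement rigorously is, I expect, the true difficulty of the conjecture, and probably requires a substantially more precise description of the local geometry of $\mathbb{X}$ at its multi-tangency points than what is currently established in our work.
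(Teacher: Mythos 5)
Your proposal is in the same broad spirit as the paper's heuristic (both aim to distil from the face‑tangency structure a topological invariant that is random enough to separate two independent samples), but it has a concrete defect and it also faces the rigidity problem in a form the paper deliberately avoids.

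The defect is that your basic invariant $N_{ij}=\#(\overline{F_i}\cap\overline{F_j})$ is not a positive integer in the dense phase. By Propositions~\ref{I_infty} and~\ref{prop:records-loops}, the tangency set between two touching faces is encoded (under the absolutely continuous ideal model) by the intersection of two independent ranges of $(2-\alpha)$-stable subordinators; for $\alpha<3/2$ this intersection is the range of a $(3-2\alpha)$-stable subordinator, hence a Cantor-like set of Hausdorff dimension $3-2\alpha>0$. Thus $\overline{F_i}\cap\overline{F_j}$ is uncountable whenever it is non-empty, $N_{ij}$ is the cardinality of the continuum for every touching pair, and the invariant carries no information. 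Your bound ``supported on $\{1,2,\dots,\lceil(3-2\alpha)^{-1}\rceil+1\}$'' misreads Proposition~\ref{intersection}, which bounds the number of \emph{distinct} regenerative sets (i.e., distinct branches of the spine) that can share a common value, not the number of tangency points of two fixed faces. Likewise, the density you invoke is in Proposition~\ref{prop:records-loops}(i), not Theorem~\ref{main_theorem_topology}.

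The paper's heuristic sidesteps both the ill-definedness and the rigidity issue you correctly flag at the end. Instead of the whole tangency graph, it fixes two faces touching at two distinguished tangency points $x,x'$, looks at the region bounded by the two arcs $\Gamma_i,\Gamma_j$ joining $x$ to $x'$, and considers the bi-infinite family of ``yellow'' faces crossing this region. These cut it into a linearly (and bi-infinitely) ordered sequence of sub-regions, each flagged ``Red'' or ``Blue'' according to whether its two bounding yellow faces touch. The resulting word in $\{\mathrm{Red},\mathrm{Blue}\}^{\mathbb{Z}}$ is a topological invariant of the local pair $(U,U\cap\mathbb{X})$ whose automorphism group is (a priori) just translations, so the rigidity step reduces to the elementary fact that two independent Bernoulli sequences almost surely never coincide up to a shift. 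This is precisely the ``refinement until the automorphism group collapses'' that you anticipate as the hard part: replacing a 2-dimensional combinatorial structure by a 1-dimensional one makes the collapse trivial. If you want to push your approach, you should redesign the invariant along these lines (a linearly ordered random word extracted between two fixed tangency points), rather than the raw multiplicity/tangency graph, and note that the paper itself flags the existence of infinitely many traversing faces as an unproved zero-one law.
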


The heuristic argument, again based on properties of the graph $ \mathscr{G}$ goes as follows. Let us consider two faces, corresponding to $\mathrm{f}_{t_i},\mathrm{f}_{t_j}$, and represented in light and dark green in Figure \ref{fig:topologyrandom}, that are mutually intersecting. We let $s_i<s'_i,s_j<s'_j$ be such that $\Pi_D(\mathrm{f}_{t_i}(s_i))=\Pi_D(\mathrm{f}_{t_j}(s_j))=:x$, $\Pi_D(\mathrm{f}_{t_i}(s'_i))=\Pi_D(\mathrm{f}_{t_j}(s'_j))=:x'$, and
$\Pi_D(\mathrm{f}_{t_i}((s_i,s'_i)))\cap \Pi_D(\mathrm{f}_{t_j}((s_j,s'_j)))=\varnothing$. We think about the points $x,x'$ as boundary points of a region delimited by the two non-touching curves  $\Gamma_i:=\Pi_D(\mathrm{f}_{t_i}((s_i,s'_i)))$ and $\Gamma_j=\Pi_D(\mathrm{f}_{t_j}((s_j,s'_j)))$. Then, we claim that 
we can find infinitely many faces (represented in yellow) that touch both $\Gamma_i$ and $\Gamma_j$. These latter faces, together with $\Gamma_i,\Gamma_j$, delimit a bi-infinite sequence of regions which may be of two possible types: {the regions depicted in red have a pinch point, i.e.~the two neighboring faces touch each other, whereas they do no touch in the regions depicted in blue.
} Hence, the bi-infinite sequence of these regions induces a bi-infinite sequence in $\{ \mathrm{Red}, \mathrm{Blue}\}^{ \mathbb{Z}}$ which should be  a topological invariant. Arguing that the local geometry around $x,x'$ is described asymptotically by a scale-invariant model, the law of this bi-infinite sequence should be ``close'' to an i.i.d.\ sample of Bernoulli random variables. Moreover, as one is allowed to change the choice of $t_i,t_j,x,x'$---note, in particular, that there are only countably many possible such choices---these sequences should be independent of one another.  Since the probability that two independent Bernoulli random sequences coincide (up to translation) is zero, one is led by this heuristic argument to conclude that two independent samples of $ \mathbb{X}$ cannot be homeomorphic as closed subset of $ \mathbb{S}^{2}$, almost surely. Moreover, since the regions as the ones considered here arise at all scales, it should hold that the same holds at the level of neighborhoods. We believe that the above claims about the existence of infinitely many traversing, yellow faces should result from a zero-one law in the vicinity of the points $x,x'$. Making this argument rigorous would require to study more cautiously such vicinities, and this will be considered elsewhere. 

  \begin{figure}[!h]
      \begin{center}
  \includegraphics[width=13cm]{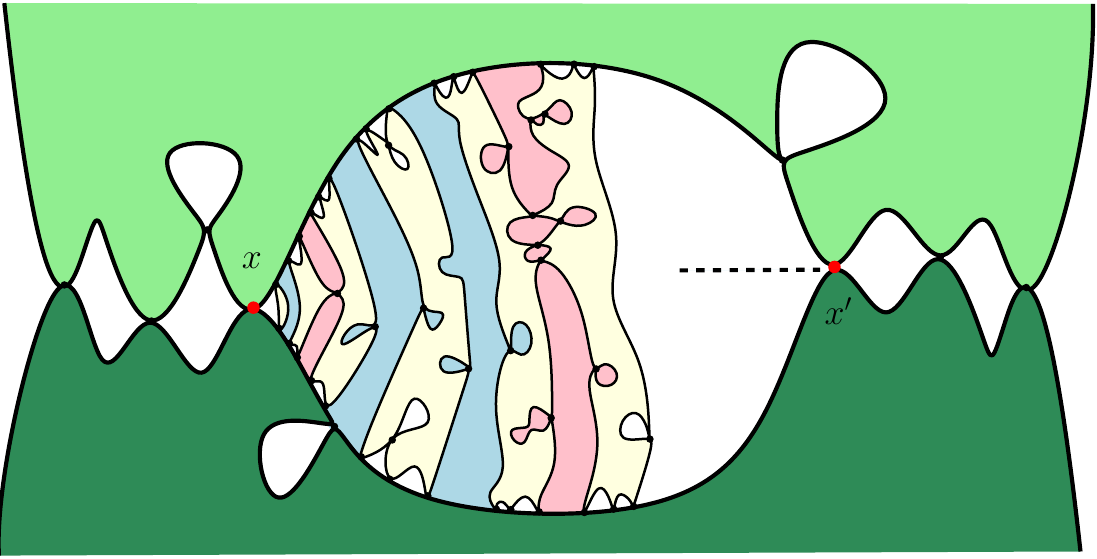}
  \caption{An illustration of the heuristic argument for the fact that the topology of $\mathcal{S}$ is sample-dependent in the dense case  $\alpha\in (1,\frac{3}{2})$. Two touching faces (light and dark green) enclose a region separated by the two red extreme points.
    In the vicinity of each of these red points, we claim that we can find infinitely many (yellow) faces touching both green faces. These yellow faces are separated by a bi-infinite sequence of regions, each of which is called ``blue'' if it {has no pinch point, 
  and ``red'' otherwise.}
  The resulting sequence in $\{ \mathrm{Red}, \mathrm{Blue}\}^{ \mathbb{Z}}$, considered up to shifts, is a topological invariant, which should be a.s.\ different for every realization.
    \label{fig:topologyrandom}}
  \end{center}
  
\end{figure}

\subsection{Fractal dimension of faces}
In this section, we compute the  Hausdorff dimension of the faces of $( \mathcal{S},D)$ and we show that this dimension is always $2$, in particular it does not depend on $\alpha$ nor on the subsequence $(n_k)_{k\geq 1}$. This result should be related with the fact that the boundary in models of Brownian geometry have also Hausdorff dimension equal to $2$. The results of this section will not be used in what follows and they can be skipped in a first reading. 
\begin{prop}[Dimension of the faces] \label{prop:HDface=2}
$\mathbf{P}$-a.s., for every $i\in \mathbb{N}$, the Hausdorff dimension of $\Pi_D\big(\mathrm{f}_{\mathrm{t}_i}([0,1])\big)$, in the space $( \mathcal{S},D)$,  is 2.
\end{prop}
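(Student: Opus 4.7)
By the enumeration of loops and the scaling/re-rooting invariance gathered in Section \ref{sec:equivalence:Z}, it is enough to work with a single fixed index $i\in \mathbb{N}$. Let $\phi:[0,1]\to \mathcal{S}$ be the continuous map $\phi(s):=\Pi_D(\mathrm{f}_{\mathrm{t}_i}(s))$, whose image is precisely the face under consideration. By Proposition \ref{b_Brownian_Brigde}, conditionally on $X$, the process $s\mapsto \mathrm{b}^*_i(s) = \Delta_{\mathrm{t}_i}^{-1/2}(Z_{\mathrm{f}_{\mathrm{t}_i}(s)}-Z_{\mathrm{t}_i})$ is a Brownian bridge of lifetime~$1$, and the key point is that the $D$-distances along $\phi$ are comparable, at a logarithmic loss, to the fluctuations of $\mathrm{b}^*_i$.

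\paragraph{Lower bound $\dim_H\geq 2$.} The plan is to apply the classical energy method of Frostman to the probability measure $\mu:=\phi_*\mathrm{Leb}_{[0,1]}$ supported on the face. Using the elementary bound $D(\phi(s),\phi(t))\geq |Z_{\mathrm{f}_{\mathrm{t}_i}(s)}-Z_{\mathrm{f}_{\mathrm{t}_i}(t)}|=\Delta_{\mathrm{t}_i}^{1/2}|\mathrm{b}^*_i(s)-\mathrm{b}^*_i(t)|$ from \eqref{trivial_bounds}, the $(2-\varepsilon)$-energy of $\mu$ is dominated by
\[
\Delta_{\mathrm{t}_i}^{-(2-\varepsilon)/2}\iint_{[0,1]^2}|\mathrm{b}^*_i(s)-\mathrm{b}^*_i(t)|^{-(2-\varepsilon)}\, \d s\, \d t.
\]
Since $\mathrm{b}^*_i(s)-\mathrm{b}^*_i(t)$ is a centered Gaussian of variance $|s-t|(1-|s-t|)$, the conditional expectation of the inner term is at most $C_\varepsilon|s-t|^{-1+\varepsilon/2}$, which is integrable on $[0,1]^2$. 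The energy is therefore a.s.\ finite, and Frostman's lemma gives $\dim_H^D(\mathrm{face})\geq 2-\varepsilon$ for every $\varepsilon>0$.

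\paragraph{Upper bound $\dim_H\leq 2$.} The pseudo-distance $D$ is dominated by $D^*$ (cf.\ \eqref{triangle_inequality}), hence it suffices to bound the Hausdorff dimension of the face in $(\mathcal{S},D^*)$. For this I plan to show that $\phi$ is, almost surely, $(\tfrac{1}{2}-\varepsilon)$-Hölder continuous from $([0,1],|\cdot|)$ into $(\mathcal{S},D^*)$ for every $\varepsilon>0$; a Hölder image of $[0,1]$ of exponent $\beta$ has Hausdorff dimension at most $1/\beta$, which yields $\dim_H\leq 2$ after letting $\varepsilon\to 0$.

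The Hölder estimate is obtained via \eqref{trivial_bounds} and the definition of $D^*$: for $s<t$,
\[
D^*(\phi(s),\phi(t))\leq \mathfrak{z}\big(\mathrm{f}_{\mathrm{t}_i}(s),\mathrm{f}_{\mathrm{t}_i}(t)\big)\leq 2\,\mathrm{osc}\Big(Z,\big[\mathrm{f}_{\mathrm{t}_i}(s),\mathrm{f}_{\mathrm{t}_i}(t)\big]\Big).
\]
The oscillation is controlled by two independent contributions. The first is the fluctuation of $Z$ restricted to the arc of the loop itself, i.e.\ of $\Delta_{\mathrm{t}_i}^{1/2}\mathrm{b}^*_i$ on the interval $[s,t]$, which is $O(|t-s|^{1/2-\varepsilon})$ by the usual modulus of continuity for Brownian bridges. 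The second is the contribution of the sub-looptrees grafted on the arc: by the Markov property (Corollary \ref{cutting_N}), the family of excursions dangling from this arc forms a Poisson point measure with intensity $\mathbf{1}_{[0,\Delta_{\mathrm{t}_i}(t-s)]}\d x\otimes \mathbf{N}(\d X,\d Z)$, and by Corollary \ref{Z<-1} together with Proposition \ref{sec:stable-map-3} one has $\mathbf{N}(\sup|Z|>r)\lesssim r^{-2}$, so that the maximum of $\sup|Z|$ over such grafted excursions (shifted by their root label) is $O(|t-s|^{1/2-\varepsilon})$ with high probability.

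\paragraph{Main obstacle.} The delicate point will be to make the oscillation bound uniform over all arcs simultaneously. The natural strategy is a dyadic covering argument on $[0,1]$ combined with Borel--Cantelli, leveraging the stretched-exponential tails of Proposition \ref{variations_Z}(ii) for the loop part, and a similar dyadic scheme controlling the maximum of $\sup|Z|$ over Poisson-sampled sub-excursions restricted to dyadic arcs. Once uniformity is secured, the Hölder bound follows and the upper bound $\dim_H\leq 2$ is proved.
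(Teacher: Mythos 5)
Your lower bound via the Frostman energy method does not work as written. Using only the Schaeffer bound $D(\phi(s),\phi(t))\geq |Z_{\mathrm{f}_{\mathrm{t}_i}(s)}-Z_{\mathrm{f}_{\mathrm{t}_i}(t)}|=\Delta_{\mathrm{t}_i}^{1/2}|\mathrm{b}^*_i(s)-\mathrm{b}^*_i(t)|$ to bound the $(2-\varepsilon)$-energy leads to the integral
\[
\iint_{[0,1]^2}|\mathrm{b}^*_i(s)-\mathrm{b}^*_i(t)|^{-(2-\varepsilon)}\,\d s\,\d t,
\]
and this is almost surely \emph{infinite} for every $\varepsilon\in(0,1]$. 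Indeed $\mathrm{b}^*_i(s)-\mathrm{b}^*_i(t)$ is a one-dimensional Gaussian of variance of order $|s-t|$, and $\mathbf{E}[|N(0,\sigma^2)|^{-p}]=c_p\sigma^{-p}$ with $c_p=\mathbf{E}[|N(0,1)|^{-p}]$ which is finite only for $p<1$; for $p=2-\varepsilon\geq 1$ the constant diverges (equivalently, the energy can be written via the jointly continuous local times of the bridge as $\int L^aL^b|a-b|^{-p}\,\d a\,\d b$, which is a.s.\ infinite when $p\geq1$). The Schaeffer bound is a purely ``vertical'' lower bound and cannot see more than dimension~$1$. The paper's proof instead controls the $\kappa$-mass of a $D$-ball centered at $\Pi_D(\mathrm{f}(s))$ directly, using the cactus bound (Lemma~\ref{lem:cactusbound}): a ball of radius $\delta$ is contained in $\mathrm{f}([\tilde\tau_\delta^s,\tau_\delta^s])$, where $\tau_\delta^s,\tilde\tau_\delta^s$ are the first exits of the loop label below level $Z_{\mathrm{f}(s)}-\delta$ on either side. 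For a Brownian bridge this interval typically has length of order $\delta^2$, so $\limsup_{\delta\to 0}\delta^{-2+\eta}\kappa(B_D(\cdot,\delta))=0$ for Lebesgue-a.e.\ $s$, and a density theorem gives $\dim_H\geq 2-\eta$. This use of the cactus bound (rather than Schaeffer) is precisely what allows you to see a two-dimensional decay of ball masses; you need some version of it, and Frostman energy with the Schaeffer lower bound alone cannot substitute.

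Your upper bound strategy via H\"older continuity of $\phi$ is in principle viable, but as you acknowledge, the uniform oscillation bound for the grafted sub-looptrees over all sub-arcs simultaneously is the real work and is not carried out. The paper sidesteps this by a direct covering argument: it selects the loop times $\mathrm{f}(s_j^{(\varepsilon)})$ of the $J_\varepsilon\lesssim\varepsilon^{-2-\eta}$ sub-looptrees with $Z$-range exceeding $\varepsilon$ (Poisson with intensity $\propto\varepsilon^{-2}$ by Corollary~\ref{cutting_N} and Proposition~\ref{sec:stable-map-3}) and $K_\varepsilon\lesssim\varepsilon^{-2-\eta}$ further points $\mathrm{f}(r_i^{(\varepsilon)})$ making the bridge oscillate less than $\varepsilon$ between consecutive centers, then invokes the $\mathfrak{z}$ bound to conclude this is a $2\varepsilon$-cover. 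This avoids a dyadic/Borel--Cantelli uniformity argument altogether and is more direct; you would essentially end up reproving the same Poisson/bridge estimates inside your dyadic scheme.

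In short: the upper bound plan is a legitimate alternative route (modulo unproved uniformity), but the lower bound has a genuine gap that requires replacing the Schaeffer bound by the cactus bound (or some other two-sided separation mechanism) before any dimension estimate above $1$ can be extracted.
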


\begin{proof} The proof follows the  same lines as that of \cite[Theorem 3]{Bet11}. As usual, to simplify some technicalities, we work under  $\mathbf{N}$ since the statement under $\mathbf{P}$ then follows by scaling. Fix an arbitrary $h>0$ and consider a stopping time $T$, taking values in  $\{t\geq 0:\:\Delta_{t}>h\}$. To simplify notation set: 
\[ \mathrm{f}(t):= \mathrm{f}_{T}(\frac{t}{\Delta_{T}})~,\quad t\in[0,\Delta_{T}].\] It is enough to prove that, $\mathbf{N}(\cdot\:|\:T<\infty, \Delta_{T})$-a.s.~, the Hausdorff dimension of $\Pi_D\big(\,\mathrm{f}([0,\Delta_{T}])\big)$ is 2.
\textbf{Lower bound}. For every $s\in [0,\Delta_{T}]$ and $\delta>0$, set 
\[\tau_{\delta}^{s}:=\inf\big\{t\in [s;\Delta_{T}]~:~Z_{\mathrm{f}(t)}\leq Z_{\mathrm{f}(s)}-\delta\big\}\quad\text{and}\quad\tilde{\tau}_{\delta}^{s}:=\sup\big\{t\in[0;s]~:~Z_{\mathrm{f}(t)}\leq Z_{\mathrm{f}(s)}-\delta\big\}\]
with the convention $\inf \varnothing =\infty$.
For every $x\in[0,1]/\sim_D$, we write $B_{D}(x,\delta)$ for the ball of radius $\delta$ centered at $x$ with respect to the distance $D$.   Set $\kappa$ the pushforward of the Lebesgue measure on $[0,\Delta_{T}]$ by the continuous function $\Pi_D\circ\mathrm{f}$. Provided that $\tau_{\delta}^{s}$ and $\tilde{\tau}_{\delta}^{s}$ are finite,  the cactus bound \eqref{cactus}  yields:
\[\kappa\Big(B_D\big(\Pi_D(\mathrm{f}(s)),\delta\big)\Big)\leq \tau_{\delta}^{s}-\tilde{\tau}_{\delta}^{s}.\]
We recall now that by Proposition \ref{b_Brownian_Brigde}, under $\mathbf{N}(\cdot\:|\:T<\infty, \Delta_{T})$, the process $(Z_{\mathrm{f}(t)}-Z_{\mathrm{f}(0)})_{t\in[0,\Delta_{T}]}$ is a Brownian bridge starting and ending at $0$.
By the absolute continuity of the Brownian bridge with respect to Brownian motion we deduce that, $\mathbf{N}(\cdot\:|\:T<\infty, \Delta_{T})$-a.s.,  for every $\eta>0$ and Lebesgue almost all $s\in [0,\Delta_{T}]$, we have:
\begin{align*}
\limsup \limits_{\delta \to 0}\delta^{-2+\eta}\kappa\Big(B_D\big(\Pi_D(\mathrm{f}(s)),\delta\big)\Big)&\leq\limsup \limits_{\delta \to 0}\delta^{-2+\eta}\big (\tau_{\delta}^{s}-\tilde{\tau}_{\delta}^{s}\big)=0, \quad a.s.
\end{align*}
Standard density theorems for Hausdorff measures now give that the Hausdorff dimension of $\Pi_D(\mathrm{f}([0,\Delta_{T}]))$ is bounded below by $2 - \eta$,  almost surely for $\mathbf{N}(\cdot\:|\:T<\infty, \Delta_{T})$.\\
\noindent \textbf{Upper bound}. To establish the upper bound, we construct a covering of $ \mathrm{f}([0, \Delta_{T}])$ by removing sub-looptrees corresponding to large (negative) $Z$-excursion, see Figure  \ref{fig:chopoff}. Specifically, consider $(t_{j})_{j\in J}$ the set of points $t\in[0,\Delta_{T}]$ such that $\mathrm{f}(t-)<\mathrm{f}(t)$ and $(X^{j},Z^{j})_{j\in J}$ the associated excursions defined by:
\[X^{j}_{t}=X_{\mathrm{f}(t_{j}-)+t}-X_{\mathrm{f}(t_{j}-)}, \quad Z^{j}_{t}=Z_{\mathrm{f}(t_{j}-)+t}-Z_{\mathrm{f}(t_{j}-)} \quad \mbox{ for }\quad t\in[0,\mathrm{f}(t_{j})-\mathrm{f}(t_{j}-)],\]
for every $j\in J$. By  Corollaries \ref{cutting_N} and \ref{Z<-1} imply that, conditionally on $ \mathcal{F}_{T}$, the number $ J_{ \varepsilon}$ of such excursions whose overall $Z^{j}$-infimum is below $- \varepsilon$ is a Poisson random variable with intensity $\Delta_{T} \cdot   \mathbf{N}( \inf Z \leq -1) \cdot \varepsilon^{-2}$. Using standard large deviation estimates and that $J_\varepsilon$ is decreasing in $\varepsilon$, it follows that  $J_{  \varepsilon} \leq  \varepsilon^{-2 - \eta}$ when $ \varepsilon \to 0$ for any $\eta >0$. Let $  \mathrm{f}(s_{1}^{( \varepsilon)}),... , \mathrm{f}(s_{J_{ \varepsilon}}^{( \varepsilon)}) \in  \mathrm{f}([0, \Delta_{T}])$ be the loop times associated to these large excursions, and let us consider  $ \mathrm{f}(0) = \mathrm{f}(r_{1}^{( \varepsilon)}) \leq \cdots \leq \mathrm{f}(r_{K_{ \varepsilon}}^{( \varepsilon)}) = \mathrm{f}(1)$ so that $$ \Big(\max \Big\{Z_{\mathrm{f}(u)} :~ u \in [\mathrm{f}(r_{i}^{( \varepsilon)}), \mathrm{f}(r_{i+1}^{( \varepsilon)})]\Big\} - \min \Big\{Z_{\mathrm{f}(u)} :~ u \in [\mathrm{f}(r_{i}^{( \varepsilon)}), \mathrm{f}(r_{i+1}^{( \varepsilon)})]\Big\}\Big) \leq \varepsilon, $$
for all  $1 \leq i \leq K_{ \varepsilon}$.  Since $Z_{\mathrm{f}(\cdot )}$ is a Brownian bridge of length $ \Delta_{T}$ it is in particular $  \frac{1}{2} ^{-}$-H\"older continuous. Thus, the points can be chosen such that $ K_{ \varepsilon} \leq \varepsilon^{-2 - \eta}$ as $ \varepsilon \to 0$ a.s. for any $\eta>0$. Moreover, an application of Schaeffer bound \eqref{trivial_bounds} entails that
$$ \Pi_{D}\left( \big\{\mathrm{f}(s_{i}^{( \varepsilon)}) : 1 \leq i \leq J_{ \varepsilon}\} \cup \{\mathrm{f}(r_{i}^{( \varepsilon)}) : 1 \leq i \leq K_{ \varepsilon}\big\}\right),$$ is a $2 \varepsilon$ cover of $\Pi_{D}( \mathrm{f}([0, \Delta_{T}]))$. Since $J_{ \varepsilon} + K_{ \varepsilon} \leq 2\varepsilon^{-2 - \eta}$ as $ \varepsilon\to 0$, this shows that its Hausdorff dimension is less than $2 + \eta$ for any $\eta >0$. 
\begin{figure}[!h]
 \begin{center}
 \includegraphics[width=8.5cm]{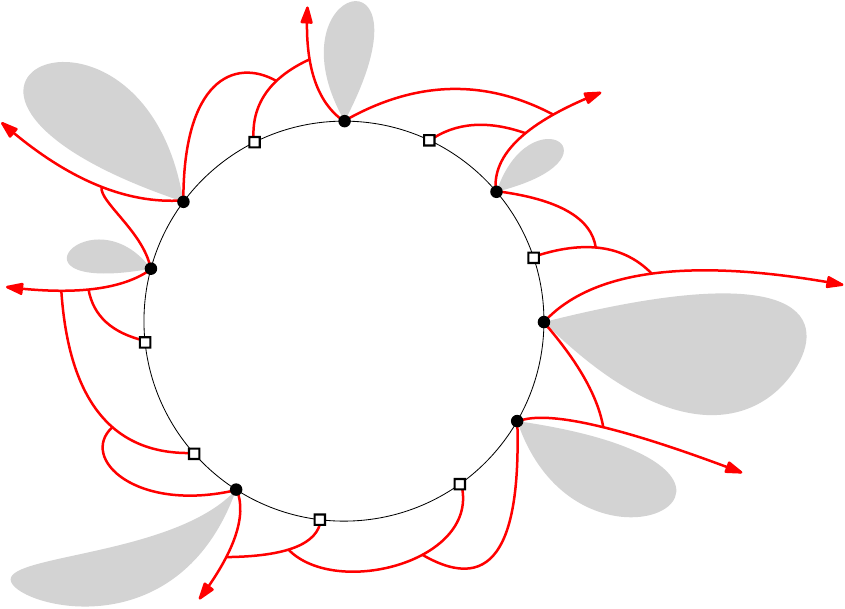}
 \caption{Illustration of the covering of a face. The blacks dots correspond to the projection of the points $ \mathrm{f}(s_{i}^{{ ( \varepsilon)}})$ and the white boxes to the projections of the points $ \mathrm{f}(r_{i}^{{ ( \varepsilon)}})$. We can start simple geodesics (in red) from the two pre-images of $\Pi_{d}( \mathrm{f}(s_{i}^{{ ( \varepsilon)}}))$ and from $\mathrm{f}(r_{i}^{{ ( \varepsilon)}})$ which enclose the face and produce a $2 \varepsilon$ covering. \label{fig:chopoff}}
 \end{center}
 \end{figure}
 \end{proof}

 \begin{rek}[Other dimensions] One might wonder about the Hausdorff dimensions of various geometric objects, such as the intersection of two touching faces, the set of contact points between a simple geodesic and a face, or the image of the skeleton of the looptree. While some of these may be accessible using our techniques, we do not address them in this work.
 \end{rek} 
 
  \subsection{The a priori local bound $D^{*}\leq D^{1-\delta}$}\label{Sec:A:priori:bound}
 This section contains an important consequence of Theorems  \ref{technical_uniform_balls} and \ref{main_theorem_topology}. It provides an \textit{a priori} local lower bound on $D$ in terms of $D^{*}$. This will be a key input in the following section when performing the surgery along geodesics:
\begin{prop}[A priori control on distances]\label{preliminary_control} For any $\delta>0$,  $\mathbf{P}$-a.s. there exists a (random) positive number $A_{\delta}$ such that
 \begin{equation}\label{ine:exp}
\:D^{*}(x,y)\leq  A_{\delta}\cdot D(x,y)^{1-\delta},\quad x,y\in \mathcal{S}.
\end{equation}
\end{prop}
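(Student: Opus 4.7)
The plan is to combine the volume estimate of Theorem \ref{technical_uniform_balls}, the re-rooting invariance of Proposition \ref{theo:sub}, and the fact from Section \ref{sec:D<D*} that the $D^*$-simple geodesics targeting $\rho_*$ are also $D$-geodesics, in order to derive a Hölder-type comparison between the two pseudo-distances.

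First, I would use the identification $D(\rho_*, s) = Z_s - Z_{t_*}$ of \eqref{Distance:rho_*} to translate Theorem \ref{technical_uniform_balls} into the stretched exponential bound $\mathbf{P}(\mathrm{Vol}(B_D(\rho_*, r)) \geq r^{2\alpha - \delta}) \leq C \exp(-r^{-c})$ for every $r > 0$. The re-rooting identity \eqref{eq:re-rooting-S} transports this bound to balls centered at $\Pi_D(U)$ for $U$ uniform on $[0, 1]$ and independent of $(X, Z, D)$. A union bound along the countable dense family $\{\Pi_D(k 2^{-n})\}_{k, n \geq 0}$, a Borel--Cantelli argument along a geometric sequence of radii, and a continuity argument would then promote this into the a.s.~uniform statement: for every $\delta > 0$, there exists a random $A_\delta^{(1)} < \infty$ such that $\mathrm{Vol}(B_D(x, r)) \leq A_\delta^{(1)} r^{2\alpha - \delta}$ for every $x \in \mathcal{S}$ and every $r > 0$.

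For the comparison itself, fix $x, y \in \mathcal{S}$ with $r := D(x, y)$ and choose pre-images $s, t \in [0, 1]$. Let $z$ denote the merging point of the $D^*$-simple geodesics $\gamma^{(s)}$ and $\gamma^{(t)}$ targeting $\rho_*$. Since simple geodesics are simultaneously $D$- and $D^*$-geodesics, we have $D(x, z) = D^*(x, z)$ and $D(y, z) = D^*(y, z)$, and hence $D^*(x, y) \leq D(x, z) + D(y, z)$. The triangle inequality $|D(x, z) - D(y, z)| \leq r$ reduces the problem to bounding $\ell := D(x, z) \vee D(y, z)$ by $A\, r^{1 - \delta'}$ for some $\delta'$ tending to $0$ as $\delta \to 0$. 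To achieve this, I would establish a matching \emph{lower} bound $\mathrm{Vol}(B_D(x, \ell)) \geq c\, \ell^{2\alpha}$, uniform in $x \in \mathcal{S}$ and in $\ell$ small enough; this is consistent with the Hausdorff dimension $2\alpha$ of $\mathcal{S}$ discussed in Section \ref{sec:boundsonD}. Combined with the uniform upper bound applied at radius $2\ell$, this yields $c\, \ell^{2\alpha} \leq A_\delta^{(1)} (2\ell)^{2\alpha - \delta}$, which together with the bound $\ell \leq r + (D(x, z) \wedge D(y, z))$ inherited from the triangle inequality and a symmetric application at $y$, forces $\ell \leq A_\delta\, r^{1 - \delta/(2\alpha)}$. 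Renaming $\delta/(2\alpha)$ as $\delta$ completes the proof, after handling large values of $r$ via the deterministic bound $D^* \leq \sup D^*$, finite a.s.~by Lemma \ref{D:small}.

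The main obstacle is precisely the uniform lower bound $\mathrm{Vol}(B_D(x, \ell)) \geq c\, \ell^{2\alpha}$. While the Hausdorff dimension of $(\mathcal{S}, D)$ being $2\alpha$ guarantees such an estimate in an averaged Frostman sense, promoting it to an a.s.~uniform-in-$x$ statement requires an adaptation of the scheme leading to Theorem \ref{technical_uniform_balls}, together with the fine record and label-process estimates of Sections \ref{sec:local:minima} and \ref{sec:2pt}, in order to rule out degeneracies of the volume measure near exceptional points of $\mathcal{S}$.
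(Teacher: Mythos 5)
The first half of your plan is sound: the uniform upper bound $\mathrm{Vol}(B_D(x,r))\leq \widetilde C_\delta\, r^{2\alpha-\delta}$ is indeed obtained from Theorem \ref{technical_uniform_balls} combined with the re-rooting invariance \eqref{eq:re-rooting-S} and a Borel--Cantelli argument, and this matches the paper's Lemma \ref{uniform_control_vol}. But from there the argument goes astray in two distinct ways.

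First, the comparison step has a genuine logical gap. You reduce to bounding $\ell:=D(x,z)\vee D(y,z)$ by $r^{1-\delta'}$, and you propose to do this by comparing the lower bound $\mathrm{Vol}(B_D(x,\ell))\gtrsim \ell^{2\alpha}$ against the upper bound $\mathrm{Vol}(B_D(x,2\ell))\lesssim (2\ell)^{2\alpha-\delta}$. But this inequality involves only $\ell$, not $r$: it merely forces $\ell$ to be bounded above by a constant, which gives no power of $r$. The subsequent appeal to ``$\ell\leq r+(D(x,z)\wedge D(y,z))$'' does not break this, since the unknown quantity $D(x,z)\wedge D(y,z)$ can itself be of order $\ell$. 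What is actually needed — and what your argument has no counterpart for — is a way to \emph{amplify} the volume lower bound by stacking many disjoint small balls against a single large one. The paper's proof does exactly this: it fixes a $D$-geodesic $\gamma$ from $x$ to $y$, extracts $N\gtrsim D^*(x,y)/D(x,y)$ points $x_0,\dots,x_{N-1}$ along $\gamma$ whose $D^*$-balls of radius $\varepsilon/3$ (with $\varepsilon=D(x,y)$) are pairwise disjoint, and observes that all of them sit inside $B_D(x,2\varepsilon)$ since $D\leq D^*$. The volume comparison $(N-1)\, C_{\delta/2}(\varepsilon/3)^{2\alpha+\delta/2}\leq \widetilde C_{\delta/2}(2\varepsilon)^{2\alpha-\delta/2}$ then gives $N\lesssim \varepsilon^{-\delta}$, hence $D^*(x,y)\lesssim \varepsilon^{1-\delta}$. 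Without this packing step there is no mechanism to convert volume bounds into a Hölder comparison between $D$ and $D^*$.

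Second, you identify the uniform lower bound on ball volumes as the ``main obstacle'' and suggest adapting the whole machinery behind Theorem \ref{technical_uniform_balls}. This overshoots in two directions. On one hand you are aiming for the exact exponent $2\alpha$, which is both unnecessary and, stated that sharply, not what the paper proves; the exponent $2\alpha+\delta$ suffices because of the $\delta$-slack that is anyway present in the final statement. On the other hand, the lower bound you need — $\mathrm{Vol}(B_{D^*}(x,\varepsilon))\geq C_\delta(\varepsilon^{2\alpha+\delta}\wedge 1)$ uniformly in $x$ — is not a hard estimate at all. It follows directly from the Schaeffer bound $D^*\leq\mathfrak{z}$ together with the Hölder continuity of $Z$ in \eqref{Z:variation:L}: if $|Z_t-Z_s|\leq W\, |t-s|^{1/(2\alpha+\delta)}$ uniformly, then the time interval around any $r\in[0,1]$ on which $|Z-Z_r|\leq \varepsilon/2$ has length at least $1\wedge\big((2W)^{-(2\alpha+\delta)}\varepsilon^{2\alpha+\delta}\big)$, and this interval projects into $B_{D^*}(\Pi_D(r),\varepsilon)$. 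No adaptation of Theorem \ref{technical_uniform_balls} is needed for the lower bound; the heavy machinery is required only for the upper bound.
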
The proof of this proposition follows the same strategy as that presented in \cite[Proposition 6.1]{LG11} or \cite[Proposition 6]{Mie11}. It relies on the identification of the topology, together with uniform controls concerning the volume of  balls for $D$ and $D^{*}$ in $\mathcal{S}$ which are here provided by our Theorem \ref{technical_uniform_balls}. We start with the latter, which establishes the monofractality of the measure $ \mathrm{Vol}$: 
\begin{lem}\label{uniform_control_vol} For every $\delta >0$,  almost surely for $\mathbf{P}$ there exist two positive (random) numbers $0<C_{\delta}<\widetilde{C}_{\delta}$ such that:
\[C_{\delta}\cdot (\varepsilon^{2\alpha+\delta}\wedge 1)\leq \mathrm{Vol}\big(B_{D^{*}}(x,\varepsilon)\big)\leq\mathrm{Vol}\big(B_{D}(x,\varepsilon)\big)\leq \widetilde{C}_{\delta}\cdot \varepsilon^{2\alpha-\delta},\]
for every $\eps>0$ and $x\in \mathcal{S}$.
\end{lem}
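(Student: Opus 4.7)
The plan is to prove the three inequalities separately, treating the middle one as immediate and splitting the remaining bounds between a simple Hölder argument (for the lower bound on $B_{D^*}$) and a re-rooting / random net argument (for the upper bound on $B_D$).

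First, the inclusion $B_{D^*}(x,\varepsilon)\subset B_{D}(x,\varepsilon)$ is immediate from $D\leq D^{*}$ (Equation~\eqref{triangle_inequality}), and yields the middle inequality.

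For the lower bound $\mathrm{Vol}(B_{D^*}(x,\varepsilon))\geq C_\delta\,(\varepsilon^{2\alpha+\delta}\wedge 1)$, I would rely only on the Hölder regularity of $Z$. By \eqref{Z:variation:L}, for every $\eta>0$ there is a random, a.s.\ finite $W=W_\eta$ such that $|Z_t-Z_s|\leq W|t-s|^{\frac{1}{2\alpha}-\eta}$ for all $s,t\in[0,1]$. Consequently, for any $s\in[0,1]$ and $|t-s|\leq(\varepsilon/(2W))^{2\alpha/(1-2\alpha\eta)}$, one has $\mathfrak{z}(s,t)\leq 2W|t-s|^{\frac{1}{2\alpha}-\eta}\leq\varepsilon$. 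Since $D^{*}\leq\mathfrak{z}$, a whole interval of length $\gtrsim\varepsilon^{2\alpha/(1-2\alpha\eta)}$ around $s$ projects into $B_{D^*}(\Pi_{D^*}(s),\varepsilon)$, giving $\mathrm{Vol}(B_{D^*}(x,\varepsilon))\geq C_{\eta,W}\,\varepsilon^{2\alpha/(1-2\alpha\eta)}$ uniformly in $x\in\mathcal{S}$. Choosing $\eta$ small enough gives the exponent $2\alpha+\delta$; the constant $C_\delta$ is random but a.s.\ positive since $W<\infty$ a.s.

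For the upper bound $\mathrm{Vol}(B_D(x,\varepsilon))\leq\widetilde{C}_\delta\varepsilon^{2\alpha-\delta}$, the starting point is that, by \eqref{Distance:rho_*}, $B_D(\rho_*,r)$ coincides (at the level of preimages in $[0,1]$) with the ball of radius $r$ around the root in $\mathcal{T}_{\mathfrak{z}}$, so Theorem~\ref{technical_uniform_balls} provides
\[
\mathbf{P}\bigl(\mathrm{Vol}(B_D(\rho_*,r))\geq r^{2\alpha-\delta/2}\bigr)\leq C\exp(-r^{-c}).
\]
To promote this to a uniform bound in $x$, I would use a random net: let $(U_i)_{i\geq 1}$ be i.i.d.\ uniform on $[0,1]$, independent of $(X,Z,D)$, and set $N=\lceil r^{-A}\rceil$ for a large constant $A=A(\delta)$ to be chosen. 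By the re-rooting invariance \eqref{eq:re-rooting-S}, each $\mathrm{Vol}(B_D(\Pi_D(U_i),2r))$ has the same law as $\mathrm{Vol}(B_D(\rho_*,2r))$, so a union bound and the preceding stretched-exponential tail give
\[
\mathbf{P}\Bigl(\max_{1\leq i\leq N}\mathrm{Vol}(B_D(\Pi_D(U_i),2r))> r^{2\alpha-\delta/2}\Bigr)\leq N\cdot C\exp(-(2r)^{-c}),
\]
still super-polynomially small in $r$. On the other hand, combining the lower bound from the previous step with the fact that each $\Pi_D(U_i)$ is $\mathrm{Vol}$-distributed conditionally on $(X,Z,D)$, a standard coupon-collector estimate shows that, for $A$ large enough, with overwhelming probability the balls $B_D(\Pi_D(U_i),r)$, $1\leq i\leq N$, cover $\mathcal{S}$. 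On that intersection of events, for every $x\in\mathcal{S}$ one picks an $i$ with $D(x,\Pi_D(U_i))\leq r$, whence $B_D(x,r)\subset B_D(\Pi_D(U_i),2r)$ and thus $\mathrm{Vol}(B_D(x,r))\leq r^{2\alpha-\delta/2}\leq r^{2\alpha-\delta}$ for $r$ small. A Borel--Cantelli argument along $r_k=2^{-k}$, together with monotonicity of $r\mapsto\mathrm{Vol}(B_D(x,r))$, then yields the desired bound for all $r>0$, up to enlarging the random constant $\widetilde{C}_\delta$.

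The main obstacle is the coupling of the two estimates in the last step: the random net only covers $\mathcal{S}$ because we already control the volume of $B_{D^*}$ (hence of $B_D$) from below, while we also need the stretched-exponential upper tails at each net point to survive the union bound. This dictates the order of proof (lower bound first, then upper bound) and explains why Theorem~\ref{technical_uniform_balls} is indispensable: a mere polynomial tail would not tolerate the polynomial-size random net needed to handle the $\sup_{x\in\mathcal{S}}$.
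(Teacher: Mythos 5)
Your lower bound matches the paper's argument essentially verbatim: H\"older regularity of $Z$ via \eqref{Z:variation:L}, then $D^*\leq\mathfrak{z}$ to convert a neighbourhood in time into a neighbourhood in $B_{D^*}$. The middle inequality is treated identically. Where you part ways with the paper is the upper bound. You pass from the single-point estimate of Theorem~\ref{technical_uniform_balls} to a uniform-in-$x$ bound by a random $\varepsilon$-net of $N\approx\varepsilon^{-A}$ independent uniform points, combined with a coupon-collector covering argument that consumes the lower bound you just proved. The paper instead uses a one-shot biasing trick: on the event $\{\exists\,x:\mathrm{Vol}(B_D(x,\varepsilon))\geq\varepsilon^{2\alpha-\delta}\}$, a single independent uniform $\Pi_D(U)$ falls in that large ball with conditional probability at least $\varepsilon^{2\alpha-\delta}$, forcing $\mathrm{Vol}(B_D(\Pi_D(U),2\varepsilon))\geq\varepsilon^{2\alpha-\delta}$; re-rooting and Theorem~\ref{technical_uniform_balls} then bound $\mathbf{P}(\exists\,x:\dots)$ by $C\varepsilon^{-(2\alpha-\delta)}\exp(-\varepsilon^{-c})$, and Borel--Cantelli finishes. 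The paper's version is shorter and, importantly, \emph{does not} require the lower bound as an input to the upper bound, so the two estimates stay logically independent. Your net argument is valid, but it forces you to propagate the random constant $C_\delta$ from the lower bound through the coupon-collector and Borel--Cantelli steps; to make that rigorous you should work on the events $\{C_\delta\geq 1/M\}$ and then let $M\to\infty$, since Borel--Cantelli needs a deterministic summable bound and your covering-failure probability depends on $C_\delta$. That is a fixable technicality but worth spelling out, and it is precisely the complication the paper's biasing argument avoids.
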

\begin{proof}  The lower bound $C_{\delta}(\varepsilon^{2\alpha+\delta}\wedge 1)\leq \mathrm{Vol}\big(B_{D^{*}}(x,\varepsilon)\big)$ is a direct consequence of the fact that $t\mapsto Z_{t}$ is $\beta$-H\"older for every $\beta<(2\alpha)^{-1}$. Namely, recall from \eqref{Z:variation:L} that there exists a positive random variable $W$ such that
\[\:|Z_{t}-Z_{s}|\leq W\cdot |t-s|^{\frac{1}{2\alpha+\delta}},\quad s,t\in[0,1].\]
The lower bound follows since, for $ \varepsilon>0$ and $r\in [0,1]$, the bound \eqref{trivial_bounds}  ensures
\begin{align*}
\mathrm{Vol}\Big(B_{D^{*}}(\Pi_D(r),\varepsilon)\Big) &\geq 1\wedge \inf\big\{s\in [r,1] :\:|Z_{s}-Z_{r}|\geq \frac{\varepsilon}{2}\big\}-0\vee \sup\big\{s\in[0, r]:\:|Z_{s}-Z_{r}|\geq \frac{\varepsilon}{2}\big\}\\
&\geq 1\wedge \big((2W)^{-2\alpha-\delta} \varepsilon^{2\alpha+\delta}\big).
\end{align*}
This completes the proof of the lower bound.  Since $D\leq D^*$ it remains to establish that there exists a constant $\widetilde{C}_\delta$ such that $\mathrm{Vol}\big(B_{D}(x,\varepsilon)\big)\leq \widetilde{C}_{\delta}\cdot \varepsilon^{2\alpha-\delta}$, for every $x\in \mathcal{S}$ and $\varepsilon>0$. In this direction, notice that by \eqref{Distance:rho_*} and Proposition \ref{technical_uniform_balls} we have:
\[\mathbf{P}\big(\mathrm{Vol}(B_{D}(\rho_{*},2\varepsilon))\geq \varepsilon^{2\alpha-\delta}\big) \underset{\eqref{Distance:rho_*}}{=}\mathbf{P}\big(\int_{0}^{1}\d s ~\mathbbm{1}_{Z_{s}\leq Z_{t_*}+2\varepsilon} ~\geq \varepsilon^{2\alpha-\delta}\big) \underset{ \mathrm{Prop.}\  \ref{technical_uniform_balls}}{\leq} C \exp( - \varepsilon^{-c}), \] for some $c,C \in (0, \infty)$. The upper bound  of the proposition then follows by the re-rooting property of $( \mathcal{S},D)$. Indeed, on the event $\{ \exists x \in \mathcal{S} : \mathrm{Vol}\big(B_{D}(x,\varepsilon)\big)\geq \varepsilon^{2\alpha-\delta}\}$ the projection of an independent uniform point $U \in [0,1]$ (independent of $(X,Z)$) may fall in such a large ball with probability at least $\varepsilon^{2\alpha-\delta}$. In that case we obviously have $ \mathrm{Vol}\big(B_{D}( \Pi(U), 2\varepsilon)\big)\geq \varepsilon^{2\alpha-\delta}$. Thanks to the re-rooting property \eqref{eq:re-rooting-S} we deduce:
  \begin{eqnarray*}C\cdot \exp( - \varepsilon^{c}) &\geq& \mathbf{P}(\mathrm{Vol}\big(B_{D}( \rho_{*}, 2\varepsilon)\big)\geq \varepsilon^{2\alpha-\delta})\\ & \underset{ \mathrm{re-rooting}}{=} &\mathbf{P}(\mathrm{Vol}\big(B_{D}( \Pi(U), 2\varepsilon)\big)\geq \varepsilon^{2\alpha-\delta})\\
  &\geq&  \varepsilon^{2\alpha-\delta} \cdot \mathbf{P}(\exists x \in \mathcal{S} : \mathrm{Vol}\big(B_{D}(x,\varepsilon)\big)\geq \varepsilon^{2\alpha-\delta}).  \end{eqnarray*} it follows that $\mathbf{P}(\exists x \in \mathcal{S} : \mathrm{Vol}\big(B_{D}(x,\varepsilon)\big)\geq \varepsilon^{2\alpha-\delta})$ has a  stretched-exponential decay to $0$. Using Borel--Cantelli along the sequence $ \varepsilon = 2^{-n}$ we get that eventually as $n \to \infty$, there is no point $x$ in $ \mathcal{S}$ such that $\mathrm{Vol}\big(B_{D}(x, 2^{-n})\big)\geq 2^{-n(2\alpha-\delta)}$. This suffices to imply the lemma by interpolation.\end{proof}
The proof of Proposition \ref{preliminary_control} is now mutatis mutandis the same as in \cite[Proposition 6]{Mie11} or \cite[Proposition 6.1]{LG11}, but we present and illustrate the idea for completeness:

\begin{proof}[Proof of Proposition \ref{preliminary_control}]
Fix $\delta>0$ and consider $C_{\delta/2},\widetilde{C}_{\delta/2}$ as in Proposition \ref{uniform_control_vol}.  Let $x,y \in \mathcal{S}$ such that $D^*(x,y)\in(0,1/2)$. Since $\mathcal{S}$ is compact, Theorem  \ref{main_theorem_topology} entails that:
$$\inf \big\{D(x,y):~x,y \in \mathcal{S} \text{ with } D^*(x,y)\geq 1 \big\}>0. $$
Hence, it suffices to  establish \eqref{ine:exp} restricted to $x,y\in \mathcal{S}$ verifying $D^{*}(x,y)\in (0,1)$. So we fix $x,y\in \mathcal{S}$ such that $D^{*}(x,y)\in (0,1)$ and, to simplify notation, we set  $\varepsilon:=D(x,y)$ and $\varepsilon^*:=D^{*}(x,y)$. Remark that  we must have $\varepsilon\leq \varepsilon^*$, and our goal is to show that $ \varepsilon$ cannot be much smaller than $ \varepsilon^*$. By Theorem~\ref{main_theorem_topology}, we already know that we must have $\varepsilon>0$.  Fix a $D$-geodesic $\gamma : [0, \varepsilon] \to \mathcal{S}$ going from $x$ to $y$. As in \cite[Proposition 6.1]{LG11}, put $t_{0}:=0$ and as long as $t_{n} < \varepsilon$ define by induction 
$$ t_{n+1} := \sup\big\{  t \in [t_n, \varepsilon] : \gamma(t) \in B_{D^{*}}( \gamma(t_{n}), \varepsilon) \big\},$$ where we recall that $B_{D^{*}}(x, r)$ stands for  the closed ball of radius $r$ around $x$ for the metric $D^{*}$. Using the fact that the topologies defined by $D$ and $D^{*}$ coincides (again by Theorem \ref{main_theorem_topology}), a compactness argument shows that the construction stops after a finite number of steps $N$ and yields points $x=x_{0} = \gamma(t_{0}), ... , x_{N} = \gamma(t_{N}) = y$ such that for $0 \leq i \leq N-2$
$$ D^{*}( \gamma(t_{i}), \gamma(t_{i+1})) = \varepsilon, \quad \mbox{ and  the balls }  B_{D^{*}}(x_{i}, \varepsilon/ 3) \mbox{ are disjoint}.$$   In particular, since $D^{*}(x_{i},x_{i+1}) \leq  \varepsilon$, $0 \leq i \leq N-1$, we must have   $ N \varepsilon \geq \varepsilon^*$ and since $D \leq D^{*}$, the $D$-ball of radius $ 2 \varepsilon$ centered at $x$  at least contains the $N-1$ disjoint balls $B_{D^{*}}(x_{i}, \varepsilon/ 3)$, $0 \leq i \leq N-2$. 

\begin{figure}[!h]
 \begin{center}
 \includegraphics[width=10cm]{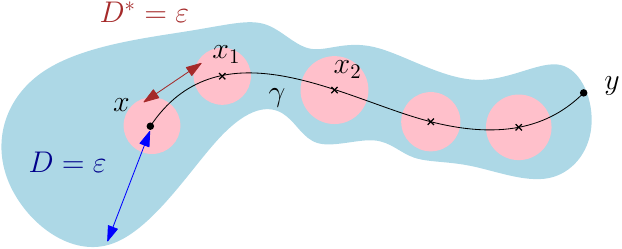}
 \caption{Illustration of the proof: If $\varepsilon$ is much smaller than $\varepsilon^*$, then we can find roughly $ N= \varepsilon^*/ \varepsilon'>> 1 $ points along a $D$-geodesic between $x$ and $y$ whose $D^{*}$-balls of radius $ \varepsilon/ 3$ are disjoint. The contradiction comes from the fact that $B_{D}( x, 2 \varepsilon)$ (in blue on the figure) contains at least $N$ balls for $D^{*}$ of radius $ \varepsilon/3$, and as a consequence its volume is too large.}
 \end{center}
 \end{figure}
 Taking volumes, an application of Lemma \ref{uniform_control_vol} gives the inequality:
 $$ (N-1) \times C_{\delta/2} \cdot (\varepsilon/3)^{ 2 \alpha + \delta/2} \leq \widetilde{C}_{\delta/2} \cdot (2 \varepsilon)^{ 2\alpha - \delta/2}.$$
 Recalling that $N \geq \frac{ \varepsilon^*}{ \varepsilon}$, we deduce the desired estimate on $ \varepsilon, \varepsilon^*$. \end{proof}

\section{A conditional proof of $D=D^*$}
\label{sec:D=D*}

{This short section presents the proof of our main theorem $D=D^{*}$ based on results we established in the previous section, and assuming certain forthcoming properties of geodesics in $ ( \mathcal{S},D)$. Proving these properties constitutes the most technical part of this work and the reader will embark a long journey using both discrete and continuous arguments.} This section can thus be seen as a resting area  offering the necessary motivation for the subsequent sections. The surgery techniques along geodesics presented here are directly inspired by the ones used in \cite{LG11} and \cite{Mie11} in the case of the Brownian sphere.  \medskip

Under $\mathbf{P}$, we  consider $U_{1},U_{2}$ two uniform random variables in $[0,1]$, independently of  $(X,Z,D)$ and we set $\rho_1:=\Pi_D(U_1)$ and $\rho_2:=\Pi_D(U_2)$ for simplicity. Since the random functions $D:[0,1]^2\to \mathbb{R}_+$ and $D^*:[0,1]^2\to \mathbb{R}_+$ are a.s.~continuous, Theorem \ref{thm:main} boils down to establishing that:
   \begin{equation} \label{eq:theoequiv}D(\rho_1,\rho_2)=D^{*}(\rho_1,\rho_2), \quad \mbox{ $\mathbf{P}$-a.s.}  \end{equation}
The starting point of the surgery along geodesics is the following result, analogous to \cite{LG09,Mie09} in the case of the Brownian sphere, which we will prove in Section \ref{secP:uni:geo}:
\begin{theo}[Essential uniqueness of geodesics] \label{alm-unique}
$\mathbf{P}$-a.s., there is a unique $D$-geodesic $$\gamma_{1,2} : [0, D(\rho_{1}, \rho_{2})] \to \mathcal{S}$$ going from $\rho_{1}$ to $\rho_{2}$.
\end{theo}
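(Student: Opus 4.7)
The plan is to deduce Theorem \ref{alm-unique} from a scaling-limit analysis of the discrete bi-pointed BDG$^{2\bullet}$ construction announced in Section \ref{sec:boltzm-stable-maps} and developed in Section \ref{sec:scalingunicyclo}. Concretely, I would start from $\mathfrak{M}_n$ equipped with two uniformly chosen distinguished vertices $\widehat{v}_1^n,\widehat{v}_2^n$ with an admissible delay $\hdelay_n$. Applying $\mathrm{BDG}^{2\bullet}$ (the analog of the pointed bijection for bi-pointed maps) produces a random well-labeled unicyclomobile $\mathbf{u}_n$: it is a mobile-like structure whose underlying graph is a labeled tree glued along a single cycle, and whose labels encode the minimum of the two graph distances to $\widehat{v}_1^n$ and $\widehat{v}_2^n$, shifted by the delay. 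The two vertices $\widehat{v}_1^n,\widehat{v}_2^n$ sit on either side of that cycle, and the discrete analog of a simple geodesic towards each $\widehat{v}_i^n$ can be followed from any corner by iterating successors on its side of the cycle. A crucial combinatorial point is that if $J_n$ denotes any white vertex of $\mathbf{u}_n$ on the cycle with minimal label, then every geodesic from $\widehat{v}_1^n$ to $\widehat{v}_2^n$ in $\mathfrak{M}_n$ is forced to pass through $J_n$, and in fact is obtained by concatenating a simple geodesic from $\widehat{v}_1^n$ to $J_n$ with a simple geodesic from $J_n$ to $\widehat{v}_2^n$.

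The scaling-limit inputs of Section \ref{sec:scalingunicyclo} then say that, after the usual normalization, the coding functions of $\mathbf{u}_n$ converge jointly (along a further extraction $(m_k)$ of $(n_k)$) to an explicit continuous object: a pair of labeled stable processes glued along a ``cycle'' parametrized by $[0,1]$, along which the label process is a certain explicit continuous process $\Lambda$ built from the restriction of $(X,Z)$ to the two sub-looptrees attached on either side of the cycle, together with a Brownian bridge on the cycle itself. Passing \eqref{eq:second_couple} and the joint convergence of $\mathbf{u}_n$ to the limit, the discrete identity $\gamma = \gamma^{(c_{\widehat{v}_1^n}\to J_n)}\cdot \gamma^{(J_n\to c_{\widehat{v}_2^n})}$ becomes, in the scaling limit, that any $D$-geodesic between $\rho_1$ and $\rho_2$ must be of the form $\gamma^{(U_1\to t_\star^{\mathrm{cyc}})}\cdot \gamma^{(t_\star^{\mathrm{cyc}}\to U_2)}$ where $t_\star^{\mathrm{cyc}}$ is any time realizing the overall minimum of the continuous label process $\Lambda$ on the cycle.

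Once this reduction is in place, uniqueness of the limiting geodesic is equivalent to uniqueness of the argmin of $\Lambda$ on the cycle. The key probabilistic step is therefore to prove that, $\mathbf{P}$-almost surely, $\Lambda$ attains its minimum at a unique point. This is the analog of Proposition \ref{distinct} for the cycle label process, and I would prove it by the same scheme: conditionally on the jump structure, $\Lambda$ is a Gaussian process whose restriction to each loop is obtained by concatenating independent Brownian bridges, combined with the cyclic Brownian bridge. Two candidate minimizers $t\ne t'$ on the cycle either lie on the same Brownian bridge (ruled out by distinctness of local minima of a Brownian bridge), on a common loop of the looptree (treated as in the proof of Proposition \ref{distinct}), or are separated by at least one non-degenerate loop, in which case the difference $\Lambda_t-\Lambda_{t'}$ is a non-degenerate Gaussian random variable conditionally on $X$ and on a collection of independent Brownian bridges, hence a.s.\ non-zero. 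A countable covering of candidate pairs by rationals will conclude.

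The main obstacle will be the passage from the discrete factorization through $J_n$ to its continuous counterpart along $(m_k)$. The difficulty is that a priori, a $D$-geodesic between $\rho_1$ and $\rho_2$ in the scaling limit is only a limit of almost-geodesics in $\mathfrak{M}_n$, not of true geodesics; to exploit the bi-pointed construction one must show that any such limit can be approximated by actual discrete geodesics between points converging to $\rho_1$ and $\rho_2$, and that the discrete almost-minimizing vertices on the cycle converge to minimizers of $\Lambda$. This requires the a priori local lower bound $D^*\leq A_\delta D^{1-\delta}$ of Proposition \ref{preliminary_control} (to control excursions of approximate geodesics away from true ones), together with the precise joint convergence of labels along the cycle from Section \ref{sec:scalingunicyclo}. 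Once this compactness/continuity step is settled, the essential uniqueness follows by combining the discrete factorization, its continuous counterpart, and the a.s.\ uniqueness of the argmin of $\Lambda$.
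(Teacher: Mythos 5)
Your overall strategy matches the paper's: pass through the $\mathrm{BDG}^{2\bullet}$ bijection, observe that any discrete geodesic between the two marks must cross the cycle of $\mathbf{u}_n$ at a vertex of (near-)minimal label, and reduce continuous uniqueness to uniqueness of the argmin of the limiting label process along the cycle. Three points, however, need attention.

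First, the two distinguished vertices of the $\mathrm{BDG}^{2\bullet}$ picture are \emph{not} uniform on $\mathfrak{M}_n$: the pushforward of $w_\bq^{2\bullet}$ is the law of $(\mathfrak{M}_n,(v_1,v_2),\delay)$ \emph{biased} by the number of admissible delays, i.e.\ by $(\mathrm{d}^{\mathrm{gr}}(v_1,v_2)-1)_+$. To recover a statement about two genuinely uniform points $\rho_1,\rho_2$, the paper constructs the biased space $(\widehat{\mathcal{S}},\widehat{D},\widehat{\mathrm{Vol}},\widehat{\rho}_*,(\widehat{\rho}_1,\widehat{\rho}_2))$ and a measurability lemma showing that $\mathbb{PM}^{2\bullet,1}_{\mathrm{root}}$ (unique geodesic between marks) is a Borel subset of $\mathbb{M}^{2\bullet}_{\mathrm{root}}$, after which one can de-bias using $\mathbf{E}[D(\rho_1,\rho_2)]<\infty$. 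If you start from uniform $v_1^n,v_2^n$ as written, the unicyclomobile $\mathbf{u}_n$ is not $\widetilde{w}^{2\bullet}_\bq$-distributed and the scaling-limit input of Section~\ref{sec:scalingunicyclo} does not apply directly.

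Second, your description of the limiting cycle label process $\Lambda$ as ``a Brownian bridge on the cycle'' misidentifies the object. The white vertices along the cycle converge, under $\mathrm{Cod}_{m_k}$, to the set $\widehat{\mathrm{Branch}}(0,\widehat{t}_\bullet)$ of (pinch-point) ancestors of $\widehat{t}_\bullet$, and the label process on this Cantor-type set — parametrised via $\xi_{\widehat{t}_\bullet}$ as in \eqref{one_point} — is a symmetric $2(\alpha-1)$-stable Lévy process stopped at time $H_{\widehat{t}_\bullet}$, not a bridge. Uniqueness of its overall minimum comes from the classical fact that stable processes have distinct two-sided minima (this is exactly Lemma~\ref{lem:onePinch} and the property built into $\Omega'$), not from a bridge argument. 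Your proposed conditional-Gaussian proof of argmin uniqueness is not hopeless, but as stated it treats the minimum as ranging over a countable object, whereas $\widehat{\mathrm{Branch}}(0,\widehat{t}_\bullet)$ is uncountable; the paper's route via the stable-process identity is the clean way to control this.

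Third, the concern you raise about approximate versus true geodesics is real but the paper sidesteps it: instead of trying to upgrade limits of discrete geodesics, the proof of Lemma~\ref{lem:alm-unique} works with the set $\widehat{\mathscr{G}}(h)$ of $h$-median points and observes that any two distinct elements of $\widehat{\mathscr{G}}(\hdelay)$ yield sequences $(x_{m_k}),(x'_{m_k})\in\mathcal{A}$ with almost-minimising distances. One then concatenates geodesics $\widehat{v}_1^{m_k}\to x_{m_k}\to\widehat{v}_2^{m_k}$ to get a path of length $\mathrm{d}(\widehat{v}_1,\widehat{v}_2)+o(m_k^{1/2\alpha})$, and the Jordan-curve argument forces this path through a cycle vertex $J'_{m_k}$ whose label satisfies \eqref{eq:Knpresqmin}. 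The Schaeffer bound then pins $J'_{m_k}$ to $J_{m_k}$. No use of Proposition~\ref{preliminary_control} is needed here, and invoking it would be a detour. Filling these three gaps essentially reproduces the paper's proof.
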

Recall from Section \ref{sec:D<D*} that $D^*$ is constructed as the largest pseudo-distance which passes to the quotient of $\sim_d$ and for which simple geodesics are geodesics. As a consequence of Theorem \ref{alm-unique} we shall prove that: \begin{prop}\label{thm:geodesics:rho:*}  \label{thm:geodesics}
$\mathbf{P}$ - a.s.,  all the geodesics towards $\rho_*$  in $(\mathcal{S},D)$ are simple geodesics.
\end{prop}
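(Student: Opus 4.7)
The plan is to leverage the re-rooting invariance \eqref{eq:re-rooting-S} together with the essential uniqueness of Theorem \ref{alm-unique} in order to first pin down all geodesics from \emph{typical} starting points, and then to propagate this uniqueness to every starting point by an approximation procedure inspired by Bettinelli \cite{bettinelli2016geodesics}.

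First, I would invoke \eqref{eq:re-rooting-S} to identify the joint law of $(\mathcal{S}, D, \mathrm{Vol}, \rho_*, \Pi_D(U))$, with $U$ uniform on $[0,1]$ independent of $(X,Z,D)$, with that of $(\mathcal{S}, D, \mathrm{Vol}, \rho_1, \rho_2)$. Theorem \ref{alm-unique} then yields that almost surely, for Lebesgue-almost every $u \in [0,1]$ there exists a unique $D$-geodesic from $\Pi_D(u)$ to $\rho_*$. Since the simple geodesic $\gamma^{(u)}$ is always a $D$-geodesic from $\Pi_D(u)$ to $\rho_*$ by \eqref{Distance:rho_*}--\eqref{trivial_bounds}, uniqueness forces this geodesic to coincide with $\gamma^{(u)}$. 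Applying Fubini on the almost sure event that this property holds, I would obtain a measurable subset $\Xi \subset [0,1]$ of full Lebesgue measure with the property that for every $u \in \Xi$, the unique $D$-geodesic from $\Pi_D(u)$ to $\rho_*$ is $\gamma^{(u)}$.

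Second, I would transfer this statement into an everywhere statement through the coalescence property of simple geodesics: by construction via the successor mechanism, two simple geodesics $\gamma^{(s)}$ and $\gamma^{(s')}$ that share a common point must coincide from that point onward. Fix an arbitrary $D$-geodesic $\gamma : [0,T] \to \mathcal{S}$ from $x \in \mathcal{S}$ to $\rho_*$. For every $r \in (0,T)$, the restriction $\gamma|_{[r,T]}$ is a geodesic from $\gamma(r)$ to $\rho_*$ of length $T-r$. Approximating $\gamma(r)$ by a sequence of points $y_n = \Pi_D(t_n)$ with $t_n \in \Xi$ and $y_n \to \gamma(r)$ (which is possible because $\Pi_D(\Xi)$ is dense in $\mathcal{S}$, using Proposition \ref{pinch_points_are_not_record}), the geodesic from each $y_n$ to $\rho_*$ is the simple geodesic $\gamma^{(t_n)}$. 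A compactness argument combined with the continuity of $Z$ and the local stability of the first-return map $\eta^{(\cdot)}_r$ (away from the countable set of two-sided local minima of $Z$, controlled by Proposition \ref{distinct}) then shows that, up to a subsequence, $\gamma^{(t_n)}$ converges uniformly to some simple geodesic $\gamma^{(t_\infty)}$ from $\gamma(r)$ to $\rho_*$. By the essential uniqueness of $D$-geodesics between two typical points (extended by a second re-rooting argument to cover the joint law of $(\rho_*, \gamma(r))$ up to an a priori error controlled via Proposition \ref{preliminary_control}), this limit must agree with $\gamma|_{[r,T]}$.

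Finally, letting $r \downarrow 0$ and invoking coalescence one last time, I would conclude that $\gamma$ itself is a simple geodesic: namely, it coincides with $\gamma^{(s_0)}$ for an appropriate preimage $s_0 \in \Pi_D^{-1}(x)$, extracted as a further subsequential limit of the $t_n$'s as $r \downarrow 0$. The main obstacle in this plan is the robustness of the third paragraph: the point $\gamma(r)$ is \emph{not} a priori typical, so the joint uniqueness statement between $\rho_*$ and $\gamma(r)$ does not follow directly from Theorem \ref{alm-unique}. The trick, following Bettinelli, is to use the triangle inequality and the \emph{a priori} local bound $D^* \leq A_\delta D^{1-\delta}$ of Proposition \ref{preliminary_control} to transfer uniqueness from typical neighbors $y_n$ to the non-typical point $\gamma(r)$, and to argue that deviations from simple geodesics force $\gamma$ to create a ``geodesic two-star'' at $\gamma(r)$, which is ruled out by the typical uniqueness combined with coalescence. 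Controlling these two-star configurations rigorously is where the delicate continuous analysis developed in Sections \ref{sec:boltzm-stable-maps}--\ref{secP:uni:geo} will ultimately be invoked.
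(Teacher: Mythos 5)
Your first two steps match the paper's opening moves exactly: re-rooting plus Theorem \ref{alm-unique} yields that the projections $\Pi_D(U_k)$ of countably many independent uniforms almost surely all lie in the set $\mathfrak{S}$ of points with a unique $D$-geodesic to $\rho_*$; and the coalescence \eqref{eq:geocoincide} of simple geodesics then upgrades this to the statement that \emph{every} interior point of \emph{every} simple geodesic lies in $\mathfrak{S}$ (which is not just ``a.e.\ $t\in\Xi$'' but all $t\in[0,1]$). The divergence from the paper, and the gap, is in your third paragraph.

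The step ``this limit must agree with $\gamma|_{[r,T]}$'' is where the argument breaks down, and it cannot be patched the way you suggest. What the convergence $\gamma^{(t_n)}\to\gamma^{(t_\infty)}$ gives you is a simple geodesic from $\gamma(r)$ to $\rho_*$ --- but the existence of one was never in doubt (any preimage of $\gamma(r)$ already produces one via \eqref{Distance:rho_*}--\eqref{trivial_bounds}). What you need is that $\gamma|_{[r,T]}$ itself \emph{equals} such a simple geodesic, which requires a uniqueness statement at the specific, non-typical point $\gamma(r)$. No re-rooting argument can supply this: $\gamma(r)$ is not $\mathrm{Vol}$-distributed, its joint law with $\rho_*$ is not covered by \eqref{eq:re-rooting-S}, and Proposition \ref{preliminary_control} bounds $D^*$ by a power of $D$ but says nothing about numbers of geodesics. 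Worse, the uniqueness you would need genuinely \emph{fails}: as established after Proposition \ref{thm:geodesics}, the image of $\mathrm{Skel}$ in $\mathcal{S}$ is precisely the cut locus of $\rho_*$, so pinch points carry two distinct geodesics to $\rho_*$, and $\gamma(r)$ may well be one of them. Your approximation thus shows there is a simple geodesic nearby, but cannot rule out that $\gamma$ wanders away from both of them. The paper's actual proof avoids this dead end by a completely different mechanism: it assumes $\gamma$ is not simple, extracts an initial segment $\gamma([0,\varepsilon])$ disjoint from the interiors of all simple geodesics (the coalescence argument above shows this must happen), and then produces a planar ``trap'' by launching simple geodesics $\gamma^{(u_m'\to v_m')}$ and $\gamma^{(u_m\to v_m)}$ from carefully chosen times on a loop adjacent to $\gamma(0)$, using the Brownian bridge nature of $Z$ on loops to arrange the labels so that $\gamma$ is forced to cross from a higher to a lower label and then back up --- contradicting $D(\gamma(s),\rho_*)=Z-\inf Z$. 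The case where $\gamma(0)$ projects to a pinch point is reduced to the leaf case via the total disconnectedness of $\Pi_D(\mathrm{Skel})$. None of this looptree-face machinery appears in your proposal, and it is the load-bearing part of the proof.
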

In particular, the geodesics to $\rho_{*}$ coincide for both metrics $D^*$ and $D$, and being a simple geodesic is actually a metric notion in $( \mathcal{S},D, \rho_*)$. Assuming the previous two results in this section, we shall thus drop the adjective \textit{simple} and only speak of geodesics towards $\rho_*$. The equality \eqref{eq:theoequiv} means that we can approximate the path $\gamma_{1,2}$ as closely as desired using a concatenation of pieces of geodesics towards $\rho_*$.
In this direction, for $u  \in ( \varepsilon, D( \rho_{1}, \rho_{2})- \varepsilon)$, we  say that the point $x = \gamma_{1,2}(u) \in \mathcal{S}$  is \textbf{$ \varepsilon$-good}, for $\gamma_{1,2}$ and inside $ ( \mathcal{S},D,\rho_*)$,  if $\gamma_{1,2}([u- \varepsilon, u+ \varepsilon])$ coincides {with the concatenation of  one or two  pieces of geodesics} towards $\rho_*$. It is said \textbf{$ \varepsilon$-bad} otherwise. See Figure~\ref{fig:epsilongood} for an illustration. The presence of a bad point in $\gamma_{1,2}$ is related to the concept of $3$-stars along $\gamma_{1,2}$ which are points from which we can start three locally distinct geodesics, see \cite{Mie11} and Lemma \ref{lem:stars}.

{Returning  to our random setting, $ \varepsilon$-good points along $\gamma_{1,2}$ can be used to replace parts of $\gamma_{1,2}$ by pieces of  geodesics towards $\rho_*$.} 
   \begin{figure}[!h]
    \begin{center}
    \includegraphics[width=13cm]{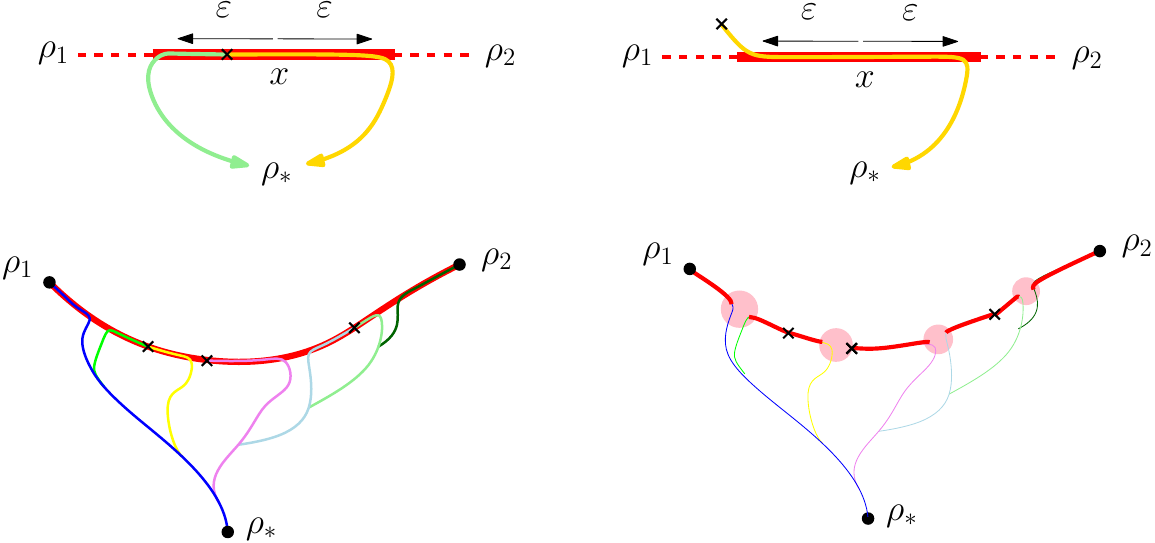}
    \caption{Top: illustration of $ \varepsilon$-good points $x$. The geodesic $\gamma_{1,2}$ is in red, while the geodesics towards $\rho_*$ are in green or yellow. Bottom: approximation of $\gamma_{1,2}$ by pieces of geodesics towards $\rho_*$ using $ \varepsilon$-good points. An a priori bound $D^{*} \leq D^{1- \delta}$ (Proposition \ref{preliminary_control}) is used in the pink regions to bridge between the parts coinciding with  geodesics towards $\rho_*$.\label{fig:epsilongood}}
    \end{center}
    \end{figure}
{However, there is an important caveat: we need to show that we can perform this surgery along a large part of $\gamma_{1,2}$. Namely, we must prove that most points along $\gamma_{1,2}$ are $ \varepsilon$-good, and then control distances in the remaining parts of the geodesic. For the latter we rely on Proposition \ref{preliminary_control} and for the former on the following estimate:}
\begin{prop}[Most points along geodesics are good]\label{main:techni} 
There exist  constants $C,c>0$ such that:
$$  \mathbf{E} \left[ \int_{0}^{D( \rho_{1}, \rho_{2})} \mathrm{d}u \  \mathbbm{1}_{\{\gamma_{1,2}(u)  \mbox{ is $\varepsilon$-bad}\,\}} \right] \leq C \cdot \eps^{c},\quad \text{for every } \eps>0.$$
\end{prop}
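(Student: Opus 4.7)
The plan is to bound the expected length of $\varepsilon$-bad points along $\gamma_{1,2}$ by relating it, via a Fubini argument and the essential uniqueness of geodesics (Theorem \ref{alm-unique}), to the probability that a ``typical'' point along $\gamma_{1,2}$ fails to be trapped by two faces, and then invoking the polynomial estimate of Theorem \ref{prop:malo}. First I would use the co-area like identity
\[
\mathbf{E}\Big[\int_0^{D(\rho_1,\rho_2)}\d u\,\mathbbm{1}_{\{\gamma_{1,2}(u)\text{ is $\varepsilon$-bad}\}}\Big] = \mathbf{E}\Big[D(\rho_1,\rho_2)\,\mathbb{P}\big(\gamma_{1,2}(VD(\rho_1,\rho_2))\text{ is $\varepsilon$-bad}\,\big|\,X,Z,D,U_1,U_2\big)\Big],
\]
where $V$ is independent and uniform on $[0,1]$. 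Since the diameter of $\mathcal{S}$ has stretched-exponential tails (Lemma \ref{D:small}~(i)), it suffices to bound the probability that $\gamma_{1,2}(VD(\rho_1,\rho_2))$ is $\varepsilon$-bad by $C\,\varepsilon^c$.

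The second step is a geometric criterion: if one can find two different faces $\mathfrak{F}_1,\mathfrak{F}_2$ of $\mathcal{S}$ and four points $y_1,y_2$ on $\gamma_{1,2}$ strictly before $x=\gamma_{1,2}(u)$ and $y_3,y_4$ strictly after it, with each face touching $\gamma_{1,2}$ once on each side of $x$, and such that the two arcs along $\gamma_{1,2}$ joining the contact points separate $x$ from $\rho_*$ in $\mathcal{S}$, then $x$ is $\varepsilon$-good. This is a planarity argument: using the embedding $\mathcal{S}\hookrightarrow\mathbb{S}_2$ coming from Lemma~\ref{Lem:injection} and Theorem \ref{main_theorem_topology}, the two faces plus the pieces of $\gamma_{1,2}$ between them form a Jordan curve separating $x$ from $\rho_*$, forcing any geodesic from $x$ to $\rho_*$ to cross one of these faces and thus forcing $\gamma_{1,2}([u-\varepsilon,u+\varepsilon])$ to decompose into at most two geodesics targeted at $\rho_*$ as soon as the contact points sit within $D$-distance of order $\varepsilon$ from $x$.

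The heart of the matter is then to show that a uniform point on $\gamma_{1,2}$ satisfies this trapping condition at scale $\varepsilon$ with probability at least $1-C\varepsilon^c$. For that I would use the bi-pointed construction $\mathrm{BDG}^{2\bullet}$ of Section \ref{sec:boltzm-stable-maps} together with its scaling limit developed in Section \ref{sec:scalingunicyclo}: a uniform point on $\gamma_{1,2}$ corresponds in the continuum to sampling the spine between the two sources at a uniform height, whose local description is governed, after re-rooting, by the measure $\mathbf{N}^\bullet$ of Section \ref{sec:prison}, with $t_\bullet$ playing the role of our typical point $x$ and $\rho_*$ playing the role of the ``other end'' of the spine. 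Under this correspondence, the geometric trapping of $x$ by faces of $\mathcal{S}$ on both sides of $\gamma_{1,2}$ translates exactly into the event that the point $\mathfrak{X}$ be $\varepsilon$-trapped in the sense of Theorem \ref{prop:malo}: the four times $s_1<\varpi_1<s_2<t_\bullet<s_3<\varpi_2<s_4$ produced by the trapping theorem provide the two faces and their four contact points with $\gamma_{1,2}$.

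It remains to integrate the estimate of Theorem \ref{prop:malo}. Conditionally on the spine length $h$ (which in terms of the random map corresponds to the distance from the typical point to the common end $\rho_*$), that theorem gives
\[
\mathbf{N}^\bullet\big(\mathfrak{X}\text{ is not $\varepsilon$-trapped}\,\big|\,H_{t_\bullet}=h\big)\leq C\Big(\frac{\varepsilon}{h^{2(\alpha-1)}}\Big)^{c}.
\]
Integrating against the law of $h$, and using that $h$ has polynomial lower tails (which can be read off the scaling limit in Section \ref{sec:scalingunicyclo}, since $h$ is of the order of the $D$-distance between a uniform point and $\rho_*$, with distribution controlled through Lemma \ref{D:expectation} and the rerooting property), yields a bound of the form $C'\varepsilon^{c'}$ on the probability that a uniform point of $\gamma_{1,2}$ is $\varepsilon$-bad. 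The main obstacle is the last step, namely, justifying the translation between the discrete $\mathrm{BDG}^{2\bullet}$ two-source construction and the idealized model of $\mathbf{N}^\bullet$ used for Theorem \ref{prop:malo}; this requires the full scaling-limit machinery of Section \ref{sec:scalingunicyclo} to describe the joint limit of the spine and the two sides of labeled looptrees glued to it, with uniform polynomial control on the ``length'' parameter $h$.
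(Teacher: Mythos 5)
Your high-level strategy — reduce to a uniform point on $\gamma_{1,2}$, translate $\varepsilon$-good into an $\varepsilon$-trapping condition on the spine of the two-point decomposition, and invoke Theorem~\ref{prop:malo} — is indeed the spine of the paper's proof. But the final integration step has a genuine gap, and it is precisely the place where the paper does the real work.

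The issue concerns the law of $H_{t_\bullet}$ and the biasing of the spine measure. The Fubini step you perform (equivalently, sampling the delay $\delay$ uniformly) produces the \emph{biased} space $\widehat{\mathcal{S}}$, biased by $D(\rho_1,\rho_2)$; this is exactly what the $\mathrm{BDG}^{2\bullet}$ construction and the scaling limit of Proposition~\ref{prop:scalingbelt} describe. You propose to discard this biasing (``it suffices to bound the probability that $\gamma_{1,2}(VD(\rho_1,\rho_2))$ is $\varepsilon$-bad''), but the discrete-to-continuum scaling limit one actually controls is precisely the biased one, and the resulting law of $(X,Z,t_\bullet,\sigma)$ is not $\mathbf{N}^\bullet$ itself: it is $\mathbf{N}^\bullet$ weighted by the density $\GG(1-\sigma,-Z_{t_\bullet})$ (see \eqref{eq:Z:hat}). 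Dropping this factor is not a cosmetic simplification. Under plain $\mathbf{N}^\bullet$ the marginal law of $h=H_{t_\bullet}$ is Lebesgue measure on $\mathbb{R}_+$, so the integral $\int_0^\infty (\varepsilon/h^{2(\alpha-1)})^c\,\mathrm{d}h$ that you propose to perform after applying Theorem~\ref{prop:malo} diverges at $h\to\infty$ (and even at $h\to 0$ if $c$ is not adjusted). It is exactly the weight $\GG(1-\sigma,-Z_{t_\bullet})$, combined with the constraint $\sigma\in(0,1)$ it encodes, that makes the integral converge, and the paper has to work for it: $\GG$ is first bounded by $C_3\,x^{-1+\frac{1+\beta}{2\alpha}}|z|^{-\beta}$ using tail asymptotics of the stable density and the error function, then Hölder's inequality separates the trapping indicator from $\GG$, the trapping factor is controlled by combining Theorem~\ref{prop:malo} on $\{H_{t_\bullet}\leq\varepsilon^{-r}\}$ with the tail of $H_{t_\bullet}$ conditioned on $\sigma\in(0,1)$ (\cite[Theorem 3.3.3]{DLG02}), and the $\GG$-factor is integrated using moment estimates on $|Z_{t_\bullet}|^{-\beta q}$ via volume bounds on the looptree. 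Your comment that the tail of $h$ ``can be read off the scaling limit\ldots controlled through Lemma~\ref{D:expectation} and the rerooting property'' is also misdirected: $H_{t_\bullet}$ is a height in the stable tree, not a $D$-distance, and its integrability against the estimate of Theorem~\ref{prop:malo} is not furnished by map-distance moments.

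A smaller remark on your second step: you propose to prove ``trapped $\Rightarrow$ good'' directly in the continuum via the embedding $\mathcal{S}\hookrightarrow\mathbb{S}_2$ of Lemma~\ref{Lem:injection} and a Jordan-curve argument. That is plausible, but the paper deliberately carries out this separation argument at the discrete level (Proposition~\ref{lem:prison-discrete}), building two ``doors'' in the discrete map $\widehat{\mathfrak{M}}_{m_k}$ and applying Jordan's theorem there, then concluding by the coupling \eqref{eq:V:M:k:convergence:E} and the approximation Lemma~\ref{chemin:proches}. The continuous route would require careful arguments about simple curves and face boundaries in $\mathbb{S}_2$, which the discrete path sidesteps; it is not a gap, but it is not the easier route. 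You would also need to verify, as the paper does (Lemmas~\ref{lem:stars} and~\ref{Good-Bad-measurable}), that the set of $\varepsilon$-bad times is measurable in the rooted bi-marked Gromov--Hausdorff--Prokhorov sense, since the expectation you start from is a priori an expectation of a functional on $\mathbb{M}_{\mathrm{root}}^{2\bullet}$, not of a functional of $(X,Z)$.
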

The proof of Proposition \ref{main:techni} is actually the main technical estimate of this paper, it is proved at the end of Section \ref{secP:uni:geo} and relies on results of Section \ref{sec:prison}. We also stress that part of the result is to establish that the random variable inside the expectation is well-defined, i.e.\ measurable with respect to the rooted Gromov--Hausdorff--Prokhorov topology. Let us now deduce Theorem \ref{thm:main} from the three results Theorem \ref{alm-unique}, Proposition \ref{thm:geodesics:rho:*} and Proposition \ref{main:techni} admitted above. See Figure~\ref{fig:epsilongood} for an illustration.
\begin{proof}[Proof of \eqref{eq:theoequiv}, hence of Theorem \ref{thm:main}.]
Using Theorem \ref{alm-unique}, let $\gamma_{1,2}$ be the a.s.~unique geodesic between $\rho_{1}$ and $\rho_{2}$. For $ \varepsilon>0$  decompose $\gamma_{1,2}$ into $  K_{ \varepsilon}= (\lfloor D(\rho_{1}, \rho_{2})/ \varepsilon \rfloor +1)$ chunks $$I_{j} = \gamma_{1,2}\Big(\Big[(j-1)\frac{D(\rho_{1}, \rho_{2})}{K_{ \varepsilon}}, j \frac{D(\rho_{1}, \rho_{2})}{K_{ \varepsilon}} \Big]\Big), \quad \mbox{ for }1 \leq j \leq K_{ \varepsilon}\, ,$$ of $D$-length less than $  \varepsilon$. We say that the chunk $I_{j}$ is  $ \varepsilon$-nice if it contains an $ \varepsilon$-good point for $\gamma_{1,2}$, in which case the entire chunk $I_{j}$ coincides with a  concatenation of at most two pieces of geodesics towards $\rho_*$. We say that the chunk  is  $ \varepsilon$-ugly otherwise. {In particular,  an $ \varepsilon$-ugly chunk must only contain $ \varepsilon$-bad points for $\gamma_{1,2}$ and thus, simultaneously  for every $\varepsilon>0$, we have:
$$  \varepsilon \cdot \# \big\{ 1 \leq i \leq K_{ \varepsilon}  : I_{j} \mbox{ is $ \varepsilon$-ugly}\big\} \leq  \int_{0}^{D( \rho_{1}, \rho_{2})} \mathrm{d}u \  \mathbbm{1}_{\{\gamma_{1,2}(u)  \mbox{ is $\varepsilon$-bad}\}}.$$ 
We now use Proposition \ref{main:techni}, which gives us that $\mathbf{E}[\int_{0}^{D( \rho_{1}, \rho_{2})} \mathrm{d}u \  \mathbbm{1}_{\{\gamma_{1,2}(u)  \mbox{ is  $\varepsilon$-bad}\}}]\leq C\cdot \varepsilon^{c}$, for some $C,c>0$ independent of $ \varepsilon>0$. }
Taking $ \varepsilon = 2^{-n}$ for $n=1,2, ...$ and using Markov's inequality together with the Borel--Cantelli lemma, we  deduce that a.s.~the number of $2^{-n}$-ugly chunks is eventually less than $2^{n(1- c/2)}$ as $n \to \infty$. 
Next, observe that by Proposition \ref{thm:geodesics}, geodesics towards $\rho_*$ are geodesics for both $D$ and $D^{*}$ and therefore  the metric $D$ and $D^{*}$ coincides on the $2^{-n}$-nice intervals of $\gamma_{1,2}$. We deduce that for any $ \delta>0$ we have
  \begin{eqnarray*} |D^{*}(\rho_{1}, \rho_{2}) - D(\rho_{1}, \rho_{2})| &\underset{D \leq D^*}{\leq}& \# \{ 1 \leq j \leq K_{ 2^{-n}}  : I_{j} \mbox{ is $2^{-n}$-ugly}\} \times \sup_{\begin{subarray}{c}x,y \in \mathcal{S}\\ D(x,y) \leq 2^{-n} \end{subarray}} D^{*}(x,y)\\ & \underset{ \mathrm{Prop.\ } \ref{preliminary_control}}{\leq}&  2^{n (1- c/2)} \times A_{\delta} \cdot 2^{-n(1- \delta)},  \end{eqnarray*} eventually as $ n \to \infty$ ; where we recall that $A_\delta$ is the positive random variable appearing in Proposition~\ref{preliminary_control}  and in particular does not depend on $n$.
Taking $\delta < c/2$ and letting $n \to \infty$ completes the  proof. \end{proof}

We now have to prove Theorem \ref{alm-unique}, Proposition \ref{thm:geodesics}, and Proposition \ref{main:techni} admitted above.  For this, we need to understand the local geometric picture around a typical point of a typical geodesic in $ \mathcal{S}$. Although this information can in principle be extracted from the encoding presented in Section~\ref{sec:BDG}, it will be more practical to introduce another discrete encoding of maps.  {This encoding is a variant of the BDG construction, originally introduced in \cite{Mie09} for quadrangulations, and offers a clearer and more transparent view of the behavior of a typical point along a typical geodesic in the discrete.}

 \section{Boltzmann stable maps with two sources}\label{sec:boltzm-stable-maps}

In this section, we introduce a tool that will be useful to study
the properties of geodesics in discrete stable maps and their scaling
limits.  It
consists of a generalization to multi-marked maps of the BDG
bijection presented in Section \ref{sec:BDG}, which is directly
inspired from the particular case of quadrangulations that is
considered in \cite{Mie09}.

\subsection{The BDG bijection with two sources and delays}\label{sec:biject-with-delays}

Consider the set $\mathcal{M}^{2\bullet}$ of
triples $(\bm,(v_1,v_2),\delay)$ where:
\begin{itemize}
\item 
$\bm$ is a planar bipartite map with one distinguished oriented
edge $e_0$;
\item
  $v_1,v_2$ are two vertices of $\bm$;
  \item $\delay$ is an integer such that
    $|\delay|<\mathrm{d}^{\mathrm{gr}}_\bm(v_1,v_2)$, and
    $\mathrm{d}^{ \mathrm{gr}}_{\bm}(v_1,v_2)+\delay$ is an even number. 
  \end{itemize}
  Note that the last condition implies that
  $\mathrm{d}^{\mathrm{gr}}_\bm(v_1,v_2)\geq 2$, and, in particular, that
  $v_{1},v_{2}$ are distinct.  
 The integer $ \delay$ is referred to as the \textbf{delay} and is said to be \textbf{admissible} when it satisfies the above conditions. For a given bi-pointed planar map $ ( \mathbf{m}, (v_{1},v_{2}))$, the set of all admissible delays will be denoted by  \begin{eqnarray} \label{def:Edelay} \Edelay_{\mathbf{m},v_1,v_2}. \end{eqnarray} 
  In order to obtain more harmonious notation, it will sometimes be useful to fix arbitrarily two integers
  $\delay_1,\delay_2$ such that $\delay=\delay_1-\delay_2$.

  We also consider the set $\mathcal{U}$ of {\bf unicyclomobiles},
  that are rooted plane maps $\bu$ having the property that:
  \begin{itemize}
  \item
    $\bu$ is a bipartite map with exactly two faces labeled as $f_1$ and $f_2$;
    \item the vertices of $\bu$ are partitioned into two ``black
      and white'' sets of vertices
      $V_\bullet(\bu),V_\circ(\bu)$ in such a way that a vertex of one
      set is only incident to vertices of the other set;
      \item $\bu$ is rooted at a corner incident to a white vertex of
        $V_\circ(\bu)$;
        \item the white vertices carry a function
          $\ell:V_\circ(\bu)\to \Z$, in such a way that $(\bu,\ell)$
          is well-labeled according to the definition given in the opening of Section
          \ref{sec:bdg-construction}. In particular, the label of the root vertex is
equal to $0$. 
  \end{itemize}

The reason for the fancy name  is that
unicyclomobiles are planar maps with exactly one \textbf{cycle}, of even
length. This comes from the usual core decomposition of planar maps
with two faces: removing inductively all edges incident to a vertex of
degree 1, one obtains a planar map with two faces and with only vertices of
degree at least $2$, which does not depend on the order of the edge
deletions.
By Euler's formula, this map must have an equal number
of vertices and edges, and the only possibility is that the resulting
map is a cycle, which must have even length by the bipartite nature
of the map we started from. Conversely, it is a consequence of the
Jordan curve theorem that a planar map with exactly one cycle has
exactly two
faces

\begin{figure}[!h]
 \begin{center}
 \includegraphics[width=10cm]{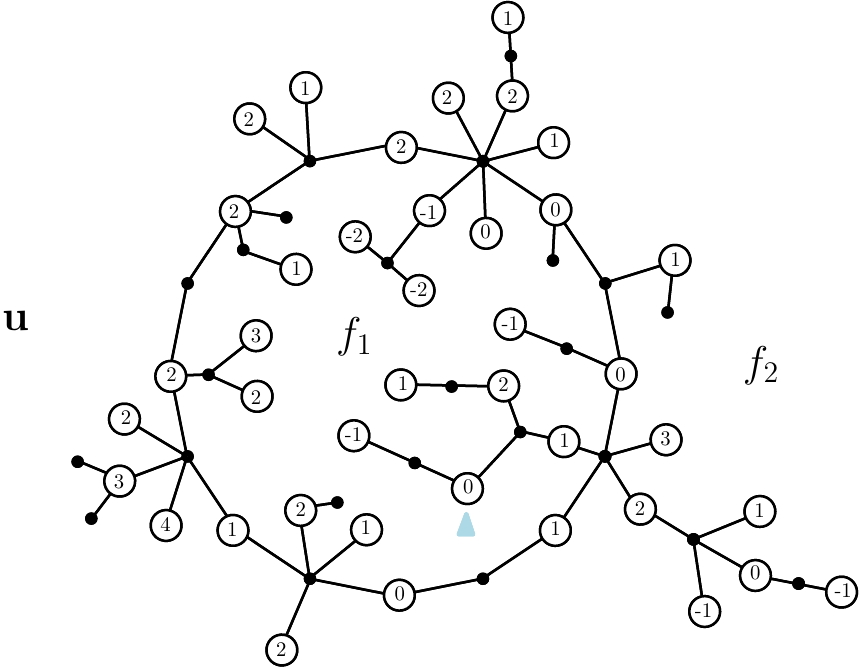}
 \caption{A unicyclomobile. \label{fig:unicyclo1}}
 \end{center}
 \end{figure}
 
The sets $\mathcal{M}^{2\bullet}$ and $\mathcal{U}\times \{-1,1\}$ are in natural
correspondence, as we now discuss.

\subsubsection{From a unicyclomobile to a map}

It is easier to describe the mapping which, to an element of
$(\bu,\epsilon)\in \mathcal{U}\times \{-1,1\}$, associates an element of
$\mathcal{M}^{2\bullet}$, as it consists in performing the BDG
construction, described in Section \ref{sec:bdg-construction},
within each face of $\bu$. We refer the reader to Figure
\ref{fig:unicyclo2} for an illustration of this mapping, which we now
define. 

For $i\in \{1,2\}$, we first add a new vertex $v_i$
inside the face $f_i$, with a label $\ell(v_i)=\min\{\ell(v):v \mbox{
  incident to }f_i\}-1$. For each white corner $c$ incident to the
face $f_i$, we define its successor to be the next white corner $c'=s(c)$ in the 
(counterclockwise) contour order around the face $f_i$ with label $\ell(c)-1$, or
$s(c)=v_i$ if $c$ has minimal label in the face $f_i$. Then, from
every white corner, we draw an arc connecting this corner to its successor,
in such a way that these arcs do not cross each other, nor an edge of
$\bu$. In particular, the arcs always connect two corners incident to
the same face $f_i$. Lastly, we remove the edges originally present in
$\bu$.

The resulting map, whose vertex set is $V_\circ(\bu)\cup \{v_1,v_2\}$ and
whose edge-set is the set of arcs from the white corners to their successors, is denoted by $\bm$. 
It is naturally $2$-marked at the vertices
$(v_1,v_2)$. We also let
 \begin{eqnarray} \label{eq:Di} \delay_i&:=&\ell(v_i), \quad \mbox{ for }~i\in \{1,2\}.  \end{eqnarray}
Finally, the map $\bm$  is rooted at the edge emanating from  the
root corner $c_0$ of
$\bu$, which we choose to orient from $c_0$ to $s(c_0)$ if
$\epsilon=+1$, and from $s(c_0)$ to $c_0$ if $\epsilon=-1$. With this
rooting choice, and letting
$\delay:=\delay_1-\delay_2$, we denote by
$\rm{BDG}^{2\bullet}(\bu)$ the tuple $(\bm,(v_1,v_2),\delay)$.

\begin{figure}[!h]
 \begin{center}
 \includegraphics[width=14cm]{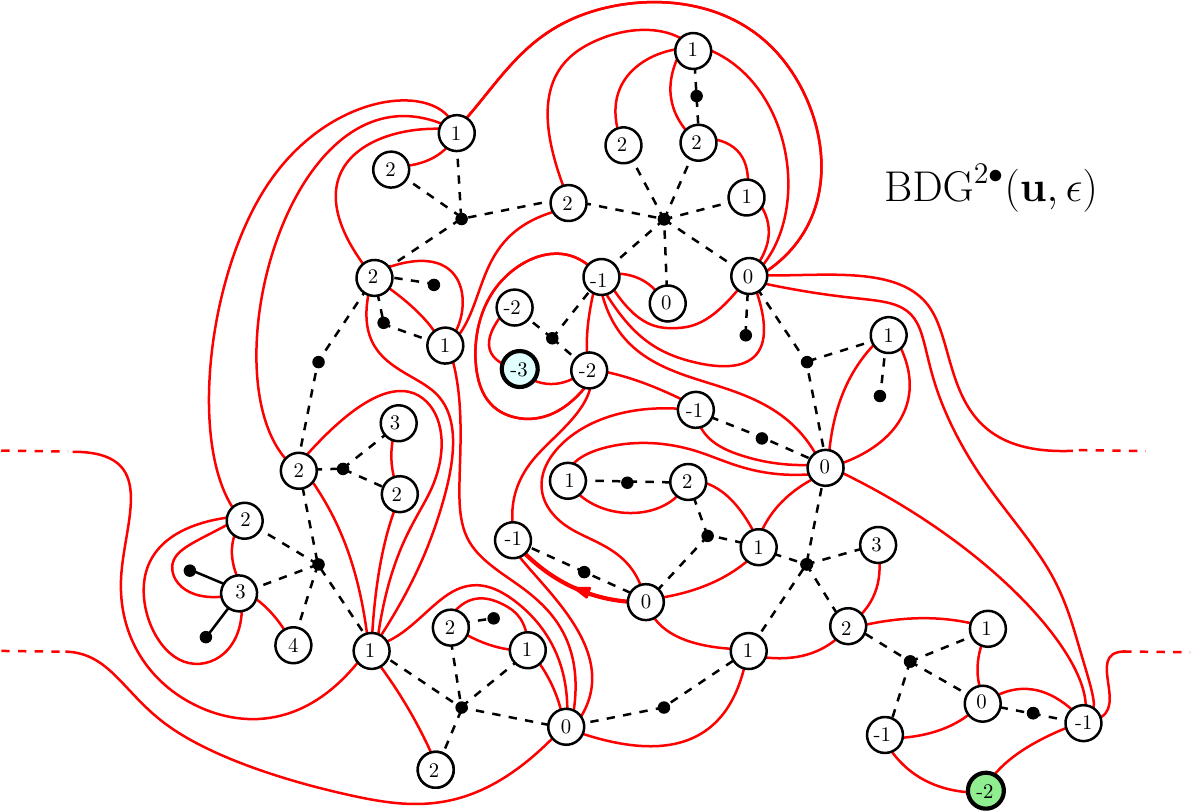}
 \caption{The bi-pointed BDG construction applied on the unicyclomobile of Figure \ref{fig:unicyclo1}. Notice the two distinguished vertices in each face of the unicyclomobile, and the root edge of the map in bold line. For better visibility two edges have been drawn on the sphere and "go around" the figure. \label{fig:unicyclo2}}
 \end{center}
 \end{figure}

 \begin{lem}\label{sec:from-unicycl-map}
   It holds that $\rm{BDG}^{2\bullet}(\bu)$ is an element of
   $\mathcal{M}^{2\bullet}$.
 \end{lem}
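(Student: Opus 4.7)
My plan is to verify in turn the three defining properties of an element of $\mathcal{M}^{2\bullet}$: that $\bm$ is a well-defined rooted planar bipartite map, that $v_1,v_2\in V(\bm)$, and that $\delay$ is an admissible delay, i.e.\ $|\delay|<\mathrm{d}^{\mathrm{gr}}_\bm(v_1,v_2)$ with $\mathrm{d}^{\mathrm{gr}}_\bm(v_1,v_2)+\delay$ even.

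For the first two points, the construction of arcs within each face $f_i$ of $\bu$ mirrors the single-source BDG construction of Section \ref{sec:bdg-construction}, applied separately inside each face to the label function $\ell-\delay_i$ restricted to white corners incident to $f_i$ (which is non-negative with minimum $1$). A standard argument shows that the successor of any white corner in $f_i$ is well-defined (possibly equal to $v_i$) and that the arcs can be drawn in a non-crossing fashion inside each face, yielding a planar map $\bm$ with vertex set $V_\circ(\bu)\cup\{v_1,v_2\}$, rooted at the arc emanating from the root corner of $\bu$ with orientation given by $\epsilon$. Moreover, every edge of $\bm$ connects vertices whose labels differ by exactly $1$: the arcs from a corner of label $\ell$ go to a corner of label $\ell-1$, while the edges incident to $v_i$ connect to white vertices of label $\delay_i+1$. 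Hence $\bm$ is bipartite, with color classes indexed by the parity of labels, which also gives the parity condition, since any path in $\bm$ from $v_1$ to $v_2$ has a length of the same parity as $\ell(v_1)-\ell(v_2)=\delay_1-\delay_2=\delay$, so $\mathrm{d}^{\mathrm{gr}}_\bm(v_1,v_2)+\delay$ is even.

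The substantive point is the strict inequality $|\delay|<\mathrm{d}^{\mathrm{gr}}_\bm(v_1,v_2)$ (which in particular forces $v_1\neq v_2$). My approach is to fix a geodesic $\gamma$ from $v_1$ to $v_2$ in $\bm$ and to observe that every edge of $\bm$ lies strictly within one face of $\bu$, because arcs are drawn between white corners incident to a common face and never cross the edges of $\bu$. By the Jordan curve theorem applied to the cycle of $\bu$ (which separates $v_1$ from $v_2$), $\gamma$ must pass through some vertex $w$ lying on the cycle, and since the only vertices of $\bm$ on the cycle are white cycle vertices of $\bu$, the point $w$ carries a label $\ell(w)$ satisfying $\ell(w)\geq\delay_i+1$ for $i=1,2$. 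The key lower bound is then
\[
\mathrm{d}^{\mathrm{gr}}_\bm(u,v_i)\geq \ell(u)-\delay_i
\]
for every white vertex $u$ incident to $f_i$, which is immediate from the fact that labels change by exactly $\pm 1$ along each edge of $\bm$. Applying this to $w$ on both sides and summing gives
\[
\mathrm{d}^{\mathrm{gr}}_\bm(v_1,v_2)=\mathrm{d}^{\mathrm{gr}}_\bm(v_1,w)+\mathrm{d}^{\mathrm{gr}}_\bm(w,v_2)\geq 2\ell(w)-\delay_1-\delay_2\geq 2\max(\delay_1,\delay_2)+2-\delay_1-\delay_2=|\delay|+2,
\]
which is stronger than the required strict inequality.

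The only genuine obstacle is the appeal to planarity for the decomposition of $\gamma$ at a cycle vertex. Once one grants that $\bm$-edges never cross the cycle of $\bu$ (a direct consequence of the construction, in which arcs are drawn inside faces of $\bu$ avoiding its edges), the Jordan curve theorem handles the rest, and the label bound is routine. The remaining verifications -- non-crossing of arcs inside each face, and the single-source BDG style bound -- are local and follow exactly as in the one-source construction of Section~\ref{sec:bdg-construction}.
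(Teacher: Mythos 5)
Your proof is correct, and its skeleton matches the paper's own argument: establish the BDG-type distance identity to $v_i$ inside each face, observe by planarity (Jordan) that any $v_1$--$v_2$ path must pass through a white vertex $w$ on the cycle of $\bu$, and combine. The one place you improve on the paper's phrasing is the strictness $|\delay|<\mathrm{d}^{\mathrm{gr}}_\bm(v_1,v_2)$: the paper simply invokes ``the triangle inequality,'' which at face value only yields $\geq|\delay|$; your explicit calculation $\mathrm{d}^{\mathrm{gr}}_\bm(v_1,v_2)\geq 2\ell(w)-\delay_1-\delay_2\geq|\delay|+2$, using the one-sided label bound and $\ell(w)\geq\max(\delay_1,\delay_2)+1$ since $w$ is incident to both faces, is cleaner and makes the strictness manifest. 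All other verifications (bipartiteness via label parity, parity of $\mathrm{d}^{\mathrm{gr}}_\bm(v_1,v_2)+\delay$) are routine and match the paper's treatment.
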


 This statement will be proved by deriving some important geometric
 properties of the map $ \mathrm{BDG}^{2\bullet}( \bu)$. 
First, the following can be directly deduced from the construction and
adapting classical arguments in the Bouttier--Di Francesco--Guitter construction \cite{BDFG04}. 
\begin{itemize}
\item The map  $\mathrm{BDG}^{2\bullet}( \bu)$  is a plane bipartite
  map, whose faces of degree $2k$ are
  in one-to-one correspondence with the black vertices of $\bu$ of degree~$k$. 
\item Fix $i\in \{1,2\}$, as well as a white corner $c$ incident to $f_i$,
  and denote  the vertex incident to $c$ by $v$. Then the path $\gamma^{(c)}$ of arcs from $v$ to $v_i$ following the
  consecutive 
  successors $c,s(c),s(s(c)),\ldots$ (until eventually reaching $v_{i}$) is  a geodesic path in $ \mathrm{BDG}^{2\bullet}(\bu)$, again called the \textbf{simple geodesic} starting from $c$. In particular, we have:
\begin{equation}\label{dist:v_*}
   \mathrm{d}^{\mathrm{gr}}_\bm (v,v_i)=\ell(v)-\min \limits_{v\in f_i}\ell(v) +1 = \ell(v) - \delay_{i}.
\end{equation}
\end{itemize}
The so-called ``Schaeffer bound'', analog of \eqref{d_n^circ:BDG}, is
an improved version of the last display and states that for every
$i\in \{1,2\}$ and every pair of white vertices $v,v^{\prime}$   of $\mathrm{BDG}^{2\bullet}(\bu)\setminus\{v_i\}$ incident to $f_i$, we have:
\begin{equation}\label{d_n^circ:2}
  |\ell(v)-\ell(v^{\prime})|\leq  \mathrm{d}^{\mathrm{gr}}_\bm (v,v^{\prime})\leq \ell(v)+\ell(v^{\prime})-2\max\left( \min\limits_{[v,v^{\prime}]_{\bu,f_{i}}}\ell;\min\limits_{[v^{\prime},v]_{\bu, f_{i}}}\ell\right) +2~,
\end{equation}
where $[v,v^{\prime}]_{\bu, f_{i}}$ are the white vertices appearing in a minimal interval in the clockwise
order when going from $v$ to $v^{\prime}$ inside the face $f_i$.
Another important consequence of the construction of $\mathrm{BDG}^{2\bullet}(\bu)$
is that a path $\gamma$ in the map $\bm$ between the two distinguished vertices $v_1$ and $v_2$ necessarily passes through a (white) vertex $v$ of the cycle of
$\bu$. If $c_1,c_2$ denote corners incident to $v$ that are also
respectively incident to $f_1$ and $f_2$, then the concatenation of the two simple geodesics $ \gamma^{(c_{1})}$ and $\gamma^{(c_{2})}$ yields a path of length
$2\ell(v)-\delay_1-\delay_2$. In particular, if the path
$\gamma$ as a geodesic, then it is necessary that:
$$\ell(v)=\min\big\{\ell(v'):v' \text{ white vertex belonging to the
  cycle of }\bu\big\}\, .$$
And in that case indeed,  it is easy to check that the concatenation of $ \gamma^{(c_{1})}$ and $\gamma^{(c_{2})}$ produces a geodesic between $v_{1},v_{2}$ in the map $ \mathbf{m}$. Recalling \eqref{eq:Di} and \eqref{dist:v_*}, we have in particular  for $v$ minimizing the label on the cycle: 
 \begin{eqnarray} \label{eq:distancesdelays}
  \mathrm{d}^{ \mathrm{gr}}_{ \bm}(v,v_{1}) = \frac{1}{2} \left(   \mathrm{d}^{ \mathrm{gr}}_{ \bm}(v_{1},v_{2}) - \delay\right) \quad \mbox{ and }\quad   \mathrm{d}^{ \mathrm{gr}}_{ \bm}(v,v_{2}) = \frac{1}{2} \left(   \mathrm{d}^{ \mathrm{gr}}_{ \bm}(v_{1},v_{2}) + \delay\right).  \end{eqnarray}  
 
Note that in particular, the parity of
$\delay=\delay_1-\delay_2$ is the same as that of $
\mathrm{d}^{\mathrm{gr}}_\bm(v_1,v_2)$, while 
the triangle inequality gives
$\mathrm{d}^{\mathrm{gr}}_\bm
(v_1,v_2)>|\delay_1-\delay_2|=|\delay|$, justifying
that $(\bm,(v_1,v_2),\delay)$ is indeed an element of
$\mathcal{M}^{2\bullet}$, which proves Lemma
\ref{sec:from-unicycl-map}. 

The above discussion on geodesic paths will be crucial to our
purposes. 
By establishing that the set of ``minimal'' vertices $v$ along the
cycle of $\bu$ is typically very
small and localized, this will allow us to prove, in Section \ref{sec:uni:geo},
that geodesics are typically unique in the scaling limit. This line of
reasoning is parallel to \cite[Section 7]{Mie09}, but with some
important differences due to the very different scaling limits we are
working with. This property will also allow us, in Section
\ref{sub:goodpointestimate}, to study the local structure of geodesics in
$( \mathcal{S},D)$ and prove Proposition \ref{main:techni}.  

\subsubsection{Inverse construction}\label{sec:inverse-construction}

We now argue that the mapping $\rm{BDG}^{2\bullet}$ is indeed a bijection
from $\mathcal{U}\times \{-1,1\}$ onto $\mathcal{M}^{2\bullet}$.  We will skip some details, as the discussion closely mirrors that in \cite[Section 2.4]{Mie09}, requiring only minor adaptations to fit the current extended context. Additionally, we will not employ this inverse construction in the sequel.
Consider $(\bm,(v_1,v_2),\delay)\in
\mathcal{M}^{2\bullet}$ and let $\delay_1,\delay_2$ be integers such that
$\delay_1-\delay_2=\delay$. We introduce the labeling function defined
on vertices $v\in V(\bm)$ by
$$\widetilde{\ell}(v):=\min\big( \mathrm{d}^{ \mathrm{gr}}_{\bm}(v,v_1)+\delay_1;\mathrm{d}^{ \mathrm{gr}}_{\bm}(v,v_2)+\delay_2\big)\,
.$$
From the bipartite nature of the map $\bm$, and the fact that $\delay$
has the same parity as $\mathrm{d}^{ \mathrm{gr}}_{\bm}(v_1,v_2)$, one can check that
$|\widetilde{\ell}(v)-\widetilde{\ell}(v')|=1$ for any two adjacent vertices $v,v'$ of
$\bm$. For this reason, we may canonically orient the edges $e$ of
$\bm$ in such a way that $\widetilde{\ell}(e^+)=\widetilde{\ell}(e^-)-1$. Any maximal path of edges
that is oriented in this way will necessarily end in a vertex that is
a local minimum of $\widetilde{\ell}$, in the sense that none of its neighbors has
smaller label. It is easy to see that only $v_1$ and $v_2$ are local
minima of $\widetilde{\ell}$. 

We now construct an element of $\mathcal{U}$ in the following
way. We first add a ``black'', dual vertex $v_f$  inside each face $f$
of $\bm$. Then, we let $(v^f_i : i\in \Z)$ be the sequence of vertices of
$\bm$ appearing in clockwise contour order around the face $f$,
starting from an arbitrary vertex, and extended
periodically. Note that $(\widetilde{\ell}(v^f_i) :  i\in \Z)$  is then an infinite,
$\deg(f)$-periodic path with $\pm1$ steps. We then draw $\deg(f)/2$
arcs $\gamma_i^f$ (that are non-intersecting and disjoint from the edges of $\bm$) from $v_f$ to each of the vertices $v^f_i$ such
that $\widetilde{\ell}(v^f_i)=\widetilde{\ell}(v^f_{i+1})+1$. 

Finally, we let $e_0$ be the canonical orientation of the
root edge of $\bm$, and we let $\epsilon=+1$ if this canonical
orientation coincides with the original orientation of the root, and
$\epsilon=-1$ otherwise. We define the label function
$\ell(v)=\widetilde{\ell}(v)-\widetilde{\ell}(e_0^-)$. 

\begin{lem}
  \label{sec:biject-with-delays-1}
  The graph with ``white'' vertices  $V(\bm)\setminus
  \{v_1,v_2\}$, and ``black'' vertices $\{v^f_i:  (f,i)\in  \mathrm{Faces}(\bm)\times \mathbb{Z}\}$, whose
set of edges is the set of arcs  $A=\{\gamma_{i}^f\}$  as above, and
with label function inherited from $\ell$, is a unicyclomobile $\bu$,
such that the removed vertices  $v_1,v_2$ belong to distinct faces of
$\bu$, thereby labeled $f_1$ and $f_2$. It is rooted at the corner
inherited from the root corner incident to the left of the origin of $e_0$. 
\end{lem}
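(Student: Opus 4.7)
The proof follows the standard template of Schaeffer-type bijections, with the only genuinely new ingredient being the unicyclic structure of $\bu$, which replaces the tree structure of the one-pointed case.

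I would begin with elementary verifications about $\widetilde{\ell}$: from the bipartite nature of $\bm$ and the parity condition imposed on $\delay$, one checks by case analysis that $|\widetilde{\ell}(v)-\widetilde{\ell}(v')|=1$ for any two adjacent vertices, and that the set of local minima of $\widetilde{\ell}$ is exactly $\{v_1,v_2\}$ (by following shortest paths to $v_1$ or $v_2$ one exhibits, at every other vertex, a neighbor with strictly smaller label). This makes the canonical orientation of edges well-defined and implies that no arc $\gamma^f_i$ in the inverse construction ever has $v_1$ or $v_2$ as an endpoint. Around each face $f$ of $\bm$, the sequence $(\widetilde{\ell}(v_i^f):i\in\Z)$ is a periodic $\pm 1$ path of period $\deg(f)$, so there are exactly $\deg(f)/2$ descending steps per period; the $\deg(f)/2$ arcs $\gamma_i^f$ can be drawn inside $f$ in a non-crossing manner and disjointly from the edges of $\bm$ by the same argument as in the single-source BDG construction. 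The resulting graph $\bu$ is thus embedded in the sphere, and its edges always connect a black vertex $v_f$ to a white vertex in $V(\bm)\setminus\{v_1,v_2\}$, so the bicoloring of the statement is automatic.

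The main step is to prove that $\bu$ is connected and has exactly two faces. For connectedness, I would use a gradient-descent argument: starting from any white vertex $w$ one follows the arc emanating from the corner of $w$ of smallest incident label, reaches a black vertex $v_f$, and can continue descending through the white vertices joined to $v_f$ by an arc; iterating, every vertex of $\bu$ is connected to the white vertices of $\bm$ that are adjacent to $v_1$ or to $v_2$, and these two clusters are themselves linked through the arcs drawn in the faces incident to the canonical ``geodesic boundary'' of $\bm$ that separates $v_1$ from $v_2$. Once connectedness is established, the counts $\#V(\bu)=\#V(\bm)-2+\#F(\bm)$ and $\#E(\bu)=\sum_{f}\deg(f)/2=\#E(\bm)$, combined with Euler's formula applied to the connected planar map $\bm$, yield $\#F(\bu)=2$. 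That $v_1$ and $v_2$ lie in distinct faces of $\bu$ then follows from a planarity argument: the cycle of $\bu$ (which exists because the core of any planar map with exactly two faces is a cycle, as explained in the excerpt) separates the sphere into two regions, and the absence of any arc incident to $v_1$ or $v_2$ forces each of these vertices to lie in a single one of the two regions; a local analysis at a vertex adjacent to $v_1$ and to $v_2$ (one can always find such configurations by exploring the faces of $\bm$ around each source) shows that the two sources cannot lie in the same region.

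It remains to verify that $(\bu,\ell)$ is well-labeled, i.e.\ that for any black vertex $v_f$ and any two consecutive white neighbors $u_1,u_2$ in the clockwise contour of $\bu$ around $v_f$, one has $\ell(u_2)\geq \ell(u_1)-1$. This is a direct consequence of the definition of the arcs $\gamma_i^f$: between the endpoints $u_1$ and $u_2$ (in the clockwise contour of $f$), the label sequence in $\widetilde{\ell}$ starts with a descent of size $1$ from $u_1$, may then fluctuate, but cannot include any further descending step (otherwise an extra arc would have been drawn in between), so $\widetilde{\ell}(u_2)\geq \widetilde{\ell}(u_1)-1$, which is exactly the desired bound. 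The normalization $\ell=\widetilde{\ell}-\widetilde{\ell}(e_0^-)$ ensures that the root vertex carries label $0$, and the definition of the root corner and of the sign $\epsilon$ is immediately consistent with the direct construction of Section \ref{sec:biject-with-delays}. The main obstacle of the plan is the two-face property together with the separation of $v_1$ and $v_2$: all other verifications mirror the one-source BDG case exactly.
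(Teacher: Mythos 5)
The overall structure you propose — check labels, establish bipartition, show two faces, verify well‑labeling — is sound, and your instinct that the two‑face property is the crux is correct. But your route to it differs from the paper's, and it contains a genuine gap.

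You argue ``connectedness $+$ Euler $\Rightarrow$ two faces,'' whereas the paper bypasses connectedness entirely by orienting the \emph{dual} edges of the augmented map so that the smaller‑labelled endpoint of each primal edge lies to the right, then observing that any oriented dual path extends uniquely forward and, since the right‑hand labels are non‑increasing, must eventually cycle around one of the only two local minima of $\widetilde{\ell}$, namely $v_1$ and $v_2$. This makes the oriented dual edges a cycle‑rooted spanning forest with exactly two components, whence the complementary arcs form a connected map with two faces, each containing one of $v_1,v_2$. Connectedness, the face count, and the separation of the sources all fall out in a single stroke.

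Your replacement — gradient descent to the neighbours of $v_1$ and $v_2$, then ``these two clusters are themselves linked through the arcs drawn in the faces incident to the canonical geodesic boundary'' — is the step that does not hold up. There is no canonically defined ``geodesic boundary,'' and nothing in the construction a priori forces a face $f$ of $\bm$ whose arcs simultaneously touch vertices from both descent clusters; the set of vertices with $\mathrm{d}(v,v_1)+\delay_1 = \mathrm{d}(v,v_2)+\delay_2$ can be non‑empty (the parity condition allows ties), so the bipartition of $V(\bm)\setminus\{v_1,v_2\}$ into two clean clusters is not even automatic, let alone that some shared face joins them. Without this, Euler's formula cannot be applied, because a disconnected $\bu$ with $c$ components would have $c+1$ faces, and your equalities $\#V(\bu)-\#E(\bu)=0$ only pin the face count once $c=1$ is secured. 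I would either prove connectedness by an explicit descent/parent argument (each white vertex except those adjacent to $v_1,v_2$ has a well‑defined arc decreasing the label, \emph{and} all terminal vertices around a common face are joined through $v_f$, iterated down to level $\min\widetilde{\ell}+1$), or, more economically, adopt the paper's dual‑orientation argument, which avoids the issue. Your remaining verifications (bipartition, well‑labeling via the ``no extra descent between consecutive arcs'' observation, root and sign conventions) match the paper and are fine.
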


The proof of this lemma is exactly parallel to Lemma 1 in
\cite{Mie09}. By construction, the graph $\bu$ comes with a bipartite
coloration of its vertices and is well-labeled by $\ell$. To prove
that $\bu$ is indeed a map with two faces, 
we consider the augmented map obtained from $\bm$ by
taking all (white) vertices of $\bm$, all (black) dual vertices, and
all edges and arcs. In this new map, we consider the edges $e^*$ dual to the
original edges $e$ of $\bm$, oriented in such a way that the vertex
incident to $e$ of
smaller label lies systematically to the right of the edge $e^*$. Then
it is easy to check that any oriented dual edge can be uniquely
extended into an infinite oriented dual path, and this path has the
property that the sequence
of vertices
lying to the right of the successive primal edges crossed by the path
have non-increasing label. This implies that the path eventually
cycles around a vertex of locally minimal label, and the only two such
vertices are $v_1$ and $v_2$. Therefore, the oriented dual edges
form a spanning graph which is a cycle-rooted forest with two
components, and whose unique cycles circle around the vertices
$v_1$ and $v_2$. The remaining edges, which are the arcs
$A=\{\gamma_{i}^f\}$, therefore form
a map with two faces.

\subsection{Combinatorial decompositions}\label{sec:comb-decomp} 

Let us discuss some decompositions of unicyclomobiles into simpler
combinatorial objects. This will eventually allow
us to study the random unicyclomobiles associated with random Boltzmann
maps via the BDG$^{2\bullet}$ construction. 

Let $\boldsymbol{\mathcal{T}}=(\mathcal{T},\ell)$ and $\boldsymbol{\mathcal{T}}^\prime=(\mathcal{T'},\ell')$
be two well-labeled mobiles, and, provided the sets
$\widehat{V}_\circ(\mathcal{T})$ and
$\widehat{V}_\circ(\mathcal{T}^\prime)$ of
 white leaves of $\mathcal{T},\mathcal{T'}$ are not empty, we let
$\hat{v}_\circ,\hat{v}'_\circ$ be two distinguished elements of these
sets respectively. 
We define a natural
concatenation operation by changing the labeling function on
$\mathcal{T}'$ to $\ell(\hat{v}_\circ)+\ell'$, and then identifying the leaf
$\hat{v}_\circ$ of $\mathcal{T}$ with the root vertex of
$\mathcal{T}'$, in such a way that the corner incident to
$\hat{v}_\circ$ is merged with the root corner of $\mathcal{T}'$. The result
is a new well-labeled mobile with a distinguished white leaf
$\hat{v}'_\circ$. The neutral element of this concatenation operation
is the trivial vertex-mobile, and the non-trivial irreducible
elements are well-labeled mobiles with a marked
white leaf at generation $2$, which will be called {\bf mobile buckle} (the name will become clear after the forthcoming Proposition \ref{sec:comb-decomp-1}). If $ (\boldsymbol{\mathcal{P}},\hat{v}'_\circ)$ is  a mobile buckle, we denote by $C_\circ(\boldsymbol{\mathcal{P}},\hat{v}'_\circ)$ the set of white corners incident to a white vertex different from $\hat{v}'_\circ$ and where the root white corner has been duplicated.

Mobile buckles can be described in the following convenient way, see Figure \ref{fig:mobile_buckle}. Start
with a  mobile ``star'', that is, a well-labeled mobile buckle
$(\boldsymbol{\mathcal{S}},\hat{v}'_\circ)$ with exactly one black vertex
$\hat{v}_\bullet$, which necessarily has to be the
parent vertex of $\hat{v}'_\circ$. The latter is the parent of the
white vertices $v_1,v_2,\ldots,v_k$ for some $k\geq 1$, arranged in
clockwise order, whose respective labels are denoted by
$\ell_1,\ldots,\ell_k$, and we let $r\in \{1,\ldots,k\}$ be the index such
that $v_r=\hat{v}'_\circ$. By convention we also let $\ell_0=\ell_{k+1}=0$. 

For every $i\in \{0,1,\ldots,k,k+1\}\setminus \{r\}$, we let 
$\boldsymbol{\mathcal{T}}^{(i)}$ be a well-labeled mobile,
whose label function has been shifted by the addition of $\ell_i$. For $1\leq i\leq k$, we graft the root of
$\boldsymbol{\mathcal{T}}^{(i)}$ (at the level of its 
root corner) to the vertex $v_i$, and we identify
the root vertices of 
$\boldsymbol{\mathcal{T}}^{(0)},\boldsymbol{\mathcal{T}}^{(k+1)}$ with
the root vertex of $\boldsymbol{\mathcal{S}}$, by grafting them to 
the two sides of the root of $\boldsymbol{\mathcal{S}}$  separated by the
edge from the root to $\hat{v}_\bullet$. See Figure
\ref{fig:mobile_buckle} for an illustration. The result is clearly a well-labeled 
mobile buckle. 

\begin{figure}
  \centering
  \includegraphics{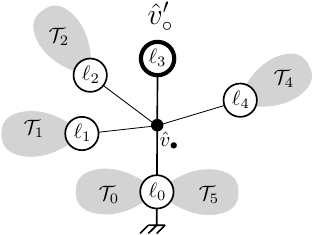}
  \caption{Decomposition of a mobile buckle into a mobile star and
    sub-mobiles, with $k=4$ and $r=3$ in the notation of the text. }
  \label{fig:mobile_buckle}
\end{figure}

\begin{prop}
  \label{sec:boltzm-meas-reduct-1}
  The previous construction is a bijection onto the set of mobile
  buckles $(\boldsymbol{\mathcal{T}},\hat{v}'_\circ)$, where
  $\hat{v}'_\circ$ has $r-1$ older siblings and $k-r$ younger
  siblings. 
\end{prop}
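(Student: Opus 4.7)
My plan is to argue by constructing an explicit inverse map and checking that both directions preserve the well-labeling condition. Given a mobile buckle $(\boldsymbol{\mathcal{T}},\hat{v}'_\circ)$ with $\hat{v}'_\circ$ having $r-1$ older and $k-r$ younger siblings, the first step is to extract canonically the underlying mobile star $(\boldsymbol{\mathcal{S}},\hat{v}'_\circ)$: since $\hat{v}'_\circ$ lies at generation $2$, its parent $\hat{v}_\bullet$ is the unique black vertex incident to $\hat{v}'_\circ$, and the star consists of the root, $\hat{v}_\bullet$, and the $k$ white children $v_1,\ldots,v_k$ of $\hat{v}_\bullet$ arranged in clockwise order, together with their labels $\ell_1,\ldots,\ell_k$ inherited from $\ell$. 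The index of $\hat{v}'_\circ$ among the $v_i$'s is then automatically $r$, as dictated by the sibling counts.

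The sub-mobiles $\boldsymbol{\mathcal{T}}^{(i)}$ for $i\in\{1,\ldots,k\}\setminus\{r\}$ are obtained by taking the subtree of $\boldsymbol{\mathcal{T}}$ rooted at $v_i$ (viewed as a new white root with root corner inherited from the side of $v_i$ at which it was grafted), and shifting the label function by $-\ell_i$ so that the resulting root label is $0$. Similarly, $\boldsymbol{\mathcal{T}}^{(0)}$ and $\boldsymbol{\mathcal{T}}^{(k+1)}$ are the two well-labeled sub-mobiles obtained by cutting at the root vertex of $\boldsymbol{\mathcal{T}}$ on either side of the edge to $\hat{v}_\bullet$ (no label shift is needed here since the root label is already $0$). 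One readily checks that the inversion is unique: any preimage under the construction must produce these specific star and sub-mobiles.

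The two things to verify are that (a) the construction indeed lands in the claimed target set, and (b) the reverse procedure produces genuine well-labeled mobiles. For (a), the fact that $\hat{v}'_\circ$ has $r-1$ older and $k-r$ younger siblings follows immediately from the definition of the star and the placement of the sub-mobiles. For (b), well-labeledness at black vertices is a purely local condition comparing labels of consecutive white children; after the label shifts $+\ell_i$, the shifted labels agree with those in the bigger mobile around each $v_i$, so the well-labeling condition at every black vertex of $\boldsymbol{\mathcal{T}}$ reduces to the well-labeling conditions already assumed on $\boldsymbol{\mathcal{S}}$ and each $\boldsymbol{\mathcal{T}}^{(i)}$.

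The main (mild) subtlety — really the only point where some care is needed — is the treatment of the two sub-mobiles $\boldsymbol{\mathcal{T}}^{(0)}$ and $\boldsymbol{\mathcal{T}}^{(k+1)}$ that are grafted at the root vertex of $\boldsymbol{\mathcal{S}}$, because one must specify unambiguously on which ``side'' of the root edge toward $\hat{v}_\bullet$ each sits, and keep track of how their respective root corners merge with the root corner of $\boldsymbol{\mathcal{S}}$. This is handled by the convention that $\boldsymbol{\mathcal{T}}^{(0)}$ is the sub-mobile lying in the counterclockwise sector from the root corner to the edge toward $\hat{v}_\bullet$, and $\boldsymbol{\mathcal{T}}^{(k+1)}$ lies in the complementary sector, with the root corner of $\boldsymbol{\mathcal{T}}$ inherited from that of $\boldsymbol{\mathcal{T}}^{(0)}$ (or of $\boldsymbol{\mathcal{S}}$ if $\boldsymbol{\mathcal{T}}^{(0)}$ is trivial). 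Once this convention is fixed, both directions of the construction become deterministic and manifestly inverse to one another, giving the desired bijection.
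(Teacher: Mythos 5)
Your proof is correct, and it fills in a verification that the paper leaves implicit: the text preceding the proposition describes the construction and asserts the bijection without spelling out the inverse map, referring instead to Figure \ref{fig:mobile_buckle}. Your explicit extraction of the star (from the root, $\hat{v}_\bullet$, and its children with their labels) and of the sub-mobiles $\boldsymbol{\mathcal{T}}^{(i)}$ (by restriction to the relevant subtrees followed by the opposite label shift) is exactly the natural inverse, and you are right to flag the only place where a convention is needed, namely how the root corner of $\boldsymbol{\mathcal{T}}$ is split between $\boldsymbol{\mathcal{T}}^{(0)}$ and $\boldsymbol{\mathcal{T}}^{(k+1)}$ on either side of the edge to $\hat{v}_\bullet$. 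Two small remarks: (a) you could state more crisply that the well-labeling constraint is a local condition around each black vertex and is insensitive to a global additive shift of labels in a subtree, which is the precise reason the constraints decouple as you claim; (b) the observation that $\ell_0=\ell_{k+1}=0$ is what guarantees that $\boldsymbol{\mathcal{T}}^{(0)}$ and $\boldsymbol{\mathcal{T}}^{(k+1)}$ need no label shift and that the root of the resulting mobile retains label $0$, which you use implicitly when saying "no label shift is needed here since the root label is already $0$." With those minor clarifications the argument is complete.
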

Next, we observe that unicyclomobiles
$\bu\in\mathcal{U}$ are in correspondence with pairs
$((\boldsymbol{\mathcal{T}},\hat{v}_\circ),(\boldsymbol{\mathcal{P}},\hat{v}'_\circ,c))$, where:
\begin{itemize}
\item $(\boldsymbol{\mathcal{T}},\hat{v}_\circ)$ is well-labeled mobile with a distinguished white leaf
  $\hat{v}_\circ$; 
  \item $(\boldsymbol{\mathcal{P}},\hat{v}'_\circ)$ is a well-labeled mobile buckle,
    and $c\in C_\circ(\mathcal{P},\hat{v}'_\circ)$ is a
    distinguished white corner distinct from the one incident to $\hat{v}_{\circ}'$ (and where the root corner has been duplicated);
    \item we have
      $\ell_{\mathcal{T}}(\hat{v}_\circ)=-\ell_{\mathcal{P}}(\hat{v}'_\circ)$. 
  \end{itemize}
The correspondence is illustrated in Figure
\ref{fig:unicyclomobile_decomp}. We concatenate
$\boldsymbol{\mathcal{T}}$ with $\boldsymbol{\mathcal{P}}$ as in the
beginning of the section, resulting in a non-trivial\footnote{Note
  that, contrary to $\mathcal{P}$, the mobile $\mathcal{T}$ may well be the
  trivial vertex-mobile.} well-labeled mobile with a distinguished
white leaf $\hat{v}'_\circ$, whose label is
$\ell_{\mathcal{T}}(\hat{v}_\circ)+\ell_{\mathcal{P}}(\hat{v}'_\circ)=0$. This
allows us to merge the corner incident to this leaf with the root corner
of $\mathcal{T}$, resulting in a unicyclomobile
$\bu=\Phi((\boldsymbol{\mathcal{T}},\hat{v}_\circ),(\boldsymbol{\mathcal{P}},\hat{v}'_\circ,c))\in  \mathcal{U}$, rooted at the white
corner $c$. By convention, we let
$f_1$ be the face that is incident to the left side of $\mathcal{P}$,
and $f_2$ be the other face. Finally, we shift all the labels by
subtracting the label of the root corner $c$ in $\mathcal{P}$.  
This construction is easily inverted, as indicated on Figure
\ref{fig:unicyclomobile_decomp}. The only delicate
point is to decide where to cut the cycle of a unicyclomobile $\bu$ to
recover the pair
$((\boldsymbol{\mathcal{T}},\hat{v}_\circ),(\boldsymbol{\mathcal{P}},\hat{v}'_\circ,c))$:
to this end, we start from the distinguished corner $c$. If it is
incident to $f_1$ (resp.\ $f_2$), we run in counterclockwise (resp.\ clockwise) contour order until we first
meet a black vertex $\hat{v}'_\bullet$ on the cycle of $\bu$, and then stop and cut the
cycle at the first
encounter of a white vertex $\hat{v}'_\circ$ on the cycle
afterwards. This is materialized by the two blue oriented curves on
the picture. The vertex $\hat{v}_\circ$ is then found by
backtraking two edges along the cycle. Summarizing, we obtain the
following statement. 

\begin{prop}[Belt-Buckle decomposition]
  \label{sec:comb-decomp-1} The mapping $\Phi$ is a bijection between
${\mathcal{U}}$ and the set of pairs
$((\boldsymbol{\mathcal{T}},\hat{v}_\circ),(\boldsymbol{\mathcal{P}},\hat{v}'_\circ,c))$
described above. We will call them respectively the {\bf belt} and the
{\bf buckle} associated with the corresponding unicyclomobile. 
\end{prop}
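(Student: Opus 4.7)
The plan is to verify that $\Phi$ is well-defined and then construct its inverse explicitly, checking that both compositions are the identity. First I would check that $\bu = \Phi((\boldsymbol{\mathcal{T}},\hat{v}_\circ),(\boldsymbol{\mathcal{P}},\hat{v}'_\circ,c))$ is indeed a unicyclomobile. The mobile buckle $\boldsymbol{\mathcal{P}}$, by its definition as a well-labeled mobile with a distinguished white leaf $\hat{v}'_\circ$ at generation $2$, has the property that identifying the root corner of $\mathcal{P}$ with the corner incident to $\hat{v}'_\circ$ creates exactly one cycle (of even length, passing through the root, the parent black vertex $\hat{v}_\bullet$, the distinguished leaf $\hat{v}'_\circ$, and back). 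Attaching $\boldsymbol{\mathcal{T}}$ at $\hat{v}_\circ$ (which is a leaf, hence grafting a tree does not create any additional cycle) preserves this property, so the resulting map has exactly one cycle, two faces, and the bipartite black/white structure is preserved. The label-matching condition $\ell_{\mathcal{T}}(\hat{v}_\circ) = -\ell_{\mathcal{P}}(\hat{v}'_\circ)$ ensures that the label function is consistent at the gluing point, and shifting by the label at the root corner $c$ ensures that the root of $\bu$ has label $0$. A quick check that the well-labeling property on $\bu$ holds locally at the glued vertex needs to be performed, using the fact that around $\hat{v}'_\circ$ (equivalently $\hat{v}_\circ$) the label constraints inherited from $\mathcal{T}$ and $\mathcal{P}$ are compatible.

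Next I would construct the inverse map $\Psi$ explicitly. Given $\bu \in \mathcal{U}$ rooted at a white corner $c$, I would first determine which face of $\bu$ the root corner is incident to (it is incident to a unique face, say $f_i$ for some $i \in\{1,2\}$). Following the cutting procedure described informally in the text: starting from the root corner, traverse the boundary of $f_i$ in counterclockwise order if $i=1$ and clockwise if $i=2$, until encountering the first black vertex $\hat{v}'_\bullet$ belonging to the cycle of $\bu$, then continuing until the first white vertex $\hat{v}'_\circ$ on the cycle. Backtracking two edges along the cycle from $\hat{v}'_\circ$ identifies the vertex $\hat{v}_\circ$. Cutting the cycle at the appropriate edge splits $\bu$ into two pieces: the piece containing the root, which after shifting labels becomes the buckle $(\boldsymbol{\mathcal{P}},\hat{v}'_\circ,c)$, and the piece detached at $\hat{v}_\circ$, which becomes the belt $(\boldsymbol{\mathcal{T}},\hat{v}_\circ)$ (where $\hat{v}_\circ$ becomes a white leaf after cutting).

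I would then verify that $\Psi \circ \Phi = \mathrm{id}$ and $\Phi \circ \Psi = \mathrm{id}$. For $\Psi \circ \Phi$, the key observation is that the cutting procedure in $\Psi$ precisely undoes the gluing of the root of $\mathcal{T}$ to $\hat{v}'_\circ$ performed in $\Phi$: the traversal around the face starting from $c$ in the correct direction first hits the parent black vertex of $\hat{v}'_\circ$ (which is $\hat{v}_\bullet$ from the star decomposition of the buckle), and the next white cycle vertex encountered is $\hat{v}'_\circ$ itself by construction. For $\Phi \circ \Psi = \mathrm{id}$, one checks that cutting the cycle in $\bu$ produces a mobile buckle (i.e., the detached subgraph contains exactly one black vertex on what was the cycle, and the distinguished leaf is at generation $2$), and that reglueing reconstitutes the original cycle.

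The main obstacle will be managing the combinatorial bookkeeping around the cutting point, particularly verifying that the orientation convention (counterclockwise vs.\ clockwise depending on whether $c$ is incident to $f_1$ or $f_2$) correctly distinguishes the two faces and that the labels after the shifting by the root label are consistent on both pieces. The other subtle point is ensuring that the distinguished corner $c \in C_\circ(\mathcal{P},\hat{v}'_\circ)$ is unambiguously identified: since we duplicated the root corner in the definition of $C_\circ(\mathcal{P},\hat{v}'_\circ)$, the corner $c$ carries enough information to recover on which side of the cycle (i.e., in which face of $\bu$) the root is located. Once these points are carefully addressed, the bijection follows from the fact that both operations are deterministic local surgeries that undo each other.
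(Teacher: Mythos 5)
Your overall plan---verify that $\Phi$ is well-defined, construct the inverse $\Psi$, and check both compositions---coincides with what the paper does, so the strategy is sound. However, your description of $\Phi$ contains a concrete error that would derail the verification. You describe $\Phi$ as first closing a loop entirely inside $\mathcal{P}$, by identifying the root of $\mathcal{P}$ with $\hat{v}'_\circ$, and \emph{then} grafting $\mathcal{T}$ afterwards. In that picture the cycle of $\Phi((\boldsymbol{\mathcal{T}},\hat{v}_\circ),(\boldsymbol{\mathcal{P}},\hat{v}'_\circ,c))$ always has length $2$, which is incompatible with $\Phi$ being surjective onto unicyclomobiles of arbitrary even cycle length. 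The construction runs the other way: first concatenate $\mathcal{T}$ and $\mathcal{P}$ by identifying the marked leaf $\hat{v}_\circ$ of $\mathcal{T}$ with the root of $\mathcal{P}$, obtaining a mobile rooted at the root of $\mathcal{T}$ whose marked leaf $\hat{v}'_\circ$ sits at generation $2h+2$ (where $2h$ is the generation of $\hat{v}_\circ$ in $\mathcal{T}$); \emph{then} close the loop by merging the corner at $\hat{v}'_\circ$ with the root corner of $\mathcal{T}$. Consequently the cycle passes through the entire ancestral path in $\mathcal{T}$ from its root to $\hat{v}_\circ$, and then through the single black vertex $\hat{v}_\bullet$ of the buckle star; its length is $2h+2$. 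The buckle contributes exactly one black vertex to the cycle, and that is precisely what makes the inverse well-defined: from the root corner $c$ one locates this distinguished black vertex $\hat{v}'_\bullet$ by running around the face and finding the first black vertex on the cycle, and then $\hat{v}'_\circ$, $\hat{v}_\circ$ are the next white cycle vertex and the white cycle vertex two edges back, respectively.

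A smaller imprecision: you describe $\Psi$ as ``cutting the cycle at the appropriate edge.'' Since $\Phi$ acts by merging corners (identifying vertices), its inverse must split the merged vertex at $\hat{v}'_\circ = (\text{root of }\mathcal{T})$ to turn $\bu$ back into a tree, and then split the concatenation vertex $\hat{v}_\circ = (\text{root of }\mathcal{P})$ to separate $\mathcal{T}$ from $\mathcal{P}$; no edge is deleted. With these two corrections the rest of your verification goes through as planned.
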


    \begin{figure}
      \centering
      \includegraphics[scale=.8]{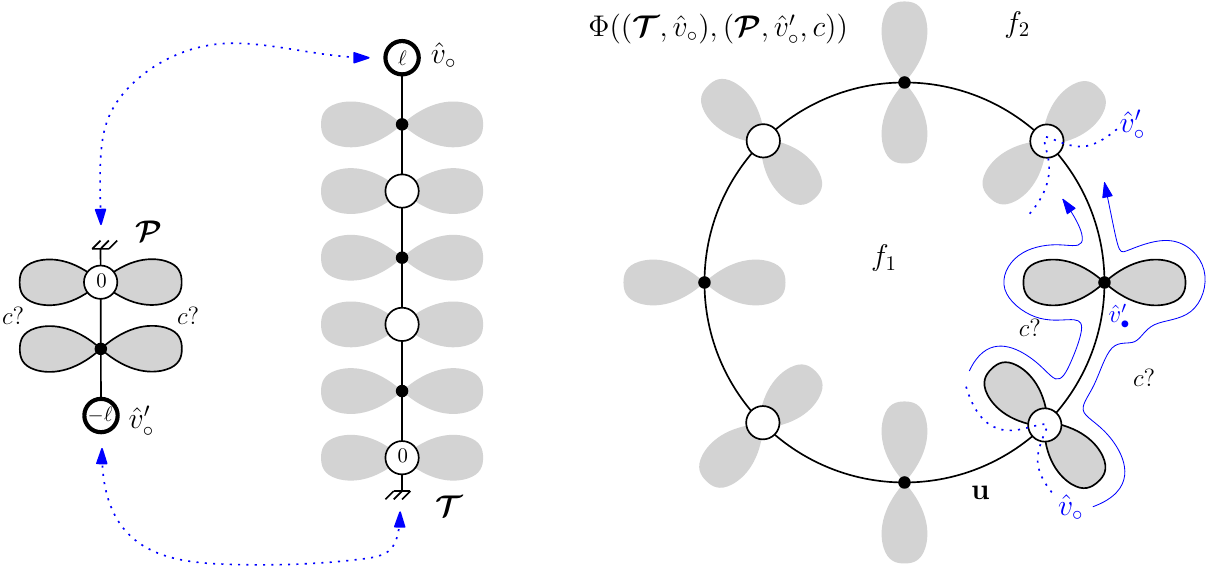}
      \caption{An element $\bu\in\mathcal{U}$ is described by a pair
        $(\boldsymbol{\mathcal{T}},\hat{v}_\circ),(\boldsymbol{\mathcal{P}},\hat{v}'_\circ,c)$, where the
        first element (the belt) is a mobile with a marked leaf, and
        the second (the buckle)
        is a mobile buckle with a marked white corner $c$. The grey
        blobs indicate sub-mobiles, and 
        circled grey blobs indicate the places where the distinguished
        white corner can be located. The oriented curves and dashed
        lines indicate how one should cut and split $\bu$ to recover
        the mobiles. }      
\label{fig:unicyclomobile_decomp}
    \end{figure}

\subsection{Boltzmann measures}\label{sec:boltzmann-measures}

Recall \eqref{eq:defface}. Let us now consider  the natural sigma-finite measure on  $\mathcal{M}^{2\bullet}$  defined by:
$$w_\bq^{2\bullet}\big(\bm,(v_1,v_2),\delay\big)=w_\bq(\bm)\, ,\qquad
(\bm,(v_1,v_2),\delay)\in \mathcal{M}^{2\bullet}\, ,$$
which we may formally rewrite as 
\begin{equation}
  \label{eq:w2bullet}
  w_{\bq}^{2\bullet}\big(\d(\bm,(v_1,v_2),\delay)\big):=
\#V(\bm)^2\, w_{\bq}(\d \bm) \mathrm{vol}_{\bm}(\d v_1)
\mathrm{vol}_{\bm}( \d v_2 )~  \#_{\Edelay_{\bm, v_1,v_2}} (\d
\delay)\, .
\end{equation}
Here, $\#_{\Edelay_{\bm, v_1,v_2}} (\d \delay)$ stands for  the counting measure on the set $\Edelay_{\mathbf{m},v_1,v_2}$ of integers $\delay$ such that   $|\delay|<\mathrm{d}^{ \mathrm{gr}}_{\bm}(v_1,v_2)$ and
    $\mathrm{d}^{ \mathrm{gr}}_{\bm}(v_1,v_2)+\delay$ is an even number. Simply, this set
    has cardinality   \begin{eqnarray} \label{eq:cardinalityD}\# \Edelay_{\mathbf{m},v_1,v_2}=(\mathrm{d}^{ \mathrm{gr}}_{\bm}(v_1,v_2)-1)_+,  \end{eqnarray}
and in particular $\# \Edelay_{\mathbf{m},v_1,v_2}\leq \# V(\bm)$. Consequently,  for every $n\geq 1$, the event $\{\#V(\bm)=n\}$ has finite measure under $w_{\bq}^{2\bullet}$. 
 Notice that the pushforward of $w_\bq^{2\bullet}$ by the bijection $\rm{BDG}^{2\bullet}:\mathcal{M}^{2\bullet}\to
\mathcal{U}\times \{-1,1\}$ described in the preceding section, and
after forgetting the sign variable $\epsilon\in \{-1,1\}$, is given by:
  \begin{eqnarray} \label{eq:defwtilde2pt}\widetilde{w}_\bq^{2\bullet}(\bu)=2\prod_{v\in
  V_\bullet(\bu)}q_{\deg(v)}\, .  \end{eqnarray}
In particular, if $(\mathfrak{M}_{n}, v_1^{n},v_2^n,\delay_n)$
has  distribution $w_{\bq}^{2\bullet}(\d(\bm,(v_1,v_2),\delay)| \#V(\bm)=n)$, then
the associated unicyclomobile $\bu_n$ has distribution
$\widetilde{w}^{2\bullet}_\bq(\d \bu\, |\, \#V_\circ(\bu)=n-2)$. 
Our primary aim now is to investigate the behavior of $\bu_n$ as $n\to
\infty$.  This analysis will enable us to deduce certain geometric
properties of $(\mathfrak{M}_n,v_1^n,v_2^n,\delay_n)$ and deliver the proofs of the results announced  in Section \ref{sec:D=D*}. 
To implement this program, we will describe the unicyclomobile $\bu$ under the
sigma-finite ``law'' $\widetilde{w}^{2\bullet}_\bq$ by using simpler
probabilistic objects, with the help of the decompositions of Section \ref{sec:comb-decomp}. \medskip

Recall the notation $\mu_\circ,\mu_\bullet$ of \eqref{eq:mucirc_mubullet}, and recall that 
$\rm{GW}_\bq(\d \boldsymbol{\mathcal{T}})$ denotes the law of a
well-labeled mobile described by an alternating 
2-type branching process, starting with a white vertex, with offspring distributions
$\mu_\circ,\mu_\bullet$, and where the labels are uniformly distributed
among possible well labelings conditionally given the tree structure.

For $h\in\Z_+$, we define the measure
  \begin{eqnarray} \label{def:gwh} \widehat{\mathrm{GW}}^{(h)}_\bq(\d(\boldsymbol{\mathcal{T}},\hat{v}))=\frac{\mathrm{GW}_\bq(\d
\boldsymbol{\mathcal{T}})}{\mu_\circ(0)}\sum_{v\in \widehat{V}_\circ(\mathcal{T})}\ind_{\{|\hat{v}|=2h\}}\delta_v(\d \hat{v})\,
,  \end{eqnarray}
which consist in marking one distinguished white leaf at height $2h$ according to
the counting measure.

In particular, note that
$\widehat{\mathrm{GW}}_\bq^{(1)}$ is supported on the set of mobile buckles. As
an easy exercise, we invite the reader to check that the image measure
of $\widehat{\mathrm{GW}}_\bq^{(h)}\otimes \widehat{\mathrm{GW}}_\bq^{(h')}$ under the
concatenation operation is
$\widehat{\mathrm{GW}}_\bq^{(h+h')}$ (notice the factor $1/{\mu_\circ(0)}$ has conveniently be included in the definition).
In order to give a description of random mobile buckles under
$\widehat{\mathrm{GW}}_\bq^{(1)}$, we invite the reader to recall the
discussion around Proposition \ref{sec:boltzm-meas-reduct-1}, and define the following random
variables associated with a mobile buckle
$(\boldsymbol{\mathcal{P}},\hat{v}'_\circ)$: 
\begin{itemize}
\item $K$ is the number of children of $\hat{v}_\bullet$, the parent vertex
  of $\hat{v}'_\circ$;
\item $R$ is the rank in clockwise order around $\hat{v}_\bullet$ of
  the vertex $\hat{v}'_\circ$ among its siblings. 
\end{itemize}

\begin{lem}
  \label{sec:boltzmann-measures-1}
Under
the law $\widehat{\mathrm{GW}}_\bq^{(1)}$, 
\begin{itemize}
\item the random variable $K$ has distribution
  $\hat{\mu}_\bullet(k)=k\mu_\bullet(k)/m_\bullet$, the size-biased
  distribution associated with $\mu_\bullet$;
  \item conditionally given $K$, the random variable $R$ is uniformly
    distributed in $\{1,\ldots,K\}$. 
  \end{itemize}
  Moreover, conditionally given $(K,R)=(k,r)$,
  \begin{itemize}
  \item   the labels
$(\ell_0,\ell_1,\ldots,\ell_k,\ell_{k+1})$ of vertices around
$\hat{v}_\bullet$ form a random walk with i.i.d.~steps distributed according to a shifted geometric law $\sum_{n\geq -1}2^{-n-2}\delta_n$,
and conditioned on $\ell_0=\ell_{k+1}=0$;
\item the random mobiles $\boldsymbol{\mathcal{T}}^{(i)}$ for $i\in
  \{0,\ldots,k+1\}\setminus \{r\}$ are i.i.d.~with distribution
  $\mathrm{GW}_\bq$. 
\end{itemize}
\end{lem}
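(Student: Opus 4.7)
}
My plan is to apply a many-to-one (spine) formula to the branching structure on the two levels separating the root $v_0$ from the marked white leaf $\hat{v}'_\circ$, then exploit the geometric form of $\mu_\circ$ to regroup the root's children.  First, I would write the law that marks a uniform white vertex at generation $2$ (without the leaf condition) as
$$\nu(\d(\boldsymbol{\mathcal{T}},\hat{v}))\;=\;\mathrm{GW}_\bq(\d\boldsymbol{\mathcal{T}})\!\!\sum_{w\in V_\circ(\mathcal{T}),\,|w|=2}\delta_w(\d\hat{v})\, ,$$
and use the elementary size-biasing lemma (applied first at the root, then at its child carrying the spine) to describe it as follows: $v_0$ has black children $b_1,\ldots,b_N$ with $N\sim\hat{\mu}_\circ$, a uniformly chosen index $J\in\{1,\ldots,N\}$ singles out $\hat{v}_\bullet=b_J$, which in turn has white children $v_1,\ldots,v_K$ with $K\sim\hat{\mu}_\bullet$, and a uniformly chosen index $R\in\{1,\ldots,K\}$ singles out $\hat{v}'_\circ=v_R$; the subtrees hanging off the spine are i.i.d.\ $\mathrm{GW}_\bq$ trees (for those rooted at the $v_i$, $i\neq R$) or i.i.d.\ $\mathrm{GW}_\bq$-subtrees rooted at black vertices (for the other black children of $v_0$), and labels are assigned uniformly among well-labelings. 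Since the leaf condition $\hat{v}'_\circ\in\widehat{V}_\circ(\mathcal{T})$ only affects the subtree rooted at $\hat{v}'_\circ$ (making it trivial) and contributes the normalizing factor $\mu_\circ(0)$, the measure $\widehat{\mathrm{GW}}_\bq^{(1)}$ enjoys exactly the same decomposition, simply erasing the offspring of $\hat{v}'_\circ$. This immediately yields the announced laws of $K$ and $R$.

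The second step is to identify $\boldsymbol{\mathcal{T}}^{(0)}$ and $\boldsymbol{\mathcal{T}}^{(k+1)}$, which by the grafting description of Proposition \ref{sec:boltzm-meas-reduct-1} correspond to the two sub-mobiles obtained by cutting $\bu$ apart at the edge $v_0$--$\hat{v}_\bullet$ and keeping, respectively, the black children $b_1,\ldots,b_{J-1}$ and $b_{J+1},\ldots,b_N$ of $v_0$ together with their subtrees. By the above spine description, conditionally on $(N,J)$ these two clusters of siblings have joint law $\mu_\circ(n)\, m_\bullet$ in $(n,j)\in\mathbb{N}\times\{1,\ldots,n\}$, so setting $M_0=J-1$ and $M_{k+1}=N-J$ one obtains
$$\mathbb{P}(M_0=j_0,\,M_{k+1}=j_1)\;=\;m_\bullet\,\mu_\circ(j_0+j_1+1)\, .$$
The crucial observation is that, because $\mu_\circ$ is \emph{geometric} with the explicit form given in \eqref{eq:mucirc_mubullet}, one has the identity $m_\bullet\mu_\circ(j_0+j_1+1)=\mu_\circ(j_0)\mu_\circ(j_2)$ (as may be checked directly, or by noting that $1/m_\bullet=\zbq-1$ equals the inverse of the geometric ratio). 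Hence $M_0$ and $M_{k+1}$ are i.i.d.\ with distribution $\mu_\circ$, and since under $\mathrm{GW}_\bq$ each ``black subtree'' rooted at a child of $v_0$ together with its sub-mobiles is a $\mathrm{GW}_\bq$ mobile after adjunction of the white root, one concludes that $\boldsymbol{\mathcal{T}}^{(0)}$ and $\boldsymbol{\mathcal{T}}^{(k+1)}$ are indeed i.i.d.\ $\mathrm{GW}_\bq$ mobiles, independent of all the other subtrees $\boldsymbol{\mathcal{T}}^{(i)}$, $1\leq i\leq k$, $i\neq r$, which are directly i.i.d.\ $\mathrm{GW}_\bq$ as subtrees attached at white spine-siblings.

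It remains to treat the labels. Conditionally on the tree structure, under $\mathrm{GW}_\bq$ (hence under $\widehat{\mathrm{GW}}_\bq^{(1)}$) the labeling is uniform among well-labelings, and this decomposes independently over black vertices; thus, conditionally on $(K,R)$ and the surrounding subtree structure, the increments $\eps_i=\ell_i-\ell_{i-1}$ for $i=1,\ldots,k+1$ (with $\ell_0=\ell_{k+1}=0$) are uniform among integer sequences with $\eps_i\geq -1$ and $\sum_i\eps_i=0$. A direct computation shows that if $(Y_i)_{1\leq i\leq k+1}$ are i.i.d.\ with common law $\sum_{n\geq -1}2^{-n-2}\delta_n$, then the joint density of $(Y_1,\ldots,Y_{k+1})$ on $\{\eps_i\geq -1,\,\sum\eps_i=0\}$ is the constant $2^{-2(k+1)}$; hence conditioning on $\sum Y_i=0$ gives exactly the uniform measure, which is the sought distribution of $(\ell_1,\ldots,\ell_k)$. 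This ends the proof. The only delicate point, which I would single out as the main obstacle, is the algebraic identity $m_\bullet\mu_\circ(j_0+j_1+1)=\mu_\circ(j_0)\mu_\circ(j_1)$ that makes the root's offspring split cleanly into two independent $\mathrm{GW}_\bq$ pieces; this is precisely the place where the specific geometric form of the white offspring distribution in \eqref{eq:mucirc_mubullet} is needed.
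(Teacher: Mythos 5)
Your proof is correct, but it takes a genuinely different route from the one in the paper, so let me compare the two. You organize the argument probabilistically: you first describe the many-to-one (spine) measure that marks a uniform white vertex at generation $2$ by successively size-biasing $\mu_\circ$ at the root and $\mu_\bullet$ along the spine, and only then restrict to a leaf (which trivially contributes the factor $1/\mu_\circ(0)$). The paper instead computes $\widehat{\mathrm{GW}}^{(1)}_\bq(\{(\boldsymbol{\mathcal{P}},\hat{v}'_\circ)\})$ directly by expanding $\mathrm{GW}_\bq(\{\boldsymbol{\mathcal{P}}\})$ as a product over vertices of offspring probabilities and inverse binomial coefficients, and then reorganizes this product into $\frac{1}{\binom{2k+1}{k}}\frac{\hat\mu_\bullet(k)}{k}\prod_i\mathrm{GW}_\bq(\{\boldsymbol{\mathcal{T}}^{(i)}\})$. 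Both arguments turn on exactly the same algebraic fact, the memoryless splitting of the geometric white offspring law; you state it as $m_\bullet\,\mu_\circ(j_0+j_1+1)=\mu_\circ(j_0)\mu_\circ(j_1)$, the paper as $\mu_\circ(k_{\mathrm{root}}(\mathcal{P}))=\frac{1}{m_\bullet}\mu_\circ(k_{\mathrm{root}}(\mathcal{T}^{(0)}))\mu_\circ(k_{\mathrm{root}}(\mathcal{T}^{(k+1)}))$, and these are the same identity once one observes that the $j_0+j_1+1$ black children of the root of $\boldsymbol{\mathcal{P}}$ are partitioned as $j_0$ going to $\boldsymbol{\mathcal{T}}^{(0)}$, one spine vertex $\hat{v}_\bullet$, and $j_1$ going to $\boldsymbol{\mathcal{T}}^{(k+1)}$. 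Your label computation (uniform increment sequences with $\eps_i\geq -1$, $\sum\eps_i=0$, equivalently a geometric-increment walk conditioned on $\ell_0=\ell_{k+1}=0$) coincides with the paper's one-line observation about the $\binom{2k+1}{k}$ labelings. Net gain of your route: the size-biasing interpretation is more transparent and gives the distributions of $K$ and $R$ immediately, at the cost of having to justify the spine decomposition; the paper's route skips the spine machinery and is shorter, at the cost of making the product reorganization look a bit magic. Two small things to fix: the displayed identity has a typo ($\mu_\circ(j_2)$ should be $\mu_\circ(j_1)$), and the sentence ``conditionally on $(N,J)$ these two clusters of siblings have joint law $\mu_\circ(n)\,m_\bullet$'' is misphrased --- you mean that $(N,J)$ itself has joint law $m_\bullet\mu_\circ(n)\,\ind_{1\leq j\leq n}$, from which $\mathbb{P}(M_0=j_0,M_{k+1}=j_1)=m_\bullet\mu_\circ(j_0+j_1+1)$ follows by the bijection $(n,j)\leftrightarrow(j_0,j_1)=(j-1,n-j)$.
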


\begin{proof}
  This is a direct consequence of Proposition
  \ref{sec:boltzm-meas-reduct-1}. For a given mobile buckle
  $(\boldsymbol{\mathcal{P}},\hat{v}'_\circ)$ obtained by appending the
  given well-labeled mobiles $\boldsymbol{\mathcal{T}}^{(i)}$, for $i\in
  \{0,\ldots,k+1\}\setminus \{r\}$, to a mobile ``star'' with $k+1$
  branches, we have, 
  \begin{align*}
    \widehat{\mathrm{GW}}_\bq^{(1)}(\{(\boldsymbol{\mathcal{P}},\hat{v}'_\circ)\})&=\frac{\mathrm{GW}_\bq(\{\boldsymbol{\mathcal{P}}\})}{\mu_\circ(0)}\\
    &=\frac{1}{\binom{2k+1}{k}}\mu_\bullet(k)\prod_{v\in
      V_\circ(\mathcal{P})\setminus
      \{\hat{v}'_\circ\}}\mu_\circ(k_v(\mathcal{P}))\prod_{v\in
      V_\bullet(\mathcal{P})\setminus \{\hat{v}_\bullet\}}\frac{\mu_\bullet(k_v(\mathcal{P}))}{\binom{2k_v(\mathcal{P})+1}{k_v(\mathcal{P})}}\, ,
  \end{align*}
where the inverse binomial factors come from the number of possible
labelings of the neighbours of the different black vertices, and the
first term is the contribution of $\hat{v}_\bullet$. Note that, from
the fact that $\mu_\circ$ is a geometric distribution and the
criticality assumption $m_\circ m_\bullet=1$, we may
rewrite
$$\mu_\circ(k_{\mathrm{root}}(\mathcal{P}))=\frac{1}{m_\bullet}\mu_\circ(k_{\mathrm{root}}(\mathcal{T}^{(0)}))\mu_\circ(k_{\mathrm{root}}(\mathcal{T}^{(k+1)}))\, , $$
so that we may re-express the above display as
$$
\widehat{\mathrm{GW}}_\bq^{(1)}(\{(\boldsymbol{\mathcal{P}},\hat{v}'_\circ)\})=\frac{1}{\binom{2k+1}{k}}\frac{\hat{\mu}_\bullet(k)}{k}\prod_{i\in
\{0,1,\ldots,k+1\}\setminus
\{r\}}\mathrm{GW}_\bq(\{\boldsymbol{\mathcal{T}}^{(i)}\})\, .$$
We conclude by the elementary observation that a uniformly chosen
possible labeling $(\ell_0,\ell_1,\ldots,\ell_k,\ell_{k+1})$ among the
$\binom{2k+1}{k}$ possible ones has the same law as the claimed
conditioned random walk.  
\end{proof}

Next, we define the two measures $\widehat{\mathrm{GW}}_\bq=\sum_{h\geq
  0}\widehat{\mathrm{GW}}^{(h)}_\bq$, and 
  \begin{eqnarray} \label{def:gwtilde}\widetilde{\mathrm{GW}}_\bq(\d
(\boldsymbol{\mathcal{P}},\hat{v}',c))=\widehat{\mathrm{GW}}_\bq^{(1)}(\d(\boldsymbol{\mathcal{P}},\hat{v}'))
\sum_{c\in C_\circ(\boldsymbol{\mathcal{P}},\hat{v}')}(\d c)\, ,  \end{eqnarray}
where we recall that $C_\circ(\boldsymbol{\mathcal{P}},\hat{v}')$ is the set of corners incident to
a white vertex different from $\hat{v}'$ and where the white root corner has been duplicated.

\begin{prop}
  \label{sec:boltzm-meas-reduct}
The measure $\widetilde{w}^{2\bullet}_\bq/2$ is the image measure under
$\Phi$ (recall Figure \ref{fig:unicyclomobile_decomp}) of 
$$\widehat{\mathrm{GW}}_\bq(\d(\boldsymbol{\mathcal{T}},\hat{v}))
\widetilde{\mathrm{GW}}_\bq(\d (\boldsymbol{\mathcal{P}},\hat{v}',c))
\ind_{\{\ell_\mathcal{T}(\hat{v})=-\ell_{\mathcal{P}}(\hat{v}')\}}\, .$$
\end{prop}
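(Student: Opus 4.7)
The plan is to use the bijective nature of $\Phi$ (already established in Proposition \ref{sec:comb-decomp-1}) and verify that for each unicyclomobile $\bu\in\mathcal{U}$, the mass assigned by $\widetilde{w}^{2\bullet}_\bq/2$ coincides with the mass assigned on the right-hand side to the unique preimage $\Phi^{-1}(\bu)=((\boldsymbol{\mathcal{T}},\hat{v}),(\boldsymbol{\mathcal{P}},\hat{v}',c))$. The indicator $\ind_{\{\ell_{\mathcal{T}}(\hat{v})=-\ell_{\mathcal{P}}(\hat{v}')\}}$ then presents no difficulty: this identity is precisely the label-compatibility condition needed for the construction of $\Phi$ to produce a well-labeled unicyclomobile, so it is automatically satisfied by $\Phi^{-1}(\bu)$.

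The main computational step will be the following algebraic identity, valid for every finite well-labeled mobile $\boldsymbol{\mathcal{S}}$ with white root:
\begin{equation*}
\mathrm{GW}_\bq(\{\boldsymbol{\mathcal{S}}\}) \; =\; \mu_\circ(0)\,\prod_{v\in V_\bullet(\mathcal{S})} q_{k_v(\mathcal{S})+1}\, .
\end{equation*}
To prove it, I substitute the explicit formulas $\mu_\circ(k)=\zbq^{-k-1}(\zbq-1)^k$ and $\mu_\bullet(k)/\binom{2k+1}{k}=\zbq^{k+1}q_{k+1}/(\zbq-1)$ from \eqref{eq:mucirc_mubullet} into the defining product for $\mathrm{GW}_\bq(\{\boldsymbol{\mathcal{S}}\})$. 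The powers of $\zbq$ and $(\zbq-1)$ that appear can then be collapsed using the two elementary tree-counting identities $\sum_{v\in V_\circ(\mathcal{S})} k_v(\mathcal{S})=\#V_\bullet(\mathcal{S})$ and $\sum_{v\in V_\bullet(\mathcal{S})} k_v(\mathcal{S})=\#V_\circ(\mathcal{S})-1$; this produces exactly one remaining factor $\zbq^{-1}=\mu_\circ(0)$, yielding the claimed identity.

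With this identity in hand, the conclusion is straightforward. By definition of $\widehat{\mathrm{GW}}_\bq$ in \eqref{def:gwh} and of $\widetilde{\mathrm{GW}}_\bq$ in \eqref{def:gwtilde}, the mass of $\widehat{\mathrm{GW}}_\bq(\d(\boldsymbol{\mathcal{T}},\hat{v}))\,\widetilde{\mathrm{GW}}_\bq(\d(\boldsymbol{\mathcal{P}},\hat{v}',c))$ at a specific tuple equals $\mathrm{GW}_\bq(\{\boldsymbol{\mathcal{T}}\})\,\mathrm{GW}_\bq(\{\boldsymbol{\mathcal{P}}\})/\mu_\circ(0)^2$, which, by the identity above, simplifies to
\begin{equation*}
\prod_{v\in V_\bullet(\mathcal{T})} q_{k_v(\mathcal{T})+1}\,\prod_{v\in V_\bullet(\mathcal{P})} q_{k_v(\mathcal{P})+1}\, .
\end{equation*}
To match this with $\widetilde{w}_\bq^{2\bullet}(\bu)/2 = \prod_{v\in V_\bullet(\bu)} q_{\deg_\bu(v)}$ from \eqref{eq:defwtilde2pt}, the last point to check is that the gluing procedure of $\Phi$ identifies only white vertices (namely the leaf $\hat{v}$ of $\mathcal{T}$ with the root of $\mathcal{P}$, and $\hat{v}'$ with the root of $\mathcal{T}$), so the black vertices of $\bu$ are in bijection with $V_\bullet(\mathcal{T})\sqcup V_\bullet(\mathcal{P})$ and their degrees are preserved: any black vertex has degree $k_v+1$ (parent plus children) in the mobile, and the same neighbours in $\bu$---in particular this is true for the unique black vertex $\hat{v}_\bullet$ lying on the cycle, whose white neighbours in $\bu$ are its $K$ original children together with the root of $\mathcal{P}$. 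This completes the argument, as the remaining verification is a matter of carefully tracking neighbourhoods around the two white identification points.

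The main potential obstacle is purely bookkeeping---ensuring that the constraint $\ell_{\mathcal{T}}(\hat{v})=-\ell_{\mathcal{P}}(\hat{v}')$ is exactly encoded by $\Phi^{-1}$ (rather than, e.g., leading to an undesirable multiplicative factor), and that the label shift performed at the end of $\Phi$ to root $\bu$ at a corner of label $0$ does not affect the weight. Both are resolved by observing that the weights $w_\bq$ and $\widetilde{w}_\bq^{2\bullet}$ depend only on the (unlabelled) combinatorial structure, so any global label shift is irrelevant, and that $\mathrm{GW}_\bq$ is the law of a \emph{well-labelled} mobile, whose root is by convention at label $0$.
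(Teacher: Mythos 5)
Your proof is correct and follows essentially the same route as the paper: fix a unicyclomobile $\bu$, use the bijectivity of $\Phi$ from Proposition~\ref{sec:comb-decomp-1}, evaluate both measures at $\bu$ and its preimage, and match them by cancelling the $\mu_\circ$, $\mu_\bullet$ and binomial factors. Your intermediate identity $\mathrm{GW}_\bq(\{\boldsymbol{\mathcal{S}}\})=\mu_\circ(0)\prod_{v\in V_\bullet(\mathcal{S})}q_{k_v(\mathcal{S})+1}$ is a slightly cleaner, modular way of organizing the same telescoping computation the paper carries out inline for $\boldsymbol{\mathcal{T}}$ and $\boldsymbol{\mathcal{P}}$ separately, and it makes explicit where the two $1/\mu_\circ(0)$ normalizations from $\widehat{\mathrm{GW}}_\bq$ and $\widetilde{\mathrm{GW}}_\bq$ get absorbed --- a point the paper leaves implicit. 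The only slip is cosmetic: the proof header refers to Proposition~\ref{sec:comb-decomp-1} rather than Proposition~\ref{sec:boltzm-meas-reduct}.
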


\begin{proof}
  We fix a unicyclomobile $\bu$, that we may write as
  $\Phi((\boldsymbol{\mathcal{T}},\hat{v}_\circ),(\boldsymbol{\mathcal{P}},\hat{v}'_\circ,c))$ in a unique way. In
  particular, notice that $\ell_\mathcal{T}(\hat{v}_\circ)=-\ell_{\mathcal{P}}(\hat{v}_\circ')$. Then, it
  suffices to show that
$$\mathrm{GW}_\bq(\{\boldsymbol{\mathcal{T}}\})
\mathrm{GW}_\bq(\{\boldsymbol{
\mathcal{P}}\})=\prod_{v\in V_\bullet(\bu)}q_{\deg(v)}\, .$$
Using the fact that, in a mobile, every black vertex is the child of a
white vertex, while every white vertex but the root is the child of a
black vertex, we observe that 
\begin{align*}
  \mathrm{GW}_\bq(\{\boldsymbol{\mathcal{T}}\}) &=
  \prod_{v\in
  V_\circ(\mathcal{T})\setminus\{\hat{v}_\circ\}}\mu_\circ(k_v(\mathcal{T}))\prod_{v\in
  V_{\bullet}(\mathcal{T})}\frac{\mu_\bullet(k_v(\mathcal{T}))}{\binom{2k_v(\mathcal{T})+1}{k_v(\mathcal{T})}}\\
                                     &=
  \prod_{v\in
  V_\circ(\mathcal{T}) \setminus\{\hat{v}_\circ\}}\zbq^{-1}\left(1-\zbq^{-1}\right)^{k_v(\mathcal{T})}\prod_{v\in
  V_{\bullet}(\mathcal{T})}
                                     \frac{\zbq^{k_v(\mathcal{T})}
                                     q_{k_v(\mathcal{T})+1}}{1-\zbq^{-1}}\\
                                   &=\prod_{v\in
                                     V_\bullet(\mathcal{T})}q_{k_v(\mathcal{T})+1}\, .
\end{align*}
In the first line, the product of inverses of binomial coefficients comes from the choice of the labeling function of $\mathcal{T}$. 
Similarly,
\begin{align*}
  \mathrm{GW}_\bq(\{\boldsymbol{\mathcal{P}}\}) =\prod_{v\in
  V_\circ(\mathcal{P})\setminus\{\hat{v}'_\circ\}}\zbq^{-1}\left(1-\zbq^{-1}\right)^{k_v(\mathcal{P})}\prod_{v\in
  V_{\bullet}(\mathcal{P})\setminus\{\hat{v}'_\circ\}}                         \frac{\zbq^{k_v(\mathcal{P})}q_{k_v(\mathcal{P})+1}}{1-\zbq^{-1}}
                                   =\prod_{v\in
                                     V_\bullet(\mathcal{P})}q_{k_v(\mathcal{P})+1}\, ,
\end{align*}
which gives the result. 
\end{proof}

\section{Scaling limit of Boltzmann unicyclomobiles} \label{sec:scalingunicyclo}

The purpose of this section is to obtain scaling limit results for a
random unicyclomobile under the measure $\tilde{w}^{2\bullet}_\bq(\d\bu) $
conditioned on the event that $\bu$ has $n$ white vertices, as
$n\to\infty$. In this section we will use the \textbf{assumption that $\bq$ is strictly non-generic} i.e.~satisfies
furthermore \eqref{eq:strictly-non-generic} which we supposed from
Section \ref{sub:reroot} on.

\subsection{Scaling limit for the belt of a random unicyclomobile}\label{sec:result}

Let us state the main result of this section. Let $ \mathbf{u}_{n}$ be a random
 unicyclomobile of law $\tilde{w}^{2\bullet}_\bq(\d\bu \mid \#
{V}_{\circ}( \mathbf{u})= n-2)$ and denote the belt part in the
belt-buckle decomposition of Proposition \ref{sec:comb-decomp-1}  by $
(\boldsymbol{\mathcal{B}}_{n}, \hat{v}_{\circ}^n)$. In
particular, $ \boldsymbol{ \mathcal{B}}_{n}$ is a pointed labeled tree
of random size $ \theta^{n} \leq n-1$, and we write 
$(S^{\boldsymbol{\mathcal{B}}_{n}}_{k}, L^{\boldsymbol{\mathcal{B}}_{n}}_{k})_{k \geq 0}$
for its Lukasiewicz encoding as in  Section \ref{sec:tightness}. 
We also
denote  the time of visit of $ \hat{v}_{
  \circ}$ by $a^{n}$. 
The main result of this section identifies the scaling limit of
$(S^{\boldsymbol{\mathcal{B}}_{n}}, L^{\boldsymbol{\mathcal{B}}_{n}},a^n,\theta^n)$
in terms of the law of $(X,Z, t_{\bullet}, \sigma)$ under the
measure $ \mathbf{N}^{\bullet}$ defined in Section \ref{sec:prison}. 
In order to state it, we introduce two bits of notation. We let
  \begin{eqnarray} \label{eq:defbarp}\bar{p}_t(z):=\frac{1}{t}\int_0^t\d s\, \sqrt{\frac{t}{2\pi s(t-s)}}
 \mathrm{e}^{-\frac{t\, z^2}{2s(t-s)}} \underset{ \eqref{mellin:erfc}}{=}\sqrt{\frac{\pi}{2t}}\,
\mathrm{erfc}\left( |z|\sqrt{\frac{2}{t}}\right)\, ,\qquad z\in
\R\, ,  \end{eqnarray}
be the 
density of a standard Brownian bridge of duration $t>0$ sampled at an independent, uniformly
random time in $[0,t]$. Finally, for $x>0$ and $z\in \R$, we let 
\begin{equation}
  \label{eq:GG}
   \GG(x,z) :=\int_0^\infty\frac{\d t}
   {\Gamma(-\alpha)\, 
     t^{\alpha-1}}\, q^{[\alpha]}_x(-t)\, \bar{p}_t(z)\, ,
 \end{equation}
 where, for every $\beta\in (0,1)\cup (1,2)$, and $c>0$, $q^{[\beta]}_{c}$ is the density of a stable spectrally
   positive L\'evy process with exponent $\beta$ taken at time $c$,
   defined by its Laplace transform
\begin{equation}
  \label{eq:qbetalaplace}
  \int_\R  \mathrm{e}^{-\lambda x}q^{[\beta]}_c(x)\, \d x=
    \begin{cases}
  \exp(-c\lambda^\beta) &\mbox{ if }\beta\in (0,1)\\
   \exp(c\lambda^\beta) &\mbox{ if }\beta\in (1,2)
  \end{cases}
    ,\qquad \lambda\geq 0\, ,
\end{equation}
and we let 
$\GG(0,z):=0$ for every $z\in \R$. 
 Note that $\GG$ is a
continuous function on $(0,\infty)\times (\R\setminus \{0\})$, because of the Gaussian tails of the error function for
$t\to 0$, and because of the stretched-exponential tails of
$q_x^{[\alpha]}(-t)$ as $t\to\infty$,  see \eqref{eq:tailqalphsleft} below. For  $\alpha\in (1,3/2)$, it holds that $\GG (x,0)<\infty$
and $\GG$ is in fact continuous on $(0,1)\times \R$. On the other
hand, since $q_x^{[\alpha]}(0)>0$, one should note that $\GG(x,0)=\infty$ when
$\alpha\in [3/2,2)$ and $x>0$.

\begin{prop}[Scaling limits for the belt of a Boltzmann-distributed
  unicyclomobile] \label{prop:scalingbelt} Assume that $\bq$ is strictly non-generic.
  For any bounded continuous function $ F : \mathbb{D}([0,1], \mathbb{R})^2 \times [0,1]^{2} \to \mathbb{R}$, we have 
\begin{align*}& \mathbf{E}\left[ F\left(
               2 (\scal n)^{-\frac{1}{\alpha}}S_{\lfloor (n-1)\cdot\rfloor}^{\boldsymbol{
                \mathcal{B}}_{n}},(\scal n)^{-\frac{1}{2\alpha}}L_{\lfloor (n-1)\cdot\rfloor}^{\boldsymbol{ \mathcal{B}}_{n}}, \frac{a^{n}}{n}, \frac{\theta^{n}}{n}\right)\right]\\
 \xrightarrow[n\to\infty]{}\quad & \mathrm{Cst} \cdot
                                   \mathbf{N}^{\bullet}\left( F (X,Z,
                                   t_{\bullet}, \sigma) \cdot
                                   \GG\left(1-\sigma,-Z_{t_{\bullet}}\right)\right),  \end{align*}
                               where the constant is such that the
                               right-hand side defines a probability
                               distribution. 
   \end{prop}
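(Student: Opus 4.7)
The plan is to combine the belt--buckle decomposition of Proposition~\ref{sec:boltzm-meas-reduct} with two essentially independent asymptotic analyses: a scaling limit for the belt $(\boldsymbol{\mathcal{B}}_n,\hat{v}_\circ^n)$, which will produce the law $\mathbf{N}^\bullet$, and a local limit theorem for the buckle, which will account for the density $\GG(1-\sigma,-Z_{t_\bullet})$ as a Radon--Nikodym derivative. More precisely, for a bounded continuous $F$, Proposition~\ref{sec:boltzm-meas-reduct} rewrites the expectation we are interested in as a ratio of two sums over pairs $(\boldsymbol{\mathcal{B}},\hat{v})$ weighted by $\widehat{\mathrm{GW}}_\bq$ and by the ``partition function''
\[Q_n(k,z):=\widetilde{\mathrm{GW}}_\bq\big(\#V_\circ(\boldsymbol{\mathcal{P}})=n-k-1,\,\ell_\mathcal{P}(\hat{v}'_\circ)=-z\big),\]
evaluated at $k=\#V_\circ(\boldsymbol{\mathcal{B}})$ and $z=\ell_\mathcal{B}(\hat{v})$; it is therefore enough to establish a joint scaling limit for the belt encoding and a pointwise asymptotic for $Q_n$ at $(k,z)$ of orders $\sigma n$ and $Z_{t_\bullet}(\scal n)^{1/(2\alpha)}$ respectively.

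For the belt part, I will extend the convergence \eqref{first_couple} to pointed well-labelled mobiles. Under $\widehat{\mathrm{GW}}_\bq(\cdot\,|\,\#V_\circ=k)$, marking a uniform white leaf corresponds in the scaling limit to marking a uniformly distributed time, since white leaves are asymptotically of density $\mu_\circ(0)$ along the lexicographic order. Combining this with a one-sided stable local limit theorem for $\#V_\circ$ under $\mathrm{GW}_\bq$ (which produces the Kemperman-type density $q^{[\alpha]}_\sigma(0)/\sigma$ in the scaling limit) and integrating over $\sigma$ via~\eqref{eq:excursionmeasuredecomp} will show that the rescaled belt data converges to $(X,Z,t_\bullet,\sigma)$ under a fixed multiple of $\mathbf{N}^\bullet$.

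The heart of the matter is then the asymptotic analysis of $Q_n$, which rests on Lemma~\ref{sec:boltzmann-measures-1}: a buckle under $\widetilde{\mathrm{GW}}_\bq$ is a mobile ``star'' around a black vertex $\hat{v}_\bullet$ of size-biased degree $K\sim\hat{\mu}_\bullet$, equipped with a conditioned random-walk labelling of its $K$ children, and decorated by $K+1$ independent $\mathrm{GW}_\bq$ sub-mobiles grafted at its white neighbours; the marked corner is picked uniformly among all white corners. Decomposing $Q_n(k,z)$ according to the rescaled cycle size $t\approx 2K/(\scal n)^{1/\alpha}$, the sub-mobile carrying the marked corner, and the index $R\in\{1,\ldots,K\}$ of the marked leaf, three ingredients will conspire to produce $\GG$: a one-sided stable local limit theorem of Gnedenko--Bretagnolle type for the sum of the $K+1$ sub-mobile sizes, yielding the factor $q^{[\alpha]}_{1-\sigma}(-t)$; a Gaussian local limit theorem for the conditioned random walk evaluated at a uniform index $R$, yielding $\bar{p}_t(-z)$; and the combinatorial weights $\hat{\mu}_\bullet(K)\propto K^{-\alpha}$, the uniform choice of $R$, and the size-biasing incurred by the choice of the marked corner among all white corners, which combine to produce the measure $\Gamma(-\alpha)^{-1}t^{1-\alpha}\,\mathrm{d}t$ appearing in~\eqref{eq:GG} (the extra power $t^2$ compared to the stable Lévy measure coming from these two size-biasings of~$K$).

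The main obstacle will be to convert these pointwise heuristics into sufficiently uniform controls, since $z=\ell_{\mathcal{B}_n}(\hat{v}_\circ^n)$ is itself random of order $n^{1/(2\alpha)}$ and the localisation $k/n\to\sigma$ only holds in distribution. One must rule out the tail contributions coming from unusually large or unusually small values of $t$, from the possibility that the marked corner lies outside the sub-mobiles, and from the subtle boundary behaviour of $\bar{p}_t(z)$ near $z=0$ that is responsible for the $\alpha=3/2$ dichotomy, since $\GG(x,0)<\infty$ if and only if $\alpha<3/2$. This requires an integrability estimate of the form $\mathbf{N}^\bullet\!\left[\GG(1-\sigma,-Z_{t_\bullet})\right]<\infty$, a non-trivial fact in the dilute phase, which will rely on the regularity of the law of $Z_{t_\bullet}$ near $0$ inherited from the Gaussian character of $Z$ together with the fact that the skeleton of $\mathcal{L}$ has positive Hausdorff dimension $\alpha-1$. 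Once such uniform controls are in place, a standard truncation of $F$ together with dominated convergence will conclude the proof, the multiplicative constant appearing in the statement being the total mass of the resulting reweighted measure.
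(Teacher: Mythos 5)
Your plan follows essentially the same route as the paper: both reduce via the belt--buckle decomposition to a joint scaling limit for the belt marked at a uniform white leaf together with a pointwise local-limit asymptotic for the buckle law $\widetilde{\mathrm{GW}}_\bq$ at the complementary size and negated label, with $\GG$ assembled precisely from the ingredients you list (the cyclic-lemma/stable local limit theorem giving $q^{[\alpha]}_{1-\sigma}$, the Gaussian local limit theorem for the conditioned walk giving $\bar p_t$, the size-biased $\hat\mu_\bullet$, the uniform rank $R$, and the corner-count biasing controlled by Lemma~\ref{sec:scaling-limit-buckle-1}). The one place where you part company with the paper is the justification of finiteness of $\mathbf{N}^\bullet[\GG(1-\sigma,-Z_{t_\bullet})]$ and the uniform tail controls: the paper does not invoke any Hausdorff-dimension argument for the skeleton, but rather obtains everything from careful discrete estimates (the expansion \eqref{eq:expansionG}, the concentration-function bound \eqref{eq:concentrationpbar}, cyclic lemmas and stretched-exponential tail bounds in Lemma~\ref{sec:joint-convergence-1}), which constitute the longest technical portion of the proof and are considerably more than ``a standard truncation together with dominated convergence.''
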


As a consequence of our study, we will also obtain the following
statement that will be useful 
in the next section. Recall that $\mathfrak{M}_n$ denotes a random
$\bq$-Boltzmann map conditioned to have $n$ vertices. 

\begin{prop} \label{lem:esperance} Conditionally on $
  \mathfrak{M}_{n}$, let $v_{1}^{n}$ and $v_{2}^{{n}}$
  be two independent uniform random vertices. Then
  the sequence of random variables 
  $(n^{-\frac{1}{2\alpha}}\mathrm{d}^{ \mathrm{gr}}_{ \mathfrak{M}_n}(
  v_1^n, v_2^n))_{n \geq 1}$ is uniformly
  integrable. Consequently,   along the subsequence $(n_k)_{k\geq 1}$ defined before 
  \eqref{eq:second_couple} it holds that $$ \mathbf{E}[ (\scal
  n)^{-\frac{1}{2\alpha}}\cdot \mathrm{d}^{ \mathrm{gr}}_{ \mathfrak{M}_n}(
  v_1^n, v_2^n)] \xrightarrow[n\to\infty]{} \mathbf{E}[D(
  \rho_1, \rho_2)] \, ,$$
where $\rho_{1}, \rho_{2}$ are two independent random points in $( \mathcal{S},D)$ of law $ \mathrm{Vol}$.
\end{prop}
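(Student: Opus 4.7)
My plan is to establish the uniform integrability of $X_n := (\scal n)^{-1/(2\alpha)}\mathrm{d}^{\mathrm{gr}}_{\mathfrak{M}_n}(v_1^n, v_2^n)$, from which the convergence of expectations will follow by combining it with the convergence in distribution provided by Proposition~\ref{theo:sub}.

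The starting observation is the trivial pointwise bound $\mathrm{d}^{\mathrm{gr}}_{\mathfrak{M}_n}(v_1^n, v_2^n) \leq \mathrm{diam}(\mathfrak{M}_n)$, which reduces the task to bounding the rescaled diameter. I would then use the BDG$^{\bullet}$ bijection of Section~\ref{sec:BDG} together with the Schaeffer bound \eqref{d_n^circ:BDG}: applied between any two vertices and going through the distinguished vertex $v_*$, one obtains that the diameter of $\mathfrak{M}_n$ is at most $4\bigl(\max_k L_k^{\bTn} - \min_k L_k^{\bTn}\bigr) + O(1)$, so it suffices to establish
$$\sup_{n \geq 1}\mathbf{E}\Bigl[\bigl(n^{-1/(2\alpha)}\max_k |L_k^{\bTn}|\bigr)^p\Bigr] < \infty$$
for some $p > 1$. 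Such a uniform moment bound should be accessible from the fine analysis of labeled mobiles performed in~\cite{LGM09}: the invariance principle \eqref{first_couple} combined with the tail estimates used in the proof of tightness there (themselves based on Gaussian-type tail bounds on the conditionally uniform labelings of $\mathcal{T}_n$, together with moment bounds on the height of stable Galton-Watson trees) should provide the required $L^p$ control.

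The main technical obstacle, I expect, is the careful handling of the conditioning on $\{\#V_\circ(\mathcal{T}_n) = n-1\}$ when transferring tail estimates from the unconditioned Galton-Watson law to the conditioned one. Here the strict non-genericity assumption \eqref{eq:strictly-non-generic} and sharp local-limit theorems for $\alpha$-stable random walks (which also underlie the proof of Proposition~\ref{prop:scalingbelt}) play a key role. As an alternative route, one could use the BDG$^{2\bullet}$ bijection of Section~\ref{sec:boltzm-stable-maps}: the law of the bi-pointed $\mathbf{q}$-Boltzmann map $(\mathfrak{M}_n, v_1^n, v_2^n)$ restricted to the event $\{\mathrm{d}^{\mathrm{gr}}_{\mathfrak{M}_n}(v_1^n, v_2^n) \geq 2\}$ is absolutely continuous with respect to the law of $(\widehat{\mathfrak{M}}_n, \hat v_1^n, \hat v_2^n)$ induced by $\widetilde{w}^{2\bullet}_\bq$ conditioned on $\#V = n$ (marginalizing over the delay), with Radon-Nikodym derivative proportional to $1/(\mathrm{d}^{\mathrm{gr}} - 1)$, which is at most $1$ on the relevant event; uniform moments of $(\scal n)^{-1/(2\alpha)}\mathrm{d}^{\mathrm{gr}}_{\widehat{\mathfrak{M}}_n}(\hat v_1^n, \hat v_2^n)$ can then be extracted from Proposition~\ref{prop:scalingbelt}.

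Once uniform integrability is in hand, the conclusion is immediate: along the subsequence $(n_k)_{k \geq 1}$, the Skorokhod representation of the GHP convergence in Proposition~\ref{theo:sub}, together with the convergence of the sampling measures $\mathrm{vol}_{\mathfrak{M}_n}$ to $\mathrm{Vol}$, implies that two independent uniformly chosen vertices $v_1^n, v_2^n$ in $\mathfrak{M}_n$ converge jointly in distribution to two independent $\mathrm{Vol}$-samples $\rho_1, \rho_2$ in $\mathcal{S}$. Hence $X_n \to D(\rho_1, \rho_2)$ in distribution, and uniform integrability upgrades this weak convergence to the claimed convergence of expectations.
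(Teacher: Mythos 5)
Your overall strategy --- establish uniform integrability and then combine with the GHP convergence of Proposition~\ref{theo:sub} --- is correct, and your closing step is fine, but both routes you sketch towards uniform integrability leave genuine gaps. Your primary route reduces, via the Schaeffer bound applied to the singly-pointed BDG$^{\bullet}$ mobile, to a uniform $L^p$ bound ($p>1$) on $n^{-1/(2\alpha)}\max_k |L^{\bTn}_k|$, which you assert ``should be accessible from~\cite{LGM09}''; but this is precisely the hard step, you do not prove it, and the paper takes a different route specifically to bypass it. The tightness results of \cite{LGM09} are $L^0$ statements, and transferring higher-moment tail bounds on the label maximum from the unconditioned Galton--Watson law to the law conditioned on total progeny $n$ --- the obstacle you yourself flag --- is nontrivial with heavy-tailed offspring.

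Your second, ``alternative'' route is essentially the paper's proof, and you correctly identify its key mechanism: since $w^{2\bullet}_\bq$ is the doubly-marked Boltzmann law biased by $\#\Edelay_{\bm,v_1,v_2}=(\mathrm{d}^{\mathrm{gr}}_{\bm}(v_1,v_2)-1)_+$, passing to the BDG$^{2\bullet}$ encoding converts uniform integrability (an $L^1$-tail statement for the unbiased law) into mere \emph{tightness} (an $L^0$ statement) of the rescaled bi-pointed graph distance read from the unicyclomobile $\bu_n$ under the biased law --- so no uniform moments are needed at all, and your phrasing slightly overshoots what is required. The real gap is the final step: you claim this tightness follows from Proposition~\ref{prop:scalingbelt}, but that result controls only the \emph{belt} of $\bu_n$. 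Since the bi-pointed distance is bounded by four times the maximal absolute label over all of $\bu_n$, one must also control the labels on the \emph{buckle}. The paper handles this by a separate re-rooting argument: re-rooting $\bu_n$ at a uniformly chosen white corner (which lands in the belt with probability bounded below) places the old buckle's labels inside a new belt, so the belt tightness can be bootstrapped to the whole unicyclomobile. This step is missing from your sketch.
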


The rest of this technical section is devoted to the proofs of Propositions
\ref{prop:scalingbelt} and \ref{lem:esperance}, which will be based on Proposition
\ref{sec:boltzm-meas-reduct}, and first requires to address a similar
question for mobiles with $\sigma$-finite ``distributions''
$\widetilde{\mathrm{GW}}_\bq$ and $\widehat{\mathrm{GW}}_\bq$  using
some slightly delicate local limit theorems.
At first, we gather some needed estimates for heavy-tailed random variables.

\subsection{Some classical estimates on heavy-tailed random variables}\label{sec:some-class-estim}

In this section, we are going to make an extensive use of classical
local limit theorems for random variables in stable domains of
attraction, so let us recall some basic facts about stable densities
and domains of attractions.

\paragraph{On the stable densities $q_c^{[\beta]}$ defined at (\ref{eq:qbetalaplace}).}
For $\beta\in (0,1)\cup
(1,2)$ and $c>0$, we note that, by 
 \cite[Theorem 1.18]{kyprianou2022stable}, it
holds that: 
\begin{equation}
  \label{eq:qcbetazero}
  c^{1/\beta}q^{[\beta]}_c(0)=
\begin{cases}
  \frac{1}{|\Gamma(-1/\beta)|} & \mbox{ if }\beta>1\\
\frac{1}{|\Gamma(-\beta)|} & \mbox{ if }\beta<1\, .
\end{cases}
\end{equation}
 We will also use stretched-exponential decay of the left tail of
 $q^{[\beta]}_{1}$: there exist $c_{1},c_{2}>0$ (depending on $\beta$) such that
  \begin{eqnarray} \label{eq:tailqalphsleft} q_{c}^{[\beta]}(-t) \leq \frac{c_{1}}{c^{1/\beta}} \exp\left(-c_{2} \left(\frac{t}{c^{1/\beta}}\right)^{ \frac{\beta}{\beta -1}}\right),   \end{eqnarray}  for $c,t>0$, see \cite[Theorem 2.5.3]{Zol86}.
 Moreover, assuming that $\beta\in (0,1)$,  the following 
identity is known as Zolotarev's duality \cite[Theorem 1.16 and (1.31)]{kyprianou2022stable}, which can also be seen as a continuous version of the cyclic lemma: 
\begin{equation}
  \label{eq:ballot}
  q^{[\beta]}_c(x)=\frac{c}{x}q^{[1/\beta]}_x(-c) \, ,\qquad c,x>0\, .
\end{equation}

\paragraph{On heavy-tailed random variables. }
Now let $\xi_1^\circ,\xi_2^\circ,\ldots$ be integer-valued, i.i.d.\ nonnegative random
variables with a non-lattice distribution, meaning that
$\mathrm{gcd}(\{k:\mathbf{P}(\xi^\circ_1=k)>0\})=1$. We assume that
\begin{equation}
  \label{eq:tailboundstable}
  \mathbf{P}(\xi^\circ_1>k)\sim \frac{c}{|\Gamma(1-\beta)|k^{\beta}}\, ,\qquad
  k\to\infty,
\end{equation}
for some $c\in
(0,\infty)$. We let $\xi_i:=\xi^\circ_i$ if $\beta\in (0,1)$, and
$\xi_i:=\xi^\circ_i-\mathbf{E}[\xi^\circ_i]$ if $\beta\in (1,2)$.
These assumptions imply, by an Abelian theorem,
that
\begin{equation}\label{eq:abelian}
   \mathbf{E}[\exp(-\lambda \xi_1)]=
\begin{cases}
  1-c\lambda^\beta(1+o(1)) & \mbox{ if }\beta\in (0,1) \\
  1+c\lambda^\beta(1+o(1))  & \mbox{ if }\beta\in (1,2)
\end{cases}\, , \quad \lambda \to 0\, ,
\end{equation}
which in turn implies that the rescaled random sum
$(\xi_1+\dots+\xi_n)/n^{1/\beta}$ converges in distribution as $n\to\infty$ to a
random variable with density $q^{[\beta]}_{c}$. 
We are going to make extensive use of the Gnedenko
local limit theorem (see \cite[Theorem 4.2.1]{IL71}), according to which 
 \begin{equation}
   \label{eq:loc_lim}
   \sup_{k\in \Z}\left|n^{1/\beta}\mathbf{P}(\xi_1+\cdots+\xi_n=k)-q^{[\beta]}_c\left(\frac{k}{n^{1/\beta}}\right)\right|\underset{n\to\infty}{\longrightarrow}0\, .
 \end{equation}
This result will be complemented 
by exponential bounds for the left-tail of $\xi_1+\dots+\xi_n$. If
$\beta\in (1,2)$, the Chernov bound, together with (\ref{eq:abelian}) applied at $\lambda = n^{-1/\beta}$, implies that
\begin{equation}
  \label{eq:lefttailboundbetabig}
   \mathbf{P}(\xi_1+\dots+\xi_n\leq -k)\leq
   C\exp\left(-\frac{k}{n^{1/\beta}}\right)\, , \quad k,n\geq 1\, ,
\end{equation}
for some constant $C\in (0,\infty)$.

Finally, in the
case where $\beta\in (0,1)$, it holds that 
\begin{equation}
  \label{eq:lefttailboundbetasmall}
  \mathbf{P}(\xi_1+\dots+\xi_n\leq k)\leq
  C\exp\left(-\frac{n}{k^{\beta}}\right)\, , \quad k,n\geq 1\, ,
\end{equation}
for some constant $C\in (0,\infty)$. This is obtained by
combining the bound
$$\mathbf{P}(\xi_1+\cdots+\xi_n\leq k)\leq
\mathbf{E}\left[\exp\left(\lambda\left(1-\frac{\xi_1+\cdots+\xi_n}{k}\right)\right)\right]= \mathrm{e}^\lambda\,
\mathbf{E}\left[\exp\left(-\frac{\lambda \xi_1}{k}\right)\right]^n\,
,$$
where $\lambda$ is chosen arbitrarily in $(0,1/c^{1/\beta})$, 
with the estimate \eqref{eq:abelian}.

\subsection{Scaling limit of the buckle}\label{sec:scaling-limit-buckle}
We first deal with the scaling limit of a marked mobile piece with
sigma-finite ``distribution'' $\widetilde{\mathrm{GW}}_\bq$ defined in \eqref{def:gwtilde}, starting with
the easier case of an unmarked mobile piece with law
$\widehat{\mathrm{GW}}_\bq^{(1)}$ see \eqref{def:gwh}.  Recall the definition of the random
variables $K,R$
associated with mobile pieces $\boldsymbol{\mathcal{P}}$  discussed around Lemma
\ref{sec:boltzmann-measures-1}, and introduce two extra random
variables, this time associated with any pointed mobile
$(\boldsymbol{\mathcal{T}},\hat{v}_\circ)$: 
\begin{itemize}
\item $M$ is the number of white vertices of $\mathcal{T}$ different from
  $\hat{v}_\circ$;
\item $L=\ell_{\boldsymbol{\mathcal{T}}}(\hat{v}_\circ)$ is the label of $\hat{v}_\circ$. 
\end{itemize}
Let $\Xi$ denote the law of $(M,L)$ under
$\widehat{\mathrm{GW}}_\bq^{(1)}$. We write
$\Xi(m,l)=\Xi(\{(m,l)\})=\widehat{\mathrm{GW}}_\bq^{(1)}(M=m,L=l)$ for
simplicity. Out first goal is
to describe the asymptotic behavior of this law.  

\begin{prop}
  \label{sec:scal-limit-adjac-1}
  Fix $x>0$ and $z\in\R$. Then, for any two sequences
  $(m_N),(l_N)$ such that $m_N\sim N^{\frac{\alpha}{\alpha-1}}\, x$ and
  $l_N\sim N^{\frac{1}{2(\alpha-1)}}\, \scal^{\frac{1}{2\alpha}} z$, it holds that
  \begin{equation}
    \label{eq:adjacent_restricted}
  2 \scal^{1-\frac{1}{2\alpha}}\,N\cdot 
      N^{\frac{\alpha}{\alpha-1}}\cdot N^{\frac{1}{2(\alpha-1)}}\cdot
      \Xi(m_N,l_N)
      \underset{N\to\infty}{\longrightarrow} \frac{\GG(x,z)}{ x}\, .
    \end{equation}
     Similarly, it holds that, for every $\eta\in (0,\infty]$,
     \begin{equation}
    \label{eq:adjacent_restricted2}
   2\scal\, N\cdot 
      N^{\frac{\alpha}{\alpha-1}}\cdot
      \Xi\big(\{m_N\}\times (-\eta
      N^{1/2(\alpha-1)},\eta N^{1/2(\alpha-1)})\big)
      \underset{N\to\infty}{\longrightarrow}
      \int_{-\eta}^\eta\frac{\GG(x,z)}{x}\,  \mathrm{d}z\, .
    \end{equation}
  \end{prop}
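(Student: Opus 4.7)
The starting point of my plan is to fix $m=m_N,l=l_N$ as in the statement and write, using the conditional independence given $K$ afforded by Lemma~\ref{sec:boltzmann-measures-1},
\[
\Xi(m,l)\;=\;\sum_{k\geq 1}\hat{\mu}_\bullet(k)\,\P(M=m\mid K=k)\,\P(L=l\mid K=k).
\]
The conditional independence follows because, once $(K,R)$ is fixed, the variable $L$ depends only on the label bridge while $M$ depends only on the sub-mobiles $(\boldsymbol{\mathcal{T}}^{(i)})_{i\neq r}$, and the conditional law of $M$ given $K=k$ does not depend on $R$ by the exchangeability of the $\boldsymbol{\mathcal{T}}^{(i)}$. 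The strict non-genericity assumption \eqref{eq:strictly-non-generic}, together with the explicit form of $\mu_\bullet$ in \eqref{eq:mucirc_mubullet} and Stirling's formula, yields the sharp asymptotic $\mu_\bullet(k)\sim c_1\,k^{-\alpha-1}$, hence $\hat{\mu}_\bullet(k)\sim c_2\,k^{-\alpha}$ as $k\to\infty$, for explicit constants $c_1,c_2$ expressible in terms of $\scal$ and $\zbq$.

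Next I would analyze each factor in the regime $k=\kappa N^{1/(\alpha-1)}$. For $M$: conditionally on $K=k$, the random variable $M$ is (up to bounded shifts) the sum $N_0+\cdots+N_{k+1}$ of i.i.d.\ copies of the number of white vertices of a $\mathrm{GW}_\bq$ mobile, each of which is the total progeny of a critical Galton–Watson tree with offspring law $\mu_{\circ\bullet}$. Applying the cyclic lemma and then Gnedenko's local limit theorem \eqref{eq:loc_lim} with step law $\mu_{\circ\bullet}$ (in the domain of attraction of the $\alpha$-stable law with parameter $\scal/2^\alpha$) yields
\[
\P(M=m\mid K=k)\;\sim\;\frac{k}{m^{1+1/\alpha}}\,q^{[\alpha]}_{\scal/2^\alpha}\!\bigl(-k/m^{1/\alpha}\bigr)\qquad\text{as }m/k^\alpha=x/\kappa^\alpha\text{ is fixed.}
\]
For $L$: conditionally on $K=k$, $L$ is the value at a uniform time $R\in\{1,\dots,k\}$ of a random-walk bridge with i.i.d.~shifted-geometric increments of variance $2$, pinned at $0$ at times $0$ and $k+1$. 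A standard Gaussian local limit theorem for bridges combined with a Riemann-sum approximation gives $\P(L=l\mid K=k)\sim\bar p_{2k}(l)$ for $l^2/k$ bounded, where $\bar p_t$ was defined in \eqref{eq:defbarp}.

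Plugging these two asymptotics together, a Riemann-sum approximation $\sum_k\approx N^{1/(\alpha-1)}\int\mathrm d\kappa$ transforms the sum above into an integral in $\kappa$. A routine power count in $N$ matches the prescribed prefactor $2\scal^{1-\frac{1}{2\alpha}}\,N\cdot N^{\alpha/(\alpha-1)}\cdot N^{1/(2(\alpha-1))}$ on the left-hand side of \eqref{eq:adjacent_restricted}. Then, performing the change of variable $t=2\kappa/\scal^{1/\alpha}\cdot x^{1/\alpha}$ and using the scaling identity $q^{[\alpha]}_c(y)=c^{-1/\alpha}q^{[\alpha]}_1(y\,c^{-1/\alpha})$ together with the analogous scaling $\bar p_t(az)=a\,\bar p_{t/a^2}(z)$, one reorganizes the integrand into the form $\int_0^\infty\mathrm dt\,t^{1-\alpha}q^{[\alpha]}_x(-t)\bar p_t(z)/(x\,\Gamma(-\alpha))$, which is exactly $\GG(x,z)/x$. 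Throughout, the precise constants are pinned down by the identity $\mu_{\circ\bullet}(\{k\})\sim\scal/(\Gamma(-\alpha)\,2^\alpha\,k^{\alpha+1})$ and the value of $q^{[\alpha]}_c(0)$ recalled in \eqref{eq:qcbetazero}.

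The main technical difficulty will be to justify exchanging the limit and the sum over $k$. Pointwise convergence is not sufficient: I will need uniform domination of the product $\hat{\mu}_\bullet(k)\P(M=m\mid K=k)\P(L=l\mid K=k)$ by an $N$-independent integrable function of $\kappa$. For the middle factor, the exponential left-tail bound \eqref{eq:lefttailboundbetasmall} for sums of heavy-tailed variables (in the regime $k\gg N^{1/(\alpha-1)}$, so that $m\ll k^\alpha$) will give stretched-exponential decay in $\kappa$, while for $\kappa$ small the contribution is controlled by the explicit scaling of $q^{[\alpha]}_{\scal/2^\alpha}$ and the Gaussian factor $\bar p_{2k}(l)$. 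Together with the polynomial decay $\hat{\mu}_\bullet(k)\lesssim k^{-\alpha}$, this yields an integrable envelope in $\kappa$, completing the proof of \eqref{eq:adjacent_restricted}. Finally, to deduce \eqref{eq:adjacent_restricted2} I would simply sum \eqref{eq:adjacent_restricted} over $l\in\Z$ with $|l|<\eta N^{1/2(\alpha-1)}$, using the same uniform bound in $l$ (plus the Gaussian tail $\bar p_t(z)\lesssim |z|^{-1}e^{-2z^2/t}$ to cover $\eta=\infty$) to justify dominated convergence and recover the integral $\int_{-\eta}^{\eta}\GG(x,z)\mathrm dz/x$.
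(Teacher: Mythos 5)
Your decomposition of $\Xi$ is exactly the paper's Lemma~\ref{lem:adjacent:eq}, and your appeal to the cyclic lemma, Gnedenko's local limit theorem \eqref{eq:loc_lim}, and the local limit theorem for the bridge at a uniform time (Lemma~\ref{sec:scal-limit-adjac-4}~(i)) is the same route the paper takes; the Riemann-sum passage and the change of variable producing $\GG(x,z)/x$ are also as in the paper. So the plan is essentially the one the paper follows.

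The genuine gap is in the domination step. You commit to proving \eqref{eq:adjacent_restricted} via dominated convergence with an $N$-independent integrable envelope in $\kappa$. When $\alpha\in[3/2,2)$ and $z=0$ there is no such envelope: the limiting integrand behaves like $\kappa^{1/2-\alpha}$ near $\kappa=0$ (since $q^{[\alpha]}_x(0)>0$ and $\bar{p}_t(0)=\sqrt{\pi/(2t)}$), which is not integrable at the origin as soon as $\alpha\geq 3/2$, and indeed $\GG(x,0)=\infty$ in this range. Dominated convergence does not apply, and you would still have to show that the left-hand side of \eqref{eq:adjacent_restricted} actually tends to $+\infty$; the paper does this with a one-line Fatou argument, and you need an analogous case distinction. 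For the same reason, deriving \eqref{eq:adjacent_restricted2} by summing the pointwise convergence over $l$ is delicate: when $\alpha\geq 3/2$ the pointwise limit is $+\infty$ at $z=0$, so one cannot simply sum the limits but must dominate the full double Riemann sum in $(k,l)$ (this is possible, as $\int_{-\eta}^{\eta}\bar{p}_t(z)\,\mathrm{d}z\leq 1$ tames the singularity); the paper more simply re-runs the argument, bounding the $\bar{P}_k$-mass of the window by $1$. Finally, a small algebraic slip: the scaling identity is $\bar{p}_{t/a^2}(z)=a\,\bar{p}_t(az)$, not $\bar{p}_t(az)=a\,\bar{p}_{t/a^2}(z)$; using the wrong version would flip exponents in your change of variable.
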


Here, we recall that the function $\GG$ is defined at (\ref{eq:GG}), and note that the integral
in~(\ref{eq:adjacent_restricted2}) is always finite despite the fact
that $\GG(x,z)$ may explode at $z=0$, because
$\bar{p}_t$ is an approximation of $\delta_0$ as $t\downarrow 0$. 
From this, it will be easy to deduce the following scaling limit result for the
buckle measure $\widetilde{\mathrm{GW}}_\bq$, which, we recall, is
the measure $\widehat{\mathrm{GW}}_\bq^{(1)}$ biased by the total
number of white corners $\#C_\circ$ (not incident to the pointed white leaf, and with root corner duplicated). Let us introduce 
the notation
$\widetilde{G}(m,l)=\widetilde{\mathrm{GW}}_\bq(M=m,L=l)$, which we
view as a measure on $\N\times \Z$. 
The measure $\widetilde{G}$ is closely connected to 
$\Xi$, as the following result shows. 

\begin{lem}
  \label{sec:scaling-limit-buckle-1}
For every $\epsilon>0$, there exists $c(\eps)\in (0,\infty)$ such that,
for every $A\subset \Z$ and $m\geq 1$, 
$$(\zbq-\eps)m \, \Xi(\{m\}\times A)- \mathrm{e}^{-c(\eps)m}\leq
\widetilde{G}(\{m\}\times A)\leq (\zbq+\eps)m\, \Xi(\{m\}\times A)+ \mathrm{e}^{-c(\eps)m}\, .$$
\end{lem}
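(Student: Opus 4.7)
\textbf{Proposal for the proof of Lemma~\ref{sec:scaling-limit-buckle-1}.}
My plan is to rewrite the difference $\widetilde{G}(\{m\}\times A)-\zbq\, m\,\Xi(\{m\}\times A)$ as the expectation of a centred quantity, and then to exploit a concentration estimate for $|V_\bullet|$ given $M=m$ under $\widehat{\mathrm{GW}}_\bq^{(1)}$. The key combinatorial input is the identity
\[
\#C_\circ\;=\;|E(\mathcal{P})|\;=\;M+|V_\bullet|\,,
\]
valid for every mobile buckle with $M+1$ white vertices, which holds because the bipartite nature of the mobile forces each edge to contribute exactly one corner to a white vertex, and because the single corner incident to the white leaf $\hat v'_\circ$ exactly cancels with the duplicated root corner. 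Inserting this into the definition of $\widetilde{G}$ and recalling $\zbq=1+m_\circ=1+(\zbq-1)$, we obtain
\[
\widetilde{G}(\{m\}\times A)-\zbq\,m\,\Xi(\{m\}\times A)\;=\;\widehat{\mathrm{GW}}_\bq^{(1)}\!\left[\bigl(|V_\bullet|-(\zbq-1)m\bigr)\ind_{\{M=m,\,L\in A\}}\right].
\]

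Next I would split this error according to the event $\mathcal{G}_\eps=\{|\,|V_\bullet|-(\zbq-1)m\,|\le \eps m\}$. Taking absolute values, the contribution on $\mathcal{G}_\eps$ is at most $\eps\, m\,\Xi(\{m\}\times A)$ trivially, while the contribution on $\mathcal{G}_\eps^{\mathrm c}$ is bounded (after dropping the harmless constraint $L\in A$) by
\[
E_m(\eps)\;:=\;\widehat{\mathrm{GW}}_\bq^{(1)}\!\left[\bigl(|V_\bullet|+(\zbq-1)m\bigr)\,\ind_{\{M=m\}}\,\ind_{\mathcal{G}_\eps^{\mathrm c}}\right].
\]
The whole lemma therefore reduces to establishing that $E_m(\eps)\le  \mathrm{e}^{-c(\eps)\, m}$ for some $c(\eps)>0$ and all $m\geq 1$.

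The estimate $E_m(\eps)\leq  \mathrm{e}^{-c(\eps) m}$ will come from a Cauchy--Schwarz argument combined with a Chernoff-type concentration for $|V_\bullet|$ on $\{M=m\}$. Using the belt-buckle decomposition of Lemma~\ref{sec:boltzmann-measures-1}, under $\widehat{\mathrm{GW}}_\bq^{(1)}$ the buckle is obtained by attaching $K+1$ independent sub-mobiles $\mathcal{T}^{(i)}$ ($i\in\{0,\ldots,K+1\}\setminus\{r\}$) with law $\mathrm{GW}_\bq$ to the black vertex $\hat v_\bullet$ and to the root. Writing $N_i=|V_\circ(\mathcal{T}^{(i)})|$ and $B_i=|V_\bullet(\mathcal{T}^{(i)})|$, a direct book-keeping gives $\sum_{i\neq r}N_i=M+1$ and
\[
|V_\bullet|-(\zbq-1)m\;=\;\zbq\,+\,\sum_{i\neq r}\bigl(B_i-(\zbq-1)N_i\bigr).
\]
Since $\mathbb{E}[B_i\mid N_i]=(\zbq-1)N_i$ by the martingale property of critical two-type Galton--Watson trees, the summands on the right are independent and centred given $(N_i)$. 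The decisive point is then a sub-Gaussian bound of the form
\[
\mathrm{GW}_\bq\!\left[ \mathrm{e}^{\lambda(B-(\zbq-1)N)}\,\big|\,N=n\right]\;\leq\; \mathrm{e}^{C\lambda^{2} n}\,,\qquad |\lambda|\leq \lambda_0\,,
\]
valid uniformly in $n\geq 1$. Once this is in hand, a standard Chernoff argument applied conditionally on $(N_i)$ yields
$\widehat{\mathrm{GW}}_\bq^{(1)}(M=m,\mathcal{G}_\eps^{\mathrm c})\leq  \mathrm{e}^{-c_1\eps^{2} m}$, and combining with a crude bound on $\widehat{\mathrm{GW}}_\bq^{(1)}[(|V_\bullet|+(\zbq-1)m)^{2}\ind_{\{M=m\}}]$ of polynomial order in $m$ (easily derived by expanding the second moment over the belt-buckle decomposition and using the bounded second moment of $\mu_\circ$) closes the estimate, possibly with a slightly worse constant $c(\eps)$.

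The main obstacle, and the technical heart of the proof, is the sub-Gaussian MGF bound displayed above. The difficulty is that although the number of black children of each white vertex is Geometric($\zbq^{-1}$), and hence has exponential moments, conditioning on $|V_\circ|=n$ couples these variables through the tree structure in a non-trivial way. I would handle this by conditioning further on the \emph{reduced white tree} (which encodes only the grand-offspring counts $(g_v)_{v\in V_\circ}$ and has offspring distribution $\mu_{\circ\bullet}$): given that tree, the numbers of black children $k_v$ are independent with conditional law $\mathbb{P}(k_v=k\mid g_v=g)\propto \mu_\circ(k)\mu_\bullet^{*k}(g)$, so $|V_\bullet|=\sum_v k_v$ is a sum of independent variables. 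One has to check that the conditional log-moment generating function $g\mapsto \log \mathbb{E}[ \mathrm{e}^{\lambda(k_v-(\zbq-1))}\mid g_v=g]$ is affine in $g$ up to an $O(\lambda^{2})$ remainder (which follows from the identity $\sum_g\mu_\bullet^{*k}(g)=1$ and the explicit expression of $g_\circ$), and summation over $v$ together with $\sum_v g_v=n-1$ then yields the desired $\mathrm{e}^{C\lambda^{2}n}$ bound. The whole argument is largely independent of the specific heavy-tailed index $\alpha$ entering $\mu_{\circ\bullet}$, because the concentration concerns only the light-tailed variable $|V_\bullet|$; however, a careful accounting is needed to ensure that the constant $C$ does not depend on the size or shape of the reduced tree.
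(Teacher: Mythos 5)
Your identity $\#C_\circ = M + |V_\bullet|$ and the ensuing split of $\widetilde{G}(\{m\}\times A)-\zbq m\,\Xi(\{m\}\times A)$ into a bounded piece on the concentration event $\mathcal{G}_\eps$ plus an error $E_m(\eps)$ are correct and, up to the book-keeping $\#C_\circ=\sum_{i=1}^{m+1}\chi_i$ (with $\chi_i$ one plus the number of black children of the $i$-th white vertex), coincide with the paper's decomposition.

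The gap is in your proof that $E_m(\eps)$ is exponentially small. You claim $\mathbb{E}[B_i\mid N_i]=(\zbq-1)N_i$ ``by the martingale property of critical two-type Galton--Watson trees,'' but this identity is false. Take $\zbq=2$, so $\mu_\circ$ is geometric of mean $1$, and $\mu_\bullet=\delta_1$, which is critical. Then $\{N_i=1\}$ forces the root to have no black children (if it had one, that child would have a white child), so $\mathbb{E}[B_i\mid N_i=1]=0\neq (\zbq-1)\cdot 1$. More generally, conditioning on tree size distorts the offspring law near the boundary, so $\mathbb{E}[|V_\bullet|\mid M=m]$ need not equal $(\zbq-1)m$; your proposed sub-Gaussian bound would then concentrate $|V_\bullet|$ around the wrong point, and the plan of conditioning further on the reduced white tree does not repair this, since the conditional centring is still size-dependent in an uncontrolled way.

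The paper avoids all conditioning. It rewrites $R_m(\eps)=\widehat{\mathrm{GW}}^{(1)}_\bq(\#C_\circ\ind_{\{M=m,|\#C_\circ-\zbq m|>\eps m\}})$ as $\sum_k\widehat\mu_\bullet(k)\,\mathrm{GW}_\bq^{\otimes\infty}\big((\sum_{i=1}^{m+1}\chi_i)\ind_{\{\#V_\circ[k+1]=m+1,\ |\sum\chi_i-\zbq m|>\eps m\}}\big)$ and then simply drops the indicator $\ind_{\{\#V_\circ[k+1]=m+1\}}\leq 1$, summing out $k$ against the probability $\widehat\mu_\bullet$. What remains involves only the \emph{unconditional} law $\mathrm{GW}_\bq^{\otimes\infty}$, under which $(\chi_i)_{i\geq 1}$, read off the lexicographic order of white vertices in the infinite forest, are plainly i.i.d.\ with a shifted geometric law of mean $\zbq$ and exponential moments; Cram\'er's theorem then gives $\mathrm{GW}_\bq^{\otimes\infty}(|\sum_{i=1}^{m+1}\chi_i-\zbq m|>\eps m)\leq \mathrm{e}^{-C(\eps)m}$, and Cauchy--Schwarz absorbs the multiplicative factor $\sum\chi_i$. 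No conditional moment-generating-function bound is needed. I suggest you replace the conditional concentration argument by this ``drop the indicator and use the i.i.d.\ structure on the infinite forest'' step, which is the genuine content of the lemma.
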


Together with Proposition \ref{sec:scal-limit-adjac-1}, this lemma
will allow to show the following estimates on $\widetilde{G}$. 

\begin{prop}
  \label{sec:scal-limit-adjac-2}
  Fix $x>0$ and $z\in\R$, and let $(m_N),(l_N)$ be
  sequences such that $m_N\sim N^{\frac{\alpha}{\alpha-1}}\, x$ and $l_N\sim
  N^{\frac{1}{2(\alpha-1)}}\, \scal^{\frac{1}{2\alpha}} z$. Then it holds that 
  \begin{equation}
    \label{eq:adjacent_biased}
N\cdot
N^{\frac{1}{2(\alpha-1)}}\cdot \widetilde{G}(m_N,l_N)
\underset{N\to\infty}{\longrightarrow}\GG(x,z)\, .
\end{equation}

Moreover, it holds that
\begin{equation}
  \label{eq:limsuploc}
  \lim_{\eta\downarrow
    0}\limsup_{m\to\infty}m^{1-1/\alpha}\widetilde{G} (\{m\}\times [-\eta m^{\frac{1}{2\alpha}}, \eta m^{\frac{1}{2\alpha}}])=0\, .
\end{equation}
\end{prop}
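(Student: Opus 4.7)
My strategy is to combine the comparison estimate of Lemma~\ref{sec:scaling-limit-buckle-1}, which expresses the biased measure $\widetilde{G}$ as essentially $\zbq\,m$ times the unbiased measure $\Xi$, with the sharp asymptotics for $\Xi$ furnished by Proposition~\ref{sec:scal-limit-adjac-1}. Both identities in the statement should then follow by straightforward manipulation, once one verifies that the numerical constants align to produce exactly the function $\GG$ on the right-hand side.

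To prove \eqref{eq:adjacent_biased}, I would apply Lemma~\ref{sec:scaling-limit-buckle-1} with $A=\{l_N\}$ to obtain, for any $\epsilon>0$,
\[(\zbq-\epsilon)\,m_N\,\Xi(m_N,l_N)-\mathrm{e}^{-c(\epsilon)m_N}\le \widetilde{G}(m_N,l_N)\le (\zbq+\epsilon)\,m_N\,\Xi(m_N,l_N)+\mathrm{e}^{-c(\epsilon)m_N}.\]
Since $m_N\sim x\,N^{\alpha/(\alpha-1)}$ grows polynomially while $\mathrm{e}^{-c(\epsilon)m_N}$ decays (stretched-)exponentially in $N$, the correction terms are negligible at the scale $N\cdot N^{1/(2(\alpha-1))}$. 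Inserting the asymptotics of $\Xi(m_N,l_N)$ provided by~\eqref{eq:adjacent_restricted}, using the relation $m_N\sim x\,N^{\alpha/(\alpha-1)}$ to cancel the factor $1/x$ appearing in front of $\GG(x,z)$, and finally letting $\epsilon\downarrow 0$ gives the announced limit.

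For~\eqref{eq:limsuploc}, I would argue similarly but invoke \eqref{eq:adjacent_restricted2} in place of \eqref{eq:adjacent_restricted}. Given an integer $m$, set $N=N(m):=\lfloor m^{(\alpha-1)/\alpha}\rfloor$, so that $m\sim N^{\alpha/(\alpha-1)}$ with $x=1$ and the interval $[-\eta\,m^{1/(2\alpha)},\eta\,m^{1/(2\alpha)}]$ becomes, up to a $1+o(1)$ factor, the interval $[-\eta\,N^{1/(2(\alpha-1))},\eta\,N^{1/(2(\alpha-1))}]$. Combining Lemma~\ref{sec:scaling-limit-buckle-1} with~\eqref{eq:adjacent_restricted2} (applied at $x=1$) and observing that $m^{1-1/\alpha}=m^{(\alpha-1)/\alpha}$ is of order $N$ while $m\cdot \Xi$ in the relevant regime is of order $N^{-(\alpha/(\alpha-1))}\int_{-\eta}^{\eta}\GG(1,z)\,\d z$, yields
\[\limsup_{m\to\infty} m^{1-1/\alpha}\,\widetilde{G}\bigl(\{m\}\times[-\eta\,m^{1/(2\alpha)},\eta\,m^{1/(2\alpha)}]\bigr)\le C\int_{-\eta}^{\eta}\GG(1,z)\,\d z,\]
for some universal constant $C$. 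The right-hand side tends to $0$ as $\eta\downarrow 0$ because $\GG(1,\cdot)$ is integrable in a neighborhood of the origin, as noted in the discussion preceding the statement of Proposition~\ref{sec:scal-limit-adjac-2}---valid both in the dense case $\alpha\in(1,3/2)$ (where $\GG(1,0)<\infty$) and in the dilute case $\alpha\in[3/2,2)$ (where $\GG(1,0)=+\infty$ but the function nonetheless remains locally integrable at zero).

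The main subtle point will be the careful matching of constants: it must be checked that the factor $\zbq$ from Lemma~\ref{sec:scaling-limit-buckle-1} combines with the prefactor $2\scal^{1-1/(2\alpha)}$ from Proposition~\ref{sec:scal-limit-adjac-1} and the scaling $m_N\sim xN^{\alpha/(\alpha-1)}$ to produce exactly $\GG(x,z)$; this reduces to an explicit identity between the normalizations stemming from the size-biasing by $\#C_\circ$ and from the local limit theorems used in the proof of Proposition~\ref{sec:scal-limit-adjac-1}. A secondary technical point is that Proposition~\ref{sec:scal-limit-adjac-1} is stated along sequences $(m_N)$ parametrized by integer $N$, whereas~\eqref{eq:limsuploc} is phrased as a $\limsup$ over all integer $m$; this is resolved by the rounding argument above, combined with a mild uniformity from the local limit theorem underlying Proposition~\ref{sec:scal-limit-adjac-1}.
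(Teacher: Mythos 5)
Your proposal is correct and follows exactly the same route as the paper: for \eqref{eq:adjacent_biased} one combines Lemma \ref{sec:scaling-limit-buckle-1} (with the stretched-exponential error absorbed by the polynomial growth of $m_N$) with the limit \eqref{eq:adjacent_restricted}, and for \eqref{eq:limsuploc} one bounds $\widetilde G(\{m\}\times[-\eta m^{1/2\alpha},\eta m^{1/2\alpha}])$ by $(\zbq+1)m\,\Xi(\{m\}\times[-\eta m^{1/2\alpha},\eta m^{1/2\alpha}])+\mathrm{e}^{-c(1)m}$ via the same lemma, invokes \eqref{eq:adjacent_restricted2} at $x=1$ with $m\sim N^{\alpha/(\alpha-1)}$, and lets $\eta\downarrow0$ using local integrability of $\GG(1,\cdot)$. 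The two side remarks you flag (constant-matching between $\zbq$, $\scal$ and the normalizations in Proposition \ref{sec:scal-limit-adjac-1}, and the rounding/uniformity needed to pass from the parametrized sequence $(m_N)$ to a $\limsup$ over all $m$) are precisely the points the paper leaves implicit, so your reconstruction is faithful.
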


Proving these statements requires some preliminary notation. 
For $k\geq 1, m\geq 1$, we let $Q^*_k(m)$ be the
probability that the total number of white vertices in a sequence of
$k$ independent random mobiles with law $\mathrm{GW}_\bq$ is equal to
$m$. By the cyclic lemma, we have
\begin{equation}
  \label{eq:cycliclemmaQ}
Q^*_k(m)=\frac{k}{m}Q_m(-k)\, ,  
\end{equation}
where, recalling the notation $\mu_{\circ\bullet}$ of Section
\ref{sec:non-generic} for the law of the number of grandchildren in a
$\mathrm{GW}_\bq$-distributed mobile, we let
$Q_m=\mu_{\circ\bullet}(\cdot+1)^{*m}$. The latter can also be
interpreted as the law at time $m$ of the white Lukaciewicz path in a
forest of independent $\mathrm{GW}_\bq$-distributed mobiles. Under our hypotheses \eqref{eq:tailmu}, the local
limit theorem \eqref{eq:loc_lim} and formula (\ref{eq:qcbetazero}) imply that, as $m\to\infty$, 
\begin{equation}
  \label{eq:loclimstar}
  Q^*_1(m)\sim \frac{q^{[\alpha]}_{\scal/2^{\alpha}}(0)}{
  m^{1+1/\alpha}}
=\frac{2}{\scal^{1/\alpha}|\Gamma(-1/\alpha)|m^{1+1/\alpha}}\, .
\end{equation}
For $1\leq r\leq k$ and $l\in \Z$, we also let $P_k(l)$ be the probability that a sum of $k$
independent random variables with shifted geometric law $\sum_{n\geq
  -1}2^{-n-2}\delta_{\{n\}}$ equals $l$, and
\begin{equation}
  \label{eq:prk}
  P_r^{(k)}(l):=\frac{P_r(l)P_{k+1-r}(-l)}{P_{k+1}(0)}\, ,
\end{equation}
That is, $P_r^{(k)}$ is the law of the value at time $r$ of a bridge of a random walk with step
distribution $P_1(\cdot)$ with duration $k+1$. We define the
distribution $\bar{P}_k$ by the formula
\begin{equation}
  \label{eq:barpk}
 \bar{P}_k:=\frac{1}{k}\sum_{r=1}^kP_r^{(k)}\, ,
\end{equation}
which corresponds to the law at a uniformly random time in
$\{1,2,\ldots,k\}$ of that same random walk bridge.
Note that $\bar{P}_k$ is a centered distribution, and moreover, it has
variance $(k+1)/3$, as shown for instance in \cite[Section
3.2]{MM07}.

Finally, recall that
$\widehat{\mu}_\bullet(k)=k\mu_\bullet(k)/m_\bullet$, where
$m_\bullet=(\zbq-1)^{-1}$, 
is the size-biased law associated with $\mu_\bullet$ defined in \eqref{eq:mucirc_mubullet}. 
The key formula we will need is the following.
\begin{lem}\label{lem:adjacent:eq}
  For $k\geq  1$,  $m\geq 1$ and $l\in \Z$, one has
  \begin{equation}
    \label{eq:adjacent}
   \Xi(m,l)=\sum_{k\geq
     1}\widehat{\mu}_\bullet(k)Q^*_{k+1}(m+1)\bar{P}_k(l)\, .
    \end{equation}
  \end{lem}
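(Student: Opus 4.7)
The plan is to read off the identity directly from the explicit description of random mobile buckles provided by Lemma~\ref{sec:boltzmann-measures-1}. That lemma decomposes a mobile buckle under $\widehat{\mathrm{GW}}_\bq^{(1)}$ into the pair $(K,R)$ (number of children of $\hat{v}_\bullet$ and the rank of $\hat{v}'_\circ$ among them), the random walk bridge $(\ell_0,\ldots,\ell_{k+1})$ of labels around $\hat{v}_\bullet$, and the independent sub-mobiles $(\boldsymbol{\mathcal{T}}^{(i)})_{i\in \{0,\ldots,k+1\}\setminus\{r\}}$ with law $\mathrm{GW}_\bq$. The label $L=\ell_R$ is a measurable function of $(K,R)$ and the bridge labels alone, while $M$ depends only on $(K,R)$ and the sub-mobile sizes $N_i:=\#V_\circ(\mathcal{T}^{(i)})$, so $L$ and $M$ are conditionally independent given $(K,R)$.

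First I would work out the counting identity expressing $M$ in terms of the $N_i$'s. Going back to the construction preceding Proposition~\ref{sec:boltzm-meas-reduct-1}, the roots of $\boldsymbol{\mathcal{T}}^{(0)}$ and $\boldsymbol{\mathcal{T}}^{(k+1)}$ are both identified with the root of the buckle, while for each $i\in\{1,\ldots,k\}\setminus\{r\}$ the root of $\boldsymbol{\mathcal{T}}^{(i)}$ is identified with the white vertex $v_i$; the vertex $v_r=\hat{v}'_\circ$ is the only white vertex of the star that receives no graft. A direct bookkeeping then gives that the total number of white vertices of $\boldsymbol{\mathcal{P}}$ equals $\sum_{i\neq r}N_i$, so that $M+1=\sum_{i\neq r}N_i$. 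Since the $N_i$'s are i.i.d.\ and there are exactly $k+1$ of them, the conditional distribution of $M$ given $(K,R)=(k,r)$ is $Q^*_{k+1}(m+1)$, and this does not depend on $r$.

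Second, Lemma~\ref{sec:boltzmann-measures-1} identifies the conditional law of $L=\ell_r$ given $(K,R)=(k,r)$ as the value at time $r$ of the shifted geometric random walk bridge with duration $k+1$, which is by definition $P_r^{(k)}$. Combining this with the conditional independence of $M$ and $L$ and averaging over $R$ uniform in $\{1,\ldots,k\}$ gives
$$\mathbf{P}(M=m,\,L=l\mid K=k) \;=\; Q^*_{k+1}(m+1)\cdot \frac{1}{k}\sum_{r=1}^k P_r^{(k)}(l)\;=\;Q^*_{k+1}(m+1)\,\bar{P}_k(l),$$
and summing against the law $\widehat{\mu}_\bullet$ of $K$ yields the claimed formula. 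The only genuinely non-trivial step is the combinatorial identity $M+1=\sum_{i\neq r}N_i$, which requires being careful about the double identification at the root; once this is in hand the rest is a direct unpacking of the definitions of $Q^*_{k+1}$, $P_r^{(k)}$ and $\bar{P}_k$.
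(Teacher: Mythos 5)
Your proposal is correct and follows the same route the paper takes: read the conditional laws off Lemma \ref{sec:boltzmann-measures-1}, exploit the conditional independence of the bridge labels and the sub-mobiles given $(K,R)$, and sum over $r$ and $k$. Your more detailed bookkeeping for the identity $M+1=\sum_{i\neq r}N_i$ (accounting for the double identification at the root and the fact that $\hat{v}'_\circ$ receives no graft) is exactly the check that justifies the paper's terse claim that $M$ given $(K,R)=(k,r)$ has law $Q^*_{k+1}(\cdot+1)$.
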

  
This is an immediate application of Lemma
 \ref{sec:boltzmann-measures-1}, which implies in fact the more detailed formula
$$ \widehat{\mathrm{GW}}_\bq^{(1)}(K=k,R=r,M=m,L=l)=\frac{\widehat{\mu}_\bullet(k)}{k}Q^*_{k+1}(m+1)P_r^{(k)}(l)\,
,$$
of which (\ref{eq:adjacent}) is obtained by summing over $r$ and $k$. 

Manipulating this formula will require the following estimates. 
First, it is  an
easy consequence of the hypotheses on $\bq$ that
\begin{equation}
  \label{eq:whmubulletasymp}
  \widehat{\mu}_\bullet(k)\sim \frac{\scal}{4\zbq\Gamma(-\alpha)(2k)^{\alpha}}\,,
\end{equation}
as $k\to\infty$.

Next, we state some  estimates on $\bar{P}_{k}$. 
\begin{lem}
  \label{sec:scal-limit-adjac-4}
  (i --- Local limit theorem) Recalling \eqref{eq:defbarp}, it holds that: 
  \begin{equation}
    \label{eq:loclimpbar}
      \lim_{k\to\infty}\sup_{l\in 
    \Z}\left|\sqrt{k}\bar{P}_{k}(l)-\bar{p}_2\left(\frac{l}{\sqrt{k}}\right)\right|=0\, 
  .
\end{equation}
(ii --- Local bound) For every $\beta>0$, there exists a constant $C=C_\beta\in
(0,\infty)$ such that, for every $k\geq 0$ and $l\in \Z$, 
\begin{equation}
  \label{eq:loclimimprove}
  \sqrt{k}\bar{P}_k(l)\leq
  \frac{C}{1+\left(\frac{|l|}{\sqrt{k}}\right)^\beta}\, .
\end{equation}
\noindent
(iii --- Concentration function) There exists a constant $c>0$ such
that for every integers $j\geq 1$ and $k_1,k_2,\ldots,k_j\geq 1$,
\begin{equation}
  \label{eq:concentrationpbar}
 \sup_{l\in \Z}
 \bar{P}_{k_1}*\bar{P}_{k_2}*\cdots*\bar{P}_{k_j}(l)\leq
 \frac{c}{\sqrt{k_1+k_2+\cdots+k_j}}\, .
\end{equation}
\end{lem}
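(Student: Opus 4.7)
Parts (i) and (ii) are essentially classical local limit theorem considerations, while part (iii) is the technically demanding step that will require additional work.

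\emph{Part (i)} will be proved using the Gnedenko local central limit theorem (LCLT) for the random walk $(S_n)_{n\geq 0}$ with step law $P_1=\sum_{n\geq -1} 2^{-n-2}\delta_{\{n\}}$, which is centered with variance $\sigma^2=2$ and has exponential moments of all orders. From the identity $P_r^{(k)}(l)=P_r(l)P_{k+1-r}(-l)/P_{k+1}(0)$, the LCLT yields, uniformly over $l\in\Z$ and over $r$ satisfying $\min(r,k+1-r)\geq \delta(k+1)$ for any fixed $\delta>0$,
\[
P_r^{(k)}(l) = \sqrt{\tfrac{k+1}{4\pi\, r(k+1-r)}}\, \exp\!\left(-\tfrac{(k+1)l^2}{4\,r(k+1-r)}\right)(1+o(1)).
\]
The boundary contributions to $\sqrt k\,\bar P_k(l)$ from $r\leq \delta k$ or $r\geq (1-\delta)k$ are $O(\delta)$ uniformly in $l$ (via the crude bound $P_r^{(k)}(l)\leq C/\sqrt{\min(r,k+1-r)}$, itself a consequence of the LCLT), so averaging over $r$ and recognizing the Riemann sum gives $\sqrt k\,\bar P_k(l)=\int_0^1 \frac{du}{\sqrt{4\pi u(1-u)}}\exp(-\tfrac{(l/\sqrt k)^2}{4u(1-u)})+o(1)$ uniformly in $l$, which matches $\bar p_2(l/\sqrt k)$ after the change of variables $s=2u$ in \eqref{eq:defbarp}. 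For \emph{part (ii)}, when $|l|\leq A\sqrt k$ the result follows from (i) combined with the super-polynomial decay of $\bar p_2(z)=\sqrt{\pi/4}\,\mathrm{erfc}(|z|)$. For $|l|\geq A\sqrt k$ one invokes Cramér's inequality $P_r(l)\leq \mathrm{e}^{-\lambda l}\mathbf{E}[\mathrm{e}^{\lambda X_1}]^r$ for $\lambda\in(0,\log 2)$, together with $P_{k+1}(0)\asymp 1/\sqrt k$; optimization over $\lambda$ gives the Gaussian-type bound $P_r^{(k)}(l)\leq (C/\sqrt k)\,\mathrm{e}^{-cl^2/k}$ uniformly in $r$ and $l$, which transfers to $\bar P_k$ upon averaging over $r$.

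For \emph{part (iii)}, we proceed by Fourier inversion on $\Z$: letting $\phi_k(t):=\sum_l\mathrm{e}^{itl}\bar P_k(l)$ denote the characteristic function of $\bar P_k$,
\[
\sup_{l\in\Z}\,\bar P_{k_1}\ast\cdots\ast\bar P_{k_j}(l)\leq \frac{1}{2\pi}\int_{-\pi}^{\pi}\prod_{i=1}^j |\phi_{k_i}(t)|\, dt.
\]
The plan is to establish a uniform bound of the form $|\phi_k(t)|^2\leq 1-c\,(kt^2\wedge 1)$ on $[-\pi,\pi]$ for some universal $c>0$, together with the sharper decay $|\phi_k(t)|\lesssim 1/(1+k\,\mathrm{dist}(t,2\pi\Z)^2)$ valid for $k$ large. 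The first bound, for $|t|$ small, comes from the variance identity $\mathrm{Var}(\bar P_k)=(k+1)/3$ and the elementary inequality $1-|\phi(t)|^2\geq c\, t^2\mathrm{Var}(Y)$ for small $|t|$; for $|t|$ bounded away from $0$ it follows from aperiodicity of the step distribution combined with compactness (for $k$ bounded) or from Poisson summation applied to the density profile $\bar p_2(\cdot/\sqrt k)/\sqrt k$ identified in (i) (for $k$ large). Substituted into the Fourier integral, this reduces the estimate to controlling $\int_{-\pi}^\pi\exp(-c'\sum_i\min(k_i t^2,1))\,dt$, which is then split according to the sizes $(k_1,\ldots,k_j)$.

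The \textbf{main obstacle} is precisely this final integral estimate. The naive inequality $\sum_i\min(k_it^2,1)\geq \min(Kt^2,1)$ only produces a bound of order $1/\sqrt K+\mathrm{Cst}$, which is insufficient when $K$ is large. One must use the stronger inequality $\sum_i\min(k_it^2,1)\geq \min(Kt^2,j)$ and handle separately the ``balanced'' regime $j\gtrsim \log K$, where the contribution from $|t|$ away from zero is polynomially small in $K$, and the ``unbalanced'' regime $j\ll \log K$, where one must exploit the sharper density-type bound $|\phi_k(t)|\lesssim 1/(1+kt^2)$ applied to the largest factor $\phi_{\max_i k_i}$ (since $\max_i k_i\geq K/j$). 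Combining these two regimes uniformly over the partition $(k_1,\ldots,k_j)$ of $K$ is the key technical input that ultimately yields the sharp $O(1/\sqrt K)$ rate.
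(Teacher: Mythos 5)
Your proof of (i) follows the same path as the paper: the Gnedenko LCLT for the step law, plus a boundary estimate for $r\notin[\delta k,(1-\delta)k]$ using $\sup_l P_r(l)=O(r^{-1/2})$, and recognizing the Riemann sum. No issues here.

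Your proof of (ii) takes a genuinely different route. The paper simply cites Petrov's refined LCLT (Theorem VII.3.16) to get $(1+(|l|/\sqrt r)^\beta)\sqrt r\,P_r(l)\leq C(\beta)$, and then averages. You instead split according to whether $|l|\lesssim\sqrt k$ (where (i) plus the super-polynomial decay of $\mathrm{erfc}$ suffices) or $|l|\gtrsim\sqrt k$ (where you use Cram\'er bounds on each of $P_r$ and $P_{k+1-r}$ together with $P_{k+1}(0)\asymp k^{-1/2}$). This is self-contained and works: the resulting Gaussian-type bound on $P_r^{(k)}(l)$ for $|l|\geq A\sqrt k$ is indeed uniform in $r$ on that range of $l$ (the factor $P_r(l)$ or $P_{k+1-r}(-l)$ with the smaller step count supplies the decay when the bridge is pinned near one endpoint), and it even yields a stronger conclusion than a fixed polynomial rate. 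The paper's route is shorter because it outsources the refined LCLT to a reference.

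There is a genuine gap in (iii). Your Fourier-inversion plan stops exactly at the step you yourself flag as the main obstacle: the estimate $\int_{-\pi}^\pi\prod_i|\phi_{k_i}(t)|\,dt=O((k_1+\cdots+k_j)^{-1/2})$. The bound $|\phi_k(t)|^2\leq 1-c(kt^2\wedge 1)$ alone gives only $O(j^{-1/2})$, as you note, and the proposed refinement $|\phi_k(t)|\lesssim 1/(1+k\,\mathrm{dist}(t,2\pi\Z)^2)$ is asserted but not proved --- and it is not a formal consequence of the pointwise LLT in (i), since a local limit theorem does not control the characteristic function away from $0$ (especially as $\bar P_k$ comes from a bridge, so $\phi_k$ is not a power of a fixed function). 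The two-regime split you sketch (``balanced'' versus ``unbalanced'' in terms of $j$ versus $\log K$) is a programme, not a proof, and it is the hardest point of the whole lemma. In contrast, the paper treats (iii) in one line: observe from (i) that $\sup_{k\geq1}\sup_l\sqrt k\,\bar P_k(l)<\infty$, and apply the (Rogozin--Kesten--Esseen type) concentration-function inequality in Bretagnolle's course notes (Theorem 33.1.1), which states precisely that the concentration of a sum of independent variables is $\leq C\big(\sum_j a_j^2\big)^{-1/2}$ when each summand has concentration $\leq a_j^{-1}$. If you want to avoid the citation, the cleanest route inside the Esseen framework is to first prove the uniform bound $|\phi_k(t)|\leq(1+c k t^2)^{-1}$ on $|t|\leq\pi$ directly (not via (i)), and then use $\prod_i(1+ck_it^2)\geq 1+c(\sum_ik_i)t^2$; but establishing that Fourier bound uniformly in $k$ is itself nontrivial and is not done in your proposal.
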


\begin{proof}
  Recall that we use the notation $\lesssim$ for upper bounds that
  hold up to some universal multiplicative constant which we do not want to
  keep track of.
  
 (i) For a given $l\in \Z$, the difference in absolute values is bounded by 
  $$\int_0^1 \mathrm{d}s \, \left|\frac{\sqrt{k}P_{\lceil ks\rceil}(l)\, 
      \sqrt{k}P_{k+1-\lceil 
        ks\rceil}(-l)}{\sqrt{k}P_{k+1}(0)}-\frac{\mathrm{e}^{-\frac{l^2}{4sk}}\mathrm{e}^{-\frac{l^2}{4(1-s)k}}}{\sqrt{4\pi s(1-s)}}
\right|\, ,$$
and the integral on $[\eps,1-\eps]$ is immediately controlled by the 
local limit theorem, since the variance of a random variable with law
$P_1$ is $2$. It remains to estimate the boundary terms, that
it, to show that the integral outside $[\eps,1-\eps]$ is uniformly
small in $l$ as $n\to\infty$, provided $\eps$ has been chosen small
enough. Let us 
look at the integral on $[0,\eps]$, the other one is dealt with 
by symmetry. First, we observe that the sequence 
$\sup_{k/2\leq r\leq k+1}\sup_{l\in \Z}P_{r}(l)/P_{k+1}(0),k\geq 1$ is
bounded, as a consequence of the local limit theorem for $P_r$. Therefore,
Assuming that $\eps\in (0,1/2)$, we can bound
ratio $P_{k+1-\lceil ks\rceil}(-l)/P_{k+1}(0)$ uniformly in $s\in [0,\eps]$ and
$l\in \Z$, and it remains to check that: 
$$\lim_{\eps\downarrow 0}\limsup_{k\to\infty}\sup_{l\in 
  \Z}\frac{1}{\sqrt{k}}\sum_{r=1}^{\epsilon k}P_r(l)=0\, .$$
However, using the local limit theorem for $P_r$ again, we have 
$\sup_{l\in \Z}P_r(l)=O(1/\sqrt{r})$, so that the last sum above is 
$O(\sqrt{\eps k})$ uniformly in $l$, which implies
(\ref{eq:loclimpbar}).

(ii) We
rely on the refined bounds on the local limit theorem stated in
\cite[Theorem VII.3.16]{petrov75}, which entail that, for every
$\beta>0$, 
$$C(\beta)=\sup_{l\in \Z,r\geq
  1}\left(1+\left(\frac{|l|}{\sqrt{r}}\right)^\beta\right)\sqrt{r}P_r(l)<\infty\, .$$
Therefore, using \eqref{eq:prk} and \eqref{eq:barpk}, it holds that for every $k\geq 0,l\in
\Z$, 
\begin{align*}
  \sqrt{k}\bar{P}_k(l)&\lesssim
\sum_{r=1}^{k}\frac{C(\beta)^2}{\sqrt{r(k+1-r)}}\cdot\frac{1}{\left(1+\left(\frac{|l|}{\sqrt{r}}\right)^\beta\right)
  \left(1+\left(\frac{|l|}{\sqrt{k+1-r}}\right)^\beta\right)}\\
&\lesssim\Big( \sum_{r=1}^{k}\frac{C(\beta)^2}{\sqrt{r(k+1-r)}}\Big) \cdot \frac{1}{\left(1+\left(\frac{|l|}{\sqrt{k}}\right)^\beta\right)} \, ,
\end{align*}
which gives the result since the sum is a converging Riemann sum. 
(iii) Observe that (i)
entails that
$$\sup_{k\geq 1}\sup_{l\in \Z}\sqrt{k}\bar{P}_k(l)<\infty\, ,$$
so that the wanted result is a direct application of
\cite[ Theorem 33.1.1]{bretagnolle04}. 
\end{proof}

We can now proceed with the proofs of Propositions
\ref{sec:scal-limit-adjac-1} and \ref{sec:scal-limit-adjac-2}.

\begin{proof}[{Proof of Proposition \ref{sec:scal-limit-adjac-1}}]
  We apply (\ref{eq:adjacent}) to $m=m_N$ and $l=l_N$, to obtain that
  $$\Xi(m_N,l_N)= \sum_{k\geq 1}\widehat{\mu}_\bullet(k)Q^*_{k+1}(m_{N}+1)\bar{P}_k(l_N)$$
  Next, fix $\eps>0$.   
    The local limit theorem and dominated
      convergence, justified by the fact that
      $N^{\frac{\alpha}{\alpha-1}}Q^*_{k+1}(\cdot)$ and
      $N^{\frac{1}{2(\alpha-1)}}\bar{P}_k(\cdot)$ are uniformly bounded
      functions for $k\geq \eps N^{\frac{1}{\alpha-1}}$, entail that
  \begin{eqnarray*} &&2\scal^{1-\frac{1}{2\alpha}}N\cdot N^{\frac{\alpha}{(\alpha-1)}}\cdot N^{\frac{1}{2(\alpha-1)}}\,
      \sum_{k> \eps
        N^{1/(\alpha-1)}}\widehat{\mu}_\bullet(k)Q^*_{k+1}(m_{N}+1)\bar{P}_k(l_N)\\ &  \xrightarrow[ \eqref{eq:ballot},\eqref{eq:cycliclemmaQ},  \eqref{eq:whmubulletasymp}, \eqref{eq:loclimpbar}]{}&
      \int_\eps^\infty\frac{
        \d t}{\Gamma(-\alpha)t^\alpha}q^{[1/\alpha]}_t(x)\bar{p}_t(z)\,
      ,  \end{eqnarray*}
      which converges to $\GG(x,z)/x$ as $\eps\to 0$. 
      It remains to show the smallness of the remainder term
      \begin{multline*}
              N\cdot N^{\frac{\alpha}{\alpha-1}}\cdot N^{\frac{1}{2(\alpha-1)}}\,
      \sum_{k\leq \eps
        N^{1/(\alpha-1)}}\widehat{\mu}_\bullet(k)Q^*_{k+1}(m_{N}+1)\bar{P}_k(l_N)\\
      =N^{\frac{4\alpha-1}{2(\alpha-1)}}\sum_{k\leq \eps
        N^{1/(\alpha-1)}}\widehat{\mu}_\bullet(k)\frac{k+1}{m_N+1}Q_{m_{N}+1}(-k-1)\bar{P}_k(l_N). 
    \end{multline*}
    By the local limit theorem, it holds that
    $m_N^{1/\alpha}Q_{m_N+1}(-k-1)$ is uniformly bounded in $k$, so we
    can bound the latter term by a constant times
    \begin{equation}
      \label{eq:intermbound}
          N^{\frac{2\alpha-3}{2(\alpha-1)}}\sum_{k\leq \eps
            N^{1/(\alpha-1)}}\frac{\bar{P}_k(l_N)}{k^{\alpha-1}}\, .
        \end{equation}
If $\alpha\in (1,3/2)$, we simply bound $\sqrt{k}\bar{P}_k(l)$ by
a constant using (i) or (ii) in Lemma \ref{sec:scal-limit-adjac-4}, yielding
an upper bound of the form of a constant times
$$N^{\frac{2\alpha-3}{2(\alpha-1)}}\sum_{k\leq
  \eps N^{\frac{1}{\alpha-1}}}k^{1/2-\alpha}
\lesssim\eps^{3/2-\alpha}\, ,$$
which goes to $0$ as $\eps\to 0$, as wanted. 
Now suppose that $\alpha\in
[3/2,2)$. If $z=0$, then
the result follows directly from Fatou's lemma and the fact that
$\GG(x,0)=\infty$. So we assume that $z\neq 0$ and use the
bound (ii) in Lemma
\ref{sec:scal-limit-adjac-4} for $\beta=1$, yielding
$\bar{P}_k(l_N)\leq 
C_1(1+|l_N|)^{-1}=O(N^{-1/2(\alpha-1)})$ (this is where we use that
$z\neq 0$), and this gives 
an upper bound for (\ref{eq:intermbound}) of the form of a constant
multiple of 
$$N^{\frac{\alpha-2}{\alpha-1}}\sum_{k\leq
  \eps N^{\frac{1}{\alpha-1}}}k^{1-\alpha}
\lesssim \eps^{2-\alpha}\, .$$
Again, this goes to $0$ as $\eps\to 0$. 
The proof of \eqref{eq:adjacent_restricted2} goes along similar lines,
but is simpler: in the error term, it suffices to bound
$\bar{P}_k((-\eta N^{1/2(\alpha-1)},\eta N^{1/2(\alpha-1)}))$ by $1$. 
\end{proof}

Next, we turn to the comparison between $\Xi$ and $\widetilde{G}$.

\begin{proof}[Proof of Lemma \ref{sec:scaling-limit-buckle-1}]
  For $m\geq 1$ and $A\subset \Z$, write
  $\widetilde{G}(\{m\}\times
  A)=\widehat{\mathrm{GW}}^{(1)}_\bq(\#C_\circ\ind_{\{M=m,L\in
    A\}})$, 
  so that
  $$ (\zbq-\eps)m\, \Xi(\{m\}\times A)+ R_m(\eps)\leq
  \widetilde{G}(\{m\}\times A)\leq (\zbq+\eps)m\, \Xi(\{m\}\times A)+
  R_m(\eps)\, ,$$
  where $R_m(\eps)=\widehat{\mathrm{GW}}^{(1)}_\bq(\#C_\circ\ind_{\{M=m,|\#C_\circ-\zbq
    m|>\eps m\}})$. 
  It remains to show that $R_m(\eps)$ is exponentially small.
  To this end, let us work under the law
  $\mathrm{GW}_\bq^{\otimes \infty}$ of an infinite forest of i.i.d.\
  trees with law $\mathrm{GW}_\bq$, and let $v_1,v_2,\ldots$ be the list of white
  vertices of this infinite forest listed in lexicographical (depth-first) order. If
  $\chi_i$ is the number of children of $v_i$ plus $1$, then
  $(\chi_i : i\geq 1)$ is an i.i.d. sequence with geometric law
  $\sum_{k\geq 1}\mu_\circ(k-1)\delta_k$, which has expectation
  $\zbq$, and exponential
  moments. Hence, for a fixed $\varepsilon>0$, Cram\'er's theorem
  implies that,  for some $C(\eps)\in (0,\infty)$, 
  \begin{equation}
    \label{eq:cramergw}
      \mathrm{GW}_\bq^{\otimes \infty}\left(\left|\sum_{i=1}^m\chi_i-\zbq m\right|>\eps
        m\right)\leq  \mathrm{e}^{-C(\eps) m}
    \end{equation}
    for every $m\geq 1$. On the other
    hand, letting $\#V_\circ[k]$ be the number of white vertices of the first $k$ trees
  in the infinite forest,  Lemma
  \ref{sec:boltzmann-measures-1} implies that $R_m(\eps)$ writes
\begin{align*}
\widehat{\mathrm{GW}}^{(1)}_\bq(\#C_\circ\ind_{\{M=m,|\#C_\circ-\zbq
    m|>\eps m\}})
  &=\sum_{k\geq 1}\widehat{\mu}_\bullet(k)\mathrm{GW}_\bq^{\otimes
    \infty}\left( \left(\sum_{i=1}^{m+1}\chi_i\right)
    \ind_{\{\#V_\circ[k+1]=m+1,\left|\sum_{i=1}^{m+1}
    \chi_i-\zbq m\right|>\eps m\}}\right)\\
  &\leq \mathrm{GW}_\bq^{\otimes
    \infty}\left( \left(\sum_{i=1}^{m+1}\chi_i\right)
    \ind_{\{\left|\sum_{i=1}^{m+1}
    \chi_i-\zbq m\right|>\eps m\}}\right)\, ,
\end{align*}
  where the second inequality is obtained by
  bounding the indicator that $\#V_\circ[k+1]=m+1$ by $1$, and using
  the fact that $\widehat{\mu}_\bullet$ is a probability distribution.
  By the Cauchy-Schwarz inequality, the latter
  quantity is bounded by 
$$\mathrm{GW}_\bq^{\otimes\infty}\left(\left(\sum_{i=1}^{m+1}\chi_i\right)^2\right)^{1/2}\mathrm{GW}_\bq^{\otimes\infty}\left(\left|\sum_{i=1}^{m+1}\chi_i-\zbq
  m\right|>\eps m\right)^{1/2}\,
,$$
which, by (\ref{eq:cramergw}), is bounded by $ \mathrm{e}^{-c(\eps)m}$ for some
$c(\eps)\in (0,C(\eps))$, as wanted.   
\end{proof}

Finally, the proof of Proposition \ref{sec:scal-limit-adjac-2}
consists in using Lemma \ref{sec:scaling-limit-buckle-1} to transfer
the estimates of Proposition \ref{sec:scal-limit-adjac-1} to $\widetilde{G}$.

\begin{proof}[Proof of Proposition \ref{sec:scal-limit-adjac-2}]
  The limit (\ref{eq:adjacent_biased}) is a direct consequence of \eqref{eq:adjacent_restricted},
  with Lemma \ref{sec:scaling-limit-buckle-1}. 
To obtain (\ref{eq:limsuploc}),  we fix $\eta\in (0,\infty]$ and write: 
$$
\widetilde{G} (\{m\}\times [-  \eta m^{\frac{1}{2\alpha}},  \eta m^{\frac{1}{2\alpha}}])
  \leq (\zbq+1)m\, \Xi(\{m\}\times [-  \eta m^{\frac{1}{2\alpha}},  \eta m^{\frac{1}{2\alpha}}])+ \mathrm{e}^{-c(1)m}.
$$
Applying (\ref{eq:adjacent_restricted2}) for $x=1$
and
$m=N^{\alpha/(\alpha-1)}$, we obtain (\ref{eq:limsuploc}) by first
letting $N\to\infty$ and then $\eta\downarrow 0$. 
\end{proof}

\subsection{Joint convergence of the belt and the buckle}\label{sec:joint-convergence}

We can now give the proof of Proposition \ref{prop:scalingbelt}, which
will consist in studying a joint convergence of the pair
$((\boldsymbol{\mathcal{T}},\hat{v}_\circ),(\boldsymbol{\mathcal{P}},\hat{v}'_\circ,c))$
under the sigma-finite measure
$$\mathrm{U}_\bq:=\widehat{\mathrm{GW}}_\bq(\d 
(\boldsymbol{\mathcal{T}},\hat{v}_\circ))
\widetilde{\mathrm{GW}}_\bq(\d 
(\boldsymbol{\mathcal{P}},\hat{v}'_\circ,c))\ind_{\{\ell_{\mathcal{T}}(\hat{v}_\circ)=-\ell_{\mathcal{P}}(\hat{v}'_\circ)\}}$$
appearing in Proposition \ref{sec:boltzm-meas-reduct}, and which, we
recall, is pushed to the measure $\tilde{w}_{ \bq}^{{2 \bullet}}/2$ by the mapping
$\Phi$. As explained at
the beginning of this 
section, we will be interested in conditioning this measure on the total number
$\#V_\circ=M(\mathcal{T})+M(\mathcal{P})$ of white vertices of the
unicyclomobile $\Phi((\boldsymbol{\mathcal{T}},\hat{v}_\circ),
(\boldsymbol{\mathcal{P}},\hat{v}'_\circ,c))$ being some large integer $n$. If $F$ is some
non-negative measurable function, we have
\begin{equation}
  \label{eq:UqGWq}
  \mathrm{U}_\bq(F(\boldsymbol{\mathcal{T}},\hat{v}_\circ)\ind_{\{\#V_\circ=n\}})
=\widehat{\mathrm{GW}}_\bq\left(\widetilde{G}(n-M,-L)F(\boldsymbol{\mathcal{T}},\hat{v}_\circ)
\right)
\end{equation}
where, as before,
$\widetilde{G}(m,l)=\widetilde{\mathrm{GW}}_\bq(M=m,L=l)$. 
Now recall the notation of Section \ref{sec:tightness}, and consider the rescaled
Lukasiewicz and label process 
of $\boldsymbol{\mathcal{T}}$:
$$S^{(n)}=\frac{2 S^{\boldsymbol{\mathcal{T}}}_{\lfloor n\cdot\rfloor}}{(\scal
  n)^{\frac{1}{\alpha}}}\,
,\qquad L^{(n)}=\frac{L^{\boldsymbol{\mathcal{T}}}_{\lfloor n\cdot\rfloor}}{(\scal
  n)^{\frac{1}{2\alpha}}}\, .$$
We also let $a$ be the rank of $\hat{v}_\circ$ in
the depth-first order of white vertices of $\mathcal{T}$, that is the
integer such that $v^\circ_{a}=\hat{v}_\circ$, and let
$\theta=\#V_\circ(\mathcal{T})$. 
\begin{prop}
  \label{sec:joint-convergence-3}
  It holds that, for every bounded uniformly continuous function $F$, 
$$
      n^{\frac{1}{2\alpha}}\mathrm{U}_\bq\left(F\left(S^{(n)},L^{(n)},\frac{a}{n},\frac{\theta}{n}\right)\ind_{\{\#V_\circ=n-2\}}\right)    
 \xrightarrow[n\to\infty]{}  \mathrm{Cst} \cdot
                                   \mathbf{N}^{\bullet}\left( F (X,Z,
                                   t_{\bullet}, \sigma) \cdot
                                   \GG\left(1-\sigma,-Z_{t_{\bullet}}\right)\right)\, . $$
\end{prop}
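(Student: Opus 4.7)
The plan is to combine identity \eqref{eq:UqGWq} with the sharp asymptotic from Proposition \ref{sec:scal-limit-adjac-2}. Setting $M = \theta - 1$, the condition $\#V_\circ = n-2$ translates to $M(\mathcal{P}) = n-1-\theta$, so the left-hand side of the proposition becomes
\begin{equation*}
n^{1/(2\alpha)}\, \widehat{\mathrm{GW}}_\bq\!\left(\widetilde{G}(n-1-\theta, -L)\, F(S^{(n)}, L^{(n)}, a/n, \theta/n)\right).
\end{equation*}
Proposition \ref{sec:scal-limit-adjac-2} identifies the pointwise asymptotic $\widetilde{G}(n-1-\theta, -L) \sim \GG\bigl(1 - \theta/n,\, -L/(\scal n)^{1/(2\alpha)}\bigr)/n^{1-1/(2\alpha)}$, producing the continuous weight $\GG(1-\sigma, -Z_{t_\bullet})$ that appears in the claimed limit.

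The heart of the proof is then to establish a scaling limit for the sigma-finite measure $\widehat{\mathrm{GW}}_\bq$ itself: for any bounded continuous $G$,
\begin{equation*}
n^{1/\alpha - 1}\, \widehat{\mathrm{GW}}_\bq\!\left(G(S^{(n)}, L^{(n)}, a/n, \theta/n)\right) \xrightarrow[n\to\infty]{}\ C\, \mathbf{N}^\bullet\!\big(G(X,Z,t_\bullet,\sigma)\big),
\end{equation*}
for an explicit constant $C$. To prove this, I would use the representation \eqref{def:gwh} of $\widehat{\mathrm{GW}}_\bq$ as $\mu_\circ(0)^{-1}\,\mathrm{GW}_\bq$ tensored with the counting measure on white leaves, condition on $\theta = m$, and combine four inputs: the cyclic-lemma / local-limit-theorem estimate $\mathrm{GW}_\bq(\theta = m) \sim 2/(\scal^{1/\alpha} |\Gamma(-1/\alpha)|\, m^{1+1/\alpha})$; the invariance principle \eqref{first_couple} for Boltzmann mobiles conditioned on $\theta = m$; a law-of-large-numbers argument on the geometric distribution $\mu_\circ$ giving $\#\widehat{V}_\circ(\mathcal{T}) \sim \mu_\circ(0) \theta$; and the fact that a uniformly chosen white leaf has depth-first position $a$ with $a/\theta$ asymptotically uniform on $[0,1]$. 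Converting the sum over $m$ into a Riemann integral, then applying the disintegration \eqref{eq:excursionmeasuredecomp} and the identity $\mathbf{N}^\bullet(H) = \mathbf{N}(\int_0^\sigma H\,\mathrm{d} t)$, yields the claimed convergence. Since $n^{1/(2\alpha)} \cdot n^{-(1-1/(2\alpha))} = n^{1/\alpha - 1}$, combining this with the pointwise asymptotic for $\widetilde{G}$ and applying it with $G \approx F \cdot \GG(1-\cdot,\cdot)$ produces the exact normalization and limit of the proposition.

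The principal obstacle is the rigorous justification of exchanging the sigma-finite limit of $\widehat{\mathrm{GW}}_\bq$ with the pointwise replacement $\widetilde{G} \approx \GG/n^{1-1/(2\alpha)}$, which requires uniform integrability in three problematic regimes. When $\theta$ is close to $n$ so that $1 - \sigma$ is near zero, the comparison of Lemma \ref{sec:scaling-limit-buckle-1} together with the exponential left-tail bounds \eqref{eq:lefttailboundbetabig}--\eqref{eq:lefttailboundbetasmall} provide the needed control on $\widetilde{G}(n-1-\theta, \cdot)$. In the dilute phase $\alpha \in [3/2,2)$, the singularity $\GG(x,0) = \infty$ forces one to treat small values of $L$ with care: the estimate \eqref{eq:limsuploc} on the concentration function of $\widetilde{G}$ must be combined with a Gaussian small-ball bound $\mathbf{N}^\bullet(|Z_{t_\bullet}| < \eta) \to 0$ as $\eta \downarrow 0$, which follows from Proposition \ref{b_Brownian_Brigde} since $Z_{t_\bullet}$ is conditionally Gaussian given $X$ with non-degenerate variance. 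Finally, the regime of small $\theta$ is controlled by the uniform bound $\widetilde{G}(m,l) \leq C$ combined with the tail $\mathrm{GW}_\bq(\theta = m) = O(m^{-1-1/\alpha})$. Assembling these uniform estimates with Fatou-type arguments and the pointwise convergences from Propositions \ref{sec:scal-limit-adjac-1}--\ref{sec:scal-limit-adjac-2} completes the proof.
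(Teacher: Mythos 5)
Your overall strategy matches the paper's: rewrite via identity (\ref{eq:UqGWq}), feed in the pointwise asymptotic for $\widetilde{G}$ from Proposition \ref{sec:scal-limit-adjac-2}, compare $\widehat{\mathrm{GW}}_\bq(\cdot\,|\,M=m)$ with a uniformly leaf-marked $\mathrm{GW}_\bq$-mobile via the invariance principle, and convert the sum over $m$ into an integral against $\mathbf{N}^\bullet$. The delicate step, as you correctly identify, is uniform integrability over the three degenerate regimes, and this is exactly what the paper isolates into Lemma \ref{sec:joint-convergence-1}.

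Where your proposal genuinely breaks down is in the small-$\theta$ regime. You propose to control $\sum_{m\leq\eps n}\sum_l G(m,l)\widetilde{G}(n-1-m,-l)$ via ``the uniform bound $\widetilde{G}(m,l)\leq C$ combined with the tail $\mathrm{GW}_\bq(\theta=m)=O(m^{-1-1/\alpha})$.'' But the relevant factor on the belt side is the sigma-finite mass $G(\{m\}\times\Z)=\widehat{\mathrm{GW}}_\bq(M=m)\sim c\,m^{-1/\alpha}$ (not the much smaller probability $\mathrm{GW}_\bq(\theta=m)$), and with the crude $O(1)$ bound on $\widetilde{G}$ the resulting estimate is
\[
n^{\frac{1}{2\alpha}}\sum_{m\leq\eps n}\widehat{\mathrm{GW}}_\bq(M=m)\cdot O(1)\;\asymp\;\eps^{1-\frac{1}{\alpha}}\,n^{1-\frac{1}{2\alpha}},
\]
which blows up. What is actually needed is a bound that captures the concentration of $\widetilde{G}$ and $G$ in the label variable $l$, so that the convolution $\sum_l G(m,l)\widetilde{G}(n-1-m,-l)$ inherits the correct $n^{-1+1/(2\alpha)}$ scale. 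The paper achieves this by expanding the Green function $G$ over the renewal structure (as in (\ref{eq:expansionG})), applying the cyclic lemma to re-express the $Q^*$ factors, and then using the concentration-function estimate (\ref{eq:concentrationpbar}) — an application of Bretagnolle's theorem to convolutions of the bridge laws $\bar P_{k_1}*\cdots*\bar P_{k_y}$ — together with the left-tail stretched-exponential bounds. These are the essential ingredients that make the error terms manageable, and they are absent from your sketch. The same machinery is needed in the small-buckle regime, which you wave at through Lemma \ref{sec:scaling-limit-buckle-1} and the tail bounds but without a concrete argument.

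A secondary, minor remark: your intermediate claim of a scaling limit for $\widehat{\mathrm{GW}}_\bq$ with bounded test functions is correct and would be a clean way to organize the argument, but you cannot then simply ``apply it with $G\approx F\cdot\GG$'' since $\GG(1-\sigma,-Z_{t_\bullet})$ is neither bounded nor continuous (it blows up as $\sigma\uparrow 1$, and for $\alpha\geq 3/2$ also as $Z_{t_\bullet}\to 0$); one is forced to do the regime-by-regime analysis inside the sum anyway, so the two-step organization buys less than it seems. The singularity at $Z_{t_\bullet}=0$ you handle correctly via (\ref{eq:limsuploc}) on the discrete side and a small-ball Gaussian estimate on the continuum side; this is the right idea and is what the paper does in (\ref{eq:controllabel}).
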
                                 
                               
By
applying this result to $F=1$, we obtain
\begin{equation}
  n^{\frac{1}{2\alpha}}\mathrm{U}_\bq\left(\#V_\circ=n-2\right) \xrightarrow[n\to\infty]{}  \mathrm{Cst} \cdot
                                   \mathbf{N}^{\bullet}\left(
                                   \GG\left(1-\sigma,-Z_{t_{\bullet}}\right)\right)\, , 
                                 \label{eq:asymptoUn}
\end{equation}
which allows to obtain the conditioned result stated in Proposition
\ref{prop:scalingbelt} by dividing. 
To prove Proposition \ref{sec:joint-convergence-3}, we rewrite the
left-hand side of the displayed expression of the statement by using
(\ref{eq:UqGWq}), as
\begin{align}
  \label{eq:beltscalingdisplay}
&n^{\frac{1}{2\alpha}} \widehat{\mathrm{GW}}_\bq\left(\widetilde{G}(n-M,-L)F\left(S^{(n)},L^{(n)},
    \frac{a}{n},\frac{M}{n}\right)\right) \\
  &  =
  n^{1+\frac{1}{2\alpha}}\int_0^1\d  x  \, \widehat{\mathrm{GW}}_\bq(M=\lceil
nx\rceil)\, \widehat{\mathrm{GW}}_\bq\left(\widetilde{G}(n-\lceil
  nx\rceil,-L^{(n)}_{a_n/n})F\left(S^{(n)},L^{(n)},
    \frac{a_n}{n},\frac{\lceil nx\rceil}{n}\right)
  \, \Big|\, M=\lceil nx\rceil\right)\, .\nonumber
\end{align}
Note that, by the definition of $\widehat{\mathrm{GW}}_\bq$ just
before \eqref{def:gwtilde}, it holds that $\widehat{\mathrm{GW}}_\bq(M=n)=\sum_{y\geq
  0}\Xi^{*y}(\{n\}\times \Z)$. Since $n^{2-1/\alpha}\, \Xi(\{n\}\times \Z)$ converges as $n\to\infty$ by \eqref{eq:adjacent_restricted2}, a standard
renewal theorem implies the existence of a
constant $c\in (0,\infty)$ such that $\widehat{\mathrm{GW}}_\bq(M=\lceil
nx\rceil)\sim c\lceil
nx\rceil^{-1/\alpha}$, with a uniformly bounded error over values of
$x$ in a compact subset of $(0,1]$.
The next lemma says that, in a sense, it is almost impossible to
distinguish a large white leaf-pointed random mobile with law
$\widehat{\mathrm{GW}}_\bq$ from a large random mobile with law
$\mathrm{GW}_\bq$ of same size, marked by a uniformly chosen white
leaf.

\begin{lem}
  \label{sec:joint-convergence-2}
  The total variation distance between the laws
  $$\widehat{\mathrm{GW}}_\bq(\d (\boldsymbol{\mathcal{T}},\hat{v}_\circ)\,
  |\, M=n)\qquad
  \mbox{ and }\qquad \mathrm{GW}_\bq(\d \boldsymbol{\mathcal{T}}\,
    |\, \#V_\circ=n+1)\frac{\sum_{v\in
        \widehat{V}_\circ(\mathcal{T})}\delta_v(\d \hat{v}_\circ)}{\#\widehat{V}_\circ(\mathcal{T})}$$
    converges to $0$ as $n\to\infty$.
    Moreover, if we let $N_\circ(k)$ be such that $v^\circ_{N_\circ(k)}$ is
    the $k$-th white leaf appearing in the list
    $v_1^\circ,v_2^\circ,\ldots,v_{\#V_\circ}^\circ$ of white vertices of $\mathcal{T}$, then, under $\mathrm{GW}_\bq(\cdot\,
    |\, \#V_\circ=n)$, we have
    $$\Big(\frac{N_{\circ}(\lfloor \#\widehat{V}_\circ t
      \rfloor)}{n} : 0\leq t\leq 1\Big)
  \underset{n\to\infty}{\longrightarrow} \big(t : 0\leq t\leq 1\big)\, ,$$
in probability for the uniform norm. 
\end{lem}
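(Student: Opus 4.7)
The plan is to establish the two assertions in turn: the total-variation estimate reduces to a concentration statement on $\#\widehat{V}_\circ$, from which the uniform LLN on leaf positions will follow along the same lines.

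Unwrapping the definition $\widehat{\mathrm{GW}}_\bq(\d (\boldsymbol{\mathcal{T}}, \hat{v})) = \mu_\circ(0)^{-1} \mathrm{GW}_\bq(\d \boldsymbol{\mathcal{T}}) \sum_{v \in \widehat{V}_\circ(\mathcal{T})} \delta_v(\d \hat{v})$, the conditioning on $M=n$ (equivalently $\#V_\circ = n+1$) shows that both measures distribute the mark $\hat{v}$ uniformly over $\widehat{V}_\circ(\mathcal{T})$ given the mobile. The TV distance therefore reduces to that of the two marginals on $\boldsymbol{\mathcal{T}}$; since the first marginal is $\mathrm{GW}_\bq(\cdot | \#V_\circ = n+1)$ size-biased by $\#\widehat{V}_\circ$, the standard size-bias bound gives
$$d_{\mathrm{TV}} \leq \tfrac{1}{2}\, \E\Bigl[ \bigl| \#\widehat{V}_\circ/\E[\#\widehat{V}_\circ | \#V_\circ = n+1] - 1 \bigr|\Bigr],$$
and everything reduces to showing $\#\widehat{V}_\circ/(n+1) \to \mu_\circ(0)$ in $L^1$ under the conditioned law.

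For this concentration, I use the key observation that under unconditional $\mathrm{GW}_\bq$ the sequence of pairs $(X_i, Y_i)$ --- with $X_i := k_{v_i}(\mathcal{T})$ the number of black children of $v_i$ and $Y_i := S^{\boldsymbol{\mathcal{T}}}_{i+1} - S^{\boldsymbol{\mathcal{T}}}_i + 1$ the number of grandchildren --- is i.i.d.\ in lexicographic order. Since $v_i \in \widehat{V}_\circ$ iff $X_i = 0$ and since the conditioning $\#V_\circ = n+1$ is a function of the Lukasiewicz path $(Y_i)$ alone, conditionally on $(Y_i)$ the indicators $\ind_{X_i = 0}$ remain independent Bernoullis with parameter $\ind_{Y_i = 0}\,\mu_\circ(0)/\mu_{\circ\bullet}(0)$, whence
$$\E[\#\widehat{V}_\circ | (Y_i), \#V_\circ = n+1] = \frac{\mu_\circ(0)}{\mu_{\circ\bullet}(0)}\, \#\{i \leq n : Y_i = 0\},$$
with conditional variance at most $n+1$. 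A classical LLN for the Lukasiewicz bridge yields $\#\{i \leq n : Y_i = 0\}/(n+1) \to \mu_{\circ\bullet}(0)$ in $L^2$ under the conditioning, and combining these two ingredients gives $\#\widehat{V}_\circ/(n+1) \to \mu_\circ(0)$ in $L^2$, a fortiori in $L^1$.

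For the uniform LLN for leaf positions, note that $N_\circ$ is the generalized inverse of the non-decreasing process $k \mapsto G_n(k) := \#\{i \leq k : X_i = 0\}$, so the claim is equivalent to the uniform convergence in probability
$$\sup_{u \in [0,1]} \bigl| G_n(\lfloor un \rfloor)/n - \mu_\circ(0) u \bigr| \to 0.$$
Since $u \mapsto G_n(\lfloor un \rfloor)/n$ is non-decreasing and the candidate limit is continuous on $[0,1]$, a Glivenko--Cantelli/Dini argument reduces this to pointwise convergence at each $u \in [0,1]$, which follows by rerunning the preceding paragraph's argument on partial sums up to $\lfloor un \rfloor$ rather than up to $n$. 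The main obstacle throughout is the LLN control on Lukasiewicz step statistics under the bridge-plus-positivity conditioning $\{\#V_\circ = n+1\}$; this is however routine, since the unconditional deviations of $\#\{i \leq k : Y_i = 0\}/k$ decay exponentially by the i.i.d.\ LLN, whereas the conditioning event has probability of order $n^{-1-1/\alpha}$ by the local limit theorem recalled in \eqref{eq:loclimstar}, so the exponential bounds survive after conditioning.
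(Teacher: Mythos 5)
Your proof is correct and reaches the same conclusion, but it takes a slightly more layered route than the paper. You first condition on the Lukasiewicz increments $(Y_i)$, observe that conditionally the leaf indicators $\ind_{X_i=0}$ are independent Bernoulli$\bigl(\ind_{Y_i=0}\mu_\circ(0)/\mu_{\circ\bullet}(0)\bigr)$, and then reduce matters to an LLN for $\#\{i:Y_i=0\}$ under the Łukasiewicz bridge conditioning. The paper instead observes directly that the unconditioned indicators $\zeta_i=\ind_{X_i=0}$ are already i.i.d.\ Bernoulli$(1/\zbq)$ (since the pairs of ``number of black children, number of grandchildren'' are i.i.d.\ in depth-first order), applies Cram\'er once, and absorbs the conditioning via Cauchy--Schwarz and the polynomial lower bound $\mathrm{GW}_\bq(\#V_\circ=n+1)\gtrsim n^{-1-1/\alpha}$ from \eqref{eq:loclimstar}. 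Your detour through $(Y_i)$ is not wrong, but it is somewhat circular: the LLN for $\#\{Y_i=0\}$ under the bridge that you invoke at the end is justified by exactly the same ``exponential concentration survives polynomial conditioning'' argument that the paper applies directly to $\#\widehat V_\circ$, so you end up using the same tool twice where once suffices. One small notational slip: the conditional expectation should be $\tfrac{\mu_\circ(0)}{\mu_{\circ\bullet}(0)}\#\{1\leq i\leq n+1:Y_i=0\}$ (the sum runs over $n+1$ white vertices, not $n$), but this does not affect the asymptotics. For the uniform LLN for leaf positions, your Dini-type reduction to pointwise convergence on a grid is essentially the paper's union-bound argument, so the two proofs coincide there.
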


\begin{proof}
Both statements are a consequence of exponential concentration of the
number of white leaves in random mobiles, analog to that discussed in
the proof of Proposition \ref{sec:scal-limit-adjac-2}. Recall that, in
a $\mathrm{GW}_\bq^{\otimes\infty}$-distributed random forest, we let
$\chi_i-1$ be the number of children of the $i$-th white vertex
$v_i^\circ$ in depth-first order of exploration. This expresses the
fact that $v_i^\circ$ is a white leaf if and only if
$\zeta_i:=\ind_{\{\chi_i-1=0\}}=1$, the latter being i.i.d.\ Bernoulli random
variables with expectation $1/\zbq$ by
\eqref{eq:mucirc_mubullet}. Then, recalling that $\#V_\circ[1]$ is the number
of white vertices of the first tree in the above infinite forest, we have 
$$\mathrm{GW}_\bq\big(|\#\widehat{V}_\circ-n/\zbq\big|>\eps n\, |\, \#V_\circ=n\big)=\frac{
\mathrm{GW}_\bq^{\otimes
  \infty}\Big(\#V_\circ[1]=n,\Big|\sum_{i=1}^{n}\zeta_i-n/\zbq\Big|>\eps
  n\Big)}{\mathrm{GW}_\bq(\#V_\circ=n)}\, ,$$
which decays exponentially fast in $n$ for every fixed $\eps>0$ by Cram\'er's theorem and the fact
that $\mathrm{GW}_\bq(\#V_\circ=n)=Q^*_1(n)$ has a power-law decay, by
(\ref{eq:loclimstar}).
Using a simple union bound, we even obtain that the
process $(n^{-1}\sum_{i=1}^{\lfloor nt\rfloor}\zeta_i : 0\leq t\leq 1)$
under $\mathrm{GW}_\bq(\cdot\, |\, \#V_\circ=n)$ is exponentially
concentrated around $(t\#\wh{V}_\circ/n : 0\leq t\leq 1)$, which easily implies
the second statement since $N_\circ(\lfloor \cdot\rfloor)$ can be seen
as a right-continuous inverse of this process. We leave details to the reader. 

To prove the first statement, we express the total variation distance
between the two measures as
\begin{multline}
  \label{eq:totalvarlem}
\sum_{(\boldsymbol{\mathcal{T}},\hat{v}_\circ):M(\mathcal{T})=n}\left|\widehat{\mathrm{GW}}_\bq(\{(\boldsymbol{\mathcal{T}},\hat{v}_\circ)\}\,
  |\, M=n)-\frac{\mathrm{GW}_\bq(\{\boldsymbol{\mathcal{T}}\}\, |\,
    \#V_\circ=n+1)}{\#\widehat{V}_\circ(\boldsymbol{\mathcal{T}})}\right|
\\=\sum_{(\boldsymbol{\mathcal{T}},\hat{v}_\circ):M(\mathcal{T})=n}\left|\frac{\mathrm{GW}_\bq(\{\boldsymbol{\mathcal{T}}\})}{\widehat{\mathrm{GW}}_\bq( M=n)}-\frac{\mathrm{GW}_\bq(\{\boldsymbol{\mathcal{T}}\}\, |\,
  \#V_\circ=n+1)}{\#\widehat{V}_\circ(\boldsymbol{\mathcal{T}}) \mathrm{GW}_\bq( \#V_\circ=n+1)}\right|
\\ =
\mathrm{GW}_\bq\left(\frac{\ind_{\{M=n\}}}{\mathrm{GW}_\bq(\#V_\circ=n+1)}\left|\frac{\#\widehat{V}_\circ\mathrm{GW}_\bq(\#V_\circ=n+1)}{\widehat{\mathrm{GW}}_\bq(\#V_\circ=n+1)}-1\right|\right)\, .
\end{multline}
Now, since
$\widehat{\mathrm{GW}}_\bq(M=n)=\mathrm{GW}_\bq(\#\widehat{V}_\circ\ind_{\{\#V_\circ=n+1\}})$,
we have, for every $\eps>0$, 
$$\left|\widehat{\mathrm{GW}}_\bq(M=n)-\frac{n}{\zbq}\mathrm{GW}_\bq(\#V_\circ=n+1)\right|\leq
\eps
n\, \mathrm{GW}_\bq(\#V_\circ=n+1)+\mathrm{GW}_\bq\left(\#\widehat{V}_\circ\ind_{\{\#V_\circ=n+1,|\#\widehat{V}_\circ-n/\zbq|>\eps
    n\}}\right)\, ,$$
where
$$\mathrm{GW}_\bq\left(\#\widehat{V}_\circ\ind_{\{\#V_\circ=n+1,|\#\widehat{V}_\circ-n/\zbq|>\eps
    n\}}\right)\leq \mathrm{GW}_\bq^{\otimes \infty}\Big(\Big(\sum_{i=1}^{n+1}\zeta_i\Big)^2\Big)^{\frac{1}{2}} \cdot \mathrm{GW}_\bq^{\otimes
  \infty}\Big(\Big|\sum_{i=1}^n\zeta_i-n/\zbq\Big|>\eps
  n\Big)^{\frac{1}{2}}$$
has exponential decay in $n$. Therefore, using again that
$\mathrm{GW}_\bq(\#V_\circ=n+1)$ has a power-law decay, it holds that 
$\mathrm{GW}_\bq(\#V_\circ=n+1)/\widehat{\mathrm{GW}}_\bq(M=n)\sim \zbq/n$ as
$n\to\infty$. Plugging this back into \eqref{eq:totalvarlem}, and
applying a similar reasoning as above, distinguishing whether $\zbq \#\widehat{V}_\circ/n$ is at a distance greater than $\eps$ from $1$ or not, and using the Cauchy-Schwarz inequality, we obtain the first wanted result.
\end{proof}

Let us now fix $\eps,\eta\in (0,1/2)$ and consider the integral expression (\ref{eq:beltscalingdisplay})
restricted to values of $x$ in $[\eps,1-\eps]$, and to values of
$L^{(n)}_{a_n/n}$ in $[-\eta,\eta]^{\mathrm{c}}$. 
Then, by the first statement of the previous lemma, this integral
expression is equivalent to
$$cn^{1+\frac{1}{2\alpha}}\int_\eps^{1-\eps}\frac{ \mathrm{d}x}{(nx)^{\frac{1}{\alpha}}}\mathrm{GW}_\bq\left(\frac{1}{\#\wh{V}_\circ}\sum_{k=1}^{\#\wh{V}_\circ}\widetilde{G}(n-\lceil nx\rceil,-L^{(n)}_{N_\circ(k)/n})F\Big(S^{(n)},L^{(n)},\frac{N_\circ(k)}{n},\frac{\lceil nx\rceil}{n}\Big)\,
\Big|\, M=\lceil nx\rceil 
\right)\,
.$$ 
By the second statement of Lemma \ref{sec:joint-convergence-2}, together with 
\eqref{eq:second_couple} and Proposition \ref{sec:scal-limit-adjac-2} for $1-x$ instead
of $x$ and $N=n^{(\alpha-1)/\alpha}$, this expression has an equivalent of the form
$$ c \int_\eps^{1-\eps}\frac{\d  x}{x^{\frac{1}{\alpha}}}\int_{\eps}^x\frac{\d 
a}{x}\mathbf{N}^{(x)}\left(\GG\left(1-x,-Z_a\right) F\left(
  X,Z,a\right)\ind_{\{|Z_a|>\eta\}}\right)\, ,  $$
where $c\in (0,\infty)$ is some constant, and $\mathbf{N}^{(x)}$ is the probability measure introduced above \eqref{eq:excursionmeasuredecomp} under which $(X,Z)$ is the process with total
duration $x$. By \eqref{eq:excursionmeasuredecomp} and \eqref{def:N:bullet}, we may rewrite the previous expression as
$$ c \mathbf{N}^\bullet\Big(\GG(1-\sigma, -
Z_{t_\bullet}) F\big(X,Z,t_{\bullet}\big)\ind_{\{t_\bullet\in [\eps,1-\eps],|Z_{t_\bullet}|>\eta\}}\Big)\,
,$$
and as we let $\eta\downarrow 0$ and then $\eps\downarrow
0$, the above integral converges to the limiting expression appearing in Proposition
\ref{prop:scalingbelt}. In order to conclude, we need to show that the
remainder terms are asymptotically negligible, which is the object of
the next lemma.

\begin{lem}
  \label{sec:joint-convergence-1}
  For every $\eps\in (0,1/2)$,  it holds that
  \begin{equation}
    \label{eq:controllabel}
    \lim_{\eta\downarrow
  0}\limsup_{n\to\infty}n^{\frac{1}{2\alpha}}\mathrm{U}_\bq\left(M(\mathcal{T})\wedge
  M(\mathcal{P})>\eps n, \#V_\circ=n,|L|\leq \eta
  n^{\frac{1}{2\alpha}}\right)=0\, .
\end{equation}
Moreover, one has
\begin{equation}
  \label{eq:controlmass}
\lim_{\eps\downarrow
    0}\limsup_{n\to\infty}n^{\frac{1}{2\alpha}}\, \mathrm{U}_\bq(M(\mathcal{T})\wedge
  M(\mathcal{P})\leq \eps n\, ,\,
  \#V_\circ=n)=0\, .
\end{equation}
\end{lem}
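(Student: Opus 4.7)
Both statements will follow from sharpened versions of the local estimates of Propositions \ref{sec:scal-limit-adjac-1} and \ref{sec:scal-limit-adjac-2}, combined with a renewal argument for the renewal measure
\begin{equation*}
  \widehat{G}(m,l)\;:=\;\widehat{\mathrm{GW}}_\bq(M=m,L=l)\;=\;\sum_{h\geq 0}\Xi^{*h}(\{m\}\times\{l\}),
\end{equation*}
which is the relevant object since, by Proposition \ref{sec:boltzm-meas-reduct},
\begin{equation}\label{eq:Uqfactor}
 \mathrm{U}_\bq\big(M(\mathcal{T})=m_1,\,M(\mathcal{P})=m_2,\,L\in A\big)\;=\;\sum_{l\in A}\widehat{G}(m_1,l)\widetilde{G}(m_2,-l).
\end{equation}
Revisiting the proofs of Propositions \ref{sec:scal-limit-adjac-1} and \ref{sec:scal-limit-adjac-2} while keeping track of the uniformity in $(m,l)$ via the global bounds \eqref{eq:loclimimprove} and \eqref{eq:concentrationpbar} of Lemma \ref{sec:scal-limit-adjac-4} (rather than the pointwise LLT) yields $\sup_l\Xi(m,l)\lesssim m^{-2+\frac{1}{2\alpha}}$, $\sup_l\widetilde{G}(m,l)\lesssim m^{-1+\frac{1}{2\alpha}}$, $\Xi(\{m\}\times\Z)\lesssim m^{-2+\frac{1}{\alpha}}$ and $\widetilde{G}(\{m\}\times\Z)\lesssim m^{-1+\frac{1}{\alpha}}$. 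Since $\Xi_M:=\Xi(\{\cdot\}\times\Z)$ is a probability with a regular power-law tail of exponent $1-\frac{1}{\alpha}\in(0,\frac12)$, Kesten's renewal theorem for heavy-tailed distributions also yields $\widehat{G}(\{m\}\times\Z)\lesssim(1+m)^{-\frac{1}{\alpha}}$.

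\smallskip
With these bounds at hand, \eqref{eq:controlmass} follows by splitting according to whether $M(\mathcal{T})\leq\eps n$ or $M(\mathcal{P})\leq\eps n$. In each case, we dominate the inner sum in \eqref{eq:Uqfactor} via a sup on one coordinate and the total mass on the other: $\sum_l\widehat{G}(m_1,l)\widetilde{G}(m_2,-l)\leq\widehat{G}(\{m_1\}\times\Z)\cdot\sup_l\widetilde{G}(m_2,l)$. In the first subcase ($m_2\geq n/2$), summing over $m_1\leq\eps n$ and using $\sum_{m_1\leq\eps n}(1+m_1)^{-1/\alpha}\lesssim(\eps n)^{1-\frac{1}{\alpha}}$ produces a bound of order $\eps^{1-\frac{1}{\alpha}}\,n^{-\frac{1}{2\alpha}}$; in the symmetric subcase ($m_1\geq n/2$), a parallel computation gives a bound of order $\eps^{\frac{1}{2\alpha}}\,n^{-\frac{1}{2\alpha}}$. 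Multiplying by $n^{\frac{1}{2\alpha}}$ and letting $\eps\downarrow 0$ concludes.

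\smallskip
The proof of \eqref{eq:controllabel} exploits the additional constraint $|L|\leq\eta n^{\frac{1}{2\alpha}}$. The key extra input is a concentration estimate for $\widehat{G}$ analogous to \eqref{eq:limsuploc}, namely the existence of a rate function $\delta(\cdot)$ with $\delta(\eta)\downarrow 0$ as $\eta\downarrow 0$ such that
\begin{equation}\label{eq:widehatGconc}
  \widehat{G}\big(\{m\}\times[-\eta m^{\frac{1}{2\alpha}},\eta m^{\frac{1}{2\alpha}}]\big)\;\leq\;\delta(\eta)\,(1+m)^{-\frac{1}{\alpha}},\qquad m\geq 1,\,\eta>0.
\end{equation}
Granted \eqref{eq:widehatGconc}, the inclusion $[-\eta n^{\frac{1}{2\alpha}},\eta n^{\frac{1}{2\alpha}}]\subset[-\eta\eps^{-\frac{1}{2\alpha}}m_1^{\frac{1}{2\alpha}},\eta\eps^{-\frac{1}{2\alpha}}m_1^{\frac{1}{2\alpha}}]$ (valid since $m_1\geq\eps n$) combined with $\sup_l\widetilde{G}(m_2,l)\lesssim m_2^{-1+\frac{1}{2\alpha}}$ yields
\begin{equation*}
\sum_{\eps n<m_1<(1-\eps)n}\sum_{|l|\leq\eta n^{\frac{1}{2\alpha}}}\widehat{G}(m_1,l)\widetilde{G}(m_2,-l)\;\lesssim\;\delta\big(\eta\eps^{-\frac{1}{2\alpha}}\big)\sum_{m_1}\frac{1}{m_1^{\frac{1}{\alpha}}\,(n-m_1)^{1-\frac{1}{2\alpha}}}\;\lesssim\;C(\eps)\,\delta\big(\eta\eps^{-\frac{1}{2\alpha}}\big)\,n^{-\frac{1}{2\alpha}}.
\end{equation*}
After multiplication by $n^{\frac{1}{2\alpha}}$ and passage to the limits $n\to\infty$ and then $\eta\downarrow 0$, this vanishes, as required.

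\smallskip
The main technical obstacle will be the bound \eqref{eq:widehatGconc}. The analog of \eqref{eq:limsuploc} for $\Xi$, namely $m^{2-\frac{1}{\alpha}}\,\Xi(\{m\}\times[-\eta m^{\frac{1}{2\alpha}},\eta m^{\frac{1}{2\alpha}}])\to 0$ as $m\to\infty$ then $\eta\downarrow 0$, is an immediate consequence of \eqref{eq:adjacent_restricted2} and the integrability of $z\mapsto\GG(1,z)$ near $0$. Transferring this concentration to the renewal measure $\widehat{G}=\sum_h\Xi^{*h}$ requires either a Tauberian-type argument for heavy-tailed renewal measures (in the spirit of Doney's estimates for renewal measures with regularly varying tails), or a direct route invoking \eqref{eq:concentrationpbar} applied to the conditional label distribution along the renewal chain, exploiting that the total bridge duration $\sum_iK_i$ associated with $h$ buckles of total size $m$ grows like $m^{\frac{1}{\alpha}}$; controlling this last scaling with sufficient uniformity in the belt-buckle decomposition is the delicate step.
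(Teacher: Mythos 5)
Your starting point is the claim that $\sup_{l}\Xi(m,l)\lesssim m^{-2+\frac{1}{2\alpha}}$ and $\sup_{l}\widetilde{G}(m,l)\lesssim m^{-1+\frac{1}{2\alpha}}$, which you assert follows from ``revisiting the proofs of Propositions~\ref{sec:scal-limit-adjac-1} and~\ref{sec:scal-limit-adjac-2} while keeping track of uniformity in $(m,l)$.'' This is false for $\alpha\in[3/2,2)$: the convergences \eqref{eq:adjacent_restricted} and \eqref{eq:adjacent_biased} hold at \emph{fixed} scaled label $z\neq 0$, and the limiting function $\GG(x,\cdot)$ satisfies $\GG(x,0)=+\infty$ precisely in that regime (as noted after \eqref{eq:GG}, because $q^{[\alpha]}_x(0)>0$ makes the integral $\int_0 t^{1-\alpha}\,t^{-1/2}\,\d t$ diverge). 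Concretely, plugging $l=0$ into \eqref{eq:adjacent} and isolating the contribution of bounded $k$ shows $\Xi(m,0)\asymp m^{-1-1/\alpha}$ when $\alpha>3/2$ (with a logarithmic correction at $\alpha=3/2$), which is strictly larger than $m^{-2+1/(2\alpha)}$ for $\alpha>3/2$. The same pointwise blow-up near $l=0$ persists after biasing, so $\sup_l\widetilde{G}(m,l)\asymp m^{-1/\alpha}$, not $m^{-1+1/(2\alpha)}$.

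This error is fatal to your argument for \eqref{eq:controlmass}, not just a blemish. If one substitutes the true worst-case bound $\sup_l\widetilde{G}(m_2,l)\lesssim n^{-1/\alpha}$ into your first subcase, the final estimate becomes $\eps^{1-1/\alpha}\,n^{1-3/(2\alpha)}$, which \emph{diverges} with $n$ for $\alpha>3/2$. The deeper reason the sup/total-mass domination $\sum_l\widehat{G}(m_1,l)\widetilde{G}(m_2,-l)\leq\widehat{G}(\{m_1\}\times\Z)\,\sup_l\widetilde{G}(m_2,\cdot)$ cannot work in the dilute phase is that the two factors' mass concentrates on \emph{different} label scales: $\widehat{G}(m_1,\cdot)$ spreads over $l\sim m_1^{1/(2\alpha)}$ while the near-$l=0$ spike of $\widetilde{G}(m_2,\cdot)$ is what makes $\sup_l\widetilde{G}$ large, but that spike carries negligible $\widehat{G}$-mass. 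Discarding this anticorrelation costs exactly the polynomial factor you need. The paper's proof deals with this by expanding both $G$ and $\widetilde{G}$ via \eqref{eq:adjacent}/\eqref{eq:expansionG}, then summing over $l$ \emph{before} dropping information, using the concentration-function bound \eqref{eq:concentrationpbar} to control $\sup_l(\bar{P}_{k_1}*\cdots*\bar{P}_{k_y}*\bar{P}_r)(l)\lesssim (k+r)^{-1/2}$ for the \emph{combined} label from both pieces; the split into regimes of small/large $k$ and $r$ (the degrees of the relevant black vertices) then produces convergent integrals in all cases. For \eqref{eq:controllabel} you correctly identify that one needs a concentration estimate for the renewal measure near $l=0$, but you leave this — which you yourself call ``the delicate step'' — as a sketch of two possible routes rather than a proof, so that part is incomplete even setting aside the first gap.
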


\begin{proof}
In this proof, we introduce the quantity
\begin{equation}
  \label{eq:gml}
  G(m,l)=\widehat{\mathrm{GW}}_\bq(M=m,L=l)\, , 
\end{equation}
which we view as a measure on $\N\times \Z$. Note that it 
is the Green function 
of the random walk whose step distribution is $\Xi$, that is,
$G=\sum_{y\geq 0}\Xi^{*y}$. Our proofs will all rely on expansions of
expressions of the form
\begin{equation}
  \label{eq:formcano}
  n^{\frac{1}{2\alpha}}\sum_{m,l}G(m,l)\widetilde{G}(n-m,-l)\, ,
\end{equation}
where the sum over $m$ and $l$ are over certain subsets of $\N$ and
$\Z$. 
On the one hand, we can express $\widetilde{G}(n-m,-l)$ by first
expressing it in terms of $\Xi$ using Lemma
\ref{sec:scaling-limit-buckle-1}, and then expanding it by using
(\ref{eq:adjacent}). On the other hand, $G(m,l)$
can also be expanded using (\ref{eq:adjacent}), yielding
\begin{align}
 G(m,l)&=\sum_{\substack{y\geq 0\\k\geq 1\\r\geq 1}}\sum_{\substack{k_1+\cdots+k_y=k\\ m_1+\cdots+m_y=m\\l_1+\cdots+l_y=l}}
  \widehat{\mu}_\bullet(k_1)\cdots\widehat{\mu}_\bullet(k_y) 
  \bar{P}_{k_1}(l_1)\cdots\bar{P}_{k_y}(l_y)Q^*_{k_1+1}(m_1+1)\cdots Q^*_{k_y+1}(m_y+1)\nonumber\\
   &= \sum_{\substack{y\geq 0\\k\geq 1\\r\geq
  1}}Q^*_{k+y}(m+y)\sum_{k_1+\cdots+k_y=k} \widehat{\mu}_\bullet(k_1)\cdots\widehat{\mu}_\bullet(k_y) \left(\bar{P}_{k_1}*\cdots*\bar{P}_{k_y}\right)(l)
  \, .
  \label{eq:expansionG}
\end{align}
Such expansions will be especially useful in conjunction with the
concentration bound \eqref{eq:concentrationpbar}, which allows one to
get rid of the sum over $k_1,\ldots,k_y$, and of the dependence on $l$. Let us demonstrate this by proving \eqref{eq:controllabel}. 
For a given $\eps\in (0,1/2)$ and $\eta>0$, we write the quantity of
interest in the form \eqref{eq:formcano}: 
  \begin{equation}
n^{\frac{1}{2\alpha}}\mathrm{U}_\bq\left(M(\mathcal{T})\wedge
  M(\mathcal{P})>\eps n, \#V_\circ=n,|L|\leq \eta
    n^{\frac{1}{2\alpha}}\right)
    =n^{\frac{1}{2\alpha}}\sum_{\eps n\leq m\leq
      (1-\eps)n}\sum_{|l|\leq \eta
      n^{\frac{1}{2\alpha}}}G(n-m,l)\widetilde{G}(m,-l)\, .
\label{eq:firstcaseG}
  \end{equation}
Next, we expand $G$ as in \eqref{eq:expansionG}, and use the bound
\eqref{eq:concentrationpbar}, showing
\begin{equation}
  \label{eq:expansionbound}
G(m,l)\lesssim \sum_{\substack{y\geq 0\\k\geq 1}}\frac{Q^*_{k+y}(m+y) \widehat{\mu}_\bullet^{*y}(k)}{\sqrt{k}}
\, .
\end{equation}
  This yields that the quantity of
\eqref{eq:firstcaseG} is
\begin{align}
  &\lesssim n^{\frac{1}{2\alpha}}\sum_{\eps n\leq m\leq
  (1-\eps)n}\widetilde{G}(\{n-m\}\times [-\eta
n^{\frac{1}{2\alpha}},\eta n^{\frac{1}{2\alpha}}])
\sum_{\substack{y\geq 0\\k\geq 1}} \frac{Q^*_{k+y}(m+y)
  \widehat{\mu}_\bullet^{*y}(k)}{\sqrt{k}}
\label{eq:intermQstar}\\
&\lesssim \left(n^{1-\frac{1}{\alpha}}\sup_{\eps n\leq m\leq
  (1-\eps)n}\widetilde{G}(\{m\}\times [-\eta
n^{\frac{1}{2\alpha}},\eta n^{\frac{1}{2\alpha}}])\right) \cdot n^{\frac{3}{2\alpha}-1}
\sum_{\substack{y\geq 0\\k\geq 1}} \frac{Q^*_{k+y}([y+\eps
  n,y+(1-\eps)n]) \widehat{\mu}_\bullet^{*y}(k)}{\sqrt{k}}\, ,\nonumber
\end{align}
and we observe that the term in brackets involving $\widetilde{G}$ is
uniformly bounded in $n$ by a function of $\eps$ and $\eta$ that
vanishes as $\eta\to 0$ for $\eps$ fixed, as a consequence
of \eqref{eq:limsuploc}. Hence, to conclude, it suffices to show that the rest of
the expression is uniformly bounded. 
To this end, we aim at getting rid of the manifestly superfluous presence of $y$ in
the term $Q^*$. To achieve this, we use the left-tail bound
\eqref{eq:lefttailboundbetasmall}, yielding (note that the possible values of
$y$ are trivially less than or equal to $n$)
\begin{equation}\label{eq:stretchedQstar}
  Q^*_{k+y}([y+\eps
  n,y+(1-\eps)n])\lesssim
  \exp\left(-C\frac{k+y}{(n+y)^{1/\alpha}}\right)\lesssim
  \exp\left(-C\left(\frac{k}{n^{1/\alpha}}\wedge n^{1-1/\alpha}\right)\right)
    ,
\end{equation}  
  as well as
  $$\widehat{\mu}_\bullet^{*y}(k)\lesssim
  \exp\left(-C\frac{y}{k^{\alpha-1}}\right)\, , $$
for some universal $C\in (0,\infty)$. These two facts together easily
entail that the contribution of the values of $y\geq
n^{1-\frac{1}{\alpha}+\lambda}$ to the above sum are
stretched-exponentially decaying for every $\lambda\in
(0,(2/\alpha-1)\wedge (1/2\alpha))$. Since the rest
of the terms behave as powers of $n$, we can restrict without loss of
generality our attention to values $y\leq
n^{1-\frac{1}{\alpha}+\lambda}$.
If we further assume that $k\geq \delta n^{1/\alpha}$, then the local
limit theorem \eqref{eq:loc_lim} entails that
$$n^{\frac{3}{2\alpha}-1}\cdot
\sum_{\substack{y\leq n^{1-\frac{1}{\alpha}+\lambda}\\k\geq \delta n^{\frac{1}{\alpha}}}} \frac{Q^*_{k+y}([y+\eps
  n,y+(1-\eps)n])
  \widehat{\mu}_\bullet^{*y}(k)}{\sqrt{k}}\lesssim
\int_\delta^\infty t^{\alpha-\frac{5}{2}}\, \d t\, \int_\eps^{1-\eps}q_{2t/\scal^{1/\alpha}}^{[1/\alpha]}(x)\d x$$
where the dominated convergence is justified by a use of the
stretched-exponential bound \eqref{eq:stretchedQstar}, and where we
have used the fact that $\sum_{y\geq
  0}\widehat{\mu}_\bullet(k)\lesssim k^{\alpha-2}$ by virtue of the
renewal theorem. 
In turn, this integral converges to a finite value as $\delta\to
0$, due to the fact that $q^{[1/\alpha]}_{ct}(x)=(ct/x)q^{[\alpha]}_x(-t)$. 
Hence, to conclude, it remains to show that the remaining terms $k\leq
\delta n^{1/\alpha}$ have a negligible role. 
Resuming from the expression \eqref{eq:intermQstar} and re-expressing
$Q^*_{k+y}(m+y)$ by the cyclic lemma \eqref{eq:cycliclemmaQ}, we
obtain that this term, restricted to the said values of $y$ (so that
in particular, one has $m+y\leq n$ for large enough $
n$), is
\begin{equation}
  \label{eq:usefullater}
  \lesssim R(\eta)n^{\frac{3}{2\alpha}-1}\max_{\eps n\leq m\leq
  n}\sum_{\substack{y\leq n^{1-\frac{1}{\alpha}+\lambda}\\k\leq \delta
    n^{\frac{1}{\alpha}}}}\frac{(k+y)\widehat{\mu}_\bullet^{*y}(k)Q_m(-k-y)}{\sqrt{k}}\,
.
\end{equation}
Finally, an application of the local limit theorem shows that
$Q_m(-k-y)\lesssim m^{-1/\alpha}$, uniformly in $k,y$. Putting things
together, we obtain a bound of the form 
$$C(\eps) R(\eta)
n^{\frac{1}{2\alpha}-1}\sum_{k\leq
  \delta
  n^{1/\alpha}}\left(k^{\alpha-\frac{3}{2}}+n^{1-\frac{1}{\alpha}+\lambda}k^{\alpha-\frac{5}{2}}\right)
\lesssim C(\eps)
R(\eta)\left(\delta^{\alpha-1/2}+n^{-\frac{1}{2\alpha}+\lambda}\sum_{k\leq
  \delta n^{\frac{1}{\alpha}}}k^{\alpha-\frac{5}{2}}\right)\, ,  $$
for some finite constant $C(\eps)$ depending only on $\eps$. The
second term is $\lesssim n^{\lambda-1/2\alpha}$ if $\alpha<3/2$, and
is $\lesssim \delta^{\alpha-3/2}n^{1-2/\alpha+\lambda}$ if $\alpha\geq
3/2$ (with an extra logarithmic factor if $\alpha=3/2$), so that it
vanishes as $n\to\infty$. Since the first term
vanishes as $\delta\to 0$, this concludes the proof of
\eqref{eq:controllabel}.

The proof of \eqref{eq:controlmass} is separated in two parts,
depending on which of
  $M(\mathcal{T})$ or $M(\mathcal{P})$ is smaller than $\eps n$, and
  follows the same general lines as above. 

  We start with the case where $M(\mathcal{T})\leq \eps n$.  The relevant form is now 
 $$ n^{\frac{1}{2\alpha}}   \mathrm{U}_\bq(
  M(\mathcal{T})\leq \eps n\, ,\,
  \#V_\circ=n)
  =n^{\frac{1}{2\alpha}}\sum_{m=0}^{\eps n}\sum_{l\in
    \Z}G(m,l)\widetilde{G}(n-m,-l)\, .$$
  To estimate the latter, we use Lemma
  \ref{sec:scaling-limit-buckle-1}, which yields, for $0\leq m\leq n/2$, 
  \begin{equation}
    \label{eq:prelimggtilde}
      \sum_{l\in
    \Z}G(m,l)\widetilde{G}(n-m,-l)\lesssim n\sum_{l\in
    \Z}\Xi(n-m,-l)G(m,l)+n \mathrm{e}^{-c(1)n/2}G(\{m\}\times \Z)\, ,
    \end{equation}
  and the remainder term has exponential decay as $n\to\infty$
  uniformly in $0\leq m\leq n/2$ since $G(\{m\}\times
  \Z)=\widehat{\mathrm{GW}}_\bq(M=m)$ is of order $m^{-1/\alpha}$, as
  we already observed. 
Next, we express $\Xi(n-m,-l)$ by (\ref{eq:adjacent}) to get 
$$\sum_{l\in
    \Z}\Xi(n-m,-l)G(m,l)=n\sum_{l\in \Z}\sum_{r\geq
    1}\widehat{\mu}_\bullet(r)Q^*_{r+1}(n-m+1)G(m,l)\, ,$$
and then we expand the Green function $G(m,l)$ as in
\eqref{eq:expansionG}, 
showing
that the last displayed quantity equals 
\begin{align*}
 \sum_{\substack{y\geq 0\\k\geq 1\\r\geq 1}}\widehat{\mu}_\bullet(r)Q^*_{r+1}(n-m+1) Q^*_{k+y}(m+y) \sum_{\substack{k_1+\cdots+k_y=k\\ k_1,\ldots,k_y\geq 1}}
  \widehat{\mu}_\bullet(k_1)\cdots\widehat{\mu}_\bullet(k_y) 
  \left(\bar{P}_{k_1}*\cdots*\bar{P}_{k_y}*\bar{P}_r\right)(0)\\
   \lesssim \frac{1}{n}\sum_{y\geq 0}\sum_{k\geq 0,r\geq 
  1}\frac{(r+1)\widehat{\mu}_\bullet(r)\widehat{\mu}_\bullet^{*y}(k)}{\sqrt{r+k}}Q^*_{k+y}(m+y)  Q_{n-m+1}(-r-1).
\end{align*}
Here we have used (iii) in Lemma
\ref{sec:scal-limit-adjac-4} to control the convolution term, and (\ref{eq:cycliclemmaQ}) to re-express $Q^*_{r+1}(n-m+1)$.
Summing this over $0\leq
m\leq \eps n$ and using (\ref{eq:prelimggtilde}), this yields  
\begin{multline}\label{eq:smallbelt}  n^{\frac{1}{2\alpha}}\mathrm{U}_\bq(
  M(\mathcal{T})\leq \eps n\, ,\,
  \#V_\circ=n)
  \\ \lesssim  n^{\frac{1}{2\alpha}}
 \sum_{\substack{y\geq 0\\k\geq 0\\r\geq
     1}}\frac{(r+1)\widehat{\mu}_\bullet(r)\widehat{\mu}_\bullet^{*y}(k)}{\sqrt{r+k}}\sum_{0\leq
   m\leq \eps n}Q^*_{k+y}(y+m)Q_{n-m+1}(-r-1) + R_n\, ,
\end{multline}
where $R_n$ has exponential decay, and can be removed from further discussions. 
Moreover, similar argument as around (\ref{eq:stretchedQstar}) show that we can restrict our
attention to values of $y$ at most $n^{1-1/\alpha+\lambda}$, which we
now assume. Here, one has to pay attention to values of $k,r$ that
significantly deviate from the typical magnitude $n^{1/\alpha}$. To
fix the ideas, we consider $\delta>0$ and consider the situation where
$k\leq \delta n^{1/\alpha}$ and $r\geq n^{1/\alpha}$. 
To estimate this last term, we write
\begin{align}
  n^{\frac{1}{2\alpha}}\sum_{\substack{y\leq
  n^{1-\frac{1}{\alpha}+\lambda}\\k\leq  \delta n^{\frac{1}{\alpha}}\\r\geq
      n^{\frac{1}{\alpha}}}}\widehat{\mu}_\bullet^{*y}(k)\sum_{0\leq
   m\leq \eps
  n}Q^*_{k+y}(y+m)\frac{(r+1)\widehat{\mu}_\bullet(r)}{\sqrt{r+k}}Q_{n-m+1}(-r-1)\nonumber
\\
  \lesssim n^{\frac{1}{2\alpha}}\sum_{\substack{y\leq
  n^{1-\frac{1}{\alpha}+\lambda}\\k\leq  \delta n^{\frac{1}{\alpha}}}}\widehat{\mu}_\bullet^{*y}(k)\sum_{0\leq
   m\leq \eps
  n}Q^*_{k+y}(y+m)\sum_{r\geq
     n^{\frac{1}{\alpha}}}\frac{r^{1-\alpha}}{\sqrt{r+k}}Q_{n-m+1}(-r-1)\label{eq:intermscales}
\end{align}
and we smoothen the terms of this sum over $r$ by splitting it in
scales, rewriting it as
$$  \sum_{w=1}^\infty\sum_{r=2^{w-1}
  n^{\frac{1}{\alpha}}}^{2^w 
  n^{\frac{1}{\alpha}}-1}\frac{r^{1-\alpha}}{\sqrt{r+k}}
Q_{n-m+1}(-r-1)
\lesssim  \sum_{w=1}^\infty\frac{\left(2^w 
      n^{\frac{1}{\alpha}}\right)^{1-\alpha}}{\sqrt{2^w
      n^{\frac{1}{\alpha}}+k}}
Q_{n-m+1}(-1-[2^{w-1} n^{\frac{1}{\alpha}}, 2^w 
n^{\frac{1}{\alpha}}-1])\, .
$$
This allows to use the left-tail bound
\eqref{eq:lefttailboundbetabig}, uniformly over the value of $n-m\geq (1-\eps)n$, to
bound the last term by $\exp(-c2^{w-1})$, for some finite constant
$c$. Reverting the steps, we obtain a bound of the form
$$\lesssim n^{-\frac{1}{\alpha}}\sum_{r\geq 
  n^{\frac{1}{\alpha}}}\frac{r^{1-\alpha}}{\sqrt{r+k}}
\exp\left(-c\frac{r}{n^{1/\alpha}}\right)\, ,$$
possibly for some other universal constant $c$. 
This quantity does not depend on $m$ anymore, and so we may estimate
\eqref{eq:intermscales} by
$$\lesssim n^{-\frac{1}{2\alpha}}\sum_{k\leq  \delta
  n^{\frac{1}{\alpha}}}\sum_{y\geq 0}\widehat{\mu}_\bullet^{*y}(k)
\sum_{r\geq
  n^{\frac{1}{\alpha}}}\frac{\exp(-cr/n^{1/\alpha})}{r^{\alpha-1}\sqrt{r+k}}
\lesssim \int_0^{\delta}\d t\int_1^\infty \d s\frac{\exp(-cs)}{s^{\alpha-1}t^{2-\alpha}\sqrt{s+t}}\, .$$
The latter integral is finite for every $\delta>0$, as shown by
a polar change of coordinates. 
Dealing with the situation where $k\leq \delta n^{1/\alpha}$ and
$r\leq n^{1/\alpha}$ is easier, as one can simply use the bound
$Q_{n-m+1}(-r-1)\lesssim n^{-1/\alpha}$ provided by the local limit theorem,
which is uniform in $m\leq \eps n$ and $r$. 
The contribution of the
remaining terms,
when $k\geq \delta n^{1/\alpha}$, can then be bounded
similarly, using, as a final extra input, the fact that
$Q^*_{k+y}([y,y+\eps n])\lesssim Q^*_k([0,\eps n])$ for $k\geq \delta
n^{1/\alpha}$ and $y\leq n^{1-1/\alpha+\lambda}$ (choosing
$0<\lambda<2/\alpha-1$), yielding the estimate
$$
\lesssim n^{-\frac{1}{2\alpha}}\sum_{k\leq  \delta
  n^{\frac{1}{\alpha}}}
\sum_{r\geq 1}Q^*_k([0,\eps
n])\frac{\exp(-cr/n^{1/\alpha})}{k^{2-\alpha}r^{\alpha-1}\sqrt{r+k}}
\lesssim
\int_\delta^\infty \d t\int_0^\infty \d s
\frac{\exp(-c s)\int_0^\eps
  \, q^{[1/\alpha]}_{2t/\scal^{1/\alpha}}(x)\, \d x}{s^{\alpha-1}t^{2-\alpha}\sqrt{s+t}}\, .$$
This is a converging integral, as can again be checked by
changing to polar coordinates, and this goes to $0$ as
$\eps\downarrow 0$, as wanted.

The case where the buckle part $M(\mathcal{P})\leq \eps n$ is small in
\eqref{eq:controlmass} is dealt with by similar
methods. We start from
  \begin{align*}
   n^{\frac{1}{2\alpha}}   \mathrm{U}_\bq(
  M(\mathcal{P})\leq \eps n\, ,\,
    \#V_\circ=n)&\lesssim n^{\frac{1}{2\alpha}}\sum_{m=0}^{\eps
    n}\sum_{l\in\Z}G(n-m,l)\widetilde{G}(m,-l)\\
    &\lesssim n^{\frac{1}{2\alpha}}\sum_{m=0}^{\eps n}\sum_{l\in
    \Z}m\Xi(m,-l)G(n-m,l)+n^{\frac{1}{2\alpha}}\sum_{m=0}^{\eps
    n}e^{-c(1)m}\sup_{0\leq m\leq \eps n}G(\{n\}\times \Z)\, .
  \end{align*}
  Here, we have used Lemma \ref{sec:scaling-limit-buckle-1}. We
  already observed several times that $G(\{n\}\times \Z)\lesssim
  n^{-1/\alpha}$, and therefore the second term is negligible and can
  be omitted from the discussion. We then expand the main term as we
  are now used to, similarly to the previous discussion. This yields
  an estimate 
  $$n^{\frac{1}{2\alpha}}\sum_{m=0}^{\eps n}m \sum_{\substack{y\geq
      0\\k\geq 1\\r\geq
      1}}\frac{\widehat{\mu}_\bullet(r)\widehat{\mu}_\bullet^{*y}(k)}{\sqrt{k+r}}Q^*_{r+1}(m+1)Q^*_{k+y}(n-m+y)\, .$$
Now, more than before, we will make use of cyclic lemmas to adapt to
the values of $k,r$ under study. As a preliminary remark, the same
argument as around (\ref{eq:stretchedQstar}) show that we can restrict our attention to $y\leq
n^{1-\frac{1}{\alpha}+\lambda}$. We will now understand the behavior
of the above estimate depending on whether $k$ and $r$ are small (say
$\leq n^{1/\alpha}$) or large ($>n^{1/\alpha}$). The contribution of
both $k,r$ small is obtained by applying the cyclic lemma to the terms
$Q^*_{r+1}(m+1)$ and $Q^*_{k+y}(n-m+y)$, giving the
estimate
$$n^{\frac{1}{2\alpha}-1}\sum_{m=0}^{\eps n} \sum_{\substack{y\leq n^{1-1/\alpha+\lambda}
      \\k\leq n^{1/\alpha}\\r\leq
      n^{1/\alpha}}}\frac{r\widehat{\mu}_\bullet(r)(k+y)\widehat{\mu}_\bullet^{*y}(k)}{\sqrt{k+r}}Q_{m+1}(-r-1)Q_{n-m+y}(-k-y)\, .$$
In this expression, we claim that we can get rid of the additive
factor $y$ in the quantity $(k+y)$ numerator: this is proved by
a similar argument to the one following \eqref{eq:usefullater}, and is left to the reader. Then, we simply use
the local limit theorem in the form $Q_{m+1}(-r-1)\lesssim
m^{-1/\alpha}$ and $Q_{n-m+y}(-k-y)\lesssim n^{-1/\alpha}$, allowing
to sum $\widehat{\mu}_\bullet^{*y}(k)$ over values of $y$, yielding  a bound
$$ n^{-\frac{1}{2\alpha}-1}\sum_{r,k\leq
  n^{1/\alpha}}\frac{k^{\alpha-1}}{r^{\alpha-1}\sqrt{k+r}}\sum_{m=0}^{\eps
n}m^{-1/\alpha}\lesssim \eps^{1-\frac{1}{\alpha}}\int_0^1 \d
t\int_0^1 \frac{t^{\alpha-1}\d s}{s^{\alpha-1}\sqrt{s+t}}\, ,$$
a converging integral. We see that this vanishes as $\eps\to 0$.
Let us now consider the contribution of small $k$ and large $r$. This time, we
only apply the cyclic lemma on the term $Q^*_{k+y}(n-m+y)$, yielding
the estimate
$$n^{-\frac{1}{2\alpha}-1}\sum_{\substack{k\leq n^{1/\alpha}\\r\geq
    n^{1/\alpha}}}\frac{k^{\alpha-1}}{r^\alpha\sqrt{k+r}}\sum_{m=0}^{\eps
n}mQ^*_{r+1}(m+1)\, .$$
The contribution of the last sum is at most $\eps nQ^*_{r+1}([0,\eps
n])$ by using an Abel transform, and this is $\lesssim \eps
n\exp(-cr/(\eps n)^{1/\alpha})$ by the left-tail bound
\eqref{eq:lefttailboundbetasmall}. This justifies the dominated
convergence in the resulting estimate
$$\eps n^{-\frac{1}{2\alpha}}\sum_{\substack{k\leq n^{1/\alpha}\\r\geq
  n^{1/\alpha}}}\frac{k^{\alpha-1}\exp(-cr/(\eps
n)^{1/\alpha})}{r^\alpha\sqrt{k+r}}\lesssim \eps \int_{0}^1\d
t\int_1^\infty\frac{t^{\alpha-1}\exp(-cs/\eps^{1/\alpha})\d
  s}{s^\alpha\sqrt{t+s}}\, ,$$
again a converging integral, that vanishes when $\eps\to 0$. 
Finally, let us consider
the case where $k$ is large. This time, we start
from the estimate based on applying the local limit theorem to
$Q_{m+1}$: 
\begin{align}
  n^{\frac{1}{2\alpha}}\sum_{m=0}^{\eps n} \sum_{\substack{y\leq n^{1-1/\alpha+\lambda}
  \\k\geq n^{1/\alpha}}}
 \frac{r\widehat{\mu}_\bullet(r)\widehat{\mu}_\bullet^{*y}(k)}{\sqrt{k+r}}Q_{m+1}(-r-1)Q^*_{k+y}(n-m+y)\\
  \lesssim n^{\frac{1}{2\alpha}} \sum_{\substack{y\leq n^{1-1/\alpha+\lambda}
      \\k\geq n^{1/\alpha}
  }}\frac{\widehat{\mu}_\bullet^{*y}(k)}{r^{\alpha-1}\sqrt{k+r}}\sum_{m=0}^{\eps
  n}\frac{Q_{k+y}^*(n-m+y)}{m^{1/\alpha}}\, .
\end{align}
Now, we first perform an Abel transform on the last sum,
to turn it into a sum of the form
$$\sum_{m=0}^{\eps n}m^{-1-\frac{1}{\alpha}}Q^*_{k+y}([n-m+y,n+y])\,
.$$
The terms of this sum are (at least for $m$
greater than some large enough constant) respectively $\lesssim
Q^*_{k}([n-m,n])$, by the convergence in distribution entailed by the
local limit theorem. This allows to revert the Abel transformation, and to
obtain as estimate of the form 
$$n^{\frac{1}{2\alpha}-1} \sum_{\substack{y\leq n^{1-1/\alpha+\lambda}
      \\k\geq n^{1/\alpha}}}\frac{\widehat{\mu}_\bullet^{*y}(k)}{r^{\alpha-1}\sqrt{k+r}}\sum_{m=0}^{\eps
  n}Q_{n-m}(-k)\, ,$$
where $Q_{n-m}(-k)\lesssim \exp(-ck/n^{1/\alpha})$. This allows to
bound the superior limit of this expression by a constant multiple of the converging integral
$$\eps^{1-\frac{1}{\alpha}}\int_{1}^\infty\d
t\int_0^\infty\frac{\exp(-cs)\d s}{s^{\alpha-1}t^{2-\alpha}\sqrt{s+t}}\,
,$$
which vanishes as $\eps \to 0$.

This concludes the proof of Lemma \ref{sec:joint-convergence-1}, and
hence of Proposition \ref{sec:joint-convergence-3} and of Proposition
\ref{prop:scalingbelt}. 
\end{proof}

\subsection{Proof of Proposition \ref{lem:esperance}}

Fix $A>0$, and let $D_{(n)}=n^{-\frac{1}{2\alpha}}(\mathrm{d}^{ \mathrm{gr}}_{ \mathfrak{M}_n}(
v_1^n, v_2^n)-1)_+$ for simplicity.

Recall  from \eqref{eq:distancesdelays} that if
$(\bm,(v_1,v_2),\delay)=\mathrm{BDG}^{2\bullet}(\bu)$, then it
holds that $\mathrm{d}^{ \mathrm{gr}}_{ \mathfrak{M}_n}(
v_1, v_2)=2\ell-\delay_1-\delay_2$, where $\ell$ is the
minimal label along the cycle of $\bu$, and $\delay_i$ is the
minimal label along the face $f_i$, minus $1$. We denote this quantity
by $\mathrm{D}_{1,2}(\bu)$.

Then, recalling \eqref{eq:w2bullet}, we have 
  \begin{align*}
      \mathbf{E}\left[ D_{(n)}\ind_{\{D_{(n)}>A\}}\right]
  &=\int_{\mathcal{M}} \frac{w_\bq( \d \bm)}{w_\bq(\#V=n)}\mathrm{vol}_\bm(\d  v_1)\mathrm{vol}_{\bm}( \d 
    v_2)
    \frac{\#\Edelay_{\bm,v_1,v_2}}{n^{2+\frac{1}{2\alpha}}}\ind_{\{
      \#\Edelay_{\bm,v_1,v_2}>An^{\frac{1}{2\alpha}}, \#V=n\}}\\
  &=\int_{\mathcal{M}^{2\bullet}} \frac{w^{2\bullet}_\bq(\d
    (\bm,(v_1,v_2),\delay))}{n^{2+\frac{1}{2\alpha}}w_\bq(\#V=n)} 
  \ind_{\{(\mathrm{d}^{ \mathrm{gr}}_{ \bm}(
  v_1, v_2)-1)_+>An^{\frac{1}{2\alpha}}, \#V=n\}}\\
  &=2\int_{\mathcal{U}} \frac{\tilde{w}^{2\bullet}_\bq(\d \bu)}{n^{2+\frac{1}{2\alpha}}w_\bq(\#V=n)}
  \ind_{\{\mathrm{D}_{1,2}(\bu)>An^{\frac{1}{2\alpha}}, \#V_\circ(\bu)=n-2\}}\\
  &=2\frac{\tilde{w}^{2\bullet}_\bq(\#V_\circ=n-2)}{n^{2+\frac{1}{2\alpha}}w_\bq(\#V=n)}\P(\mathrm{D}_{1,2}(\bu_n)>An^{\frac{1}{2\alpha}})\, .
\end{align*}
Next, we observe that the prefactor
$2\frac{\tilde{w}^{2\bullet}(\#V_\circ=n-2)}{n^{2+\frac{1}{2\alpha}}w_\bq(\#V=n)}$
is a bounded function of $n$, by virtue of (\ref{eq:asymptoUn}) and of
the fact that $w_\bq(\#V=n)=2n^{-1}\mathrm{GW}_\bq(M=n)\sim
cn^{-2-1/\alpha}$. 
In particular, the previously
displayed quantity
is bounded by a constant multiple of 
$\P(4\omega(\bu_n)>An^{\frac{1}{2\alpha}})$,
where $\omega(\bu_n)$ is equal to one plus the maximal 
label of $\bu_n$ in absolute value. In turn, the latter quantity can be written
as the maximum of $\omega^{\mathrm{belt}}(\bu_n)$ and
$\omega^{\mathrm {buckle}}(\bu_n)$, these quantities being equal to
one plus the maximal label in absolute value along the belt (resp.\
buckle) of $\bu_n$. Proposition \ref{prop:scalingbelt} entails that
$(\omega^{\mathrm{belt}}(\bu_n)/n^{\frac{1}{2\alpha}}: n\geq 1)$ forms a tight family of
random variables, so that it
suffices to show that the same is true of
$(\omega^{\mathrm{buckle}}(\bu_n) : n\geq 1)$. This can be achieved by a re-rooting
argument. Indeed, consider a white corner $c_2$ chosen uniformly at
random in
the unicyclomobile $\bu_n$. Clearly, the unicyclomobile $\bu_n^*$
obtained by re-rooting $\bu_n$ at $c_2$ (and forgetting the first root) has same
distribution as $\bu_n$. Therefore, provided that $c_2$ belongs to
the belt of $\bu_n$ (with the initial root), we have 
$\omega^{\mathrm{buckle}}(\bu_n^*)\leq
\omega^{\mathrm{belt}}(\bu_n)$. 
Fix $\eps>0$. Let $E_\eps(n)$ be the event
$\#C_\circ(\boldsymbol{\mathcal{B}}_n)>\eps \#C_\circ(\bu_n)$.  We have by the
above observation, 
\begin{align*}
  \eps\, \mathbf{P}(\omega^{\mathrm{buckle}}(\bu_n^*)>An^{\frac{1}{2\alpha}},E_\eps(n))
&\leq \mathbf{P}(\omega^{\mathrm{buckle}}(\bu_n^*)>An^{\frac{1}{2\alpha}},E_\eps(n),c_2\in
                                                                       C_\circ(\boldsymbol{\mathcal{B}}_n))\\
  &\leq
    \mathbf{P}(\omega^{\mathrm{belt}}(\bu_n) >An^{\frac{1}{2\alpha}})\, .
\end{align*}
Therefore, from the tightness of labels in the belt entailed by
Proposition \ref{prop:scalingbelt}, we obtain that
for every $\eps>0$, 
$\limsup_{n\to\infty}\mathbf{P}(\omega^{\mathrm{buckle}}(\bu_n^*)>An^{\frac{1}{2\alpha}},E_\eps(n))=0$.
We conclude by writing
\begin{align*}
  \mathbf{P}(\omega^{\mathrm{buckle}}(\bu_n)>An^{\frac{1}{2\alpha}})&=\mathbf{P}(\omega^{\mathrm{buckle}}(\bu^*_n)>An^{\frac{1}{2\alpha}})\\
  &\leq
    \mathbf{P}(\omega^{\mathrm{buckle}}(\bu_n^*)>An^{\frac{1}{2\alpha}},E_\eps(n)) +
    \mathbf{P}(E_{\eps}(n)^c)\, ,
\end{align*}
and by observing that the superior limit of the term $\mathbf{P}(E_\eps(n)^c)$
goes to $0$ as $\eps\downarrow 0$. Indeed, since
$\#C_\circ(\boldsymbol{\mathcal{B}_n})\geq
\#V_\circ(\boldsymbol{\mathcal{B}_n})$, we deduce that the family
$(\#C_\circ(\boldsymbol{\mathcal{B}_n})/n : n\geq 1)$  is tight in $(0,\infty]$ by
Proposition~\ref{prop:scalingbelt}. Therefore, our result will follow
from the fact that $(\#C_\circ(\boldsymbol{\mathcal{B}'_n})/n : n\geq 1)$ is tight in
$[0,\infty)$, where $\boldsymbol{\mathcal{B}'_n}$ is the buckle part
of $\bu_n$. To prove this, observe that the following variant of
equation (\ref{eq:UqGWq}) holds: 
$$\mathrm{U}_\bq(\#C_\circ(\mathcal{P})>An,\#V_\circ=n)
=\wh{\mathrm{GW}}_\bq\left(\widetilde{G}(n-M,-L;An)\ind_{M\leq
  n}\right)\,
,$$
where
$\widetilde{G}(m,l;m')=\widetilde{\mathrm{GW}}_\bq(M=m,L=l,\#C_\circ>m')$.  
We conclude from the fact that, for $A>z_\bq$, it holds that
$\widetilde{G}(m,l;An)$ is exponentially small in $n$, for
reasons similar to the ones appearing in the proof of Lemma 
\ref{sec:scaling-limit-buckle-1}, while, on the other hand,
$\wh{\mathrm{GW}}_\bq(M\leq n)$ diverges as a power of $n$. This concludes
the proof of Proposition \ref{lem:esperance}. 

\section{Geodesics  between typical points}\label{secP:uni:geo}

In this final section, we establish the key  properties of geodesics needed to complete the proof of $D = D^*$ presented in  Section \ref{sec:D=D*}. Specifically, we  study  geodesics in $(\mathcal{S}, D, \mathrm{Vol},\rho_*)$ using the bi-marked construction introduced in Section \ref{sec:boltzm-stable-maps} and the scaling limit results of Section~\ref{sec:scalingunicyclo}.  More precisely, we first  prove that geodesics between typical points in $(\mathcal{S}, D, \mathrm{Vol},\rho_*)$ are almost surely unique (Theorem~\ref{alm-unique}). This result enables us to show (Proposition \ref{thm:geodesics:rho:*}) that all geodesics to the root $\rho_{*}$ are simple. Finally, we prove the last remaining estimate on good points along typical geodesics (Proposition \ref{main:techni}), thereby completing the proof of our main result.
\subsection{Coupling the bi-marked construction with  $(\mathcal{S},D,\mathrm{Vol},\rho_*)$}\label{section:coupling}

The purpose of this section is to connect  the scaling limits of the well-labeled unicyclomobiles from the previous section with the construction of $( \mathcal{S}, D,\mathrm{Vol},\rho_*)$. As in Section~\ref{sec:boundsonD}, passing from the discrete constructions to the continuum provides a new perspective on $(\mathcal{S},D,\mathrm{Vol}, \rho_*)$, enabling computations that were previously highly intricate when using the single-pointed $\text{BDG}^{\bullet}$ construction and its scaling limit. In this section, we work under the probability measure $\mathbf{P}$. We will also introduce new random variables, and when doing so, we implicitly enlarge the underlying probability space. \par
 Recall from Section \ref{sec:tightness} that $ \mathfrak{M}_n$ is a rooted $ \mathbf{q}$-Boltzmann map with $n$ vertices, that is, with law $w_\bq( \cdot  \mid \#V=n)$. Conditionally on $ \mathfrak{M}_{n}$, let $v_{1}^{n},v_{2}^{n}$ be two independent uniform vertices of $\mathfrak{M}_{n}$, and finally let $\delay_{n} \in \Edelay_{\mathfrak{M}_n,v_1^n,v_2^n}$ be a uniform integer satisfying $ |  \delay_{n}| <  \mathrm{d}^{ \mathrm{gr}}_{ \mathfrak{M}_{n}}( v_{1}^{n}, v_{2}^{n})$ and such  that $ \delay_{n}+ \mathrm{d}^{ \mathrm{gr}}_{ \mathfrak{M}_{n}}( v_{1}^{n}, v_{2}^{n})$ is even. In particular, $ \big( \mathfrak{M}_{n}, (v_{1}^{n}, v_{2}^{n}),   \delay_{n}\big)$ is a random element of  $\mathcal{M}^{2 \bullet}$, but its law  is not $w_\bq^{2\bullet}( \cdot \mid \# V =n)$, as  defined in \eqref{eq:w2bullet}.  Indeed, there is a bias between these two distributions given by $ \# \Edelay_{\mathfrak{M}_n,v_1^n,v_2^n}$, which is equal to $ (\mathrm{d}^{ \mathrm{gr}}_{ \mathfrak{M}_{n}}(v_1,v_2)-1)_+$  by \eqref{eq:cardinalityD}. More precisely, we define a random variable $\big( \widehat{\mathfrak{M}}_{n}, (\widehat{v}_{1}^{n}, \widehat{v}_{2}^{n}),  \hdelay_{n}\big) \in \mathcal{M}^{2 \bullet}$ of law $w_\bq^{2\bullet}( \cdot \mid \# V =n)$ by biasing the law of $\big( \mathfrak{M}_{n}, (v_{1}^{n}, v_{2}^{n}),   \delay_{n}\big)$ by $(\mathrm{d}^{ \mathrm{gr}}_{ \mathfrak{M}_{n}}(v_1,v_2)-1)_+$. Specifically, for every  non-negative measurable function $f:\mathcal{M}^{2\bullet}\to \mathbb{R}_+$, we have
  \begin{eqnarray} \label{eq:defcartebiaisee} \mathbf{E}\Big[f\big( \widehat{\mathfrak{M}}_{n}, (\widehat{v}_{1}^{n}, \widehat{v}_{2}^{n}),  \hdelay_{n}\big)\Big] &:= &\frac{\mathbf{E}\Big[  \displaystyle \left(\mathrm{d}^{ \mathrm{gr}}_{\mathfrak{M}_{n}}(v_1^n,v_2^n)-1\right)_+ \, f \left(   \mathfrak{M}_{n}, (v_{1}^{n}, v_{2}^{n}),    \delay_{n}\right) \Big]}{\mathbf{E}\Big[  \left(\mathrm{d}^{ \mathrm{gr}}_{\mathfrak{M}_{n}}(v_1^n,v_2^n)-1\right)_+\Big]},  \end{eqnarray} 
and  consequently it holds that:
  \begin{eqnarray*} \mathbf{E}\left[f\big( \widehat{\mathfrak{M}}_{n}, (\widehat{v}_{1}^{n}, \widehat{v}_{2}^{n}),  \hdelay_{n}\big)\right]  & \underset{\eqref{eq:w2bullet}}{=} &  w_{\bq}^{2 \bullet} \Big( f\big(\bm, (v_1,v_2), \delay\big) ~ \Big| ~\# V( \bm) = n \Big)\\
   &\underset{ \eqref{eq:defwtilde2pt}}{=}& \tilde{w}_{\bq}^{2 \bullet} \Big( f\big( \mathrm{BDG}^{2\bullet}(  \mathbf{u})\big)~ \Big|~ \# V_{\circ}( \mathbf{u}) = n-2 \Big) \\ & \underset{ \mathrm{Sec.\  }\ref{sec:result}}{=} & \mathbf{E}\left[f\big( \mathrm{BDG}^{2\bullet}(  \mathbf{u}_n, \epsilon)\big) \right].\end{eqnarray*}  In particular,  we may and will think of $\big( \widehat{\mathfrak{M}}_{n}^{2\bullet},  \hdelay_{n}\big):=\big( \widehat{\mathfrak{M}}_{n}, (\widehat{v}_{1}^{n}, \widehat{v}_{2}^{n}),  \hdelay_{n}\big)$ as being coded by the   well-labeled unicyclomobile $ \mathbf{u}_n$ together with a random sign $ \epsilon$. According to the previous section, let us write   $  (\boldsymbol{\mathcal{B}}_{n}, \hat{v}_{\circ})$ for the belt part in the belt-buckle decomposition (Proposition \ref{sec:comb-decomp-1}) of $ \mathbf{u}_n$ and recall that $(S^{\boldsymbol{ \mathcal{B}}_{n}}_{k}, L^{\boldsymbol{ \mathcal{B}}_{n}}_{k})_{k \geq 0}$ is its  Lukasiewicz encoding as in  Section \ref{sec:tightness}. We also recall the notation   $\theta^{n}$ for the number of white vertices in $\boldsymbol{ \mathcal{B}}_{n}$, $a^{n}$ for   the first time of visit of $ \hat{v}_{ \circ}$ and the constant $\scal$ appearing in the asymptotic  \eqref{eq:tailmu}. We encapsulate these random variables in the quadruplet:
 \begin{equation}\label{eq:dist2}  \mathrm{Cod}_{n} :=    \left(2 (\scal n)^{-\frac{1}{\alpha}}S_{\lfloor (n-1)\cdot\rfloor}^{\boldsymbol{
                \mathcal{B}}_{n}},(\scal n)^{-\frac{1}{2\alpha}}L_{\lfloor (n-1)\cdot\rfloor}^{\boldsymbol{ \mathcal{B}}_{n}}, \frac{a^{n}}{n}, \frac{\theta^{n}}{n}\right),
\end{equation}                  
that we interpret as a random variable in $\mathbb{D}([0,1],\R)^2\times \mathbb{R}^2$. 
 We now introduce the  continuous analog of  \eqref{eq:dist2}. Following Proposition \ref{prop:scalingbelt},  we consider a random variable $( \widehat{X}, \widehat{Z}, \widehat{t}_\bullet, \widehat{\sigma})$  characterized by 
 \begin{equation}\label{eq:Z:hat} \mathbf{E}\big[f( \widehat{X}, \widehat{Z}, \widehat{t}_\bullet, \widehat{\sigma})\big] = \mathrm{Cst} \cdot \mathbf{N}^{\bullet}\Big( f \big(X,Z, t_{\bullet}, \sigma \big) \cdot  \GG\big(1-\sigma,-Z_{t_{\bullet}}\big)\Big),
 \end{equation}
  for every non-negative measurable function $f$ on    $\mathbb{D}([0,1],\R)^2\times \mathbb{R}^2$. We interpret $( \widehat{X}, \widehat{Z}, \widehat{t}_\bullet, \widehat{\sigma})$ as the scaling limit of \eqref{eq:dist2}  since,  by the previous discussion and Proposition \ref{prop:scalingbelt}, we have
 \begin{equation}\label{Codn:convergence:distribution}
 \mathrm{Cod}_{n}\xrightarrow[n\to\infty]{(d)} ( \widehat{X}, \widehat{Z}, \widehat{t}_\bullet, \widehat{\sigma}), 
 \end{equation}
 where, of course, the convergence takes place in  $\mathbb{D}([0,1],\R)^2\times \mathbb{R}^2$. Let us now also mimic  \eqref{eq:defcartebiaisee} on the continuous side. To this end, we introduce a Polish space suitable for defining random rooted bi-marked weighted compact metric spaces, i.e.\ tuples $(M,d_M,\mu,x, (x_1,x_2))$ where $(M,d_M,\mu,x)$ is a rooted weighted compact metric space and $x_1,x_2$ are two points of $M$ called the marks. We stress that  it is also  possible to interpret the root $x$ as a mark; however, it will be useful for us to break the symmetry between the three distinguished points.  Following Section \ref{sec:tightness}, we consider the set of all isometry classes\footnote{Here we say that $(M,d_M,\mu,x, (x_1,x_2))$ and $(M^\prime,d^\prime,\mu^\prime,x^\prime, (x_1^\prime,x_2^\prime))$ are isometric if there exists an isometric bijection $\varphi: M\to M'$ such that $\varphi_*\mu=\mu'$ and $(\varphi(x),\varphi(x_1),\varphi(x_2))= (x^\prime,x_1^\prime, x_2^\prime)$.} of rooted bi-marked weighted compact metric spaces, denoted by $$\mathbb{M}^{2\bullet}_{\mathrm{root}}:=\{(M,d_M,\mu,x, (x_1,x_2)): x,x_1,x_2\in M\} / \mathrm{iso},$$  and we endow it with the rooted bi-marked Gromov--Hausdorff--Prokhorov metric, i.e.
\begin{align*}
&d_{\mathrm{GHP}}^{\mathrm{root},2\bullet}\big(\big(M,d_M,\mu,x,(x_1,x_2)\big),\big(M^\prime,d_{M^\prime},\mu^\prime,x^\prime,(x_1^\prime,x_2^\prime)\big)\big)\\
&:=\inf\limits_{\phi,\phi^{\prime}}\Big( \delta_{\text{H}}\big(\phi(M),\phi^{\prime}(M^{\prime})\big)\vee \delta_{\text{P}}\big(\phi_* \mu,\phi^{\prime}_* \mu^{\prime}\big)\vee \delta\big(\phi(x),\phi^{\prime}(x^{\prime})\big)\vee \delta\big(\phi(x_1),\phi^{\prime}(x_1^{\prime})\big)\vee \delta\big(\phi(x_2),\phi^{\prime}(x_2^{\prime})\big)\Big)~,
\end{align*}
where again the infimum is taken over all isometries $\phi$, $\phi^{\prime}$ from $M$, $M^{\prime}$ into a metric space $(Z, \delta)$.  As usual, here $\delta_{\text{H}}$ (resp.~$\delta_{\text{P}}$) stands for the classical Hausdorff distance (resp.\ the Prokhorov distance).
The space $(\mathbb{M}^{2\bullet}_\mathrm{root},d_{\mathrm{GHP}}^{\mathrm{root},2\bullet})$ is a Polish space (see e.g.\ \cite{abraham2013note,khezeli2023unified}) and we equip it with the corresponding Borel sigma-field. When no ambiguity is possible,  we identify  a rooted bi-marked weighted  compact metric space with its equivalence class. In particular, we can  see the random variables
$$\big(V( \widehat{ \mathfrak{M}}_{n}), (\scal n)^{- \frac{1}{2 \alpha}}\mathrm{d}^{ \mathrm{gr}}_{ \widehat{\mathfrak{M}}_n}, \mathrm{vol}_{ \widehat{\mathfrak{M}}_n},  \widehat{\rho}_*^n,  (\widehat{v}_1^n, \widehat{v}_2^n)\big) \quad \text{ and }\quad \big(V(  \mathfrak{M}_{n}), (\scal n)^{- \frac{1}{2 \alpha}}\mathrm{d}^{ \mathrm{gr}}_{ \mathfrak{M}_n}, \mathrm{vol}_{\mathfrak{M}_n}, \rho_*^n, (v_1^n, v_2^n)\big),  $$
for $n\geq 1$, as random variables on $\mathbb{M}^{2\bullet}_{\mathrm{root}}$, where $\widehat{\rho}_*^n$ and $\rho_*^n$ stand for the starting vertex of the distinguished oriented edge of $ \widehat{ \mathfrak{M}}_{n}$ and $\mathfrak{M}_{n}$ respectively.  When further equipped with the delays, these become random variables taking values in the space $ \mathbb{M}^{2 \bullet}_{\mathrm{root}} \times \mathbb{R}$, which is  endowed with the product topology and Borel sigma-field. We stress that the projection mapping a rooted  bi-marked weighted compact metric space $\big(M,d_M,\mu,x,(x_1,x_2)\big)$ to $\big(M,d_M,\mu,x\big)$  plainly defines a continuous projection from  $\mathbb{M}^{2 \bullet}_{\mathrm{root}}$ onto $\mathbb{M}_{\mathrm{root}}$.
\par This framework provides a natural way to translate \eqref{eq:defcartebiaisee} into the continuum and to obtain scaling limit results.  Specifically, recall the definition of $( \mathcal{S},D, \mathrm{Vol},\rho_*)$ from Proposition \ref{theo:sub} along with the subsequence $(n_k)_{k \geq 1}$ on which it may depend. Conditionally on $( \mathcal{S},D,\mathrm{Vol},\rho_*)$, let $\rho_1, \rho_2 \in \mathcal{S}$ be two independent points sampled according to $ \mathrm{Vol}$ (that is, $\Pi_D(U_1), \Pi_D(U_2)$ for two i.i.d.\ uniform random variables $U_1,U_2$ on $[0,1]$ independent of $(X,Z,D)$). The random variable $( \mathcal{S},D,\mathrm{Vol},\rho_*,(\rho_1,\rho_2))$ takes values in $\mathbb{M}^{2\bullet}_{\mathrm{root}}$, and conditionally to it, we consider $\delay$ a random variable uniform on $[-D(\rho_1,\rho_2), D(\rho_1,\rho_2)]$.  We then introduce $( \widehat{ \mathcal{S}}, \widehat{D}, \widehat{\mathrm{Vol}}, \widehat{\rho}_*, (\widehat{\rho}_1, \widehat{\rho}_2), \hdelay)$, a biased version of $( \mathcal{S},D,  \mathrm{Vol}, \rho_*,(\rho_1, \rho_2),\delay)$, whose law is  defined by 
  \begin{align} \label{eq:defShat} \mathbf{E}\Big[f\big(  \widehat{ \mathcal{S}}, \widehat{D}, \widehat{\mathrm{Vol}}, \widehat{\rho}_*, (\widehat{\rho}_1, \widehat{\rho}_2), \hdelay\big)\Big] &= \frac{\mathbf{E}\Big[ D( \rho_{1}, \rho_{2})  f\left(\mathcal{S},D,  \mathrm{Vol}, \rho_*, (\rho_{1},\rho_{2}),\delay\right)\Big]}{ \mathbf{E}[D(\rho_1, \rho_2)]}\nonumber \\
  &= \frac{\mathbf{E}\left[ \displaystyle \frac{1}{2}\int_{-D( \rho_{1}, \rho_{2})}^{D( \rho_{1}, \rho_{2})}  \mathrm{d}u f\left(\mathcal{S},D,  \mathrm{Vol}, \rho_*, (\rho_{1},\rho_{2}),u\right)\right]}{ \mathbf{E}[D(\rho_1, \rho_2)]},   \end{align} 
for every measurable function $f:\mathbb{M}^{2 \bullet}_{\mathrm{root}}\times \mathbb{R}\to \mathbb{R}_{+}$.  We stress that \eqref{eq:defShat} defines a probability measure on $\mathbb{M}^{2 \bullet}_{\mathrm{root}}\times \mathbb{R}$ since  $ \mathbf{E}[D(\rho_1,\rho_2)]< \infty$ by Lemma \ref{D:expectation}. In particular, the projection $( \widehat{ \mathcal{S}}, \widehat{D}, \widehat{\mathrm{Vol}},\widehat{\rho}_*)$  has the law of $( \mathcal{S},D, \mathrm{Vol},\rho_*)$ biased by $\int  \mathrm{Vol}( \mathrm{d}x_1) \mathrm{Vol}( \mathrm{d}x_2)\ D(x_1,x_2)$.
\begin{lem}\label{convergence:law:2:bullet}
With the notation above, we have:
\begin{equation}\label{eq:converge:in:distribution:M:S:biais}
\Big(V( \widehat{\mathfrak{M}}_{n_k}),   (\scal n_k)^{-\frac{1}{2\alpha}}\cdot \mathrm{d}^{ \mathrm{gr}}_{\widehat{\mathfrak{M}}_{n_k}},\mathrm{vol}_{ \widehat{\mathfrak{M}_{n}}},\widehat{\rho}_{*}^{n_k}, \big(\widehat{v}_{1}^{n_k}, \widehat{v}_{2}^{n_k}\big), (\scal n_k)^{-\frac{1}{2\alpha}}\cdot  {\hdelay}_{n_k}\Big) \xrightarrow[k\to\infty]{(d)}\Big( \widehat{ \mathcal{S}}, \widehat{D}, \widehat{\mathrm{Vol}},\widehat{\rho}_*, \big(\widehat{\rho}_{1}, \widehat{\rho}_{2}\big), \hdelay\Big),
\end{equation}
where the convergence takes place on the space $\mathbb{M}^{2 \bullet}_{\mathrm{root}} \times \mathbb{R}$. 
\end{lem}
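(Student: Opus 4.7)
The plan is to first establish an unbiased joint convergence involving marks and a rescaled delay, and then transfer this through the biasing relation using the uniform integrability provided by Proposition~\ref{lem:esperance}.

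\textbf{Step 1: Unbiased joint convergence.} Let $U_1,U_2,U_3$ be independent uniform random variables on $[0,1]$, independent of $(X,Z,D)$ and of the whole sequence $(\mathfrak{M}_n)_{n\geq 1}$. Using the enumeration $v_0^n,\ldots,v_{n-2}^n$ of the white vertices of $\bTn$ together with the distinguished vertex $v_*^n$, one may realize on the same probability space two uniform random vertices by $v_1^n:=v_{\lfloor (n-1)U_1\rfloor}^n$ and $v_2^n:=v_{\lfloor (n-1)U_2\rfloor}^n$ (this introduces a negligible $1/n$ discrepancy between $v_i^n$ and a genuinely uniform vertex, which is irrelevant in the scaling limit), and, given $(v_1^n,v_2^n)$, a uniform admissible delay by
\[
\delay_n := 2\lfloor d^n U_3\rfloor - d^n+1, \qquad d^n := \mathrm{d}^{\mathrm{gr}}_{\mathfrak{M}_n}(v_1^n,v_2^n)\, .
\]
The convergence \eqref{sub:conv:M} of Proposition~\ref{theo:sub} can then be upgraded, exactly as in the last part of its proof, to a joint convergence in $\mathbb{M}^{2\bullet}_{\mathrm{root}}$ of $\big(V(\mathfrak{M}_n),(\scal n)^{-\frac{1}{2\alpha}}\mathrm{d}^{\mathrm{gr}}_{\mathfrak{M}_n},\mathrm{vol}_{\mathfrak{M}_n},\rho_*^n,(v_1^n,v_2^n)\big)$ towards $\big(\mathcal{S},D,\mathrm{Vol},\rho_*,(\rho_1,\rho_2)\big)$ along $(n_k)$, where $\rho_i=\Pi_D(U_i)$: this is the standard coupling-via-correspondence argument, taking the canonical correspondence $\{(\Pi_{d_n}(s),\Pi_D(s)):s\in[0,1]\}$ and observing that the pre-images $U_1,U_2$ are the same in both spaces. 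Since the continuous mapping theorem gives $(\scal n)^{-\frac{1}{2\alpha}}d^n\to D(\rho_1,\rho_2)$, we also obtain
\[
(\scal n)^{-\frac{1}{2\alpha}}\,\delay_n \;=\; (2U_3-1)(\scal n)^{-\frac{1}{2\alpha}}d^n + O(n^{-\frac{1}{2\alpha}}) \;\xrightarrow[n\to\infty]{(d)}\; (2U_3-1)\,D(\rho_1,\rho_2)\, ,
\]
jointly with the previous convergence. Since $(2U_3-1)D(\rho_1,\rho_2)$ is, conditionally on $(\mathcal{S},D,\mathrm{Vol},\rho_*,(\rho_1,\rho_2))$, uniform on $[-D(\rho_1,\rho_2),D(\rho_1,\rho_2)]$, this is exactly the law of $\delay$ appearing in the second line of \eqref{eq:defShat}. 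We have thus established, along $(n_k)$, the unbiased convergence in $\mathbb{M}^{2\bullet}_{\mathrm{root}}\times\mathbb{R}$:
\[
\big(V(\mathfrak{M}_{n_k}),(\scal n_k)^{-\frac{1}{2\alpha}}\mathrm{d}^{\mathrm{gr}}_{\mathfrak{M}_{n_k}},\mathrm{vol}_{\mathfrak{M}_{n_k}},\rho_*^{n_k},(v_1^{n_k},v_2^{n_k}),(\scal n_k)^{-\frac{1}{2\alpha}}\delay_{n_k}\big)
\;\xrightarrow[k\to\infty]{(d)}\;
\big(\mathcal{S},D,\mathrm{Vol},\rho_*,(\rho_1,\rho_2),\delay\big)\, .
\]

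\textbf{Step 2: Transfer the bias.} Comparing \eqref{eq:defcartebiaisee} and the second form in \eqref{eq:defShat}, one sees that the law of the biased object on either side is obtained by weighting the corresponding unbiased law by $(d^n-1)_+$ and $D(\rho_1,\rho_2)$ respectively, after normalization. For any bounded continuous $F:\mathbb{M}^{2\bullet}_{\mathrm{root}}\times\mathbb{R}\to\mathbb{R}$, the product $(\scal n_k)^{-\frac{1}{2\alpha}}(d^{n_k}-1)_+\cdot F(\cdots)$ converges in distribution along $(n_k)$ to $D(\rho_1,\rho_2)\cdot F(\mathcal{S},D,\mathrm{Vol},\rho_*,(\rho_1,\rho_2),\delay)$ by Step~1 and continuous mapping. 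By Proposition~\ref{lem:esperance}, the family $((\scal n)^{-\frac{1}{2\alpha}}d^n)_{n\geq 1}$ is uniformly integrable, hence so is the product (as $F$ is bounded), and therefore convergence in distribution upgrades to convergence of expectations. Applying this both to the numerator and, with $F\equiv 1$, to the normalizing denominator (which converges to $\mathbf{E}[D(\rho_1,\rho_2)]\in(0,\infty)$, this positivity following for instance from Lemma~\ref{D:expectation}(ii)), and combining with the definitions \eqref{eq:defcartebiaisee} and \eqref{eq:defShat}, we conclude that
\[
\mathbf{E}\!\left[F\big(V(\widehat{\mathfrak{M}}_{n_k}),(\scal n_k)^{-\frac{1}{2\alpha}}\mathrm{d}^{\mathrm{gr}}_{\widehat{\mathfrak{M}}_{n_k}},\mathrm{vol}_{\widehat{\mathfrak{M}}_{n_k}},\widehat{\rho}_*^{n_k},(\widehat{v}_1^{n_k},\widehat{v}_2^{n_k}),(\scal n_k)^{-\frac{1}{2\alpha}}\hdelay_{n_k}\big)\right]
\]
converges to $\mathbf{E}[F(\widehat{\mathcal{S}},\widehat{D},\widehat{\mathrm{Vol}},\widehat{\rho}_*,(\widehat{\rho}_1,\widehat{\rho}_2),\hdelay)]$, which is the desired convergence.

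\textbf{Main obstacle.} The non-trivial input here is entirely confined to Step~2 and is the uniform integrability of the rescaled distance $(\scal n)^{-\frac{1}{2\alpha}}d^n$ between two independent uniform vertices, i.e.~Proposition~\ref{lem:esperance}. This is fortunately already available, so the present lemma itself is essentially a soft consequence once that estimate is in hand; the remaining minor care concerns handling the rounding $\lfloor(n-1)U_i\rfloor$ and the integer-valued delay, both of which introduce only $O(n^{-\frac{1}{2\alpha}})$ errors that are absorbed in the scaling limit.
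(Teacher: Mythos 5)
Your proof is correct and uses the same two key ingredients as the paper's: the unbiased subsequential convergence in $\mathbb{M}^{2\bullet}_{\mathrm{root}}$ coming from Proposition~\ref{theo:sub}, and the uniform integrability of the rescaled two-point distance from Proposition~\ref{lem:esperance}, which allows the bias $(d^n-1)_+$ to be transferred to the limit. The only structural difference is the order of operations: you couple the delay into the \emph{unbiased} convergence first (via an auxiliary uniform $U_3$) and then apply the biasing once at the end, whereas the paper first biases the space-with-marks (without delay) and only afterwards observes that the conditional law of $\hdelay_n$, being uniform on the admissible set of size $(d^n-1)_+$, converges. Both orderings work and carry the same content; yours avoids a separate conditional argument for the delay, at the cost of having to phrase the initial coupling with three uniforms. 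One small slip: the explicit coupling $\delay_n := 2\lfloor d^n U_3\rfloor - d^n + 1$ does not actually land in $\Edelay_{\mathfrak{M}_n,v_1^n,v_2^n}$ — it has the wrong parity, since $\delay_n + d^n = 2\lfloor d^n U_3\rfloor + 1$ is odd, and there are $d^n$ rather than $(d^n-1)_+$ possible values — but this is immaterial for the scaling limit (replace it, e.g., by $\delay_n = 2(1+\lfloor (d^n-1)_+ U_3\rfloor) - d^n$ for $d^n\geq 2$).
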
  
\begin{proof}
First, recall that $$\widehat{Y}_{n} := \Big(V( \widehat{ \mathfrak{M}}_{n}), (\scal n)^{- \frac{1}{2 \alpha}}\cdot \mathrm{d}^{ \mathrm{gr}}_{ \widehat{\mathfrak{M}}_n}, \mathrm{vol}_{ \widehat{\mathfrak{M}}_n}, \widehat{\rho}_*^n,   (\widehat{v}_1^n, \widehat{v}_2^n)\Big)$$ has the law of $Y_n:= ( V( \mathfrak{M}_{n}), (\scal n)^{- \frac{1}{2 \alpha}}\mathrm{d}^{ \mathrm{gr}}_{ {\mathfrak{M}}_n}, \mathrm{vol}_{ \mathfrak{M}_n}, \rho_*^n,  ({v}_1^n, {v}_2^n))$  biased by the Radon--Nikodym derivative:  $$ A_{n} := \Big(\mathrm{d}^{ \mathrm{gr}}_{{\mathfrak{M}}_n}({v}_1^n,{v}_2^n)-1\Big)_+ \Big/ \mathbf{E}\Big[\Big(\mathrm{d}^{ \mathrm{gr}}_{{\mathfrak{M}}_n}({v}_1^n,{v}_2^n)-1\Big)_+ \Big] . $$ By Proposition \ref{theo:sub}, the family $(Y_{n_{k}})_{k \geq 1}$ is tight in $ \mathbb{M}^{2 \bullet}_{\mathrm{root}}$ and converges in law towards: $$ \big( \mathcal{S},D,\mathrm{Vol},\rho_*, (\rho_{1}, \rho_{2})\big).$$  Moreover,  Corollary \ref{lem:esperance} entails that the family $(A_{n})_{n\geq 1}$  is  uniformly integrable. It then follows that $( \widehat{Y}_{n_{k}})_{k \geq 1}$  converges in law towards: $$ \big(  \widehat{\mathcal{S}},\widehat{D},\widehat{\mathrm{Vol}}, \widehat{\rho}_*,  (\widehat{\rho}_{1}, \widehat{\rho}_{2})\big).$$ To extend the result to the delays, notice that conditionally on $ \widehat{Y}_n$, the delay $ \hdelay_{n}$ is  uniform on the set $\{m\in \mathbb{Z}:~|m|< \mathrm{d}^{ \mathrm{gr}}_{ \widehat{\mathfrak{M}}_{n}}(  \widehat{v}_{1}^{n}, \widehat{v}_{2}^{n}) \text{ and } m+ \mathrm{d}^{ \mathrm{gr}}_{ \widehat{\mathfrak{M}}_{n}}( \widehat{v}_{1}^{n}, \widehat{v}_{2}^{n})  \text{ is even}\}$.
It is then straightforward  to deduce  the desired convergence \eqref{eq:converge:in:distribution:M:S:biais}   along the subsequence $(n_{k})_{k \geq 1}$.
\end{proof}
Combining \eqref{Codn:convergence:distribution} and \eqref{eq:converge:in:distribution:M:S:biais}, we derive that the family of random vectors
\begin{equation}\label{eq:V:M:n:k:prim}
\Big(\Big(V( \widehat{\mathfrak{M}}_{n_k}),   (\scal n_k)^{-\frac{1}{2\alpha}}\cdot \mathrm{d}^{ \mathrm{gr}}_{\widehat{\mathfrak{M}}_{n_k}}, \mathrm{vol}_{ \widehat{\mathfrak{M}}_{n_k}}, \widehat{\rho}_*^{n_k}, \big(\widehat{v}_{1}^{n_k}, \widehat{v}_{2}^{n_k}\big), (\scal n_k)^{-\frac{1}{2\alpha}}\cdot  {\hdelay}_{n_k}\Big) ,   \mathrm{Cod}_{n_k}\Big), \quad k\geq 1,
\end{equation}
\noindent living in the Polish space $( \mathbb{M}^{2 \bullet}_{\mathrm{root}}  \times \mathbb{R}) \times (\mathbb{D}([0,1], \mathbb{R})^2 \times \mathbb{R}^2)$, is tight, since each of its  coordinates is tight. By the Skorokhod representation theorem, we can further extract a subsequence $(m_{k})_{k \geq 1}$ from $(n_{k})_{k \geq 1}$ along which we can couple all realizations to have almost sure convergence towards some vector of  $( \mathbb{M}^{2 \bullet}_{\mathrm{root}}  \times \mathbb{R}) \times (\mathbb{D}([0,1], \mathbb{R})^2 \times \mathbb{R}^2)$ with marginals distributed as $((  \widehat{\mathcal{S}},\widehat{D},\widehat{\mathrm{Vol}}, \widehat{\rho}_*,  (\widehat{\rho}_{1}, \widehat{\rho}_{2})), \hdelay)$ and $( \widehat{X}, \widehat{Z}, \widehat{t}_\bullet, \widehat{\sigma})$ respectively. With a slight abuse of notation, we still denote this vector by: 
\begin{equation}\label{eq:S:tilde}
\Big(\big( (  \widehat{\mathcal{S}},\widehat{D},\widehat{\mathrm{Vol}}, \widehat{\rho}_*,  (\widehat{\rho}_{1}, \widehat{\rho}_{2})), \hdelay\big) , ( \widehat{X}, \widehat{Z}, \widehat{t}_\bullet, \widehat{\sigma})\Big).
\end{equation}
We stress that the coupling between the two parts of this vector is a priori unknown.  The point is that we can use this coupling to pass the discrete $ \mathrm{BDG}^{2\bullet}$ to the scaling limit. In this way, we can import some new geometric information on $\widehat{D}$ (equivalently on  $D$ after de-biasing) from $ (\widehat{X}, \widehat{Z}, \widehat{t}_{\bullet}, \widehat{\sigma})$ similarly as we imported information on $D$ from $(X,Z)$ in Section \ref{sec:boundsonD}.  We also stress that for convenience, all random variables $( ( \widehat{\mathcal{S}},\widehat{D},\widehat{\mathrm{Vol}}, \widehat{\rho}_*,  (\widehat{\rho}_{1}, \widehat{\rho}_{2})), \hdelay ,  \widehat{X}, \widehat{Z}, \widehat{t}_\bullet, \widehat{\sigma})$ as well as  $( ( \mathcal{S},D,\mathrm{Vol}, \rho_*,  (\rho_{1}, \rho_{2})), \delay ,  X,Z, t_\bullet, \sigma)$ are defined under the probability measure $  \mathbf{P}$, but there is no coupling between those two vectors; they could very well be independent.
\medskip 
\begin{center}
\begin{minipage}{0.9\linewidth}
\hrule\vspace{1ex}
\centering\textit{In the remainder of Section \ref{secP:uni:geo}, we shall always suppose that we have made the above couplings and implicitly restrict to the subsequence $(m_k)_{k\geq 1}$.}
\vspace{1ex}\hrule
\end{minipage}
\end{center}
\vspace{0.2cm}
Let us conclude this section by  addressing a technicality. Under $\mathbf{P}$, the space  $ \big( \mathcal{S},D,\mathrm{Vol},\rho_*, (\rho_{1}, \rho_{2})\big)$ is constructed from the pseudo-distance $D:[0,1]^2\to \mathbb{R}_+$ and two uniform independent  random variables on $[0,1]$.
Therefore, $ \big( \mathcal{S},D,\mathrm{Vol},\rho_*, (\rho_{1}, \rho_{2})\big)$  is a \textit{concrete} rooted bi-marked weighted compact metric space, which we then view as a random variable when  considering its equivalence class in $\mathbb{M}_{\mathrm{root}}^{2\bullet}$.
This allowed us to discuss metric  properties directly using the representative $ \big( \mathcal{S},D,\mathrm{Vol},\rho_*, (\rho_{1}, \rho_{2})\big)$. 
However, this is not longer the case for  $(  \widehat{\mathcal{S}},\widehat{D},\widehat{\mathrm{Vol}}, \widehat{\rho}_*,  (\widehat{\rho}_{1}, \widehat{\rho}_{2}))$ which is directly defined as an element of  $\mathbb{M}_{\mathrm{root}}^{2\bullet}$. To circumvent this difficulty and treat it as  a 
rooted bi-marked weighted compact metric space, we rely on general representation theorems for Gromov-Hausdorff-type topologies \cite{khezeli2023unified}. Namely, we fix $\omega$ (in the underlying probability space) such that the convergence of \eqref{eq:V:M:n:k:prim}  to  \eqref{eq:S:tilde} holds. Then, by \cite[Lemma 2.5]{khezeli2023unified}, we can
embed all the rooted bi-marked weighted compact metric spaces in \eqref{eq:V:M:n:k:prim}  and  \eqref{eq:S:tilde}  isometrically in the same compact 
metric space $(E,d_E)$, in such a way that
\begin{equation}\label{eq:V:M:k:convergence:E}
V( \widehat{\mathfrak{M}}_{m_k})\build{\la}_{k\to\infty}^{} \widehat{ \mathcal{S}}\quad \text{ and } \quad \big(\widehat{\rho}_*^{m_k}, \widehat{v}_{1}^{m_k}, \widehat{v}_{2}^{m_k}\big)\build{\la}_{k\to\infty}^{} \big(\widehat{\rho}_*, \widehat{\rho}_{1}, \widehat{\rho}_{2}\big),
\end{equation}
where the first convergence is with respect to    the Hausdorff distance in  $(E,d_E)$. We also have the weak convergence $\mathrm{vol}_{ \widehat{\mathfrak{M}}_{m_k}} \rightarrow \widehat{\mathrm{Vol}} $,  but this last convergence will not be needed in this work. We stress that the  compact metric space $(E,d_E)$  might depends on $\omega$.
Then, if we establish that some property holds, for $\mathbf{P}$ almost every  such $\omega$, then we can use \eqref{eq:defShat} to import it to $ \big( \mathcal{S},D,\mathrm{Vol},\rho_*, (\rho_{1}, \rho_{2})\big)$,  $\mathbf{P}$-a.s., provided that the related events are measurable.  We will implement this method to deduce the uniqueness of typical geodesics in Section \ref{sec:uni:geo} and to study good points in Section \ref{sub:goodpointestimate}. 
Let us list here the almost sure properties that we will use: We write $\Omega^{\prime}$ for the subset of all such $\omega$ such that \eqref{eq:V:M:k:convergence:E} hold,  such that  the process $\widehat{X}$ does not have negative jumps and verifies properties {\hypersetup{linkcolor=black}\hyperlink{prop:A:1}{$(A_1)$}}--{\hypersetup{linkcolor=black}\hyperlink{prop:A:4}{$(A_4)$}} of Section \ref{sec:defloop} with $X$ replaced by $\widehat{X}$, and  such that $\widehat{Z}$ is continuous and its  restriction  to $\widehat{\mathrm{Branch}}(0,\widehat{t}_\bullet):=\{h\leq \widehat{t}_\bullet:~\widehat{X}_{h-}\leq \min_{[h,\widehat{t}_\bullet]} \widehat{X} \}$ realizes its infimum at a unique point. We stress  that the set  $\widehat{\mathrm{Branch}}(0,\widehat{t}_\bullet)$ corresponds,  in terms of the looptree encoded by $\widehat{X}$, to  the pinch point times in the branch from $0$ to $\widehat{t}_\bullet$, in the sense of Section \ref{sec:defloop}. By the above discussion, \eqref{eq:Z:hat} and Lemmas~\ref{Lem-cut-jump}~and~\ref{lem:onePinch}, we have $ \mathbf{P}( \Omega^\prime)=1$.

\subsection{Uniqueness of geodesics between two typical points}\label{sec:uni:geo}
Let us present the first application of the above methodology to prove uniqueness of typical geodesics. Specifically, we are going to show that, $\mathbf{P}$-a.s., there is a unique geodesic on $\mathcal{S}$ going from $\rho_1$ to $\rho_2$, thereby establishing Theorem \ref{alm-unique}. Since to achieve this, we rely on the biased version $$(  \widehat{\mathcal{S}},\widehat{D},\widehat{\mathrm{Vol}}, \widehat{\rho}_*,  (\widehat{\rho}_{1}, \widehat{\rho}_{2})),$$ we need to check measurability issues. In this direction, we introduce $\mathbb{PM}_{\mathrm{root}}^{2\bullet,1}$, the subset of $\mathbb{M}_{\mathrm{root}}^{2\bullet}$ consisting of all equivalence classes of weighted geodesic compact metric spaces with exactly one geodesic between the two marked points. We emphasize that $\mathbb{PM}_{\mathrm{root}}^{2\bullet,1}$ is well-defined, as the property of having exactly one geodesic between the two marked points is invariant under isometry.

\begin{lem}
$\mathbb{PM}_{\mathrm{root}}^{2\bullet,1}$ is a measurable subset of $\mathbb{M}_{\mathrm{root}}^{2\bullet}$. 
\end{lem}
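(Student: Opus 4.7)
The plan is to express $\mathbb{PM}_{\mathrm{root}}^{2\bullet,1}$ as a countable intersection of Borel subsets of $\mathbb{M}_{\mathrm{root}}^{2\bullet}$, via a level-set characterization of geodesic uniqueness.

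First, I would verify that $\mathbb{PM}_{\mathrm{root}}^{2\bullet}$ is a closed subset of $\mathbb{M}_{\mathrm{root}}^{2\bullet}$. This follows from the classical fact that being a path/geodesic metric space is preserved under Gromov--Hausdorff limits of compact spaces (\cite[Theorem 7.5.1]{BBI01}), together with the continuity of the projection $\mathbb{M}_{\mathrm{root}}^{2\bullet}\to\mathbb{M}$ which forgets the measure, root and marks, and the fact that the property of being geodesic depends only on the underlying metric space.

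Second, I would establish the following convenient characterization. For a representative $(M,d_M,\mu,x,(x_1,x_2))$ of a class $\mathcal{X}\in\mathbb{PM}_{\mathrm{root}}^{2\bullet}$, set $L:=d_M(x_1,x_2)$ and, for $q\in[0,1]$,
\begin{equation*}
T_q(\mathcal{X}) := \bigl\{y\in M:\ d_M(x_1,y)=qL,\ d_M(x_2,y)=(1-q)L\bigr\}.
\end{equation*}
Then there is a unique geodesic from $x_1$ to $x_2$ if and only if $T_q(\mathcal{X})$ is a singleton for every $q\in\mathbb{Q}\cap[0,1]$. Indeed every geodesic $\gamma$ satisfies $\gamma(qL)\in T_q(\mathcal{X})$; conversely, any $y\in T_q(\mathcal{X})$ lies on \emph{some} geodesic from $x_1$ to $x_2$ obtained by concatenating geodesics through $y$, so if two distinct geodesics from $x_1$ to $x_2$ exist then by continuity they differ on an interval, producing a rational $q$ for which $T_q(\mathcal{X})$ contains at least two points.

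Third --- and this is the main technical point --- I would show that for each $q\in[0,1]$, the map
\begin{equation*}
\Phi_q: \mathbb{M}_{\mathrm{root}}^{2\bullet}\longrightarrow [0,\infty),\qquad \Phi_q(\mathcal{X}) := \mathrm{diam}_{d_M}\bigl(T_q(\mathcal{X})\bigr),
\end{equation*}
is Borel measurable. The difficulty is that $T_q(\mathcal{X})$ is defined by strict equalities and does not vary continuously under GHP convergence, which only yields correspondences with small distortion and not genuine isometries. The remedy is to introduce the $\delta$-thickened level sets
\begin{equation*}
T_q^\delta(\mathcal{X}) := \bigl\{y\in M:\ |d_M(x_1,y)-qL|\leq\delta,\ |d_M(x_2,y)-(1-q)L|\leq\delta\bigr\},
\end{equation*}
and the corresponding diameters $\Phi_q^\delta(\mathcal{X})$. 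A standard correspondence argument then shows that $\Phi_q^\delta$ is upper semicontinuous on $\mathbb{M}_{\mathrm{root}}^{2\bullet}$: if $\mathcal{X}_n\to\mathcal{X}$ in the GHP topology with distortions $\eta_n\downarrow 0$, any pair of points in $T_q^\delta(\mathcal{X}_n)$ nearly maximising $\Phi_q^\delta(\mathcal{X}_n)$ admits subsequential partners in $\mathcal{X}$ (via the correspondence and compactness of $\mathcal{X}$) landing in $T_q^{\delta+O(\eta_n)}(\mathcal{X})$; the right-continuity $\lim_{\delta'\downarrow \delta}\Phi_q^{\delta'}=\Phi_q^\delta$ (a consequence of the compactness of $T_q^{\delta'}$ and the continuity of $d_M$, $d_M(x_1,\cdot)$, $d_M(x_2,\cdot)$) then yields the upper semicontinuity of $\Phi_q^\delta$. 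Since $T_q^\delta$ decreases to $T_q$ as $\delta\downarrow 0$, the same compactness argument gives $\Phi_q(\mathcal{X}) = \lim_{\delta\downarrow 0}\Phi_q^\delta(\mathcal{X})$, so $\Phi_q$ is Borel as a pointwise limit of upper semicontinuous functions.

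Combining these ingredients yields
\begin{equation*}
\mathbb{PM}_{\mathrm{root}}^{2\bullet,1} = \mathbb{PM}_{\mathrm{root}}^{2\bullet}\cap \bigcap_{q\in\mathbb{Q}\cap[0,1]} \Phi_q^{-1}(\{0\}),
\end{equation*}
which is a countable intersection of Borel sets, hence Borel. The main obstacle is the third step, namely transferring the level-set structure across GHP convergence; the thickening device with $\delta>0$ kept positive before letting $\delta\downarrow 0$ is the key technical idea, all the rest being straightforward.
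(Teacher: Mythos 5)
Your proof is correct, but it takes a genuinely different route from the paper's. The paper works directly with pairs of geodesics: it defines $\mathbb{PM}_{\mathrm{root}}^{2\bullet}[\delta]$ as the set of spaces admitting two geodesics $\gamma,\gamma'$ from $x_1$ to $x_2$ with $\sup_t d(\gamma(t),\gamma'(t))\geq\delta$, shows each such set is closed (via Arzel\`a--Ascoli applied to pairs of geodesics in a common embedding), and then writes $\mathbb{PM}_{\mathrm{root}}^{2\bullet,1}=\mathbb{PM}_{\mathrm{root}}^{2\bullet}\setminus\bigcup_{n}\mathbb{PM}_{\mathrm{root}}^{2\bullet}[2^{-n}]$. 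Your argument instead detects non-uniqueness through the level sets $T_q$ of the two distance functions $d(x_1,\cdot),d(x_2,\cdot)$, reduces to diameters $\Phi_q$ of these level sets, and handles the discontinuity of level sets under GHP convergence by the $\delta$-thickening device; you then use that a decreasing limit of upper semicontinuous functions is Borel. Both are valid: the paper's approach is shorter and avoids the issue of level sets not converging under GHP (since geodesics, as uniformly equicontinuous maps, behave better under Arzel\`a--Ascoli than the sets they sweep), while yours gives a slightly more refined picture by localizing non-uniqueness to a rational time $q$, at the cost of the extra thickening/upper-semicontinuity machinery. Your characterization ``uniqueness iff all $T_q$ are singletons for rational $q$'' is correct (it uses that any $y\in T_q$ lies on some geodesic, since a geodesic space is one where the triangle inequality between $x_1,y,x_2$ forces the concatenation of a geodesic $x_1\to y$ with one $y\to x_2$ to be length-minimizing), and the claim $\Phi_q=\lim_{\delta\downarrow 0}\Phi_q^\delta$ follows from compactness of the nested $T_q^\delta$.
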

\begin{proof} First note that by (a marked extension of) \cite[Theorem 7.5.1]{BBI01}, the space $\mathbb{PM}_{\mathrm{root}}^{2\bullet}$
of equivalence classes of rooted bi-marked weighted \textit{geodesic} compact metric spaces is a closed subset of   $\mathbb{M}_{\mathrm{root}}^{2\bullet}$, and furthermore that the number of geodesics connecting $x_1$ and $x_2$ does not depend on the choice of representative  $(M, d, \mu, x, (x_1, x_2))$.
For every $\delta > 0$, consider $\mathbb{PM}_{\mathrm{root}}^{2\bullet}[\delta]$, the space of equivalence classes of spaces $(M, d, \mu, x, (x_1, x_2))$ possessing at least two geodesics, $\gamma$ and $\gamma'$, going from $x_1$ to $x_2$, such that
$$\sup\big\{d(\gamma(t),\gamma^\prime(t)):~t\in [0,d(x_1,x_2)]\big\}\geq \delta.$$ Then it is straightforward to verify that $\mathbb{PM}_{\mathrm{root}}^{2\bullet}[\delta]$ is a closed subset of $\mathbb{PM}_{\mathrm{root}}^{2\bullet}$,  and  $\mathbb{PM}_{\mathrm{root}}^{2\bullet,1}$ is a measurable set since:
$$ \mathbb{PM}_{\mathrm{root}}^{2\bullet,1}= \mathbb{PM}_{\mathrm{root}}^{2\bullet}\setminus \bigcup_{n\in \mathbb{N}}  \mathbb{PM}_{\mathrm{root}}^{2\bullet}[2^{-n}] .$$
\end{proof}
Therefore, our formal goal is to establish that $(\mathcal{S},D,\mathrm{Vol},\rho_*, (\rho_1,\rho_2))\in \mathbb{PM}_{\mathrm{root}}^{2\bullet,1}$, $\mathbf{P}$-a.s.,  which by \eqref{eq:defShat} is equivalent to:
\begin{equation}\label{eq:S:hat:geo:1}
\big(  \widehat{\mathcal{S}},\widehat{D},\widehat{\mathrm{Vol}}, \widehat{\rho}_*,  (\widehat{\rho}_{1}, \widehat{\rho}_{2})\big)\in \mathbb{PM}_{\mathrm{root}}^{2\bullet,1}, \quad \mathbf{P}\text{-a.s.}
\end{equation}
To demonstrate \eqref{eq:S:hat:geo:1}, we  exploit the connection with well-labeled unicyclomobiles, following a method similar to that in \cite[Section 7.2.3]{Mie09}. Specifically, we fix an element $\omega\in \Omega^\prime$ in the underlying probability space, as described at the end of Section \ref{eq:S:tilde}.  In particular,  $(  \widehat{\mathcal{S}},\widehat{D},\widehat{\mathrm{Vol}}, \widehat{\rho}_*,  (\widehat{\rho}_{1}, \widehat{\rho}_{2}))$ is embedded into a compact space $(E,d_E)$.  Next, for every $h\in  [- \widehat{D}( \widehat{\rho}_{1}, \widehat{\rho}_{2}), D( \widehat{\rho}_{1}, \widehat{\rho}_{2})]$, we write 
$$\widehat{\mathscr{G}}(h) :=  \Big \{ x \in \widehat{\mathcal{S}} : \widehat{D}(x,\widehat{\rho}_{1}) = \frac{\widehat{D}( \widehat{\rho}_{1}, \widehat{\rho}_{2})-h}{2} \mbox{ and } \widehat{D}(x,\widehat{\rho}_{2}) = \frac{\widehat{D}( \widehat{\rho}_{1}, \widehat{\rho}_{2})+h}{2} \Big\},$$
\noindent which is the set of all $h$-median points in a $\widehat{D}$-geodesic between $\widehat{\rho}_{1}$ and $\widehat{\rho}_{2}$. The assertion that there is a unique geodesic connecting $\widehat{\rho}_1$ to $\widehat{\rho}_2$ is equivalent to stating that $\widehat{\mathscr{G}}(h)$ is a singleton for every $h \in [-\widehat{D}(\rho_1, \rho_2), \widehat{D}(\rho_1, \rho_2)]$. Moreover by continuity, if $\#\widehat{\mathscr{G}}(h) \geq 2$ for some $h$, then $\#\widehat{\mathscr{G}}(h') \geq 2$ for all $h'$ in a small neighborhood of $h$.
Thus,  to obtain the uniqueness of geodesics going from $\widehat{\rho}_1$ to $\widehat{\rho}_2$, it suffices to show that:

\begin{lem}\label{lem:alm-unique} For every  $\omega\in \Omega^\prime$ and with the notation above,  the set $ \widehat{\mathscr{G}}( \hdelay)$ is a singleton.
\end{lem}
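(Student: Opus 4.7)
The plan is to argue, for a fixed $\omega \in \Omega'$, that the set $\widehat{\mathscr{G}}(\hdelay)$ arises as the collection of limit points (in the common space $(E,d_E)$ of \eqref{eq:V:M:k:convergence:E}) of the minimum-label white vertices on the cycle of the unicyclomobile $\bu_{m_k}$, and then to show that this limit is almost surely a singleton. The first correspondence is a direct translation of the distance relations \eqref{eq:distancesdelays}: a white vertex $v$ on the cycle of $\bu_{m_k}$ is a midpoint of a geodesic from $\widehat{v}_1^{m_k}$ to $\widehat{v}_2^{m_k}$ at delay $\hdelay_{m_k}$ if and only if $\ell(v)$ minimizes the labels on the cycle, and conversely every such midpoint produces such a minimum-label vertex. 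Combined with the convergence \eqref{eq:V:M:k:convergence:E}, this identifies $\widehat{\mathscr{G}}(\hdelay)$ with the set of accumulation points of sequences $(v_k)$ of minimum-label cycle vertices.

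The next step will be to identify the scaling limit of the label process along the cycle using the belt-buckle decomposition of Proposition~\ref{sec:comb-decomp-1}. The portion of the cycle coming from the belt is the spine from the root of $\boldsymbol{\mathcal{B}}_n$ to the marked leaf $\hat{v}_\circ$; by Proposition~\ref{prop:scalingbelt} together with Lemma~\ref{Lem-cut-jump}, the rescaled labels along this spine converge to the restriction of $\widehat{Z}$ to $\widehat{\mathrm{Branch}}(0,\widehat{t}_\bullet)$. The portion coming from the buckle is the loop around the central black vertex $\hat{v}'_\bullet$, whose $K+2$ labels form, conditionally on $K$, a random walk bridge with step distribution $\sum_{n\geq -1}2^{-n-2}\delta_n$ by Lemma~\ref{sec:boltzmann-measures-1}; since under the conditioning $\#V_\circ = n-2$ the variable $K$ is of order $n^{1/\alpha}$, a standard Donsker-type invariance principle yields convergence of these labels, once rescaled, to a Brownian bridge of random duration, jointly with the convergence on the belt side and with all other scaling limits, thanks to the conditional independence provided by Proposition~\ref{sec:boltzm-meas-reduct}.

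Uniqueness of the overall minimum then follows from three inputs: (i) by the definition of $\Omega'$, the restriction of $\widehat{Z}$ to $\widehat{\mathrm{Branch}}(0,\widehat{t}_\bullet)$ attains its infimum at a unique point; (ii) a Brownian bridge almost surely has a unique minimum on its lifetime; and (iii) the belt-side infimum and the buckle-side infimum are (conditionally on the sizes) independent random variables with continuous distributions, so they are almost surely distinct. Combining these, the infimum of the limiting label process on the cycle is attained at a single point, and by the correspondence established in the first step this forces $\widehat{\mathscr{G}}(\hdelay)$ to be a singleton.

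The principal obstacle lies in the first two steps, namely in making the limiting identification of the cycle's label process rigorous in the coupled setting: one must check that for any sequence $v_k \in \widehat{\mathscr{G}}_k$ of minimum-label cycle vertices, any accumulation point in $(E,d_E)$ is indeed a $\hdelay$-median geodesic point in $(\widehat{\mathcal{S}},\widehat{D})$, and conversely that every point of $\widehat{\mathscr{G}}(\hdelay)$ arises this way. The forward direction uses the Schaeffer bound \eqref{d_n^circ:2} applied within each face to control distances from midpoint candidates to $\widehat{v}_1^{m_k}$ and $\widehat{v}_2^{m_k}$; the converse direction relies on producing approximating discrete midpoints by locating, for each large $k$, a vertex of $\bu_{m_k}$ whose labels in both faces are close to the minimum. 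Additional care is required to handle a potential accumulation of minimum-label cycle vertices near the interface between belt and buckle, but the uniqueness of the infimum on each side rules this out in the limit.
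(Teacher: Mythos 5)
Your overall strategy is the same as the paper's: reduce the uniqueness of $\widehat{\mathscr{G}}(\hdelay)$ to the uniqueness of the minimum-label vertex on the cycle of $\bu_{m_k}$, and then use the scaling limit of the belt coding process together with the assumed uniqueness of the minimizer of $\widehat{Z}$ on $\widehat{\mathrm{Branch}}(0,\widehat{t}_\bullet)$ on $\Omega'$. But there is a real gap in your implementation, rooted in a misreading of the belt-buckle decomposition.

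You treat the cycle as having two pieces that each carry white vertices whose labels could realize the minimum: a belt spine and a ``buckle side'' consisting of the $K+2$ labels $(\ell_0,\dots,\ell_{k+1})$ around the central black vertex, which you want to handle by a Donsker theorem and then rule out ties via independence of the two side infima. This is not the actual structure. In the decomposition of Proposition~\ref{sec:comb-decomp-1}, the buckle's marked leaf $\hat v'_\circ$ sits at generation~$2$, so the path from the root of $\mathcal{P}$ to $\hat v'_\circ$ has exactly one intermediate vertex, namely the black vertex $\hat v_\bullet$. The cycle of $\bu$ passes through $\hat v_\bullet$ exactly once, using two of its $K+1$ incident edges (the edge to the root of $\mathcal{P}$ and the edge to $v_r=\hat v'_\circ$); the other children $v_i$, $i\neq r$, and the labels $\ell_i$ you invoke lie \emph{off} the cycle. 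Consequently every white vertex on the cycle lies on the belt spine between the root of $\mathcal{T}$ (which is $\hat v'_\circ$) and $\hat v_\circ$ (which is the root of $\mathcal{P}$), as the paper states explicitly. Your items (ii) and (iii), and the entire Donsker step for the buckle, are therefore addressing a nonexistent part of the problem, and the framing suggests the cycle minimum could migrate into the buckle, which cannot happen.

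Once one knows the white cycle vertices all come from the belt, the argument collapses to the paper's: the belt labels converge via $\mathrm{Cod}_{m_k}$ to $\widehat Z$, the cycle vertices correspond to $\widehat{\mathrm{Branch}}(0,\widehat t_\bullet)$ (via the analogue of Lemma~\ref{Lem-cut-jump}), and the unique minimizer assumed on $\Omega'$ forces any two asymptotically minimal cycle vertices to coalesce after rescaling, via the Schaeffer bound \eqref{d_n^circ:2}. A secondary omission in your sketch: you do not make explicit the planarity step (Jordan's theorem) that forces any geodesic from $\widehat v_1^{m_k}$ to $\widehat v_2^{m_k}$ through a candidate midpoint $x_{m_k}$ to hit a cycle vertex $J'_{m_k}$; this is the discrete mechanism by which a median point is necessarily tied to a near-minimal cycle vertex, and the contradiction argument in the paper is built around it. Without it, your ``converse direction'' is unsupported.
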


\begin{proof}
Let $\omega$  be fixed as explained  above. We begin by reformulating the statement of the lemma in terms of $( \widehat{\mathfrak{M}}_{m_k}, \widehat{v}_{1}^{m_k},  \widehat{v}_{2}^{m_k},  \hdelay_{m_k})$. In this direction, we introduce the set  $\mathcal{A}$ of all sequences $(x_{m_k})_{k\geq 1}$, with $x_{m_k}$ a vertex of $\widehat{\mathfrak{M}}_{m_k}$, such that
 \begin{equation} \label{eq:distanceconverge:A}(\scal m_k)^{- \frac{1}{2 \alpha}}\cdot  \mathrm{d}^{ \mathrm{gr}}_{\widehat{ \mathfrak{M}}_{m_k}}( \widehat{v}_{1}^{m_k},  x_{m_k}) \to \frac{\widehat{D}( \widehat{\rho}_{1}, \widehat{\rho}_{2})-\hdelay}{2} \mbox{ \ \ and \ \  } (\scal m_k)^{- \frac{1}{2 \alpha}}\cdot \mathrm{d}^{ \mathrm{gr}}_{\widehat{ \mathfrak{M}}_{m_k}}( \widehat{v}_{2}^{m_k},  x_{m_k}) \to   \frac{\widehat{D}( \widehat{\rho}_{1}, \widehat{\rho}_{2})+\hdelay}{2}.  \end{equation}  
Next, remark that  if there exist  $x,x^{\prime} \in \widehat{\mathscr{G}}( \hdelay)$ with $x\neq x^\prime$, then by the definition of $\widehat{\mathscr{G}}( \hdelay)$ and  \eqref{eq:V:M:k:convergence:E} we can find  two sequences $(x_{m_k})_{k\geq 1}, (x_{m_k}^\prime)_{k\geq 1}$ in $\mathcal{A}$ such that 
  \begin{equation}\label{eq:limsup:x:m}
\limsup_{k\to \infty} (\scal m_k)^{- \frac{1}{2 \alpha}}\cdot \mathrm{d}^{ \mathrm{gr}}_{\widehat{ \mathfrak{M}}_{m_k}}(x_{m_k},x_{m_k}^\prime)>0.
\end{equation}
We shall prove that this is impossible. The interest of this reformulation lies in the fact that
  \eqref{eq:distanceconverge:A} and \eqref{eq:limsup:x:m}  involve quantities that can be directly controlled with  the construction $( \widehat{\mathfrak{M}}_{m_k}, \widehat{v}_{1}^{m_k},  \widehat{v}_{2}^{m_k},  \hdelay_{m_k}) = \mathrm{BDG}^{2\bullet}( \mathbf{u}_{m_k},  \epsilon)$ of Section \ref{sec:boltzm-stable-maps}. In this direction, for  $k\geq 1$, we recall the notation $f_1^{m_k}$ and $f_2^{m_k}$ for the faces containing $\widehat{v}_1^{m_k}$ and $\widehat{v}_2^{m_k}$ respectively, and let us  start with a couple of easy remarks. First, for each $k\geq 1$, consider $J_{m_k}$  a white vertex attaining the minimal label along the cycle of $ \mathbf{u}_{m_k}$, and remark that by  \eqref{dist:v_*}, we must have:
 $$(\scal m_k)^{- \frac{1}{2 \alpha}}\cdot \Big(\mathrm{d}^{ \mathrm{gr}}_{\widehat{ \mathfrak{M}}_{m_k}}( \widehat{v}_{1}^{m_k},  J_{m_k})-\mathrm{d}^{ \mathrm{gr}}_{\widehat{ \mathfrak{M}}_{m_k}}( \widehat{v}_{2}^{m_k},  J_{m_k}) \Big)\to \hdelay.$$
Next, recall from the discussion preceding \eqref{eq:distancesdelays} that if we concatenate two simple geodesics in the map  starting from $J_{m_k}$ and obtained by iterating the successor function in the faces $f_{1}^{m_k}$ and $f_{2}^{m_k}$ respectively, until reaching $\widehat{v}_{1}^{m_k}$ and $\widehat{v}_{2}^{m_k}$, then this concatenation produces a geodesic $\widehat{\gamma}_{1,2}^{m_k}$ connecting $\widehat{v}_{1}^{m_k}$ to $\widehat{v}_{2}^{m_k}$.  Hence,  we must also have
  $$(\scal m_k)^{- \frac{1}{2 \alpha}}\cdot \Big(\mathrm{d}^{ \mathrm{gr}}_{\widehat{ \mathfrak{M}}_{m_k}}( \widehat{v}_{1}^{m_k},  J_{m_k})+\mathrm{d}^{ \mathrm{gr}}_{\widehat{ \mathfrak{M}}_{m_k}}( \widehat{v}_{2}^{m_k},  J_{m_k}) \Big)\to \widehat{D}(\widehat{\rho_1},\widehat{\rho}_2).$$
  We infer that $(J_{m_k})_{k\geq 1}$ is in $\mathcal{A}$. Now remark  that in order to have \eqref{eq:limsup:x:m} for two sequences in $\mathcal{A}$, there must exist $(x_{m_k})_{k\geq 1}\in\mathcal{A}$ such that $
\limsup_{k\to \infty} (\scal m_k)^{- \frac{1}{2 \alpha}}\cdot \mathrm{d}^{ \mathrm{gr}}_{\widehat{ \mathfrak{M}}_{m_k}}(J_{m_k},x_{m_k})>0$. We argue by contradiction and assume that we can find such a sequence $(x_{m_k})_{k\geq 1}$. For $k\geq 1$,  consider a path $\gamma_{m_k}$ going from $\widehat{v}_{1}^{m_k}$ to $\widehat{v}_{2}^{m_k}$ and passing through $x_{m_k}$ obtained by concatenating two geodesics from $x_{m_k}$ to $\widehat{v}_{1}^{m_k}$ and from $x_{m_k}$ to $\widehat{v}_{2}^{m_k}$. By the Jordan theorem, this path must go through a white vertex $J_{m_k}^\prime$ of the cycle of $ \mathbf{u}_{m_k}$, and since $(x_{m_k})_{k\geq 1}$ is in $\mathcal{A}$, the length of $\gamma_{m_k}$  is equal to $\mathrm{d}^{ \mathrm{gr}}_{\widehat{\mathfrak{M}}_{m_k}}( \widehat{v}_{1}^{m_k}, \widehat{v}_{2}^{m_k}) + o(m_k^{ \frac{1}{2 \alpha}})$,
  and as a consequence \begin{equation}  \label{eq:distancepastrop} \mathrm{d}^{ \mathrm{gr}}_{\widehat{\mathfrak{M}}_{m_k}}( \widehat{v}_{1}^{m_k}, \widehat{v}_{2}^{m_k}) + o( m_k^{ \frac{1}{2 \alpha}}) \geq  \mathrm{d}^{ \mathrm{gr}}_{\widehat{\mathfrak{M}}_{m_k}}( \widehat{v}_{1}^{m_k}, J_{m_k}^\prime) +  \mathrm{d}^{ \mathrm{gr}}_{\widehat{\mathfrak{M}}_{m_k}}( J_{m_k}^\prime, \widehat{v}_{2}^{m_k}) \quad \text{ and }\quad \mathrm{d}^{ \mathrm{gr}}_{\widehat{ \mathfrak{M}}_{m_k}}(  x_{m_k}, J_{m_k}^\prime) = o(m_k^{ \frac{1}{2\alpha}}). \end{equation}    
  We refer to Figure \ref{fig:douane} for an illustration. 
\begin{figure}[!h]
 \begin{center}
 \includegraphics[width=12cm]{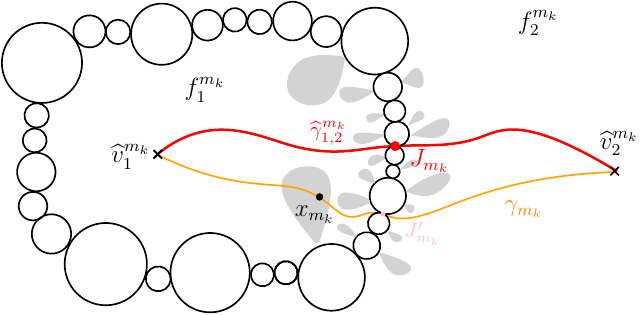}
 \caption{Illustration of the notation. The (essentially unique) vertex minimizing the label along the cycle of $ \mathbf{u}_{m_k}$ is denoted by $J_{m_k}$. The vertex $ x_{m_k}$  must be close to a vertex $J_{m_k}^\prime$ on the cycle which asymptotically minimizes the label. \label{fig:douane}}
 \end{center}
 \end{figure}
  In particular, to obtain a contradiction it suffices to show that $\mathrm{d}^{ \mathrm{gr}}_{\widehat{ \mathfrak{M}}_{m_k}}(  J_{m_k}, J_{m_k}^\prime) = o(m_k^{ \frac{1}{2\alpha}})$. 
In this direction, remark that by \eqref{dist:v_*} and \eqref{eq:distancepastrop}, combined with the fact that $J_{m_k}$ minimizes  the label along the cycle of $\mathbf{u}_{m_k}$, we have
  \begin{eqnarray} \label{eq:Knpresqmin} \ell_{m_k}(J_{m_k}^\prime)  = \ell_{m_k}(J_{m_k}) + o( m_k^{ \frac{1}{2 \alpha}}),  \end{eqnarray}
  where $\ell_{m_k}$ stands for the label function of $\mathbf{u}_{m_k}$.
We conclude using now  the convergence of $\mathrm{Cod}_{m_k}$ towards $( \widehat{X}, \widehat{Z}, \widehat{t}_\bullet, \widehat{\sigma})$ by an argument parallel to  the one of the proof of the Cactus bound of Lemma~\ref{lem:cactusbound}. In this direction, let us write  $j_{m_k}\leq  a^{m_k}$ (resp.\ $j_{m_k}^\prime \leq a^{m_k}$) for the  visit time of the vertex $J_{m_k}$ (resp.\ $J_{m_k}^\prime$) in the belt-part of $ \mathbf{u}_{m_k}$ (by definition all white vertices of the cycle are present in the belt part). Up to further extracting, we can suppose that almost surely $j_{m_k}/n \to  \mathfrak{j} \in [0, \widehat{t}_{\bullet}]$ (resp.\ $ j_{m_k}^\prime/n \to \mathfrak{j}^\prime \in [0, \widehat{t}_{\bullet}]$). 
It is straightforward to deduce from the facts that $J_{m_k}$ and $J_{m_k}^\prime$ lie in the cycle of $\mathbf{u}_{m_k}$, the assumed properties of $(\widehat{X}, \widehat{Z})$ on $\omega$, and \eqref{eq:Knpresqmin},  
 that
 $\mathfrak{j},\mathfrak{j}^\prime\in \widehat{\mathrm{Branch}}(0,\widehat{t}_\bullet)$ and $\widehat{Z}_{\mathfrak{j}}=\widehat{Z}_{\mathfrak{j}^\prime}= \min\{\widehat{Z}_t:~t\in \widehat{\mathrm{Branch}}(0,\widehat{t}_\bullet)\}$. Since the restriction of $\widehat{Z}$ at $ \widehat{\mathrm{Branch}}(0,\widehat{t}_\bullet)$ realizes its infimum at a unique point we derive that $\mathfrak{j}^\prime=\mathfrak{j}$. Lastly, by the Schaeffer bound \eqref{d_n^circ:2} and the continuity of $\widehat{Z}$, we deduce that
 $$\limsup \limits_{k\to \infty} (\scal m_k)^{- \frac{1}{2 \alpha}}\cdot \mathrm{d}^{ \mathrm{gr}}_{\widehat{ \mathfrak{M}}_{m_k}}(  J_{m_k}, J_{m_k}^\prime)\leq \widehat{Z}_{ \mathfrak{j}}+\widehat{Z}_{ \mathfrak{j}^\prime} -2\min \limits_{[ \mathfrak{j}\wedge  \mathfrak{j}^\prime, \mathfrak{j} \vee  \mathfrak{j}^\prime]} \widehat{Z}=0.$$
 This completes the proof of the lemma.
 \end{proof}

For simplicity, we let $\Omega^{\prime\prime}$ be the subset of all $\omega\in \Omega^\prime$ such that $\big(  \mathcal{S},D,\mathrm{Vol}, \rho_*,  (\rho_{1}, \rho_{2})\big)$ and  $\big(  \widehat{\mathcal{S}},\widehat{D},\widehat{\mathrm{Vol}}, \widehat{\rho}_*,  (\widehat{\rho}_{1}, \widehat{\rho}_{2})\big)$ belong to $\mathbb{PM}_{\mathrm{root}}^{2\bullet,1}$. By Lemma \ref{lem:alm-unique}, we have $\mathbf{P}(\Omega^{\prime\prime})=1$.

\subsection{Classification of the geodesics towards $\rho_{*}$} \label{sec:classificationgeo}

The aim of this section is to use Theorem \ref{alm-unique} to derive the structure of the $D$-geodesics towards $\rho_*$ in the metric space $(\mathcal{S},D,\mathrm{Vol},\rho_*)$. Specifically, recall from Section \ref{sec:D<D*} the notion of simple geodesics $\gamma^{(s)}$ and the definition of the path $\gamma^{(s \to t)}$ going from $\Pi_D(s)$ to $\Pi_D(t)$. Informally, this path is obtained by following $\gamma^{(s)}$ down to the merging point with $\gamma^{(t)}$ and then ascending to $\Pi_D(t)$. The goal of this section is to prove Proposition \ref{thm:geodesics:rho:*}, which we restate here for clarity:
\\
\\
\centerline{\textit{$\mathbf{P}$ - a.s.,  all the geodesics towards $\rho_*$  in $(\mathcal{S},D,\mathrm{Vol},\rho_*)$ are simple geodesics.}}
\\
\\
\noindent In particular, the latter  entails that  the geodesics to $\rho_{*}$ coincide for both metrics $D^*$ and $D$.  The idea of the proof, adapted from \cite[Section 6]{bettinelli2016geodesics}, is to cage general geodesics towards the root by typical geodesics towards the root.

\begin{proof}

We begin by introducing   the set  $\mathfrak{S}$ of all points $x\in \mathcal{S}$ such that there exists only one $D$-geodesic from $x$ to $\rho_{*}$. If $x\in \mathfrak{S}$, then the unique geodesic between $x$ and $\rho_{*}$ is a simple geodesic of the form $\gamma^{(s)}$, with  $s\in \Pi^{-1}_D(x)$. By Theorem \ref{alm-unique}  and the re-rooting property \eqref{eq:re-rooting-S}, if $(U_{k} : k \geq 1)$ are i.i.d.\ uniform random variables independent of $(X,Z,D)$, the event $\{\Pi_D(U_k):~k\geq 1\}\subset \mathfrak{S}$ has full $ \mathbf{P}$-probability. Moreover, since $(U_k : k \geq 1)$ is dense in $[0,1]$ and $\mathfrak{z}$ is continuous,  \eqref{eq:geocoincide} guarantees that for any $t\in [0,1]$ and $\varepsilon>0$, there exists $k\geq 0$ such that the simple geodesics $\gamma^{(t)}$ and $\gamma^{(U_k)}$ coincide outside $B_D(\Pi_D(t),\varepsilon)$. We thus deduce that the event:
$$\big\{\gamma^{(t)}\big((0,Z_t-Z_{t_*}]\big):~t\in [0,1]\big\}\subset \mathfrak{S}, $$
has full probability.  For the remainder of the proof, we work on this event and we argue by contradiction, supposing that there exists a geodesic $\gamma:[0,r_0]\to \mathcal{S}$, with  $\gamma(r_0)=\rho_{*}$, that does not coincide with a simple geodesic. By a compactness argument, there must exist $\varepsilon\in(0,r_0)$ such that:
\begin{equation}\label{pince:gamma}
\gamma([0,\eps])\cap \Big\{\gamma^{(t)}\big((0,Z_t-Z_{t_*}]\big):~t\in [0,1]\Big\}=\varnothing.
\end{equation}
For the sake of clarity, let us fix  $t\in\Pi^{-1}_D(\gamma(0))$ and note that $t\notin \{t_*\}\cup \{U_k:~k\geq 1\}$. We argue based on whether $\Pi_{d}(t)$ is a leaf or a pinch point of $\mathcal{L}$, and we refer to Figure \ref{fig:unicity-trap} for an illustration of the argument.
\par
\textsc{Case 1: The point $\Pi_d(t)$ is a leaf.}  First assume that $t\notin \{0,1\}$. Using the characterization of leaves given in Section \ref{Sec:equiv:d}, we know  that $\Pi_d(t)$ belongs to a loop or that there exists a sequence $(t_n)_{n\geq 0}$ of jumping times converging towards $t$ and with $t_n\preceq t_{n+1}$ for every $n\geq 0$ -- in particular the loop associated with $t_n$ disconnects the points $\Pi_{d}(0)$ and $\Pi_{d}(t)$.  Now recall that, conditionally on $X$, the process $Z$ along a loop of  $\mathcal{L}$ is a Brownian bridge. We deduce  that we can always find $u_m< u_m^{\prime}<t<v^{\prime}_m<v_m$ verifying the following properties (see Figure \ref{fig:unicity-trap} for an illustration):
\begin{itemize}
\item There exists $t^{\prime}\prec t$, with $\Delta_{t^{\prime}}>0$, such that $\Pi_d(u_m),\Pi_d(u_m^{\prime}),\Pi_d(v_m),\Pi_d(v^{\prime}_m)$ belong to the loop associated with $t^{\prime}$;
\item All the sequences  $(u_m)_{m>0},(u_m^{\prime})_{m>0},(v_m)_{m>0},(v^{\prime}_m)_{m>0}$ converge to $t$;
\item We have $Z_{u_m}\wedge Z_{v_m}>Z_{u_{m^{\prime}}}\vee Z_{v_{m^{\prime}}}.$
\end{itemize}
Here, we only used the fact that $\Pi_d(t)$ is a leaf in the second item.  We can even suppose that for all $m \geq 1$ we have $ t_{*} \notin [u_{m},v_{m}]$. Furthermore it is straightforward  to verify that, for every $m\geq 1$, the path $\gamma^{(u_m^{\prime}\to v_m^{\prime})}$ disconnects $\Pi_D\big((u_m^{\prime},v_m^{\prime})\big)$ from its complement. To see why, note that the discrete analogue of the previous statement follows directly from the BDG bijection and planarity. The same method used to prove the Cactus Bound (Lemma \ref{lem:cactusbound}) can then be applied to extend this result to the continuous setting.
\begin{figure}[!h]
 \begin{center}
 \includegraphics[width=14.5cm]{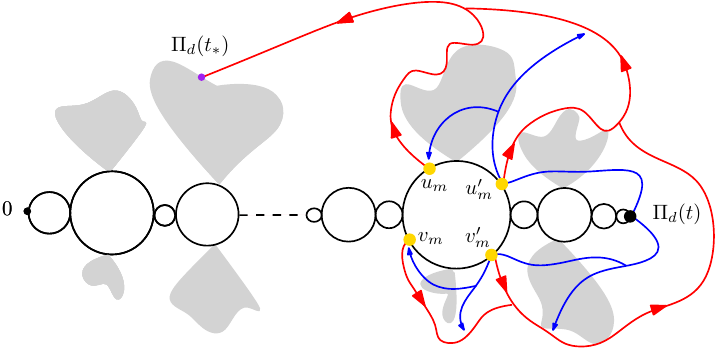}
 \caption{\label{fig:unicity-trap} Illustration of the paths $\gamma^{(u_m\to v_m)}$ and $\gamma^{(u_m^{\prime}\to v_m^\prime)}$ disconnecting respectively $\Pi_D((u_m,v_m))$ and $\Pi_D((u_m^\prime,v_m^\prime))$ from its complement. A geodesic starting from $\Pi_{D}(t)$ and targeting $\rho_{*}$ must either cross those paths or pass through  $\Pi_{D}(u_{m}')$ and $\Pi_{D}(u_{m})$, or $\Pi_{D}(v_{m}')$ and $\Pi_{D}(v_{m})$, in this order, which is excluded by \eqref{Distance:rho_*}.}
 \end{center}
 \end{figure}
Since $\rho_*$ is in the complement of $\Pi_D\big((u_m^{\prime},v_m^{\prime})\big)$, it follows that $\gamma$ must intersect the range of $\gamma^{(u_m^{\prime}\to v_m^{\prime})}$. Recalling that $(u_m^{\prime})_{m>0}$ and $(v_m^{\prime})_{m>0}$ both converge to $t$, we obtain from \eqref{pince:gamma} that $\gamma$ must hit $\gamma^{(u_m^{\prime}\to v_m^{\prime})}$ at $\Pi_D(u_m^{\prime})$ or $\Pi_D(v_m^{\prime})$ for some time $s_m^{\prime}$, for every $m$ large enough. We can then apply the same argument with $u_m^\prime$ and $v_m^\prime$ replaced by $u_m$ and $v_m$ to deduce that, for $m$ large enough, $\gamma$ has also to intersect the set $\{\Pi_D(u_m),\Pi_D(v_m)\}$ at some time $s_m>s^\prime_m$. By \eqref{Distance:rho_*}, we get:
\begin{align*}
D(\gamma(s_m),\rho_*)&\geq D\big(\rho_*,\Pi_D(u_m)\big)\wedge D\big(\rho_*,\Pi_D(v_m)\big)
\\
 &=\big(Z_{u_m}-\inf Z \big)\wedge \big(Z_{v_m}-\inf Z \big) \\
&\geq \big(Z_{u_m^{\prime}}-\inf Z \big)\vee \big(Z_{v_m^{\prime}}-\inf Z \big)\\
&=D(\rho_*,\Pi_D(u_m^{\prime}))\vee D(\rho_*,\Pi_D(v_m^{\prime}))\geq D(\gamma(s_m^\prime),\rho_*),
\end{align*}
where in the second and last line we used \eqref{Distance:rho_*}.  The previous display is in contradiction with the fact that $\gamma$ is a geodesic towards $\rho_*$ since $s_m>s_m^\prime$. This completes the proof in Case $1$ for $\Pi_D(t)$ a leaf different 
of $\Pi_D(0)=\Pi_D(1)$. The same argument as above applies to the case $\Pi_D(0) = \Pi_D(1)$ after re-rooting the looptree at time $U_1$. As before, we can disconnect $\Pi_D(0) = \Pi_D(1)$ from $\rho_*$ using simple geodesics starting from faces near $\Pi_D(0) = \Pi_D(1)$. The details are left to the reader.

\textsc{Case 2: The point $\Pi_d(t)$ is a pinch point}. We can use a similar argument to the previous case, but it requires considering 8 points to trap the geodesic $\gamma$. To avoid this technicality, we can deduce Case 2 directly from Case 1 using topological considerations. Recall that  the skeleton of the looptree is the set $\mathrm{Skel}:=\{\Pi_{d}(r) : r \in [0,1]\:\:\text{is a pinch point time}\}$. Using the same arguments as in the proof of Proposition~\ref{prop:timeclassification}, it is straightforward to verify that  $\mathrm{Skel}$ is totally disconnected -- meaning that  the connected subsets are singletons. Furthermore, by a compactness argument  combined with Theorem~\ref{main_theorem_topology} and Proposition~\ref{pinch_points_are_not_record}, it follows that the canonical projection $\texttt{p} : \mathcal{L}\to \mathcal{S}$ realizes an homeomorphism between $\mathrm{Skel}$ and its image $\texttt{p} (\mathrm{Skel})=\Pi_D(r \in [0,1]\:\:\text{is a pinch point time})$. This entails that the set $\texttt{p} (\mathrm{Skel})$ is also totally disconnected, implying that the geodesic $\gamma$ must take values outside of $\texttt{p} (\mathrm{Skel})$ at any non-empty open interval. Therefore, there exists some $\eps^\prime<\eps$ such that $\gamma(\eps^\prime)=\Pi_D(s)$ for some leaf time $s\in[0,1]$.  We can then apply Case 1 to  derive that $\gamma(\eps^\prime+\cdot)$ is a simple geodesic, which contradicts \eqref{pince:gamma}.
  \end{proof}
  We conclude this section with a discussion of some consequence of Proposition \ref{thm:geodesics}. For simplicity, as in the previous proof, let $\texttt{p} : \mathcal{L}\to \mathcal{S}$ denote the canonical projection. Proposition \ref{thm:geodesics}, combined with Proposition \ref{pinch_points_are_not_record}, shows that $\mathbf{P}$-almost surely, for any point $u \in \mathcal{L}$ in the looptree, the number of distinct geodesics from $\texttt{p} (u)$ towards $\rho_{*}$ is equal to the degree of $u$ in $ \mathcal{L}$ (see Section \ref{sec:defloop}) which belongs to $\{1,2\}$. In particular, the \textbf{cut locus}\footnote{There are different notions of cut locus depending on the regularity of the underlying space, here we follow \cite{angel2017stability}.} of $ \mathcal{S}$  relative to the root $\rho_{*}$, i.e. the set of all points that are connected to $\rho_{*}$ by at least two distinct geodesics, is exactly $\texttt{p} ( \mathrm{Skel})$, where we recall that $ \mathrm{Skel}$ is the skeleton  of $\mathcal{L}$.  Furthermore, as seen in the  proof of Proposition \ref{thm:geodesics}, the set $\texttt{p} ( \mathrm{Skel})$ is in fact totally disconnected, i.e.~the connected subsets are singletons. This situation contrasts with the Brownian sphere case where the cut locus is a tree, with uncountably many regular points of order $2$, from which we can start two distinct geodesics towards $\rho_*$, and a countable dense set of branch points of order $3$, from which  we can start three such geodesics, see \cite{LG09}.
  
More generally, this raises the question of studying \textbf{geodesic networks} in the $\alpha$-stable carpet/gasket, i.e.~the possible topology of the set of all geodesics linking two points. This was first addressed by Angel, Kolesnik \& Miermont \cite{angel2017stability}, and then considerably developed by Miller \& Qian \cite{miller2020geodesics} and Le Gall \cite{LGstar} in the Brownian sphere case. Some universality is expected  for planar random metrics and a basis of $27,28$ or $29$ networks are expected to show up in Brownian geometry, in Liouville Quantum metric \cite{gwynne2021geodesic}, in Aldous-Kendall planar metrics \cite{blanc2024geodesics}  and in the directed landscape \cite{dauvergne202327}. However, our situation differs, as the $\alpha$-stable carpets and gaskets are not homeomorphic to the sphere.   
  
  \subsection{Estimates for good points along typical geodesics} \label{sub:goodpointestimate}
We  finally come to the proof of Proposition \ref{main:techni} on the density of good points along typical geodesics, which we recall here for the reader's convenience:
There exist  constants $c,C\in(0,\infty)$ such that
$$  \mathbf{E} \left[ \int_{0}^{D( \rho_{1}, \rho_{2})} \mathrm{d}u \  \mathbbm{1}_{\{\gamma_{1,2}(u)  \mbox{ is $ \varepsilon$-bad}\,\}} \right] \leq C \cdot \eps^{c},\quad \text{for every } \eps>0.$$
As in the previous section, since the derivation of the last display will use the biased version of $( \mathcal{S},D)$, one first needs to check measurability issues and begin with some remarks on the concept of good points in a fixed geodesic metric space. \medskip 

In this direction, consider $\mathbf{M}:=(M,d_M,\mu,x,(x_1,x_2))$ a rooted bi-marked weighted geodesic compact space with a unique geodesic  $\gamma_{1,2}$  going from $x_1$ to $x_2$. Generalizing the notion introduced in Section \ref{sec:D=D*}, we say that $u\in [0,d_M(x_1,x_2)]$ is an $\varepsilon$-good time (for $\mathbf{M}$) if $\gamma_{1,2}(t)$ for $(u-\varepsilon)\vee 0\leq t\leq (u+\varepsilon)\wedge d_M(x_1,x_2)$,  coincides with the concatenation of at most  two portions of geodesics towards the root $x$. Otherwise, we say that $u$ is an $\eps$-bad time (for $\mathbf{M}$), and we use  the same terminology for the point $\gamma_{1,2}(u)$. 
We now relate this notion with that of \textbf{aligned points}. Specifically, three points ${y_1, y_2, y_3}$ are said to be aligned if they all lie on the range of the same geodesic. This condition can be expressed equivalently in terms of the metric: $d_M(y_{\tau(1)}, y_{\tau(3)}) = d_M(y_{\tau(1)}, y_{\tau(2)}) + d_M(y_{\tau(2)}, y_{\tau(3)})$ for some permutation $\tau$ of the indices ${1, 2, 3}$. The connection with good points is formalized in the following lemma for which we introduce some notation, and refer to Figure \ref{fig:aligned} for an illustration. Fix $u \in (0, d(x_1,x_2))$ as well as $ \varepsilon>0$ and decompose ${\gamma_{1,2}}$ into the three parts 
  \begin{eqnarray*}L_{ \varepsilon} &:=&   \big\{ {\gamma_{1,2}}(t) : t \in  [0, (u - \varepsilon)\vee 0]\big\},\\
  C_{ \varepsilon} &:=& \big\{ {\gamma_{1,2}}(t) : t \in ( (u-  \eps)\vee 0,(u+\eps)\wedge {d}( {x}_1, {x}_2))  \big\}, \\
  R_{ \varepsilon} &:=&   \big\{ {\gamma_{1,2}}(t) : t \in [ (u+\eps)\wedge {d}( {x}_1, {x}_2), {d}( x_{1},{x}_{2})]  \big\}  \end{eqnarray*} thus isolating the $  \varepsilon$-neighborhood $C_{ \varepsilon}$ of ${\gamma_{1,2}}(u)$ inside  ${\gamma_{1,2}}$. We write $y = \gamma_{1,2}(u)$ and $$ \qquad y_1^{ (\varepsilon)} = \gamma_{1,2} \left( (u-\varepsilon) \vee 0 \right), \qquad y_2^{ (\varepsilon)} = \gamma_{1,2} \left( (u+\varepsilon)\wedge d(x_1,x_2)\right)$$ for the extremities of $C_{ \varepsilon}$ inside $\gamma_{1,2}$. Then we have: 
\begin{lem}\label{lem:stars}
With the notation above, the point $y$ is $\eps$-good if and only if for every $z\in C_{\varepsilon}$, the points $\{z,y_1^{ (\varepsilon)},x\}$ or the points $\{z,y_2^{ (\varepsilon)},x\}$ are aligned. Moreover, if $y$ is $\eps$-bad, then there exists  $z' \in C_{ \varepsilon}$ such that the geodesic from $z'$ to $x$ is non-trivial\footnote{A trivial geodesic is a constant geodesic with length $0$.} and only intersects the range of $\gamma_{1,2}$ at its starting point~$z'$.
\end{lem}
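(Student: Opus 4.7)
The plan is to parameterize $\gamma_{1,2}$ by arc length, letting $t_1=(u-\varepsilon)\vee 0$, $t_2=(u+\varepsilon)\wedge d_M(x_1,x_2)$ and $z_t=\gamma_{1,2}(t)$ for $t\in[t_1,t_2]$, so that $y_i^{(\varepsilon)}=z_{t_i}$ and $d_M(z_s,z_t)=|s-t|$ throughout. The forward implication is essentially immediate: $\varepsilon$-goodness yields a split $t^*$ and two geodesics $\eta_1,\eta_2$ both ending at $x$ whose ranges cover $\gamma_{1,2}([t_1,t^*])$ and $\gamma_{1,2}([t^*,t_2])$ respectively, so for every $z\in C_\varepsilon$ the triple $\{z,y_i^{(\varepsilon)},x\}$ lies on the corresponding $\eta_i$ and is aligned.

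For the converse, I introduce, for $i\in\{1,2\}$, the two closed alignment sets
\[
A_i^+=\{t:\, d_M(z_t,x)+|t-t_i|=d_M(y_i^{(\varepsilon)},x)\},\qquad
A_i^-=\{t:\, d_M(z_t,x)=|t-t_i|+d_M(y_i^{(\varepsilon)},x)\},
\]
recording respectively whether $z_t$ lies on a geodesic from $y_i^{(\varepsilon)}$ to $x$, or $y_i^{(\varepsilon)}$ lies on a geodesic from $z_t$ to $x$; the third alignment configuration (with $x$ sandwiched between $z_t$ and $y_i^{(\varepsilon)}$) only occurs when $d_M(y_i^{(\varepsilon)},x)\leq 2\varepsilon$ and is treated just like $A_i^-$. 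A triangle-inequality argument using $d_M(z_s,y_i^{(\varepsilon)})=|s-t_i|$ shows that each $A_i^\pm$ is a closed interval abutting $t_i$, with $A_i^+\cap A_i^-=\{t_i\}$; hence $A_i=A_i^+\cup A_i^-$ is itself a single closed interval abutting $t_i$, say $A_1=[t_1,\widetilde{t}_1]$ and $A_2=[\widetilde{t}_2,t_2]$. The alignment hypothesis reads $A_1\cup A_2=[t_1,t_2]$, forcing $\widetilde{t}_1\geq\widetilde{t}_2$; I then pick $t^*\in[\widetilde{t}_2,\widetilde{t}_1]$ and set $w=z_{t^*}$. Depending on which of $A_1^\pm$ contains $t^*$, $\gamma_{1,2}|_{[t_1,t^*]}$ sits inside the range of a geodesic from $y_1^{(\varepsilon)}$ (in the $A_1^+$ case) or from $z_{t^*}$ (in the $A_1^-$ case) to $x$, built by concatenating $\gamma_{1,2}|_{[t_1,t^*]}$ (possibly reversed) with an arbitrary geodesic from its far endpoint to $x$ and noting that the total length matches exactly $d_M(\cdot,x)$. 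The symmetric construction on $[t^*,t_2]$ then completes the verification of $\varepsilon$-goodness.

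For the moreover statement, suppose $y$ is $\varepsilon$-bad and pick, via the converse, some $z_0\in C_\varepsilon$ for which neither triple is aligned. Fix any geodesic $\sigma$ from $z_0$ to $x$, which is non-trivial (otherwise $z_0=x$ would trivially align with $y_i^{(\varepsilon)}$). I first show that $\sigma$ cannot intersect $\gamma_{1,2}$ outside $C_\varepsilon$: if $\sigma(s_0)=\gamma_{1,2}(t'')$ for some $t''\leq t_1$, then $\gamma_{1,2}$ being a geodesic gives $d_M(z_0,\gamma_{1,2}(t''))=d_M(z_0,y_1^{(\varepsilon)})+d_M(y_1^{(\varepsilon)},\gamma_{1,2}(t''))$, and combining with $d_M(z_0,x)=d_M(z_0,\gamma_{1,2}(t''))+d_M(\gamma_{1,2}(t''),x)$ and the triangle inequality around $y_1^{(\varepsilon)}$ would force the alignment of $\{z_0,y_1^{(\varepsilon)},x\}$, a contradiction; the case $t''\geq t_2$ is symmetric. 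Hence every re-intersection of $\sigma$ with $\gamma_{1,2}$ lies strictly inside $C_\varepsilon$, and, by compactness of $\gamma_{1,2}([t_1,t_2])$, there is a largest parameter $s^*\in[0,d_M(z_0,x)]$ with $\sigma(s^*)\in\gamma_{1,2}([t_1,t_2])$; setting $z':=\sigma(s^*)$ (or $z'=z_0$ if no such $s^*>0$ exists) produces the desired geodesic $\sigma|_{[s^*,d_M(z_0,x)]}$ from $z'\in C_\varepsilon$ to $x$, meeting $\gamma_{1,2}$ only at $z'$.

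The step I expect to be the most delicate is the converse direction when $t^*\in A_i^-$: the relevant geodesic towards $x$ does not then emanate from the endpoint $y_i^{(\varepsilon)}$ but rather from the interior point $z_{t^*}$ of $C_\varepsilon$ itself, and one must make a consistent choice among the possibly several geodesics from $y_i^{(\varepsilon)}$ to $x$ so that the reverse of $\gamma_{1,2}|_{[t_1,t^*]}$ serves as the initial segment of a genuine geodesic from $z_{t^*}$ to $x$, while remaining compatible with the symmetric construction on the right half.
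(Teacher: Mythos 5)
Your proof is correct and follows essentially the same approach as the paper's: decompose $\mathrm{Cl}(C_\varepsilon)$ into two closed alignment intervals abutting $y_1^{(\varepsilon)}$ and $y_2^{(\varepsilon)}$ respectively, use the covering hypothesis to locate an overlap point $t^*$, and for the ``moreover'' part track the last intersection of a geodesic emanating from a non-aligned point with $\gamma_{1,2}$ (the paper reaches the same decomposition slightly more compactly via the observation that a geodesic from $C_\varepsilon$ to $x$ sticks to $\gamma_{1,2}$ until it leaves at a single point, together with an appeal to Hopf--Rinow for closedness). The step you flag as delicate is in fact unproblematic: uniqueness of sub-geodesics of $\gamma_{1,2}$ forces the geodesic from $z_{t^*}$ to $y_i^{(\varepsilon)}$ to equal $\gamma_{1,2}|_{[t_i,t^*]}$ reversed, the $A_i^-$ equation then makes its concatenation with \emph{any} geodesic from $y_i^{(\varepsilon)}$ to $x$ a genuine geodesic from $z_{t^*}$ to $x$, and the two pieces in the definition of $\varepsilon$-good are chosen independently, with no compatibility constraint whatsoever between the left and right halves.
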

  \begin{figure}[!h]
  \begin{center}
  \includegraphics[width=10cm]{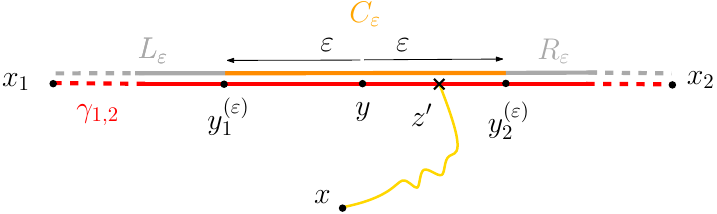}
  \caption{Illustration of the notation for Lemma \ref{lem:stars}. On this picture, $y$ is an $\eps$-bad point, because there exists a geodesic starting from the point $z'\in C_\eps$ that immediately leaves $\gamma_{1,2}$. 
    \label{fig:aligned}}
  \end{center}
  \end{figure}
  \begin{rek}[Geodesic stars] In particular, with the notation of the lemma, the point $z'$ is a $3$-star point along $\gamma_{1,2}$, where we recall that a \textbf{ $k$-geodesic star} is a point from which  which we can start $k$ disjoint geodesics (appart from their starting point).  These points have been studied in depth in the case of the Brownian geometry \cite{angel2017stability,LGstar,Mie11,miller2020geodesics} and in related contexts \cite{blanc2024geodesics,dauvergne202327}.
  \end{rek} 

 \begin{proof}  
Notice first that since there is a unique geodesic between $x_{1}$ and $x_{2}$, if a geodesic $\gamma$ intersects  ${\gamma_{1,2}}$ at two points, then it must actually coincide with it in-between. A geodesic $\gamma$ to $x$ starting from $C_{ \varepsilon}$ must then stick to a part of ${\gamma_{1,2}}$ before leaving ${\gamma_{1,2}}$ at some point $x_{\gamma}$, and then go to $x$ without crossing $ {\gamma_{1,2}}$ again. Using this remark, if there exists $z \in C_{\varepsilon}$ such that neither $\{z,y_1^{ ( \varepsilon)},x\}$ nor $\{z,y_2^{ ( \varepsilon)},x\}$ are aligned, then a geodesic $\gamma$ going from $z$ to $x$ necessary leaves $\gamma_{1,2}$ at  $x_{\gamma}\in C_{ \varepsilon}$. As a consequence, the part of $\gamma$ from $z'=x_{\gamma}$ to $x$ gives a non-trivial  geodesic only intersecting the range of $\gamma_{1,2}$ at its starting point $x_{\gamma}\in C_\varepsilon$, as claimed in the statement of the lemma. Similarly if $y$ is $ \varepsilon$-good, then $C_{ \varepsilon}$, and its closure, is covered by the range of  two geodesics towards the root $x$. Those must form two closed intervals by the above remark, and we infer that for any $z \in C_{ \varepsilon}$ either $\{z, y_1^{ (\varepsilon)},x\}$ or $\{z, y_2^{ (\varepsilon)},x\}$ belong to one of the above two geodesics and are thus aligned.

  \begin{figure}[!h]
  \begin{center}
\includegraphics[width=12cm]{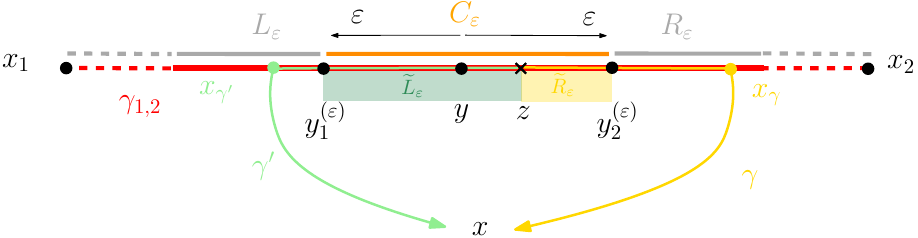}
  \caption{Illustration of the proof of Lemma \ref{lem:stars}. If any point of $C_{\varepsilon}$ is aligned with either $\{y_1^{ (\varepsilon)},x\}$ or $\{y_2^{ (\varepsilon)},x\}$, then we can cover $ C_{ \varepsilon}$ with the range of two geodesics $\gamma, \gamma'$ towards the root $x$. 
  \label{fig:3stars}
  }
  \end{center}
  \end{figure}

To conclude, it remains to show that if for every $z\in C_\varepsilon$  the points  $\{z,y_1^{ ( \varepsilon)},x\}$ or the points $\{z,y_2^{ ( \varepsilon)},x\}$ are aligned, then $y$ is necessarily  $\varepsilon$-good. 
In this direction,  consider   the set $ \widetilde{L}_{ \varepsilon}$ (resp.\ $ \widetilde{R}_{ \varepsilon}$) of all points in the closure of $C_{  \varepsilon}$ which are aligned with $\{y_1^{ (\varepsilon)},x\}$ (resp.\ $\{y_2^{ (\varepsilon)},x\}$), that is from which there is a  geodesic  $\gamma$ to $x$ that leaves ${\gamma_{1,2}}$ on the left part $L_{ \varepsilon}$, i.e. $x_{\gamma}\in L_{\varepsilon}$ (resp.~$x_\gamma\in R_{\varepsilon}$). By our assumption these two sets cover the closure of $C_{ \varepsilon}$. If  the right extremity $y_2^{( \varepsilon)}$ of $C_{ \varepsilon}$ belongs to  $ \widetilde{L}_{ \varepsilon}$ then there exists a geodesic  $\gamma$ going to $x$ which coincides with ${\gamma_{1,2}}$ on $C_{ \varepsilon}$, this  implies that $y$ is $ \varepsilon$-good. Otherwise, by the Hopf-Rinow theorem we can find a point $z \in C_{ \varepsilon}$ from which we can start two geodesics to $x$, one leaving ${\gamma_{1,2}}$ on $L_{ \varepsilon}$ and one on $R_{ \varepsilon}$. This implies in particular that $\mathrm{Cl}(C_{ \varepsilon})$ is covered by the range of these two geodesics targeting $x$, this  again gives that $y$ is $ \varepsilon$-good.
\end{proof}
Let us now use the above lemma to prove that the random variables considered in Proposition~\ref{main:techni} are indeed measurable.  To this end, for every  $\mathbf{M}:=(M,d_M,\mu,x,(x_1,x_2))$  rooted bi-marked weighted geodesic compact space  with exactly one geodesic going from $x_1$ to $x_2$ and $u\geq 0$, we  set $\mathrm{Bad}_{\varepsilon}(\mathbf{M}, u):=\mathbbm{1}_{u  \mbox{\small ~is a $ \varepsilon$-bad time for $\mathbf{M}$} }$, with the convention $\mathrm{Bad}(\mathbf{M},u):=0$ if $u>d_M(x_1,x_2)$.  This function is invariant by isometry and therefore we can view it as a maps from $\mathbb{PM}^{2\bullet, 1}_{\mathrm{root}}\times \mathbb{R}_+$ to $\mathbb{R}_+$,  that we still denote by $\mathrm{Bad}_{\varepsilon}$ with a slight abuse of notation.

\begin{lem}\label{Good-Bad-measurable}
The function   $\mathrm{Bad}_{\varepsilon}: \mathbb{PM}_{\mathrm{root}}^{2\bullet,1}\times\mathbb{R}_+\to \{0,1\}$ is measurable.
\end{lem}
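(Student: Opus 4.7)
The plan is to reduce the measurability of $\mathrm{Bad}_\varepsilon$ to a Borel condition built from the continuous functional $s\mapsto d_M(\gamma_{1,2}(s),x)$, by combining Lemma \ref{lem:stars} with a continuity statement for the unique geodesic under GHP convergence. Throughout, I will denote by $\tilde{\gamma}_{1,2}:\mathbb{R}_+\to M$ the extended geodesic defined by $\tilde{\gamma}_{1,2}(s):=\gamma_{1,2}(s\wedge d_M(x_1,x_2))$.

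The main technical step is the following joint continuity statement: for any fixed $s,t\geq 0$, the map
\[ \mathbf{M}\;\longmapsto\; \big(d_M(\tilde{\gamma}_{1,2}(s),\tilde{\gamma}_{1,2}(t)),\; d_M(\tilde{\gamma}_{1,2}(s),x)\big) \]
is jointly continuous in $(\mathbf{M},s,t)\in \mathbb{PM}_{\mathrm{root}}^{2\bullet,1}\times \mathbb{R}_+^{2}$. To prove it, for a sequence $\mathbf{M}_n\to \mathbf{M}$ in $d_{\mathrm{GHP}}^{\mathrm{root},2\bullet}$ we invoke a standard representation theorem (as already used in Section \ref{section:coupling} via \cite[Lemma 2.5]{khezeli2023unified}) to embed all $M_n$'s and $M$ isometrically into a common compact metric space $(E,d_E)$ in such a way that their images converge in Hausdorff distance and the marked vertices $(x_1^n,x_2^n,x^n)$ converge to $(x_1,x_2,x)$. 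The extended geodesics $\tilde{\gamma}_{1,2}^n:\mathbb{R}_+\to E$ are $1$-Lipschitz, take values in a compact set, and eventually agree with $x_2^n$ past a bounded time, so by Arzel\`a-Ascoli any subsequence admits a locally uniform subsequential limit $\gamma$. Such a $\gamma$ is necessarily a $1$-Lipschitz path from $x_1$ to $x_2$ of length $d_M(x_1,x_2)$, hence a geodesic in $M$; since $\mathbf{M}\in\mathbb{PM}_{\mathrm{root}}^{2\bullet,1}$, it must coincide with $\tilde{\gamma}_{1,2}$. Uniqueness upgrades subsequential convergence to convergence of the entire sequence, yielding the desired joint continuity.

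The second step is to encode $\varepsilon$-badness via alignment, following Lemma \ref{lem:stars}. Set $I_\varepsilon(u):=[(u-\varepsilon)\vee 0,(u+\varepsilon)\wedge d_M(x_1,x_2)]$ and, for a triple $\{a,b,c\}$, define the non-negative quantity
\[ A(a,b,c):=\min\big\{d(a,b)+d(b,c)-d(a,c),\; d(a,c)+d(c,b)-d(a,b),\; d(b,a)+d(a,c)-d(b,c)\big\}, \]
which vanishes exactly when $\{a,b,c\}$ are aligned. By Lemma \ref{lem:stars}, $u$ is $\varepsilon$-bad iff $u\leq d_M(x_1,x_2)$ and there exists $v\in I_\varepsilon(u)$ satisfying
\[ A\big(\tilde{\gamma}_{1,2}(v),\tilde{\gamma}_{1,2}((u-\varepsilon)\vee 0),x\big)>0 \quad\text{and}\quad A\big(\tilde{\gamma}_{1,2}(v),\tilde{\gamma}_{1,2}((u+\varepsilon)\wedge d_M(x_1,x_2)),x\big)>0. \]
By the first step, both quantities above are jointly continuous functions of $(\mathbf{M},u,v)$.

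Finally, since the strict inequalities above are open conditions in $v$, the set of $v\in I_\varepsilon(u)$ satisfying them is an open subset of $I_\varepsilon(u)$, so the existential can be restricted to rational $v\in I_\varepsilon(u)\cap\mathbb{Q}$. This writes
\[ \{(\mathbf{M},u):\mathrm{Bad}_\varepsilon(\mathbf{M},u)=1\}=\{u\leq d_M(x_1,x_2)\}\cap \bigcup_{v\in\mathbb{Q}_+}\big(\{v\in I_\varepsilon(u)\}\cap B_v\big), \]
where each $B_v$ is Borel by continuity, concluding measurability. The genuinely delicate step is the uniform convergence of geodesics, for which uniqueness in $\mathbb{PM}_{\mathrm{root}}^{2\bullet,1}$ plays a decisive role; everything else is a routine translation of an open condition into a countable union of Borel sets.
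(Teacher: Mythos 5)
Your proof is correct and rests on the same two pillars as the paper's: (i) convergence of the unique geodesic under GHP convergence, obtained by embedding into a common compact space and invoking uniqueness to upgrade subsequential (Arzelà–Ascoli) limits to full convergence, and (ii) the alignment characterization of $\varepsilon$-bad points from Lemma \ref{lem:stars}. The only difference is the final packaging: the paper directly shows that $\mathrm{Bad}_\varepsilon$ (in a reparametrized form) is lower semicontinuous, whereas you first establish joint continuity of the auxiliary functionals $(\mathbf{M},s,t)\mapsto(d_M(\tilde\gamma_{1,2}(s),\tilde\gamma_{1,2}(t)),d_M(\tilde\gamma_{1,2}(s),x))$ and then express $\{\mathrm{Bad}_\varepsilon=1\}$ as a countable union of Borel sets over rational test points $v$; the observation that the alignment defect vanishes at the endpoints of $I_\varepsilon(u)$, so that the set of witnessing $v$ is open and nonempty hence meets $\mathbb{Q}$, is the small extra lemma your route requires. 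Both packagings are valid and of comparable length, so the difference is cosmetic rather than conceptual.
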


\begin{proof}
It suffices to establish that the function  $\mathrm{Bad}_{\varepsilon}^\prime: \mathbb{PM}_{\mathrm{root}}^{2\bullet,1}\times[0,1]\to \{0,1\}$ defined by $\mathrm{Bad}_{\varepsilon}^\prime(\mathbf{M}, v)= \mathrm{Bad}_{\varepsilon}^\prime(\mathbf{M}, v\cdot d_M(x_1,x_2))$ is measurable; here $d_M(x_1,x_2)$ stands for the distance between the two marks which is a quantity invariant by isometry and it is thus well defined. We are going to prove that $\mathrm{Bad}_{\varepsilon}^\prime$ is lower semicontinuous, which directly implies the desired result. Let  $(\mathbf{M}, v)\in\mathbb{PM}_{\mathrm{root}}^{2\bullet,1}\times[0,1]$ and  $(\mathbf{M}_n,v_n)$, $n\geq 1$, a sequence in  $\mathbb{PM}_{\mathrm{root}}^{2\bullet,1}\times[0,1]$ converging to $(\mathbf{M}, v)$. We need to show that:
$$\liminf_{n\to \infty} \mathrm{Bad}_{\varepsilon}^\prime(\mathbf{M}_n,v_n)\geq \mathrm{Bad}_{\varepsilon}^\prime(\mathbf{M},v).$$
Moreover, since $ \mathrm{Bad}_{\varepsilon}$ takes values on $\{0,1\}$,  it suffices to consider the case when $\mathrm{Bad}_{\varepsilon}(\mathbf{M}_n,v_n)=0$, for every $n\geq 1$. In this direction, we recall that by \cite[Lemma 2.5]{khezeli2023unified}, we may assume that all the spaces $(M_n,d_{M_n},\mu_n, x^n,(x_1^n ,x_2^n))$, $n\geq 1$, and $(M,d_M,\mu, x, (x_1,x_2))$  are  isometrically embedded into the same compact metric space $(Z, \delta)$, and
\begin{equation}\label{eq:convergence:embedding:bad:good}
M_n\to M, \quad \mu_n\to \mu, \quad (x^n, x_1^n,x_2^n)\to (x,x_1,x_2),  
\end{equation}
as $n\to \infty$.
Next, in this embedding, consider $\gamma$ (resp.\ $\gamma_n$, $n\geq 1$) the unique geodesic staying in  $M$ (resp.\ $M_n$, $n\geq 1$) going from $x_1$ to $x_2$ (resp.\ $x_1^n$ to $x_2^n$, $n\geq 1$). We claim that for every $t_n\in[0,d_{M_n}(x_1^n,x_2^n)]$, $n\geq 1$, converging to some $t\in [0,d_M(x_1,x_2)]$ we must have $\gamma_n(t_n)\to \gamma(t)$. To see why the claim holds, remark that by compactness, it is equivalent to establish  that  $\gamma(t)$ is the unique limit point of   $(\gamma_n(t_n))_{n\geq 1}$.  In this direction, fix $z$ a limit point and note that necessarily  $z\in M$, by the convergence $M_n\to M$.  Next, observe  that since  $\gamma_n$, $n\geq 1$, are geodesics we must have  $\delta(x_1^n ,\gamma_n(t_n))= t_n$ and  $\delta(x_2^n,\gamma_n(t_n))=  \delta(x_1^n,x_2^n)-t_n$. Hence, passing to the limit, we obtain $\delta(x_1 ,z)= t$ and  $\delta(x_2,z)= \delta(x_1,x_2)-t$. This implies that $z$ is in a geodesic  taking values on $M$ and going from $x_1$ to $x_2$ . Since $(M,d,\mu, x, (x_1,x_2))$ only has one such geodesic, it follows  that $z=\gamma(t)$. Next set $t_n^-:=\big(v_n\cdot d_{M_n}(x_1^n,x_2^n)-\varepsilon\big)\vee 0 $ and $t_n^+:=\big(v_n\cdot  d_{n}(x_1^n,x_2^n)+\varepsilon\big)\wedge d_{M_n}(x_1^n,x_2^n)$. Similarly, let $t^{-}:=(v \cdot d_M(x_1,x_2)-\varepsilon)\vee 0 $ and $t^{+}:= (v\cdot d_M(x_1,x_2)+\varepsilon)\wedge d_M(x_1,x_2)$. Consider now $t^{-}\leq t\leq t^{+}$ and remark that, by \eqref{eq:convergence:embedding:bad:good} and since $v_n\to v$, we can always find  $t^{-}_n\leq t_n\leq t^{+}_n$, for every $n\geq 1$, such that $t_n\to t$, as $n\to \infty$. Then, by the previous discussion, we must have:
\begin{equation}\label{eq:final:Bad:Good:measurable}
\big(\gamma_n(t_n^-), \gamma_n(t_n), \gamma_n(t_n^+)\big)\to \big(\gamma(t^-), \gamma(t), \gamma(t^+)\big),\quad n\to \infty.
\end{equation}
Since $\mathrm{Bad}_{\varepsilon}^\prime(\mathbf{M}_n,v_n)=0$, for $n\geq 1$, then by definition it holds that, for every $n\geq 1$, the points $\{\gamma_n(t_n^-), \gamma_n(t_n), x^n\}$ or $\{\gamma_n(t_n^+), \gamma_n(t_n), x^n\}$ are aligned in $M_n$. Hence, by the convergences \eqref{eq:convergence:embedding:bad:good} and \eqref{eq:final:Bad:Good:measurable}, we derive that the points $\{\gamma(t^-), \gamma(t), x\}$ or $\{\gamma(t^+), \gamma(t), x\}$ are aligned in $M$. Since this holds for any arbitrary $t\in [t^-,t^+]$, an application of Lemma \ref{lem:stars} gives  $\mathrm{Bad}_{\varepsilon}^\prime(\mathbf{M},v)=0$, completing the proof.
\end{proof}

Coming back to Proposition \ref{main:techni}, recall that, under $\mathbf{P}$, the random variables:
$$\boldsymbol{\mathcal{S}}:=\big(\mathcal{S},D, \mathrm{Vol}, \rho_*,  (\rho_{1}, \rho_{2})\big)  \quad \text{and } \quad \boldsymbol{\widehat{\mathcal{S}}}:=\big(\widehat{\mathcal{S}},\widehat{D},\widehat{\mathrm{Vol}}, \widehat{\rho}_*,  (\widehat{\rho}_{1}, \widehat{\rho}_{2})\big),$$
are a.s.\ in $\mathbb{PM}^{2\bullet,1}_{\mathrm{root}}$. In particular,  Lemma \ref{Good-Bad-measurable} entails that 
$$ \int_{0}^{D( \rho_{1}, \rho_{2})} \mathrm{d}u \  \mathbbm{1}_{\{\gamma_{1,2}(u)  \mbox{ is $ \varepsilon$-bad}\}}= \int_{0}^{\infty} \mathrm{d} u ~\mathrm{Bad}_{\eps}(\boldsymbol{\mathcal{S}}, u)$$ 
is a well defined random variable. Furthermore, since conditionally on $\big(\mathcal{S},D, \mathrm{Vol}, \rho_*,  (\rho_{1}, \rho_{2})\big)$,  the random variable $\delay$ is uniformly distributed in $[-D(\rho_1,\rho_2), D(\rho_1,\rho_2)]$,  Proposition \ref{main:techni}  is equivalent to the existence of two constants $c,C\in (0,\infty)$ such that:
  \begin{equation}  \label{eq:newgoal:pre} \mathbf{E} \Big[  D(\rho_1,\rho_2)\cdot \mathbbm{1}_{\{(D(\rho_1,\rho_2)-\delay)/2 \mbox{ is $\varepsilon$-bad time for } \boldsymbol{\mathcal{S}} \}} \Big] \leq C\cdot \eps^{c},\quad \text{for every } \eps>0.  \end{equation}
By \eqref{eq:defShat}, the latter  can be finally be rewritten in the form:
  \begin{equation} \label{eq:newgoal}   \mathbf{P} \Big( \, \frac{\widehat{D}(\widehat{\rho}_1,\widehat{\rho}_2)-\hdelay}{2}  \mbox{ is an $ \varepsilon$-bad time for } \boldsymbol{\widehat{\mathcal{S}}}\,\Big) \leq C\cdot \eps^{c},\quad \text{for every } \eps>0, \end{equation}
for possibly two other constants. Here we use again the fact that the random variable and event appearing in \eqref{eq:newgoal:pre} and \eqref{eq:newgoal} are well defined and measurable by Lemma \ref{Good-Bad-measurable}. 

We aim to prove \eqref{eq:newgoal} by connecting it to the framework established in Section \ref{sec:prison} and the results presented therein. In this direction, recall that $( \widehat{X}, \widehat{Z}, \widehat{t}_\bullet, \widehat{\sigma})$ under $\mathbf{P}$ is distributed as $(X, Z, t_\bullet, \sigma)$ under $ \mathrm{cst}\cdot\GG\big(1-\sigma,-Z_{t_{\bullet}}\big)\cdot \mathbf{N}^{\bullet}$.
As a key consequence, we adopt the notation from Part \ref{PartI} for the process $( \widehat{X}, \widehat{Z}, \widehat{t}_\bullet, \widehat{\sigma})$ by adding a  ``~$\widehat{}$~'' in the notation. For instance, we write $\widehat{\varpi} \leq \widehat{t}_{\bullet}$ for the unique pinch point time that minimizes the label along the spine (which is unique by Lemma \ref{lem:onePinch}), and we denote by  $  \widehat{\mathfrak{X}}$ its image in the looptree coded by $ \widehat{X}$. To obtain \eqref{eq:newgoal} it suffices to establish that:

  \begin{eqnarray}  \label{eq:gogo1} \mathbf{P}\Big( \Big\{\widehat{\mathfrak{X}} \text{ is  }2\varepsilon\text{-trapped} \Big\} \cap \Big\{\frac{(\widehat{D}(\widehat{\rho}_1,\widehat{\rho}_2)-\hdelay)}{2}  \text{ is an $\varepsilon$-bad time for }\boldsymbol{\widehat{\mathcal{S}}}\,\Big\} \Big) = 0,  \end{eqnarray}
and 
 \begin{eqnarray} \label{eq:gogo2} \mathbf{P}(\widehat{\mathfrak{X}} \text{ is not }\varepsilon\text{-trapped}) \leq C \cdot  \varepsilon^c,  \end{eqnarray} for yet possibly other constants $c,C\in(0,\infty)$.

The proof of \eqref{eq:gogo2} is based on the estimates of Section~\ref{sec:prison}, more precisely Theorem \ref{prop:malo},  and can be found at the end of the section (Lemma \ref{lem:derdesder}). We shall thus start with \eqref{eq:gogo1}. As in Section~\ref{sec:uni:geo}, this is addressed by translating the problem in terms of planar maps, for which we use the construction from well-labeled unicyclomobiles.  In this direction, recall the notation $\Omega^{\prime\prime}$ introduced at the end of Section~\ref{sec:uni:geo}, which has full probability, and fix $\omega\in \Omega^{\prime\prime}$.  In particular,  $(  \widehat{\mathcal{S}},\widehat{D},\widehat{\mathrm{Vol}}, \widehat{\rho}_*,  (\widehat{\rho}_{1}, \widehat{\rho}_{2}))$ is embedded  on a compact space $(E,d_E)$, and has a unique geodesic $\widehat{\gamma}_{1,2}$ going from  $\widehat{\rho}_1$ to $\widehat{\rho}_2$ (staying in $\widehat{\mathcal{S}}$).  Let us now give the underlying geometric idea  which is best illustrated by putting Figures~\ref{fig:trap-2}~and~\ref{fig:trapped} side by side,  see also Figure~\ref{fig:sidebyside} and its caption. For an interval $I \subset \mathbb{R}$, we  write 
$$ \Gamma_I = \widehat{\gamma}_{1,2} \Big( \Big(I +  \frac{1}{2}\big( \widehat{D}(\widehat{\rho}_1,\widehat{\rho}_2) - \widehat{\Delta}\big)\Big) \cap [0,  \widehat{D}(\widehat{\rho}_1,\widehat{\rho}_2)]\Big),$$ which corresponds to the $I$-neighborhood of $ \mathbb{J}$ in $\widehat{\gamma}_{1,2}$ where $ \mathbb{J}$ correspond to $$ \mathbb{J}=\Gamma_{[0]} =  \widehat{\gamma}_{1,2} \Big(\frac{1}{2}\big( \widehat{D}(\widehat{\rho}_1,\widehat{\rho}_2) - \widehat{\Delta}\big)\Big).$$
For simplicity, we write $\Gamma$ for $\Gamma_{\mathbb{R}}$. Heuristically, when $ \widehat{\mathfrak{X}}$ is $ \varepsilon$-trapped in the sense of Section \ref{sec:prison}, then there exist two faces $  \mathfrak{F}_{1}, \mathfrak{F}_{2}$ of $ ( \widehat{\mathcal{S}},\widehat D)$, corresponding respectively to the times $\widehat{r}_{1,4}$ and $\widehat{r}_{2,3}$ therein,  such that the piece $ \Gamma_{(- \infty, \varepsilon]}$ of $\widehat{\gamma}_{1,2}$ going to $\widehat{\rho}_{1}$ touches both $ \mathfrak{F}_{1},\mathfrak{F}_{2}$, and similarly the piece $ \Gamma_{[ \varepsilon, \infty)}$ of $\widehat{\gamma}_{1,2}$ going to $\widehat{\rho}_{2}$  of $\widehat{\gamma}_{1,2}$ going to $\widehat{\rho}_{2}$ touches both $ \mathfrak{F}_{1},\mathfrak{F}_{2}$ (in any order), in such a way that the
 parts of these sub-geodesics in-between $ \mathfrak{F}_{1}, \mathfrak{F}_{2}$ separate the root $\widehat{\rho}_{*}$ and $\mathbb{J}$. We call  these sub-geodesics  the left and right ``doors'', see Figure~\ref{fig:sidebyside} for an illustration. In presence of such doors, it is impossible for a geodesic as in Lemma \ref{lem:stars} to exist, and a fortiori the point $ \mathbb{J}$ is $  \varepsilon$-good.\bigskip

\begin{figure}[!h]
 \begin{center}
 \includegraphics[height=9cm]{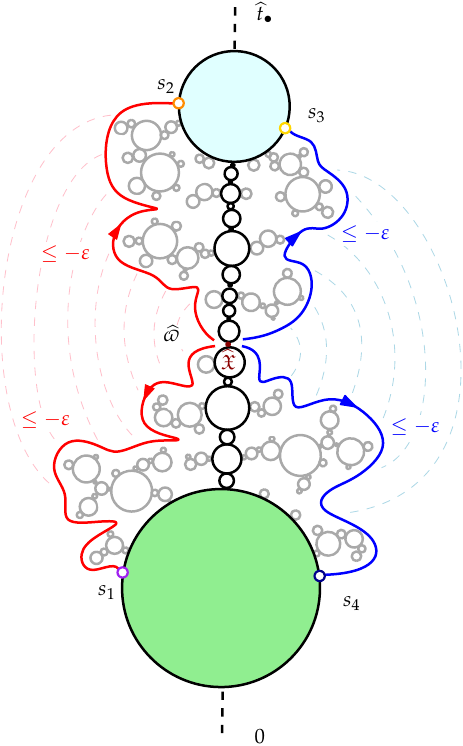} \hspace{0.5cm}  \includegraphics[height=9.5cm]{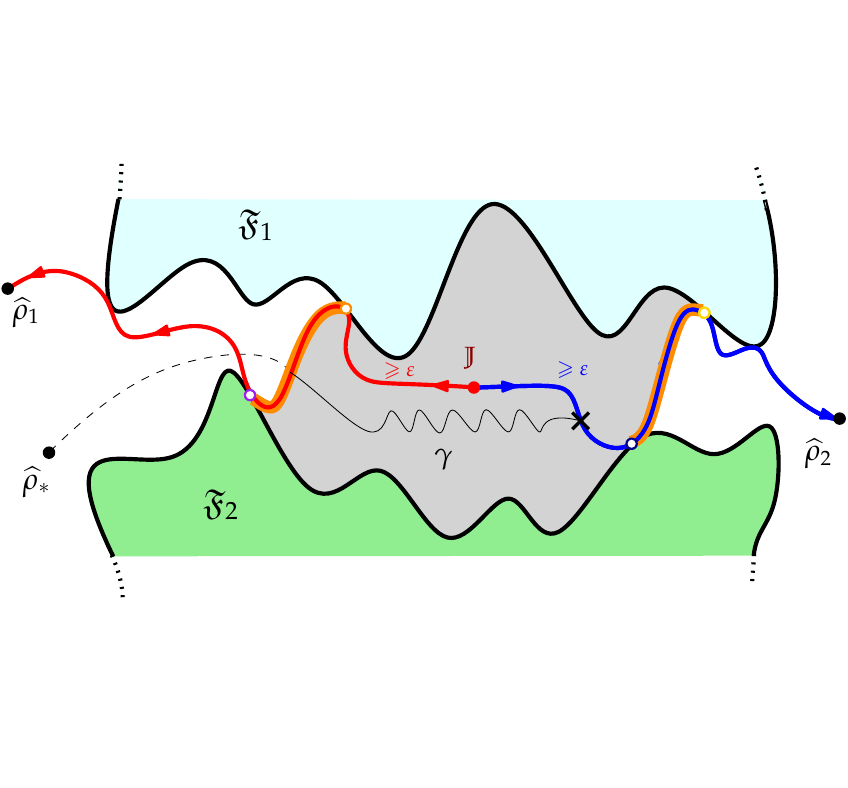}
 \caption{On the left, the $ \varepsilon$-trapped condition for the point $ \widehat{\mathfrak{X}}$ imported from Section \ref{sec:prison} and its geometric consequence on the right figure in $( \widehat{ \mathcal{S}}, \widehat{D})$. The color code is the same in the left and right pictures, in particular,  the geodesic $ \widehat{\gamma}_{1,2}$ is constructed by concatenating the simple geodesics starting from $ \widehat{\mathfrak{X}}$ in each face and the $ \varepsilon$-trap condition shows that those geodesics hit two faces separating the $ \varepsilon$-neighborhood of $  \mathbb{J}$ from $ \widehat{\rho}_{*}$. In such a situation, it is  impossible for a geodesic $\gamma$ to start  from the $ \varepsilon$-neighborhood of $   \mathbb{J}$ and targeting $ \widehat{\rho}_{*}$ without crossing $\widehat{\gamma}_{1,2}$. This contradicts Lemma \ref{lem:stars} and thus implies that $  \mathbb{J}$ is $ \varepsilon$-good. \label{fig:sidebyside}}
 \end{center}
 \end{figure}

 Let us now proceed with the formal proof and recall that we have fixed $\omega\in \Omega^{\prime \prime}$.  We start discussing the implication of $(\widehat{D}(\widehat{\rho}_1,\widehat{\rho}_2)-\hdelay)/2$ being $\varepsilon$-bad for $\boldsymbol{\widehat{\mathcal{S}}}$ in terms of the planar maps $( \widehat{\mathfrak{M}}_{m_k}, \widehat{v}_{1}^{m_k},  \widehat{v}_{2}^{m_k},  \hdelay_{m_k})$. In this direction, recall the construction from well-labeled unicyclomobiles,  $( \widehat{\mathfrak{M}}_{m_k}, \widehat{v}_{1}^{m_k},  \widehat{v}_{2}^{m_k},  \hdelay_{m_k}) = \mathrm{BDG}^{2\bullet}( \mathbf{u}_{m_k},  \epsilon)$ of Section \ref{sec:boltzm-stable-maps}. As in the proof of Lemma \ref{lem:alm-unique}, for every $k\geq 1$, we fix a white vertex  $J_{m_k}$ attaining the minimal label along the cycle of $\mathbf{u}_{m_k}$, and write $\widehat{\gamma}_{1,2}^{m_k}$ for the geodesic in the map between $\widehat{v}_1^{m_k}$ and $\widehat{v}_2^{m_k}$ obtained by iterating the successor function in each face. Specifically, we see the belt part $\boldsymbol{\mathcal{B}}_{m_k}$ (with its labels) as a submap of $\mathbf{u}_{m_k}$, and for every white corner $c$, we write $\gamma^{(c)}$ for the geodesic obtained by iterating the successor  function. Then, we let $c^{m_k}_f$ and $c^{m_k}_\ell$ be respectively the first and last corner of $J_{m_k}$ in the contour order of $\boldsymbol{\mathcal{B}}_{m_k}$. In particular, $\gamma^{(c^{m_k}_f)}$ (resp.\ $\gamma^{(c^{m_k}_\ell)}$) defines a geodesic staying in the face $f_1^{m_k}$ (resp.\ $f_2^{m_k}$), starting in $J_{m_k}$ and ending in $\widehat{v}_1^{m_k}$ (resp.\ $\widehat{v}_2^{m_k}$). The geodesic $\widehat{\gamma}_{1,2}^{m_k}$ is obtained by reversing $\gamma^{(c^{m_k}_f)}$ and concatenating  it with $\gamma^{(c^{m_k}_\ell)}$.
 \par
We begin with a preliminary lemma, translating the fact that $(\widehat{D}(\widehat{\rho}_1,\widehat{\rho}_2)-\hdelay)/2$ is $ \varepsilon$-bad  into a geometric condition in the discrete maps: 
 
 \begin{lem}\label{chemin:proches}
With the notation above, let $\eps>0$ and  assume that $(\widehat{D}(\widehat{\rho}_1,\widehat{\rho}_2)-\hdelay)/2$ is an $ \varepsilon$-bad time for $ \boldsymbol{\widehat{ \mathcal{S}}}$. Then for every $\omega\in \Omega^{\prime\prime}$, we can find $\eta\equiv \eta(\omega)>0$ and a sequence, of paths $\gamma_{m_k},k\geq 1$, respectively in $\widehat{\mathfrak{M}}_{m_k}$, from  a point of $B_{\mathrm{d}^{ \mathrm{gr}}_{\widehat{ \mathfrak{M}}_{m_k}}}(J_{m_k},\frac{3}{2}\eps (\scal m_k)^{\frac{1}{2 \alpha}})$ to $\widehat{\rho}_*^{m_k}$, such that, for $k$ large enough, the path $\gamma_{m_k}$ stays at graph  distance at least $\eta (\scal m_k)^{\frac{1}{2 \alpha}}$ from $\widehat{\gamma}_{1,2}^{m_k}$ in $\widehat{ \mathfrak{M}}_{m_k}$.
\end{lem}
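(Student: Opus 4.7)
The strategy is to lift the $\varepsilon$-bad condition, which lives in the continuum space $\boldsymbol{\widehat{\mathcal{S}}}$, into the discrete maps $\widehat{\mathfrak{M}}_{m_k}$ via their joint isometric embedding in the compact space $(E,d_E)$ provided at the end of Section~\ref{section:coupling}. Starting from Lemma~\ref{lem:stars} applied to $\boldsymbol{\widehat{\mathcal{S}}}$, the hypothesis that $\mathbb{J}$ is an $\varepsilon$-bad time produces $z'\in C_\varepsilon$ (so $z'\in\widehat{\gamma}_{1,2}$ with $\widehat{D}(z',\mathbb{J})\leq\varepsilon$) and a non-trivial continuous geodesic $\gamma:[0,L_\gamma]\to\widehat{\mathcal{S}}$ from $z'=\gamma(0)$ to $\widehat{\rho}_*$ whose range intersects $\widehat{\gamma}_{1,2}$ only at $z'$.

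Since Proposition~\ref{thm:geodesics:rho:*} holds under $\widehat{\mathbf{P}}$ (absolutely continuous with respect to $\mathbf{P}$), $\gamma$ is a simple geodesic. The image of the pinch-point set in $\widehat{\mathcal{S}}$ is totally disconnected (see the proof of Proposition~\ref{thm:geodesics}), so its preimage under $\gamma$ has empty interior in $[0,L_\gamma]$ (otherwise $\gamma$ would be constant on an interval, contradicting that it is a geodesic). We may therefore pick $r_0\in(0,\varepsilon/2)$ so that $z'':=\gamma(r_0)$ is a leaf, which (by the classification of points discussed after the proof of Proposition~\ref{thm:geodesics:rho:*}) admits a unique geodesic to $\widehat{\rho}_*$, namely $\gamma|_{[r_0,L_\gamma]}$. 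Setting $\eta_0:=\widehat{D}\bigl(\gamma([r_0,L_\gamma]),\widehat{\gamma}_{1,2}\bigr)$, which is positive by compactness and the disjointness $\gamma((0,L_\gamma])\cap\widehat{\gamma}_{1,2}=\varnothing$, we fix $\eta:=\eta_0/2$. Using the Hausdorff convergence $V(\widehat{\mathfrak{M}}_{m_k})\to\widehat{\mathcal{S}}$ in $E$, choose $z''_{m_k}\in V(\widehat{\mathfrak{M}}_{m_k})$ with $z''_{m_k}\to z''$ in $(E,d_E)$. From the proof of Lemma~\ref{lem:alm-unique} the rescaled graph distance between $J_{m_k}$ and $z''_{m_k}$ converges to $\widehat{D}(\mathbb{J},z'')\leq\varepsilon+r_0<\tfrac{3}{2}\varepsilon$, so for $k$ large $z''_{m_k}$ belongs to $B_{\mathrm{d}^{\mathrm{gr}}_{\widehat{\mathfrak{M}}_{m_k}}}(J_{m_k},\tfrac{3}{2}\varepsilon(\scal m_k)^{1/(2\alpha)})$. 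Define $\gamma_{m_k}$ to be any discrete geodesic in $\widehat{\mathfrak{M}}_{m_k}$ from $z''_{m_k}$ to $\widehat{\rho}_*^{m_k}$, seen as a compact subset of $E$.

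The central claim to prove is the pair of Hausdorff convergences in $(E,d_E)$:
\[
\gamma_{m_k}\ \longrightarrow\ \gamma([r_0,L_\gamma]), \qquad \widehat{\gamma}_{1,2}^{m_k}\ \longrightarrow\ \widehat{\gamma}_{1,2}.
\]
Both follow from the same standard Arzelà–Ascoli type argument: parametrize each discrete geodesic at constant (rescaled) speed over an interval whose length converges, by Lemma~\ref{convergence:law:2:bullet} and the Schaeffer bound, to the length of the limiting continuous geodesic; the resulting family of parametrizations is $1$-Lipschitz into $(E,d_E)$, hence equicontinuous; any uniform subsequential limit is a $1$-Lipschitz path between the limit endpoints $(z'',\widehat{\rho}_*)$ or $(\widehat{\rho}_1,\widehat{\rho}_2)$ whose length equals the corresponding $\widehat{D}$-distance, and therefore is a continuous geodesic in $\widehat{\mathcal{S}}$; by the uniqueness of the continuous geodesic from $z''$ (because $z''$ is a leaf) and from $\widehat{\rho}_1$ to $\widehat{\rho}_2$ (Theorem~\ref{alm-unique}), the limit is forced to be $\gamma|_{[r_0,L_\gamma]}$ respectively $\widehat{\gamma}_{1,2}$. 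This is the main technical obstacle, and it requires care in showing that rescaled graph distances along converging pairs of vertices transfer to $\widehat{D}$-distances in the limit, so that the length of the limiting rectifiable path is correctly identified.

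Given these two Hausdorff convergences, the conclusion is immediate: for $k$ large, $\gamma_{m_k}$ sits in the $\eta_0/4$-neighborhood (for $d_E$) of $\gamma([r_0,L_\gamma])$ and $\widehat{\gamma}_{1,2}^{m_k}$ sits in the $\eta_0/4$-neighborhood of $\widehat{\gamma}_{1,2}$, so the triangle inequality yields a $d_E$-distance between $\gamma_{m_k}$ and $\widehat{\gamma}_{1,2}^{m_k}$ of at least $\eta_0-\eta_0/2=\eta$. Since the embedding is isometric on each $V(\widehat{\mathfrak{M}}_{m_k})$ with respect to the rescaled graph distance $(\scal m_k)^{-1/(2\alpha)}\mathrm{d}^{\mathrm{gr}}_{\widehat{\mathfrak{M}}_{m_k}}$, this is precisely the stated graph-distance lower bound $\eta(\scal m_k)^{1/(2\alpha)}$.
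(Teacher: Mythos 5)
Your route differs substantially from the paper's. Where the paper constructs $\gamma_{m_k}$ as a \emph{concatenation of many short discrete geodesics} tracking a piecewise subdivision of $\widehat{\gamma}_\perp$ (so the constructed path is not a geodesic, and no uniqueness is invoked), you take a \emph{single} discrete geodesic from $z''_{m_k}$ to the root and force its Hausdorff convergence to $\gamma|_{[r_0,L_\gamma]}$ via an Arzel\`a--Ascoli $+$ uniqueness argument. The paper's piecewise construction buys robustness: it needs nothing beyond the GHP convergence \eqref{eq:V:M:k:convergence:E} and the one-sided bound \eqref{eq:distance:range}, and it compiles into the short argument that a geodesic segment of $d_E$-length $<3\eta$ started from a point at $d_E$-distance $>4\eta$ from $\Gamma_{m_k}$ stays at distance $>\eta$. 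Your approach is cleaner in its final accounting ($\eta=\eta_0/2$ versus the paper's factor-of-five bookkeeping), but it shifts all the weight onto the uniqueness of the continuous geodesic issuing from $z''$.

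That shift is where the gap lies. Lemma \ref{chemin:proches} is stated as a deterministic assertion valid for \emph{every} $\omega\in\Omega''$, and $\Omega''$, as defined at the end of Section~\ref{sec:uni:geo}, guarantees exactly two things beyond the coding-process regularity of $\Omega'$: that $\boldsymbol{\mathcal{S}}$ and $\boldsymbol{\widehat{\mathcal{S}}}$ lie in $\mathbb{PM}^{2\bullet,1}_{\mathrm{root}}$, i.e.\ uniqueness of the geodesic \emph{between the two marks}. It does not encode the classification of geodesics to the root (Proposition~\ref{thm:geodesics:rho:*}) nor the total disconnectedness of the cut locus for the \emph{biased} space $(\widehat{\mathcal{S}},\widehat{D},\widehat{\rho}_*)$. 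Your step ``Proposition~\ref{thm:geodesics:rho:*} holds under $\widehat{\mathbf{P}}$'' is problematic as written: the notion of simple geodesic is not an isometry-invariant of $(\mathcal{S},D,\rho_*)$ but is tied to the coding $(X,Z)$, and $\widehat{\mathcal{S}}$ carries no canonical such coding (the vector $(\widehat{X},\widehat{Z},\widehat{t}_\bullet,\widehat{\sigma})$ codes only the belt). You would need to reformulate what you use in purely metric terms (``the cut locus with respect to the root is totally disconnected, so its preimage along any geodesic has empty interior'') --- which \emph{is} isometry-invariant and does transfer a.s.\ by the absolute continuity of \eqref{eq:defShat} --- and then shrink $\Omega''$ to a smaller full-probability event carrying this property, together with the existence and uniqueness of geodesics to $\widehat{\rho}_*$ from the point $z''$. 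Without that modification, the conclusion does not follow ``for every $\omega\in\Omega''$'' as the lemma demands. The remaining technical point you flag --- transferring rescaled graph distances into $\widehat{D}$-distances through the embedding --- is indeed standard given the isometric embedding, and the Arzel\`a--Ascoli argument (parametrize by arc length, interpolate, extract a $1$-Lipschitz limit of the right length, invoke uniqueness) can be carried out once the uniqueness of the limiting geodesic is secured.
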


\begin{proof}
Fix $\varepsilon>0$ and $\omega\in \Omega^{\prime\prime}$. Recall that the spaces in \eqref{eq:V:M:n:k:prim} and \eqref{eq:S:tilde} are embedded in $(E,d_{E})$, $\widehat{\gamma}_{1,2}$ is the unique geodesic going from $\widehat{\rho}_1$ to $\widehat{\rho}_2$, the notation $\Gamma=\widehat{\gamma}_{1,2}([0,\widehat{D}(\widehat{\rho}_1,\widehat{\rho}_2)])$,  and:
\begin{equation}\label{eq:V:M:k:convergence:E:lemma:geo:dis}
V( \widehat{\mathfrak{M}}_{m_k})\to\widehat{ \mathcal{S}}\quad \text{ and }\quad \big(\widehat{\rho}_{m_k}, \widehat{v}_{1}^{m_k}, \widehat{v}_{2}^{m_k}\big)\to\big(\widehat{\rho}_*, \widehat{\rho}_{1}, \widehat{\rho}_{2}\big), \quad \text{as} \quad k\to \infty.
\end{equation}
 Now we claim that the trace of $\gamma^{m_k}_{1,2}$ in $V( \widehat{\mathfrak{M}}_{m_k})$ is  arbitrarily close to $\Gamma$, asymptotically. Namely, let $\Gamma_k$ be the set of all points of $V( \widehat{\mathfrak{M}}_{m_k})$ corresponding to the vertices appearing in the range of  $\gamma^{m_k}_{1,2}$. Then we claim that:
\begin{equation}\label{eq:distance:range}
\lim \limits_{k\to \infty} \sup\big\{ d_{E}\big(x, \Gamma\big):~x\in \Gamma_k \big\}=0.
\end{equation}
To see this, remark that by compactness it suffices to show that any adherent point $z$ of a sequence  $x_k\in \Gamma_{k},$ $k\geq 1$,  must verify $d_{E}\big(z, \Gamma\big)=0$. But by \eqref{eq:V:M:k:convergence:E:lemma:geo:dis}, any such adherent point  belongs to $\widehat{\mathcal{S}}$ and satisfies $\widehat{D}(\widehat{\rho}_1,\widehat{\rho}_2)= \widehat{D}(\widehat{\rho}_1,z)+\widehat{D}(z,\widehat{\rho}_2)$. This implies that $z$ is on a geodesic of $\widehat{\mathcal{S}}$ going from $\widehat{\rho}_1$ to $\widehat{\rho}_2$, and thus $z\in \Gamma$, since $\widehat{\gamma}_{1,2}$ is the unique such geodesic. For the remainder of the proof we assume that $d_*:=(\widehat{D}(\widehat{\rho}_1,\widehat{\rho}_2)-\hdelay)/2$ is $ \varepsilon$-bad, and notice that by the above argument we have
\begin{equation}\label{convergence:J:m:k}
J_{m_k} \to \widehat{\gamma}_{1,2}(d_*)=\mathbb{J},  \quad \text{as} \quad k\to \infty,
\end{equation}
 in the embeddings.   Then  by  Lemma \ref{lem:stars}, we know  that we can find a point $t\in[(d_*-\eps)\vee 0,(d_*+\eps)\wedge \widehat{D}(\widehat{\rho}_1,\widehat{\rho}_2)]$ such that there exists a geodesic $\widehat{\gamma}_{\perp}:[0,\widehat{D}(\widehat{\gamma}_{1,2}(t),\widehat{\rho}_*)]\to \widehat{\mathcal{S}}$ connecting $\widehat{\gamma}_{1,2}(t)$ and $\widehat{\rho}_*$, which only intersects $\Gamma$ at its starting point.  In particular, $\widehat{\rho}_*$ is not in $\Gamma$. Next set
$$\widehat{\gamma}(r):=\widehat{\gamma}_\perp\big(r+(\eps/4)\wedge \widehat{D}(\widehat{\gamma}_{1,2}(t),\widehat{\rho}_*)\big),\quad 0\leq r\leq R:=(\widehat{D}(\widehat{\gamma}_{1,2}(t),\widehat{\rho}_*)-\eps/4)\vee 0.$$
By construction and \eqref{eq:distance:range}, we can find $\eta\in(0,\eps/4)$, such that $\widehat{\gamma}$ stays at distance at least $5\eta$ from $\Gamma_{k}$, for $k$ large.  The desired result now  follows straightforwardly by  approximating $\widehat{\gamma}$ using paths in $\widehat{\mathfrak{M}}_{m_k}$, $k\geq 1$. Specifically, pick an integer $N\geq 1$ such that $R/N< \eta$. By \eqref{eq:V:M:k:convergence:E:lemma:geo:dis}, for $k$ even larger, we can find, $0\leq i\leq N$,  
a point $x_i^{k}$ in $V( \widehat{\mathfrak{M}}_{m_k})$ such that $d_E(\widehat{\gamma}(  i R /N ), x_i^{k})< \eta$, and  without loss of generality we may assume that $x_N^{k}= \widehat{\rho}_{m_k}$ since $\widehat{\gamma}(R)=\widehat{\rho}_*$. By the triangle inequality and using the fact that $\widehat{\gamma}$ is  a geodesic, we infer that 
\begin{equation}\label{eq:12:4:approx:i}
d_{E}\big(x_{i}^k, \Gamma_{m_k}\big)>4\eta\quad \text{ and }\quad d_E\big(x_i^{k}, x_{(i+1)\wedge N}^{k}\big)< 3\eta, 
\end{equation}
for every $1\leq i\leq N$. Now,  concatenate in $\widehat{\mathfrak{M}}_{m_k}$ geodesics from $x_i^{k}$ to $x_{i+1}^{k}$, for $1\leq i\leq N-1$,  and denote the obtained path on the map 
$\widehat{\mathfrak{M}}_{m_k}$ by $\gamma_{m_k}$. Then \eqref{eq:12:4:approx:i} entails that  the trace of $\gamma_{m_k}$ in $V( \widehat{\mathfrak{M}}_{m_k})$ is at Hausdorff distance in $(E,d_E)$ from $\Gamma_{k}$ bigger than $\eta$.  Furthermore, by construction, $\gamma_{m_k}$ ends at $\widehat{\rho}_{m_k}$ and  by the triangle inequality, using   that $\widehat{\gamma}_\perp$ is a geodesic starting from the $\varepsilon$-neighborhood of $\widehat{\gamma}_{1,2}(d_*)$, it plainly follows that $m_k^{-\frac{1}{2\alpha}}\cdot\mathrm{d}^{ \mathrm{gr}}_{\widehat{ \mathfrak{M}}_{m_k}}\big(\gamma_{m_k}(0), J_{m_k}\big)<5 \varepsilon/4+\eta+ d_{E}( \widehat{\gamma}_{1,2}(d_*),J_{m_k})$. Hence, applying \eqref{convergence:J:m:k} and using that $\eta\in(0,\eps/4)$, we infer that the family of paths $\gamma_{m_k}$, for $k$ large enough, verifies the statement of the lemma.
\end{proof}

We can now proceed with the proof that $\{\widehat{\mathfrak{X}} \text{ is  }2\varepsilon\text{-trapped} \}$ and   $\{(\widehat{D}(\widehat{\rho}_1,\widehat{\rho}_2)-\hdelay)/2  \text{ is  $\varepsilon$-bad for }\boldsymbol{\widehat{\mathcal{S}}}\,\}$ are incompatible under $\mathbf{P}$.
\begin{prop} \label{lem:prison-discrete} Fix $\omega \in \Omega^{\prime\prime}$. With the notation above, it is impossible for $\widehat{\mathfrak{X}}$ to be $2\varepsilon$-trapped and for $(\widehat{D}(\widehat{\rho}_1, \widehat{\rho}_2) - \hdelay)/2$ to be $\varepsilon$-bad for $\boldsymbol{\widehat{\mathcal{S}}}$ simultaneously.
\end{prop}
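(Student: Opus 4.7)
The plan is to argue by contradiction, assuming that for the fixed $\omega\in\Omega^{\prime\prime}$ both the $2\varepsilon$-trapping of $\widehat{\mathfrak{X}}$ and the $\varepsilon$-badness of the time $d_*:=(\widehat{D}(\widehat{\rho}_1,\widehat{\rho}_2)-\hdelay)/2$ for $\boldsymbol{\widehat{\mathcal{S}}}$ hold simultaneously. Applying Lemma \ref{chemin:proches} then provides a constant $\eta>0$ and, for $k$ large enough, a path $\gamma_{m_k}$ in $\widehat{\mathfrak{M}}_{m_k}$ joining the graph-ball $B_{\mathrm{d}^{\mathrm{gr}}_{\widehat{\mathfrak{M}}_{m_k}}}(J_{m_k},\tfrac{3}{2}\varepsilon(\scal m_k)^{1/(2\alpha)})$ to $\widehat{\rho}_*^{m_k}$, while staying at graph distance at least $\eta(\scal m_k)^{1/(2\alpha)}$ from the discrete geodesic $\widehat{\gamma}_{1,2}^{m_k}$. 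These paths will serve as the obstructions in the planar topological argument that follows.

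The next step is to translate the continuous $2\varepsilon$-trap data to the discrete belt $\boldsymbol{\mathcal{B}}_{m_k}$. Exactly as in the proof of Lemma \ref{lem:alm-unique}, the visit time $j_{m_k}/m_k$ of $J_{m_k}$ in $\boldsymbol{\mathcal{B}}_{m_k}$ converges to $\widehat{\varpi}$. Using the joint convergence $\mathrm{Cod}_{m_k}\to(\widehat{X},\widehat{Z},\widehat{t}_\bullet,\widehat{\sigma})$ and the assumed $2\varepsilon$-trap, I extract on the ancestral path of $\hat{v}_\circ^{m_k}$ two black vertices $F_1^k,F_2^k$ of degrees of order $m_k^{1/\alpha}$, associated with the jump times $\widehat{r}_{1,4}$ and $\widehat{r}_{2,3}$, together with four white vertices $V_1^k,V_4^k$ adjacent to $F_1^k$ and $V_2^k,V_3^k$ adjacent to $F_2^k$, coming from the one-sided minimal-record times $s_1,s_4$ and $s_2,s_3$. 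In the map, $F_i^k$ becomes a face $\mathfrak{F}_i^{m_k}$ on whose boundary the associated $V_j^k$ lie, and the inequality $\ell(V_j^k)\le \ell(J_{m_k})-\tfrac{3}{2}\varepsilon(\scal m_k)^{1/(2\alpha)}$ holds for all large $k$, together with the discrete counterpart of the one-sided minimal-record property in the contour of $\boldsymbol{\mathcal{B}}_{m_k}$.

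By this minimal-record property and the iterative nature of the successor function in the BDG$^{2\bullet}$ bijection, the two simple geodesics composing $\widehat{\gamma}_{1,2}^{m_k}$ (from $J_{m_k}$ toward $\widehat{v}_1^{m_k}$ and toward $\widehat{v}_2^{m_k}$) must each visit vertices on both face boundaries, producing four visit points $P_1^k,P_2^k,P_3^k,P_4^k\in\widehat{\gamma}_{1,2}^{m_k}$ with $P_1^k,P_4^k\in\partial\mathfrak{F}_1^{m_k}$ and $P_2^k,P_3^k\in\partial\mathfrak{F}_2^{m_k}$, and with labels $\ell(P_j^k)=\ell(V_j^k)$. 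Concatenating on the sphere the sub-path of $\widehat{\gamma}_{1,2}^{m_k}$ from $P_1^k$ to $P_2^k$ (the left ``orange door''), an appropriately chosen arc of $\partial\mathfrak{F}_2^{m_k}$ from $P_2^k$ to $P_3^k$, the sub-path of $\widehat{\gamma}_{1,2}^{m_k}$ from $P_3^k$ to $P_4^k$ (the right door), and an appropriately chosen arc of $\partial\mathfrak{F}_1^{m_k}$ from $P_4^k$ to $P_1^k$ yields a Jordan curve $\mathcal{C}^{m_k}$ on $\mathbb{S}^2$. By planarity and the trapping geometry (see Figure \ref{fig:sidebyside}), $\mathcal{C}^{m_k}$ separates $J_{m_k}$ from $\widehat{\rho}_*^{m_k}$.

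Since $\gamma_{m_k}$ starts inside the region enclosed by $\mathcal{C}^{m_k}$ and ends at $\widehat{\rho}_*^{m_k}$ outside, it must visit some vertex of $\mathcal{C}^{m_k}$. The distance constraint from the first step forbids visits to vertices on the $\widehat{\gamma}_{1,2}^{m_k}$-portions of $\mathcal{C}^{m_k}$, hence the crossing must occur on one of the face-boundary arcs. This last step is the main technical obstacle: one must argue that, with the appropriate selection of the two arcs of $\partial\mathfrak{F}_i^{m_k}$, each of their vertices is within graph distance less than $\eta(\scal m_k)^{1/(2\alpha)}$ of some vertex of $\widehat{\gamma}_{1,2}^{m_k}$, provided $\eta$ is chosen small enough depending on $\varepsilon$. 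The key input is that the labels along $\partial\mathfrak{F}_i^{m_k}$ form a Brownian bridge of duration of order $m_k^{1/\alpha}$ and label fluctuation $O(m_k^{1/(2\alpha)})$: the one-sided minimal-record properties at $s_1,s_2,s_3,s_4$ confine the chosen arcs to the low-label part of this bridge, and the upper Schaeffer bound \eqref{d_n^circ:2} inside the face $\mathfrak{F}_i^{m_k}$ then converts those label controls into graph-distance bounds from $P_i^k\in\widehat{\gamma}_{1,2}^{m_k}$, producing the desired contradiction and completing the proof.
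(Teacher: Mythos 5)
Your proof follows the same broad strategy as the paper's: argue by contradiction, invoke Lemma~\ref{chemin:proches} to produce paths $\gamma_{m_k}$ staying at distance $\ge \eta(\scal m_k)^{1/(2\alpha)}$ from $\widehat{\gamma}_{1,2}^{m_k}$, extract discrete analogues of the trapping data on the belt, and then use a planar separation argument to force $\gamma_{m_k}$ to come close to $\widehat{\gamma}_{1,2}^{m_k}$. However, your construction of the separating curve has a genuine gap.

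You close the Jordan curve using arcs \emph{along the boundaries} $\partial\mathfrak{F}_1^{m_k}$, $\partial\mathfrak{F}_2^{m_k}$, which are paths of map edges carrying map vertices. The path $\gamma_{m_k}$ is therefore permitted to cross the curve at one of these vertices, and to conclude you would need every vertex on the chosen arcs to lie within distance $<\eta(\scal m_k)^{1/(2\alpha)}$ of $\widehat{\gamma}_{1,2}^{m_k}$. This fails for two reasons. First, the constant $\eta>0$ is produced by Lemma~\ref{chemin:proches} and depends only on $\omega$; it is not a quantity you are free to shrink ``depending on $\varepsilon$''. Second, and more fundamentally, the trapping hypothesis controls labels only on the contour intervals $(s_1,\varpi_1)$, $(\varpi_1,s_2)$, $(s_3,\varpi_2)$, $(\varpi_2,s_4)$, and those lower bounds concern the arcs of the loops that pass through the spine-pinch-point on the side facing $J_{m_k}$. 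The arcs you actually need to close the curve — connecting the two doors on the side away from $J_{m_k}$ — lie in the uncontrolled contour intervals $(s_2,s_3)$ and $(s_4,\sigma)\cup(0,s_1)$, along which the labels form a discrete random-walk bridge with fluctuations of order $m_k^{1/(2\alpha)}$. These can exceed $\ell(J_{m_k})$, so by the lower Schaeffer bound $d^{\mathrm{gr}}(w,\widehat{\gamma}_{1,2}^{m_k}) \ge |\ell(w)-\ell(J_{m_k})|$ such a vertex $w$ can sit at distance $\gg \eta(\scal m_k)^{1/(2\alpha)}$ from $\widehat{\gamma}_{1,2}^{m_k}$, and $\gamma_{m_k}$ may cross there without contradiction.

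The paper's construction sidesteps both issues. The two ``doors'' are not sub-paths of $\widehat{\gamma}_{1,2}^{m_k}$: they are paths from $w_{\widehat{s}_1^{m_k}}$ to $w_{\widehat{s}_2^{m_k}}$ (resp.\ $w_{\widehat{s}_3^{m_k}}$ to $w_{\widehat{s}_4^{m_k}}$) built by concatenating pieces of the simple geodesics $\gamma^{(c_{\widehat{s}_i^{m_k}})}$ and of $\gamma^{(c_f^{m_k})}$ (resp.\ $\gamma^{(c_\ell^{m_k})}$). The left-hand sides of \eqref{eq:s1omega1}--\eqref{eq:s4omega2} — the discretized minimal-record conditions — guarantee that each $\gamma^{(c_{\widehat{s}_i^{m_k}})}$ merges with $\gamma^{(c_f^{m_k})}$ or $\gamma^{(c_\ell^{m_k})}$ within $o(m_k^{1/(2\alpha)})$ steps, so every door vertex lies within $o(m_k^{1/(2\alpha)})$ of $\widehat{\gamma}_{1,2}^{m_k}$. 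Moreover, the two doors together with the two (closed) faces already separate the sphere: the faces, being faces of the map, cannot be traversed by a path in $\widehat{\mathfrak{M}}_{m_k}$, so by planarity any such path from $\mathrm{In}^{m_k}$ to $\mathrm{Out}^{m_k}$ must share a vertex with a door. There is no need — and no attempt — to control labels along any face boundary arc. If you wish to salvage your version, you should close the curve through the \emph{interiors} of $\mathfrak{F}_1^{m_k}$, $\mathfrak{F}_2^{m_k}$ rather than along their boundaries, and replace the sub-paths of $\widehat{\gamma}_{1,2}^{m_k}$ by the doors built from the quickly-merging simple geodesics $\gamma^{(c_{\widehat{s}_i^{m_k}})}$, exactly as in the paper.
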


\begin{proof}
Fix $\omega\in \Omega^{\prime\prime}$ and assume that $\widehat{\mathfrak{X}}$ is $2\varepsilon$-trapped.  The proof reduces to show that necessarily $\omega\notin\{(\widehat{D}(\widehat{\rho}_1,\widehat{\rho}_2)-\hdelay)/2  \text{ is  $\varepsilon$-bad for }\boldsymbol{\widehat{\mathcal{S}}}\,\}$. Our goal is to construct the discret equivalent of the doors describe in Figure \ref{fig:sidebyside}, which for maps are just two paths, and then use the Jordan theorem to deduce that these doors disconnect a small ball centered at $J_{m_k}$ from $\widehat{\rho}_*^{m_k}$, making impossible the existence of the family of paths $\gamma_{m_k}$, $k\geq 1$,   in Lemma \ref{chemin:proches}. Let us proceed, and in this direction recall the notation  $\theta_{m_k}$ and $a_{m_k}$ from \eqref{eq:dist2}.

 \noindent \textsc{Building the discrete doors.}  
 For every integer $i\leq \theta_{m_k}$ we denote by $w_i^{m_k} \in V_\circ( \boldsymbol{ \mathcal{B}}_{m_k})$ the white vertex visited at time $i$ in the Lukasiewicz exploration of the belt part of $ \mathbf{u}_{m_k}$ and write $c_i^{m_k}$ for the first corner of $v_i^{m_k}$ in the contour order. We remind the reader of the notation $c_f^{m_k}$ and $c_\ell^{m_k}$ respectively for the first and last corner of $J_{m_k}$ in the contour of $\boldsymbol{\mathcal{B}}_{m_k}$.  Next recall the definition of  $\{\widehat{\mathfrak{X}}$ is $2\varepsilon$-trapped$\}$, given in Section~\ref{sec:prison} with $(X,Z,t_\bullet,\sigma)$ replaced by $(\widehat{X},\widehat{Z},\widehat{t}_\bullet,\widehat{\sigma})$, where we use the  same notation with a ``~$\widehat{}$~''. Then by the convergence of $\mathrm{Cod}_{m_k}$ to  $(\widehat{X},\widehat{Z},\widehat{t}_\bullet,\widehat{\sigma})$, and the assumptions on $\omega$, it is straightforward to verify that:
 \begin{itemize}
 \item   We can find discrete equivalents of $\widehat{r}_{1,4}$ and $\widehat{r}_{2,3}$, that is, integers $\widehat{r}_{1,4}^{m_k}$ and $\widehat{r}_{2,3}^{m_k}$, smaller than $a_{m_k}$,  corresponding to the visit of a white vertex on the spine of $ \boldsymbol{\mathcal{B}}_{m_k}$, such that $\widehat{r}_{1,4}^{m_k}/{m_k} \to \widehat{r}_{1,4}$ and $\widehat{r}_{2,3}^{m_k}/{m_k}\to \widehat{r}_{2,3}$ and so that the renormalized number of grand-children of these vertices converge  towards $\Delta \widehat{X}_{\widehat{r}_{1,4}}$ and $\Delta \widehat{X}_{\widehat{r}_{2,3}}$ respectively.  
 \item Similarly,  we can find four visit times $\widehat{s}_1^{m_k}< \widehat{s}_2^{m_k} < a^{m_k} < \widehat{s}_3^{m_k} < \widehat{s}_4^{m_k}$ such that $w_{\widehat{s}_1^{m_k}}^{m_k}$ and $w_{\widehat{s}_4^{m_k}}^{m_k}$ are grand-children of $w_{\widehat{r}_{1,4}^{m_k}}^{m_k}$ (and similarly for the pair $2,3$), and   if $\ell_{m_k}$ is the labeling of $ \boldsymbol{ \mathcal{B}}_{m_k}$, we have: 
  \begin{align} 
  \label{eq:s1omega1} \min_{[ c_{\widehat{s}_{1}^{m_k}}^{m_k},c_{f}^{{m_k}}]_{ \boldsymbol{\mathcal{B}}_{m_k}}} \ell_{m_k} = \ell_{m_k}\big(c_{\widehat{s}_1^n}^{m_k}\big) + o(m_k^{ \frac{1}{2\alpha}}) \, \mbox{ and } \,  \ell_{m_k}(c_{\widehat{s}_1^{m_k}}^{m_k}) \leq \ell_{m_k}(J_{m_k}) - 2 \varepsilon (\scal m_k)^{\frac{1}{2 \alpha}} + o(m_k^{ \frac{1}{2\alpha}}), \\ 
\label{eq:s2omega1} \min_{[ c_{f}^{m_k},c_{\widehat{s}_2^{m_k}}^{m_k}]_{ \boldsymbol{\mathcal{B}}_{m_k}}} \ell_{m_k} = \ell_{m_k}\big(c_{\widehat{s}_2^{m_k}}^{m_k}\big) + o(m_k^{ \frac{1}{2\alpha}})\, \mbox{ and } \,    \ell_{m_k}\big(c_{\widehat{s}_2^{m_k}}^{m_k}\big) \leq \ell_{m_k}(J_{m_k}) - 2 \varepsilon (\scal m_k)^{ \frac{1}{2 \alpha}} + o(m_k^{ \frac{1}{2\alpha}}),\\
\label{eq:s3omega2} \min_{[c_{\widehat{s}_3^{m_k}}^{m_k}, c_{\ell}^{m_k}]_{ \boldsymbol{\mathcal{B}}_{m_k}}} \ell_{m_k}= \ell_{m_k}\big(c_{\widehat{s}_3^{m_k}}^{m_k}\big) + o(m_k^{ \frac{1}{2\alpha}})\, \mbox{ and } \,    \ell_{m_k}\big(c_{\widehat{s}_3^{m_k}}^{m_k}\big) \leq \ell_n(J_{m_k}) - 2 \varepsilon (\scal m_k)^{ \frac{1}{2 \alpha}} + o(m_k^{ \frac{1}{2\alpha}}), \\ 
\label{eq:s4omega2} \min_{[ c_{\ell}^{m_k},c_{\widehat{s}_4^{m_k}}^{m_k}]_{ \boldsymbol{\mathcal{B}}_{m_k}}} \ell_{m_k} = \ell_{m_k}\big(c_{\widehat{s}_4^{m_k}}^{m_k}\big) + o(m_k^{ \frac{1}{2\alpha}}) \, \mbox{ and } \,   \ell_{m_k}\big(c_{\widehat{s}_4^{m_k}}^{m_k}\big) \leq \ell_{m_k}(J_{m_k}) - 2 \varepsilon (\scal m_k)^{ \frac{1}{2 \alpha}} + o(m_k^{ \frac{1}{2\alpha}}),
  \end{align}
where we recall the notation for the corner interval $[c,c']_{ \boldsymbol{ \mathcal{T}}}$ from \eqref{eq:defintervallecorner}, defined  on a well-labeled  mobile~$\boldsymbol{ \mathcal{T}}$.
  \end{itemize}
    \begin{figure}[!h]
 \begin{center}
 \includegraphics[height=13cm]{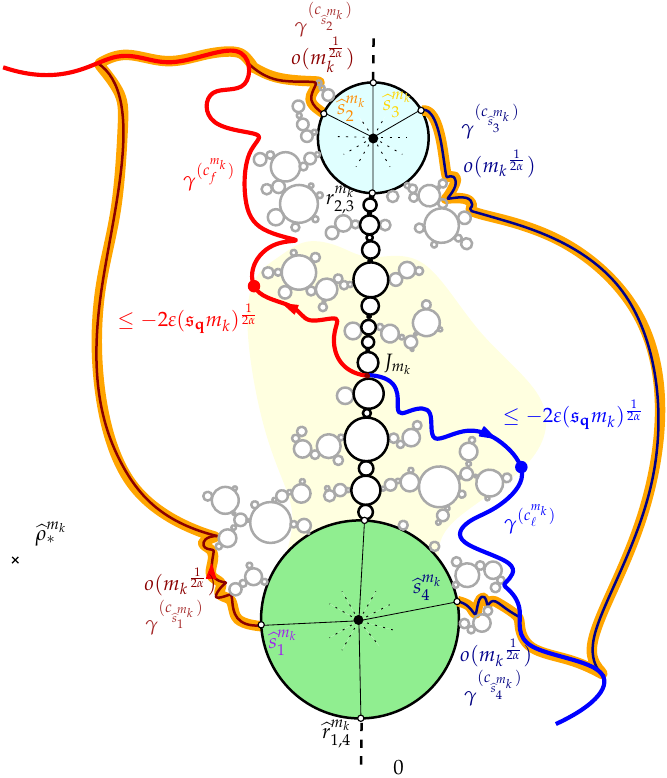} 
 \caption{Illustration of the proof of Lemma \ref{lem:prison-discrete}. On the event where $\widehat{ \mathfrak{X}}$ is $ \varepsilon$-trapped, we can build in the discrete setting the analog of the blocking doors appearing in Figure~\ref{fig:sidebyside}. In particular, any path starting from the yellow region within $\frac{3}{2}\varepsilon ( \mathfrak{s}_{\mathbf{q}} m_k)^{ \frac{1}{2 \alpha}}$ of $J_{m_k}$ and going to $\widehat{\rho}_*^{m_k}$ must cross one of the discrete doors (in orange) and come close to $\widehat{\gamma}_{1,2}^{m_k}$ (the union of the red and blue path).\label{fig:prison-discrete}}
 \end{center}
 \end{figure}
  Now, using the left hand sides of \eqref{eq:s1omega1} and \eqref{eq:s2omega1} we deduce that the simple geodesics $\gamma^{(c_{\widehat{s}_{1}^{m_k}})}$ and $\gamma^{(c_{\widehat{s}_{2}^{m_k}})}$ take only $o( m_k^{ \frac{1}{2\alpha}})$ steps before merging with $\gamma^{(c^{m_k}_f)}$. The discrete left door (in orange on Figure~\ref{fig:prison-discrete}) is the concatenation of the pieces of these geodesics going from $w_{\widehat{s}_{1}^{m_k}}^{m_k}$ to $w_{\widehat{s}_{2}^{m_k}}^{m_k}$. We do the symmetric construction in the right part of the belt for the discrete right door by starting from the last corner of $J_{m_k}$ and replacing the indexes $1,2$ by $3,4$.  \\
 \textsc{Planar separation.} An application of Jordan theorem together with the construction of the edges in $ \mathrm{BDG}^{2\bullet}( \mathbf{u}_n, \epsilon)$ shows that the union of these two discrete doors separate two regions $ \mathrm{In}^{m_k}$ and $ \mathrm{Out}^{m_k}$ in $ \widehat{ \mathfrak{M}}_{m_k}$ where the vertices of the discrete doors are declared to be in $\mathrm{Out}^{m_k}$. In particular, since $ \widehat{\rho}_*^{m_k}$ is adjacent to the root edge associated to a white corner of the buckle, we deduce from the fact that the edges in $\mathrm{BDG}^{2\bullet}( \mathbf{u}_n, \epsilon)$ do not cross the unicyclomobile and follow its contour that we always have $\widehat{\rho}_*^{m_k}\in\mathrm{Out}^{m_k}$. Notice now that the discrete labels on these doors are all smaller than $\ell_{m_k}(J_{m_k})-2 \varepsilon (\scal m_k)^{\frac{1}{2 \alpha}} + o(m_k^{ \frac{1}{2\alpha}})$. We infer  by the Schaeffer bound for unicyclomobiles, given in \eqref{d_n^circ:2},  that, for every $k\geq 1$ large enough, all vertices within graph distance less than $  \frac{3}{2}\varepsilon (\scal m_k)^{ \frac{1}{2 \alpha}}$ from $J_{m_k}$ (the region in yellow on Figure \ref{fig:prison-discrete}) belong to $ \mathrm{In}^{m_k}$. Therefore, we deduce that:
 \begin{center} For $k\geq 1$ large enough,  any path on $\widehat{ \mathfrak{M}}_{m_k}$  going from $ B_{ \mathrm{d}^{ \mathrm{gr}}_{\widehat{ \mathfrak{M}}_{m_k}} }\big(J_{m_k} , \frac{3}{2}\varepsilon (\scal m_k)^{\frac{1}{2 \alpha}}\big)$ to $\widehat{\rho}_*^{m_k}$ must intersect the union of the two doors, and in particular must come within $o( m_k^{ \frac{1}{2\alpha}})$ of $\widehat{\gamma}_{1,2}^{m_k}$.\end{center}
 We can now apply Lemma \ref{chemin:proches}, which implies that $\omega\notin\{(\widehat{D}(\widehat{\rho}_1,\widehat{\rho}_2)-\hdelay)/2  \text{ is $\varepsilon$-bad for }\boldsymbol{\widehat{\mathcal{S}}}\,\}$, as wanted.
\end{proof}

Our final task is to prove \eqref{eq:gogo2}, thus completing the upper bound  \eqref{eq:newgoal} equivalent to Proposition~\ref{main:techni}. Recalling the definition of $\GG$ given in \eqref{eq:GG}, and the results of Section~\ref{sec:prison}, Equation \eqref{eq:gogo2} is equivalent to:

 \begin{lem} \label{lem:derdesder}There exist $c,C \in(0,\infty)$ such that 
 $$ \mathbf{N}^{\bullet}\Big(  \mathbbm{1}_{\mathfrak{X}  \textnormal{\small ~is not $ \varepsilon$-trapped}} \cdot \GG\big(1-\sigma, - Z_{t_\bullet}\big) \Big) \leq C\cdot  \varepsilon^{c}.$$
 \end{lem}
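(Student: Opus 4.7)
The strategy is to combine the conditional trapping estimate of Theorem \ref{prop:malo} with a disintegration of the excursion measure $\mathbf{N}^\bullet$ along the height $H_{t_\bullet}$. Recall that, under $\mathbf{N}^\bullet$, the $\sigma$-finite ``distribution'' of $H_{t_\bullet}$ is Lebesgue on $[0,\infty)$, so we may write
$$\mathbf{N}^\bullet\Big(\mathbbm{1}_{\mathfrak{X}\,\text{not}\,\varepsilon\text{-trapped}}\cdot \GG\big(1-\sigma,-Z_{t_\bullet}\big)\Big)=\int_0^\infty \mathrm{d}h\, \mathbb{E}^{(h)}\Big[\mathbbm{1}_{\mathfrak{X}\,\text{not}\,\varepsilon\text{-trapped}}\cdot \GG(1-\sigma,-Z_{t_\bullet})\Big],$$
where $\mathbb{E}^{(h)}$ denotes the regular conditional probability $\mathbf{N}^\bullet(\cdot\mid H_{t_\bullet}=h)$, which is well defined by scaling.

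The key step is to use Hölder's inequality under $\mathbb{E}^{(h)}$ with conjugate exponents $p,q>1$:
$$\mathbb{E}^{(h)}\Big[\mathbbm{1}_{\mathfrak{X}\,\text{not}\,\varepsilon\text{-trapped}}\cdot \GG\Big]\leq \Big(\mathbb{E}^{(h)}\big[\mathbbm{1}_{\mathfrak{X}\,\text{not}\,\varepsilon\text{-trapped}}\big]\Big)^{1/p}\cdot \Big(\mathbb{E}^{(h)}\big[\GG^q\big]\Big)^{1/q}.$$
The first factor is bounded by $C\cdot (\varepsilon/h^{2(\alpha-1)})^{c/p}\wedge 1$, directly from Theorem \ref{prop:malo}. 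For the second factor I would then bound $\mathbb{E}^{(h)}[\GG^q]$ uniformly in $h$ using the structural description of $(\sigma,Z_{t_\bullet})$ under $\mathbb{E}^{(h)}$: conditionally on $H_{t_\bullet}=h$, the label $Z_{t_\bullet}=Y^\bullet_h$ is the position at time $h$ of a symmetric $2(\alpha-1)$-stable Lévy process (hence scales as $h^{1/(2(\alpha-1))}$), while $\sigma$ is determined by the lengths of the $\alpha$-stable excursions grafted along the spine described by the spinal decomposition of Proposition \ref{prop:spine}. Using the explicit form \eqref{eq:GG} of $\GG$, together with the tail estimates \eqref{eq:qcbetazero} and \eqref{eq:tailqalphsleft} on stable densities and the Gaussian tails of $\bar p_t(\cdot)$, one obtains polynomial (in $h$) bounds on $(\mathbb{E}^{(h)}[\GG^q])^{1/q}$, at least for $q>1$ sufficiently close to $1$.

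Once these two factors are in hand, I would split the integral over $h$ at a threshold $h_0=h_0(\varepsilon)$, chosen as a small positive power of $\varepsilon$. For $h\leq h_0$ I use the trivial bound $\mathbbm{1}_{\text{not trapped}}\leq 1$, and the finiteness of $\mathbf{N}^\bullet[\GG]$ (which is exactly the normalization constant in \eqref{eq:Z:hat}) together with continuity of $h\mapsto \mathbb{E}^{(h)}[\GG]$ to bound $\int_0^{h_0}\mathbb{E}^{(h)}[\GG]\,\mathrm{d}h$ by a power of $h_0$. For $h> h_0$, I insert the Hölder bound and integrate $(\varepsilon/h^{2(\alpha-1)})^{c/p}(\mathbb{E}^{(h)}[\GG^q])^{1/q}$, which, after choosing $p$ so that $c/p\cdot 2(\alpha-1)$ exceeds the growth rate of the moment factor, gives a bound $C\varepsilon^{c/p}h_0^{-\gamma}$ for some $\gamma>0$. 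Optimizing in $h_0$ yields an upper bound of the form $C\varepsilon^{c'}$ for some $c'>0$, as required.

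The main obstacle is the quantitative control of $\mathbb{E}^{(h)}[\GG^q]$. The function $\GG(1-\sigma,-z)$ behaves badly both when $\sigma$ is close to $1$ and, in the dilute regime $\alpha\in[3/2,2)$, when the label $z$ is close to $0$, where $\GG(1-\sigma,0)=\infty$. Although the Gaussian nature of $Z_{t_\bullet}$ under the conditional Brownian-bridge structure of Proposition \ref{prop:spine} prevents $Z_{t_\bullet}$ from concentrating at $0$, making $\GG^q$ integrable for $q$ close to $1$ is delicate, and has to be done along with a careful choice of exponents to match the polynomial decay $h^{-2c(\alpha-1)/p}$ coming from Theorem \ref{prop:malo}. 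Once these delicate distributional estimates are controlled, the rest is a direct integration in $h$ that produces the stretched-polynomial decay in $\varepsilon$ stated in the lemma.
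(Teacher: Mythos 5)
Your approach is recognizably in the same spirit as the paper's -- a Hölder inequality to decouple the trapping indicator from the weight $\GG$, with Theorem~\ref{prop:malo} handling the indicator -- but the organization is genuinely different and the paper's choice of organization is what makes the estimate tractable. The paper first replaces $\GG$ by the explicit pointwise bound $\GG(x,z)\lesssim x^{-(1-\frac{1+\beta}{2\alpha})}|z|^{-\beta}$ (with $\beta$ chosen in a window that guarantees everything is eventually integrable), applies Hölder \emph{globally} under $\mathbf{N}^\bullet$, and then handles the two resulting factors by two \emph{different} disintegrations: the trapping factor $\mathbf{N}^\bullet(\sigma\in(0,1),\,\mathfrak{X}\text{ not trapped})$ by disintegrating over $H_{t_\bullet}=h$ (cut at $h=\varepsilon^{-r}$, using Theorem~\ref{prop:malo} for small $h$ and the Duquesne--Le Gall height tail together with $\sigma<1$ for large $h$), and the moment factor $\mathbf{N}^\bullet\big((1-\sigma)^{-aq}|Z_{t_\bullet}|^{-\beta q}\mathbbm{1}_{\sigma<1}\big)$ by disintegrating over $\sigma=v$ via \eqref{eq:excursionmeasuredecomp}, which by scaling reduces the whole computation to the probability measure $\mathbf{P}$ and then to the Gaussian structure of $Z_t$ given $X$ and Archer's volume estimate for $(\mathcal{L},\tilde d)$. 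In contrast, you apply Hölder slice by slice under $\mathbf{N}^\bullet(\cdot\mid H_{t_\bullet}=h)$, which couples the two factors to $h$ and forces you to control $\mathbb{E}^{(h)}[\GG^q]$ for each $h$. This is a harder quantity: the conditional law of $\sigma$ given $H_{t_\bullet}=h$ is governed by the spinal decomposition of Proposition~\ref{prop:spine}, not by a clean scaling identity, and the singularity at $\sigma\to1$ (present for all $\alpha$, not only the dilute regime you emphasize) has to be tracked against the $h$-dependent law of $\sigma$. Nothing in the paper's toolbox directly delivers the needed density or tail estimates for $\sigma$ given $H_{t_\bullet}=h$ at a quantitative level.

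There is also a concrete gap in your small-$h$ regime. You propose to bound $\int_0^{h_0}\mathbb{E}^{(h)}[\GG]\,\mathrm{d}h$ by a power of $h_0$ using the finiteness of $\mathbf{N}^\bullet[\GG]$ plus continuity of $h\mapsto\mathbb{E}^{(h)}[\GG]$. But integrability of a nonnegative function on $[0,\infty)$ together with continuity on $(0,\infty)$ gives only $\int_0^{h_0}\mathbb{E}^{(h)}[\GG]\,\mathrm{d}h\to 0$, not a power of $h_0$; and since $\mathbb{E}^{(h)}[\GG]$ blows up as $h\downarrow0$ (by scaling, $Z_{t_\bullet}$ is of order $h^{1/(2(\alpha-1))}$, so $|Z_{t_\bullet}|^{-\beta}$ contributes a negative power of $h$), you need to actually establish an estimate of the form $\mathbb{E}^{(h)}[\GG]\lesssim h^{-\gamma}$ with $\gamma<1$ to get a power of $h_0$. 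That quantitative estimate is again a statement about the law of $(\sigma,Z_{t_\bullet})$ given $H_{t_\bullet}=h$ and would need its own proof. In short, the route you sketch would likely work if you invested in detailed conditional estimates for $\sigma$ given $H_{t_\bullet}$, but the paper avoids this entirely by choosing the $\sigma$-disintegration for the moment factor, which via the scaling relation \eqref{eq:scaling:N:v} collapses everything to a Beta-function integral times a quantity under $\mathbf{P}$ that is bounded by Archer's volume bound. You correctly identified the hard step; the missing ingredient is that disintegrating the moment factor along $\sigma$ rather than along $H_{t_\bullet}$ is what makes it tractable.
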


  \begin{proof}
By the scaling property, we have $q_{x}^{[\alpha]}(t)=q_{1}^{[\alpha]}(t/x^{1/\alpha})/x^{1/\alpha}$, for $t\in \mathbb{R}$ and $x>0$. Therefore, using the stretched-exponential tails of $q_{1}^{[\alpha]}$, given in \eqref{eq:tailqalphsleft}, we infer the upper bound
 $$\GG(x,z)\leq  C_1 x^{-\frac{1}{\alpha}} \int_0^\infty \frac{\d t }{t^{\alpha-1/2}} \exp\big(-c_1 t^{\frac{\alpha}{\alpha-1}}/x^{\frac{1}{\alpha-1}} \big)  \mathrm{erfc}\big(- |z|  \sqrt{\frac{2}{t}} \big),\quad x>0, z\in\mathbb{R}, $$
  for some constants $c_1,C_1\in(0,\infty)$. Performing the change of variable $u=  c_1 t^{\frac{\alpha}{\alpha-1}}/x^{\frac{1}{\alpha-1}}$, we get that the right hand side of the previous display is bounded above by 
  $$C_2 x^{-1+\frac{1}{2\alpha}} \int_0^\infty \frac{\d u }{u^{(\alpha-1)(\alpha-3/2)/\alpha  +1}} \exp(-u)  \mathrm{erfc}\Big(- c_2 \frac{|z|}{x^{\frac{1}{2\alpha}}} u^{-\frac{\alpha-1}{2\alpha}} \Big),\quad x>0, z\in\mathbb{R},$$
  for other constant $c_2, C_2\in(0,\infty)$.
 Now fix $0\vee (2\alpha-3)<\beta<1\wedge (2\alpha-2)$ and remark that for some $C>0$ we have $\mathrm{erfc}(t)  \leq C t^{-\beta}$ for all $t >0$. Therefore, since $\int_0^\infty \frac{\d u}{u^{(\alpha-1)(\alpha-(3+\beta)/2)/\alpha  +1}}\exp(-u)<\infty$,  we can find $C_3\in(0,\infty)$, such that
 \begin{equation}\label{inequality:GG:beta}
\GG(x,z)\leq \frac{C_3}{x^{1-\frac{1+\beta}{2\alpha}} |z|^{\beta}},\quad x>0, z\in\mathbb{R} .
\end{equation}
Our goal now is to apply Hölder’s inequality to separate the variables $\mathbbm{1}_{\mathfrak{X}  \mbox{\small ~is not $ \varepsilon$-trapped}}$ and  $\GG\big(1-\sigma, - Z_{t_\bullet}\big) $, thereby enabling us to use the results established in Part \ref{PartI}. In this direction, note that by our choice of $\beta$, we can find $q$ such that:
\begin{equation}\label{eq:q:final}
1<q<\big(\frac{1}{\beta}\big) \wedge \big(1-\frac{1+\beta }{2\alpha}\big)^{-1} \wedge \big(\frac{2(\alpha-1)}{\beta}\big). 
\end{equation}
Next, we set $p=q/(q-1)$ and  recall the convention $\GG(x,\cdot )=0$, for $x\leq 0$.  By \eqref{inequality:GG:beta}, an application of   Hölder inequality gives 
\begin{align*}
\mathbf{N}^{\bullet}\Big( & \mathbbm{1}_{\mathfrak{X}  \mbox{\small ~is not $ \varepsilon$-trapped}} \cdot \GG\big(1-\sigma, - Z_{t_\bullet}\big) \Big)\\
&\leq C_3\cdot \mathbf{N}^{\bullet}\Big( \sigma\in(0,1), \mathfrak{X}  \mbox{\small ~is not $ \varepsilon$-trapped} \Big)^{1/p}\cdot \mathbf{N}^{\bullet}\Big( \big(1-\sigma\big)^{-(1-\frac{1+\beta}{2\alpha}) q} \cdot |Z_{t_\bullet}|^{-\beta q} \mathbbm{1}_{\sigma\in(0,1)} \Big)^{1/q}.
\end{align*}
We conclude by proving that  $\mathbf{N}^{\bullet}\big( \sigma\in(0,1), \mathfrak{X}  \mbox{\small ~is not $ \varepsilon$-trapped} \big)={O}(\varepsilon^{\ell})$, for some $\ell>0$,  and  the quantity $ \mathbf{N}^{\bullet}\Big( \big(1-\sigma\big)^{-(1-\frac{1+\beta}{2\alpha}) q} \cdot |Z_{t_\bullet}|^{-\beta q} \mathbbm{1}_{\sigma\in(0,1)} \Big)$ is  finite. Let us treat each term separately, starting with the former. In this direction, consider the constants $c,C\in(0,\infty)$ appearing in the statement of Theorem~\ref{prop:malo} and note that without loss of generality we may assume that $c<1/(2\alpha-2)$. Next, fix $r\in(0, c/(1-2(\alpha-1)c))$ and write:
\begin{align*}
 \mathbf{N}^{\bullet}\Big( \sigma\in(0,1), \mathfrak{X}  \mbox{\small ~is not $ \varepsilon$-trapped} \Big)\leq \mathbf{N}^{\bullet}\Big( \mathfrak{X}  \mbox{\small~is not $ \varepsilon$-trapped}, H_{t_\bullet}\leq \varepsilon^{-r}\Big)+\mathbf{N}^{\bullet}\Big( \sigma\in(0,1), H_{t_\bullet}\geq \varepsilon^{-r} \Big).
\end{align*}
On one hand, an application of Theorem \ref{prop:malo} gives that:
\begin{align*}
 \mathbf{N}^{\bullet}\Big(\mathfrak{X}  \mbox{\small~is not $ \varepsilon$-trapped}, H_{t_\bullet}\leq \varepsilon^{-r}\Big)= \int_{0}^{\varepsilon^{-r}} \d h ~\mathbf{N}^\bullet\big( \mathfrak{X}  \mbox{\small~is not $ \varepsilon$-trapped}~\big|~H_{t_\bullet}=h\big)\leq \frac{C\cdot \varepsilon^{c-r(1-2(\alpha-1)c)}}{1-2(\alpha-1) c},  
\end{align*}
where to obtain the first equality we used that, by \eqref{one_point},   the distribution of $H_{t_{\bullet}}$ is  $\mathbbm{1}_{h\geq 0}\d h$. On the other hand, by the scaling invariance \eqref{eq:scaling:N:v} and \eqref{eq:excursionmeasuredecomp}, we have:
\begin{align*}
\mathbf{N}^{\bullet}\Big( \sigma\in(0,1), H_{t_\bullet}\geq \varepsilon^{-r} \Big)&=\int_0^{1} \frac{\d v}{|\Gamma(-\frac{1}{\alpha})| v^{\frac{1}{\alpha}+1}}\cdot  \mathbf{N}^{(v)}\Big(\int_0^{v} \d t  ~\mathbbm{1}_{H_t\geq \varepsilon^{-r}}\Big)\\
&=\int_0^{1} \frac{\d v}{|\Gamma(-\frac{1}{\alpha})| v^{\frac{1}{\alpha}}} \cdot \mathbf{E}\Big[\int_0^{1} \d t~ \mathbbm{1}_{ v^{\frac{\alpha-1}{\alpha}}H_t\geq \varepsilon^{-r}}\Big]\\
&\leq \Big(\int_0^{1} \frac{\d v}{|\Gamma(-\frac{1}{\alpha})| v^{\frac{1}{\alpha}}} \Big)\cdot \mathbf{E}\Big[\int_0^{1} \d t~ \mathbbm{1}_{H_t\geq \varepsilon^{-r}}\Big]=\frac{\mathbf{E}\Big[\int_0^{1} \d t~ \mathbbm{1}_{H_t\geq \varepsilon^{-r}}\Big]}{(\alpha-1)\Gamma(1-\frac{1}{\alpha})} .
\end{align*}
It follows from \cite[Theorem 3.3.3]{DLG02} that $\mathbf{E}\big[\int_0^{1} \d t~ \mathbbm{1}_{H_t\geq \varepsilon^{-r}}\big]={O}(\varepsilon^{\ell})$, for some $\ell>0$. Putting all together, we derive as wanted that  $\mathbf{N}^{\bullet}\big( \sigma\in(0,1), \mathfrak{X}  \mbox{\small ~is not $ \varepsilon$-trapped} \big)={O}(\varepsilon^{\ell})$, possibly for another $\ell>0$. It remains to prove that  $\mathbf{N}^{\bullet}\big( \big(1-\sigma\big)^{-(1-\frac{1+\beta}{2\alpha}) q} \cdot |Z_{t_\bullet}|^{-\beta q}\mathbbm{1}_{\sigma\in(0,1)} \big)$ is finite. To this end notice that, again by scaling invariance and  \eqref{eq:excursionmeasuredecomp}, the latter equals
\begin{align*}
\int_0^{1} \frac{\d v}{|\Gamma(-\frac{1}{\alpha})|} v^{-\frac{1}{\alpha}-1} &\big(1-v\big)^{-(1-\frac{1+\beta}{2\alpha}) q} \cdot \mathbf{N}^{(v)}\Big( \int_0^{v} \d t~   |Z_{t}|^{-\beta q}\Big)\\
&=\Big(\int_0^{1} \frac{\d v}{|\Gamma(-\frac{1}{\alpha})|} v^{-\frac{2+\beta q}{2\alpha}} \big(1-v\big)^{-(1-\frac{1+\beta}{2\alpha}) q}\Big) \cdot \mathbf{E}\Big[\int_{0}^{1} \d t ~|Z_{t}|^{-\beta q}\Big]\\
&= \frac{\mathrm{Beta}(1-\frac{2+\beta q}{2\alpha}, 1- (1-\frac{1+\beta}{2\alpha}) q)}{|\Gamma(-\frac{1}{\alpha})|} \cdot \mathbf{E}\Big[\int_{0}^{1} \d t ~|Z_{t}|^{-\beta q}\Big],
\end{align*}
where $\mathrm{Beta}$ stands for the Beta function. We derive from \eqref{eq:q:final} that $\mathrm{Beta}(1-\frac{2+\beta q}{2\alpha}, 1- (1-\frac{1+\beta}{2\alpha}) q)$ is a finite quantity. To estimate $\mathbf{E}\Big[\int_{0}^{1} \d t ~|Z_{t}|^{-\beta q}\Big]$ recall that, for every fixed $t\in [0,1]$, under $\mathbf{P}$ and conditionally on $X$, the random variable $Z_t$ is a centered Gaussian with variance $\widetilde{d}(0,t)$, we infer that:
\begin{align*}
 \mathbf{E}\big[\int_{0}^{1} \d t ~|Z_{t}|^{-\beta q}\big]= \int_{0}^{1} \d t~ \mathbf{E}\big[ ~|Z_{t}|^{-\beta q}\big] =\frac{2^{-\beta q/2}}{\sqrt{\pi}}\Gamma\big((1-\beta q)/2\big)\int_0^{1} \d t~ \mathbf{E}\big[ ~\widetilde{d}(0,t)^{-\beta q/2}\big] .
 \end{align*}
Lastly, we observe that
\begin{align*}
\int_0^{1} \d t~ \mathbf{E}\big[ ~\widetilde{d}(0,t)^{-\beta q/2}\big]&=\int_0^\infty \d x ~ \mathbf{E}\Big[ \int_0^{1} \d t~ \mathbbm{1}_{\widetilde{d}(0,t)\leq x^{-2/(\beta q)}}\Big]\leq \int_0^\infty \d x ~ \mathbf{E}\Big[\mathrm{Vol}_{d}\big(B_d(\Pi_d(0),2 x^{-2/(\beta q)})\big)\Big], \end{align*}
where we recall that $B_d(\Pi_d(0),y)$ denotes the ball centered at $\Pi_d(0)$ with radius $y$ in $\mathcal{L}$ and  $\mathrm{Vol}_{d}$ stands for the  pushforward of the Lebesgue measure on  $[0,1]$ under the canonical projection $\Pi_d$. We also stress that to obtain the previous inequality we used that $d\leq 2 \widetilde{d}$. We deduce from \cite[Proposition 5.8]{archer2019brownian} and the re-rooting invariance of Lemma~\ref{lem:invariancereroottime} that the  previous display is finite. This completes the proof, the section and the paper.
  \end{proof}

\addcontentsline{toc}{section}{References}

\bibliographystyle{abbrv}
\bibliography{bibli}
\end{document}